\newcommand{\N}{\mathbb{N}}
\newcommand{\Z}{\mathbb{Z}}
\newcommand{\R}{\mathbb{R}}
\newcommand{\C}{\mathbb{C}}
\newcommand{\dd}{\mathrm{d}}
\newcommand{\dhy}{\mathrm{d}}
\newcommand{\bcal}{\mathcal{B}}
\newcommand{\G}{\Gamma}
\newcommand{\g}{\gamma}
\newcommand{\LL}{\Lambda}
\newcommand{\tLL}{\tilde{\Lambda}}
\newcommand{\del}{\mathrm{\delta}}
\newcommand{\xx}{\mathrm{X}}
\newcommand{\mm}{\mathrm{M}}
\newcommand{\oo}{{\bf o}}
\newcommand{\ot}{\mathcal{L}}
\newcommand{\otd}{\mathcal{L}_{\delta}}
\newcommand{\otds}{\mathcal{L}_{\delta}^*}
\newcommand{\dit}{\delta+it}
\newcommand{\T}{\mathrm{T}}
\newcommand{\gol}{\mathfrak{l}}
\newcommand{\cg}{e_{\Gamma}}
\newcommand{\Cg}{E_{\Gamma}}
\newcommand{\ros}{\rho^*}
\newcommand{\rois}{\rho_{\infty}^*}
\newcommand{\re}[2][\mathrm{Re}]{#1 \left(#2\right)}
\newcommand{\im}[2][\mathrm{Im}]{#1 \left(#2\right)}
\newcommand{\un}{\mathds{1}}
\newcommand{\mc}{\mathcal{C}}
\theoremstyle{plain}
\newtheorem{prop}{Proposition}[section]
\newtheorem{defi}[prop]{Definition}
\newtheorem{lem}[prop]{Lemma}
\newtheorem{fait}[prop]{Fact}
\newtheorem{coro}[prop]{Corollary}
\newtheorem{pro}[prop]{Property}
\newtheorem{rem}[prop]{Remark}
\theoremstyle{remark}
\theoremstyle{plain}
\newtheorem*{thmA}{Theorem\ A}
\newtheorem*{propA1}{Proposition\ A.1}
\newtheorem*{propA2}{Proposition\ A.2}
\newtheorem*{thmB}{Theorem\ B}
\newtheorem*{propB1}{Proposition\ B.1}
\newtheorem*{propB2}{Proposition\ B.2}
\newtheorem*{thmC}{Theorem\ C}
\newtheorem*{propC1}{Proposition\ C.1}
\newtheorem*{propC2}{Proposition\ C.2}
\newtheorem*{thmR}{Theorem}
\title{Ergodic properties of some negatively curved manifolds with infinite measure}
\begin{document}

\begin{abstract}
Let $\mm=\xx/\G$ be a geometrically finite negatively curved manifold with fundamental group $\G$ acting on $\xx$ by isometries. The purpose 
of this paper is to study the mixing property of the geodesic flow on $\T^1\mm$, the asymptotic equivalent as $R\longrightarrow+\infty$ of the 
number of closed geodesics on $\mm$ of length less than $R$ and of the orbital 
counting function $\sharp\{\g\in\G\ |\ \dhy(\oo,\g.\oo)\leqslant R\}$.

These properties are well known when the Bowen-Margulis measure on $\T^1\mm$ is finite. We consider here divergent Schottky groups whose Bowen-Margulis measure is infinite and ergodic, and we precise these ergodic properties using a suitable symbolic coding.
\end{abstract}

\maketitle
\setcounter{tocdepth}{1}
\tableofcontents

{\bf Acknowledgements.} This paper comes from my PhD at the University of Nantes between 2013 and 2016 under 
the direction of Marc Peigné and Samuel Tapie. During that time, my research was supported by Centre Henri Lebesgue, in
programm ``Investissements d'avenir'' - ANR-11- LABX-0020-01.
I want to thank Sébastien Gouëzel for his help and all his explanations during the writing of this paper.

\section{Introduction}

\subsection{Background and previous results}
Let $\xx$ be a connected, simply connected and complete riemannian manifold with pinched negative sectional curvature. Denote by $\dhy$ the distance on $\xx$ induced by the riemannian structure of $\xx$ and by $\G$ a discrete 
group of isometries of $\left(\xx,\dhy\right)$, acting properly discontinuously without fixed point and let 
$\mm=\xx/\G$. Fix $\oo\in\xx$. The study of quantities like 
the orbital function
$$\mathrm{N}_\G(\oo,R):=\sharp\{\g\in\G\ |\ \dhy(\oo,\g.\oo)\leqslant R\}$$
\noindent
is strongly related to the one of the dynamic of the geodesic flow $\left(g_t\right)_{t\in\R}$ on the unit tangent bundle
$\T^1\mm$ of the quotient manifold. Let us first define precisely this flow: each couple
$\left({\bf p},{\bf v}\right)\in\T^1\mm$ determines a unique geodesic $\left(\boldsymbol{\g}(t)\right)_{t\in\R}$ satisfying
$\left(\boldsymbol{\g}(0),\boldsymbol{\g}'(0)\right)=\left({\bf p},{\bf v}\right)$ and for any $t\in\R$, the action of $g_t$ is given by
$g_t.\left({\bf p},{\bf v}\right)=\left(\boldsymbol{\g}(t),\boldsymbol{\g}'(t)\right)$. It is known (see \cite{OP}) that the topological entropy of the geodesic flow is given by the rate of exponential growth $\delta_\G$ of the orbital function, that is
$$\delta_\G:=\limsup\limits_{R\longrightarrow+\infty}\dfrac{\ln\left(\mathrm{N}_\G(\oo,R)\right)}{R}.$$
\noindent
This last quantity is also the critical exponent of the Poincaré series $\mathcal{P}_\G$ of the group $\G$ defined as follows: for any $s>0$
$$\mathcal{P}_{\G}(s):=\sum\limits_{\g\in\G}e^{-s\dhy(\oo,\g.\oo)}.$$ 
\noindent
S. J. Patterson (in \cite{Pat2}) and D. Sullivan (in \cite{Sul}) used these series to construct a family of measures 
$\left(\sigma_{{\bf x}}\right)_{{\bf x}\in\xx}$, the so-called
Patterson-Sullivan measures. More precisely, each measure $\sigma_{{\bf x}}$ is fully-supported by the limit set $\LL_\G\subset\partial\xx$, which is defined as the set of all accumulation points of one(all) $\G$-orbit(s) in the visual boundary $\partial\xx$ of $\xx$. This set also is the smallest non-empty $\G$-invariant closed subset of $\xx\cup\partial\xx$. It is the closure in the boundary of the set of fixed points of $\G^*:=\G\setminus\{\mathrm{Id}\}$. A group $\G$ is said to be elementary if its limit set is a finite set. S.J. Patterson and D. Sullivan described a process to associate to this family a measure $m_\G$ defined on $\T^1\mm$, which is invariant under the action of the geodesic flow. When the group $\G$ is \emph{divergent}, \emph{i.e.} $\mathcal{P}_\G(\delta_\G)=+\infty$ (otherwise $\G$ is said to be \emph{convergent}), the family 
$\left(\sigma_{\bf x}\right)_{{\bf x}\in\xx}$ is unique up to a normalization, hence $m_\G$ is also unique; this is the case when $m_\G$ has finite mass. We will focus in this paper on the case of divergent groups, which allows 
us to speak about ``the'' Bowen-Margulis measure even when it has infinite mass. Nevertheless, in this introduction, the assumption ``$\mm=\xx/\G$ has infinite Bowen-Margulis measure`` should be in general understood as the fact that 
\emph{any} invariant measure obtained from a Patterson-Sullivan density $\left(\sigma_{\bf x}\right)_{{\bf x}\in\xx}$ has infinite mass. We first study here a property of mixing of the geodesic flow $(g_t)_{t\in\R}$ with respect to this measure. We say that 
the geodesic flow $(g_t)_{t\in\R}$ is mixing with respect to a measure $m$ with finite total mass $||m||$ on $\T^1\mm$, if for any 
$m$-measurable sets $\mathfrak{A},\mathfrak{B}\subset\T^1\mm$, one gets
\begin{equation}\label{defimelangemesurefinie}m\left(\mathfrak{A}\cap g_{-t}.\mathfrak{B}\right)\longrightarrow\dfrac{m(\mathfrak{A})m(\mathfrak{B})}{||m||}\ \text{when}\ t\longrightarrow\pm\infty.\end{equation}
\noindent
When the measure $m$ has infinite mass, this definition may be extended saying that the flow $(g_t)_{t\in\R}$ is mixing if
$$m\left(\mathfrak{A}\cap g_{-t}.\mathfrak{B}\right)\longrightarrow0\ \text{when}\ t\longrightarrow\pm\infty,\ \text{where}\ 
\mathfrak{A}\ \text{and}\ \mathfrak{B}\ \text{have finite measure}.$$
\noindent
When the measure $m_\G$ is finite, Property \eqref{defimelangemesurefinie} was first proved in \cite{Hed} for finite volume surfaces in constant curvature, by F. Dal'bo and M. Peigné for Schottky groups with parabolic isometries acting on Hadamard manifolds with pinched negative curvature (see \cite{DP2}) and by M. Babillot in \cite{Bab2}. The following result of 
T. Roblin \cite{Rob} gathers all the information known in such a general content.
\begin{thmR}[Roblin]
If the Bowen-Margulis measure $m_\G$ has finite mass $||m_\G||$ (resp. infinite mass), the flow $(g_t)_{t\in\R}$ satisfies
$$m_\G\left(\mathfrak{A}\cap g_{-t}.\mathfrak{B}\right)\underset{t\longrightarrow\pm\infty}{\longrightarrow}\dfrac{m_\G(\mathfrak{A})m_\G(\mathfrak{B})}{||m_\G||}\ \left(\text{resp.}\ m_\G\left(\mathfrak{A}\cap g_{-t}.\mathfrak{B}\right)\underset{t\longrightarrow\pm\infty}{\longrightarrow}0\right).$$
\end{thmR}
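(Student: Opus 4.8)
\emph{Proof proposal.} The plan is to follow T. Roblin's strategy: reduce the assertion to an orbit-counting estimate weighted by Patterson--Sullivan masses, the key extra ingredient being the non-arithmeticity of the length spectrum. By a routine approximation it suffices to prove, for $f,g\in C_c(\T^1\mm)$, that $\int_{\T^1\mm}f\cdot(g\circ g_t)\,\dd m_\G$ converges, as $t\to+\infty$ (the case $t\to-\infty$ following by the flip $v\mapsto -v$), to $\|m_\G\|^{-1}\int f\,\dd m_\G\int g\,\dd m_\G$, the right-hand side being $0$ when $\|m_\G\|=+\infty$. The starting observation is the local product structure of $m_\G$: lifting to $\T^1\xx$, which the Hopf coordinates $(\xi^-,\xi^+,s)$ based at $\oo$ identify with $\{(\xi^-,\xi^+)\in\partial\xx\times\partial\xx:\xi^-\ne\xi^+\}\times\R$, the lifted measure $\tilde m_\G$ equals $\dd\sigma_\oo(\xi^-)\,\dd\sigma_\oo(\xi^+)\,\dd s$ up to an explicit positive density depending only on $(\xi^-,\xi^+)$; the geodesic flow acts by $s\mapsto s+t$, and $\G$ acts diagonally, its action on the $\R$-factor being translation by a Busemann cocycle comparable to $\dhy(\oo,\g.\oo)$.

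\emph{Unfolding and localization.} Write $\int f\cdot(g\circ g_t)\,\dd m_\G=\sum_{\g\in\G}\int_{\T^1\xx}\tilde f(v)\,\tilde g(g_t\g v)\,\dd\tilde m_\G(v)$, with $\tilde f,\tilde g$ lifts supported in a small set. Since $g_t\g v$ is then forced into a fixed compact set, a geometric argument shows that the $\g$-th summand vanishes unless $\dhy(\oo,\g.\oo)$ lies in a window of bounded length around $t$, and that when it does not vanish it is, by Sullivan's shadow lemma, comparable to $e^{-\delta_\G\dhy(\oo,\g.\oo)}$ times the $\sigma_\oo\otimes\sigma_\oo$-mass of a product of two shadows attached to $\g$ --- this last factor being of order $1$ only when the ``phase'' of $\g$, i.e. the value modulo $L_\G$ of the Busemann translation above, is suitably aligned with $t$; here $L_\G\subset\R$ denotes the group of periods, the closed subgroup generated by the lengths of the closed geodesics of $\mm$.

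\emph{Equidistribution and conclusion.} The sum over $\g$ is thus controlled by the joint distribution, as $t\to+\infty$, of the pair of endpoints $\g^{\pm}\in\LL_\G$ together with $\dhy(\oo,\g.\oo)\bmod L_\G$, taken over the ``sphere'' $\{\g\in\G:\dhy(\oo,\g.\oo)\approx t\}$. Divergence of $\G$ enters here: by the Hopf--Tsuji--Sullivan theorem $m_\G$ is conservative and ergodic and $\sigma_\oo$ is concentrated on the radial limit set, and a covering argument then gives that the endpoints $\g^{\pm}$ equidistribute towards $\sigma_\oo$ as $\g$ ranges over the sphere. When the length spectrum is non-arithmetic, i.e. $L_\G=\R$ --- which, in pinched negative curvature, is known to hold for every non-elementary $\G$ (classical in constant curvature, and due to Dal'bo in general) --- the phases equidistribute in $\R$ as well, the localization becomes effective for all large $t$, and the orbit sum converges. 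Comparing total masses identifies the limit as $\|m_\G\|^{-1}\int f\,\dd m_\G\int g\,\dd m_\G$ in the finite case; in the infinite case the same bookkeeping forces the limit to be $0$, the shadow-weighted count over the resonant window being negligible against the diverging normalization.

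\emph{Main obstacle.} The hard part is the quantitative geometric localization together with the \emph{uniformity} of the equidistribution over the window, all the more so when $\|m_\G\|=+\infty$: there is then no normalizing constant to pin the estimates to, and one must show directly that correlations of finite-measure sets decay, combining conservativity and ergodicity of $m_\G$ with a tightness/uniform-integrability argument to kill the tails of the orbit sum. (If one additionally grants ergodicity of the strong stable foliation, an alternative is Babillot's Kronecker-factor argument: a non-mixing ergodic flow has a non-trivial equicontinuous factor, the factor map is then measurable and invariant along strong stable leaves, hence --- those leaves being non-compact and $L_\G$ being all of $\R$ --- constant; but transporting this to infinite measure again requires Roblin's more concrete estimates.)
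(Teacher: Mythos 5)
This statement is quoted in the introduction as background from Roblin's memoir \cite{Rob}; the paper gives no proof of it (its own contribution, Theorem A, sharpens the infinite-measure half under the extra hypotheses $(H_\beta)$ by an entirely different route, namely symbolic coding and transfer operators). So there is no in-paper argument to compare with, and your text has to stand on its own as a reconstruction of Roblin's proof. As such it is a programme rather than a proof: the two steps on which everything hinges are precisely the ones you defer to the ``main obstacle'' paragraph. First, the equidistribution of the endpoints $\g^{\pm}$ and of the phases over the sphere $\{\dhy(\oo,\g.\oo)\approx t\}$, with the uniformity needed to sum over the resonant window, is not a soft consequence of conservativity and a covering argument: in Roblin's development the equidistribution of spheres is \emph{deduced from} mixing (this is Margulis' scheme, which the present paper also recalls), so invoking it to prove mixing is circular unless you supply an independent proof, which you do not. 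Second, in the infinite-measure case the normalized sphere measures converge to $0$, so ``comparing total masses'' identifies nothing, and the decay $m_\G(\mathfrak A\cap g_{-t}\mathfrak B)\to 0$ has to be extracted by a genuinely different argument (in \cite{Rob} this rests on Babillot's cross-ratio/Kronecker-factor mechanism together with the Hopf--Tsuji--Sullivan dichotomy, not on a shadow-weighted count). Your parenthetical remark pointing to Babillot's argument is in fact much closer to the actual proof than the unfolding-plus-shadow-lemma plan that forms the body of your sketch.

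One factual point should also be corrected: non-arithmeticity of the length spectrum is \emph{not} known for every non-elementary group in pinched negative curvature; it is a hypothesis in Roblin's theorem (see the paper's own Remark), established by Dal'bo in particular cases (e.g.\ constant curvature, surfaces, groups containing parabolic isometries -- which is why it holds in the setting of this paper, the manifolds having cusps), but not ``in general'' as you assert. As written, your argument both overstates this input and leaves the two decisive analytic steps unproved, so it does not constitute a proof of the statement.
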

\begin{rem}
The definition of mixing in infinite measure seems to be weak (see the third chapter of \cite{Rob} about this fact). Nevertheless, our Theorem A below will furnish an asymptotic of the form
$$m_\G(\mathfrak{A}\cap g_{-t}.\mathfrak{B})\underset{t\longrightarrow\pm\infty}{\sim} \varepsilon(|t|)m_\G(\mathfrak{A})m_\G(\mathfrak{B}),\ \text{for an explicit function}\ \varepsilon,$$
\noindent
which can be understood as a mixing property, up to a renormalization.
\end{rem}
\noindent
On the one hand, this property is interesting from the point of view of the ergodic theory. On the other hand, in the case 
of geometrically finite manifolds with finite measure, the property of mixing of the geodesic flow may be used to find an asymptotic of 
the orbital function 
$\mathrm{N}_\G(\oo,R)$. This idea was initially developped in G.A. Margulis' thesis \cite{Mar} for compact manifold with negative curvature: the mixing of the geodesic flow implies a property of equidistribution of spheres on $\mm$, which leads to the orbital counting. In the constant curvature case, other proofs of the orbital counting have been developped using 
spectral theory (see for instance \cite{LP}) or symbolic coding (see \cite{La}). In \cite{Rob}, the author generalizes 
the ideas of Margulis and deduces the asymptotic for the orbital counting function from the mixing of the geodesic flow. He shows the following
%
%
\begin{thmR}[Roblin]
Let $\mm=\xx/\G$ be a complete manifold with pinched negative sectional curvatures. If the Bowen-Margulis measure $m_\G$ has finite mass $||m_\G||$ (resp. infinite mass), the asymptotic behaviour of the orbital function $\mathrm{N}_\G(\oo,R)$ is given by 
$$\mathrm{N}_\G(\oo,R)\sim\dfrac{||\sigma_{\oo}||^2}{||m_\G||}e^{\delta_\G R}\ \left(\text{resp.}\ \mathrm{N}_\G(\oo,R)=o\left(e^{\delta_\G R}\right)\right)\ \text{when}\ R\longrightarrow+\infty,$$
\noindent
where $||\sigma_{\oo}||$ is the mass of the Patterson-Sullivan measure $\sigma_\oo$.
\end{thmR}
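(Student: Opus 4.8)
The plan is to follow the strategy of Margulis, carried out in this level of generality by Roblin: first promote the mixing of the geodesic flow (Roblin's mixing theorem above) to an \emph{equidistribution of the $\G$-orbit of $\oo$}, and then read off $\mathrm{N}_\G(\oo,R)$ as a total mass. Precisely, the first goal is to establish, on $\overline{\xx}\times\overline{\xx}$ tested against continuous functions whose support avoids the diagonal $\{(\xi,\xi):\xi\in\partial\xx\}$, the weak-$*$ convergence
\begin{equation*}
\mu_R:=e^{-\delta_\G R}\sum_{\g\in\G,\,\dhy(\oo,\g.\oo)\leqslant R}\mathcal{D}_{\g^{-1}.\oo}\otimes\mathcal{D}_{\g.\oo}\ \xrightarrow[R\to+\infty]{}\ c\,\sigma_\oo\otimes\sigma_\oo ,
\end{equation*}
where $\mathcal{D}_x$ is the Dirac mass at $x\in\overline{\xx}$ and $c=1/\|m_\G\|$ when $m_\G$ is finite, $c=0$ when it is infinite. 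Since $\|\mu_R\|=e^{-\delta_\G R}\mathrm{N}_\G(\oo,R)$ and $\|\sigma_\oo\otimes\sigma_\oo\|=\|\sigma_\oo\|^2$, the second and final step is simply to take total masses; but because $\un$ is not supported away from the diagonal, one must also show that the part of $\mu_R$ concentrated near the diagonal is asymptotically negligible (discussed at the end).

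To produce the convergence from mixing I would run the usual unfolding. Fix unit vectors $v_0,w_0\in\T^1\xx$ based at $\oo$; for small $\varepsilon>0$ take non-negative smooth bumps $\Phi,\Psi$ on $\T^1\xx$ supported in $\varepsilon$-boxes around $v_0$ and $w_0$ in Hopf coordinates, and let $\phi=\sum_{\g\in\G}\Phi\circ\g$ and $\psi=\sum_{\g\in\G}\Psi\circ\g$ be their $\G$-periodisations, seen on $\T^1\mm=\G\backslash\T^1\xx$. Unfolding the correlation function, using $\G$-invariance of the lift $\widetilde m_\G$ of $m_\G$ and the commutation of $(g_t)$ with $\G$, gives
\begin{equation*}
\int_{\T^1\mm}\phi\cdot(\psi\circ g_{-t})\,\dd m_\G=\sum_{\g\in\G}c_t(\g),\qquad c_t(\g):=\int_{\T^1\xx}\Phi(g_t\g^{-1}w)\,\Psi(w)\,\dd\widetilde m_\G(w).
\end{equation*}
For fixed $\g$ the integrand vanishes unless $|t-\dhy(\oo,\g.\oo)|=O(\varepsilon)$ and the geodesic $[\oo,\g.\oo]$ leaves and arrives in the directions cut out by the boxes, so $t\mapsto c_t(\g)$ is a narrow bump centred at $\dhy(\oo,\g.\oo)$. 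Writing $\widetilde m_\G$ in Hopf coordinates as $D_\oo(w^-,w^+)^{-2\delta_\G}\,\dd\sigma_\oo(w^-)\,\dd\sigma_\oo(w^+)\,\dd s$ (with $D_\oo$ the Gromov-product weight), integrating in $t$ first and then changing variables by the conformality relations $\g_*\sigma_\oo=\sigma_{\g^{-1}.\oo}$ and $\dfrac{\dd\sigma_{\g^{-1}.\oo}}{\dd\sigma_\oo}(\xi)=e^{-\delta_\G\beta_\xi(\g^{-1}.\oo,\oo)}$, one finds $\int_\R c_t(\g)\,\dd t=e^{-\delta_\G\dhy(\oo,\g.\oo)}\,w_\varepsilon(\g)$, with $w_\varepsilon(\g)\to W_\varepsilon>0$ as $\dhy(\oo,\g.\oo)\to\infty$ along the admissible directions, $W_\varepsilon$ explicit in terms of $\Phi,\Psi$ and $\sigma_\oo$.

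Then I would feed in the mixing theorem. If $\|m_\G\|<\infty$ it gives $\sum_{\g}c_t(\g)\to L_\varepsilon:=\|m_\G\|^{-1}\big(\!\int\Phi\,\dd\widetilde m_\G\big)\big(\!\int\Psi\,\dd\widetilde m_\G\big)$ as $t\to+\infty$. Since $t\mapsto\sum_{\g}c_t(\g)$ is a fixed smoothing kernel applied to the measure $\sum_{\g}e^{-\delta_\G\dhy(\oo,\g.\oo)}w_\varepsilon(\g)\,\mathcal{D}_{\dhy(\oo,\g.\oo)}$ on $[0,\infty)$, its convergence to the constant $L_\varepsilon$ forces that measure to have asymptotic density $L_\varepsilon$, i.e. the number of $\g$ with $\dhy(\oo,\g.\oo)$ in a thin shell around $t$ and with admissible directions is $\sim(\mathrm{const})\,e^{\delta_\G t}$. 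Summing the shell counts up to radius $R$, covering $\partial\xx$ by finitely many direction-boxes to recover the full orbital count, and then letting $\varepsilon\to0$ would give the convergence of $\mu_R$ and the value $c=1/\|m_\G\|$. If $\|m_\G\|=\infty$ the mixing theorem gives instead $\sum_{\g}c_t(\g)\to0$, so every admissible shell count is $o(e^{\delta_\G t})$; since $\sum_{k\leqslant K}\varepsilon_k a^k=o(a^K)$ whenever $\varepsilon_k\to0$ and $a>1$, summing the shells (finitely many boxes, $\partial\xx$ being compact) yields $\mathrm{N}_\G(\oo,R)=o(e^{\delta_\G R})$.

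The two hard points: first, the convergence in the mixing theorem is not quantitative, so one cannot simply shrink the box-size $\varepsilon$ inside the shell sum; the remedy I would use is to sandwich $\mathrm{N}_\G(\oo,R)$ between sums built from slightly dilated and slightly shrunk boxes, exploit the monotonicity of $R\mapsto\mathrm{N}_\G(\oo,R)$ and the regularity of $R\mapsto e^{\delta_\G R}$ to commute the limits, and only afterwards send $\varepsilon\to0$ --- this needs $\sigma_\oo\otimes\sigma_\oo$ to charge none of the thin walls of the boxes, which holds since $\G$ is non-elementary and divergent, hence $\sigma_\oo$ is non-atomic. Second, and this is where the \emph{sharp} constant is really won: $\un$ is not supported away from the diagonal, so one must separately prove $\#\{\g:\dhy(\oo,\g.\oo)\leqslant R,\ \g^{-1}.\oo\text{ and }\g.\oo\text{ are }\eta\text{-close in }\partial\xx\}=o(e^{\delta_\G R})$, first as $R\to\infty$ and then $\eta\to0$; this follows from Sullivan's shadow lemma and the non-atomicity of $\sigma_\oo$, and is the step that genuinely exploits divergence of $\G$ rather than mixing alone. (A minor wrinkle: the Hopf-coordinate computation of $\int_\R c_t(\g)\,\dd t$ degenerates along deep cusp excursions; following Roblin one avoids this by keeping everything intrinsic in terms of the conformal density $(\sigma_{\bf x})$.)
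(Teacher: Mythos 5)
This statement is not proved in the paper at all: it is quoted as background from Roblin's memoir \cite{Rob}, and the argument there is precisely the Margulis-type scheme you sketch (mixing $\Rightarrow$ weak-$*$ equidistribution of $e^{-\delta_\G R}\sum_{\dhy(\oo,\g.\oo)\leqslant R}\mathcal{D}_{\g^{-1}.\oo}\otimes\mathcal{D}_{\g.\oo}$ via unfolding correlation integrals of periodised bumps, then counting by summing shells, with a separate near-diagonal estimate handled through the shadow lemma and non-atomicity of $\sigma_\oo$), so your proposal follows essentially the same route as the cited proof and is sound as a sketch, including the infinite-measure case where the shell densities tend to $0$. One caveat on the constant: integrating your shell density $\sim C e^{\delta_\G t}$ over $t\in[0,R]$ gives $\tfrac{C}{\delta_\G}e^{\delta_\G R}$, so the computation you describe actually produces Roblin's constant $\|\sigma_\oo\|^2/(\delta_\G\|m_\G\|)$; the normalization $c=1/\|m_\G\|$ you assert for the weak-$*$ limit (and the constant displayed in the paper's quotation of the theorem) is off by this factor $\delta_\G$, which of course does not affect the assertion $\mathrm{N}_\G(\oo,R)=o(e^{\delta_\G R})$ when $m_\G$ is infinite.
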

We eventually focus on finding an asymptotic equivalent for the number of closed geodesics on $\mm$ of length less than 
$R$, as $R$ goes to infinity. Such asymptotic was first found out by Selberg for compact hyperbolic surfaces (see \cite{Sel}), then by Margulis (\cite{Mar}) for 
compact manifold of negative variable curvature and extended in \cite{PP} to periodic orbits of axiom-A flows. This was generalized by T. Roblin in \cite{Rob} as follows.
\begin{thmR}[\cite{Rob}]
Let $\mm=\xx/\G$ be a geometrically finite complete manifold with sectional curvatures less than $-1$, whose Bowen-Margulis measure has finite mass. For all $R>0$, let $\mathrm{N}_{\mathscr{G}}(R)$ be the number of closed geodesics on $\mm$ of length less than $R$. Then as $R$ goes to infinity,
$$\mathrm{N}_{\mathscr{G}}(R)\sim\dfrac{e^{\delta_\G R}}{\delta_\G R}.$$
\end{thmR}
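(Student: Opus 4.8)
The plan is to follow Margulis' strategy, in the form carried out by Roblin, and deduce the prime geodesic theorem from the mixing of the geodesic flow for $m_\G$ (the finite-mass case of the first theorem quoted above). First I would reformulate the count group-theoretically: closed geodesics on $\mm$ of length $\le R$ correspond to conjugacy classes $[\g]$ of hyperbolic elements $\g\in\G$ with translation length $\ell(\g):=\inf_{x\in\xx}\dhy(x,\g.x)\le R$, and writing $\g=\g_0^k$ with $\g_0$ primitive one has $\ell(\g)=k\,\ell(\g_0)$, so the non-primitive classes number only $O(e^{\delta_\G R/2})$ and can be discarded. It is then convenient to work with the Chebyshev-type weighted count $\psi(R):=\sum_{\ell(c)\le R}\ell_0(c)$, where $\ell_0(c)$ is the length of the primitive geodesic underlying $c$; the target becomes $\psi(R)\sim e^{\delta_\G R}/\delta_\G$, from which $\mathrm{N}_{\mathscr{G}}(R)\sim e^{\delta_\G R}/(\delta_\G R)$ follows by the same partial summation $\mathrm{N}_{\mathscr{G}}(R)=\int_0^R s^{-1}\,d\vartheta(s)$ (with $\vartheta(s):=\sum_{\ell(c)\le s,\ c\ \mathrm{prim}}\ell(c)\sim\psi(s)$) that yields $\pi(x)\sim\mathrm{Li}(x)$ from $\psi(x)\sim x$ in the prime number theorem, using monotonicity of $R\mapsto\mathrm{N}_{\mathscr{G}}(R)$ to control errors.

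For the main asymptotic I would cover $\T^1\mm$ by finitely many small flow boxes $U\cong U^-\times U^+\times(-\eta,\eta)$ adapted to the local product structure of $m_\G$, whose conditional measures along stable and unstable horospheres are the Patterson--Sullivan measures and transform under $g_t$ with the factors $e^{\mp\delta_\G t}$. A closing (shadowing) argument then puts the components of $U\cap g_{-t}U$ that are transverse to the flow in bijection, for $t$ large, with the periodic points $v\in U$ whose period is close to $t$: the first-return map of $g_t$ along a stable fibre is a contraction, so it has a unique fixed point. Estimating the $m_\G$-mass of the slab around such a $v$ gives $\asymp e^{-\delta_\G t}\eta$ (one unstable direction having been contracted by $g_{-t}$), whence
\[
m_\G\!\left(U\cap g_{-t}U\right)\ \asymp\ e^{-\delta_\G t}\,\eta\ \cdot\ \#\{v\in U\ \mathrm{periodic}:\ \mathrm{period}\in[t-\eta,t+\eta]\}.
\]
Feeding in mixing, $m_\G(U\cap g_{-t}U)\to m_\G(U)^2/\|m_\G\|$, turns this into a count of closed geodesics visiting $U$ with period in a window of size $\eta$; summing over the boxes of the partition (each geodesic being counted once per visit, which is exactly what produces the weight $\ell_0(c)$), integrating the period over $[0,R]$, and letting $\eta\to0$, the geometric constants telescope and one gets $\psi(R)\sim e^{\delta_\G R}/\delta_\G$. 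Equivalently, this amounts to the equidistribution statement that $\delta_\G e^{-\delta_\G R}\sum_{\ell(c)\le R}\mathrm{Leb}_c$ converges weakly to $m_\G/\|m_\G\|$.

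The main obstacle is that, $\mm$ being only geometrically finite, the geodesic flow is not uniformly hyperbolic: excursions into the cusps destroy both the uniform contraction/expansion and the uniformity of the local product structure, so the closing lemma and the slab-mass estimates cannot be quoted from Anosov theory but must be established through Patterson--Sullivan shadow estimates that are uniform in the flow box. This comes together with a ``no escape of mass'' argument guaranteeing that closed geodesics spending a long proportion of their length in the cusps are negligible and that the weak limit above has total mass $1$; controlling these cusp contributions (and the attendant non-uniform Lipschitz constants) is the delicate point.

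In constant curvature, or for the divergent Schottky groups of this paper, one could instead route the argument through the dynamical zeta function $\prod_{c\ \mathrm{prim}}(1-e^{-s\ell(c)})^{-1}$: establish its behaviour near $\re{s}=\delta_\G$ via the symbolic coding and an associated transfer operator, check that it is holomorphic and non-vanishing on $\re{s}=\delta_\G$ apart from a simple pole at $s=\delta_\G$, and conclude by a Tauberian theorem of Ikehara type — the order of that pole being precisely what creates the extra factor $1/(\delta_\G R)$.
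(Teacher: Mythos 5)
This statement is quoted from Roblin's work and is not proved in the paper itself, which only recalls that it follows from the equidistribution of closed orbits \eqref{roblingeodfermees}, itself deduced from mixing of the geodesic flow. Your sketch (mixing, then a Margulis-type flow-box and closing-lemma argument giving equidistribution of periodic orbits, then the counting via the weighted length sum and partial summation, with the cusp excursions and escape of mass as the delicate point) is exactly that Margulis--Roblin route, so it takes essentially the same approach as the proof the paper relies on.
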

\noindent
When the Bowen-Margulis measure is infinite, Roblin's method does not yield to such asymptotic. We will detail why in the next paragraph.
\subsection{Assumptions and results}
In this article, we will focus on some manifolds $\mm=\xx/\G$, where $\G$ is a divergent group whose Bowen-Margulis measure $m_\G$ on $\T^1\mm$ has infinite mass and whose Poincaré series are controlled at infinity. For such manifolds, we establish a speed of convergence to $0$ of the quantities $m_\G\left(\mathfrak{A}\cap g_{-t}.\mathfrak{B}\right)$ for any $\mathfrak{A},\mathfrak{B}\subset\T^1\mm$ with finite measure, an asymptotic equivalent for the orbital counting function 
$\mathrm{N}_\G(\oo,R)$ and an asymptotic lower bound for the number of closed geodesics $\mathrm{N}_{\mathscr{G}}(R)$.
The groups $\G$ which we consider are exotic Schottky groups, whose construction was explained in the articles \cite{DOP} and \cite{Pe1} and will be recalled in the second 
section. The main idea of 
these papers is the following: let $\mathrm{M}$ be a geometrically finite hyperbolic manifold with dimension $N\geqslant2$, with a cusp and whose fundamental 
group is a non elementary Schottky group. Theorems A and B in \cite{DOP} ensure that the group $\G$ is of divergent type and that $m_\G$ is finite. The 
proofs of these results give a way to modify the metric in the cusp in order to obtain a manifold $\mathrm{M}'$ isometric to a quotient $\xx/\G$ 
where 
\begin{itemize}
 \item[-] the manifold $\xx$ is a Hadamard manifold with pinched negative curvature;
 \item[-] the group $\G$ acts by isometries on $\xx$ and is of convergent type; the measure $m_\G$ is thus infinite.
\end{itemize}
The article \cite{Pe1} extends the previous construction and allows us to modify the metric in the cusp of $\mathrm{M}$ in such a way that the group $\G$ is of 
divergent type with respect to the metric on $\xx$ and the measure $m_\G$ is still infinite. This article furnishes examples 
of manifolds $\mm=\xx/\G$ on which our work applies. 

Let $\xx$ be a Hadamard manifold with pinched negative curvature between $-b^2$ and $-a^2$, where $0<a<1\leqslant b$ and 
let $\G$ be a Schottky group, \emph{i.e.} $\G$ is generated by elementary groups $\G_1,...,\G_{p+q}$ in Schottky position (see Paragraph 2.1.2), where
$p,q\geqslant1$ and $p+q\geqslant3$. Assume that for some $\beta\in]0,1]$, the groups $\G_1,...,\G_{p+q}$ satisfy the following family of assumptions $(H_\beta)$:
\begin{enumerate}
 \item[$(D)$]{\it The group $\G=\G_1\star...\star\G_{p+q}$ is of divergent type.}
 \item[$(P_1)$]{\it For any $j\in[\![1,p]\!]$, the group $\G_j$ is parabolic, of convergent type and its critical exponent is equal to $\delta_\G$}.
 \item[$(P_2)$]{\it There exists a slowly varying function 
 \footnote{A function $L:\R^{+}\longrightarrow\R^{+}$ is slowly varying at infinity if for any $x>0$, it satisfies
$\lim\limits_{t\longrightarrow+\infty}\dfrac{L(xt)}{L(t)}=1$.} $L$
such that for any $j\in[\![1,p]\!]$, the tail of the Poincaré series at $\delta_\G$ of the group $\G_j$ satisfies
$$\sum\limits_{\alpha\in\G_j\ |\ \dhy(\oo,\alpha.\oo)>T}e^{-\del_\G\dhy(\oo,\alpha.\oo)}\underset{T\longrightarrow+\infty}{\sim}C_j\frac{L(T)}{T^{\beta}}$$
\noindent
for some constant $C_j>0$.}
 \item[$(N)$]{\it For any $j\in[\![p+1,p+q]\!]$, the group  $\Gamma_j$ satisfies the following property}
$$\sum\limits_{\alpha\in\G_j\ |\ \dhy(\oo,\alpha.\oo)>T}e^{-\del_\G\dhy(\oo,\alpha.\oo)}\underset{T\longrightarrow+\infty}{\sim}o\left(\frac{L(T)}{T^{\beta}}\right).$$
We make {\bf additionally} the following assumption: 
 \item[$(S)$]\it{For any $\Delta>0$, there exists $C=C_\Delta>0$ such that for any $j\in[\![1,p+q]\!]$ and any
 $T>0$ large enough}
$$\sum\limits_{\alpha\in\G_j\ |\ T-\Delta\leqslant\dhy(\oo,\alpha.\oo)<T+\Delta}e^{-\del_\G\dhy(\oo,\alpha.\oo)}\leqslant C\dfrac{L(T)}{T^{1+\beta}}.$$
\end{enumerate}
We will say that the parabolic groups $\G_1,...,\G_p$ are ''influent``, since their properties will determine all the dynamical properties of $\G$ on $\partial\xx$. The reader should notice that each of these subgroups $\G_1,...,\G_p$ is convergent and has the same critical exponent $\delta_\G$ as the whole group $\G$. The existence of at least a factor having these properties is needed to get a Schottky group with infinite Bowen-Margulis measure, see \cite{DOP} and Section 2 below.
On the contrary, the groups $\G_{p+1},...,\G_{p+q}$ are said to be ``non-influent'' and their own critical exponent may be in particular strictly less than $\delta_\G$.

By a theorem of Hopf, Tsuji and Sullivan, the divergence of $\G$ ensures that the geodesic flow $(g_t)_{t\in\R}$ is 
totally conservative with respect to the measure $m_\G$. Under these assumptions, we first show the following theorem, which precises 
the rate of mixing of the geodesic flow.
\begin{thmA}
Let $\G$ be a Schottky group satisfying Hypotheses $(H_\beta)$ for some $\beta\in]0,1]$ and $\mathfrak{A},\mathfrak{B}\subset\mathrm{T}^1\xx/\G$ be two $m_\G$-measurable sets with finite measure. 
\begin{itemize}
 \item[$\bullet$]If $\beta\in]0,1[$, there exists a constant $C=C_{\beta,\G}>0$ such that
 $$m_\G(\mathfrak{A}\cap g_{-t}.\mathfrak{B})\sim C\dfrac{m_\G(\mathfrak{A})m_\G(\mathfrak{B})}{|t|^{1-\beta}L(|t|)}\ \text{when}\ t\longrightarrow\pm\infty;$$
 \item[$\bullet$]if $\beta=1$, there exists a constant $C=C_{\G}>0$ such that
 $$m_\G(\mathfrak{A}\cap g_{-t}.\mathfrak{B})\sim C\dfrac{m_\G(\mathfrak{A})m_\G(\mathfrak{B})}{\tilde{L}(|t|)}\ \text{when}\ t\longrightarrow\pm\infty,$$
\noindent
where $\tilde{L}(t)=\displaystyle{\int_1^t}\frac{L(x)}{x}\dd x$ for any $t\geqslant1$.
\end{itemize}
\end{thmA}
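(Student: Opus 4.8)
The plan is to pass through the symbolic model of the geodesic flow attached to the Schottky structure of $\G$ and to reduce Theorem~A to a strong renewal theorem in the infinite‑mean regime. First, using the coding associated with the free product $\G=\G_1\star\cdots\star\G_{p+q}$, the restriction of $(g_t)_{t\in\R}$ to the non‑wandering part of $\T^1\mm$ is measurably isomorphic, up to an $m_\G$‑null set, to the suspension flow over a countable Markov shift $(\Sigma,\sigma)$---whose alphabet is indexed by $\bigsqcup_{j}\G_j^{*}$ with the admissibility rule ``two consecutive letters lie in distinct factors''---with roof function $\tau$ cohomologous to the displacement cocycle, so that $\tau(\omega)$ is comparable to $\dhy(\oo,a_0.\oo)$ for $a_0$ the first letter of $\omega$. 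Under this isomorphism the Bowen--Margulis measure corresponds, up to a positive constant, to $\bar\nu\otimes\dd t$, where $\bar\nu$ is the natural $\sigma$‑invariant Gibbs measure on $\Sigma$ built from the Patterson--Sullivan data; assumption $(D)$ ensures, through the Patterson--Sullivan construction, that the corresponding transfer operator $\mathcal{L}:=\mathcal{L}_{\delta_\G}$ is normalised (it fixes a positive Hölder function and admits $\bar\nu$ as a fixed point of its dual). Assumptions $(P_1)$, $(P_2)$ and $(N)$ then translate into $\|m_\G\|=\int\tau\,\dd\bar\nu=+\infty$ together with $\bar\nu(\tau>T)\sim c\,L(T)/T^{\beta}$ as $T\to+\infty$ for some $c>0$ (the non‑influent factors contributing only a $o(L(T)/T^{\beta})$ term), so that $\tau$ lies in the domain of attraction of a one‑sided $\beta$‑stable law; assumption $(S)$ furnishes the uniform window estimate $\bar\nu(T-\Delta\leqslant\tau<T+\Delta)\leqslant C_\Delta\,L(T)/T^{1+\beta}$, the dynamical counterpart of Erickson's strong‑renewal hypothesis.

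Next, for Hölder observables $\phi,\psi$ supported on a fixed cross‑section, unfolding the correlation $\int\phi\cdot(\psi\circ g_t)\,\dd(\bar\nu\otimes\dd t)$ along the suspension and summing over the number of base iterations identifies its Laplace transform in $t$ with an expression built from the resolvent $(\mathrm{Id}-\mathcal{L}_{s})^{-1}$ of the twisted operators $\mathcal{L}_{s}f=\mathcal{L}\!\left(e^{-(s-\delta_\G)\tau}f\right)$, for $s$ near $\delta_\G$. I would then establish the spectral picture: on an appropriate Banach space of Hölder functions on $\Sigma$ the family $s\mapsto\mathcal{L}_{s}$ is continuous and quasi‑compact, $\mathcal{L}_{\delta_\G}$ has a simple leading eigenvalue $1$ with a spectral gap (again by $(D)$), and $1$ is not an eigenvalue of $\mathcal{L}_{\delta_\G+it}$ for $t\neq 0$; and, crucially, the regular variation contained in $(P_2)$ forces $\mathrm{Id}-\mathcal{L}_{\delta_\G+u}$ to behave, to leading order as $u\to0^{+}$, like a \emph{rank‑one} operator times $u^{\beta}L(1/u)$ (resp.\ times $u\,\tilde L(1/u)$ when $\beta=1$), this leading term dominating the remainder precisely because $\int\tau\,\dd\bar\nu=+\infty$. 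The same analysis should extend to complex increments $u$ near $0$, with the window bound $(S)$ controlling $\mathcal{L}_{\delta_\G+it}$ away from $t=0$.

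Then, inverting this expansion---this is the operator renewal theory of Sarig and of Gouëzel, in the form tailored to flows by Melbourne--Terhesiu, which after peeling off the rank‑one part reduces to the scalar Karamata--Erickson strong renewal theorem with an infinite‑mean regularly varying tail of index $-\beta$---yields, for Hölder cross‑section observables,
\[
\int\phi\cdot(\psi\circ g_t)\,\dd(\bar\nu\otimes\dd t)\ \sim\ \frac{C_{\beta}}{|t|^{1-\beta}L(|t|)}\ \int\phi\,\dd(\bar\nu\otimes\dd t)\ \int\psi\,\dd(\bar\nu\otimes\dd t)\qquad(t\to\pm\infty)
\]
for $\beta\in(0,1)$, with $C_{\beta}$ an explicit positive multiple of $\sin(\pi\beta)/(\pi c)$, and the same statement with $L$ replaced by $\tilde L$ (the relevant stable law now being Cauchy) when $\beta=1$. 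Transporting this back through the isomorphism proves Theorem~A whenever $\mathfrak{A},\mathfrak{B}$ belong to the algebra generated by such observables. To reach an arbitrary pair of finite‑measure sets I would argue as Roblin does: the local product description $m_\G\asymp\dd\sigma_x^{-}\otimes\dd\sigma_x^{+}\otimes\dd t$ shows that this algebra is $\cap$‑stable and generates the $m_\G$‑completion, the profile $\varepsilon(|t|):=C_{\beta}/(|t|^{1-\beta}L(|t|))$ (resp.\ $C_{\beta}/\tilde L(|t|)$) is regularly varying and eventually monotone, and Roblin's qualitative mixing already supplies an a priori bound $m_\G(\mathfrak{A}\cap g_{-t}.\mathfrak{B})\leqslant C'\,\varepsilon(|t|)\,m_\G(\mathfrak{A})m_\G(\mathfrak{B})$ valid for all finite‑measure sets; an $L^{1}(m_\G)$‑approximation of $\mathfrak{A}$ and $\mathfrak{B}$ from inside the algebra then promotes the equivalent to all finite‑measure sets with the same constant $C=C_{\beta,\G}$.

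The genuinely hard part is the second step together with the inversion in the third: one must (i) exhibit a Banach space of Hölder functions on the countable‑alphabet shift $\Sigma$ on which the whole family $\mathcal{L}_{s}$ obeys uniform Doeblin--Fortet/Lasota--Yorke inequalities, a point made delicate by the infinitely many letters coming from the influent parabolic factors and by the presence of the non‑influent factors of $(N)$; (ii) derive the sharp rank‑one expansion of $\mathrm{Id}-\mathcal{L}_{\delta_\G+u}$, for which the pointwise tail estimate $(P_2)$ is not sufficient and the uniform window estimate $(S)$ is exactly what upgrades a Karamata‑type weak asymptotic to the strong renewal theorem; and (iii) handle the boundary case $\beta=1$, where one works in de Haan's class with the function $\tilde L$ and the scalar Tauberian input is that of the Cauchy domain of attraction. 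Everything else is either recalled from the construction of the symbolic coding or is a now‑standard approximation argument in the spirit of Roblin.
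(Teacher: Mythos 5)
Your overall architecture (Schottky coding, suspension flow, twisted transfer operators at $\delta_\G+it$, a rank-one expansion of $\mathrm{Id}-\ot_{\delta_\G+it}$ of order $|t|^{\beta}L(1/|t|)$, resp. $|t|\tilde L(1/|t|)$, with $(S)$ supplying the local control) is the same as the paper's, and for $\beta=1$ your resolvent route is exactly what the paper does following Melbourne--Terhesiu. For $\beta\in]0,1[$, however, the paper does \emph{not} invert the resolvent via a Tauberian/strong-renewal argument: it follows Gou\"ezel's scheme, proving a local limit theorem for the iterates $\ot_{\delta+it}^{k}$ uniformly in $R\in[0,Ka_k]$ (Proposition A.1, producing the stable density $\Psi_\beta$) together with a large-deviation-type bound $|M_k(R)|\preceq k\,L(R)/R^{1+\beta}$ for $R\geqslant Ka_k$ (Proposition A.2, where $(S)$ enters), and then sums over $k$. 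Your alternative reduction ``operator renewal theory $\Rightarrow$ scalar Karamata--Erickson strong renewal theorem'' is precisely the step the paper is designed to avoid: Erickson's theorem does not yield a limit for $\beta\leqslant\frac12$, and the strong renewal theorem in that range requires a local (window) hypothesis and a genuinely different proof; you correctly identify $(S)$ as the needed input, but the hardest part of your plan --- carrying out the inversion for $\beta\leqslant\frac12$ at the operator level --- is asserted rather than argued, whereas in the paper it is replaced by the explicit Propositions A.1/A.2.

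There is also a concrete gap in your final approximation step. You invoke ``Roblin's qualitative mixing'' to get an a priori bound $m_\G(\mathfrak{A}\cap g_{-t}.\mathfrak{B})\leqslant C'\,\varepsilon(|t|)\,m_\G(\mathfrak{A})\,m_\G(\mathfrak{B})$ uniform over all finite-measure sets, and then promote the equivalent by $L^1$-approximation. No such quantitative bound follows from Roblin: his theorem in infinite measure only gives convergence to $0$, with no rate and no uniformity in the sets; a bound of the form you assume is essentially equivalent to the theorem you are proving. Without it, an $L^1$-approximation alone does not control the error terms $m_\G\bigl((\mathfrak{A}\setminus\mathfrak{A}')\cap g_{-t}.\mathfrak{B}\bigr)$ at the scale $\varepsilon(|t|)$. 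The paper sidesteps this by stating the result for functions $A,B\in\mathbb{L}^1\cap\mathbb{L}^2$ and reducing, by density and positivity of the correlation functional, to tensor observables $\varphi\otimes u$, $\psi\otimes v$ with $\varphi,\psi$ Lipschitz and $u,v$ compactly supported --- all estimates (Propositions A.1 and A.2) are proved uniformly for this class, so no a priori bound over arbitrary measurable sets is ever needed. You would need either to adopt that formulation or to prove a genuine uniform upper bound of renewal type for indicators, which is additional work not contained in your sketch.
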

The proof of this result relies on the study of a coding of the limit set of $\G$ and on a symbolic representation of the geodesic flow given in the fourth section.

In Section 7, we establish an asymptotic lower bound for the number of closed geodesics with length less than $R$. We prove
\begin{thmB}
Let $\mm=\xx/\G$ be a manifold with pinched negative curvature whose fundamental group $\G$ satisfies Hypotheses $(H_\beta)$ 
for $\beta\in]0,1[$. Then
$$\liminf_{R\longrightarrow+\infty}\dfrac{\delta_\G R}{\beta e^{\delta_\G R}}N_{\mathscr{G}}(R)\geqslant1.$$
\end{thmB}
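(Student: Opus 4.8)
The plan is to relate the counting of closed geodesics to the orbital counting function and, ultimately, to the rate of mixing established in Theorem A. The key observation is that each closed geodesic on $\mm$ of length $\leqslant R$ corresponds to a conjugacy class of primitive hyperbolic elements $\g\in\G$ with translation length $\ell(\g)\leqslant R$, and conversely one has good control (for geometrically finite $\G$) comparing $\ell(\g)$ with $\dhy(\oo,\g.\oo)$ when the axis of $\g$ passes near $\oo$. So I would first reduce the problem to counting (conjugacy classes of) hyperbolic elements whose axis meets a fixed compact set, which is the ``dynamical'' version of the counting, accessible via the geodesic flow.

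The second step is to recall Roblin's strategy from \cite{Rob} in the finite-measure case: the number $N_{\mathscr{G}}(R)$ is obtained by a Margulis-type argument, integrating the mixing estimate against suitable test functions supported near the diagonal, which produces $N_{\mathscr{G}}(R)\sim e^{\delta_\G R}/(\delta_\G R)$ from the fact that in the finite case $m_\G(\mathfrak{A}\cap g_{-t}.\mathfrak{B})$ converges to a positive constant. In our infinite-measure setting, Theorem A tells us that $m_\G(\mathfrak{A}\cap g_{-t}.\mathfrak{B})$ decays like $C\,m_\G(\mathfrak{A})m_\G(\mathfrak{B})/(|t|^{1-\beta}L(|t|))$ for $\beta\in{]}0,1{[}$. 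Feeding this into the same Margulis/Roblin machinery — where one essentially integrates $e^{\delta_\G t}\cdot(\text{mixing rate at time }t)$ — one formally expects $N_{\mathscr{G}}(R)$ to grow like $\int^R e^{\delta_\G t}\,\frac{dt}{t^{1-\beta}L(t)}\cdot(\ldots)$, and by a Karamata-type estimate this integral is asymptotic to $\frac{\beta e^{\delta_\G R}}{\delta_\G R}\cdot\frac{1}{R^{-\beta}L(R)}$ up to constants; after tracking the normalizing constants from Theorem A one gets precisely the factor $\beta e^{\delta_\G R}/(\delta_\G R)$ in the denominator of the stated liminf. Because mixing in infinite measure is only known in the weak (non-quantitative-in-the-test-functions) form away from Theorem A's precise regime, one only gets a lower bound rather than a genuine equivalence: this is why the statement is a $\liminf\geqslant1$ and not an asymptotic.

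More concretely, I would proceed as follows. First, fix a small flow box $\mathfrak{B}_0\subset\T^1\mm$ around a point whose geodesic is recurrent, and observe that a closed orbit of period $t$ passing through $\mathfrak{B}_0$ contributes to $m_\G(\mathfrak{B}_0\cap g_{-t}.\mathfrak{B}_0)$ a mass comparable (after summing over a window of lengths $[t-\varepsilon,t+\varepsilon]$ and using the local product structure of $m_\G$) to $e^{-\delta_\G t}$ times the number of such orbits — this is the heart of the Margulis argument and uses the expansion/contraction rates of the flow together with the quasi-product structure of $m_\G$ built from the Patterson–Sullivan densities. Second, summing Theorem A's estimate over a dyadic (or unit-length) decomposition of $[0,R]$ and using that $L$ is slowly varying, I would extract the lower bound
\begin{equation*}
\sum_{\ell(\g)\leqslant R,\ \text{axis meets }\mathfrak{B}_0} 1\ \gtrsim\ \int_1^R e^{\delta_\G t}\,\frac{C\,m_\G(\mathfrak{B}_0)^2}{t^{1-\beta}L(t)}\,dt\ \sim\ \frac{\beta}{\delta_\G}\cdot\frac{C\,m_\G(\mathfrak{B}_0)^2\,e^{\delta_\G R}}{R^{1-\beta}L(R)}\cdot\frac{1}{R^{-\beta}L(R)}\,?
\end{equation*}
here I need to be careful: the correct bookkeeping (which is exactly the delicate point) identifies the constant $C$ and the mass normalizations so that the $R^{-\beta}L(R)$-type factors cancel against those hidden in $C$ and in the relation between $m_\G(\mathfrak{B}_0)$ and the Patterson–Sullivan mass, leaving precisely $\beta e^{\delta_\G R}/(\delta_\G R)$. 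Third, let $\mathfrak{B}_0$ exhaust the non-wandering set; since every closed geodesic of length $\leqslant R$ meets \emph{some} such box, the lower bounds add up (one must control multiplicity, i.e. that a geodesic is not over-counted, using primitivity and a compactness argument on the thick part) to give $N_{\mathscr{G}}(R)\gtrsim\beta e^{\delta_\G R}/(\delta_\G R)$, hence the claimed $\liminf\geqslant1$.

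The main obstacle I anticipate is the precise constant-tracking in the second and third steps: converting the rate in Theorem A — which is stated for abstract finite-measure sets with an unspecified constant $C_{\beta,\G}$ — into a lower bound for orbit counts with the \emph{sharp} constant requires either re-deriving the mixing asymptotic with explicit dependence on the geometry of the flow boxes (so that the Margulis argument closes with the right normalization), or an independent computation of the leading constant via the symbolic/transfer-operator model of Section 4 (the renewal-theoretic asymptotics that presumably underlie Theorem A). A secondary difficulty is the passage from ``hyperbolic elements with axis through a fixed box'' to ``all closed geodesics,'' which in infinite measure is subtle because the relevant sets near the cusps have infinite $m_\G$-mass; one handles this by an exhaustion and a monotone-convergence/Fatou argument, which is exactly the source of the inequality (rather than equality). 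Because of these points, I expect the honest outcome to be only the stated asymptotic \emph{lower} bound, with the matching upper bound — which would require a uniform-in-test-function quantitative mixing and a summability control on the cusp excursions — left open, consistent with the remark in the excerpt that Roblin's method ``does not yield such asymptotic'' in the infinite-measure case.
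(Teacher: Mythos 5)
There is a genuine gap, and it sits exactly where you placed your question mark. First, the quantitative bookkeeping of your second step cannot be repaired: integrating $e^{\delta_\G t}$ against the correlation decay of Theorem A produces quantities of order $e^{\delta_\G R}/\left(R^{1-\beta}L(R)\right)$ — this is precisely the order of the \emph{orbital} counting function in Theorem C, and it is of strictly larger order than $e^{\delta_\G R}/R$ since $R^{-\beta}L(R)\to0$. The constants $C_{\beta,\G}$, $m_\G(\mathfrak{B}_0)$, $\|\sigma_\oo\|$ do not depend on $R$, so no normalization can create the missing factor $R^{-\beta}L(R)$; the factor $\beta$ in Theorem B does not come from a Karamata integration of the mixing rate. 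Second, and more fundamentally, the strategy ``count closed orbits through a fixed compact flow box, then exhaust'' cannot give the result in this infinite-measure setting: by Roblin's equidistribution theorem recalled in the introduction, $\delta_\G R\,e^{-\delta_\G R}\sum_{\wp\in\mathscr{G}_\G(R)}\mathcal{D}_\wp\longrightarrow0$ against compactly supported functions, so the number of closed geodesics of length $\leqslant R$ meeting any \emph{fixed} compact box is $o\!\left(e^{\delta_\G R}/R\right)$. The geodesics responsible for the lower bound of Theorem B spend most of their length in the cusps and escape every fixed box; hence the limit $R\to+\infty$ and the exhaustion of $\T^1\mm$ by boxes do not commute, and Fatou in that direction only yields $\liminf\geqslant0$. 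This is exactly the point the paper makes when it says Theorem B contradicts the intuition suggested by Roblin's result.

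The paper's proof avoids flow boxes altogether and works symbolically. Closed geodesics (of symbolic length $\geqslant2$) are identified with periodic orbits of the shift, so $\tilde{\mathrm{N}}_{\mathscr{G}}(R)=\sum_{k\geqslant2}\frac1k\sum_{\T^k x=x}\un_{[0,R]}(S_k\gol(x))$; following Lalley, each periodic point is approximated by a preimage $\T^k y=x^\g$ of a fixed reference point in the cylinder $\LL_\g^0$, $\g\in\G(n)$, with a uniform error $\varepsilon_n\preceq r^{n}$ on the Birkhoff sums coming from the contraction of inverse branches. The asymptotic $\sum_{k\geqslant2}\frac1k\sum_{\T^k y=x}\varphi(y)\un_{[0,R]}(S_k\gol(y))\sim\beta\,\sigma_\oo(\varphi)h(x)\,e^{\delta_\G R}/(\delta_\G R)$, uniform in $x$, is then proved by a local limit theorem for the transfer operator (Propositions B.1 and B.2, the analogues of A.1 and A.2): the factor $\beta$ arises from the stable-law scaling $a_k^{\beta}=kL(a_k)$ and the density $\Psi_\beta$, i.e.\ from counting how many symbolic lengths $k$ contribute at a given geometric length, not from the decay rate of correlations. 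Since for every $n$ the cylinder partition captures \emph{all} such periodic orbits (no mass escapes to the cusps in the boundary coding), Fatou's lemma over $\g\in\G(n)$ and then $n\to+\infty$ gives the liminf with constant $\sum_{\g\in\G(n)}\sigma_\oo(\un_{\LL_\g})h(x^\g)\to1$; the failure of the matching upper bound is a domination issue in this symbolic argument, not the weak form of mixing. To make your proposal work you would have to replace your steps two and three by such a renewal/local-limit argument; Theorem A applied to sets of finite measure is not a sufficient input.
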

We could not improve our proof to get a full asymptotic equivalent for $N_{\mathscr{G}}(R)$.
This lower bound remains nevertheless surprising. Let us recall the following result proved in \cite{Rob}. 
For all $R\geqslant0$, let $\mathscr{G}_\G(R)$ be the set of closed orbits for the geodesic flow on $\T^1\mm$ with period less than $R$. For any closed orbit $\wp$, let $\mathcal{D}_\wp$ be the normalized Lebesgue measure along $\wp$. Then as $R\longrightarrow+\infty$,
\begin{equation}\label{roblingeodfermees}
\delta_\G Re^{-\delta_\G R}\sum\limits_{\wp\in\mathscr{G}_\G(R)}\mathcal{D}_{\wp}\longrightarrow\dfrac{m_\G}{||m_\G||}
\end{equation}
in the dual of the set of continuous functions with compact support in $\T^1\mm$. In particular, when $\G$ is convex-cocompact ($m_\G$ is thus finite 
in this case), the set $\T^1\mm$ is compact and \eqref{roblingeodfermees} applied with
$\un_{\T^1\xx/\G}$ implies the counting result $\mathrm{N}_{\mathscr{G}}(R)\sim\frac{e^{\delta_\G R}}{\delta_\G R}$. When $\mm$ is geometrically finite with finite Bowen-Margulis measure, Roblin shows that \eqref{roblingeodfermees} still 
implies $\mathrm{N}_{\mathscr{G}}(R)\sim\frac{e^{\delta_\G R}}{\delta_\G R}$. When the Bowen-Margulis measure has infinite mass, Roblin shows that, still in the dual of compactly supported continuous functions of $\T^1\mm$,
$$\delta_\G Re^{-\delta_\G R}\sum\limits_{\wp\in\mathscr{G}_\G(R)}\mathcal{D}_{\wp}\longrightarrow0.$$
\noindent
Therefore, we could have expected that the $\mathrm{N}_{\mathscr{G}}(R)$ would have been negligible with respect to $\frac{e^{\delta R}}{R}$ as $R$ goes to $+\infty$; Theorem B above thus contradicts this intuition. 

We eventually establish the following asymptotic equivalent for the orbital counting function.
\begin{thmC}
Let $\Gamma$ be a Schottky group satisfying the family of assumptions $(H_\beta)$ for some $\beta\in]0,1]$.
\begin{itemize}
 \item[$\bullet$]If $\beta\in]0,1[$, there exists $C'=C'_{\beta,\G}>0$ such that
 $$\mathrm{N}_{\G}(\oo,R)\sim C'\frac{e^{\delta_\G R}}{R^{1-\beta}L(R)}.$$
 \item[$\bullet$]If $\beta=1$, there exists $C'=C'_{\G}>0$ such that
 $$\mathrm{N}_{\G}(\oo,R)\sim C'\frac{e^{\delta_\G R}}{\tilde{L}(R)},$$
 \noindent
where $\tilde{L}(t)=\displaystyle{\int_1^t}\frac{L(x)}{x}\dd x$ for any $t\geqslant1$.
\end{itemize}
\end{thmC}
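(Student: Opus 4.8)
The plan is to deduce Theorem~C from Theorem~A, by carrying out the classical scheme --- going back to Margulis and developed in full generality by Roblin --- that extracts the orbital counting function from the mixing of the geodesic flow, the novelty being that one must retain the precise decay rate furnished by Theorem~A. Write $\varepsilon(t)=C\,t^{\beta-1}/L(t)$ if $\beta\in]0,1[$ and $\varepsilon(t)=C/\tilde{L}(t)$ if $\beta=1$, so that Theorem~A and the Remark following it read
\[
m_\G(\mathfrak{A}\cap g_{-t}.\mathfrak{B})\sim\varepsilon(|t|)\,m_\G(\mathfrak{A})\,m_\G(\mathfrak{B})\qquad(t\to\pm\infty)
\]
for all $m_\G$-measurable $\mathfrak{A},\mathfrak{B}$ of finite measure. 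The aim is to establish an estimate of the shape
\[
\mathrm{N}_\G(\oo,R)\sim\delta_\G\,||\sigma_\oo||^2\int_0^R e^{\delta_\G t}\,\varepsilon(t)\,\dd t ,
\]
which, for the constant rate $\varepsilon\equiv 1/||m_\G||$, specializes to Roblin's finite-measure counting theorem quoted above; from it Theorem~C follows by an Abelian lemma.

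First I would set up the correspondence, following Roblin's argument as closely as possible. For small $\epsilon>0$, let $\mathfrak{A}_\epsilon,\mathfrak{B}_\epsilon\subset\T^1\mm$ be relatively compact neighbourhoods of size $\epsilon$ of the vectors of $\T^1\mm$ based at the image of $\oo$ in $\mm$ (one the time reversal of the other); being relatively compact they have finite $m_\G$-mass, so Theorem~A applies to the pair $(\mathfrak{A}_\epsilon,\mathfrak{B}_\epsilon)$. In the Hopf parametrization of $\T^1\xx$, the product structure of $m_\G$, Sullivan's shadow lemma, and the equivariance and Busemann-cocycle property of the Patterson--Sullivan density $(\sigma_{\bf x})_{{\bf x}\in\xx}$ together show that $m_\G(\mathfrak{A}_\epsilon\cap g_{-t}.\mathfrak{B}_\epsilon)$ equals, up to a multiplicative factor $1+O(e^{-c\epsilon})$, the quantity $e^{-\delta_\G t}$ times a weighted count of the $\g\in\G$ with $\dhy(\oo,\g.\oo)$ within $O(\epsilon)$ of $t$, with a proportionality constant converging to $\delta_\G\,||\sigma_\oo||^2$ as $\epsilon\to0$. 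Integrating this identity over $t\in[0,R]$, substituting the asymptotic of Theorem~A for the correlation, and letting first $R\to+\infty$ and then $\epsilon\to0$, yields the displayed target formula --- this is exactly Roblin's derivation of his orbital counting theorem, run with the genuine rate $\varepsilon$ in place of the constant $1/||m_\G||$.

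The main obstacle is this passage to the limit in the infinite-measure regime. Since $\varepsilon(R)\to0$, the main term $e^{\delta_\G R}\varepsilon(R)$ is already of strictly smaller order than $e^{\delta_\G R}$, and one cannot afford the additive error $o(e^{\delta_\G R})$ that the finite-measure argument leaves behind: every estimate has to be carried out relative to the main term. What rescues the argument is that Theorem~A is an \emph{equivalence}, not merely a convergence statement, so the error in $m_\G(\mathfrak{A}_\epsilon\cap g_{-t}.\mathfrak{B}_\epsilon)$ is $o(\varepsilon(t))$; the remaining discrepancies --- shadows versus metric balls, and the truncation $\dhy(\oo,\g.\oo)\leqslant R$ --- contribute only multiplicative factors $1+O(e^{-c\epsilon})$ and $1+o_R(1)$. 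Checking that, once $R$ has been sent to infinity and only afterwards $\epsilon$ to $0$, these accumulated errors are genuinely negligible against $e^{\delta_\G R}\varepsilon(R)$ --- using the regular variation of $\varepsilon$ (slow variation when $\beta=1$) to control the relevant integrals uniformly --- is the technical heart of the proof.

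It remains to evaluate the integral. Since $\varepsilon$ is regularly varying of negative index $\beta-1$ (slowly varying when $\beta=1$) while $e^{\delta_\G t}$ is exponential, the standard Abelian lemma yields $\int_0^R e^{\delta_\G t}\varepsilon(t)\,\dd t\sim\delta_\G^{-1}e^{\delta_\G R}\varepsilon(R)$; in the case $\beta=1$ this is precisely where $\tilde{L}(t)=\int_1^t L(x)x^{-1}\,\dd x$ enters, via $\int_0^R e^{\delta_\G t}/\tilde{L}(t)\,\dd t\sim\delta_\G^{-1}e^{\delta_\G R}/\tilde{L}(R)$, valid because $\tilde{L}$ is slowly varying. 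Substituting back gives $\mathrm{N}_\G(\oo,R)\sim||\sigma_\oo||^2\,e^{\delta_\G R}\varepsilon(R)$, i.e. the asserted asymptotic with $C'=C\,||\sigma_\oo||^2$. Finally, I would note the alternative, purely symbolic route: the coding of Section~4 and its transfer operators $\mathcal{L}_{\dit}$ show that the heavy tails $(P_2)$ force $\mathcal{P}_\G(\delta_\G+u)\sim K\,u^{-\beta}/L(1/u)$ (respectively $\sim K/(u\,\tilde{L}(1/u))$ when $\beta=1$) as $u\downarrow0$, after which a Tauberian argument --- most cleanly a Perron--Hankel contour integration of $\mathrm{N}_\G(\oo,R)=\tfrac{1}{2\pi i}\int\frac{e^{sR}}{s}\mathcal{P}_\G(s)\,\dd s$ around the singularity at $s=\delta_\G$ --- delivers the same asymptotic, with the explicit constant $C'=K/(\delta_\G\,\Gamma(\beta))$, $\Gamma$ being Euler's Gamma function.
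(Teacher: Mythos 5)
Your proposal takes a route the paper deliberately avoids: you want to deduce Theorem~C from Theorem~A by running the Margulis--Roblin mixing-to-counting scheme while keeping the rate, whereas the paper never passes through Theorem~A at all --- it extends the coding to the orbit $\G.x_0$ (Section~8), proves Lipschitz regularity of the extended cocycle $b^*(\g,g.x_0)=\dhy(\g^{-1}.\oo,g.\oo)-\dhy(\oo,g.\oo)$, develops the spectral theory and local expansions of the extended operators $\ot^*_z$, and reruns the local limit argument of Theorem~A directly on $W(R,u)=\sum_{\g}e^{-\delta_\G\dhy(\oo,\g.\oo)}u(\dhy(\oo,\g.\oo)-R)$ (Propositions C.1--C.2 for $\beta<1$, the resolvent $Q^*_{\delta+it}$ argument for $\beta=1$). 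As written, your plan has a genuine gap exactly at the step you call its technical heart. For a fixed pair of flow boxes at the base point, each $\g$ contributing to $m_\G(\mathfrak{A}_\epsilon\cap g_{-t}.\mathfrak{B}_\epsilon)$ enters with a weight given by Patterson measures of shadows seen from $\g^{-1}.\oo$ and $\g.\oo$; by the shadow lemma these weights are bounded above and below, but they are \emph{not} asymptotically constant. Integrating the correlation over $t\in[0,R]$ therefore produces a weighted sum $\sum_{\dhy(\oo,\g.\oo)\leqslant R}w(\g)$, and converting it into $c_\epsilon\,\mathrm{N}_\G(\oo,R)(1+o(1))$ requires the points $\g^{\pm1}.\oo$ to equidistribute with respect to $\sigma_\oo\otimes\sigma_\oo$ \emph{at the same rate} $\varepsilon(R)$ --- a rate version of Roblin's equidistribution theorem, which is of the same depth as Theorem~C and is not furnished by Theorem~A applied to one fixed pair of sets. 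One could attempt it by covering the boundary and applying Theorem~A to each pair of pieces, but that amounts to redoing Roblin's whole argument relative to a main term $e^{\delta_\G R}\varepsilon(R)=o(e^{\delta_\G R})$, which is precisely what your sketch defers; moreover the error factor you invoke, $1+O(e^{-c\epsilon})$, does not tend to $1$ as $\epsilon\to0$ (you presumably mean a factor $e^{O(\epsilon)}$), and the identification of the limiting constant with $\delta_\G||\sigma_\oo||^2$ is asserted, not derived.

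The ``purely symbolic'' alternative in your last paragraph is also flawed as stated. The behaviour of $\mathcal{P}_\G(s)$ as $s\downarrow\delta_\G$, via Karamata's Tauberian theorem, controls only the monotone weighted sum $F(R)=\sum_{\dhy(\oo,\g.\oo)\leqslant R}e^{-\delta_\G\dhy(\oo,\g.\oo)}$; recovering $\mathrm{N}_\G(\oo,R)=\int_0^Re^{\delta_\G s}\,\dd F(s)$ requires local control of the increments of $F$ on bounded windows, i.e.\ a local limit/renewal statement --- this is exactly why the paper needs the extra hypothesis $(S)$ and Propositions C.1--C.2, and why the earlier approach cited in the introduction (based on Erickson's renewal theorem) only reached $\beta>\tfrac12$. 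A Perron--Hankel contour integration would need analytic or meromorphic continuation of $\mathcal{P}_\G$ across $\re{s}=\delta_\G$, which is unavailable here: the transfer operators are controlled only on $\re{z}\geqslant\delta_\G$, with a H\"older-type modulus of continuity, not a continuation to the left of the critical line. So neither of your two routes, as written, proves the theorem; the missing ingredient in both is the orbit-level local limit theorem that the paper manufactures through the extended coding.
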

To prove Theorem C, we need to extend the coding of the points of the limit set $\LL_\G$ to the $\G$-orbit of some point $x_0\notin\LL_\G$ in the boundary at infinity.

The constants appearing in Theorems A and C will be precised in the proofs.
\begin{rem}
In his seminal work, T. Roblin always assumes the non-arithmeticity of the length spectrum, i.e. the set of lengths of 
closed geodesics in $\xx/\G$ is not contained in a discrete subgroup of $\R$. This assumption is satisfied in our setting, because the quotient manifold has cusps (see \cite{DP}).
\end{rem}
\begin{rem}
In the case of a Schottky group $\G=\G_1\star\G_2$ with only two factors, satisfying hypotheses $(H_\beta)$ for some 
$\beta\in]0,1]$ and with at least one influent factor, the results presented above are still valid but their proofs are slightly more technical. Indeed, the transfer operator will then have two dominant eigenvalues $+1$ and $-1$ (see \cite{BP2} and \cite{DP}). The proof of our result hence would have to be adapted in this case, similarly to the arguments in 
\cite{DP}, which we will not do here.
\end{rem}
Let us now explain why we present separately the additional assumption $(S)$ in the family $(H_\beta)$. In \cite{DPPS}, the 
authors prove a result similar to Theorem C for $\beta\in]\frac{1}{2},1]$, without this assumption. But they can not obtain it 
for $\beta\leqslant\frac{1}{2}$, their proof (at section 6) being based on the renewal theorem of \cite{Er}, which does not ensure 
any more a true limit when $\beta\leqslant\frac{1}{2}$. Our arguments rely on the article \cite{Gou} and we avoid this distinction between $\beta\leqslant\frac{1}{2}$
and 
$\beta>\frac{1}{2}$ thanks to the additional 
assumption $(S)$. We do not know whether this hypothesis is a consequence of the first four in our geometric setting.
\begin{rem}\label{remark4}
We may notice that the assumption $(S)$ is equivalent to each of both following statements:
\begin{itemize}
 \item[$(S')$]\it{For any $\Delta>0$, there exists a constant $C=C_\Delta>0$ such that for any  $j\in[\![1,p+q]\!]$ and
 any $T>0$ large enough, the following inequality is satisfied}
 $$\sharp\{\alpha\in\G_j\ |\ T-\Delta\leqslant\dhy(\oo,\alpha.\oo)<T+\Delta\}\leqslant C\dfrac{L(T)e^{\delta_\G T}}{T^{1+\beta}}.$$
 \item[$(S'')$]\it{There exists a constant $C>0$ such that for any $j\in[\![1,p+q]\!]$ and any $T>0$ large enough}
 $$\sharp\{\alpha\in\G_j\ |\ \dhy(\oo,\alpha.\oo)\leqslant T\}\leqslant C\dfrac{L(T)e^{\delta_\G T}}{T^{1+\beta}}.$$
\end{itemize}
The equivalence between the statements $(S)$ and $(S')$ is clear and the fact that $(S'')$  implies  $(S')$ follows from definitions. We may find a proof of the equivalence between the statements $(S')$ and $(S'')$ in Proposition 2.5 of \cite{DPPS2} (in the finite volume case). In section 3, we detail another proof of the converse property involving Karamata and Potter's lemmas.
\end{rem}
\begin{rem}
In the family of hypotheses $(H_\beta)$ for $\beta\in]0,1]$, the ``influent'' parabolic groups $\G_1,...,\G_p$ are supposed to be elementary. Their rank may be larger than $2$. Nevertheless, in the proofs of Proposition \ref{localexp} and \ref{localexpb1} and of Facts \ref{faitintermediaire} and \ref{fact7}, we work with parabolic groups of rank $1$ in order to simplify the notations. The arguments 
are always true in higher rank.
\end{rem}

\subsection{Outline of the paper}
The second section is devoted to the construction of Schottky groups satisfying hypotheses $(H_\beta)$ as explained in \cite{DOP} and \cite{Pe1}.

The third section is devoted to the presentation of some properties of stable laws with paramater $\beta\in]0,1]$ for 
random variables, together with results on regularly varying functions. These will be crucial tools in the sequel 
due to our assumptions $(P_2)$ and $(N)$.

In the fourth section, we define a coding of the limit set of $\G$ and of the geodesic flow. We then introduce a family of transfer operators associated to 
this coding. We finally end this part with a study of the spectrum and of the regularity of the spectral radius of these operators.

The proof of the mixing rate given in Theorem A in the case $\beta\in]0,1[$ is exposed in Section 5: it is based on the one of Theorem 1.4 in \cite{Gou}. The case $\beta=1$ is presented in Section 6 and the approach is inspired by \cite{MT}. 

We establish in Section 7 the asymptotic lower bound for closed geodesics given in Theorem B.

In section 8, we extend the previous coding of the limit set to include the orbit of a base point $x_0\in\partial\xx\setminus\LL_\G$ and study the family of extended transfer operators associated to this new coding. These extended operators will be central in the proof of Theorem C. 

The final section is dedicated to the proof of Theorem C, which follows the same steps as Theorem A.\\

\underline{Notations:}

In this paper, we will use the following notations. For two functions $f,g\ :\R^+\longrightarrow\R^+$, we will write 
$f\underset{C}{\preceq}g$ (or $f\preceq g$) if $f(R)\leqslant Cg(R)$ for a constant $C>0$ and $R$ large enough and 
$f\underset{C}{\asymp} g$ (or $f\asymp g$) if $f\underset{C}{\preceq}g$ and $g\underset{C}{\preceq}f$. Similarly, for any real numbers $a$ and $b$ and $C>0$, the notation $a\overset{C}{\sim}b$ means $|a-b|\leqslant C$. 

If $A,B$ are two subsets of $E$, we denote by
$A\overset{\Delta}{\times} B:=\{(a,b)\in A\times B\ |\ a\neq b\}$.

The value of the constant $C$ which appears in the proofs may change from line to line.

\section{About ``exotic'' Schottky groups}


In this section, we first recall some definitions and properties about manifolds in negative curvature. Then we give a sketch of the construction of \emph{exotic} Schottky groups following \cite{DOP} and \cite{Pe1}.
\subsection{Negatively curved manifolds and Schottky groups}
\subsubsection{Notations}
Let $\xx$ be a Hadamard manifold of pinched negative curvature  $-b^2\leqslant\kappa\leqslant-a^2$ with $0<a<1<b$, endowed with the 
distance $\dhy$ 
induced by the metric. We denote by $\partial\xx$ its boundary at infinity (see \cite{Bal}). For a point 
$x\in\partial\xx$ and two points ${\bf x},{\bf y}\in\xx$, the \emph{Busemann cocycle} $\mathcal{B}_{x}({\bf x},{\bf y})$ is defined as the
limit of $\dhy({\bf x},{\bf z})-\dhy({\bf z},{\bf y})$ when ${\bf z}$ goes to $x$. This quantity represents the algebraic distance between the horospheres 
centered at $x$ and passing through ${\bf x}$ and ${\bf y}$ respectively. This function satisfies the following property : for any $x\in\partial\xx$ and ${\bf x},{\bf y},{\bf z}\in\xx$
\begin{equation}\label{rajoutbusemancocycle}
\mathcal{B}_{x}({\bf x},{\bf y})=\mathcal{B}_{x}({\bf x},{\bf z})+\mathcal{B}_{x}({\bf z},{\bf y}).
\end{equation}
The \emph{Gromov product} of two points $x$ and $y$ of 
$\partial\xx$ seen from the point $\oo\in\xx$ is given by the following formula
$$(x|y)_{\oo}=\frac{1}{2}\bigg(\bcal_{x}(\oo,{\bf z})+\bcal_{y}(\oo,{\bf z})\bigg)$$
\noindent
where $z$ is any point on the geodesic $(xy)$ with endpoints $x$ and $y$; this product does not depend on the point ${\bf z}$. The curvature being bounded from above by $-a^2$, we may find in \cite{Bou} a proof of the fact that
the quantity $\dhy_{\oo}(x,y):=\exp(-a(x|y)_{\oo})$ defines a distance on $\partial\xx$, which satisfies the following ``visibility'' property: there 
exists a constant $C>0$ depending only on the bounds of the curvature of $\xx$ such that for any $x,y\in\partial\xx$
\begin{equation}\label{visibility}
\dfrac{1}{C}\dhy(\oo,(xy))\leqslant\dhy_{\oo}(x,y)\leqslant C\dhy(\oo,(xy)).
\end{equation}
\noindent
As a consequence of this property, we mention the following important lemma.
\begin{lem}[triangular ``quasi-equality'']\label{quasiegalitetriangulaire}
Let $E,F\subset\xx$ such that $\overline{E}\cap\overline{F}\cap\partial\xx=\emptyset$. There exists a constant $C=C(E,F)>0$ such that 
$\dhy(\oo,{\bf x})+\dhy(\oo,{\bf y})-C\leqslant\dhy({\bf x},{\bf y})$ for any ${\bf x}\in E$ and ${\bf y}\in F$.
\end{lem}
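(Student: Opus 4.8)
The plan is to deduce the lower bound from the visibility property \eqref{visibility} together with basic properties of the Gromov product. First I would recall the standard identity relating distances in $\xx$ to the Gromov product: for any ${\bf x},{\bf y}\in\xx$,
$$\dhy(\oo,{\bf x})+\dhy(\oo,{\bf y})-\dhy({\bf x},{\bf y})=2({\bf x}|{\bf y})_\oo,$$
where the Gromov product $({\bf x}|{\bf y})_\oo$ of two points of $\xx$ is defined by exactly this formula. So the claim $\dhy(\oo,{\bf x})+\dhy(\oo,{\bf y})-C\leqslant\dhy({\bf x},{\bf y})$ is equivalent to a uniform bound $({\bf x}|{\bf y})_\oo\leqslant C/2$ for all ${\bf x}\in E$, ${\bf y}\in F$. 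Thus the whole point is to show that the Gromov product seen from $\oo$ stays bounded when one endpoint ranges over $E$ and the other over $F$, under the hypothesis $\overline{E}\cap\overline{F}\cap\partial\xx=\emptyset$.

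Next I would argue this boundedness by contradiction and compactness. Suppose no such $C$ exists; then there are sequences ${\bf x}_n\in E$, ${\bf y}_n\in F$ with $({\bf x}_n|{\bf y}_n)_\oo\to+\infty$. In a Hadamard manifold, a large Gromov product forces the two points to be far from $\oo$ and to lie in nearly the same direction, so $\dhy(\oo,{\bf x}_n)\to\infty$, $\dhy(\oo,{\bf y}_n)\to\infty$, and (using $\delta$-hyperbolicity, or directly the convexity of the distance function on $\xx$) the geodesic segments $[\oo,{\bf x}_n]$ and $[\oo,{\bf y}_n]$ stay within bounded Hausdorff distance of each other on the ball of radius $R$ around $\oo$, for each fixed $R$ and $n$ large. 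Passing to subsequences, ${\bf x}_n\to\xi\in\overline{E}\cap\partial\xx$ and ${\bf y}_n\to\eta\in\overline{F}\cap\partial\xx$, and the comparison of the segments forces $\xi=\eta$, contradicting $\overline{E}\cap\overline{F}\cap\partial\xx=\emptyset$. Concretely, one can phrase this cleanly with \eqref{visibility}: if $\xi\neq\eta$ then the bi-infinite geodesic $(\xi\eta)$ passes within some bounded distance $D$ of $\oo$, and for $n$ large the segment $[{\bf x}_n,{\bf y}_n]$ shadows $(\xi\eta)$ near $\oo$, hence $\dhy(\oo,[{\bf x}_n,{\bf y}_n])\leqslant D+1$; but $\dhy(\oo,[{\bf x}_n,{\bf y}_n])$ and $({\bf x}_n|{\bf y}_n)_\oo$ differ by a universal constant (the thin-triangles constant), so $({\bf x}_n|{\bf y}_n)_\oo$ would be bounded, a contradiction. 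If instead $\xi=\eta$ we directly contradict the disjointness hypothesis.

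I expect the main obstacle to be bookkeeping the relation between the Gromov product of two \emph{finite} points and the distance from $\oo$ to the geodesic joining them — the analogue of \eqref{visibility}, which as stated in the excerpt concerns boundary points — and making the "shadowing near $\oo$" argument precise with explicit constants depending only on the curvature bounds. This is where pinched negative curvature ($\kappa\leqslant -a^2$) is used: it gives uniform hyperbolicity, so triangles are uniformly thin and the constant relating $\dhy(\oo,[{\bf x},{\bf y}])$ to $({\bf x}|{\bf y})_\oo$ is universal. Once that comparison is in hand, the compactness argument is routine, and the constant $C=C(E,F)$ emerges from the (finite) distance $D$ at which geodesics between $\overline{E}\cap\partial\xx$ and $\overline{F}\cap\partial\xx$ stay away from $\oo$, which is positive and finite precisely because these two compact boundary sets are disjoint.
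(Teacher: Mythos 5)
Your plan is correct. The reduction via the identity $\dhy(\oo,{\bf x})+\dhy(\oo,{\bf y})-\dhy({\bf x},{\bf y})=2({\bf x}|{\bf y})_\oo$ is exactly right, and the compactness-and-contradiction argument works: if $({\bf x}_n|{\bf y}_n)_\oo\to+\infty$ then both points escape to infinity (the Gromov product is at most $\min(\dhy(\oo,{\bf x}_n),\dhy(\oo,{\bf y}_n))$), subsequential limits $\xi\in\overline{E}\cap\partial\xx$ and $\eta\in\overline{F}\cap\partial\xx$ exist by compactness of $\xx\cup\partial\xx$, and they must coincide — either by lower semicontinuity of the extended Gromov product or by your shadowing argument, since pinched curvature $\kappa\leqslant-a^2$ gives uniform $\delta$-hyperbolicity and hence the uniform additive comparison between $({\bf x}|{\bf y})_\oo$ and $\dhy(\oo,[{\bf x},{\bf y}])$ that you flag as the main bookkeeping point. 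Note that the paper does not prove this lemma itself but refers to the arguments of Section 2.3 in \cite{Sch}, which are of the same nature; your write-up essentially supplies the argument that citation stands for.
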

This result can be proved for instance using the arguments given in section 2.3 in \cite{Sch}. This lemma thus furnishes a complement to the classical triangular inequality; many results of this paper are based on it.
Denote by $\mathrm{Isom}(\xx)$ the group of orientation-preserving isometries of $\xx$. The action of 
$\g\in\mathrm{Isom}(\xx)$ can be extended to $\partial\xx$ by homeomorphism. It follows from the previous definitions that for 
any $\g\in\mathrm{Isom}(\xx)$ and any $x,y\in\partial\xx$
\begin{equation}\label{mvr}
\dhy_{\oo}(\g.x,\g.y)=\sqrt{e^{-a\bcal_{x}(\g^{-1}.\oo,\oo)}e^{-a\bcal_{y}(\g^{-1}.\oo,\oo)}}\dhy_{\oo}(x,y).
\end{equation}
\noindent
We thus talk about ``conformal action'' of $\mathrm{Isom}(\xx)$ on the boundary at infinity; the conformal factor $\left|\g'(x)\right|_{\oo}$ 
of an isometry $\g$ at the point $x\in\partial\xx$ is given by the formula $\left|\g'(x)\right|_{\oo}=e^{-a\mathcal{B}_{x}(\g^{-1}.\oo,\oo)}$. From \eqref{rajoutbusemancocycle}, we deduce that
the function $b(\g,x):=-\frac{\log\left|\g'(x)\right|_{\oo}}{a}=\mathcal{B}_{x}(\g^{-1}.\oo,\oo)$ satisfies the following cocycle relation: for any $\g_1,\g_2\in\G$ and any $x\in\partial\xx$
$$b(\g_1\g_2,x)=b(\g_1,\g_2.x)+b(\g_2,x).$$
\subsubsection{Schottky product groups}
Let $k\geqslant1$ and
$\mathfrak{a}_1,...,\mathfrak{a}_k$ be isometries satisfying the following property: there exist non-empty pairwise disjoint closed subsets $D_1,...,D_k$ of $\partial\xx$ such that for all $j\in[\![1,k]\!]$ and all $n\in\Z^*$, one gets 
$\mathfrak{a}_j^n.\left(\partial\xx\setminus D_j\right)\subset D_j$. The isometries $\mathfrak{a}_1,...,\mathfrak{a}_k$ are 
said to be in Schottky position. Klein's ping-pong lemma implies that the group $\G$ generated by the isometries $\left(\mathfrak{a}_i\right)_{1\leqslant i\leqslant k}$ is free and acts properly discontinuously and without fixed point on $\xx$. The group 
$\G$ is called a Schottky group. The limit set $\LL_\G$ of such a group $\G$ is a perfect nowhere dense set.

Let us give an example in the model of the Poincaré half-plan. 
Let $p\ :\ z\longmapsto z+1$ be a parabolic isometry; the non-trivial powers of $p$ send
$[0,1]$ into $]\infty,0]\cup[1,\infty[$, hence we set $D_p=]\infty,0]\cup[1,\infty[\cup\{\infty\}$. On the other hand, let us conjugate
the hyperbolic isometry $h\ :\ z\longmapsto64z$ by $\g(z)=\frac{\frac{3}{4}z+\frac{1}{2}}{z+2}$. The so-obtained isometry $h'$ is hyperbolic with 
fixed points $\frac{1}{4}$ and $\frac{3}{4}$; its negative powers send $\R\cup\{\infty\}\setminus\left[\frac{7}{52},\frac{25}{76}\right]$ into 
$\left[\frac{7}{52},\frac{25}{76}\right]$, and its positives send $\R\cup\{\infty\}\setminus\left[\frac{37}{52},\frac{37}{44}\right]$
into $\left[\frac{37}{52},\frac{37}{44}\right]$. We finally set $D_h=\left[\frac{7}{52},\frac{25}{76}\right]\cup\left[\frac{37}{52},\frac{37}{44}\right]$ 
(see figure \ref{schottkyposition} below).
\begin{figure}[htbp]
\begin{minipage}[c][\height]{.45\linewidth}
\begin{center}
\includegraphics[width=8cm]{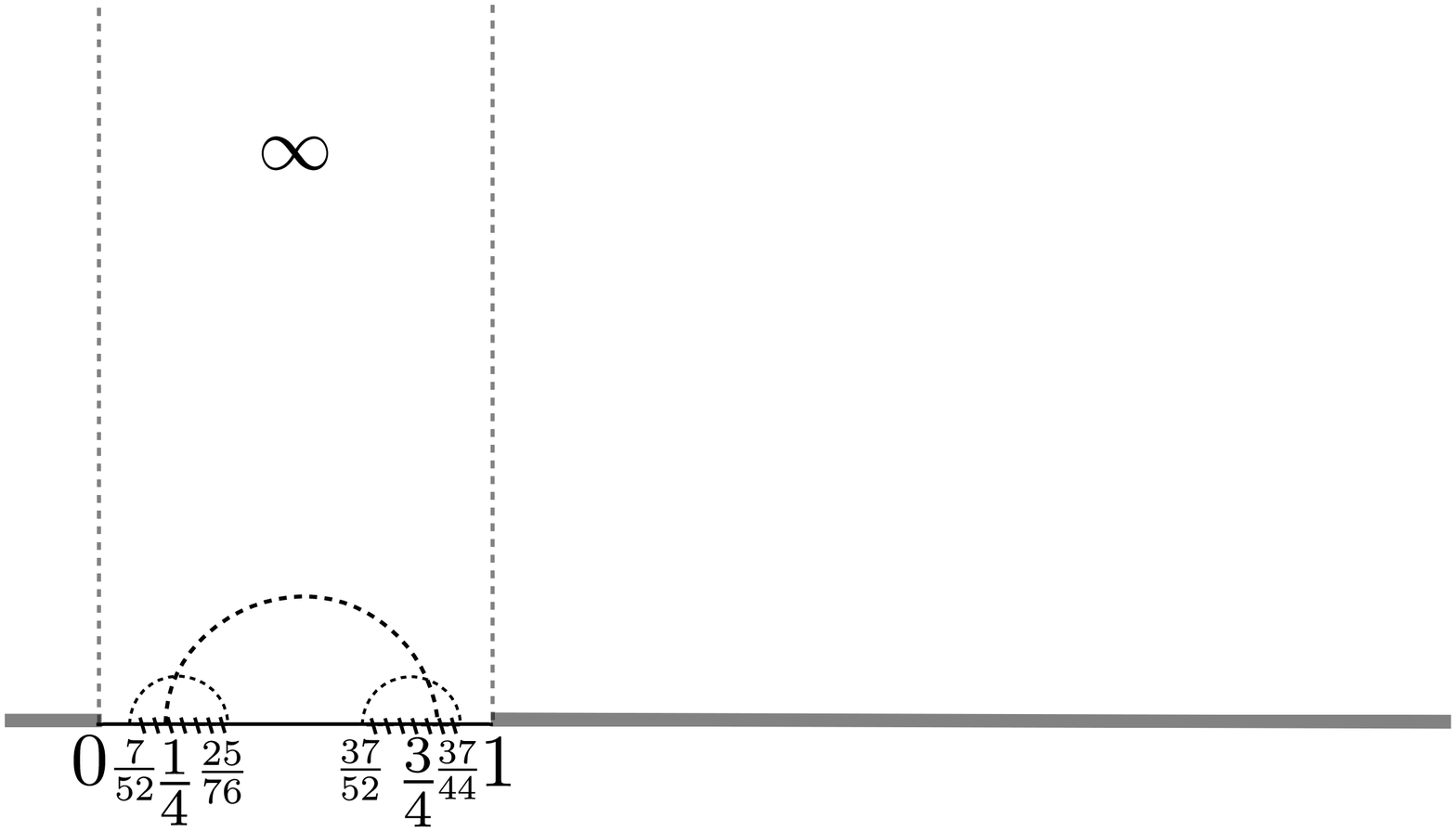}
\end{center}
\end{minipage}
\hfill
\begin{minipage}[c][\height]{.45\linewidth}
\begin{center}
\includegraphics[width=5cm]{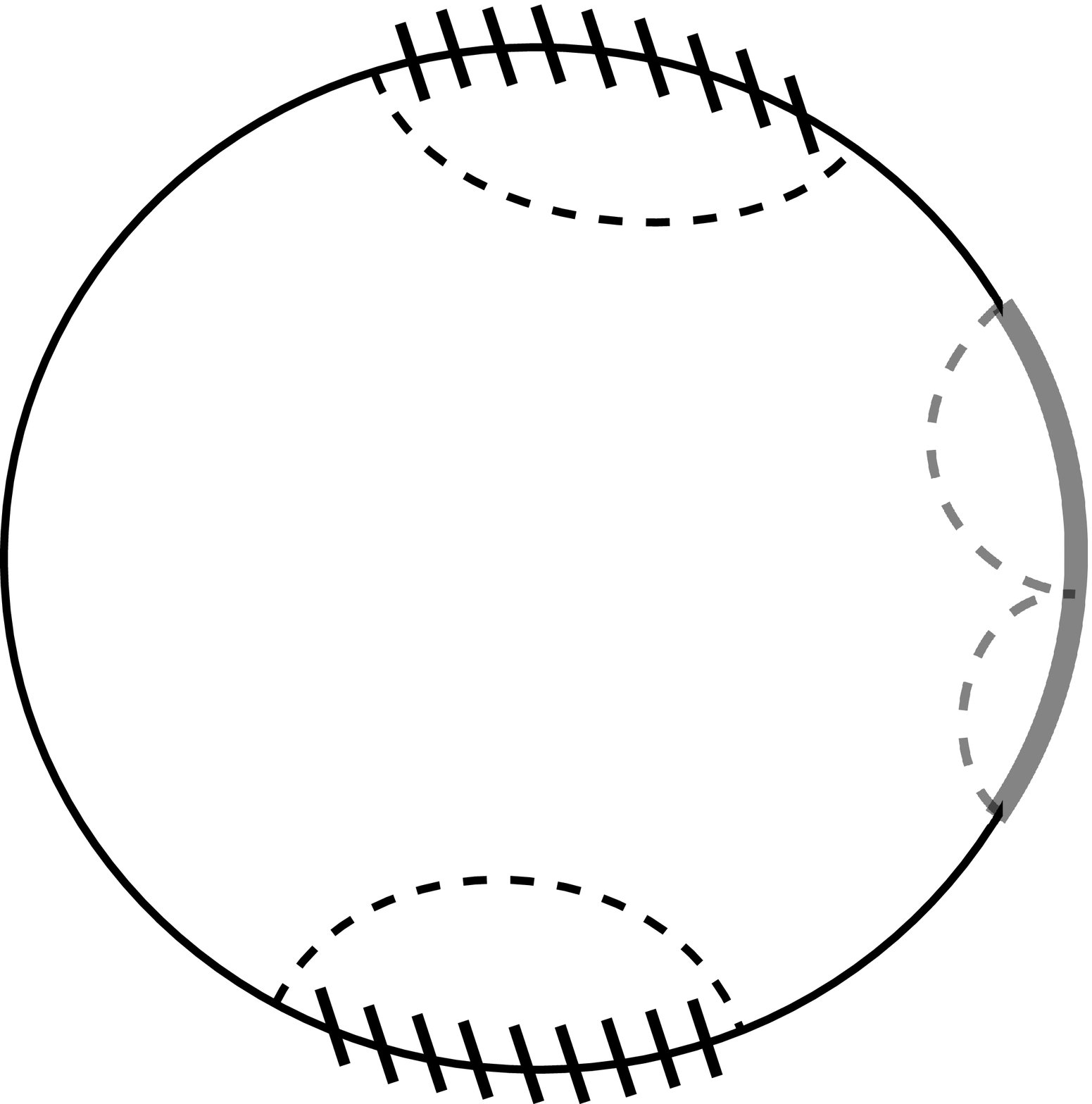}
\end{center}
\end{minipage}
\begin{center}
\includegraphics[height=1cm]{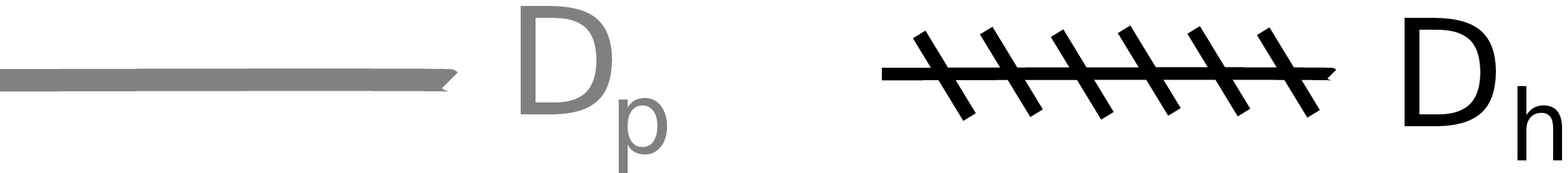}
\end{center}
\caption{\label{schottkyposition} Schottky position in the half-plan or in the disc model of $\mathbb{H}^2$}
\end{figure}
We extend this definition for product of groups. Fix $k\geqslant2$. We say that $k$ groups $\G_1,...,\G_k$ are 
in Schottky position if there exist non-empty pairwise disjoint closed subsets $D_1,...,D_k$ of $\partial\xx$ such that for all $j\in[\![1,k]\!]$, one gets 
$\G_j^*.\left(\partial\xx\setminus D_j\right)\subset D_j$. We may note that $D_j$ contains the limit set $\LL_{\G_j}$ of $\G_j$. Thus the group $\G$ generated by 
$\G_1,...,\G_k$ is the free product $\G_1\star...\star\G_k$; it is called Schottky product of the groups $\G_1,...,\G_k$. Each group $\G_i$, $1\leqslant i\leqslant k$, is called a Schottky factor of 
$\G$.

In the sequel, we will need to consider
subsets $\left({\bf D}_j\right)_{1\leqslant j\leqslant k}$ of $\overline{\xx}$ with the same dynamical properties as the sets 
$\left(D_j\right)_{1\leqslant j\leqslant k}$ under the action of $\G$. For any $j\in[\![1,k]\!]$, we introduce sets ${\bf D}_j\subset\overline{\xx}$, which are geodesically convex and connected 
(resp. admits two geodesically convex connected components) 
when $\G_j$ is generated by a parabolic (resp. hyperbolic) isometry $\mathfrak{a}_j$ and whose 
intersection with $\partial\xx$ contains $D_j$; in addition, we assume that $\G_j^*.\left(\overline{\xx}\setminus{\bf D}_j\right)\subset{\bf D}_j$ and that 
these sets are pairwise disjoint. 
Figure \ref{schottkypositionplein} illustrates the situation for the above isometries $p$ and $h'$.
\begin{figure}[htbp]
\begin{minipage}[c][\height]{.45\linewidth}
\begin{center}
\includegraphics[width=8cm]{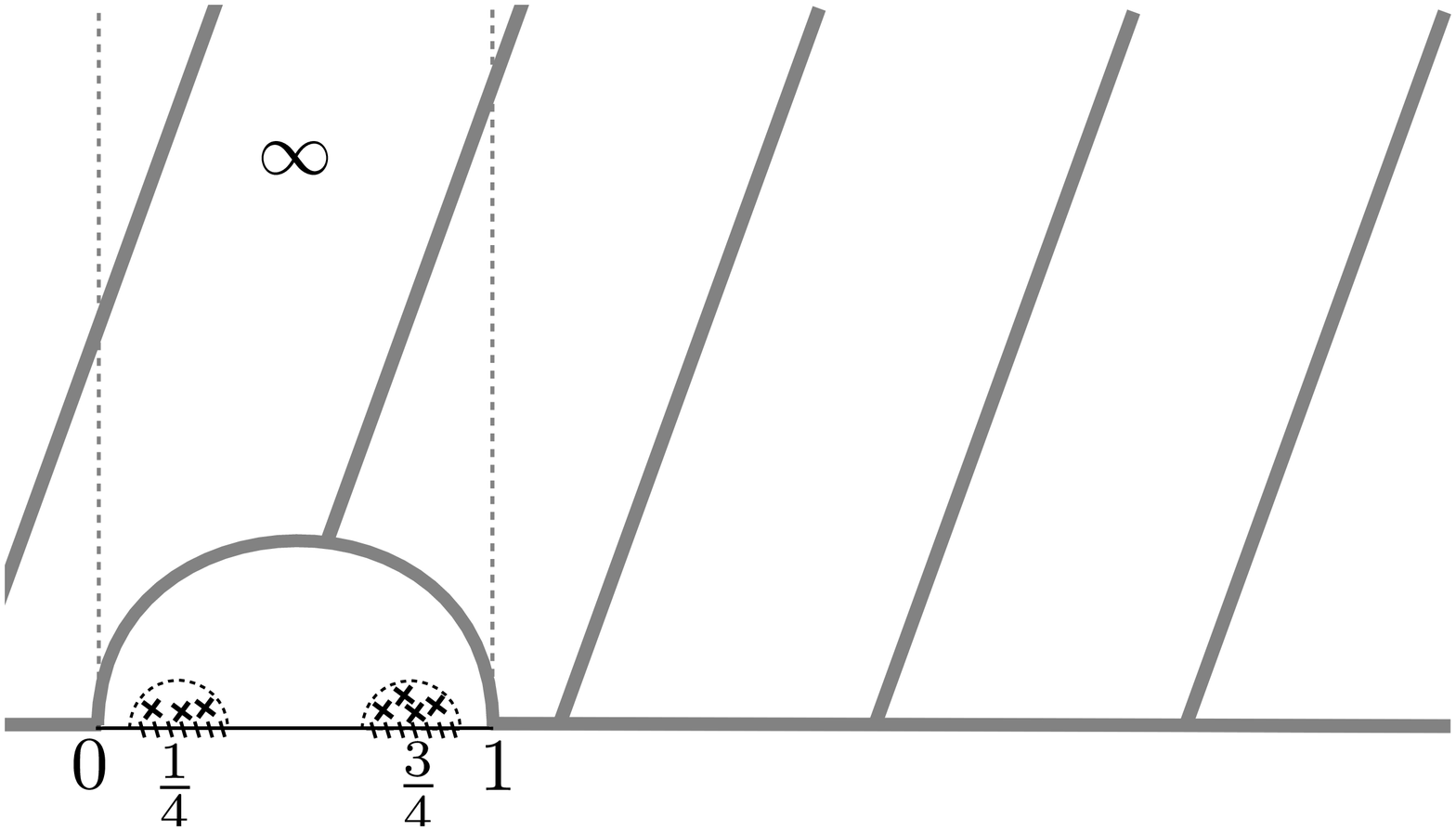}
\end{center}
\end{minipage}
\hfill
\begin{minipage}[c][\height]{.45\linewidth}
\begin{center}
\includegraphics[width=5cm]{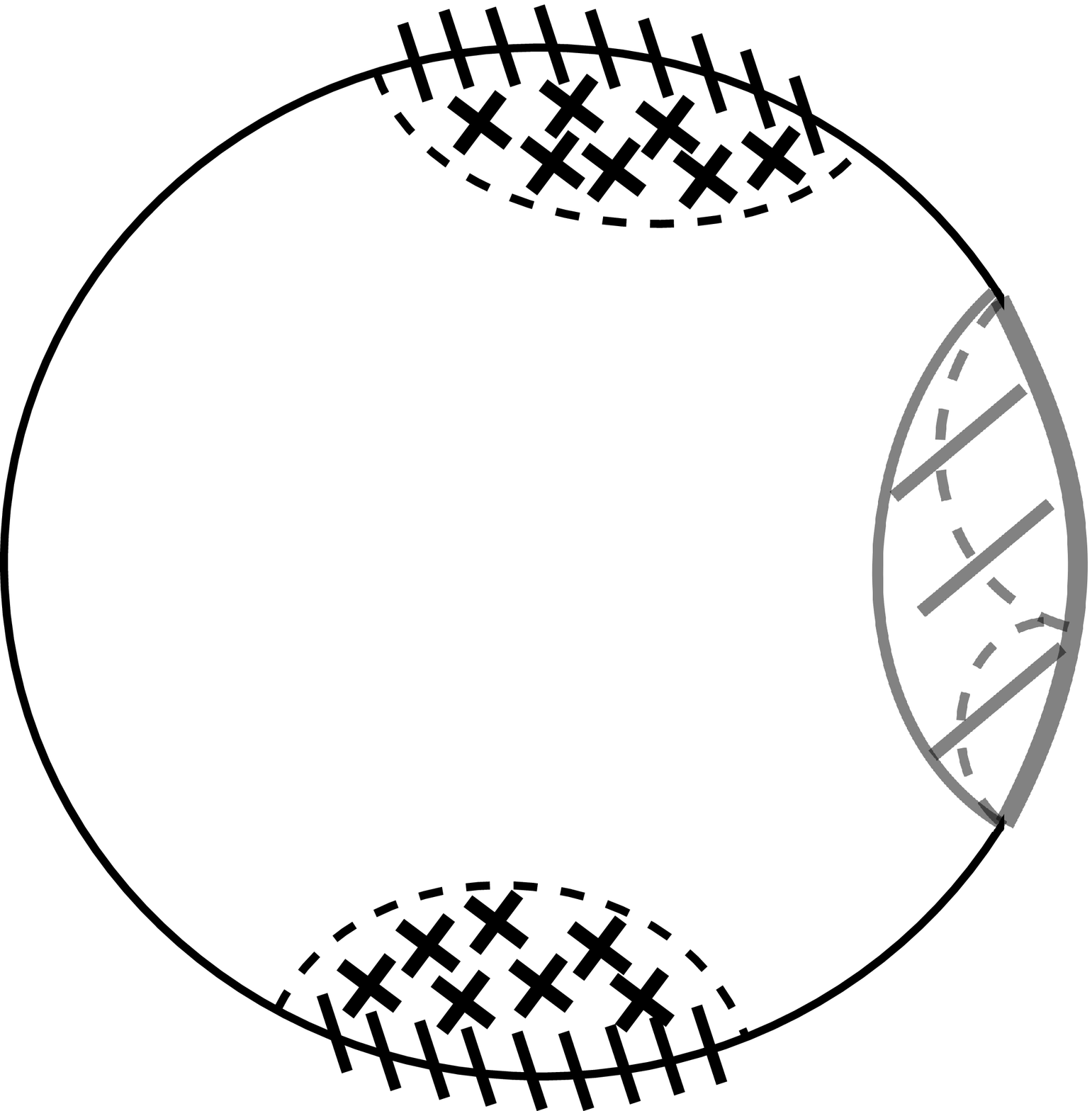}
\end{center}
\end{minipage}
\begin{center}
\includegraphics[height=1cm]{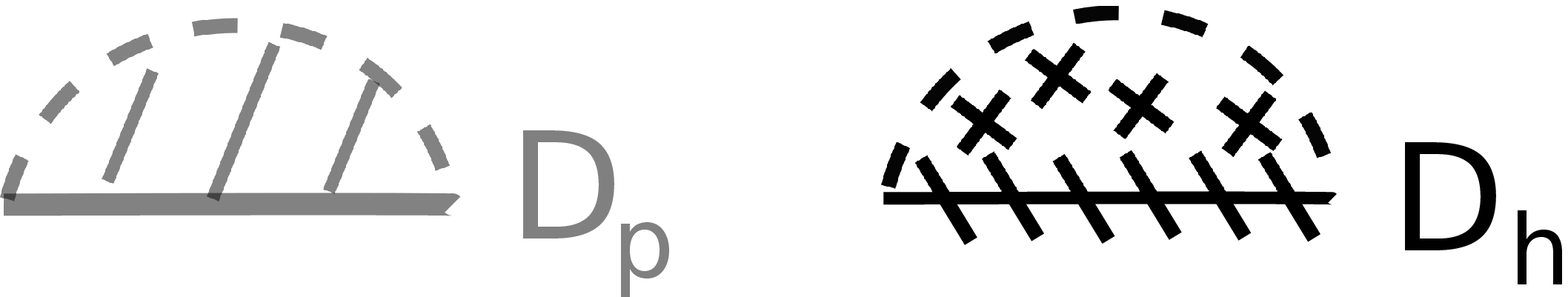}
\end{center}
\caption{\label{schottkypositionplein}Sets ${\bf D}_j$}
\end{figure}

\subsubsection{Geodesic flow and Bowen-Margulis measure}
Using Hopf coordinates, we identify the unit tangent bundle $\T^1\xx$ to the set $\partial\xx\overset{\Delta}{\times}\partial\xx\times\R$: the point
$({\bf x},{\bf v})\in\T^1\xx$ determines a unique triplet $(x^-,x^+,r)$ in $\partial\xx\overset{\Delta}{\times}\partial\xx\times\R$, where $x^-$ and $x^+$ are the 
endpoints of the oriented geodesic passing through ${\bf x}$ at time $0$ with tangent vector ${\bf v}$ and $r=\bcal_{x^+}(\oo,{\bf x})$. The group $\G$ acts on  $\partial\xx\overset{\Delta}{\times}\partial\xx\times\R$ as follows: for any $\g\in\G$
$$\g.(x^-,x^+,r)=\left(\g.x^-,\g.x^+,r+\bcal_{x^+}(\g^{-1}.\oo,\oo)\right)$$
\noindent
and the action of the geodesic flow $(\tilde{g}_t)_{t\in\R}$ is given by
$$\tilde{g}_t(x^-,x^+,r)=(x^-,x^+,r+t)$$
\noindent
for any $t\in\R$. These two actions commute and, quotienting $\T^1\xx$ by $\G$, define the action of the geodesic flow $(g_t)_{t\in\R}$ on 
$\partial\xx\overset{\Delta}{\times}\partial\xx\times\R/\G\simeq\T^1\mm$. By \cite{Eb}, the \emph{non-wandering set} of $(g_t)_{t\in\R}$ on $\T^1\mm$ is  
$\Omega_{\G}:=\LL_{\G}\overset{\Delta}{\times}\LL_{\G}\times\R/\G$.

By Patterson's construction (see \cite{Pat2} and \cite{Sul}), there exists a family $\sigma=(\sigma_{{\bf x}})_{{\bf x}\in\xx}$ of finite measures on $\partial\xx$ 
supported on $\LL_\G$ and satisfying, for any ${\bf x},{\bf x'}\in\xx$, any $x\in\LL_{\G}$ and $\g\in\G$:
 \begin{equation}\label{deltaconformité}\dfrac{\dd\sigma_{{\bf x'}}}{\dd\sigma_{{\bf x}}}(x)=e^{-\delta_\G\bcal_{x}({\bf x'},{\bf x})}\ 
\mathrm{and}\ \g^*\sigma_{{\bf x}}=\sigma_{\g^{-1}.{\bf x}}
\end{equation}
\noindent
where $\g^*\sigma(A)=\sigma(\g A)$ for any Borel subset of $\partial\xx$ and $\delta_\G$ is the Poincaré exponent of $\G$. As soon as $\G$ is divergent and 
geometrically finite, the measures $\sigma_{\mathbf{x}}$ do not have atomic part (see \cite{DOP}). As observed by Sullivan \cite{Sul}, the Patterson measure of $\G$ may be used to construct an invariant measure for the geodesic flow 
with support $\Omega_\G$. It follows from \eqref{deltaconformité} and \eqref{mvr} that the measure $\mu$ defined on $\LL_{\G}\overset{\Delta}{\times}\LL_{\G}$ by
$$\dd\mu(y,x)=\dfrac{\dd\sigma_{\oo}(y)\dd\sigma_{\oo}(x)}{\dhy_{\oo}(y,x)^{\frac{2\delta_\G}{a}}}$$
\noindent
is $\G$-invariant so that $\tilde{m}_\G=\mu\otimes\dd t$ on $\LL_{\G}\overset{\Delta}{\times}\LL_{\G}\times\R$ is both $(\tilde{g}_t)$ and $\G$-invariant. It thus induces 
on $\Omega_{\G}$ an invariant measure $m_\G$ for $(g_t)$. When this measure has finite total mass, this is the unique measure which maximises the measure-theoretic entropy of the 
geodesic flow restricted to its non wandering set: it is called the \emph{Bowen-Margulis measure} (see \cite{OP}). 
When this measure is infinite, there is 
no finite invariant measure which maximises the entropy: however, we still call it the Bowen-Margulis measure. 
\subsection{Construction of exotic Schottky groups}
Let us recall the genesis of the setting in which we will work, {\it i.e.}
the construction of some exotic Schottky groups introduced in \cite{DOP} and \cite{Pe1}.
\subsubsection{Divergents groups and finite Bowen-Margulis measure}
The article \cite{DOP} gives the first known example of geometrically finite manifolds $\mm=\xx/\G$ with pinched negative curvature and whose Bowen-Margulis measure has infinite mass.

The authors construct such examples by providing convergent Schottky groups $\G$ (which hence are geometrically finite). Therefore, the Bowen-Margulis measure on $\T^1\mm$ has infinite mass.

To get such examples, the authors first show that the group $\G$ needs to contain a parabolic subgroup $P<\G$ whose 
critical exponent is $\delta_P=\delta_\G$.
\begin{thmA}[\cite{DOP}]
Let $\G$ be a geometrically finite group with parabolic transformations. If $\delta_\G>\delta_P$ for any parabolic subgroup $P$, then $\G$ is of divergent type.
\end{thmA}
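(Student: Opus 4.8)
The plan is to establish directly that the Poincaré series diverges at its critical exponent, i.e. $\mathcal{P}_\G(\delta_\G)=+\infty$. Since $\G$ is geometrically finite and has a parabolic transformation, the quotient of the convex hull of $\LL_\G$ by $\G$ is the union of a compact set and finitely many cusps; fix representatives $P_1,\dots,P_l$ of the conjugacy classes of maximal parabolic subgroups and a $\G$-invariant family of pairwise disjoint horoballs, with $\mathcal{H}_i$ centred at the fixed point of $P_i$. We may assume $\G$ is non-elementary, since otherwise $\G$ is virtually parabolic, $\delta_\G=\delta_{P_1}$, and the hypothesis is void. The heart of the argument is a symbolic description of $\G$ governed by the way geodesics visit this family of horoballs.

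First I would build a coding. Given $\g\in\G$, run along $[\oo,\g.\oo]$ and list the translated horoballs $\g_1.\mathcal{H}_{i_1},\dots,\g_m.\mathcal{H}_{i_m}$ it traverses; between two consecutive traversals the image of the path in $\mm$ remains in a fixed compact set, so the corresponding ``transition'' group elements lie in a fixed finite set $S\subset\G$, and the displacement realised while $[\oo,\g.\oo]$ crosses the $k$-th horoball is carried by a non-trivial element $p_k\in P_{i_k}$. This attaches to $\g$ a word $(f_0,p_1,f_1,\dots,p_m,f_m)$ with $f_k\in S$ and $p_k\in P_{i_k}\setminus\{e\}$. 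The crucial point -- and what I expect to be the main obstacle -- is the length estimate
\[
\dhy(\oo,\g.\oo)=\sum_{k=0}^{m}\dhy(\oo,f_k.\oo)+\sum_{k=1}^{m}\dhy(\oo,p_k.\oo)+O(1),
\]
with an error that is \emph{uniform in} $m$: the broken path $\oo\to f_0.\oo\to\cdots\to f_m.\oo$ turns at angles bounded away from $0$ at the entry/exit points of the horoballs, so once the finitely many short pieces are absorbed it is a quasigeodesic with constants depending only on the curvature bounds and the horoball geometry. Proving this requires iterating the triangular quasi-equality of Lemma~\ref{quasiegalitetriangulaire} together with the visibility estimate \eqref{visibility}, and the local-to-global principle for broken geodesics with definite angles. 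One must also verify that the coding is essentially bijective (each $\g$ admits at least one such word, and only boundedly many) -- it is here that geometrical finiteness is genuinely used, and this step is cleanest when $\G$ is a Schottky product $\G_1\star\cdots\star\G_k$, where the free-product normal form makes the decomposition canonical.

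Granting the coding, the rest is a generating-function computation. Set $A(s)=\sum_{f\in S}e^{-s\dhy(\oo,f.\oo)}\in(0,\infty)$ and $B(s)=\sum_{i=1}^{l}\bigl(\mathcal{P}_{P_i}(s)-1\bigr)$. The length estimate gives, for $s$ near $\delta_\G$ and with constants uniform in such $s$,
\[
\mathcal{P}_\G(s)\;\asymp\;\sum_{m\ge 0}A(s)^{m+1}B(s)^{m},
\]
which is finite exactly when $A(s)B(s)<1$, and then equals $A(s)\,\bigl(1-A(s)B(s)\bigr)^{-1}$ up to constants. Here the hypothesis is decisive: because $\delta_P<\delta_\G$ for every parabolic $P$ (hence $\delta_{P_i}<\delta_\G$), the function $B$ is finite and continuous on a neighbourhood of $\delta_\G$, so $s\mapsto A(s)B(s)$ is continuous and decreasing there; comparing with the displayed equivalence, the critical exponent $\delta_\G$ must be the value $\tau$ at which $A(\tau)B(\tau)=1$. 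Letting $s\downarrow\delta_\G=\tau$ and using monotone convergence,
\[
\mathcal{P}_\G(\delta_\G)\;=\;\lim_{s\downarrow\tau}\mathcal{P}_\G(s)\;\asymp\;\lim_{s\downarrow\tau}\frac{A(s)}{1-A(s)B(s)}\;=\;+\infty,
\]
so $\G$ is of divergent type. (If instead some $\delta_{P_i}$ equalled $\delta_\G$, then $B(s)\to+\infty$ as $s\downarrow\delta_\G$, the equivalence degenerates, and whether $\G$ is convergent or divergent becomes a delicate question -- precisely the flexibility that \cite{DOP} and \cite{Pe1} exploit to manufacture the exotic examples considered in this paper.) In short, all the work is in the second paragraph: producing the horoball coding and, above all, the length estimate with error independent of the number of cusp excursions; granted that, the dichotomy follows from a one-line computation.
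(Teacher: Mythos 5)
Your plan has a genuine gap at the step you yourself identify as the crux: the coding. The assertion that the ``transition'' elements $f_k$ between two consecutive horoball excursions lie in a \emph{fixed finite set} $S\subset\Gamma$ does not follow from the fact that the corresponding piece of geodesic stays in a fixed compact part of the quotient: between two cusp excursions the geodesic may travel for an arbitrarily long time in the thick part (already in the paper's own example $G=\langle p,h\rangle$, the blocks between parabolic excursions are the powers $h^m$, an infinite family), so the transition alphabet is an infinite subset of $\Gamma$ -- roughly, the elements whose geodesic avoids deep cusp penetration. Once $S$ is infinite, $A(s)=\sum_{f\in S}e^{-s d(\oo,f.\oo)}$ is no longer automatically finite or continuous near $\delta_\Gamma$: its behaviour at $\delta_\Gamma$ encodes the growth of the non-cuspidal part of $\Gamma$, which is not controlled by the hypothesis $\delta_P<\delta_\Gamma$, and your identification of $\delta_\Gamma$ with the root of $A(s)B(s)=1$ and the monotone-convergence conclusion both collapse. (Two secondary points: even in the Schottky case the length estimate has an error $O(m)$, one bounded term per letter, not $O(1)$ uniformly in $m$ -- harmless for the generating-function comparison since it only rescales the alphabet weights, but false as stated; and the ``boundedly many words per $\gamma$'' claim is asserted, not proved.) In effect your argument works in the Schottky-product setting -- where it is essentially the computation the paper itself performs in Section 2.2 with \eqref{étoilesomme} and \eqref{étoilétoilesomme}, and corresponds to Proposition 2 of \cite{DOP} rather than to Theorem A -- but the theorem concerns arbitrary geometrically finite groups, and handling the infinite ``thick'' alphabet is precisely the work you are skipping.

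For comparison: the paper does not prove this statement, it quotes it from \cite{DOP}, and the proof there is of a different nature, measure-theoretic rather than combinatorial. One argues by contradiction: if $\Gamma$ were convergent, then by the shadow-lemma/Borel--Cantelli direction of the Hopf--Tsuji--Sullivan dichotomy any $\delta_\Gamma$-conformal (Patterson--Sullivan) density gives zero mass to the radial limit set; geometric finiteness says the rest of $\Lambda_\Gamma$ is the countable union of orbits of bounded parabolic fixed points, so some parabolic fixed point would be an atom; and an estimate of the conformal measure along the orbit of that point -- this is exactly where the gap $\delta_P<\delta_\Gamma$ enters -- rules out such atoms, giving the contradiction. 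If you want a series/coding proof in the general geometrically finite case, you would have to build a coding whose alphabet includes the (infinite) family of non-cuspidal relative elements and then control its critical exponent, which amounts to redoing this geometric analysis rather than avoiding it.
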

The assumption $\delta_\G>\delta_P$ for any parabolic subgroup $P$ is called the ``critical gap property'' of the group $\G$. It follows from Proposition 2 of \cite{DOP}: if $\G=\G_1\star...\star\G_k$ is a Schottky group, whose each factor $\G_i$, $1\leqslant i\leqslant k$, are divergent, then it has the critical gap property.

When the group $\G$ is divergent, but still has parabolic elements, necessary and sufficient condition for the finiteness of the Bowen-Margulis measure is given by the following criterion.
%
%
\begin{thmB}[\cite{DOP}]
Let $\G$ be a divergent geometrically finite group containing parabolic isometries. The measure $m_\G$ is finite if and only if 
for any parabolic subgroups $P$ of $\G$, the series $\sum_{p\in P}\dhy(\oo,p.\oo)e^{-\delta_\G\dhy(\oo,p.\oo)}$ converges.
\end{thmB}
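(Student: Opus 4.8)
The plan is to reduce the finiteness of the Bowen--Margulis measure to a computation in the cusps, using the structure of a geometrically finite manifold as the union of a compact part and finitely many cusp neighborhoods. Recall from the previous subsection that $m_\G$ is the measure on $\Omega_\G = \LL_\G \overset{\Delta}{\times} \LL_\G \times \R/\G$ induced by $\tilde m_\G = \mu \otimes \dd t$, where $\dd\mu(y,x) = \dhy_\oo(y,x)^{-2\delta_\G/a}\,\dd\sigma_\oo(y)\,\dd\sigma_\oo(x)$. Since the $\R$-factor is periodic under the flow (it contributes a finite factor to the mass of any flow-invariant fundamental domain slice), the finiteness of $m_\G$ is equivalent to the finiteness of the induced measure on $\LL_\G \overset{\Delta}{\times} \LL_\G / \G$, or equivalently to $\mu(F) < +\infty$ where $F$ is a Borel fundamental domain for the diagonal $\G$-action. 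First I would recall the standard Sullivan shadow estimate: for a suitable family of shadows of balls $\oo \in B(\g.\oo,\rho)$, one has $\sigma_\oo(\mathcal O_\rho(\g.\oo)) \asymp e^{-\delta_\G \dhy(\oo,\g.\oo)}$, at least outside the cusps (in the cusps one needs the global measure estimate of \cite{DOP}/\cite{Sul} involving the parabolic subgroup, which is precisely where the series $\sum_p \dhy(\oo,p.\oo)e^{-\delta_\G\dhy(\oo,p.\oo)}$ enters).

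The key step is the dichotomy coming from the thick-thin decomposition. Over the compact (``thick'') part of $\mm$, the set of vectors whose geodesic stays in the thick part for unit time has bounded $m_\G$-mass by compactness and local finiteness of $m_\G$; the only possible source of infinite mass is the excursions of geodesics into the finitely many cusp neighborhoods $\mathcal C_1,\dots,\mathcal C_\ell$, each with associated maximal parabolic subgroup $P_i$. So I would decompose $F$ according to which cusp a given geodesic segment projects into (together with the finitely-many-times-visited compact part), and estimate the $\mu$-mass of the portion of $F$ consisting of geodesics currently deep in the cusp $\mathcal C_i$ at depth between $t$ and $t+1$. Using the precise Sullivan-type estimate for $\sigma_\oo$ of shadows seen from a point at depth $t$ in a cusp — which is governed by the orbit $P_i.\oo$ — this mass is comparable, up to the global constants, to a tail sum
$$\sum_{\substack{p \in P_i \\ \dhy(\oo,p.\oo) > 2t}} e^{-\delta_\G \dhy(\oo,p.\oo)}\,,$$
and summing the contributions over all depths $t \geq 0$ gives, after an Abel summation / Fubini exchange, a quantity comparable to $\sum_{p \in P_i} \dhy(\oo,p.\oo)\, e^{-\delta_\G \dhy(\oo,p.\oo)}$. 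Hence $m_\G(\mathcal C_i\text{-part}) < +\infty$ iff $\sum_{p\in P_i} \dhy(\oo,p.\oo)e^{-\delta_\G\dhy(\oo,p.\oo)} < +\infty$; running over the finitely many cusps and adding the bounded thick contribution yields the claimed equivalence. (Divergence of $\G$ is used to guarantee that $\sigma_\oo$ is non-atomic and unique, so that ``the'' measure $m_\G$ is well defined and the shadow estimates hold with the exponent $\delta_\G$.)

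The main obstacle is the cusp estimate for the Patterson--Sullivan measure of a shadow seen from deep inside a horoball: one needs a two-sided control
$$\sigma_\oo\bigl(\mathcal O\bigr) \asymp e^{-\delta_\G t}\!\!\sum_{\substack{p\in P_i\\ \dhy(\oo,p.\oo)>2t}}\!\! e^{-\delta_\G\dhy(\oo,p.\oo)}$$
for the shadow $\mathcal O$ of a ball around a point at depth $t$ in the cusp. The lower bound, in particular, requires knowing that $\sigma_\oo$ gives mass to the relevant shadows, which rests on the shadow lemma adapted to parabolic points (Stratmann--Velani / Schapira type estimates) together with the $\delta_\G$-conformality relation \eqref{deltaconformité} and the conformal change formula \eqref{mvr}; the triangular quasi-equality of Lemma \ref{quasiegalitetriangulaire} is the geometric input that lets one linearize $\dhy(\oo,p.\oo)$ along cusp excursions and thereby turn the measure estimate into the stated parabolic series. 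Once this estimate is in hand, the rest is a routine Fubini/Abel-summation bookkeeping over cusp depths. I would structure the write-up as: (i) reduce to $\mu(F)<\infty$; (ii) thick-thin decomposition, thick part bounded; (iii) cusp shadow estimate; (iv) summation over depths to identify the parabolic series; (v) conclude.
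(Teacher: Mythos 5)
A preliminary remark: the paper does not prove this statement at all — it is quoted from \cite{DOP} — so there is no in-paper proof to compare against; your plan has to be measured against the original Dal'bo--Otal--Peigné argument, and its core (thick--thin decomposition, bounded contribution of the compact part, cusp contribution identified with the parabolic series via Patterson--Sullivan shadow estimates and a summation over excursion depths) is indeed that standard route. Steps (ii)--(v) of your outline are essentially the known proof, with your slicing-by-depth bookkeeping replacing the equivalent bookkeeping in \cite{DOP}, where each excursion associated with $p\in P$ is weighted directly by its duration $\asymp\dhy(\oo,p.\oo)$.

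Step (i), however, is wrong as stated, and if taken literally it would produce the wrong criterion. You reduce the finiteness of $m_\G$ to $\mu(F)<+\infty$ for a Borel fundamental domain $F$ of the diagonal action of $\G$ on $\LL_\G\overset{\Delta}{\times}\LL_\G$, on the grounds that the $\R$-factor ``contributes a finite factor''. No such fundamental domain exists: the action of $\G$ on $\LL_\G\overset{\Delta}{\times}\LL_\G$ is not properly discontinuous (for a divergent group it is conservative and ergodic with respect to $\mu$), and $\G$ genuinely acts on the $\R$-coordinate through the Busemann cocycle; the mass of $m_\G$ is $\tilde m_\G(\mathcal{F})$ for a fundamental domain $\mathcal{F}\subset\LL_\G\overset{\Delta}{\times}\LL_\G\times\R$, i.e.\ the time direction cannot be discarded. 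It is precisely the unbounded time $\asymp\dhy(\oo,p.\oo)$ spent by a geodesic during the cusp excursion labelled by $p\in P$ that turns the parabolic Poincar\'e series $\sum_{p\in P}e^{-\delta_\G\dhy(\oo,p.\oo)}$ (which is a different, weaker condition — in fact automatically finite for divergent $\G$) into the series $\sum_{p\in P}\dhy(\oo,p.\oo)e^{-\delta_\G\dhy(\oo,p.\oo)}$ of the statement. Your computation in (iii)--(iv) implicitly restores the time parameter (one unit-time slab per unit of depth, so that summing the tails $\sum_{\dhy(\oo,p.\oo)>2t}e^{-\delta_\G\dhy(\oo,p.\oo)}$ over $t$ recovers the factor $\dhy(\oo,p.\oo)$), which is why your final answer is correct; but the reduction should be phrased from the start as an estimate of $\tilde m_\G$ on a fundamental domain in $\LL_\G\overset{\Delta}{\times}\LL_\G\times\R$ (equivalently on $\T^1\xx$), the cusp part being computed either by your depth slicing or by weighting each pair of shadows $(\xi,\eta)$ with $\eta\in p.(\text{shadow})$ by the excursion length, as in \cite{DOP}.
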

We can deduce at least two things from both previous theorems. On the one hand, a geometrically finite group containing parabolic isometries 
satisfying the critical gap property is divergent and admits a finite measure $m_\G$. On the other hand, we understand that a first step to 
obtain a group with infinite measure $m_\G$ involves the construction of parabolic groups of convergent type. This is the purpose of the next paragraph, which is based on \cite{DOP}.
\subsubsection{Construction of convergent parabolic groups}
Let us first consider the situation in constant curvature $-1$. Fix $N\geqslant2$. We may identify $\mathbb{H}^N$ with the product $\R_x^{n-1}\times\R_y^{+*}$ 
endowed with the metric $\frac{\dd x^2+\dd y^2}{y^2}$. Let $P$ an elementary parabolic group acting on 
$\mathbb{H}^N$. Up to a conjugacy, we may suppose that the elements of $P$ fix the point at infinity $\infty$. Denote by $\mathscr{H}$ the 
horoball centered at $\infty$ and passing through $\mathbf{i}:=(0,0,...,0,1)$. The group
$P$ acts by euclidean isometries on the horosphere $\partial\mathscr{H}$. By a Bieberbach's theorem (see \cite{Bie} and \cite{Bie2}), there exists a finite index abelian subgroup 
$Q$ of $P$
which acts by translations on a subspace $\R^k\subset\R^{N-1}$, $1\leqslant k\leqslant N-1$. There thus exist $k$ linearly independant 
vectors $v_1,...,v_k$ and a finite set $F\subset P$ such that any element $p\in P$ decomposes into 
$\tau_{v_1}^{n_1}...\tau_{v_k}^{n_k}f$ for $\overline{n}:=(n_1,...,n_k)\in\Z^k$ and $f\in F$, where $\tau_{v_i}^{n_i}$ is the $n_i$-th power of the 
translation of vector $v_i$. In this case, the Poincaré series $\mathcal{P}_{P}$ of $P$ is given by: for $s>0$ 
$$\mathcal{P}_{P}(s)=\sum\limits_{p\in P}e^{-s\dhy(\mathbf{i},p.\mathbf{i})}
=\sum\limits_{f\in F}\sum\limits_{\overline{n}\in\Z^k}e^{-s\dhy(\mathbf{i},\tau_{v_1}^{n_1}...\tau_{v_k}^{n_k}f.\mathbf{i})}.$$
\noindent
The quantity 
$$\dhy(\mathbf{i},\tau_{v_1}^{n_1}...\tau_{v_k}^{n_k}f.\mathbf{i})-2\ln\left(||n_1v_1+...+n_kv_k||\right)$$
\noindent
is bounded when $n_1^2+...+n_k^2\longrightarrow+\infty$, where $||\cdot||$ is the euclidean norm in $\R^{n-1}$. The previous series thus 
behaves like the following
$$\sum\limits_{\underset{\overline{n}\neq\overline{0}}{\overline{n}\in\Z^k}}\dfrac{1}{||n_1v_1+...+n_kv_k||^{2s}},$$
\noindent
which diverges at its critical exponent $\frac{k}{2}$. 

In the sequel, following \cite{DOP}, we will modify the metric in the horoball $\mathscr{H}$ in such a way that the parabolic group 
$P$ will still have critical exponent $\delta_P=\frac{k}{2}$, but its Poincaré series will converge at $\frac{k}{2}$. In this purpose, we consider another model 
of the hyperbolic space, which will be more suitable to understand the action of $P$ on the horospheres. The classical 
upper half space model of the hyperbolic space, 
$\mathbb{H}^N\cong\left(\R^{N-1}\times\R_+^*,\frac{\dd x^2\oplus\dd y^2}{y^2}\right)$ is isometric to 
$\left(\R^{N-1}\times\R,e^{-2t}\dd x^2\oplus\dd t^2\right)$ via the diffeomorphism
$$\Psi\ :\left\{\begin{array}{ll}
                      \R^{N-1}\times]0,+\infty[ & \longrightarrow\R^N\\
                      (x,z) & \longmapsto (x,\ln(z))=(x,t)
                      \end{array}\right..$$
\noindent 
Let us denote by 
$\mathscr{H}_t=\{(x,s)\ |\ x\in\R^{n-1},\ s\geqslant t\}$ the horoball of level $t$ centered at infinity in this model; one gets 
$\Psi(\mathscr{H})=\mathscr{H}_0$. Fix $x,y\in\R^{N-1}$ and let us denote $x_t=(x,t)$ and $y_t=(y,t)$ for $t>0$; these two points both belong 
to the horosphere $\partial\mathscr{H}_t$, and the distance between them, with respect to the metric on $\partial\mathscr{H}_t$ induced by 
the hyperbolic metric on $\R^N$, is equal to $e^{-t}||x-y||$, where $||\cdot||$ is the Euclidean norm on $\R^{n-1}$. Therefore, on 
the horosphere of level $t=\ln\left(||x-y||\right)$, the distance induced on the horosphere between $x_t$ and $y_t$ is 
$1$. Since the curve $[x_0x_t]\cup[x_ty_t]\cup[y_ty_0]$ is a quasi-geodesic, we can deduce from \cite{HeiIm}
that the quantity $\dhy(x_0,y_0)-2\ln\left(||x-y||\right)$ is bounded. Let us now consider on $\R^{N-1}\times\R$ the metric 
$g_T=T^2(t)\dd x^2+\dd t^2$, where $T:\ \R\longrightarrow\R^+$ is chosen such that $g_T$ has pinched negative curvatures. Let us 
write $\dhy_T$ for the distance induced by $g_T$ on $\R^n$. The same argument as previously given for the hyperbolic space 
shows that if $x_0=(x,0)\in\R^{N-1}\times\R$, $y_0=(y,0)\in\R^{N-1}\times\R$, then 
$\dhy_{T}(x_0,y_0)-2u\left(||x-y||\right)$ is bounded uniformly in $x,y\in\R^{N-1}$, where $u:\ \R^{+*}\longrightarrow\R$ 
is defined by the implicit equation $T(u(s))=\frac{1}{s}$ for all $s>0$. When $u(s)=\ln(s)$ and $T(t)=e^{-t}$, we obtain the previous model of the hyperbolic space. One of the steps in \cite{DOP} section 3 and \cite{Pe1} section 2 is to explain how the functions $u$ and $T$ have to be chosen 
so that the sectional curvature $K(t)$ remains negative and pinched on $\R^n$ endowed with $g_T=T^2(t)\dd x^2+\dd t^2$. More precisely, Lemma 2.2 in \cite{Pe1} 
states the following.
\begin{lem}\label{nouvellemetrique}
Fix a constant $\kappa\in]0,1[$. For any $\beta\geqslant0$, there exist $s_\beta\geqslant1$ and a $\mathcal{C}^2$ non-decreasing function 
$u_\beta\ :\ \R^{+*}\longrightarrow\R$ satisfying:
\begin{itemize}
 \item[$\bullet$]$u_\beta(s)=\ln(s)$ if $s\in]0,1]$;
 \item[$\bullet$]$u_\beta(s)=\ln(s)+(1+\beta)\ln\left(\ln(s)\right)$ if $s\geqslant s_\beta$;
 \item[$\bullet$]if $T_\beta(u_\beta(s))=\frac{1}{s}$ for any $s>0$ and $g_\beta=T_\beta^2(t)\dd x^2+\dd t^2$, then $K_{g_\beta}\leqslant -\kappa^2$;
 \item[$\bullet$]$\lim\limits_{s\longrightarrow+\infty}K_{g_\beta}(x,s)=-1$ for any $x\in\R^{n-1}$ and the derivatives of $K_{g_\beta}$ tend to $0$ as $s$ goes to $+\infty$, uniformly in $x$. 
\end{itemize}
\end{lem}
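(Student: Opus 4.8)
The plan is to construct the function $u_\beta$ explicitly by interpolating between the two prescribed regimes $u_\beta(s)=\ln(s)$ on $]0,1]$ and $u_\beta(s)=\ln(s)+(1+\beta)\ln(\ln(s))$ for $s\ge s_\beta$, and then to verify the curvature bounds by a direct computation. First I would recall the standard formula for the sectional curvatures of a metric of the form $g=T^2(t)\,\dd x^2+\dd t^2$ on $\R^{N-1}\times\R$: the curvature of a plane containing $\partial_t$ equals $-T''(t)/T(t)$, while the curvature of a plane tangent to the horospheres $\{t=\text{const}\}$ equals $-\left(T'(t)/T(t)\right)^2$ (these are the curvatures of a warped-product metric). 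So the whole problem reduces to choosing $T_\beta$ (equivalently $u_\beta$, via $T_\beta(u_\beta(s))=1/s$) so that both $-T_\beta''/T_\beta$ and $-(T_\beta'/T_\beta)^2$ stay in $[-b^2,-\kappa^2]$ for a suitable $b$, with limits $-1$ at $+\infty$ and with derivatives of the curvature decaying.

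Next I would translate the conditions on $T_\beta$ into conditions on $u_\beta$ using the substitution $t=u_\beta(s)$, i.e. $s=u_\beta^{-1}(t)$ and $T_\beta(t)=1/u_\beta^{-1}(t)$. Differentiating $T_\beta(u_\beta(s))=1/s$ gives $T_\beta'(u_\beta(s))\,u_\beta'(s)=-1/s^2$, so $T_\beta'/T_\beta$ at $t=u_\beta(s)$ equals $-1/(s\,u_\beta'(s))$; a second differentiation expresses $T_\beta''/T_\beta$ in terms of $u_\beta'$ and $u_\beta''$. Thus the horospherical curvature is $-1/(s\,u_\beta'(s))^2$ and the radial curvature is a similar explicit rational expression in $s u_\beta'(s)$ and $s^2 u_\beta''(s)$. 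For $s\le 1$ one has $u_\beta(s)=\ln s$, $s u_\beta'(s)=1$, $s^2 u_\beta''(s)=-1$, and one checks both curvatures equal $-1$, consistent with the hyperbolic model. For $s\ge s_\beta$ one has $u_\beta'(s)=\frac1s+\frac{1+\beta}{s\ln s}$, so $s u_\beta'(s)=1+\frac{1+\beta}{\ln s}\to 1$, and $s^2u_\beta''(s)\to -1$, giving both curvatures $\to -1$ with derivatives tending to $0$; moreover for $s\ge s_\beta$ large, $s u_\beta'(s)>1$ keeps the horospherical curvature in $(-1,0)$, hence $\le-\kappa^2$ if $s_\beta$ is chosen large enough that $1+\frac{1+\beta}{\ln s}\le 1/\kappa$.

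The remaining and genuinely delicate step is the interpolation on the compact-ish interval $[1,s_\beta]$: I must choose a $\mathcal C^2$ non-decreasing $u_\beta$ matching the prescribed $0$th, $1$st and $2$nd order data at the endpoints $s=1$ and $s=s_\beta$ (to make the whole function $\mathcal C^2$) while keeping the two curvature expressions pinched between $-b^2$ and $-\kappa^2$ throughout. This is the main obstacle: one cannot interpolate completely freely, because the monotonicity of $u_\beta$, the sign of $u_\beta''$, and the pinching of the rational curvature expressions all constrain each other. The strategy is to first fix a smooth monotone transition for the quantity $\psi(s):=s\,u_\beta'(s)$ — which equals $1$ near $s=1$ and equals $1+\frac{1+\beta}{\ln s}$ near $s=s_\beta$, staying in $[1,1/\kappa]$ — and to choose $s_\beta$ large enough that $\psi$ can be joined monotonically and with controlled derivative, then recover $u_\beta$ by integrating $\psi(s)/s$ and adjust the integration constant so the $0$th-order matching holds (which it automatically does once the endpoint values of $\int \psi(s)/s\,\dd s$ are arranged, forcing a compatibility condition that is satisfied precisely because $\ln s_\beta+(1+\beta)\ln\ln s_\beta-\ln 1 = \int_1^{s_\beta}\psi$). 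One then checks that with $\psi$ varying slowly (which can be guaranteed by taking $s_\beta$ large, since the endpoint slopes of $\psi$ are $O(1/\ln s_\beta)$), the term $s^2 u_\beta''(s)=s\psi'(s)+\psi(s)-\psi(s)$... more precisely $s^2u_\beta'' = s\psi' - \psi + \psi^2$ stays close to what it is in the hyperbolic case, so the radial curvature expression remains negative and bounded below by some $-b^2$. I would present this as: pick the transition profile for $\psi$, enlarge $s_\beta$ until the curvature inequalities hold on $[1,s_\beta]$ by a compactness/continuity argument, and cite \cite{Pe1} or \cite{DOP} for the explicit computations, since Lemma~\ref{nouvellemetrique} is quoted verbatim from \cite{Pe1} and its proof there carries over unchanged.
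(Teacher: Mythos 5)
The paper never proves this lemma: it is stated as a quotation of Lemma~2.2 of \cite{Pe1} (and before Lemma~\ref{talpha} the author writes that its proof ``is given in \cite{Pe1}''), so there is no in-paper argument to compare with line by line. Your reconstruction — warped-product curvature formulas $-T''/T$ and $-(T'/T)^2$, translation into the quantity $\psi(s)=s\,u_\beta'(s)$, explicit verification in the two regimes, and an interpolation of $\psi$ on $[1,s_\beta]$ with $s_\beta$ large — is exactly the natural route and is in the spirit of the construction in \cite{Pe1} and \cite{DOP}, so the overall approach is sound.

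Two points in your sketch do need repair, and they sit precisely at the step you yourself call delicate. First, your identity is wrong: from $\psi=su_\beta'$ one gets $s^2u_\beta''=s\psi'-\psi$ (not $s\psi'-\psi+\psi^2$), and hence at $t=u_\beta(s)$ the radial curvature is $-\frac{T''}{T}=-\frac{\psi+s\psi'}{\psi^3}$ while the horospherical one is $-\frac{1}{\psi^2}$. Second, the zeroth-order matching is not ``automatic'': it is the constraint $\int_1^{s_\beta}\bigl(\psi(s)-1\bigr)\frac{\dd s}{s}=(1+\beta)\ln\ln s_\beta$ on the transition profile, which must be arranged by hand. Your appeal to a ``compactness/continuity argument'' does not by itself close this; what does close it is a quantitative observation you only gesture at: since the range of $\ln s$ available has length $\ln s_\beta$ while the required area is only $(1+\beta)\ln\ln s_\beta$, one can take $\psi-1=h\,\chi(\ln s/\ln s_\beta)$ with a fixed $\mathcal{C}^1$ profile, height $h$ of order $\ln\ln s_\beta/\ln s_\beta\longrightarrow0$ and $s\psi'=O\bigl(\ln\ln s_\beta/(\ln s_\beta)^2\bigr)$, adjusted near $s=s_\beta$ to fit $\psi=1+\frac{1+\beta}{\ln s}$, $s\psi'=-\frac{1+\beta}{(\ln s)^2}$; then both curvature expressions equal $-1+O(h)$ on the whole transition zone, hence are $\leqslant-\kappa^2$ once $s_\beta$ is large (alternatively: on any rising stretch of $\psi$ with $\psi\leqslant1/\kappa$ one gets $K\leqslant-1/\psi^2\leqslant-\kappa^2$ for free, and on a falling stretch one needs $s|\psi'|\leqslant\psi-\kappa^2\psi^3$). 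With one of these mechanisms made explicit, the remaining bullets (limit $-1$ and decay of the derivatives of $K_{g_\beta}$) follow from the closed formula valid for $s\geqslant s_\beta$, as you indicate; otherwise your argument, like the paper's, ultimately rests on the citation of \cite{Pe1}.
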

\noindent
We may notice that this metric coïncides with the hyperbolic one on the set $\R^{N-1}\times\R^-$; we can enlarge this 
area shifting the metric $g_\beta$ along the $t$-axis (see Paragraph 2.2 in \cite{Pe1} and Paragraph 2.2.4 of this paper).

On $\left(\R^{n},g_{\beta}\right)$, the group $P$ defined above still acts by isometries and its Poincaré series behaves 
like 
$$\sum\limits_{\underset{\overline{n}\neq\overline{0}}{\overline{n}\in\Z^k}}
\dfrac{e^{-sO(\overline{n})}}{||n_1v_1+...+n_kv_k||^{2s}\ln\left(||n_1v_1+...+n_kv_k||\right)^{2s(1+\beta)}}.$$
\noindent
This series still admits $\frac{k}{2}$ as critical exponent and is convergent if and only if $\beta>0$. In the next 
paragraph, we will see how to adapt the above construction of metric $g_\beta$, $\beta>0$, to highlight the existence of convergent 
parabolic group satisfying the assumptions $(P_2)$ and $(S)$

\subsubsection{On convergent parabolic group satisfying assumptions $(P_2)$ and $(S)$.} 
Here we fix $N=2$,but the following construction may be adapted in higher dimension. Let $\oo$ be the point $(0,0)$
in $\R^2$ and $p$ the translation of vector $(1,0)$. As mentionned previously, for these metrics $g_{\beta}$, 
$\beta>0$, there exists $C>0$ such that for $|n|$ sufficiently large, one gets
$$2\ln|n|+2(1+\beta)\ln\ln|n|-C\leqslant\dhy(\oo,p^n.\oo)
\leqslant2\ln |n|+2(1+\beta)\ln\ln|n|+C.$$
\noindent
As we saw in paragraph 2.2.2, this is enough to ensure the convergence of 
the parabolic group $P=\langle p\rangle$. Nevertheless, this 
estimate is not precise enough to ensure that $P$ satisfies Hypotheses $(P_2)$ and $(S)$. Therefore, in the sequel, we 
present new metrics $g_{\beta}$, $\beta>0$, close to those presented in Lemma \ref{nouvellemetrique}, for which we can
precise the behaviour of the bounded term as $n\longrightarrow\pm\infty$.

Let us fix $\beta>0$. For all real $t$ greater than some ${\mathfrak a}>0$ to  be chosen later, let us set
$$T(t)=T_{\beta,L}(t) = e^{-t}\dfrac{t^{1+\beta}}{L(t)}$$
\noindent
where $L$ is a slowly varying function on $[0,+\infty[$ with values in $\R^{*+}$. Without loss of generality, we assume 
that $L$ is $C^{\infty}$ on $\mathbb R^+$ and  its derivates $L^{(k)}, k \geq 1$, satisfy 
$\ln\left(L^{(k)}(t)\right)\longrightarrow 0$ as $t\to +\infty$ (\cite{BGT}, Theorem 1.3.3); furthermore, for any $\theta>0$, there 
exist $t_\theta \geq 0$ and $C_\theta \geq 1$ such that  for any $t\geq t_\theta$
\begin{equation} \label{majorationslowlyvarying}
\dfrac{1}{C_\theta t^\theta}\leqslant L(t)\leqslant C_\theta t^\theta.
\end{equation}
\noindent
Notice that 
$\displaystyle 
-\frac{T ''(t)}{T (t)}=-\left(1-\frac{2(1+\beta)}{t}+L'(t)\right)^2+\left(\frac{(1+\beta)}{t^2}+L''(t)\right)<0 $ 
for $t\geqslant{\mathfrak a}$. As for Lemma \ref{nouvellemetrique} (whose proof is given in \cite{Pe1}), we extend $T_{\beta,L}$ on 
$\R$ as follows. 
\begin{lem}\label{talpha}
There exists  ${\mathfrak a}={\mathfrak a}(\beta, L)>0$ such that the map $T=T_{\beta, L} : \R \to \R^{*+}$ 
defined by:
\begin{itemize}
\item $T (t) = e^{-t}$ if $t\leqslant 0$;
\item $\displaystyle{T(t)= e^{-t}\frac{t^{1+\beta}}{L(t)}}$ if $t\geq{\mathfrak a}$;
\end{itemize}
\noindent
admits a decreasing and 2-times continuously differentiable  extension on $\mathbb R$ satisfying the following inequalities
$$-b^2\leqslant K(t)= -\dfrac{T''(t)}{T(t)}\leqslant-a^2<0.$$
\end{lem}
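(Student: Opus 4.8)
The plan is to interpolate smoothly between the two prescribed branches $T(t)=e^{-t}$ for $t\leqslant 0$ and $T(t)=e^{-t}t^{1+\beta}/L(t)$ for $t\geqslant {\mathfrak a}$, while keeping control of the curvature $K(t)=-T''(t)/T(t)$. First I would record the behaviour of the right branch: writing $T=e^{-t}\varphi(t)$ with $\varphi(t)=t^{1+\beta}/L(t)$, one computes
\[
-\frac{T''(t)}{T(t)}=\Bigl(1-\tfrac{\varphi'(t)}{\varphi(t)}\Bigr)^{2}-\frac{\varphi''(t)}{\varphi(t)}+\Bigl(\tfrac{\varphi'(t)}{\varphi(t)}\Bigr)^{2}-\Bigl(\tfrac{\varphi'(t)}{\varphi(t)}\Bigr)^{2},
\]
and since $\varphi'/\varphi=(1+\beta)/t-L'/L$ and both $tL'(t)/L(t)\to 0$ and $t^{2}L''(t)/L(t)\to 0$ (Karamata-type estimates from \cite{BGT}, already invoked before the lemma), we get $\varphi'/\varphi\to 0$ and $\varphi''/\varphi\to 0$; hence $K(t)\to -1$ and in particular, for ${\mathfrak a}$ chosen large enough, $-b^{2}\leqslant K(t)\leqslant -a^{2}<0$ holds on $[{\mathfrak a},+\infty[$ (using $0<a<1<b$). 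On the left branch $K(t)\equiv -1$, which is in the same range. The parenthetical remark preceding the statement, namely $-T''(t)/T(t)<0$ for $t\geqslant{\mathfrak a}$, is the $t\geqslant{\mathfrak a}$ part of this computation.

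Next I would carry out the interpolation on a compact interval, say $[0,{\mathfrak b}]$ for a suitable ${\mathfrak b}>{\mathfrak a}$ (equivalently one may shift so the gluing happens on $[{\mathfrak a}_0,{\mathfrak a}]$). The idea: the condition $K(t)\in[-b^{2},-a^{2}]$ is equivalent to the linear differential inequality $a^{2}T(t)\leqslant T''(t)\leqslant b^{2}T(t)$ for the positive function $T$. Since $T''>0$ is required, $T$ must be convex and decreasing on the interpolation region, and one has freedom in prescribing $T''$ as any continuous function sandwiched between $a^{2}T$ and $b^{2}T$; integrating twice from the left endpoint with the boundary data $(T(0),T'(0))=(1,-1)$ coming from the left branch, one recovers $T$ and adjusts so that at the right endpoint the value and first derivative match those of $e^{-t}t^{1+\beta}/L(t)$. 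Concretely I would take a $C^{2}$ function $\psi$ on $[0,{\mathfrak b}]$ with $\psi,\psi',\psi''$ matching $e^{-t}$ at $0$ and $e^{-t}t^{1+\beta}/L(t)$ at ${\mathfrak b}$ (a $C^{2}$ gluing by a cutoff between the two explicit functions), note that for ${\mathfrak b}$ large both functions and their first derivatives are exponentially close, so $\psi$ stays positive and $\psi''$ stays pinched between $a^{2}\psi$ and $b^{2}\psi$ by a perturbation argument — the cutoff's derivatives contribute $O(e^{-{\mathfrak b}})$ terms, negligible against $\psi\asymp e^{-{\mathfrak b}}$. That $\psi$ is decreasing follows since $\psi'(0)=-1<0$ and $\psi''>0$ keeps $\psi'$ away from $0$ only if it stays negative; here one checks $\psi'<0$ throughout, again because both boundary branches have $\psi'<0$ and the interpolation is a small perturbation.

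The main obstacle is the simultaneous control, on the gluing interval, of three things: positivity and monotonicity of $T$, the $C^{2}$ matching at both ends, and the two-sided curvature pinching. These compete because a naive convex combination $\chi(t)e^{-t}+(1-\chi(t))e^{-t}t^{1+\beta}/L(t)$ does not obviously have $T''$ in the right range — the cross terms involving $\chi''$ could in principle blow up the upper bound $b^{2}T$. The resolution is quantitative: choose ${\mathfrak a}$ (hence the scale of the gluing region, which we may translate) large enough that on that region $t^{1+\beta}/L(t)$ is as close to being constant as we like in $C^{2}$ (by the slowly varying estimates and \eqref{majorationslowlyvarying}), so that both branches are $C^{2}$-close to pure exponentials $e^{-t}$ and $c\,e^{-t}$ with $c$ near $1$ on a bounded window; then any reasonable $C^{2}$ interpolation between two nearby negatively-curved metrics stays negatively curved with pinching constants slightly worse than $1$, still inside $[-b^{2},-a^{2}]$ since $a<1<b$ with strict inequalities. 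This is exactly the mechanism used for Lemma \ref{nouvellemetrique} in \cite{Pe1}, and I would invoke that argument, indicating only the modifications needed because here the slowly varying correction $t^{1+\beta}/L(t)$ replaces the logarithmic correction $\ln(s)+(1+\beta)\ln\ln(s)$ of the earlier construction; the verification that $-T''/T<0$ on $[{\mathfrak a},+\infty[$ is the explicit computation displayed before the statement, and it shows the right branch already satisfies the upper constraint $K<0$, so only the lower bound $K\geqslant -b^{2}$ and the behaviour on the compact gluing window require the perturbation estimate.
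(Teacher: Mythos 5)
Your treatment of the two explicit branches is fine in substance (both have $K(t)\to-1$, so the pinching holds on $[\mathfrak a,+\infty[$ for $\mathfrak a$ large because $a<1<b$), although your displayed identity is garbled: with $T=e^{-t}\varphi$ one has $-T''/T=-1+2\varphi'/\varphi-\varphi''/\varphi$, whereas the expression you wrote tends to $+1$, contradicting the conclusion $K\to-1$ that you then (correctly) draw. The genuine gap is in the gluing step, which is the whole content of the lemma (the paper itself gives no detailed argument and refers to the construction behind Lemma \ref{nouvellemetrique} in \cite{Pe1}). Your mechanism rests on the claim that near the gluing window the two branches are $C^2$-close to $e^{-t}$ and $c\,e^{-t}$ with $c$ near $1$. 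This is false: their ratio at height $t$ is $t^{1+\beta}/L(t)$, which by \eqref{majorationslowlyvarying} is at least of order $t^{1+\beta-\theta}$ and hence tends to infinity with $\mathfrak a$. Consequently $f-g$ and $f'-g'$ are of the same order as the larger branch $g$, and for a cutoff interpolation $\chi f+(1-\chi)g$ over a window of bounded length the cross terms $2\chi'(f'-g')+\chi''(f-g)$ are of order $e^{-t}\,\mathfrak a^{1+\beta}/L(\mathfrak a)$; near the edge where $\chi\simeq1$ (so $T\simeq e^{-t}$) these are unboundedly larger than $b^2T$, so the pinching is destroyed — indeed, since $\chi'(f-g)>0$ the interpolant need not even be decreasing there. "Small perturbation between two nearby negatively curved models" is simply not available: the interpolant must gain the unbounded multiplicative factor $\mathfrak a^{1+\beta}/L(\mathfrak a)$ relative to $e^{-t}$ across the window.

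What the cited construction actually does, and what you would need, is to interpolate at the logarithmic level. Set $u=-T'/T$, so that $T''/T=u^2-u'$, the curvature condition reads $a^2\leqslant u^2-u'\leqslant b^2$, and monotonicity reads $u>0$. One joins $u\equiv1$ (left branch) to $u(t)=1-(1+\beta)/t+L'(t)/L(t)$ (right branch) with $C^1$ matching, subject to these inequalities and to the integral constraint that $\int u$ over the gluing interval fall short of its length by $(1+\beta)\ln\mathfrak a-\ln L(\mathfrak a)$ (up to endpoint corrections), so that $\ln T$ matches at $t=\mathfrak a$. Since this deficit is $o(\mathfrak a)$, it is realized by letting $u$ dip to a level in $]a,1[$ and return, the descent and ascent rates being limited by $-u'\leqslant b^2-u^2$ and $u'\leqslant u^2-a^2$ (this is exactly where $a<1<b$ enters); in particular the gluing window must have length a fixed multiple of $(1+\beta)\ln\mathfrak a-\ln L(\mathfrak a)$, hence grows with $\mathfrak a$ — it cannot be a bounded window, and this is where the largeness of $\mathfrak a=\mathfrak a(\beta,L)$ is genuinely used. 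Your passing remark about prescribing $T''$ between $a^2T$ and $b^2T$ and integrating points in this direction, but as stated it is circular ($T$ is the unknown), and your concrete implementation falls back on the cutoff argument above, which fails.
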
 
\noindent
Notice that if this property holds for some $\mathfrak a>0$, it holds for any $\mathfrak a'\geq \mathfrak a$. 
{\bf For technical reasons} (see Lemma \ref{groupeconvergentHypotheses}),  we will assume  without loss of generality  
that $\mathfrak a > 4(1+\beta)$.
A direct computation yields the following estimate for the function $u=u_{\beta,L}$.
\begin{lem}\label{lem:UAsymp}
Let $u=u_{\beta,L}\ :\ \R^{+*} \to \R$ be such $T(u(s))=\frac{1}{s}$ for all $s>0$. Then 
$$u(s)= \ln s +(1+\beta)\ln\ln s-\ln L(\ln s)+\epsilon(s)$$
\noindent
with $\epsilon(s) \to 0$ as $s \to +\infty$.
\end{lem}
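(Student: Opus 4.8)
The plan is to invert the defining relation $T(u(s)) = 1/s$ directly, using the explicit formula $T(t) = e^{-t} t^{1+\beta}/L(t)$ valid for $t \geq \mathfrak{a}$. Taking logarithms of $T(u(s)) = 1/s$ gives
\begin{equation}\label{eq:uasymp-log}
u(s) - (1+\beta)\ln u(s) + \ln L(u(s)) = \ln s,
\end{equation}
so the whole point is to solve this transcendental equation asymptotically as $s \to +\infty$. First I would record the crude estimate: since $T$ is decreasing with $T(t) \to 0$ as $t \to +\infty$ and $T(t) \to 1$ as $t \to 0$, the relation $T(u(s)) = 1/s$ forces $u(s) \to +\infty$ as $s \to +\infty$; moreover, from \eqref{eq:uasymp-log} together with the Potter-type bounds \eqref{majorationslowlyvarying} (which give $|\ln L(u(s))| \leq \theta u(s)$ eventually for any $\theta > 0$), one gets $u(s) = \ln s + o(u(s))$, hence $u(s) \sim \ln s$. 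In particular $\ln u(s) = \ln\ln s + o(1)$ and, since $L$ is slowly varying, $L(u(s))/L(\ln s) \to 1$, i.e. $\ln L(u(s)) = \ln L(\ln s) + o(1)$.

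Next I would bootstrap. Write $u(s) = \ln s + w(s)$; substituting into \eqref{eq:uasymp-log} and using the first-order information just obtained yields
\begin{equation*}
w(s) = (1+\beta)\ln u(s) - \ln L(u(s)) = (1+\beta)\ln\ln s - \ln L(\ln s) + \epsilon_1(s),
\end{equation*}
where $\epsilon_1(s) \to 0$: indeed $(1+\beta)\ln u(s) = (1+\beta)\ln(\ln s + w(s)) = (1+\beta)\ln\ln s + (1+\beta)\ln(1 + w(s)/\ln s)$ and $w(s)/\ln s \to 0$ because $w(s) = O(\ln\ln s) + O(\theta \ln s)$ can be improved to $w(s) = o(\ln s)$ from the previous step; likewise $\ln L(u(s)) - \ln L(\ln s) \to 0$ by slow variation of $L$ evaluated along $u(s) \sim \ln s$. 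Setting $\epsilon(s) := \epsilon_1(s)$ gives exactly $u(s) = \ln s + (1+\beta)\ln\ln s - \ln L(\ln s) + \epsilon(s)$ with $\epsilon(s) \to 0$, which is the claim.

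The main obstacle is purely a matter of being careful with the slowly varying function $L$: one must justify that $L(u(s))/L(\ln s) \to 1$ and that $\ln L(\ln s) = o(\ln s)$, both of which follow from the standard properties of slowly varying functions (the uniform convergence theorem and the Potter bounds \eqref{majorationslowlyvarying}, as cited from \cite{BGT}), but one has to make sure the argument $u(s)$ is itself asymptotic to $\ln s$ \emph{before} invoking slow variation at $u(s)$, so the order of the two bootstrap steps matters. Everything else is elementary manipulation of \eqref{eq:uasymp-log}. (I would also note in passing that monotonicity and the regularity of $T$ from Lemma \ref{talpha} guarantee $u$ is well defined and smooth, so there is no issue of existence.)
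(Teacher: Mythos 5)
Your proof is correct and is exactly the ``direct computation'' the paper alludes to (it gives no written proof): take logarithms of $T(u(s))=1/s$, deduce $u(s)\sim\ln s$ from the Potter-type bound \eqref{majorationslowlyvarying}, then bootstrap using slow variation of $L$ along $u(s)\sim\ln s$ to replace $(1+\beta)\ln u(s)-\ln L(u(s))$ by $(1+\beta)\ln\ln s-\ln L(\ln s)+o(1)$. Your care about establishing $u(s)\sim\ln s$ before invoking the uniform convergence theorem for $L$ is precisely the point that makes the computation rigorous.
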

\noindent
The group $P=\langle p \rangle $ is a parabolic subgroup of the group of isometries of 
$\R^2$ endowed with the metric $g_\beta=g_{\beta,L}=T_{\beta,L}^2(t)\dd x^2+\dd t^2$, fixing the point $\infty $. 
It follows from the arguments presented in the previous paragraph and from Lemma \ref{lem:UAsymp} above that, up 
to a bounded term, $\dhy(\oo,p^n.\oo)$ equals $2\ln|n|+2(1+\beta)\ln\ln|n|-2\ln L(\ln|n|)$ for $|n|$ large enough. The 
group $P$ still has critical exponent $\frac{1}{2}$ and is convergent when $\beta>0$. The following proposition gives a 
precise estimate for $\dhy(\oo,p^n.\oo)$; it is the key-point to prove that $P$ satisfies Assumptions $(P_2)$ and $(S)$.
\begin{prop} \label{prop:PAsymp} 
The parabolic  group $P=\langle p \rangle$ on $(\R^2, g_{\beta})$ satisfies the following property: for all 
$n\in\Z\setminus\{0\}$ such that $|n|$ is large enough,
$$\dhy(\oo, p^n.\oo)=2\ln|n|+2(1+\beta)\ln\ln|n|-2\ln L(\ln|n|)+\epsilon(n)$$
\noindent
with $\displaystyle \lim_{n \to\pm\infty} \epsilon(n)=0$. In particular it is convergent with respect to $g_\beta$.
\end{prop}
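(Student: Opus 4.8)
The plan is to compute $\dhy(\oo,p^n.\oo)$ directly as the length of the minimizing geodesic between $\oo=(0,0)$ and $p^n.\oo=(n,0)$ in $(\R^2,g_\beta)$, and then to extract the asymptotics through Lemma~\ref{lem:UAsymp}. First I would record the geometry of this geodesic $\gamma$: since $(\R^2,g_\beta)$ is simply connected and negatively curved, $\gamma$ is unique and minimizing, and the reflection $(x,t)\mapsto(n-x,t)$ being an isometry of $g_\beta$ exchanging the endpoints, $\gamma$ is symmetric about $\{x=n/2\}$. As $\partial_x$ is a Killing field for $g_\beta=T^2(t)\dd x^2+\dd t^2$, the Clairaut quantity $T^2(t)\dot x\equiv c$ is constant along $\gamma$; as $n\neq0$ we have $c\neq0$, so $\ddot t=T(t)T'(t)\dot x^2<0$ everywhere (recall $T>0$ is decreasing), i.e. the $t$-coordinate is strictly concave; vanishing at the two endpoints, it is then positive in between with a single maximum $h=u(S)$ attained at $x=n/2$, where $S:=1/|c|$, so that $T(h)=1/S$ and necessarily $|c|<1$, i.e. $S>1$. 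Using the unit-speed identity $\dot t^2=1-c^2/T^2(t)$, integrating over the ascending half, doubling, and substituting $s=1/T(t)$ (so $t=u(s)$, $\dd t=u'(s)\,\dd s$, and $s$ runs from $1$ to $S$), I obtain
\begin{equation}\label{planclairaut}
\dhy(\oo,p^n.\oo)=2S\int_1^S\frac{u'(s)\,\dd s}{\sqrt{S^2-s^2}},\qquad n=2\int_1^S\frac{s^2\,u'(s)\,\dd s}{\sqrt{S^2-s^2}}.
\end{equation}
Here I use that $T(u(s))=1/s$ gives $u'(s)=\bigl(s(1+\tilde\eta(s))\bigr)^{-1}$ with $\tilde\eta(s):=-1-T'(u(s))/T(u(s))$, where, by Lemma~\ref{talpha} and the explicit form of $T$ on $[\mathfrak a,+\infty[$, $\tilde\eta$ is bounded, $1+\tilde\eta$ is bounded away from $0$, and $\tilde\eta(s)\to0$ as $s\to+\infty$.

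Next I would handle $S$ versus $n$. Writing $s=S\sigma$ in the second formula of \eqref{planclairaut} and using $Su'(S\sigma)=\bigl(\sigma(1+\tilde\eta(S\sigma))\bigr)^{-1}$ gives $n/(2S)=\int_{1/S}^1\sigma\bigl(\sqrt{1-\sigma^2}\,(1+\tilde\eta(S\sigma))\bigr)^{-1}\dd\sigma$, which by dominated convergence tends to $\int_0^1\sigma(1-\sigma^2)^{-1/2}\dd\sigma=1$; hence $S\sim n/2$ as $n\to+\infty$, and in particular $S\to+\infty$. For the distance, set $w(s):=u(s)-\ln s$, so $u'(s)=1/s+w'(s)$; the crucial observation is that $T(t)=e^{-t}$ for $t\leqslant0$ forces $u(s)=\ln s$ on $]0,1]$, hence $w$ vanishes there, $w(1)=0$, and $w(s)=\int_1^s w'(r)\,\dd r$ for $s\geqslant1$. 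Splitting the first formula of \eqref{planclairaut},
\begin{equation}\label{plansplit}
\dhy(\oo,p^n.\oo)=2S\int_1^S\frac{\dd s}{s\sqrt{S^2-s^2}}+2S\int_1^S\frac{w'(s)\,\dd s}{\sqrt{S^2-s^2}}=2\,\mathrm{arccosh}(S)+2S\int_1^S\frac{w'(s)\,\dd s}{\sqrt{S^2-s^2}}.
\end{equation}
Substituting $s=S\sigma$ in the last integral and in $w(S)=\int_1^S w'$ and subtracting, with $\theta(s):=s\,w'(s)=-\tilde\eta(s)/(1+\tilde\eta(s))$ bounded and $\to0$, I get
\begin{equation}\label{plandiff}
2S\int_1^S\frac{w'(s)\,\dd s}{\sqrt{S^2-s^2}}-2w(S)=2\int_{1/S}^1\frac{\theta(S\sigma)}{\sigma}\Bigl(\frac{1}{\sqrt{1-\sigma^2}}-1\Bigr)\dd\sigma.
\end{equation}
On any $[\sigma_0,1]$ the factor $\theta(S\sigma)\to0$ uniformly while $\sigma^{-1}\bigl((1-\sigma^2)^{-1/2}-1\bigr)$ is integrable; on $[1/S,\sigma_0]$ that same function is $O(\sigma)$, so this part of \eqref{plandiff} is $O(\sigma_0^2)$ uniformly in $S$. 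Letting $S\to+\infty$ then $\sigma_0\to0$, the right-hand side of \eqref{plandiff} is $o(1)$, so $\dhy(\oo,p^n.\oo)=2\,\mathrm{arccosh}(S)+2w(S)+o(1)$.

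Finally I would assemble the pieces: $\mathrm{arccosh}(S)=\ln S+\ln2+o(1)$ and $\ln S+w(S)=u(S)$, so $\dhy(\oo,p^n.\oo)=2u(S)+2\ln2+o(1)$; by Lemma~\ref{lem:UAsymp} one has $u(2S)-u(S)\to\ln2$ and $u(2S)-u(n)\to0$ (the $\ln\ln$- and $\ln L$-terms change by $o(1)$ because $\ln(2S)/\ln S\to1$, $2S/n\to1$ and $L$ is slowly varying, cf. \eqref{majorationslowlyvarying}), whence $\dhy(\oo,p^n.\oo)=2u(n)+o(1)$, and Lemma~\ref{lem:UAsymp} then gives the stated expansion. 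For $n<0$ one applies the isometry $p^{-n}$ to reduce to $|n|$. The expansion in turn yields $e^{-\frac12\dhy(\oo,p^n.\oo)}\asymp L(\ln|n|)\bigl(|n|(\ln|n|)^{1+\beta}\bigr)^{-1}$, and $\sum_{n\in\Z\setminus\{0\}}L(\ln|n|)\bigl(|n|(\ln|n|)^{1+\beta}\bigr)^{-1}<+\infty$ for $\beta>0$ by \eqref{majorationslowlyvarying}, so $P$ is convergent with respect to $g_\beta$.

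The main obstacle is precisely the upgrade from a bounded error to an $o(1)$ error: a priori the passage of $\gamma$ through the transition region $0\leqslant t\leqslant\mathfrak a$ and the spherical weight $(1-\sigma^2)^{-1/2}$ each threaten to leave an additive constant, and the reason they do not is exactly that $g_\beta$ is hyperbolic for $t\leqslant0$, i.e. $w(1)=0$ — which is why $u$ is normalized to equal $\ln$ on $]0,1]$ in Lemma~\ref{talpha}. Everything else (justifying the Clairaut reduction \eqref{planclairaut}, the two dominated-convergence limits, and the stability of $u$ and $w$ under multiplicative perturbation of the argument) is routine.
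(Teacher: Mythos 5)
Your argument is correct, and it reaches the stated expansion by a route that shares the computational core of the paper's proof but organizes it quite differently. Both proofs rest on the Clairaut first integral, which turns the length of the minimizing geodesic and the horizontal displacement $n$ into integrals governed by the turning height, and both pass to the limit by dominated convergence before feeding the result into Lemma \ref{lem:UAsymp}. The paper, however, first trades $\oo$ for ${\bf a}=(0,\mathfrak a)$ via the (only sketched) quasi-geodesic lemma preceding Lemma \ref{groupeconvergentHypotheses}, which asserts the exact limit $2\mathfrak a$, and then works on the surface of revolution $\R\times[\mathfrak a,+\infty[/P$, dominating the integrand through the bound $f_n(s)\leqslant e^{-s/2}$ of Lemma \ref{majorationf}; that is precisely where the hypothesis $\mathfrak a>4(1+\beta)$ and the Potter-type bounds \eqref{majorationslowlyvarying} are consumed. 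You instead treat the geodesic from $\oo$ to $p^n.\oo$ in the full metric, change variables to $s=1/T(t)$ so that the non-explicit transition zone is encoded in the single function $\theta(s)=s\,u'(s)-1$, split off the exact hyperbolic term $2\,\mathrm{arccosh}(S)$, and absorb $w(S)=u(S)-\ln S$ identically into $u(S)$; the error analysis then only needs $\theta$ bounded with $\theta(s)\to0$, so you dispense with both the height-$\mathfrak a$ reduction and the condition $\mathfrak a>4(1+\beta)$ — a genuine simplification, and your exploitation of $w(1)=0$ (hyperbolicity for $t\leqslant0$) is exactly the right mechanism to avoid losing an additive constant. Two small points deserve a line in a polished write-up: strict negativity $T'<0$ (used for $\ddot t<0$ and for $1+\tilde\eta$ bounded away from $0$ on the compact transition range) is not literally stated in Lemma \ref{talpha}, but follows since a zero of $T'$ would be a strict local minimum of $T$ (as $T''=-KT>0$), contradicting monotonicity; and before invoking the limits "as $S\to+\infty$" you should first note that $S\succeq n$ (immediate from your formula for $n/(2S)$, whose integrand is bounded above), so that $n\to\pm\infty$ indeed forces $S\to+\infty$ and no circularity occurs.
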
 
\noindent 
Let $\mathcal{H}=\{(x, t) \mid  t\geq 0\}$ be the upper half plane and $\mathcal{H}/P$ the 
quotient cylinder endowed with the metric $g_{\beta}$. We can not estimate directly the 
distances $\dhy(\oo,p^n.\oo)$, since the metric $g_{\beta}$ is not explicit for $t\in [0, \mathfrak a]$. Let us introduce 
the point ${\bf a}=(0,\mathfrak{a})\in\mathcal{H}$. The union of the three geodesic segments 
$[\oo, {\bf a}], [{\bf a}, p^n.{\bf a})]$ and $[p^n.{\bf a}, p^n.{\bf o}]$ is a quasi-geodesic. Since 
$d(\oo, {\bf a})= d(p^n.\oo, p^n.{\bf a})$ is fixed  and $d({\bf a}, p^n.{\bf a})\to +\infty$, it yields to the following lemma.
\begin{lem}
Under the previous notations, 
$$\lim_{n\to\pm\infty}\left(\dhy(\oo, p^n.\oo)-\dhy({\bf a}, p^n.{\bf a})\right) = 2\mathfrak{a}.$$
\end{lem}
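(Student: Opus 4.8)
The plan is to compare the geodesic segment $c_n:=[\oo,p^n.\oo]$ with the geodesic ray from $\oo$ to the parabolic fixed point of $p$. Write $\xi$ for that fixed point; since $p$ is parabolic, $p^{\pm n}.\oo\to\xi$ in $\overline{\xx}$ as $n\to+\infty$. In the model $(\R^2,g_\beta)$ the curves $\{x=\mathrm{const}\}$ are geodesics (the warping function $T_{\beta,L}$ does not depend on $x$), so the vertical ray $\rho:s\mapsto(0,s)$, $s\geqslant 0$, is a unit-speed geodesic ray; it enters and stays in every horoball $\mathcal{H}_t=\{s\geqslant t\}$ centred at $\xi$, hence converges to $\xi$, so $\rho=[\oo,\xi)$. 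In particular ${\bf a}=(0,\mathfrak a)$ lies on $[\oo,\xi)$ and $\dhy(\oo,{\bf a})=\dhy(p^n.\oo,p^n.{\bf a})=\mathfrak a$. By convexity of the distance on the Hadamard manifold $\xx$, the segments $c_n$ converge, uniformly on bounded sets, to $[\oo,\xi)$, so $\eta_n:=\dhy({\bf a},c_n)\to 0$; applying the isometry $p^{-n}$ and using that $[\oo,p^{-n}.\oo]$ also converges to $[\oo,\xi)$, we likewise get $\eta_n':=\dhy(p^n.{\bf a},c_n)=\dhy({\bf a},[\oo,p^{-n}.\oo])\to 0$.

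The upper bound is immediate from the triangle inequality:
\[
\dhy(\oo,p^n.\oo)\leqslant \dhy(\oo,{\bf a})+\dhy({\bf a},p^n.{\bf a})+\dhy(p^n.{\bf a},p^n.\oo)=2\mathfrak a+\dhy({\bf a},p^n.{\bf a}).
\]
For the matching lower bound, choose $b_n,b_n'\in c_n$ realizing $\eta_n,\eta_n'$. For $n$ large one has $\dhy(\oo,b_n)\leqslant \mathfrak a+\eta_n\leqslant \mathfrak a+1$, while $\dhy(\oo,b_n')\geqslant \dhy(\oo,p^n.{\bf a})-\eta_n'\geqslant \dhy(\oo,p^n.\oo)-\mathfrak a-\eta_n'\to+\infty$; hence $b_n$ precedes $b_n'$ along the geodesic $c_n$, and by additivity of the distance along $c_n$,
\[
\dhy(\oo,p^n.\oo)=\dhy(\oo,b_n)+\dhy(b_n,b_n')+\dhy(b_n',p^n.\oo).
\]
Bounding each term below by the triangle inequality, namely $\dhy(\oo,b_n)\geqslant \mathfrak a-\eta_n$, $\dhy(b_n',p^n.\oo)\geqslant \mathfrak a-\eta_n'$ and $\dhy(b_n,b_n')\geqslant \dhy({\bf a},p^n.{\bf a})-\eta_n-\eta_n'$, we obtain $\dhy(\oo,p^n.\oo)\geqslant 2\mathfrak a+\dhy({\bf a},p^n.{\bf a})-2(\eta_n+\eta_n')$. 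Since $\eta_n,\eta_n'\to 0$, combining with the upper bound yields $\dhy(\oo,p^n.\oo)-\dhy({\bf a},p^n.{\bf a})\to 2\mathfrak a$.

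The only genuinely non-formal step is the identification $[\oo,\xi)=\rho$ together with the convergence $c_n\to[\oo,\xi)$ and its quantitative consequence $\eta_n\to 0$; this is standard Hadamard-manifold geometry (stability of geodesics under convergence of endpoints in the cone topology, and the characterization of the ray to a boundary point by the horoballs centred there), but it is where all the geometry enters — everything else reduces to the triangle inequality and additivity of length along $c_n$, and it is exactly the quantitative form of the ``the three-segment path fellow-travels $[\oo,p^n.\oo]$'' heuristic. Note also that the argument is insensitive to the non-explicit form of $g_\beta$ on $\{0\leqslant t\leqslant\mathfrak a\}$: it only uses that $g_\beta$ is a warped product $T^2(t)\dd x^2+\dd t^2$ there and that $\xi$ is fixed by the metric-preserving translations $p^n$.
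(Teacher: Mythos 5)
Your proof is correct, and it is complete precisely where the paper is terse. The paper's justification for this lemma is the one-line remark that the concatenation $[\oo,{\bf a}]\cup[{\bf a},p^n.{\bf a}]\cup[p^n.{\bf a},p^n.\oo]$ is a quasi-geodesic, together with the observation that the two outer segments have fixed length $\mathfrak{a}$ while $\dhy({\bf a},p^n.{\bf a})\to+\infty$; taken literally, quasi-geodesic stability only yields $\dhy(\oo,p^n.\oo)=\dhy({\bf a},p^n.{\bf a})+O(1)$, and the exact constant $2\mathfrak{a}$ requires an asymptotic refinement that the paper leaves implicit. You supply that refinement by a different mechanism: instead of Morse stability you use the cone-topology fact that $[\oo,p^{\pm n}.\oo]$ converges uniformly on bounded sets to the ray $[\oo,\xi)$, so the geodesic $[\oo,p^n.\oo]$ passes within $\eta_n\to0$ of ${\bf a}$ and (after applying the isometry $p^{-n}$) within $\eta_n'\to0$ of $p^n.{\bf a}$, and then the exact limit follows from the trivial triangle-inequality upper bound plus additivity of distance along the geodesic. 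The standard facts you invoke (vertical lines are geodesics of the warped metric, $p^{\pm n}.\oo\to\xi$ for the parabolic fixed point $\xi$, continuity of geodesics in their endpoint for the cone topology in pinched negative curvature) are legitimate and correctly deployed, so the argument stands. An alternative closer in spirit to the paper's phrasing would be continuity of the Gromov product at the boundary in a CAT$(-a^2)$ space, applied at ${\bf a}$ and at $p^n.{\bf a}$; your version is more elementary and has the virtue of explicitly producing the $o(1)$ error that the statement needs.
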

\noindent
Therefore, Proposition \ref{prop:PAsymp} follows from the following lemma.
\begin{lem}\label{groupeconvergentHypotheses}
Assume that $\mathfrak{a}\geqslant4(1+\beta)$. Then
$$\dhy({\bf a}, p^n.{\bf a})=2\ln|n| + 2(1+\beta)\ln\ln|n|-2\ln L(\ln|n|)- 2\mathfrak{a}+ \epsilon(n)$$
\noindent
with $\displaystyle \lim_{n \to\pm\infty} \epsilon(n)=0$. 
\end{lem}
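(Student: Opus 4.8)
The plan is to work directly in the model $(\R^2, g_\beta)$ with the explicit metric $g_\beta = T^2(t)\,\dd x^2 + \dd t^2$ valid for $t \geq \mathfrak a$, using the point ${\bf a} = (0,\mathfrak a)$ as a base point so that the whole geodesic between ${\bf a}$ and $p^n.{\bf a} = (n,\mathfrak a)$ stays in the region $t \geq \mathfrak a$ where $T$ is given by the formula $T(t) = e^{-t} t^{1+\beta}/L(t)$. First I would recall, as in Paragraph 2.2.2, the general principle: the concatenation $[{\bf a}, x_{t_0}] \cup [x_{t_0}, y_{t_0}] \cup [y_{t_0}, {\bf a}]$, where $x_{t_0} = (0, t_0)$, $y_{t_0} = (n, t_0)$ and $t_0$ is the height at which the induced horospherical distance between the two vertical fibers equals $1$, is a quasi-geodesic with uniformly bounded constants; hence by the Morse lemma (reference \cite{HeiIm}/\cite{HeiIm}-type stability of quasi-geodesics) the quantity $\dhy({\bf a}, p^n.{\bf a})$ differs from the length of this broken path by a bounded amount. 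The induced metric on the horosphere $\{t = s\}$ is $T(s)\,|\dd x|$, so the horospherical distance between $(0,s)$ and $(n,s)$ is $|n| T(s)$; setting this equal to $1$ gives $T(t_0) = 1/|n|$, i.e. $t_0 = u(|n|)$ in the notation of Lemma \ref{lem:UAsymp}.

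Next I would compute the length of the broken path. The two vertical segments each have length $t_0 - \mathfrak a = u(|n|) - \mathfrak a$, contributing $2(u(|n|) - \mathfrak a)$. The horizontal segment at height $t_0$ has length $|n| T(t_0) = 1$, which is negligible (absorbed into the bounded error). So up to a bounded term, $\dhy({\bf a}, p^n.{\bf a}) = 2u(|n|) - 2\mathfrak a$. Plugging in the asymptotic expansion of $u$ from Lemma \ref{lem:UAsymp}, namely $u(s) = \ln s + (1+\beta)\ln\ln s - \ln L(\ln s) + \epsilon(s)$ with $\epsilon(s) \to 0$, I get
$$\dhy({\bf a}, p^n.{\bf a}) = 2\ln|n| + 2(1+\beta)\ln\ln|n| - 2\ln L(\ln|n|) - 2\mathfrak a + \epsilon(n)$$
with $\epsilon(n) \to 0$. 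The condition $\mathfrak a \geq 4(1+\beta)$ enters to guarantee the quasi-geodesic really lies in the good region and, more delicately, that the error term is genuinely $o(1)$ rather than merely bounded: one needs $u(|n|) - \mathfrak a$ to be large and positive and the curvature bounds (Lemma \ref{talpha}) to hold throughout, which is where the lower bound on $\mathfrak a$ relative to $1+\beta$ is used in controlling the derivatives of $T$ near $t = \mathfrak a$.

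The main obstacle I expect is upgrading the bounded error coming from the quasi-geodesic comparison into a true $o(1)$ error. The crude Morse-lemma argument only gives $\dhy({\bf a}, p^n.{\bf a}) = 2u(|n|) - 2\mathfrak a + O(1)$, which is not enough for an asymptotic equivalent with vanishing error. To get the sharper statement one must argue more carefully: for instance, show that the true geodesic from ${\bf a}$ to $p^n.{\bf a}$ is exponentially (in $t_0$) close to the broken path away from its corners, exploiting the fact that in the region $t \geq \mathfrak a$ the metric is uniformly comparable to the hyperbolic metric $e^{-2t}\dd x^2 + \dd t^2$ up to multiplicative factors $t^{1+\beta}/L(t)$ that vary slowly, and that in genuine hyperbolic space the analogous comparison has exponentially small error. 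One then tracks how the slowly varying perturbation affects this estimate, using the bounds \eqref{majorationslowlyvarying} and $\ln(L^{(k)}(t)) \to 0$ to see the perturbation does not destroy the convergence. This is the technical heart of the proof; the rest is the bookkeeping of substituting the expansion of $u$. I would also remark, as the statement's final clause requires, that convergence of the Poincaré series $\sum_n e^{-\frac12 \dhy(\oo, p^n.\oo)}$ at the critical exponent $\delta_P = \frac12$ follows immediately: the general term is comparably $\asymp \frac{L(\ln|n|)}{|n|\,(\ln|n|)^{1+\beta}}$, and $\sum_n \frac{1}{|n| (\ln|n|)^{1+\beta}} L(\ln|n|)$ converges for $\beta > 0$ by the integral test together with the slow variation of $L$ (Karamata).
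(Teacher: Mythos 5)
Your broken-path argument, as you yourself acknowledge, only yields $\dhy({\bf a},p^n.{\bf a})=2u(|n|)-2\mathfrak a+O(1)$, and that is precisely not enough: the whole point of this lemma (the paper says so explicitly just before introducing the metrics $g_{\beta,L}$) is to replace the crude bounded error of the quasi-geodesic comparison by an error tending to $0$, since Assumptions $(P_2)$ and $(S)$ are sensitive to the bounded term. The ``upgrade'' you sketch — exponential closeness of the true geodesic to the broken path, comparison with the hyperbolic metric modulated by a slowly varying factor — is stated as a programme, with no estimate actually carried out, and it hides a genuine cancellation that must be proved: in the paper's computation the maximal height $h_n$ of the geodesic segment satisfies $h_n=\ln|n|+(1+\beta)\ln\ln|n|-\ln L(\ln|n|)-\ln 2-\mathfrak a+\epsilon(n)$, i.e.\ it sits $\ln 2$ \emph{below} your corner height $u(|n|)-0$, and the length is then $d_n=2h_n+2\int_0^{h_n}\bigl(\frac{1}{\sqrt{1-f_n^2(s)}}-1\bigr)\dd s$ with the correction integral converging exactly to $2\ln2$; it is this cancellation of the two $\ln 2$'s that makes the naive answer $2u(|n|)-2\mathfrak a$ correct up to $o(1)$, and your argument gives no mechanism to see it.

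The paper's route is different and is where the real work lies: it passes to the quotient cylinder $\R\times[\mathfrak a,+\infty[/P$, a surface of revolution for the metric $\mathcal T(t)^2\dd x^2+\dd t^2$ with $\mathcal T(t)=T_\beta(t+\mathfrak a)$, writes the \emph{exact} Clairaut relations linking $n$, $h_n$ and $d_n$ as two integrals, dominates the integrands uniformly by an integrable function via the inequality $0\leqslant f_n(s)\leqslant e^{-s/2}$ (Lemma \ref{majorationf}), and concludes by dominated convergence. Note also that the hypothesis $\mathfrak a\geqslant 4(1+\beta)$ is used exactly in that domination lemma (to get the exponent $1-\frac{2(1+\beta)}{\mathfrak a}\geqslant\frac12$ in the regime $0\leqslant s\leqslant h_n/2$, together with the slow variation of $L$), not, as you suggest, to control curvature or derivatives of $T$ near $t=\mathfrak a$. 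So your proposal reproduces the heuristic that motivates the formula but leaves the decisive step — turning $O(1)$ into $\epsilon(n)\to0$ — unproved, and misidentifies the role of the hypothesis on $\mathfrak a$.
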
 
\begin{proof}
%
Throughout this proof, we work on the  upper half-plane $\R \times [\mathfrak{a}, +\infty[$ whose points are denoted 
$(x, \mathfrak a+t)$ with $x\in\R$ and $t\geq 0$; we set  
$$\mathcal{T}(t)=T_{\beta}(t+\mathfrak a)=e^{-\mathfrak a-t}\frac{(t+\mathfrak a)^{1+\beta}}{L(t+\mathfrak a)}.$$
In these coordinates, the quotient cylinder $\R\times[\mathfrak a, +\infty[/P$ is a surface of revolution 
endowed with the metric  ${\mathcal T}(t)^2\dd x^2+\dd t^2$. 
For any $n \in \mathbb Z$, denote $h_n$ the maximal height at which the geodesic segment 
$\sigma_n=[{\bf a}, p^n.{\bf a}]$ penetrates inside the upper half-plane $\R \times [\mathfrak a, +\infty[$. Note that 
due to the negative upperbound on the curvature, $\displaystyle \lim_{n\to \pm \infty} h_n = +\infty$.
The relation between $n, h_n$ and $d_n:=\dhy({\bf a}, p^n\cdot{\bf a})$ may be deduced from 
the Clairaut's relation (\cite{DC}, section 4.4 Example 5):
$$ 
\dfrac{n}{2}={\mathcal T}(th_n)\int_{0}^{h_n}{\dfrac{\dd t}{\mathcal{T}(t)\sqrt{\mathcal{T}^2(t)-\mathcal{T}^2(h_n)}}}
\qquad {\rm and} \qquad   d_n=2\int_{0}^{  h_n}{\dfrac{\mathcal{T}(t)\dd t}{\sqrt{\mathcal{T}^2(t)-\mathcal{T}^2(h_n)}}}.
$$
These identities may be rewritten as
$$ 
(a)\quad \dfrac{n}{2}=\dfrac{1}{{\mathcal T}(h_n)}\int_{0}^{h_n}\dfrac{f_n^2(s)\dd s}{\sqrt{1-f_n^2(s)}}
\quad {\rm and} \quad  (b) \quad d_n=2 h_n+ 2 \int_{0}^{ h_n}
\Bigl(\dfrac{1}{\sqrt{1-f_n^2(s)}}-1\Bigr)\dd s
$$
\noindent
where $\displaystyle f_n(s):= \dfrac{{\mathcal T}(h_n)}{{\mathcal T}(h_n-s)}\un_{[0, h_n]}(s).$

First,  for any $s \geqslant 0$, the quantity 
$\displaystyle{\frac{f_n^2(s)}{\sqrt{1-f_n^2(s)}}}$ converges towards $\displaystyle{\frac{e^{-2s}}{\sqrt{1-e^{-2s}}}}$
as $n \to\pm\infty$. 
\begin{lem}\label{majorationf}
There exists $N_0>0$ such that for all $n$, $|n|\geq N_0$ and all $s\geq 0$,
$$0\leqslant f_n(s)\leqslant f(s):= e^{-s/2}.$$ 
\end{lem}
\begin{proof}
To enlighten the proof, let us denote $\alpha=1+\beta$. 

 Assume first $\frac{h_n}{2}\leqslant s\leqslant h_n$; 
taking $\theta=\frac{\alpha}{2}$ in \eqref{majorationslowlyvarying}
yields 
\begin{eqnarray*}
0\leqslant f_n(s)&=& 
\left(\dfrac{\mathfrak a + h_n}{\mathfrak a + h_n - s}\right)^\alpha\dfrac{L(\mathfrak a + h_n - s)}{L(\mathfrak a + h_n)}
e^{-s}\\
&\leqslant& C_{\frac{\alpha}{2}}^2\dfrac{(\mathfrak a + h_n)^{\frac{3\alpha}{2}}}{(\mathfrak a + h_n - s)^{\frac{\alpha}{2}}}e^{-s}\\
&\leqslant& \dfrac{C_{\frac{\alpha}{2}}^2}{\mathfrak a^{\frac{\alpha}{2}}}(\mathfrak a + h_n)^{\frac{3\alpha}{2}}e^{-s}\\
&\leqslant& \dfrac{C_{\frac{\alpha}{2}}^2}{\mathfrak a^{\frac{\alpha}{2}}}(\mathfrak a + h_n)^{\frac{3\alpha}{2}}
e^{-\frac{h_n}{4}}e^{-\frac{s}{2}}\leqslant e^{-\frac{s}{2}}
\end{eqnarray*}
where the last inequality holds if $|n|$ is large enough, only depending on $\mathfrak a$ and $\alpha$. 

 Assume now $0\leqslant s \leqslant\frac{h_n}{2}$; it holds  
$\displaystyle {\frac{1}{2}}\leqslant\frac{\mathfrak a+h_n-s}{\mathfrak a +h_n}\leqslant 1$ and 
$0\leqslant\frac{s}{\mathfrak a +h_n}\leqslant\min\left(\frac{1}{2},\frac{s}{\mathfrak a}\right)$. The facts that $L$ is 
slowly varying  and that $0\leqslant\frac{1}{1-v}\leqslant e^{2v}$ for $0\leqslant v \leqslant\frac{1}{2}$ yield for 
all $n\geq N_0$,
\begin{eqnarray*}
0\leqslant f_n(s)&=&\dfrac{L(\mathfrak a + h_n - s)}{L( \mathfrak a + h_n)} 
\left(\dfrac{1}{1-\frac{s}{\mathfrak a + h_n}}\right)^\alpha e^{-s}\\
&\leqslant & \sup_{\stackrel{h\geqslant h_{N_0}}{\frac{1}{2}\leqslant\lambda\leqslant 2}}\dfrac{L(\lambda h)}{L(h)}
e^{-(1-\frac{2\alpha}{\mathfrak a})s}\leqslant e^{-s/2} 
\end{eqnarray*}
where the last inequality holds as soon as $\mathfrak  a> 4\alpha$ and $N_0$ large enough, depending on $L$ and $\mathfrak  a- 4\alpha$. 
\end{proof}
\noindent
We have therefore
$$
0\leqslant\dfrac{f_n^2(s)}{\sqrt{1-f_n^2(s)}}\leqslant  F(s):=\dfrac{f^2(s)}{\sqrt{1-f^2(s)}}
$$
\noindent
where  the function $F$ is integrable on $\R^+$. By the dominated convergence theorem, it yields 
$$\dfrac{n}{2}=\dfrac{1+\epsilon(n)}{{\mathcal T}(h_n)} \int_0^{+\infty}\dfrac{e^{-2s}}{\sqrt{1-e^{-2s}}}\dd s
=\dfrac{1+\epsilon(n)}{{\mathcal T}(h_n)}.$$
\noindent
Consequently $h_n= \ln|n|+(1+\beta)\ln\ln |n|-\ln L(\ln|n|)-\log 2 -\mathfrak a +\epsilon(n)$ for $|n|$ large enough.
Similarly 
$$\lim\limits_{n\to\pm\infty}\int_0^{h_n}
\Bigl(\frac{1}{\sqrt{1-f_n^2(s)}}-1\Bigr)\dd s=\int_0^{+\infty}\Bigl(\frac{1}{\sqrt{1-e^{-2s}}}-1\Bigr)\dd s=\ln 2,$$ 
which yields 
$$d_n= 2\ln|n|+2(1+\beta)\ln\ln |n|-2\ln L(\ln|n|) -2\mathfrak a +\epsilon(n)\ \text{for}\ |n|\ \text{large enough.}$$
\end{proof}
\noindent
The Poincar\'e exponent of $P$  equals $\frac{1}{2}$ and its orbital function satisfies the following property:
$$
\sharp\{p\in P\mid 0\leqslant \dhy(\oo, p.\oo)<T \} \sim   e^{\frac{T}{2}}\dfrac{L(T)}{(T/2)^{\beta+1}}\quad{\rm as}\quad T\to+\infty.
$$
\noindent
Hence, for any $\Delta >0$, 
$$
\sharp\{p\in P\mid T\leqslant \dhy(\oo,p.\oo)<T+\Delta\} \sim \frac{1}{2}\int_{T}^{T+\Delta} e^{t/2}
\dfrac{L(t)}{(t/2)^{\beta+1}}\dd t\quad{\rm as}\quad T\to+\infty
$$
\noindent
and
\begin{equation}\label{avantdemontrerP2}
\lim_{T\to +\infty}\dfrac{T^{\beta+1}}{L(T)}
\sum_{\stackrel{p \in P}{T\leqslant \dhy(\oo,p.\oo)<T+\Delta}} 
e^{-\frac{1}{2}\dhy(\oo,p.\oo)}=2^{\beta}\Delta.
\end{equation}
\noindent
On the one hand, for any $T>0$, decomposing 
$$\sum\limits_{\underset{\dhy(\oo,p.\oo)>T}{p\in P}}e^{-\frac{1}{2}\dhy(\oo,p.\oo)}$$
\noindent
as a sum over annuli whose size is arbitrarily small and using \eqref{avantdemontrerP2}, we obtain
$$\sum\limits_{\underset{\dhy(\oo,p.\oo)>T}{p\in P}}e^{-\frac{1}{2}\dhy(\oo,p.\oo)}\sim
\dfrac{2^\beta}{\beta}\dfrac{L(T)}{T^\beta}$$
which corresponds to Assumption $(P_2)$.
On the other hand, let us note that, as soon as for some $C>0$,
$$
 \lim_{T\to +\infty}\dfrac{T^{\beta+1}}{L(T)}
 \sum _{\stackrel{p \in P}{T\leqslant\dhy(\oo,p.\oo)<T+\Delta}} 
 e^{-\frac{1}{2} d(\oo,p.\oo)}  = C \Delta,
$$
\noindent
there exists $T_0 = T_0(\Delta)>0$ such that for all $T\geq T_0$, we have
$$
\dfrac{T^{\alpha}}{L(T)}\sum _{\stackrel{p \in P}{T\leqslant\dhy(\oo,p.\oo)<T+\Delta}} 
e^{-\frac{1}{2}\dhy(\oo,p.\oo)}\leqslant 2 C,
$$
\noindent
which is precisely Hypothesis $(S)$.
%

\subsubsection{About groups with convergent parabolic subgroup}

Let us now describe explicit constructions of exotic groups, i.e. non-elementary groups $\Gamma$ containing a 
parabolic subgroup $P$ such that $\delta_{P}=\delta_\Gamma$ in the context of the metrics $g_\beta$ presented above. Let 
$\beta>0$ be fixed and $N\geqslant2$. For all $\mathfrak{h}>0$ and $t\in\R$, we write
$$
T_{\beta,\mathfrak{h}}=\left\{\begin{array}{ccc}
					e^{-t} & \mbox{if} & t\leqslant \mathfrak{h}\\
					e^{-\mathfrak{h}}T_{\beta,L}(t-\mathfrak{h}) & \mbox{if} & t\geq \mathfrak{h}
					\end{array}\right.,
$$
where $T_{\beta,L}$ was defined in the previous paragraph. Following \cite{Pe1}, we introduce the metric on 
$\R^{N-1}\times\R$ given at all $(x,t)\in\R^n$ by 
$\dd g_{\beta,\mathfrak{h}}=T_{\beta,\mathfrak{h}}^2(t)\dd x^2+\dd t^2$. It is a complete smooth metric, 
with pinched negative curvature by Lemmas \ref{nouvellemetrique} and \ref{talpha}, isometric to the hyperbolic plane on 
$\R^{N-1}\times(-\infty, \mathfrak{h})$. Note that $g_{\beta,0}=g_\beta$ and $g_{\beta,+\infty}$ is the hyperbolic metric on $\mathbb H^N$ 
(see figure \ref{dessincuspethoroboule}). 
\begin{figure}[htbp]
\begin{minipage}[c][\height]{.45\linewidth}
\begin{center}
\includegraphics[width=8cm]{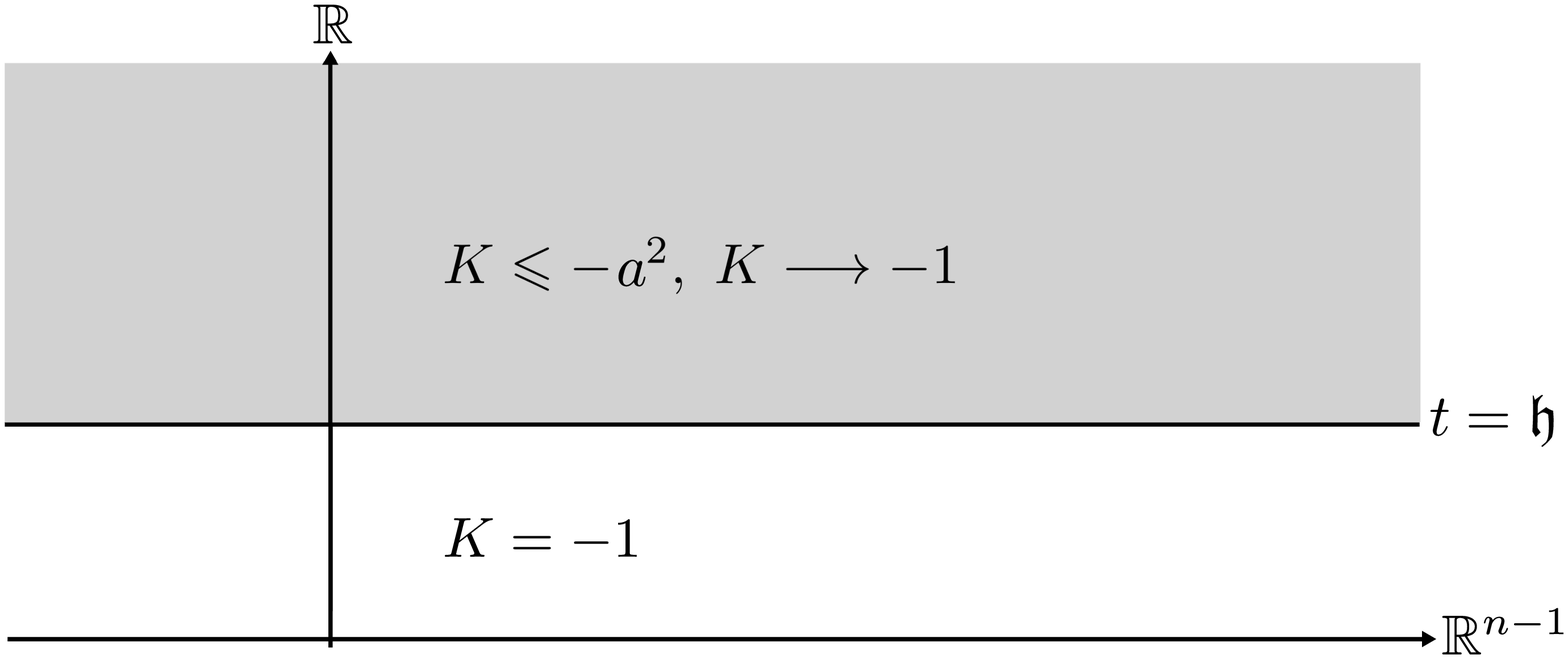}
\end{center}
\end{minipage}
\hfill
\begin{minipage}[c][\height]{.45\linewidth}
\begin{center}
\includegraphics[height=4cm]{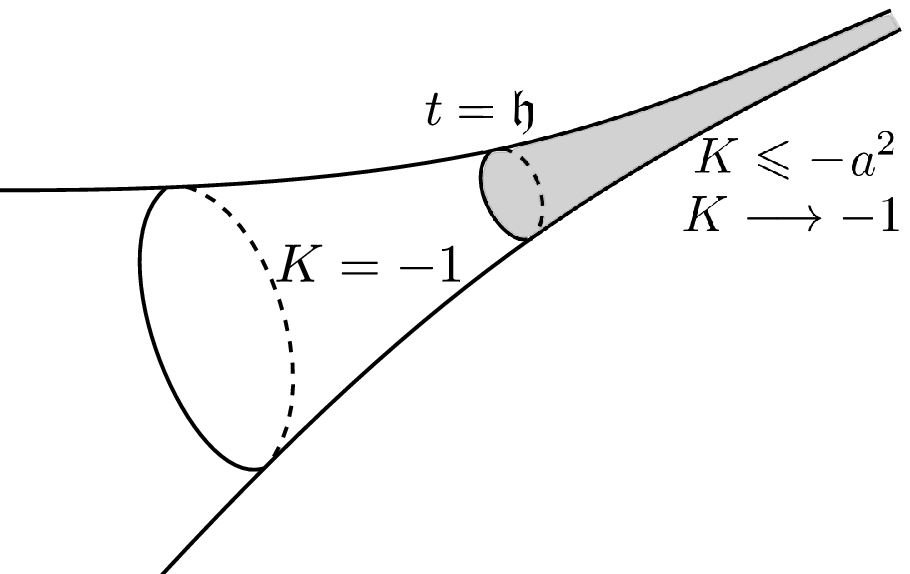}
\end{center}
\end{minipage}
\caption{\label{dessincuspethoroboule}Curvature in the horoball and in the cusp in rank $1$}
\end{figure}
We now construct non-elementary groups containing parabolic subgroups of the above type.
Let us consider a geometrically finite group $\G$, acting freely, properly and discontinuously on $\mathbb{H}^N$ and 
containing parabolic isometries. The quotient manifold $\mathbb{H}^N/\G$ thus has finitely many cusps 
$\mathcal{C}_1$,...,$\mathcal{C}_l$, each of them being 
isometric to the quotient of an horoball $\mathscr{H}_i$ by a parabolic group $P_i$ with rank $k_i\in[\![1,N-1]\!]$. By 
the previous discussion, each group $P_i$ acts by isometries on $\R^n$ endowed with the metric 
$T_{\beta,\mathfrak{h}}^2(t)\dd x^2+\dd t^2$. Let us pick out the 
cusp $\mathcal{C}_1$ and paste the quotient 
$(\R^{N-1}\times[\mathfrak{h},+\infty[)/P_1$ with $\left(\mathbb{H}^N/\G\right)\setminus\mathcal{C}_1$. The previous construction 
ensures that $\G$ still acts by isometries on the universal cover $\xx$ of this quotient manifold endowed with the 
metric $\tilde{g}_{\beta,\mathfrak{h}}$, which is the hyperbolic metric on $\xx$ excepted on the copies of 
$\mathscr{H}_1$ where it coincides with $g_{\beta,\mathfrak{h}}$. 

The fact that $\beta>0$ implies that the group $P_1$ is convergent. Let us denote $\dhy_\mathfrak{h}$ the distance induced by 
the metric $\tilde{g}_{\beta,\mathfrak{h}}$. Following \cite{Pe1} section 4, we 
will use the previous construction of manifolds with a cusp associated to a convergent parabolic group to present 
some discrete group $G$ 
acting on the above space $\xx$, and being convergent or divergent, according to the value of the parameter $\mathfrak{h}$. 
The author of \cite{Pe1} proves the following.
\begin{prop}\label{propMarcmetricetdivergence}
There exist Schottky subgroups $G$ of $\G$ and a positive real $\mathfrak{h}_0$ such that:
\begin{itemize}
 \item[$\bullet$]$G$ admits $\frac{1}{2}$ as critical exponent and is of convergent type on $\left(\xx,\tilde{g}_{\beta,0}\right)$;
 \item[$\bullet$]$G$ has a critical exponent $>\frac{1}{2}$ and is of divergent type on $\left(\xx,\tilde{g}_{\beta,\mathfrak{h}}\right)$ for 
 $\mathfrak{h}\geqslant \mathfrak{h}_0$.
\end{itemize}
\end{prop}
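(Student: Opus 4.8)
The plan is to follow the strategy of \cite{Pe1}, section 4, exploiting the flexibility of the parameter $\mathfrak h$: shifting the modified metric $g_{\beta,L}$ upward along the $t$-axis enlarges the region on which $\tilde g_{\beta,\mathfrak h}$ agrees with the hyperbolic metric, hence ``lengthens'' the excursions of geodesics into the cusp before they feel the modification. First I would fix a non-elementary Schottky subgroup $G<\G$ whose factors are chosen so that the parabolic subgroup $P_1$ (with rank $1$, say $P_1=\langle p\rangle$, fixing $\infty$) is a Schottky factor, and so that all other factors are hyperbolic with fixed points far from the cusp. Since the factors are divergent (for hyperbolic cyclic groups this is automatic), Proposition 2 of \cite{DOP} gives the critical gap property for $G$ once we know $\delta_{P_1}=\delta_G$. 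The point is to arrange the remaining factors so that $\delta_G=\frac12=\delta_{P_1}$: this is a purely combinatorial matching of the Poincaré series of the factors, and it is here that the non-influent hyperbolic factors must be taken ``small enough'' (translates far out), so that their contribution does not push $\delta_G$ above $\frac12$. The critical exponent of the Schottky product is the unique $s$ for which a certain matrix of sub-Poincaré series has spectral radius $1$ (see the coding in Section 4); choosing the hyperbolic generators with sufficiently large translation length forces this $s$ to equal $\frac12$.

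Next, for $\mathfrak h=0$: on $(\xx,\tilde g_{\beta,0})$ the metric in the horoball $\mathscr H_1$ is exactly $g_{\beta,L}=g_\beta$, so $P_1$ is convergent by Proposition \ref{prop:PAsymp} (the $\beta>0$ case). Since $G$ has the critical gap property and a convergent parabolic subgroup of maximal critical exponent, Theorem B of \cite{DOP} shows that the Bowen--Margulis measure is infinite; but more directly, to get \emph{convergence} of $G$ itself I would estimate $\mathcal P_G(\frac12)$ via the symbolic coding: a reduced word in the Schottky factors has displacement equal, up to a uniformly bounded error (Lemma \ref{quasiegalitetriangulaire}), to the sum of the displacements of its syllables, so $\mathcal P_G(\frac12)$ is comparable to $\sum_{k\ge0} \big(\text{matrix of factor tails}\big)^k$ evaluated at $\frac12$; the diagonal block coming from $P_1$ contributes $\mathcal P_{P_1}(\frac12)<\infty$, and one checks the geometric series converges because the spectral radius at $s=\frac12$ is exactly $1$ but the relevant resolvent is summable precisely when the parabolic factor is convergent. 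This is the standard Dal'bo--Otal--Peigné argument and I would simply cite it.

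For $\mathfrak h\ge\mathfrak h_0$: now the metric on $\mathscr H_1$ coincides with the hyperbolic metric up to level $\mathfrak h$, so $\dhy_{\mathfrak h}(\oo,p^n.\oo)=\dhy_{\mathbb H}(\oo,p^n.\oo)+O(1)$ \emph{for $|n|$ up to some threshold growing with $\mathfrak h$}, and only for larger $|n|$ does the $\beta$-correction $2(1+\beta)\ln\ln|n|$ appear. Concretely $\dhy_{\mathfrak h}(\oo,p^n.\oo)=2\ln|n|+\phi_{\mathfrak h}(|n|)+O(1)$ where $\phi_{\mathfrak h}$ is bounded below by a constant $c(\mathfrak h)\to+\infty$, so $\mathcal P_{P_1}(\frac12)\asymp e^{-c(\mathfrak h)}\sum_n \frac{1}{|n|}\cdot(\dots)$ — still convergent, but with an arbitrarily small tail. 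The key quantitative input is then: the critical exponent $\delta_G^{(\mathfrak h)}$ depends continuously (indeed monotonically) on $\mathfrak h$, with $\delta_G^{(0)}=\frac12$, and as $\mathfrak h\to+\infty$ it tends to the critical exponent of $G$ for the genuine hyperbolic metric, which is $>\frac12$ because for the hyperbolic metric $P_1$ is \emph{divergent} (the classical parabolic series $\sum 1/|n|$ diverges at $\frac12$) — so by Theorem B of \cite{DOP} plus the critical gap one gets a divergent group, and in fact a strict increase $\delta_G^{(\mathfrak h)}>\frac12$ for $\mathfrak h$ large. Choosing $\mathfrak h_0$ past the point where $\delta_G^{(\mathfrak h)}$ exceeds $\frac12$, the critical gap property (Theorem A of \cite{DOP}, since now every parabolic subgroup has critical exponent $\frac12<\delta_G^{(\mathfrak h)}$) yields divergence of $G$ on $(\xx,\tilde g_{\beta,\mathfrak h})$.

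The main obstacle is the continuity/monotonicity of $\mathfrak h\mapsto\delta_G^{(\mathfrak h)}$ and, more delicately, ensuring the strict inequality $\delta_G^{(\mathfrak h)}>\frac12$ for large $\mathfrak h$ rather than merely $\delta_G^{(\mathfrak h)}\ge\frac12$; this requires comparing $\mathcal P_G^{(\mathfrak h)}(\frac12)$ with the hyperbolic Poincaré series on the nose, using the symbolic coding to isolate the parabolic block and invoking that the hyperbolic parabolic series diverges while the $\beta$-modified one converges — so the ``defect'' in the geometric series of sub-Poincaré matrices can be made to cross zero. All of this is carried out in \cite{Pe1}, section 4, and I would present the argument as a sketch citing that reference for the technical estimates, emphasizing only the role of $\beta>0$ (convergence at $\mathfrak h=0$) versus the hyperbolic limit (divergence as $\mathfrak h\to\infty$).
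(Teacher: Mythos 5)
Your overall route is the right one (a small Schottky subgroup $G=\langle p,h\rangle$ containing the modified parabolic $p$ and hyperbolic elements of large translation length, factorization of the Poincar\'e series over syllables via the quasi-triangle inequality, convergence of the $\beta$-modified parabolic at $\tfrac12$ versus the hyperbolic limit, and DOP's Theorems A/B at the end), but the key step at $\mathfrak h=0$ rests on a false premise. You assert that the critical exponent of the Schottky product is the unique $s$ at which the matrix of sub-Poincar\'e series has spectral radius $1$, and that at $s=\tfrac12$ this radius is ``exactly $1$'' while the resolvent is summable because the parabolic factor converges. This is exactly what fails in the exotic situation being constructed: if the radius at $\tfrac12$ were $1$, the geometric series $\sum_l\bigl(\sum_m e^{-\frac12\dhy_0(\oo,h^m.\oo)}\sum_n e^{-\frac12\dhy_0(\oo,p^n.\oo)}\bigr)^l$ would diverge and you could not conclude convergence this way; the delicate ``radius $=1$'' case is precisely the later critical parameter $\mathfrak h_*$ of Proposition \ref{existenceduaetoile}, not $\mathfrak h=0$. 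The paper's mechanism is simpler and is what your argument is missing: since $\sum_n e^{-\frac12\dhy_0(\oo,p^n.\oo)}<+\infty$ (by $\beta>0$) and $\sum_m e^{-\frac12\dhy_0(\oo,h^m.\oo)}\asymp e^{-\frac12 l(h)}$, replacing $h$ by a sufficiently large power makes the product of the two factor series at $\tfrac12$ strictly less than $1$, so $\mathcal P_G(\tfrac12)<+\infty$; combined with $\delta_G\geqslant\delta_{\langle p\rangle}=\tfrac12$ this pins $\delta_G=\tfrac12$ and gives convergence. In other words, the critical exponent is pinned by the parabolic factor, and the spectral radius at $\delta_G$ is $<1$, not $=1$.

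For $\mathfrak h\geqslant\mathfrak h_0$ you route through continuity and monotonicity of $\mathfrak h\longmapsto\delta_G^{(\mathfrak h)}$ and a limit statement as $\mathfrak h\to+\infty$, which you yourself flag as the main obstacle and defer to \cite{Pe1}. None of this is needed for the present proposition (regularity in $\mathfrak h$ is only required later, to locate $\mathfrak h_*$). The paper argues directly: as $\mathfrak h\to+\infty$ the parabolic series $\sum_n e^{-\frac12\dhy_{\mathfrak h}(\oo,p^n.\oo)}$ tends to $+\infty$, while $\sum_m e^{-s\dhy_{\mathfrak h}(\oo,h^m.\oo)}<+\infty$ for every $s>0$; hence there exist $\varepsilon_0>0$ and $\mathfrak h_0$ such that for $\mathfrak h\geqslant\mathfrak h_0$ and $0<\varepsilon<\varepsilon_0$ the product of the two factor series at $\tfrac12+\varepsilon$ exceeds $1$, so the geometric series diverges and $\delta_G^{(\mathfrak h)}\geqslant\tfrac12+\varepsilon>\tfrac12$. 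Divergence then follows from the critical gap property (Theorem A of \cite{DOP}), since every parabolic subgroup has exponent $\tfrac12<\delta_G^{(\mathfrak h)}$. You should replace the continuity argument by this quantitative estimate, and correct the $\mathfrak h=0$ step as above; as written, the proposal does not establish either bullet of the proposition.
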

Fix $\oo\in\xx$. This result relies on the existence of a parabolic isometry $p\in P_1$ and a hyperbolic one $h\in\G$ with no 
common fixed point. We may 
shrink the horoball $\mathscr{H}_1$ such that the closed geodesic of $\xx/\G$ obtained as the projection of the axis of $h$ does not 
enter the cusp 
$\mathscr{H}_1/P_1$ (see figure \ref{groupepethfigure}). 
\begin{figure}[htbp]
\begin{center}
\includegraphics[height=5cm]{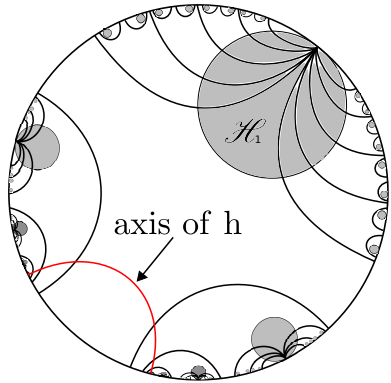}
\end{center}
\caption{\label{groupepethfigure}Isometries $p$ and $h$ and horoball $\mathscr{H}_1$}
\end{figure}

Then this geodesic stays in the area of constant curvature $-1$ and so does its lift. The isometries $p$ and $h$ 
being in Schottky position, there exist 
non-empty disjoint 
closed subsets $\mathbf{D}_p$ and $\mathbf{D}_h$ of $\overline{\xx}$ such that 
$p^n.\left(\overline{\xx}\setminus\mathbf{D}_p\right)\subset\mathbf{D}_p$ and 
$h^n.\left(\overline{\xx}\setminus\mathbf{D}_h\right)\subset\mathbf{D}_h$ for 
any $n\neq0$. Denote by $D_\g=\mathbf{D}_\g\cap\partial\xx$ for $\g\in\{p,h\}$. In this case, for any $s>0$, the Poincaré series of the group $\langle p,h\rangle$ behaves, up to a bounded term, like 
\begin{equation}\label{étoilesomme}
\sum\limits_{l\geqslant1}\sum\limits_{m_i,n_i\in\Z^*}e^{-s\dhy_\mathfrak{h}(\oo,h^{m_1}p^{n_1}...h^{m_l}p^{n_l}.\oo)}.
\end{equation}
\noindent
Using Lemma \ref{quasiegalitetriangulaire} for sets $\mathbf{D}_p$ and $\mathbf{D}_h$, we show that \eqref{étoilesomme} behaves like
\begin{equation}\label{étoilétoilesomme}
\sum\limits_{l\geqslant1}\left(\sum\limits_{m\in\Z^*}e^{-s\dhy_\mathfrak{h}(\oo,h^m.\oo)}\sum\limits_{n\in\Z^*}e^{-s\dhy_\mathfrak{h}(o,p^n.\oo)}\right)^l.
\end{equation}
\noindent
With any metric $\tilde{g}_{\beta,\mathfrak{h}}$, $\mathfrak{h}>0$, we have $\delta_{\langle p\rangle}=\frac{1}{2}\leqslant\delta_{\langle p,h\rangle}$. For 
$\mathfrak{h}=0$, we get $\sum_{n\neq0}e^{-\frac{1}{2}\dhy_0(\oo,p^n.\oo)}<+\infty$. Furthermore 
$\sum_{m\neq0}e^{-\frac{1}{2}\dhy_0(\oo,h^m.\oo)}\asymp e^{-\frac{1}{2}l(h)}$ 
where 
$l(h)$ is the length of $h$. Therefore, if we replace $h$ by a sufficiently large power of $h$, we may assume that 
$$\sum\limits_{m\in\Z^*}e^{-\frac{1}{2}\dhy_0(\oo,h^m.\oo)}\sum\limits_{n\in\Z^*}e^{-\frac{1}{2}\dhy_0(\oo,p^n.\oo)}<1.$$
\noindent
Hence $\mathcal{P}_{\langle p,h\rangle}\left(\frac{1}{2}\right)<+\infty$ so that $\delta_{\langle p,h\rangle}\leqslant\frac{1}{2}$. Finally the subgroup 
$G=\langle p,h\rangle$ of $\G$ is of convergent type with critical exponent $\frac{1}{2}=\delta_{\langle p\rangle}$.

When $\mathfrak{h}$ goes to $+\infty$, the area of $\xx$ with constant curvature grows up and the Poincaré series of the group $\langle p\rangle$ 
tends to $+\infty$ at $\frac{1}{2}$. Since $\sum_{m\neq0}e^{-s\dhy_\mathfrak{h}(\oo,h^m.\oo)}<+\infty$ for any $s>0$, there exist 
$\varepsilon_0>0$ and $\mathfrak{h}_0>0$ such that for any $\varepsilon\in]0,\varepsilon_0[$ and $\mathfrak{h}\geqslant\mathfrak{h}_0$: 
$$\sum\limits_{m\in\Z^*}e^{-\left(\frac{1}{2}+\varepsilon\right)\dhy_\mathfrak{h}(\oo,h^m.\oo)}\sum\limits_{n\in\Z^*}
e^{-\left(\frac{1}{2}+\varepsilon\right)\dhy_\mathfrak{h}(\oo,p^n.\oo)}>1.$$
\noindent
Henceforth \eqref{étoilétoilesomme} implies that $G=\langle p,h\rangle$ satisfies $\delta_G>\frac{1}{2}$ and is 
divergent with finite Bowen-Margulis measure by Theorems A and B of \cite{DOP}.
\subsubsection{About divergent groups with infinite measure $m_\G$}
We now briefly explain the following proposition.
\begin{prop}\label{existenceduaetoile}
There exists a unique $\mathfrak{h}_*\in]0,\mathfrak{h}_0[$ such that the group $G=\langle p,h\rangle$ is divergent with critical exponent 
$\delta_G=\frac{1}{2}=\delta_{\langle p\rangle}$ with respect to the metric $\tilde{g}_{\beta,\mathfrak{h}_*}$. Moreover, $G$ has 
infinite measure when $0<\beta\leqslant1$\footnote{See Remark \ref{remarquebeta=1} for the case $\beta=1$.}.
\end{prop}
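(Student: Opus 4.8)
The plan is to run a continuity-and-monotonicity argument in the parameter $\mathfrak{h}$, exactly parallel to what was done at the end of Paragraph 2.2.5. Recall that the Poincaré series of $G=\langle p,h\rangle$ at $s$ is comparable, up to a bounded multiplicative term coming from Lemma \ref{quasiegalitetriangulaire}, to the geometric-type series \eqref{étoilétoilesomme}, namely $\sum_{l\geqslant1}\left(\varphi_h(s,\mathfrak{h})\varphi_p(s,\mathfrak{h})\right)^l$ where $\varphi_h(s,\mathfrak{h})=\sum_{m\in\Z^*}e^{-s\dhy_\mathfrak{h}(\oo,h^m.\oo)}$ and $\varphi_p(s,\mathfrak{h})=\sum_{n\in\Z^*}e^{-s\dhy_\mathfrak{h}(\oo,p^n.\oo)}$. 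First I would introduce the function $\Phi(\mathfrak{h}):=\varphi_h\!\left(\tfrac12,\mathfrak{h}\right)\varphi_p\!\left(\tfrac12,\mathfrak{h}\right)$. Since $h$ does not enter the modified part of the manifold (its axis stays in the region of constant curvature $-1$, as arranged with the picture \ref{groupepethfigure}), the factor $\varphi_h(\tfrac12,\mathfrak{h})$ is in fact independent of $\mathfrak{h}$ and finite. On the other hand $\varphi_p(\tfrac12,\mathfrak{h})=\sum_{n\neq0}e^{-\frac12\dhy_\mathfrak{h}(\oo,p^n.\oo)}$ is finite and, by the definition of $T_{\beta,\mathfrak{h}}$ (which for larger $\mathfrak{h}$ keeps the hyperbolic metric on a larger region and hence makes $\dhy_\mathfrak{h}(\oo,p^n.\oo)$ smaller for every fixed $n$), it is a strictly increasing, continuous function of $\mathfrak{h}$ on $[0,\mathfrak{h}_0]$. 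So $\Phi$ is continuous and strictly increasing.

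Next I would pin down the values at the endpoints, using the computations already carried out in Paragraph 2.2.5. At $\mathfrak{h}=0$ one has $\Phi(0)<1$ (after replacing $h$ by a large enough power, as arranged there), whence $\mathcal{P}_G(\tfrac12)<+\infty$, so $\delta_G\leqslant\tfrac12$; combined with $\delta_{\langle p\rangle}=\tfrac12\leqslant\delta_G$ this gives $\delta_G=\tfrac12$ with $\mathcal P_G(\tfrac12)<\infty$, i.e. $G$ is convergent at $\mathfrak{h}=0$. At $\mathfrak{h}=\mathfrak{h}_0$ the estimate established there shows $\Phi(\mathfrak{h}_0)>1$ already at $s=\tfrac12+\varepsilon$ for small $\varepsilon$, hence a fortiori $\Phi(\mathfrak{h}_0)\geqslant 1$ at $s=\tfrac12$; in fact by the same inequality $\Phi(\mathfrak h_0)>1$ at $s=\tfrac12$, so $\delta_G>\tfrac12$ there. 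By the intermediate value theorem and strict monotonicity of $\Phi$, there is a \emph{unique} $\mathfrak{h}_*\in]0,\mathfrak{h}_0[$ with $\Phi(\mathfrak{h}_*)=1$. For $\mathfrak{h}=\mathfrak{h}_*$ the series $\sum_{l}\Phi(\mathfrak h_*)^l=\sum_l 1$ diverges, so $\mathcal P_G(\tfrac12)=+\infty$, while for every $s>\tfrac12$ one has $\varphi_p(s,\mathfrak h_*)<\varphi_p(\tfrac12,\mathfrak h_*)$ and similarly for $h$, so $\Phi_s(\mathfrak h_*)<1$ and $\mathcal P_G(s)<\infty$; hence $\delta_G=\tfrac12=\delta_{\langle p\rangle}$ and $G$ is divergent with respect to $\tilde g_{\beta,\mathfrak h_*}$.

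It remains to show that the Bowen–Margulis measure $m_G$ is infinite when $0<\beta\leqslant1$. Here I would invoke Theorem B of \cite{DOP} (stated in the excerpt): for a divergent geometrically finite group with parabolic isometries, $m_G$ is finite iff $\sum_{\alpha\in P}\dhy(\oo,\alpha.\oo)e^{-\delta_G\dhy(\oo,\alpha.\oo)}<+\infty$ for every parabolic subgroup. For $P=\langle p\rangle$ with the metric $\tilde g_{\beta,\mathfrak h_*}$, the asymptotic of Proposition \ref{prop:PAsymp} (transported through the finitely many cusp-shifts, which only changes $\dhy(\oo,p^n.\oo)$ by an additive bounded term) gives $\dhy(\oo,p^n.\oo)=2\ln|n|+2(1+\beta)\ln\ln|n|-2\ln L(\ln|n|)+O(1)$, so with $\delta_G=\tfrac12$ the general term of the series behaves like $\frac{\ln|n|\cdot L(\ln|n|)}{|n|\,(\ln|n|)^{1+\beta}}=\frac{L(\ln|n|)}{|n|\,(\ln|n|)^{\beta}}$; since $L$ is slowly varying, by \eqref{majorationslowlyvarying} this is $\gg \frac{1}{|n|\,(\ln|n|)^{\beta+\theta}}$ for small $\theta$, and Bertrand's series $\sum \frac1{|n|(\ln|n|)^{\gamma}}$ diverges precisely when $\gamma\leqslant1$; taking $\theta<1-\beta$ when $\beta<1$ (and using slow variation directly when $\beta=1$, modulo the caveat of Remark \ref{remarquebeta=1}) the series diverges, so $m_G$ is infinite. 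The main obstacle is the bookkeeping in the last step: one must be careful that the divergence of $\sum_{n}\dhy(\oo,p^n.\oo)e^{-\frac12\dhy(\oo,p^n.\oo)}$ really holds for all $0<\beta\leqslant1$, which at $\beta=1$ is borderline and is exactly why the statement defers to Remark \ref{remarquebeta=1}; everything else is the soft continuity/monotonicity argument in $\mathfrak h$, for which the real content — the endpoint estimates — was already supplied in Paragraph 2.2.5.
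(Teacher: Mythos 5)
Your route is genuinely different from the paper's, and it contains a real gap at the decisive step. The paper (following \cite{Pe1}) does not run the intermediate value theorem on the naive product $\Phi(\mathfrak{h})=\varphi_h(\frac12,\mathfrak{h})\varphi_p(\frac12,\mathfrak{h})$, but on the spectral radius $\rho_\infty(\ot_{\mathfrak{h},\frac12})$ of the transfer operator, and this is not a cosmetic difference. The comparison between $\mathcal{P}_G(s)$ and the series \eqref{étoilétoilesomme} coming from Lemma \ref{quasiegalitetriangulaire} is only valid up to a factor $e^{\pm sC}$ \emph{per letter}: for words of symbolic length $2l$ one has $(\varphi_h(s)\varphi_p(s))^l\leqslant\sum e^{-s\dhy_{\mathfrak{h}}(\oo,h^{m_1}p^{n_1}\dots h^{m_l}p^{n_l}.\oo)}\leqslant\left(e^{2sC}\varphi_h(s)\varphi_p(s)\right)^l$, so the constants hidden in ``behaves like'' grow geometrically in $l$. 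Consequently the position of $\Phi_s$ with respect to $1$ does not decide the convergence of $\mathcal{P}_G(s)$: at your crossing point $\Phi_{1/2}(\mathfrak{h}_*)=1$ the lower bound does give $\mathcal{P}_G(\frac12)=+\infty$, but your argument that $\delta_G\leqslant\frac12$ (``for $s>\frac12$, $\Phi_s(\mathfrak{h}_*)<1$ hence $\mathcal{P}_G(s)<+\infty$'') uses the wrong direction of the comparison; convergence would require $e^{2sC}\Phi_s(\mathfrak{h}_*)<1$, which need not hold when $\Phi_{1/2}(\mathfrak{h}_*)=1$. So you cannot conclude that $\delta_G=\frac12$ at your $\mathfrak{h}_*$ (it may be strictly larger, in which case ``divergent with critical exponent $\frac12$'' fails), and your uniqueness statement is uniqueness of the solution of $\Phi=1$, which is not the uniqueness asserted in the proposition. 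This is precisely why the paper replaces $\Phi$ by $\rho_\infty(\ot_{\mathfrak{h},\frac12})$: the iterates $\left|\ot_{\mathfrak{h},s}^{2l}\un_{\LL_G}\right|_\infty$ reproduce the orbital sums with constants uniform in $l$, the spectral gap on the Hölder-type space $\mathbb{L}_\omega(\LL_G)$ of \cite{Pe1} gives the (otherwise unavailable, since $\ot\mapsto\rho_\infty(\ot)$ is in general only lower semicontinuous) continuity of $\mathfrak{h}\mapsto\rho_\infty(\ot_{\mathfrak{h},\frac12})$, and at the point where this radius equals $1$ the same spectral gap forces $\left|\ot_{\mathfrak{h}_*,\frac12}^{2l}\un_{\LL_G}\right|_\infty\asymp1$, whence divergence at the exponent $\frac12$; uniqueness of $\mathfrak{h}_*$ is also established at the level of the spectral radius in \cite{Pe1}.

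Two secondary remarks. The continuity and strict monotonicity in $\mathfrak{h}$ of $\varphi_p(\frac12,\cdot)$, and the independence of $\varphi_h(\frac12,\cdot)$ on $\mathfrak{h}$, are plausible but would need justification (monotone dependence of $\dhy_{\mathfrak{h}}(\oo,p^n.\oo)$ on $\mathfrak{h}$ plus a domination argument to pass to the series); in the paper these play no role because the monotone/continuous object is the spectral radius. For the measure statement your computation via Theorem B of \cite{DOP} and Proposition \ref{prop:PAsymp} matches Remark \ref{remarquebeta=1} for $\beta<1$, but ``using slow variation directly when $\beta=1$'' is not correct as stated: the remark's example $L=\ln^{-2}$ shows the series can converge when $\beta=1$, which is exactly why both the proposition and your write-up must defer that case to the remark.
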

\noindent
This result seems to be an intermediate state between the two alternatives given in Proposition 
\ref{propMarcmetricetdivergence}. It relies on the comparison between the Poincaré series $\mathcal{P}_G$ 
(and more precisely \eqref{étoilesomme}) and the potential of a transfer operator $\ot_\mathfrak{h}$ associated to the 
action of $G$ on $\left(\xx,\tilde{g}_{\beta,\mathfrak{h}}\right)$ (see section 4 in \cite{Pe1}). 

Recall that $G=\langle p,h\rangle$, where $\langle p\rangle$ is convergent with critical exponent $\frac{1}{2}$. Fix 
$\mathfrak{h}\in[0,\mathfrak{h}_0]$. We formally introduce the following operator: for any 
$\varphi\in\mathbb{L}^{\infty}(\LL_G,\R)$, any $s>0$ and $x\in\LL_{G}$ 
$$\ot_{\mathfrak{h},s}\varphi(x)=\sum\limits_{\g\in\{p,h\}}\sum\limits_{n\in\Z^*}\un_{D_\g^c}(x)e^{-s\mathcal{B}_x^{(\mathfrak{h})}(\g^{-n}.\oo,\oo)}\varphi(\g^n.x)$$
\noindent
where $\mathcal{B}_x^{(\mathfrak{h})}(\g^{-n}.\oo,\oo)$ is the Busemann cocycle corresponding to the metric 
$\tilde{g}_{\beta,\mathfrak{h}}$. Using the fact that $p$ and $h$ are in Schottky position, we get for any 
$l\geqslant1$ and $s>0$,
$$\sum\limits_{m_i,n_i\in\Z^*}e^{-s\dhy_\mathfrak{h}(h^{m_1}p^{n_1}...h^{m_l}p^{n_l}.\oo,\oo)}
\asymp\left|\ot_{\mathfrak{h},s}^{2l}\un_{\LL_G}\right|_{\infty}$$
\noindent
where $\dhy_\mathfrak{h}$ is the distance induced by the metric $\tilde{g}_{\beta,\mathfrak{h}}$. By \eqref{étoilesomme}, 
this implies that $\mathcal{P}_G(s)$ behaves like 
$\sum_{l\geqslant1}\left|\ot_{\mathfrak{h},s}^{2l}\un_{\LL_G}\right|_{\infty}$. Since 
$\ot_{\mathfrak{h},s}$ is a positive operator, its spectral radius $\rho_{\infty}(\ot_{\mathfrak{h},s})$ on 
$\mathbb{L}^{\infty}(\LL_G,\R)$ is given by 
$$\limsup\limits_{l\longrightarrow+\infty}\left(\left|\ot_{\mathfrak{h},s}^{2l}\un_{\LL_G}\right|_{\infty}\right)^{\frac{1}{2l}}.$$
\noindent
Proposition \ref{propMarcmetricetdivergence} thus implies that
\begin{itemize}
 \item[$\bullet$] the series $\sum_{l\geqslant1}\left|\ot_{0,\frac{1}{2}}^{2l}\un_{\LL_G}\right|_{\infty}$ converges and $\rho_{\infty}(\ot_{0,\frac{1}{2}})\leqslant1$;
 \item[$\bullet$] the series $\sum_{l\geqslant1}\left|\ot_{\mathfrak{h},\frac{1}{2}}^{2l}\un_{\LL_G}\right|_{\infty}$ 
 diverges and $\rho_{\infty}(\ot_{\mathfrak{h},\frac{1}{2}})\geqslant1$ when $\mathfrak{h}\geqslant\mathfrak{h}_0$.
\end{itemize}
\noindent
In \cite{Pe1} is proved that there is a unique $\mathfrak{h}_*\in]0,\mathfrak{h}_0[$ such that 
$\rho_{\infty}(\ot_{\mathfrak{h}_*,\frac{1}{2}})=1$. The group $G$ will have 
the required properties acting on $\left(\xx,\tilde{g}_{\beta,\mathfrak{h}_*}\right)$. The existence of such 
$\mathfrak{h}_*$ is based on the regularity of the map $\mathfrak{h}\longmapsto\rho_{\infty}(\ot_{\mathfrak{h},\frac{1}{2}})$.
This is hard to obtain. Generally the map $\ot\longmapsto\rho_{\infty}(\ot)$ is only semi-lower continuous (\cite{Kato}). To overcome this lack of regularity, 
the author of \cite{Pe1} makes the family of operators 
$\left(\ot_{\mathfrak{h},\frac{1}{2}}\right)_{\mathfrak{h}\in[0,\mathfrak{h}_0]}$ act on the following subspace 
$\mathbb{L}_{\omega}\left(\LL_G\right)$ of 
$\mathbb{L}^{\infty}(\LL_G,\R)$ 
$$\mathbb{L}_{\omega}\left(\LL_G\right):=\left\{\varphi\in\mathcal{C}\left(\LL_G\right)\ |\ |\varphi|_\omega:=|\varphi|_\infty+\left[\varphi\right]_\omega<+\infty\right\}$$
\noindent
where 
$\left[\varphi\right]_\omega:=\underset{\g\in\{p,h\}}{\sup}
\underset{\underset{x\neq y}{x,y\in D_\g}}{\sup}\frac{|\varphi(x)-\varphi(y)|}{\dhy_\oo^{(\mathfrak{h})}(x,y)^\omega}$ 
and $\omega\in]0,1[$ is suitably chosen in order to satisfy Fact 3.7 of \cite{Pe1}. Here the distance 
$\dhy_\oo^{(\mathfrak{h})}(x,y)$ is the Gromov distance on the boundary of $\xx$ 
seen from the point $\oo\in\xx$ and associated with the metric $\tilde{g}_{\beta,\mathfrak{h}}$ (see Section 2.1). Denote by $\rho(a)$ the spectral radius of 
$\ot_{\mathfrak{h},\frac{1}{2}}$ on this space. M. Peigné then shows that this spectral radius is a simple isolated 
eigenvalue in the spectrum of 
$\ot_{\mathfrak{h},\frac{1}{2}}$ and equals to $\rho_{\infty}\left(\ot_{a,\frac{1}{2}}\right)$. This property of 
spectral gap for each 
$\ot_{\mathfrak{h},\frac{1}{2}}$, $\mathfrak{h}\in[0,\mathfrak{h}_0]$, is the key to obtain the required regularity of 
the map  $\mathfrak{h}\longmapsto\rho_{\infty}(\ot_{\mathfrak{h},\frac{1}{2}})$.

Finally, the proof of the main result (paragraph 4.5) explains with detailed  arguments why there exists 
$\mathfrak{h}_*\in[0,\mathfrak{h}_0]$ satisfying 
$\rho_{\infty}\left(\ot_{\mathfrak{h}_*,\frac{1}{2}}\right)=1$ and why $G=\langle p,h\rangle$ is divergent with critical 
exponent $\frac{1}{2}$ for the metric 
$\tilde{g}_{\beta,\mathfrak{h}_*}$. Since $G$ is convergent for $\tilde{g}_{\beta,0}$ and has critical exponent 
$>\frac{1}{2}$ for $\tilde{g}_{\beta,\mathfrak{h}_0}$ (because of Proposition \ref{propMarcmetricetdivergence}), then 
$\mathfrak{h}_*\in]0,\mathfrak{h}_0[$. Then we can apply the 
criterion of finiteness of $m_G$ given by Theorem B of \cite{DOP} above: we obtain that the group $G$ has 
infinite measure if $\beta\leqslant1$ (see the remark below). In the last section of \cite{Pe1}, the author also proves that the 
parameter $\mathfrak{h}_*$ is unique in $]0,\mathfrak{h}_0[$, which achieves the proof of Proposition \ref{existenceduaetoile}.
\begin{rem}\label{remarquebeta=1}
By Theorem B in \cite{DOP}, the measure $m_G$ is infinite if and only if 
$$\sum\limits_{p\in P}\dhy(\oo,p.\oo)e^{-\frac{1}{2}\dhy(\oo,p.\oo)}\asymp\sum\limits_{k\geqslant1}
\sum\limits_{\underset{\dhy(\oo,p.\oo)>k}{p\in P}}e^{-\frac{1}{2}\dhy(\oo,p.\oo)}
\asymp\sum\limits_{k\geqslant1}\frac{L(k)}{k^{\beta}}=+\infty.$$
\noindent
If we denote by 
$\tilde{L}_\beta(t)=\displaystyle{\int_1^t}\frac{L(x)}{x^{\beta}}\dd x$, then $m_G$ is infinite if and only if 
$\lim\limits_{t\longrightarrow+\infty}\tilde{L}_\beta(t)=+\infty$. The measure is thus infinite for 
$\beta<1$.
When $\beta=1$, it may not be true: for example, if we choose as slowly varying function $L=\ln^{-2}$, then using 
Proposition \ref{prop:PAsymp}, one gets
$\dhy(\oo,p^n.\oo)=2\ln(|n|)+4\ln\ln(|n|)+4\ln\ln\ln(|n|)+\varepsilon(n)$ with 
$\lim\limits_{n\longrightarrow+\infty}\varepsilon(n)=0$ and in this case
$\sum_{p\in P}\dhy(\oo,p.\oo)e^{-\frac{1}{2}\dhy(\oo,p.\oo)}\asymp\sum_{k\geqslant1}\dfrac{1}{k\ln(k)^2}$.
\noindent
In our setting of infinite measure $m_\G$, the function $\tilde{L}_\beta$ always will satisfy 
$\lim\limits_{x\longrightarrow+\infty}\tilde{L}_\beta(x)=+\infty$.
\end{rem}
\subsubsection{Comments}
Combining Paragraph 2.2.3 and Proposition \ref{existenceduaetoile}, we may notice that $G=\langle p,h\rangle$ is an 
example of group satisfying the family of assumptions $(H_\beta)$ given in the introduction (except about 
the number of Schottky factors: we may fix this, for instance, adding a hyperbolic generator to $G$). Indeed:
\begin{itemize}
 \item[$\bullet$] $G$ satisfies Assumption $(D)$ by Proposition \ref{existenceduaetoile};
 \item[$\bullet$]the subgroup $P=\langle p\rangle$ is convergent by the choice of the metric $g_{\beta,\mathfrak{h}^*}$; 
moreover Proposition \ref{existenceduaetoile} and Paragraph 2.2.3 ensure that $P$ satisfies Assumptions 
$(P_1)$, $(P_2)$ and $(S)$;
 \item[$\bullet$]similarly, Assumptions $(N)$ and $(S)$ are satisfied by the tail of the Poincaré series of the hyperbolic
group $\langle h\rangle$ at the exponent $\delta_G=\frac{1}{2}$: indeed, the critical exponent of this group is $0$, 
therefore the sums
$$\sum\limits_{n\ |\ \dhy(\oo,h^n.\oo)>T}e^{-\frac{1}{2}\dhy(\oo,h^n.\oo)}\ \text{and} 
\sum\limits_{n\ |\ T\leqslant\dhy(\oo,h^n.\oo)\leqslant T+\Delta}e^{-\frac{1}{2}\dhy(\oo,h^n.\oo)}$$
\noindent
behaves like $e^{-\frac{T}{4}l(h)}$ as $T\longrightarrow+\infty$ and $(N)$ and $(S)$ follow in this case.
\end{itemize}
%
%
\section{Regularly varying functions and stable laws}

This chapter is devoted to the statments of some properties of regularly varying functions and their applications to the study of stable laws. We first recall some facts about regularly varying functions. We then use them to the study of the local behaviour of the characteristic function of a probability law whose tail is controled by regularly varying functions.
\subsection{Slowly varying functions}
\subsubsection{Definitions and classical results}
 
\begin{defi}\
\begin{enumerate}
 \item[i)]A measurable function $L:\R^{+}\longrightarrow\R^{+}$ varies slowly at infinity if for any $x>0$
\begin{equation}\label{defislowly}\lim_{t\longrightarrow+\infty}\dfrac{L(xt)}{L(t)}=1.\end{equation}
 \item[ii)] A measurable function $U:\R^{+}\longrightarrow\R^{+}$ varies regularly with exponent $\beta\in\R$ if for any $t\in\R^+$, it satisfies
$U(t)=t^{\beta}L(t)$ with $L$ slowly varying.
\end{enumerate}
\end{defi}
\noindent
Notice that, by Theorem 1.2.1 in \cite{BGT}, the convergence 
\eqref{defislowly} is uniform in $x\in[a,b]$, for any compact interval $[a,b]\subset]0,+\infty[$.
\subsubsection{Karamata and Potter's lemmas}
The following lemma precises the property of integration of regularly varying functions.
\begin{lem}[Karamata]\label{regvar}
Let $\beta\in\R$ and $L\ :\ \R^+\longrightarrow\R^+$ be a slowly varying function.
\begin{itemize}
 \item[-] If $\beta>1$, then 
$$\int_x^{+\infty}\dfrac{L(y)}{y^\beta}\dd y\sim\dfrac{L(x)}{(\beta-1)x^{\beta-1}}\ when\ 
x\longrightarrow+\infty.$$
 \item[-] If $\beta\leqslant 1$, then the function $\tilde{L}_\beta\ :\ x\longmapsto\displaystyle{\int\limits_1^{x}}\frac{L(y)}{y^\beta}\dd y$ is regularly varying with exponent 
$1-\beta$; moreover, when $x\longrightarrow+\infty$
$$\tilde{L}_\beta(x)\sim\dfrac{x^{1-\beta}L(x)}{1-\beta}\ if\ \beta<1\ and\ 
\dfrac{L(x)}{\tilde{L}_1(x)}\longrightarrow0\ otherwise.$$
\end{itemize}
\end{lem}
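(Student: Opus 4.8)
The plan is to use the Karamata representation theorem for slowly varying functions, which is Theorem 1.3.1 in \cite{BGT}: a function $L$ varies slowly at infinity if and only if there exist a measurable function $c(t)\to c\in]0,+\infty[$ and a measurable function $\epsilon(t)\to 0$ as $t\to+\infty$ such that
$$L(t)=c(t)\exp\left(\int_{t_0}^t\frac{\epsilon(y)}{y}\,\dd y\right)$$
for $t\geq t_0$. From this one reads off immediately that for any $\eta>0$ the function $t\mapsto t^{\eta}L(t)$ is eventually increasing and $t\mapsto t^{-\eta}L(t)$ is eventually decreasing (Potter-type bounds), and these monotonicity facts are the workhorses for the integral asymptotics.

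For the case $\beta>1$, first I would write $\int_x^{+\infty}L(y)y^{-\beta}\,\dd y$, which converges because of the bound $L(y)\leq C_\theta y^\theta$ from \eqref{majorationslowlyvarying} applied with $\theta<\beta-1$. Then I would perform the change of variable $y=xu$ to get $x^{1-\beta}\int_1^{+\infty}\frac{L(xu)}{L(x)}u^{-\beta}\,\dd u\cdot L(x)$; the integrand converges pointwise to $u^{-\beta}$ by the definition of slow variation, and is dominated by an integrable function on $[1,+\infty[$ using the Potter bound $L(xu)/L(x)\leq 2u^{\theta}$ for $x$ large and a suitable $\theta<\beta-1$. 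Dominated convergence then gives $\int_1^{+\infty}u^{-\beta}\,\dd u=\frac{1}{\beta-1}$, which is the claimed equivalent. The case $\beta<1$ is entirely analogous after the substitution in $\tilde L_\beta(x)=\int_1^x L(y)y^{-\beta}\,\dd y=x^{1-\beta}\int_{1/x}^1\frac{L(xu)}{L(x)}u^{-\beta}\,\dd u\cdot L(x)$, where now one needs the Potter bound $L(xu)/L(x)\leq 2u^{-\theta}$ for $u\in]0,1]$ with $\theta$ chosen so that $-\theta-\beta>-1$, i.e. $\theta<1-\beta$, to apply dominated convergence and obtain the limit $\int_0^1 u^{-\beta}\,\dd u=\frac{1}{1-\beta}$. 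That $\tilde L_\beta$ is regularly varying with exponent $1-\beta$ follows since $\tilde L_\beta(x)/x^{1-\beta}$ tends to the positive constant $L(x)/(1-\beta)$ — more precisely, one checks directly that $\tilde L_\beta(\lambda x)/\tilde L_\beta(x)\to\lambda^{1-\beta}$ using the asymptotic equivalent just established together with slow variation of $L$.

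The remaining case $\beta=1$ is the delicate one: here $\tilde L_1(x)=\int_1^x L(y)y^{-1}\,\dd y$ may diverge or converge, but under our running hypotheses (cf. Remark \ref{remarquebeta=1}) we are in the divergent situation, and we must show $L(x)/\tilde L_1(x)\to 0$. I expect this to be the main obstacle, since the naive dominated-convergence argument breaks down. The standard trick is to note that $\tilde L_1$ is itself slowly varying (its logarithmic derivative $L(x)/(x\tilde L_1(x))$ tends to $0$, which is essentially what we are proving — so this must be argued carefully and not circularly): instead one shows directly that for any $\epsilon>0$ and $x$ large, $\tilde L_1(x)\geq\tilde L_1(\sqrt x)+\int_{\sqrt x}^x L(y)y^{-1}\,\dd y\geq(1-\epsilon)L(x)\int_{\sqrt x}^x y^{-1}\,\dd y=(1-\epsilon)L(x)\cdot\tfrac12\ln x$, using that $L(y)/L(x)\geq 1-\epsilon$ uniformly for $y\in[\sqrt x,x]$ by the uniform convergence in slow variation (Theorem 1.2.1 of \cite{BGT}) applied on a scale that still grows. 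Hence $L(x)/\tilde L_1(x)\leq 2/((1-\epsilon)\ln x)\to 0$. This is exactly Proposition 1.5.9a (and the de Haan theory) of \cite{BGT}; I would cite it and present the short self-contained argument above.
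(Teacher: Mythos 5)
The paper itself gives no proof of this lemma (it refers the reader to \cite{BGT}), so you are supplying the standard argument from scratch. Your treatment of the cases $\beta>1$ and $\beta<1$ — substitution $y=xu$, Potter domination, dominated convergence, then deducing regular variation of $\tilde L_\beta$ from the asymptotic equivalent — is the classical route and is essentially sound, up to the routine remark that the Potter bound is only valid once $xu\geqslant T$, so the range $u\in[1/x,T/x]$ must be split off and seen to contribute only $O(x^{\beta-1})$ to the rescaled integral, which is negligible against $x^{1-\beta}L(x)$.

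The genuine gap is in your $\beta=1$ argument. The uniform convergence theorem gives $L(\lambda x)/L(x)\to1$ uniformly only for $\lambda$ in \emph{compact} subsets of $]0,+\infty[$; the window $y\in[\sqrt x,x]$ corresponds to $\lambda\in[x^{-1/2},1]$, which is not such a set, and the bound $L(y)\geqslant(1-\epsilon)L(x)$ on that window is false in general. For instance $L(t)=\exp\left(\sqrt{\ln t}\right)$ is slowly varying, yet $L(\sqrt x)/L(x)=\exp\left(-\left(1-\tfrac{1}{\sqrt2}\right)\sqrt{\ln x}\right)\to0$; moreover $\tilde L_1(x)\sim 2\sqrt{\ln x}\,e^{\sqrt{\ln x}}$ there, so your quantitative conclusion $L(x)/\tilde L_1(x)\preceq 1/\ln x$ also fails (the true order is $1/(2\sqrt{\ln x})$). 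The statement survives, and the repair is small: for a \emph{fixed} $\Lambda>1$,
$$\int_{x/\Lambda}^{x}\dfrac{L(y)}{y}\dd y=\int_{1/\Lambda}^{1}\dfrac{L(xu)}{u}\dd u\sim L(x)\ln\Lambda,$$
by uniform convergence on the compact $[1/\Lambda,1]$, hence $\liminf_{x\to+\infty}\tilde L_1(x)/L(x)\geqslant\ln\Lambda$ for every $\Lambda$, i.e. $L(x)/\tilde L_1(x)\to0$ in the divergent case (and when $\tilde L_1$ is bounded the same two lines give $L(x)\to0$, so the ratio still vanishes). Note also that the lemma asserts $\tilde L_\beta$ is regularly varying of exponent $1-\beta$ for \emph{all} $\beta\leqslant1$, so you still owe slow variation of $\tilde L_1$; once $L(x)/\tilde L_1(x)\to0$ is known it follows from $\tilde L_1(\lambda x)-\tilde L_1(x)\sim L(x)\ln\lambda=o\left(\tilde L_1(x)\right)$. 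Alternatively, cite Proposition 1.5.9a of \cite{BGT} directly, as you suggest.
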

\begin{rem}
This lemma is also true in the discrete case. If $\beta>1$, then 
$\sum_{n=N}^{+\infty}\dfrac{L(n)}{n^\alpha}\sim\dfrac{L(N)}{(\beta-1)N^{\alpha-1}}$ when 
$N\longrightarrow+\infty$. If $\beta\leqslant 1$, then, setting $\tilde{L}_\beta(N)= \sum_{n=1}^{N}\frac{L(n)}{n^\beta}$, one gets
$\tilde{L}_\beta(N)\sim\dfrac{N^{1-\beta}L(N)}{1-\beta}$ if $\beta<1$ and  
$\dfrac{L(N)}{\tilde{L}_1(N)}\longrightarrow0$ otherwise, when $N\longrightarrow+\infty$. In the sequel, to simplify the text, we will denote $\tilde{L}_1$ by $\tilde{L}$, when there is no possible confusion.
\end{rem}

The following lemma gives a control of the oscillations of a slowly varying function.
\begin{lem}[Potter's Bound]\label{PB}
If $L$ is a slowly varying function then for any fixed $B>1$ and $\rho>0$, there exists $T=T(B,\rho)$ such that for any $x,y\geqslant T$
$$\dfrac{L(x)}{L(y)}\leqslant B\max\left(\left(\dfrac{x}{y}\right)^{\rho},\left(\dfrac{x}{y}\right)^{-\rho}\right).$$
\end{lem}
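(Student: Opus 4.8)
The plan is to derive the bound from the Karamata representation theorem for slowly varying functions, which is the standard route. First I would recall that by the representation theorem (Theorem 1.3.1 in \cite{BGT}), any slowly varying function $L$ can be written, for $t$ large enough, in the form
$$L(t)=c(t)\exp\left(\int_{t_0}^{t}\frac{\varepsilon(u)}{u}\,\dd u\right),$$
where $c(t)\to c\in]0,+\infty[$ as $t\to+\infty$ and $\varepsilon(u)\to 0$ as $u\to+\infty$. This is the one nontrivial input; everything else is elementary estimation.

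Next, given $B>1$ and $\rho>0$, I would choose parameters to absorb the two factors separately. Since $c(t)\to c>0$, there is a threshold beyond which $c(x)/c(y)\leqslant\sqrt{B}$ for all $x,y$ past that threshold. Since $\varepsilon(u)\to 0$, there is a threshold beyond which $|\varepsilon(u)|\leqslant\rho$; then for $x\geqslant y$ past this threshold,
$$\left|\int_{y}^{x}\frac{\varepsilon(u)}{u}\,\dd u\right|\leqslant\rho\int_{y}^{x}\frac{\dd u}{u}=\rho\ln\frac{x}{y},$$
so the exponential factor in $L(x)/L(y)$ is at most $(x/y)^{\rho}$; for $x\leqslant y$ one gets $(x/y)^{-\rho}$ by the same computation with the roles swapped. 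Taking $T=T(B,\rho)$ to be the maximum of the two thresholds, and possibly enlarging $\sqrt B$ to $B$ if one prefers to avoid square roots by instead choosing the $c$-threshold so that $c(x)/c(y)\leqslant B^{1/2}$, one obtains for all $x,y\geqslant T$
$$\frac{L(x)}{L(y)}\leqslant B\max\left(\left(\frac{x}{y}\right)^{\rho},\left(\frac{x}{y}\right)^{-\rho}\right),$$
which is exactly the claim.

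The only real obstacle is that the proof genuinely relies on the representation theorem; without it one would have to argue more painfully from the uniform convergence in \eqref{defislowly} on compact subsets of $]0,+\infty[$ together with a chaining argument to bridge far-apart $x$ and $y$, which is more delicate because the uniformity is only local. Since the representation theorem is available from \cite{BGT}, I would simply invoke it and the proof reduces to the two bookkeeping steps above. A brief remark at the end could note that the same argument, applied on the integers, gives the discrete version used later in the paper.
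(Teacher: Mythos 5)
Your argument is correct: the Karamata representation theorem plus the two threshold choices (one for $c(x)/c(y)$, one for $|\varepsilon(u)|\leqslant\rho$) gives exactly the stated bound. The paper itself offers no proof — it simply refers to \cite{BGT} — and your proof is precisely the standard representation-theorem argument from that reference, so there is nothing to compare beyond noting that you have supplied the proof the paper delegates.
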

For more details about these lemmas, we refer to \cite{BGT}.
\subsection{Applications}
\subsubsection{Local estimates for characteristic functions}
In this paragraph, we study the local behaviour of the characteristic function of probability laws, which are in the ``attraction domain'' of a stable law.
\begin{defi}
Let $\mu$ be a probability measure on $\R^+$. The probability measure $\mu$ is said to be stable if for any $a,b>0$ and any independent random variables $X,Y$ and $Z$ with law $\mu$, there exist $c>0$ and $\alpha\in\R$ such that the laws of the random variables $aX+bY$ and $cZ+\alpha$ are the same.
\end{defi}
This notion of stable law appears in the study of limit distributions of normalized sums 
\begin{equation}\label{sumandstablelaws}S_n=\dfrac{X_1+...+X_n}{a_n}-B_n\end{equation}
\noindent
of independent, identically distributed
random variables $X_1,X_2,...,X_n,...$, where $a_n>0$ and $B_n$ are suitably chosen real constants. We will focus on 
particular stable laws: a probability law is said to be fully asymmetric and stable with parameter $\beta\in(0,1)$ if its characteristic function is given by 
$g_{\beta}(t)=e^{-\G(1-\beta)e^{i\mathrm{sign}(t)\beta\pi/2}}|t|^{\beta}$, where $\G$ is the gamma function  (\cite{G-K} p.162). The density 
of such a distribution is a continuous function 
$\Psi_{\beta}$ supported on $[0,+\infty)$. If a probability law $\mu$ is such that a normalized sum of independent and identically distributed 
random variables $X_n$ with law $\mu$ converges in distribution to a stable law, we say that $\mu$ belongs to the domain of attraction of this stable law. Let us now give some information about the normalizing sequence $(a_n)_n$ appearing in sums of type \eqref{sumandstablelaws} in the case of stable law with parameter $\beta\in(0,1)$. Such
a sequence must satisfy $\frac{a_n^{\beta}}{L(a_n)}=n$ where $L$ is a slowly varying function (\cite{G-K}, p.180). In other terms, setting $A(x)=\frac{x^{\beta}}{L(x)}$, the 
sequence $(a_n)_n$ satisfies $A(a_n)=n$. By Proposition 1.5.12 in \cite{BGT}, there exists an increasing and regularly varying 
function $A^*$ with exponent $\frac{1}{\beta}$ such that $a_n=A^*(n)$. The function $A^*$ also satisfy $A^*(A(x))\sim A(A^*(x))\sim x$ when $x\longrightarrow+\infty$.

The following proposition explains the link between our setting and the class of stable laws with parameter $\beta$. 
\begin{prop}\label{bassinattractionloistable}
Let $\beta\in]0,1[$, $L$ be a slowly varying function and $\mu$ be a probability measure on $\R^+$. If the distribution function $F$ of the law $\mu$ satisfies $1-F(T)\sim\frac{L(T)}{T^\beta}$ when $T\longrightarrow+\infty$, then $\mu$ is in the domain of attraction of a fully asymmetric stable law with parameter $\beta$.
\end{prop}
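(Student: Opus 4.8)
The statement is a classical criterion for membership in the domain of attraction of a stable law; the plan is to reduce it to the standard theory of regularly varying functions and characteristic functions, as developed for instance in \cite{BGT} and \cite{G-K}. First I would recall the general principle (see \cite{G-K}, Chapter 7, or \cite{BGT}, Theorem 8.3.1): a probability measure $\mu$ on $\R$ with distribution function $F$ belongs to the domain of attraction of a stable law of index $\beta\in]0,1[$ if and only if the tails of $F$ are regularly varying with index $-\beta$ and satisfy a tail-balance condition. Here $\mu$ is supported on $\R^+$, so the left tail vanishes and the balance condition degenerates to the fully asymmetric case (the skewness parameter equals $+1$), which is exactly the law whose characteristic function is $g_\beta$. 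So the crux is to verify the regular variation of the tail, which is precisely the hypothesis $1-F(T)\sim L(T)/T^\beta$.

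The key steps, in order, are as follows. \emph{Step 1:} Observe that $1-F(T)=\mu(]T,+\infty[)$ is, by hypothesis, asymptotically equivalent to the regularly varying function $T\mapsto L(T)/T^\beta$ of index $-\beta$, and that $F(-T)=0$ for $T>0$; hence the tail-balance ratio is trivially $1$ (mass all on the positive side). \emph{Step 2:} Compute the truncated second moment $U(T):=\int_{[-T,T]}x^2\,\dd\mu(x)=\int_0^{T^2}\mu(\{x^2>u\})\,\dd u$ minus boundary corrections, or more directly invoke Karamata's theorem (Lemma \ref{regvar}) to show that $U(T)$ is regularly varying of index $2-\beta$ and that $T^2(1-F(T))/U(T)\to (2-\beta)/\beta$; this is the analytic content behind the equivalence ``regularly varying tails $\iff$ domain of attraction''. \emph{Step 3:} From Step 2, produce the normalizing sequence: define $A(x)=x^\beta/L(x)$, which is regularly varying of index $\beta$, let $A^*$ be its asymptotic inverse (regularly varying of index $1/\beta$, as recalled before the proposition via Proposition 1.5.12 of \cite{BGT}), and set $a_n=A^*(n)$, so that $A(a_n)\sim n$, i.e. $n(1-F(a_n))\to 1$ up to the slowly varying correction. \emph{Step 4:} Translate the tail estimate into a local expansion of the characteristic function $\widehat\mu$ near $0$: using integration by parts and Karamata's theorem once more, show that $1-\widehat\mu(t/a_n)=\frac{1}{n}\big(\Gamma(1-\beta)e^{-i\,\mathrm{sign}(t)\beta\pi/2}|t|^\beta+o(1)\big)$ as $n\to\infty$, uniformly for $t$ in compact sets. \emph{Step 5:} Conclude by the continuity theorem: $\widehat\mu(t/a_n)^n\to g_\beta(t)$ pointwise, hence $(X_1+\cdots+X_n)/a_n$ converges in distribution to the fully asymmetric stable law of index $\beta$, which means exactly that $\mu$ lies in its domain of attraction.

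The main obstacle is \emph{Step 4}: obtaining the precise constant $\Gamma(1-\beta)$ and the precise phase $e^{-i\,\mathrm{sign}(t)\beta\pi/2}$ in the local expansion of $1-\widehat\mu$. This requires care with the Fourier transform of a regularly varying tail: one writes $1-\widehat\mu(t)=\int_0^\infty(1-\cos(tx))\,\dd\mu(x)+i\int_0^\infty(tx-\sin(tx))\text{-type corrections}$—more cleanly, $1-\widehat\mu(t)=-\int_0^\infty(e^{itx}-1)\,\dd\mu(x)$—and then integrates by parts to transfer the estimate onto the tail $1-F$, so that $\int_0^\infty(e^{itx}-1)\,\dd(1-F(x))$ becomes $it\int_0^\infty e^{itx}(1-F(x))\,\dd x$; substituting the asymptotics $1-F(x)\sim L(x)/x^\beta$ and rescaling by $a_n$ turns this into $it\int_0^\infty e^{itx/a_n}L(a_n y)(a_n y)^{-\beta}a_n\,\dd y$, and the slowly varying factor is absorbed by the uniform convergence property of $L$ recalled after the definition. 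The remaining integral $\int_0^\infty e^{iu}u^{-\beta}\,\dd u$ is a standard Fresnel-type integral evaluating to $\Gamma(1-\beta)e^{i(1-\beta)\pi/2}$, which after combining with the factor $it=i|t|\,\mathrm{sign}(t)$ yields exactly the claimed form of $g_\beta$. The slowly varying function $L$ complicates each estimate but is handled throughout by Potter's bound (Lemma \ref{PB}) to dominate the integrands and by the uniform convergence of $L(xt)/L(t)$ on compact $x$-intervals to pass to the limit. Once these estimates are in place, the convergence $\widehat\mu(t/a_n)^n\to g_\beta(t)$ and the identification of the domain of attraction are immediate.
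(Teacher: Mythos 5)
The paper gives no proof of this proposition: it is stated as a classical fact from the Gnedenko--Kolmogorov theory of stable laws (the references to \cite{G-K} around the statement, and \cite{BGT} for the regular-variation machinery), so there is no ``paper proof'' to compare against step by step. Your outline is the standard argument and it is correct; moreover your Step 4 is exactly the content of the Erd\H{o}s-type expansion of $\widehat{\nu}$ that the paper itself quotes later (in the proof of Proposition \ref{theoreticlocalexp}), so your route is fully consistent with the tools the paper uses elsewhere. Two small remarks. First, Steps 2--3 (truncated second moment, Feller/G--K criterion) and Steps 4--5 (direct local expansion of $\widehat{\mu}$ plus the continuity theorem) are two \emph{independent} sufficient arguments; either one alone finishes the proof, so you could drop Step 2 entirely. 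Second, there is a sign slip in the integration by parts: since $\dd\mu=-\,\dd(1-F)$, one gets $1-\widehat{\mu}(t)=-it\int_0^{+\infty}e^{itx}(1-F(x))\,\dd x$ (not $+it$); with the correct factor $-it$ and $\int_0^{+\infty}e^{iu}u^{-\beta}\,\dd u=\G(1-\beta)e^{i(1-\beta)\pi/2}$ the phases combine to give precisely the expansion $1-\widehat{\mu}(t)\sim\G(1-\beta)e^{-i\,\mathrm{sign}(t)\beta\pi/2}|t|^{\beta}L(1/|t|)$ you state in Step 4, which is the phase convention appearing in the paper's quoted result from \cite{Er} (the displayed definition of $g_\beta$ in Section 3 carries the conjugate phase; for a law supported on $\R^+$ and no centering, the limit is the one-sided stable law with the $e^{-i\,\mathrm{sign}(t)\beta\pi/2}$ phase, as you obtain). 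Neither point affects the validity of the argument.
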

We deduce from this proposition that a law $\mu$ whose density $f$ satisfies the asymptotic $f(x)\sim\frac{L(x)}{x^{1+\beta}}$ is in the domain of attraction of a fully asymmetric stable law with parameter $\beta$. Moreover, this asymptotic control of the distribution function $F$ is the one imposed on the tail of the Poincaré series of the Schottky factors $(\G_j)_j$ in assumptions $(P_2)$ 
and $(N)$. As we will see later, it will be of interest to precise the local expansion at $0$ of the 
characteristic function $\widehat{\mu}$ of $\mu$, which yields in our setting to a local expansion of the spectral radius of a family of transfer operators. First, we state the following proposition.
\begin{prop}\label{etapecontroltransfert}
Let $\beta\in]0,1]$ and $\mu$ be a probability measure on $\R^+$ whose distribution function $F(t):=\mu\left([0,T]\right)$ satisfies 
$$1-F(T)\asymp\dfrac{L(T)}{T^\beta}\ \left(\text{respectively}\ 
1-F(T)=o\left(\dfrac{L(T)}{T^\beta}\right)\right)\ \text{as}\ T\longrightarrow+\infty$$
\noindent
where $L$ is a slowly varying function. For any $\kappa>0$ and $t\in\R$
\begin{itemize}
 \item[1.] if $\beta<1$, then, as $t,\kappa\longrightarrow0$,
          \begin{align*}
          &\text{a)}\ \int_{0}^{+\infty}\left|e^{ity}-1\right|\mu(\dd y)\preceq|t|^\beta L\left(\dfrac{1}{|t|}\right)\ 
           \left(\text{resp.}\ =o\left(|t|^\beta L\left(\dfrac{1}{|t|}\right)\right)\right);\\
          &\text{b)}\ \int_{0}^{+\infty}\left|e^{-\kappa y}-1\right|\mu(\dd y)\preceq\kappa^\beta L\left(\dfrac{1}{\kappa}\right)\ 
           \left(\text{resp.}\ =o\left(\kappa^\beta L\left(\dfrac{1}{\kappa}\right)\right)\right).
          \end{align*}
 \item[2.] if $\beta=1$, then, as $t,\kappa\longrightarrow0$,
          \begin{align*}
          &\text{a)}\ \int_{0}^{+\infty}\left|e^{ity}-1\right|\mu(\dd y)\preceq|t|\tilde{L}\left(\dfrac{1}{|t|}\right)\ 
           \left(\text{resp.}\ =o\left(|t| \tilde{L}\left(\dfrac{1}{|t|}\right)\right)\right);\\
          &\text{b)}\ \int_{0}^{+\infty}\left|e^{-\kappa y}-1\right|\mu(\dd y)\preceq\kappa \tilde{L}\left(\dfrac{1}{\kappa}\right)\ 
           \left(\text{resp.}\ =o\left(\kappa \tilde{L}\left(\dfrac{1}{\kappa}\right)\right)\right).
          \end{align*}
\end{itemize}
\end{prop}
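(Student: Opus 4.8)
The plan is to estimate the two integrals directly by splitting the domain of integration at the level $y \asymp 1/|t|$ (respectively $y \asymp 1/\kappa$), exploiting the elementary bounds $|e^{i t y}-1| \leqslant \min(2, |t| y)$ and $|e^{-\kappa y}-1| \leqslant \min(1, \kappa y)$. Write $G(y):=1-F(y)$ for the tail of $\mu$; by hypothesis $G(y) \asymp L(y)/y^\beta$ (resp. $G(y) = o(L(y)/y^\beta)$) as $y \to +\infty$. Integration by parts turns $\int_0^{+\infty} \phi(y)\,\mu(\dd y)$ into an expression involving $\int_0^{+\infty} \phi'(y) G(y)\,\dd y$, so the whole question reduces to estimating $\int_0^{1/|t|} y\, G(y)\,\dd y$ together with $\int_{1/|t|}^{+\infty} G(y)\,\dd y$ (and similarly with $\kappa$). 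These are exactly the integrals governed by Karamata's Lemma~\ref{regvar}.

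\textbf{Step 1: reduce to tail integrals.} For part a), split $\int_0^{+\infty}|e^{ity}-1|\,\mu(\dd y) = \int_0^{1/|t|} + \int_{1/|t|}^{+\infty}$. On the first piece bound $|e^{ity}-1|\leqslant |t| y$ and integrate by parts (or use Fubini on $|t|\int_0^{1/|t|} \int_0^y \dd u\,\mu(\dd y)$) to get a term of size $|t| \int_0^{1/|t|} G(y)\,\dd y$ plus a boundary term $\asymp G(1/|t|)/|t| \cdot |t|$. On the second piece bound $|e^{ity}-1| \leqslant 2$, giving $2 G(1/|t|)$. So up to constants the integral is controlled by $|t|\int_0^{1/|t|} G(y)\,\dd y + G(1/|t|)$. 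The estimate for b) is identical with $\kappa$ in place of $|t|$, using $|e^{-\kappa y}-1| \leqslant \kappa y \wedge 1$; in fact b) can also be deduced from a) by a standard comparison (the function $\kappa \mapsto \int|e^{-\kappa y}-1|\mu(\dd y)$ is dominated by $t\mapsto \int|e^{ity}-1|\mu(\dd y)$ up to a constant), but doing it directly is cleaner.

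\textbf{Step 2: apply Karamata.} When $\beta<1$: $G(1/|t|) \asymp |t|^\beta L(1/|t|)$ directly. For the remaining term $|t|\int_0^{1/|t|} G(y)\,\dd y$, since $G(y)\asymp L(y) y^{-\beta}$ with $\beta<1$, the function $y \mapsto \int_1^y G$ is regularly varying of index $1-\beta>0$, so by Karamata $\int_0^{1/|t|} G(y)\,\dd y \asymp (1/|t|)^{1-\beta} L(1/|t|)/(1-\beta)$ as $|t|\to 0$; multiplying by $|t|$ gives again $\asymp |t|^\beta L(1/|t|)$. Both contributions are of the claimed order, proving 1a) and 1b). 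In the $o(\cdot)$ case the same computation applies since Karamata's asymptotics pass through the hypothesis $G(y)=o(L(y)y^{-\beta})$ — one uses Potter's bound (Lemma~\ref{PB}) to dominate $G$ by $\varepsilon L(y) y^{-\beta}$ eventually and concludes $\int_0^{1/|t|} G \leqslant \varepsilon \cdot (\text{Karamata asymptotic})$ for $|t|$ small, hence $o(|t|^{\beta-1}L(1/|t|))$. When $\beta=1$: now $G(1/|t|) \asymp |t| L(1/|t|)$, which is itself $o(|t| \tilde L(1/|t|))$ since $L(x)/\tilde L(x) \to 0$ by Karamata; and the main term is $|t|\int_0^{1/|t|} G(y)\,\dd y \asymp |t| \int_1^{1/|t|} L(y)/y\,\dd y = |t|\,\tilde L(1/|t|)$ by definition of $\tilde L$. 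So the sum is $\asymp |t|\tilde L(1/|t|)$, which is 2a); 2b) follows the same way.

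\textbf{Main obstacle.} The routine part is the integration by parts and the case split; the one point requiring care is the passage from the hypothesis on $F$ — stated as an asymptotic for the tail $1-F(T)$ — to the behaviour of the \emph{integrated} tail near $0$, and in particular making the $o(\cdot)$ statements rigorous. The subtlety is that ``$G(y)=o(L(y) y^{-\beta})$'' does not by itself control $\int_0^{1/|t|} G$ pointwise; one must invoke Karamata together with Potter's bound to show that the dominating regularly varying primitive still absorbs the little-$o$, and one must check uniformity so that the error term is genuinely $o$ of the target quantity and not merely $O$. A secondary technical nuisance is handling the behaviour of $G$ near $y=0$ (where the hypothesis says nothing), but there $G\leqslant 1$ and $\int_0^1 y\,\dd y$, $\int_0^1 \dd y$ are harmless constants, absorbed since $|t|^\beta L(1/|t|) \to \infty$ or at least does not vanish faster than $|t|$. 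With these points handled, all four estimates follow uniformly in the stated limit regime.
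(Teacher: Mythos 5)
Your proof is correct and follows essentially the route the paper intends: the paper gives no argument of its own but simply invokes Lemma 2 of \cite{Er}, whose proof is exactly your splitting at $y\asymp 1/|t|$ (resp. $1/\kappa$) with $|e^{ity}-1|\leqslant\min(2,|t|y)$, reduction to tail integrals, and Karamata/Potter estimates. The only implicit point, which the paper shares, is that the little-$o$ conclusion in the case $\beta=1$ uses $\tilde{L}(x)\longrightarrow+\infty$, valid in the paper's infinite-measure setting.
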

Its proof follows the one of Lemma 2 in \cite{Er}.
In the following proposition, we focus on the case $\beta=1$.
\begin{prop}\label{asymptoticISetIC}
Let $\mu$ be a probability measure on $\R^+$ with distribution function $F$; for any $t\in\R$ and $\kappa>0$, denote by 
$$I_S:=\int_0^{+\infty}e^{-\kappa y}\sin(ty)(1-F(y))\dd y$$
\noindent
and
$$I_C:=\int_0^{+\infty}e^{-\kappa y}\cos(ty)(1-F(y))\dd y.$$
\noindent
If $F$ satisfies $1-F(T)\sim\frac{L(T)}{T}$ when $T\longrightarrow+\infty$, one gets as $t,\kappa\longrightarrow0$
\begin{align*}
&i)\ \left|I_S\right|\preceq\dfrac{|t|}{\kappa}L\left(\dfrac{1}{\kappa}\right);\\
&ii)\ \left|I_S\right|\preceq L\left(\dfrac{1}{|t|}\right);\\
&iii)\ I_C=\tilde{L}\left(\dfrac{1}{\kappa}\right)(1+o(1))+O\left(\dfrac{|t|}{\kappa}L\left(\dfrac{1}{\kappa}\right)\right);\\
&iv)\ I_C=\tilde{L}\left(\dfrac{1}{|t|}\right)(1+o(1))+O\left(\dfrac{\kappa}{|t|}L\left(\dfrac{1}{|t|}\right)\right).
\end{align*}
\end{prop}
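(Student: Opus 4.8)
The plan is to treat the four estimates as consequences of a single integration-by-parts identity together with the Karamata-type asymptotics of Lemma \ref{regvar}. First I would write, for $\kappa>0$ and $t\in\R$,
\begin{equation*}
I_S+iI_C=\int_0^{+\infty}e^{-\kappa y}e^{ity}(1-F(y))\,\dd y,
\end{equation*}
so that it suffices to control the real and imaginary parts of $\int_0^{+\infty}e^{-(\kappa-it)y}(1-F(y))\,\dd y$. Under the hypothesis $1-F(T)\sim L(T)/T$, Karamata's lemma (the case $\beta=1$) tells us that $G(y):=\int_0^y(1-F(u))\,\dd u\sim\tilde L(y)$ is regularly varying with exponent $1$, and $G$ is the natural object to integrate against, since $G'=1-F$. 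Integrating by parts,
\begin{equation*}
\int_0^{+\infty}e^{-(\kappa-it)y}(1-F(y))\,\dd y=(\kappa-it)\int_0^{+\infty}e^{-(\kappa-it)y}G(y)\,\dd y,
\end{equation*}
the boundary terms vanishing because $e^{-\kappa y}G(y)\to 0$. So everything reduces to estimating $J(\kappa,t):=\int_0^{+\infty}e^{-(\kappa-it)y}G(y)\,\dd y$ and then multiplying by $\kappa-it$.

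The key step is a scaling/Tauberian estimate for $J$. For the bounds driven by $\kappa$ (items i and iii) I would use that $\int_0^\infty e^{-\kappa y}G(y)\,\dd y\sim\kappa^{-1}\tilde L(1/\kappa)$ by the Hardy–Littlewood–Karamata Tauberian theorem (or directly by splitting the integral at $y=1/\kappa$ and using the uniform convergence of $L(xt)/L(t)$ on compacts, Potter's bound controlling the tails), and then estimate the difference $J(\kappa,t)-\int_0^\infty e^{-\kappa y}G(y)\,\dd y=\int_0^\infty e^{-\kappa y}(e^{ity}-1)G(y)\,\dd y$ using $|e^{ity}-1|\le\min(2,|t|y)$; splitting at $y=1/|t|$ and using $G(y)\preceq y\tilde L(1/\kappa)$ on $[0,1/\kappa]$ (with Potter on the overlap regions) gives $O(|t|\kappa^{-2}L(1/\kappa))$ for this difference, hence after multiplying by $|\kappa-it|\asymp\kappa$ one gets $I_C=\tilde L(1/\kappa)(1+o(1))+O(|t|\kappa^{-1}L(1/\kappa))$ and $|I_S|\preceq |t|\kappa^{-1}L(1/\kappa)$, which are iii) and i). For the bounds driven by $t$ (items ii and iv) I would run the same argument with the roles of $\kappa$ and $t$ swapped: write $J(\kappa,t)=\int_0^\infty e^{ity}G(y)\,\dd y+\int_0^\infty(e^{-\kappa y}-1)e^{ity}G(y)\,\dd y$, show the first term is $\sim (it)^{-1}\tilde L(1/|t|)$ up to a harmless constant and an oscillatory remainder (again splitting at $y=1/|t|$, using $|e^{ity}-1|\le\min(2,|t|y)$ on the complementary pieces to extract the leading term and bound the rest), and control the $\kappa$-correction by $|e^{-\kappa y}-1|\le\min(1,\kappa y)$, splitting at $y=1/\kappa$, to get $O(\kappa|t|^{-2}L(1/|t|))$; multiplying by $|\kappa-it|\asymp |t|$ yields $|I_S|\preceq L(1/|t|)$ and $I_C=\tilde L(1/|t|)(1+o(1))+O(\kappa|t|^{-1}L(1/|t|))$, which are ii) and iv).

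I expect the main obstacle to be making the leading-order extraction in the oscillatory integral $\int_0^\infty e^{ity}G(y)\,\dd y$ rigorous with the correct real/imaginary split: unlike the purely decaying kernel $e^{-\kappa y}$, here one must argue that $\mathrm{Re}\int_0^\infty e^{ity}G(y)\,\dd y$ (the piece feeding $I_S$) is genuinely smaller than the modulus $\tilde L(1/|t|)$ — this is where the cancellation in $\int_0^\infty(\cos ty)(1-F(y))\,\dd y$ must be exploited, via a further integration by parts or via the fact that $\int_0^\infty(\cos u)\,\dd u$ in the renormalized variable contributes only a bounded oscillatory factor while $\int_0^\infty(1-\cos u)u^{-1}\,\dd u$ diverges logarithmically and produces the $\tilde L$ in $I_C$. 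The slowly varying corrections throughout are handled uniformly by the uniform convergence theorem for slowly varying functions (Theorem 1.2.1 of \cite{BGT}) and by Potter's bound (Lemma \ref{PB}), so the only real care is in the combinatorics of which dyadic region contributes the main term versus an error. The scheme closely parallels the proof of Proposition \ref{etapecontroltransfert} (itself following Lemma 2 of \cite{Er}), and I would structure it the same way.
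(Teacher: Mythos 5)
The weak point is your very first move: integrating by parts to replace $1-F$ by its primitive $G(y)=\int_0^y(1-F(u))\,\dd u\sim\tilde{L}(y)$ and then estimating all corrections in absolute value. The whole content of the proposition is an asymmetry of scales: $I_C$ has main term $\tilde{L}$, while $I_S$ and all the error terms are of order $L$, and $L(x)/\tilde{L}(x)\longrightarrow0$. Once you have passed to $G\sim\tilde{L}$, every absolute-value bound you can write is of size $\tilde{L}$, not $L$. Concretely, your claimed bound $\int_0^{+\infty} e^{-\kappa y}\left|e^{ity}-1\right|G(y)\,\dd y=O\left(|t|\kappa^{-2}L(1/\kappa)\right)$ is false: the region $y\asymp1/\kappa$ alone contributes $\asymp|t|\kappa^{-2}\tilde{L}(1/\kappa)$. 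Worse, writing $J=J_C+iJ_S$ with $J_C=\int_0^{+\infty} e^{-\kappa y}\cos(ty)G(y)\,\dd y$ and $J_S=\int_0^{+\infty} e^{-\kappa y}\sin(ty)G(y)\,\dd y$, your identity gives $I_S=\kappa J_S-tJ_C$, where $tJ_C\asymp\frac{|t|}{\kappa}\tilde{L}(1/\kappa)$ and $\kappa J_S$ is of the same order; estimate i), which is smaller by the unbounded factor $\tilde{L}(1/\kappa)/L(1/\kappa)$, can therefore only come from a cancellation between these two terms, and that cancellation is exactly what absolute-value bounds taken after the integration by parts cannot detect -- exhibiting it amounts to estimating $\int_0^{+\infty} e^{-\kappa y}\sin(ty)(1-F(y))\,\dd y$ directly, i.e.\ the step you skipped. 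The same objection hits ii) and the error terms of iii)--iv); in the $t$-driven case there is the further problem that $\int_0^{+\infty}e^{ity}G(y)\,\dd y$ does not converge, even conditionally, when $\tilde{L}(+\infty)=+\infty$ (the relevant case here), so your decomposition $J=\int e^{ity}G+\int(e^{-\kappa y}-1)e^{ity}G$ is not defined. (Also, $G$ is slowly varying, not regularly varying of exponent $1$; that slip is harmless since the Abelian asymptotic you then invoke is the index-$0$ one.) You flag the cancellation issue yourself at the end, but "a further integration by parts" just undoes your reduction, so the proposal leaves the actual estimates unproved.

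The proof the paper has in mind (it simply refers to Proposition 6.2 of \cite{MT}) keeps $1-F(y)\sim L(y)/y$ inside the integral and pairs the small factor against the $1/y$ before any integration by parts. For i) and the error in iii), use $|\sin(ty)|\leqslant|t|y$ and $|\cos(ty)-1|\leqslant|t|y$, so the integrand becomes $e^{-\kappa y}L(y)$ up to constants and $\int_0^{+\infty}e^{-\kappa y}L(y)\,\dd y\preceq\kappa^{-1}L(1/\kappa)$; the main term of iii) is $\int_0^{+\infty}e^{-\kappa y}(1-F(y))\,\dd y=\tilde{L}(1/\kappa)(1+o(1))$, obtained by splitting at $1/\kappa$. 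For ii) and iv), split at $y=1/|t|$: on $[0,1/|t|]$, $|\sin(ty)|\leqslant|t|y$ gives $|t|\int_0^{1/|t|}L(y)\,\dd y\preceq L(1/|t|)$ by Karamata, and $\int_0^{1/|t|}(1-F(y))\,\dd y=\tilde{L}(1/|t|)(1+o(1))$ with the $\kappa$-correction handled by $|1-e^{-\kappa y}|\leqslant\kappa y$; on $[1/|t|,+\infty)$ one must exploit the oscillation, e.g.\ one integration by parts against $\sin(ty)$ (or pairing of half-periods), the boundary term being $\preceq\frac{1}{|t|}(1-F(1/|t|))\preceq L(1/|t|)$ and the variation of $e^{-\kappa y}(1-F(y))$ controlled by the monotonicity of $F$, again $\preceq L(1/|t|)$. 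So the fix is not a refinement of your scheme but a change in the order of operations: split first, keep $1-F$, and let $\tilde{L}$ arise only from $\int^{1/\max(|t|,\kappa)}L(y)\,\dd y/y$ in the cosine integral.
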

The proof of this proposition is the same as the one of Proposition 6.2 in \cite{MT}.
\subsubsection{Equivalence in Remark \ref{remark4}}
We now prove that Assertion $(S')$ implies $(S'')$ in Remark \ref{remark4}. Fix $j\in[\![1,p+q]\!]$. 
We thus assume that for any $\Delta>0$, there exists $C>0$ such that for any $T>0$, the quantity 
$\sharp\{\alpha\in\G_j\ |\ T-\Delta\leqslant\dhy(\oo,\alpha.\oo)<T+\Delta\}$ is bounded from above by 
$Ce^{\delta_\G T}\frac{L(T)}{T^{1+\beta}}$. We want to bound $\sharp\{\alpha\in\G_j\ |\ \dhy(\oo,\alpha.\oo)\leqslant T\}$ for any $T>0$. A ball centered in $\oo\in\xx$ of radius $T$ is the disjoint union of annulus of width $2\Delta$, so it is sufficient to prove that for any $N$ 
large enough, one gets
$$\Sigma:=\sum\limits_{n=1}^N\dfrac{e^{\delta_\G n}L(n)}{n^{1+\beta}}\preceq\dfrac{e^{\delta_\G N}L(N)}{N^{1+\beta}}.$$
\noindent
We split $\Sigma$ into $\Sigma_1+\Sigma_2$ where
$$\Sigma_1=\sum\limits_{n=1}^{\left[\frac{N}{2}\right]}\dfrac{e^{\delta_\G n}L(n)}{n^{1+\beta}}\ \text{and}\  
\Sigma_2=\sum\limits_{n=\left[\frac{N}{2}\right]+1}^{N}\dfrac{e^{\delta_\G n}L(n)}{n^{1+\beta}},$$
\noindent
where $[\cdot]$ denotes the integer part of $x$. By Karamata's lemma
\begin{equation}\label{5versusstar1}
\Sigma_1\preceq e^{\delta_\G\frac{N}{2}}\sum\limits_{n=1}^{\left[\frac{N}{2}\right]}L(n)
\preceq e^{\delta_\G\frac{N}{2}}\left[\frac{N}{2}\right]L\left(\left[\frac{N}{2}\right]\right)\preceq
NL(N)e^{\delta_\G\frac{N}{2}},
\end{equation}
\noindent
the last inequality following from Potter's lemma with $B=1$, $\rho=1$, $x=\left[\frac{N}{2}\right]$ and $y=N$. Similarly
\begin{equation}\label{5versusstar2}
\Sigma_2=\dfrac{L(N)}{N^{1+\beta}}\sum\limits_{n=\left[\frac{N}{2}\right]+1}^{N}e^{\delta_\G n}\dfrac{L(n)}{L(N)}\dfrac{N^{1+\beta}}{n^{1+\beta}}
\preceq\dfrac{L(N)}{N^{1+\beta}}\sum\limits_{n=\left[\frac{N}{2}\right]+1}^{N}e^{\delta_\G n}
\end{equation}
\noindent
where the last inequality may be deduced from Potter's lemma with $B,\rho=1$, $x=n$ and $y=N$.
The result follows combining \eqref{5versusstar1} and \eqref{5versusstar2} for $N$ large enough.

\section{Coding and transfer operators}

To study the geodesic flow $(g_t)_{t\in\R}$ on a negatively curved manifold, it is classical to conjugate it to a suspension over a shift on a symbolic space (see \cite{Bow} and \cite{PP}). The first part of this section is dedicated 
to the definition of the coding which will be used in 
Section 5.

In a second part, we introduce a family of transfer operators associated to this coding. This will be crucial in the sequel; we will need in particular a precise control on the regularity of this family of operators and their dominant eigenvalue: this will be done in the last part of this section. 
\subsection{Coding of the limit set and of the geodesic flow}
We recall now the setting of the study. We fix $p,q\geqslant1$ such that $p+q\geqslant3$ and consider $p+q$ discrete elementary subgroups $\G_1,...,\G_{p+q}$ of $\mathrm{Isom}(\xx)$ in Schottky position with $\G_1,...,\G_p$ parabolic and let $\G$ be the Schottky product of $\G_1,...,\G_{p+q}$.
We also consider the families of sets $(D_j)_{1\leqslant j\leqslant p+q}$ and $({\bf D}_j)_{1\leqslant j\leqslant p+q}$ introduced in Section 2. We will write $D:=\bigcup_{1\leqslant j\leqslant p+q}D_j$; notice that $D$ is a proper subset of $\partial\xx$. Since $\G=\G_1*...*\G_{p+q}$, any element $\g$ in $\G$ can be uniquely written as the product 
$\alpha_1...\alpha_k$ for some $\alpha_1,...,\alpha_k\in\bigcup_{j}\G_j\setminus\{\mathrm{Id}\}$ with the property that no two consecutive elements $\alpha_j$ and $\alpha_{j+1}$ belong 
to the same subgroup $\G_j,1\leqslant j\leqslant p+q$. The set $\mathcal{A}:=\bigcup_{j}\G_j\setminus\{\mathrm{Id}\}$ is called the \emph{alphabet} of $\G$, and $\alpha_1,...,\alpha_k$ 
will be called the \emph{letters} of $\g$; the word $\alpha_1...\alpha_k$ is said $\mathcal{A}$-admissible. 
The \emph{symbolic length} $|\g|$ of $\g$ is equal to the number $k$ of letters appearing in its decomposition with respect to $\mathcal{A}$.
For any $n\in\N\setminus\{0\}$, set $\G(n):=\{\g\in\G\ |\ |\g|=n\}$.  
Notice that both $\mathcal{A}$ and $\G(n)$ are infinite and countable. The initial and last letters of $\g$ play a special role, so the 
index of the group they belong to will be denoted by $i(\g)$ and $l(\g)$ respectively. 

Let us now give some geometrical properties of the action of such a Schottky group $\G$. First of all, combining Lemma \ref{quasiegalitetriangulaire} and 
the relative position of sets $\left({\bf D}_j\right)_{1\leqslant j\leqslant p+q}$, we deduce the corollaries below: the first one is a reformulation of Lemma \ref{quasiegalitetriangulaire} for triangles with two vertices in different sets ${\bf D}_j$; the second furnishes a well known improvement of the inequality $\mathcal{B}_x(\g^{-1}.\oo,\oo)\leqslant\dhy(\oo,\g^{-1}.\oo)$.
\begin{pro}\label{distsomtrigeod}
There exists a constant $C>0$, which depends only on the bounds of the curvature of $\xx$ and on $\G$, such that $\dhy(\oo,\g_1.\oo)+\dhy(\oo,\g_2.\oo)-C\leqslant\dhy(\g_1.\oo,\g_2.\oo)$ for any $\g_1,\g_2\in\G$ with $i(\g_1)\neq i(\g_2)$.
\end{pro}
\begin{rem}\label{sommefinie}
From this property and Assumptions $(P_1)$ and $(N)$, we deduce that the sum $\sum_{\g\in\G(n)}e^{-\delta_\G\dhy(\oo,\g.\oo)}$ is 
finite for any $n\geqslant1$: if $n=1$, this is a direct consequence of Assumptions $(P_1)$ and $(N)$; for $n\geqslant2$, 
Corollary \ref{distsomtrigeod} implies that 
there exists a constant $C>0$ such that for any $\gamma=\alpha_1...\alpha_n$, one gets
$$\dhy(\oo,\g.\oo)\geqslant \dhy(\oo,\alpha_1.\oo)+\dhy(\oo,\alpha_2.\oo)+...+\dhy(\oo,\alpha_{n}.\oo)-nC,$$
\noindent
so that
$$\sum_{\g\in\G(n)}e^{-\delta_\G\dhy(\oo,\g.\oo)}\leqslant e^{nC}\left(\sum\limits_{\alpha\in\mathcal{A}}e^{-\delta_\G\dhy(\oo,\alpha.\oo)}\right)^n<+\infty.$$
\end{rem}
\begin{pro}\label{busedist}
There exists a constant $C>0$ such that for any $\g\in\G$ and any $x\in\bigcup_{j\neq l(\g)}D_j$
$$\dhy(\oo,\g.\oo)-C\leqslant\bcal_{x}(\g^{-1}.\oo,\oo)\leqslant\dhy(\oo,\g.\oo).$$
\end{pro}
This estimate plays an important role in the proof of the following proposition, which allows us to bound from above the conformal coefficient of an 
isometry $\g\in\G$.
\begin{prop}\label{contract}
There exists a constant $C>0$ and $r\in]0,1[$ such that for any $\g\in\G(n)$, $n\geqslant1$, and for any $x\in\bigcup_{j\neq l(\g)}D_j$ 
$$|\g'(x)|_{\oo}\leqslant Cr^n.$$
\end{prop}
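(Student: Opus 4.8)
The plan is to prove Proposition \ref{contract} by iterating the cocycle relation for $b(\g,x)=\mathcal{B}_x(\g^{-1}.\oo,\oo)$ established in Section 2.1.1, together with the ping-pong dynamics of the Schottky factors. Recall that $|\g'(x)|_{\oo}=e^{-a\,b(\g,x)}$, so it suffices to prove a \emph{lower} bound $b(\g,x)\geqslant n\,c_0-C'$ for a fixed $c_0>0$, uniformly over $\g\in\G(n)$ and $x\in\bigcup_{j\neq l(\g)}D_j$; then $|\g'(x)|_{\oo}\leqslant e^{aC'}e^{-ac_0 n}=Cr^n$ with $r=e^{-ac_0}\in]0,1[$.

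First I would write $\g=\alpha_1\cdots\alpha_n$ in its $\mathcal{A}$-admissible form and use the cocycle relation to telescope
\begin{equation}\label{telescope}
b(\g,x)=\sum_{k=1}^{n} b\bigl(\alpha_k,\alpha_{k+1}\cdots\alpha_n.x\bigr),
\end{equation}
with the convention that the last term is $b(\alpha_n,x)$. The key geometric observation is that, by the Schottky property $\G_j^*.(\partial\xx\setminus D_j)\subset D_j$ and the fact that consecutive letters lie in distinct factors, each argument $y_k:=\alpha_{k+1}\cdots\alpha_n.x$ lies in $D_{i(\alpha_{k+1})}$ (for $k<n$), hence in $\bigcup_{j\neq i(\alpha_k)}D_j\subseteq\bigcup_{j\neq l(\alpha_k)}D_j$ since $\alpha_k$ is a single letter; for $k=n$ this is exactly the hypothesis $x\in\bigcup_{j\neq l(\g)}D_j$ because $l(\g)=i(\alpha_n)$ when... — more carefully, $x\notin D_{l(\g)}$, so $x$ lies in some $D_j$ with $j\neq i(\alpha_n)$, which is what Proposition \ref{busedist} requires for the isometry $\alpha_n$. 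Thus I may apply Proposition \ref{busedist} to each single letter $\alpha_k$ at the point $y_k$, obtaining $b(\alpha_k,y_k)\geqslant \dhy(\oo,\alpha_k.\oo)-C$. Summing over $k$ in \eqref{telescope} gives
\begin{equation}\label{minoration}
b(\g,x)\geqslant \sum_{k=1}^n \dhy(\oo,\alpha_k.\oo)-nC.
\end{equation}
Finally, since each $\alpha_k$ is a nontrivial element of one of the finitely many discrete groups $\G_1,\dots,\G_{p+q}$, there is a uniform lower bound $\ell_0:=\min_j\min_{\alpha\in\G_j^*}\dhy(\oo,\alpha.\oo)>0$ (each $\G_j$ acts properly discontinuously without fixed point, so $0$ is not an accumulation value of $\{\dhy(\oo,\alpha.\oo):\alpha\in\G_j^*\}$ near $\oo$). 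Hence $b(\g,x)\geqslant n(\ell_0-C)$. If $\ell_0-C>0$ we are done with $c_0=\ell_0-C$; otherwise I would instead replace the crude per-letter estimate by grouping: for most letters $\dhy(\oo,\alpha_k.\oo)$ is large, and only finitely many satisfy $\dhy(\oo,\alpha_k.\oo)\leqslant C+1$, so a standard bookkeeping argument — absorbing the finitely many ``short'' letters into the constant and noting that the generators can be chosen (by passing to the $D_j$'s chosen far apart, cf. the discussion after Lemma \ref{quasiegalitetriangulaire}) so that $\ell_0$ exceeds $C$ — yields the result. The cleanest route is to enlarge $C$ in Proposition \ref{busedist} is impossible, so instead one chooses the sets ${\bf D}_j$ mutually far from $\oo$, forcing $\ell_0$ as large as desired while $C$ stays bounded by the curvature; this is the point where the Schottky construction of Section 2 is used.

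The main obstacle is precisely this last point: Proposition \ref{busedist} gives $b(\alpha_k,y_k)\geqslant\dhy(\oo,\alpha_k.\oo)-C$ with $C$ depending only on the curvature and the configuration, while $\dhy(\oo,\alpha_k.\oo)$ could a priori be smaller than $C$ for the ``shortest'' nontrivial elements of the factors. The resolution is to observe that, by definition of a Schottky \emph{product} of elementary groups in Schottky position, one is free to conjugate so that $\oo$ lies deep outside all the sets ${\bf D}_j$; then $\dhy(\oo,{\bf D}_j)$ is large for every $j$, and since $\alpha_k.\oo\in{\bf D}_{i(\alpha_k)}$ one gets $\dhy(\oo,\alpha_k.\oo)\geqslant 2\,\min_j\dhy(\oo,{\bf D}_j)$ (via Lemma \ref{quasiegalitetriangulaire} applied to $\{\oo\}$ and ${\bf D}_{i(\alpha_k)}$, or directly), which can be made $>C$. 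With this arranged, $c_0:=\ell_0-C>0$ and the proof concludes as above. Everything else — the telescoping \eqref{telescope}, the verification that each $y_k$ lands in the correct $D_j$, and the passage from $b$ to $|\g'|_{\oo}$ — is routine given the cocycle relation and Proposition \ref{busedist}.
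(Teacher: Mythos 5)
Your reduction to a lower bound on $b(\g,x)$, the telescoping via the cocycle relation, and the verification that each point $y_k=\alpha_{k+1}\cdots\alpha_n.x$ lies outside $D_{l(\alpha_k)}$ are all fine, and up to that point you are close to the paper. But the step you yourself flag as the main obstacle is a genuine gap, and neither of your two proposed fixes works. First, the ``finitely many short letters'' bookkeeping fails: there are only finitely many elements $\alpha\in\mathcal{A}$ with $\dhy(\oo,\alpha.\oo)\leqslant C$, but admissibility only forbids \emph{consecutive} letters from the same factor, so a single short letter (say a generator $p$ of an influent parabolic factor with small displacement) can occur in roughly half of the positions of $\g=\alpha_1\cdots\alpha_n$ (e.g.\ $php h\cdots$). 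The number of short letters in the word is therefore not bounded but can be proportional to $n$, and the sum $\sum_k\bigl(\dhy(\oo,\alpha_k.\oo)-C\bigr)$ need not grow linearly. Second, the claim that one may ``conjugate so that $\oo$ lies deep outside all the ${\bf D}_j$'' while the constant $C$ of Proposition \ref{busedist} ``stays bounded by the curvature'' is not correct: conjugating $\G$ moves the sets $D_j$ along with it, so the relative position of $\oo$ is unchanged; and if instead you move the basepoint, the constant in Proposition \ref{busedist} is not a curvature-only constant — it controls Gromov products $(x|\g^{-1}.\oo)_{\oo}$ between points of distinct sets $D_j$, hence grows roughly like twice the distance from the basepoint to the region separating the sets, i.e.\ at the same order as the gain in the minimal displacement $\ell_0$. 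So the inequality $\ell_0>C$ is not secured by this manoeuvre, and your argument does not close.

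The paper avoids per-letter estimates altogether: it applies Proposition \ref{busedist} once, to the whole word, so that it suffices to prove a linear lower bound $\dhy(\oo,\g.\oo)\geqslant An$ for all $\g\in\G(n)$. By proper discontinuity only finitely many elements of $\G$ have displacement below $1+2C$ (with $C$ the constant of Property \ref{distsomtrigeod}), each of finite symbolic length, so there is an $l_0$ such that every $g\in\G$ with $|g|\geqslant l_0$ satisfies $\dhy(\oo,g.\oo)\geqslant 1+2C$. One then cuts $\g$ into $k=[n/l_0]$ blocks of length $l_0$ plus a remainder and applies the quasi-triangle inequality of Property \ref{distsomtrigeod} across the blocks, obtaining $\dhy(\oo,\g.\oo)\geqslant k(1+2C)-2kC\geqslant n/(2l_0)$, with a separate uniform bound for $n\leqslant 2l_0$ coming from discreteness. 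The block decomposition is exactly the device that neutralises the arbitrarily many short letters your approach stumbles on; if you want to keep your telescoping, you should telescope along blocks of length $l_0$ rather than along single letters.
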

\begin{proof}
Recall that $|\g'(x)|_{\oo}=e^{-a\mathcal{B}_x(\g^{-1}.\oo,\oo)}$. By Property \ref{busedist}, it is sufficient 
to find a constant $A>0$ such that $\dhy(\oo,\g.\oo)\geqslant An$ for all $\g\in\G(n)$. Fix $n\geqslant1$. The 
$\G$-orbits accumulating at infinity, there exists an integer 
$l_0\geqslant1$ such that $\dhy(\oo,g.\oo)\geqslant 1+2C$ for any $g$ with symbolic length at least $l_0$ and where $C>0$ is given in 
Property \ref{distsomtrigeod}. We split the transformation $\g\in\G(n)$ into a product of transformations with length $l_0$. There are 
two cases:
\begin{itemize}
 \item[1)]if $n>2l_0$, we decompose $\g$ as $\g=\g_1...\g_k.\overline{\g}$ where $|\g_i|=l_0$ for any 
$1\leqslant i\leqslant k:=\left[\frac{n}{l_0}\right]$ and $|\overline{\g}|<l_0$. Therefore, Property \ref{distsomtrigeod} implies 
$$\dhy(\oo,\g.\oo)\geqslant\sum\limits_{1\leqslant i\leqslant k}\dhy(\oo,\g_i.\oo)+\dhy(\oo,\overline{\g}.\oo)-2kC$$
\noindent
which yields $\dhy(\oo,\g.\oo)\geqslant k\geqslant\frac{1}{2l_0}n$;
 \item[2)]if $n\leqslant2l_0$, the discretness of $\G$ implies 
 $$B:=\underset{1\leqslant n\leqslant2l_0}\inf\underset{\g\in\G(n)}\inf\dhy(\oo,\g.\oo)>0,$$
 \noindent
 hence $\dhy(\oo,\g.\oo)\geqslant B\geqslant\left(\frac{B}{2l_0}\right)n$.
\end{itemize}
\noindent
The result follows with $A:=\min\left(\frac{1}{2l_0},\frac{B}{2l_0}\right)$. 
\end{proof}
 \begin{coro}\label{actioncontractante}
 There exist $r\in]0,1[$ and a constant $C>0$ such that for any $n\geqslant1$, any $\g\in\G(n)$ and $x,y\in\LL_\G\cap\left(\partial\xx\setminus D_{l(\g)}\right)$, 
 $$\dhy_{\oo}(\g.x,\g.y)\leqslant Cr^n\dhy_{\oo}(x,y).$$
\end{coro}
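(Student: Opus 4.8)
The plan is to deduce Corollary~\ref{actioncontractante} directly from Proposition~\ref{contract} together with the conformal transformation rule~\eqref{mvr} for the Gromov distance. First I would recall that by the definition given in Section~2.1, $\dhy_{\oo}(x,y)=\exp(-a(x|y)_{\oo})$ and that~\eqref{mvr} reads
$$\dhy_{\oo}(\g.x,\g.y)=\sqrt{\left|\g'(x)\right|_{\oo}\left|\g'(y)\right|_{\oo}}\,\dhy_{\oo}(x,y),$$
where $\left|\g'(z)\right|_{\oo}=e^{-a\mathcal{B}_{z}(\g^{-1}.\oo,\oo)}$ is the conformal factor. So the whole statement is just the assertion that the geometric mean of the two conformal factors is bounded by $Cr^n$, which is immediate once each factor is.

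Next I would check that the hypothesis $x,y\in\LL_\G\cap(\partial\xx\setminus D_{l(\g)})$ places both points in the regime where Proposition~\ref{contract} applies. Indeed, $\partial\xx\setminus D_{l(\g)}$ contains $\bigcup_{j\neq l(\g)}D_j$ but is in general larger; however, the limit set $\LL_\G$ is contained in $\overline{\bigcup_j D_j}=\bigcup_j D_j$ (each $D_j$ being closed), so $x,y\in\LL_\G\setminus D_{l(\g)}$ forces $x,y\in\bigcup_{j\neq l(\g)}D_j$. Hence Proposition~\ref{contract} yields a constant $C_0>0$ and $r\in]0,1[$, independent of $n$, with $\left|\g'(x)\right|_{\oo}\leqslant C_0 r^n$ and $\left|\g'(y)\right|_{\oo}\leqslant C_0 r^n$ for every $\g\in\G(n)$.

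Finally I would combine these: by~\eqref{mvr},
$$\dhy_{\oo}(\g.x,\g.y)=\sqrt{\left|\g'(x)\right|_{\oo}\left|\g'(y)\right|_{\oo}}\,\dhy_{\oo}(x,y)\leqslant \sqrt{C_0 r^n\cdot C_0 r^n}\,\dhy_{\oo}(x,y)=C_0 r^n\,\dhy_{\oo}(x,y),$$
so the corollary holds with $C=C_0$ and the same $r\in]0,1[$. There is essentially no obstacle here: the only mild point to be careful about is the inclusion $\LL_\G\subset\bigcup_j D_j$ used to pass from the stated hypothesis to the hypothesis of Proposition~\ref{contract}, and the fact that the constant and the ratio $r$ produced by that proposition do not depend on $\g$ or $n$, so that taking the geometric mean of the two bounds costs nothing.
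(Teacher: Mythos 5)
Your proof is correct and is exactly the intended derivation: the paper leaves the corollary as an immediate consequence of Proposition~\ref{contract} combined with the conformality relation~\eqref{mvr}, which is precisely what you do, and your observation that $\LL_\G\subset\bigcup_j D_j$ (so the hypothesis of Proposition~\ref{contract} applies) is a valid and worthwhile detail to make explicit.
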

\subsubsection{Coding of the limit set}
Denote by $\Sigma_{\mathcal{A}}^+$ the set of \emph{$\mathcal{A}$-admissible sequences} $(\alpha_n)_{n\geqslant1}$, {\it i.e.} sequences for which each letter $\alpha_n$ belongs to the 
alphabet $\mathcal{A}$ and such that no two consecutive letters belong to the same subgroup $\G_j,1\leqslant j\leqslant p+q$. Fix a point $x_0\in\partial\xx\setminus D$. 
We may find in \cite{BP2} the following result.
\begin{prop}\
\begin{itemize}
 \item[1)]For any $\boldsymbol{\alpha}=(\alpha_n)\in\Sigma_{\mathcal{A}}^+$, the sequence $\left(\alpha_1...\alpha_n.x_0\right)_n$ converges to a point 
$\pi(\boldsymbol{\alpha})\in\LL_\G$, which does not depend on the choice of $x_0$.
 \item[2)]The map $\pi\ :\ \Sigma_{\mathcal{A}}^+\longrightarrow\LL_{\G}$ is one-to-one and $\pi\left(\Sigma_{\mathcal{A}}^+\right)$ is included in the radial limit set of $\G$.
 \item[3)]The complement of $\pi\left(\Sigma_{\mathcal{A}}^+\right)$ in $\LL_{\G}$ is countable and consists of the $\G$-orbit of the union of the limit sets 
 $\LL_{\G_j}$ (each of which being finite here). In particular $\sigma_\oo\left(\LL_\G\setminus\pi\left(\Sigma_{\mathcal{A}}^+\right)\right)=0$, where $\sigma_\oo$ is the Patterson-Sullivan measure on $\partial\xx$.
\end{itemize}
\end{prop}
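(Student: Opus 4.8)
The plan is to establish the three assertions in turn, exploiting the contraction estimate of Corollary \ref{actioncontractante} and the Schottky ping-pong dynamics.

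For (1), fix $\boldsymbol{\alpha}=(\alpha_n)\in\Sigma_{\mathcal{A}}^+$ and set $g_n:=\alpha_1\cdots\alpha_n$. Since $x_0\notin D$, in particular $x_0\notin D_{l(g_n)}$ for every $n$ (the last letter of $g_n$ is $\alpha_n$, which lies in some $\G_{j}$ with $j=l(g_n)$, and $x_0\notin D_j$). Applying Corollary \ref{actioncontractante} to $g_n$ and the two points $x_0,\alpha_{n+1}.x_0$, both of which lie outside $D_{l(g_n)}$ because $\alpha_{n+1}\in\G_{l(g_{n+1})}^*$ sends $\partial\xx\setminus D_{l(g_{n+1})}$ into $D_{l(g_{n+1})}\neq D_{l(g_n)}$ (consecutive letters are in distinct factors), we get $\dhy_\oo(g_{n+1}.x_0,g_n.x_0)=\dhy_\oo(g_n.(\alpha_{n+1}.x_0),g_n.x_0)\leqslant Cr^n\dhy_\oo(\alpha_{n+1}.x_0,x_0)$. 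The latter factor is bounded since $\partial\xx$ is bounded for the Gromov metric, so $(g_n.x_0)_n$ is a Cauchy sequence and converges to some $\pi(\boldsymbol{\alpha})\in\partial\xx$; it lies in $\LL_\G$ because it is a limit of a $\G$-orbit point. For independence of the choice of $x_0$: given two base points $x_0,x_0'\in\partial\xx\setminus D$, the same Corollary gives $\dhy_\oo(g_n.x_0,g_n.x_0')\leqslant Cr^n\dhy_\oo(x_0,x_0')\to 0$, so the two limits coincide. Moreover $\pi(\boldsymbol{\alpha})$ lies in the radial limit set: the points $g_n.\oo$ converge to $\pi(\boldsymbol{\alpha})$ and, by the triangular quasi-equality applied along the decomposition (as in Remark \ref{sommefinie}), the $g_n.\oo$ stay within bounded distance of the geodesic ray towards $\pi(\boldsymbol{\alpha})$, which is exactly radiality.

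For (2), injectivity follows from the ping-pong structure: if $\boldsymbol{\alpha}\neq\boldsymbol{\beta}$ agree up to index $n$ but $\alpha_{n+1}\neq\beta_{n+1}$, write the common prefix as $g_n$; then $\pi(\boldsymbol{\alpha})=g_n.\xi$ and $\pi(\boldsymbol{\beta})=g_n.\eta$ where $\xi\in\overline{\mathbf{D}_{i}}$ and $\eta\in\overline{\mathbf{D}_{i'}}$ for the (possibly equal) indices $i,i'$ of the first letters $\alpha_{n+1},\beta_{n+1}$ — and if $i=i'$ one iterates one more step inside $\G_i$, using that distinct nontrivial powers of a Schottky generator (or distinct elements of an elementary factor) send the complement of $D_i$ into disjoint pieces; since the sets $\mathbf{D}_j$ are pairwise disjoint, $\xi\neq\eta$, hence $\pi(\boldsymbol{\alpha})\neq\pi(\boldsymbol{\beta})$ as $g_n$ acts by homeomorphism. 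Radiality of the whole image was already observed in step (1).

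For (3), one identifies $\LL_\G\setminus\pi(\Sigma_{\mathcal{A}}^+)$ explicitly. A point of $\LL_\G$ is either reached by an infinite admissible word — hence in the image — or its ``cylinder expansion'' terminates, meaning it is of the form $g.\xi$ with $g\in\G$ and $\xi$ a limit point of a single factor $\G_j$, i.e. $\xi\in\LL_{\G_j}$; this is the classical description of the limit set of a free product via its cylinder sets (cf. \cite{BP2}). Each $\LL_{\G_j}$ is finite (the $\G_j$ are elementary), $\G$ is countable, so this exceptional set is countable. Finally, since $\G$ is divergent and geometrically finite, the Patterson measures $\sigma_\oo$ have no atoms (recalled after \eqref{deltaconformité}, from \cite{DOP}); a countable set therefore has $\sigma_\oo$-measure zero, giving $\sigma_\oo(\LL_\G\setminus\pi(\Sigma_{\mathcal{A}}^+))=0$.

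I expect the main obstacle to be the bookkeeping in (2) and (3): carefully handling the case where the first differing letters lie in the same factor $\G_j$ (so one must descend to the ping-pong dynamics inside that single elementary group, which for parabolic factors means comparing distinct powers of a parabolic isometry and invoking disjointness of the images of $\partial\xx\setminus D_j$ under distinct nontrivial elements), and giving a clean proof that the ``terminating'' expansions are exactly the $\G$-translates of the $\LL_{\G_j}$. Everything else is a routine consequence of the geometric contraction lemmas already established and of the non-atomicity of the Patterson measure; since this proposition is quoted from \cite{BP2}, I would in fact keep the argument brief and refer there for the detailed ping-pong verification.
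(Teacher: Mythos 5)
The paper itself gives no proof of this proposition: it is quoted verbatim from \cite{BP2}, so the only internal ``proof'' is that citation. Your sketch is, in outline, exactly the standard contraction/ping-pong argument behind that reference (and your decision to keep it brief and refer to \cite{BP2} matches the paper's own treatment); in particular your injectivity step is sound, since for two distinct letters $\alpha_{n+1},\beta_{n+1}$ in the same factor $\G_j$ the element $\beta_{n+1}^{-1}\alpha_{n+1}\in\G_j^*$ maps $\partial\xx\setminus D_j$ into $D_j$, so the images of the two tails are indeed disjoint. One point to patch: Corollary \ref{actioncontractante} is stated for points of $\LL_\G\cap\left(\partial\xx\setminus D_{l(\g)}\right)$, and the underlying Proposition \ref{contract} for points of $\bigcup_{j\neq l(\g)}D_j$, whereas your base point $x_0$ lies outside $D$ altogether; the contraction estimate does extend to $x_0$ (with a constant depending on $x_0$) by the same Busemann/visibility argument, and this is precisely the kind of extension the paper later carries out for the extended coding (Proposition \ref{contfinal}), but as written your appeal to the corollary is not literally licensed by its hypotheses. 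Part (3) of your write-up is essentially an assertion deferred to \cite{BP2} together with non-atomicity of $\sigma_\oo$ from \cite{DOP}, which is the same level of detail as the paper itself provides.
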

Let us denote $\LL^0:=\pi\left(\Sigma_{\mathcal{A}}^+\right)$ and $\LL_j^0:=\LL^0\cap D_j=\{\pi(\boldsymbol{\alpha})\ |\ \alpha_1\in\G_j^*\}$ for $1\leqslant j\leqslant p+q$. 
Let us emphasize that $\LL_j^0$ does not equal to the
limit set of the group $\G_j$, but nevertheless $\LL_{\G_j}\subset\overline{\LL_j^0}=\LL_\G\cap D_j$. The sets 
$D_j$, $1\leqslant j\leqslant p+q$, being disjoint, the sets $\left(\LL_j^0\right)_j$ have disjoint closures. The following description of $\LL^0$ will be useful:
\begin{enumerate}
 \item[(1)] $\LL^0$ is the finite union of the sets $\LL_1^0,...,\LL_{p+q}^0$; 
 \item[(2)] each of sets $\LL_j^0$ is partitioned into a countable number of subsets with disjoint closures: indeed, for any $j\in[\![1,p+q]\!]$
$$\LL_j^0=\bigcup\limits_{\alpha\in\G_j^*}\bigcup\limits_{k\neq j} \alpha.\LL_k^0.$$
\end{enumerate}
\noindent
The \emph{shift} $\Theta$ on the symbolic space $\Sigma_{\mathcal{A}}^+$ is defined by:
$$\forall\boldsymbol{\alpha}\in\Sigma_{\mathcal{A}}^+,\ \Theta(\boldsymbol{\alpha})=(\alpha_{k+1})_{k\geqslant1}.$$
\noindent
This operator $\Theta$ induces a transformation $\T :\LL^0\rightarrow\LL^0$ whose action is defined for all  
$x=\pi\left(\boldsymbol{\alpha}\right)$ by
$$\T.x=\alpha_1^{-1}.x.$$
\noindent
As a consequence of Corollary \ref{actioncontractante}, the map $\T$ is expanding on $\LL^ 0$ (see Corollary II.4 in \cite{DP}).
\subsubsection{Coding of the geodesic flow}
In Paragraph 2.1.3, we recalled how to define the action of the geodesic flow $(g_t)_{t\in\R}$ on  $\partial\xx\overset{\Delta}{\times}\partial\xx\times\R/\G\simeq\T^1\mm$. We propose here a coding of the geodesic flow on a $(g_t)$-invariant subset $\Omega^0$ of $\Omega_\G$ defined by 
$\Omega^0:=\LL^0\overset{\Delta}{\times}\LL^0\times\R/\G$. We first conjugate the action of $\G$ on $\LL^0\overset{\Delta}{\times}\LL^0\times\R$ 
with the action of a single transformation. Observe that the subset $\mathcal{D}^0:=\bigcup_{k\neq j}\LL_k^0\times\LL_j^0$ of 
$\LL^0\overset{\Delta}{\times}\LL^0$ is in one-to-one correspondence with the symbolic space $\Sigma_{\mathcal{A}}$ of 
bi-infinite $\mathcal{A}$-admissible sequences $\left(\alpha_n\right)_{n\in\Z}$. Moreover the shift of $\Sigma_{\mathcal{A}}$ induces a transformation still denoted by $\T$ on this set $\mathcal{D}^0$ whose action is
given by
$$\T.(y,x)=(\alpha_1^{-1}.y,\alpha_1^{-1}.x)\ \mathrm{if}\ x=\pi(\boldsymbol{\alpha}).$$
\noindent
In \cite{BP} it is proved that the action of $\G$ on $\LL^0\overset{\Delta}{\times}\LL^0$ is orbit-equivalent with the action of $\T$ on $\mathcal{D}^0$. 
Similarly, the action of $\G$ on the space $\LL^0\overset{\Delta}{\times}\LL^0\times\R$ is orbit-equivalent to the action of the transformation $\T_{\gol}$ on 
$\mathcal{D}^0\times\R$ given by
\begin{equation}\label{actionT_gol}
\T_{\gol}.(y,x,r)=(\T.(y,x),r-\gol(x))
\end{equation}
\noindent
where $\gol(x)=-\mathcal{B}_x(\alpha_1.\oo,\oo)$ when $x=\pi(\boldsymbol{\alpha})$. Let us write $S_k\gol(x)=\gol(x)+\gol(\T.x)+...+\gol(\T^{k-1}.x)$ for all $k\in\N\setminus\{0\}$ and all $x\in\LL^0$. For all $k\geqslant$, one gets
\begin{equation}\label{rajoutsamuel}\T_{\gol}^k.(y,x,r)=\left(\T^k.(y,x),r-S_k\gol(x)\right).\end{equation}
\noindent
When $\gol>0$, a fundamental domain $\mathcal{D}_{\gol}^0$ for the action of $\T_{\gol}$ on $\mathcal{D}^0\times\R$ is given by 
$$\mathcal{D}_{\gol}^0=\left\{(y,x,r)\in\mathcal{D}^0\times\R\ |\ 0\leqslant r<\gol(x)\right\}.$$
\begin{figure}[htbp]
\begin{center}
\includegraphics[height=6cm]{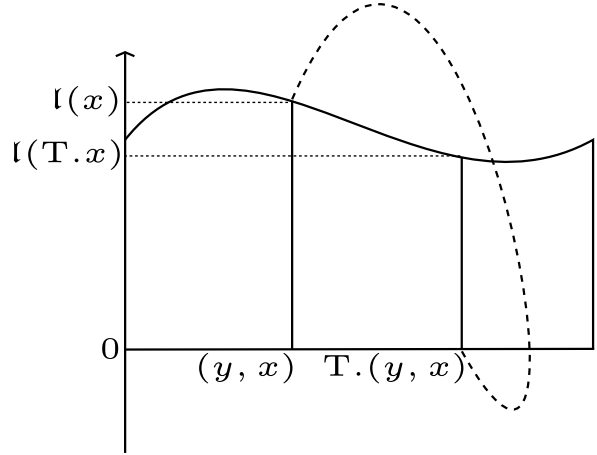}
\caption{\label{domfondgol}Action of $\T_\gol$ when $\gol>0$}
\end{center}
\end{figure}
In general, the function $\gol$ is not positive; nevertheless we have the following property 
\begin{lem}\label{proprietesgol}
The roof function $\gol$ satisfies: 
\begin{itemize}
 \item[$\bullet$]$\gol$ is uniformly bounded from below by $-C$, where the constant $C>0$ depends only on $\xx$ and $\G$; 
 \item[$\bullet$]there exists $k_0\geqslant1$ such that $S_k\gol(x)>0$ for any 
 $k\geqslant k_0$ and $x\in\LL^0$.
\end{itemize}
\end{lem}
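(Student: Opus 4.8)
The plan is to express the Birkhoff sums of the roof function $\gol$ as a single Busemann cocycle and then invoke Property~\ref{busedist}. Fix $x=\pi(\boldsymbol{\alpha})\in\LL^0$ with $\boldsymbol{\alpha}=(\alpha_n)_{n\geqslant1}\in\Sigma_{\mathcal{A}}^+$, and for $k\geqslant1$ set $g_k:=\alpha_1\cdots\alpha_k$, so that $g_k\in\G(k)$ and $\T^k.x=g_k^{-1}.x$. Recalling the cocycle $b(\g,y)=\mathcal{B}_y(\g^{-1}.\oo,\oo)$ with $b(\g_1\g_2,y)=b(\g_1,\g_2.y)+b(\g_2,y)$, the $\G$-equivariance of the Busemann cocycle gives, for any $i\geqslant0$,
$$\gol(\T^i.x)=-\mathcal{B}_{\T^i.x}(\alpha_{i+1}.\oo,\oo)=\mathcal{B}_{\alpha_{i+1}^{-1}.\T^i.x}(\alpha_{i+1}^{-1}.\oo,\oo)=b(\alpha_{i+1},\T^{i+1}.x),$$
since the first letter of $\Theta^i\boldsymbol{\alpha}$ is $\alpha_{i+1}$ and $\T^{i+1}.x=\alpha_{i+1}^{-1}.\T^i.x$.

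First I would telescope: a straightforward induction on $k$, using $\alpha_k g_k^{-1}=g_{k-1}^{-1}$, yields
$$S_k\gol(x)=\sum_{i=0}^{k-1}b(\alpha_{i+1},\T^{i+1}.x)=b(g_k,\T^k.x)=\mathcal{B}_{\T^k.x}(g_k^{-1}.\oo,\oo).$$
Now the boundary point $\T^k.x=\pi(\Theta^k\boldsymbol{\alpha})$ lies in $D_j$, where $j$ is the index of the group containing $\alpha_{k+1}$, and admissibility of the word $\alpha_1\cdots\alpha_{k+1}$ forces $j\neq l(g_k)$ (the index of $\alpha_k$); hence $\T^k.x\in\bigcup_{j\neq l(g_k)}D_j$. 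Property~\ref{busedist} applied to $\g=g_k$ therefore gives, with the constant $C$ of that statement (which depends only on $\xx$ and $\G$),
$$\dhy(\oo,g_k.\oo)-C\leqslant S_k\gol(x)\leqslant\dhy(\oo,g_k.\oo).$$

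The two assertions then follow at once. Taking $k=1$ gives $\gol(x)=S_1\gol(x)\geqslant\dhy(\oo,\alpha_1.\oo)-C\geqslant-C$, uniformly in $x\in\LL^0$, which is the first bullet. For the second, I would recall from the proof of Proposition~\ref{contract} that there is a constant $A>0$, depending only on $\G$, with $\dhy(\oo,\g.\oo)\geqslant A|\g|$ for every $\g\in\G$; since $g_k\in\G(k)$, we obtain $S_k\gol(x)\geqslant Ak-C$, which is positive as soon as $k\geqslant k_0:=\lfloor C/A\rfloor+1$, and $k_0$ does not depend on $x$. The argument is short; the only points needing care are the cocycle bookkeeping that turns $S_k\gol$ into a single Busemann cocycle and the verification that $\T^k.x$ lands in $\bigcup_{j\neq l(g_k)}D_j$, so that Property~\ref{busedist} applies — both being immediate from the admissibility of $\mathcal{A}$-words.
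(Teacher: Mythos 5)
Your proof is correct and follows essentially the same route as the paper: both telescope $S_k\gol(x)$ into the single cocycle $\mathcal{B}_{\T^k.x}(g_k^{-1}.\oo,\oo)$ with $g_k=\alpha_1\cdots\alpha_k$ and then apply Property~\ref{busedist}, the admissibility check ensuring $\T^k.x\in\bigcup_{j\neq l(g_k)}D_j$. The only cosmetic difference is that for the second bullet you invoke the linear lower bound $\dhy(\oo,\g.\oo)\geqslant A|\g|$ from the proof of Proposition~\ref{contract} to get an explicit uniform $k_0$, whereas the paper concludes directly from the discreteness of $\G$; both are valid.
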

\begin{proof}
Let $x\in\LL^0$ and $\boldsymbol{\alpha}\in\Sigma_{\mathcal{A}}^+$ such that $x=\pi\left(\boldsymbol{\alpha}\right)$. Since 
$\gol(x)=\mathcal{B}_{\alpha_1^{-1}.x}\left(\alpha_1^{-1}.\oo,\oo\right)$, Corollary \ref{busedist} implies 
$\gol(x)\geqslant\dhy(\oo,\alpha_1^{-1}.\oo)-C$, which proves the first assertion. Concerning the second one, let us notice that
\begin{align*}
S_k\gol(x)= & \mathcal{B}_{\alpha_1^{-1}.x}(\alpha_1^{-1}.\oo,\oo)+\mathcal{B}_{\alpha_2^{-1}\alpha_1^{-1}.x}(\alpha_2^{-1}.\oo,\oo)+...+
\mathcal{B}_{\alpha_k^{-1}...\alpha_1^{-1}.x}(\alpha_k^{-1}.\oo,\oo)\\
= & \mathcal{B}_{\alpha_k^{-1}...\alpha_1^{-1}.x}(\alpha_k^{-1}...\alpha_1^{-1}.\oo,\oo)\geqslant\dhy(\oo,\alpha_k^{-1}...\alpha_1^{-1}.\oo)-C.
\end{align*}
\noindent
Since the group $\G$ is discrete, the sums $S_k\gol(x)$ are positive for $k$ large enough, uniformly
in $x\in\LL^0$.
\end{proof}
Using this lemma, a classical argument in Ergodic Theory allows us to explicit a fundamental domain for the action of $\T_{\gol}$. 
\begin{prop}
The function $\gol$ is \emph{cohomologous} to a positive function $\mathfrak{L}$, {\it i.e.} there exists a measurable function $f\ :\ \LL^0\longrightarrow\R$ such that $\gol=\mathfrak{L}+f-f\circ\T$.
\end{prop}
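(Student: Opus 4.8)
The plan is to apply the classical coboundary trick that turns a roof function into a positive one, taking care to produce a \emph{uniform} positive lower bound rather than mere pointwise positivity.

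First I would upgrade the second item of Lemma \ref{proprietesgol} to a linear lower bound. By the computation in its proof, every $x=\pi(\boldsymbol{\alpha})\in\LL^0$ satisfies $S_n\gol(x)\geqslant\dhy(\oo,\alpha_n^{-1}\cdots\alpha_1^{-1}.\oo)-C$, and $\alpha_n^{-1}\cdots\alpha_1^{-1}$ is an $\mathcal{A}$-admissible word of symbolic length $n$, hence lies in $\G(n)$. Combined with the inequality $\dhy(\oo,\g.\oo)\geqslant A|\g|$ established in the proof of Proposition \ref{contract}, this produces constants $c_0>0$ and $c_1\geqslant0$, \emph{independent of $x$}, such that $S_n\gol(x)\geqslant c_0\,n-c_1$ for all $n\geqslant0$ and all $x\in\LL^0$, with the convention $S_0\gol\equiv0$. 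Fix once and for all $\varepsilon\in\,]0,c_0[$.

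Next I would set
$$f(x):=\inf_{n\geqslant0}\bigl(S_n\gol(x)-n\varepsilon\bigr),\qquad x\in\LL^0,$$
and $\mathfrak{L}:=\gol-f+f\circ\T$, so that the cohomological relation $\gol=\mathfrak{L}+f-f\circ\T$ holds by construction. Since $S_n\gol(x)-n\varepsilon\geqslant(c_0-\varepsilon)n-c_1$, the infimum is attained among indices $0\leqslant n\leqslant N_0$ for an $N_0$ depending only on $c_0,c_1,\varepsilon$; hence $f$ is everywhere finite, satisfies $-c_1\leqslant f\leqslant0$, and is Borel measurable, being a finite minimum of the Borel functions $x\mapsto S_n\gol(x)-n\varepsilon$ (recall that $\gol$ and $\T$ are Borel, $\gol$ being continuous on each of the countably many cells $\{x\ |\ \alpha_1=\alpha\}$, $\alpha\in\mathcal{A}$, of $\LL^0$). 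It then only remains to check $\mathfrak{L}>0$. The one computation to carry out is the cocycle identity $S_n\gol(\T x)=S_{n+1}\gol(x)-\gol(x)$, valid for all $n\geqslant0$, which after reindexing gives
$$f(\T x)=\varepsilon-\gol(x)+\inf_{m\geqslant1}\bigl(S_m\gol(x)-m\varepsilon\bigr).$$
Writing $K(x):=\inf_{m\geqslant1}(S_m\gol(x)-m\varepsilon)$ and noting that $f(x)=\min\{0,K(x)\}$, a direct substitution yields $\mathfrak{L}(x)=\gol(x)-f(x)+f(\T x)=\varepsilon-\min\{0,K(x)\}+K(x)=\varepsilon+\max\{0,K(x)\}\geqslant\varepsilon$ for every $x\in\LL^0$, so that $\mathfrak{L}$ is in fact bounded below by the positive constant $\varepsilon$.

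I expect the only genuine (and still mild) obstacle to be this strict positivity, not the cohomological identity. The naive choice $f(x)=\inf_{n\geqslant0}S_n\gol(x)$ (i.e.\ $\varepsilon=0$) already exhibits $\gol$ as a coboundary plus a \emph{nonnegative} function, but it leaves zeros: $\mathfrak{L}(x)=0$ whenever some forward sum $S_m\gol(x)$ vanishes. It is precisely the shift by $\varepsilon>0$ — legitimate exactly because the uniform linear growth $S_n\gol(x)\geqslant c_0n-c_1$ keeps the defining infimum finite — that forces $\mathfrak{L}\geqslant\varepsilon>0$ and, incidentally, yields a bounded transfer function $f$.
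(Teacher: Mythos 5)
Your proof is correct, but it follows a genuinely different route from the paper. The paper's proof is the finite averaging (telescoping) trick: with $k_0$ as in Lemma \ref{proprietesgol} and $\varepsilon=\frac{1}{k_0}$, it takes $f(x)=\sum_{i=0}^{k_0-1}(1-i\varepsilon)\,\gol\left(\T^i.x\right)$, computes $f-f\circ\T=\gol-\frac{1}{k_0}S_{k_0}\gol\circ\T$, and so obtains the explicit positive roof $\mathfrak{L}=\frac{1}{k_0}S_{k_0}\gol\circ\T$, with $f$ a finite sum of weights (hence as regular as $\gol$, i.e.\ Lipschitz on each cylinder). You instead use the infimum-coboundary construction $f(x)=\inf_{n\geqslant0}\left(S_n\gol(x)-n\varepsilon\right)$, which requires the stronger input of a uniform \emph{linear} lower bound $S_n\gol(x)\geqslant c_0n-c_1$; you obtain it legitimately by combining the Busemann estimate in the proof of Lemma \ref{proprietesgol} (noting that $\alpha_n^{-1}\cdots\alpha_1^{-1}\in\G(n)$ and that $\T^n.x$ lies outside $D_{l(\alpha_n)}$, so Property \ref{busedist} applies) with the bound $\dhy(\oo,\g.\oo)\geqslant A|\g|$ from the proof of Proposition \ref{contract}. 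Your identity $\mathfrak{L}=\varepsilon+\max\{0,K\}$ with $K(x)=\inf_{m\geqslant1}\left(S_m\gol(x)-m\varepsilon\right)$ is checked correctly, and measurability of $f$ follows since the infimum reduces to a finite minimum. What each approach buys: the paper's argument needs only the qualitative statement that $S_k\gol>0$ for $k\geqslant k_0$ and yields a more explicit, continuous $f$ and an explicit $\mathfrak{L}$; yours yields the quantitatively stronger conclusions that $f$ is bounded ($-c_1\leqslant f\leqslant0$) and that $\mathfrak{L}$ is bounded below by the positive constant $\varepsilon$, which is more than the statement requires but could be convenient elsewhere.
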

\begin{proof}
By Lemma \ref{proprietesgol}, there exists $k_0\geqslant1$ such that for any $k\geqslant k_0$ and any $x\in\LL^0$, one gets
$S_k\gol(x)>0$. Denote by $\varepsilon=\frac{1}{k_0}$ and let us introduce
$a_i=1-i\varepsilon$ for any $i\in[\![0,k_0]\!]$. We notice that $a_0=1$, $a_{k_0}=0$ and $a_i-a_{i-1}=-\varepsilon$ for any $i\in[\![1,k_0]\!]$. Fix $x\in\LL^0$ and set 
$$f(x)=\sum\limits_{i=0}^{k_0-1}a_i\gol\left(\T^i.x\right).$$
\noindent
Therefore
\begin{align*}
f(x)-f\left(\T.x\right) & = \sum\limits_{i=0}^{k_0-1}a_i\gol\left(\T^i.x\right)-\sum\limits_{i=0}^{k_0-1}a_i\gol\left(\T^{i+1}.x\right)\\
& = a_0\gol(x)-a_{k_0}\gol\left(\T^{k_0}.x\right)+\sum\limits_{i=1}^{k_0}a_i\gol\left(\T^i.x\right)-\sum\limits_{i=1}^{k_0}a_{i-1}\gol\left(\T^i.x\right),
\end{align*}
\noindent
which yields
$$f(x)-f\left(\T.x\right)=\gol(x)-\varepsilon\sum\limits_{i=1}^{k_0}\gol\left(\T^i.x\right).$$
\noindent
Let us denote 
$\mathfrak{L}(x)=\varepsilon\sum\limits_{i=1}^{k_0}\gol\left(\T^i.x\right)=\dfrac{1}{k_0}S_{k_0}\gol\left(\T.x\right)$;
Lemma \ref{proprietesgol} implies that the function $\mathfrak{L}$ is positive, which ends the proof of the lemma.
\end{proof}
\noindent
The set
$$\mathcal{D}_{f,\mathfrak{L}}^0=\left\{(y,x,r)\in\mathcal{D}^0\times\R\ |\ f(x)\leqslant r< f(x)+\mathfrak{L}(x)\right\}$$
\noindent
is a fundamental domain for the action of $\T_{\gol}$ on $\mathcal{D}^0\times\R$: indeed
$$\T_{\gol}.\left(y,x,f(x)+\mathfrak{L}(x)\right)=\left(\T.(y,x),f(x)+\mathfrak{L}(x)-\gol(x)\right)=
\left(\T.(y,x),f\left(\T.x\right)\right).$$
\begin{figure}[htbp]
\begin{center}
\includegraphics[height=6cm]{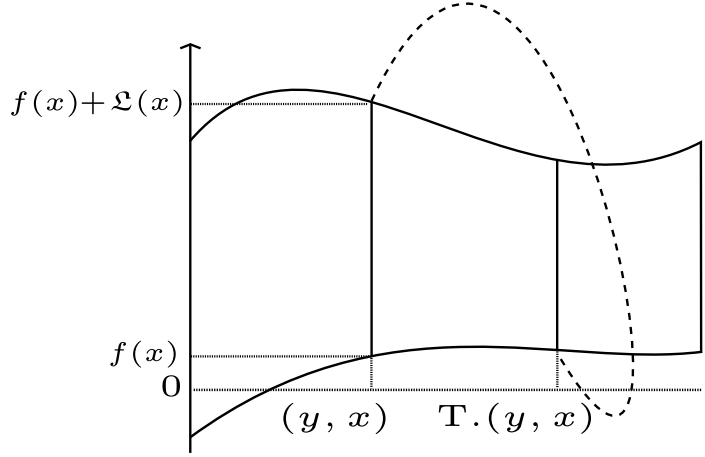}
\caption{\label{domfondgoldeux}Domain $\mathcal{D}_{f,\mathfrak{L}}^0$}
\end{center}
\end{figure}
Let $\tilde{\phi}_t$ denote the transformation, whose action on triplets $(y,x,r)\in\mathcal{D}^0\times\R$ is given by translation 
of $t$ on the third coordinate. The actions of $\left(\tilde{\phi}_t\right)_t$ and $\T_{\gol}$ commute and 
define a \emph{special flow} $\left(\phi_t\right)_t$ on 
$\mathcal{D}^0\times\R/\langle\T_{\gol}\rangle$. Identifying $\mathcal{D}^0\times\R/\langle\T_{\gol}\rangle$ with
$\mathcal{D}_{f,\mathfrak{L}}^0$, for any $(y,x,r)\in\mathcal{D}_{f,\mathfrak{L}}^0$ and $t>0$, one gets from \eqref{rajoutsamuel}
\begin{equation}\label{actiondeT_golsurlesousdécalage}
\phi_t.\left(y,x,r\right)=\left(y,x,r+t\right)=\left(\T^k.(y,x),r+t-S_k\mathfrak{l}(x)\right)=\T_{\gol}^k.\left(y,x,r+t\right),
\end{equation}
\noindent
where $k\in\Z$ is the unique integer such that $f(x)\leqslant r+t-S_k\mathfrak{l}(x)<f(x)+\mathfrak{L}(x)$. We finally deduce the following lemma
\begin{lem}\label{representationsymboliqueflot}\
\begin{enumerate}
 \item[i)]The spaces $\LL^0\overset{\Delta}{\times}\LL^0\times\R/\G$ and $\mathcal{D}^0\times\R/\langle\T_{\gol}\rangle$ are in one-to-one correspondence.
 \item[i)]The geodesic flow on $\Omega^0$ is conjugated to the special flow on $\mathcal{D}^0\times\R/\langle\T_{\gol}\rangle$.
\end{enumerate}
\end{lem}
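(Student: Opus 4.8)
The plan is to deduce both assertions from the base-level orbit equivalence between $(\mathcal{D}^0,\T)$ and the $\G$-action on $\LL^0\overset{\Delta}{\times}\LL^0$, by ``crossing it with the $\R$-coordinate''. The point is that the roof function $\gol$ has been defined precisely so that $\T_{\gol}$ mimics the action of $\G$ on the third Hopf coordinate; once this is made explicit, $(i)$ becomes a bijection-chasing argument and $(ii)$ is formal.

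\textbf{Proof of $(i)$.} Recall from \cite{BP} (see also \cite{BP2}, \cite{DP}) that $\T$ is a bijection of $\mathcal{D}^0$, that the image of $\mathcal{D}^0$ in $\LL^0\overset{\Delta}{\times}\LL^0$ meets every $\G$-orbit, and that there is an orbit cocycle $k\mapsto\g_k=\g_k(y,x)\in\G$ with $\T^k.(y,x)=\g_k^{-1}.(y,x)$; when $x=\pi(\boldsymbol{\alpha})$ and $k\geqslant1$ one has simply $\g_k=\alpha_1\cdots\alpha_k$, and two points of $\mathcal{D}^0$ in the same $\G$-orbit lie in the same $\T$-orbit, up to the (trivial or infinite cyclic) stabiliser in $\G$ of a pair of distinct boundary points. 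Now the computation carried out in the proof of Lemma~\ref{proprietesgol} gives $S_k\gol(x)=\bcal_x(\oo,\g_k.\oo)=-\bcal_x(\g_k.\oo,\oo)$; comparing this with \eqref{rajoutsamuel} and with the defining formula $\g.(x^-,x^+,r)=(\g.x^-,\g.x^+,r+\bcal_{x^+}(\g^{-1}.\oo,\oo))$ yields
$$\T_{\gol}^k.(y,x,r)=\g_k^{-1}.(y,x,r)\qquad (k\in\Z).$$
Hence the map $\mathcal{D}^0\times\R\hookrightarrow\LL^0\overset{\Delta}{\times}\LL^0\times\R$, $(y,x,r)\mapsto(y,x,r)$, carries $\langle\T_{\gol}\rangle$-orbits into $\G$-orbits and descends to a map $\Phi$ from $\mathcal{D}^0\times\R/\langle\T_{\gol}\rangle$ to $\Omega^0$. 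Surjectivity of $\Phi$ is immediate, since any $(y,x,r)$ can be moved into $\mathcal{D}^0\times\R$ by some $\g\in\G$. For injectivity, suppose $\g.(y_1,x_1,r_1)=(y_2,x_2,r_2)$ with both triples in $\mathcal{D}^0\times\R$. Projecting onto the first two coordinates and using the base case, $(y_2,x_2)=\g'^{-1}.(y_1,x_1)$ where $\g'$ realises a power $\T^k$ at $(y_1,x_1)$, so $\g'\g$ fixes the pair $(y_1,x_1)$; thus $\g'\g$ is either trivial or lies in the cyclic stabiliser of that pair, generated by a hyperbolic $g_0$ whose inverse again acts as a power of $\T_{\gol}$ on the fibre over $(y_1,x_1)$ (with exponent the symbolic period of $x_1$, by the same cocycle identity). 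Since these $r$-shifts do not depend on the $r$-coordinate and $\g'$ realises $\T_{\gol}^k$ on the whole fibre over $(y_1,x_1)$, one concludes in all cases that $(y_2,x_2,r_2)$ is a $\T_{\gol}$-iterate of $(y_1,x_1,r_1)$. Here the freeness of the $\G$-action on $\xx$, hence on $\T^1\xx\cong\partial\xx\overset{\Delta}{\times}\partial\xx\times\R$, is what guarantees that no further stabiliser survives. So $\Phi$ is a bijection, which is $(i)$.

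\textbf{Proof of $(ii)$.} This is now purely formal. The geodesic flow on $\Omega^0$ is, in Hopf coordinates, the translation $\tilde g_t(x^-,x^+,r)=(x^-,x^+,r+t)$ passed to the quotient by $\G$, while the special flow on $\mathcal{D}^0\times\R/\langle\T_{\gol}\rangle$ is the translation $\tilde\phi_t(y,x,r)=(y,x,r+t)$ passed to the quotient by $\langle\T_{\gol}\rangle$ (and $\tilde\phi_t$ does commute with $\T_{\gol}$, directly from \eqref{actionT_gol}). Since the map inducing $\Phi$ is the identity, in particular the identity on the third coordinate, it intertwines $\tilde\phi_t$ and $\tilde g_t$; passing to the quotients, $\Phi$ conjugates $(\phi_t)$ to $(g_t)$.

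\textbf{Main obstacle.} The only genuinely delicate step is the injectivity in $(i)$: one must exclude that two points of $\mathcal{D}^0\times\R$ which are not $\langle\T_{\gol}\rangle$-equivalent become identified in $\Omega^0$. This reduces entirely to the structure of point-pair stabilisers in $\G$ (trivial, or cyclic generated by a hyperbolic whose axial translation length is exactly the corresponding $\T_{\gol}$-return time, which is precisely the content of $S_k\gol=\bcal_\cdot(\oo,\g_k.\oo)$ from Lemma~\ref{proprietesgol}) together with the freeness of the $\G$-action on $\xx$; everything else is bookkeeping on top of the orbit equivalence recalled from \cite{BP}.
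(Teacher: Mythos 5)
Your proof is correct and follows essentially the same route as the paper, which deduces the lemma directly from the orbit equivalence of the $\G$-action on $\LL^0\overset{\Delta}{\times}\LL^0\times\R$ with the $\T_{\gol}$-action on $\mathcal{D}^0\times\R$ (cited from \cite{BP}) together with the fact that both flows are translations in the last Hopf coordinate. The only difference is that you make explicit the cocycle identity $\T_{\gol}^k=\g_k^{-1}$ and the stabiliser discussion for injectivity, details the paper leaves to the cited orbit equivalence.
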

This lemma implies in particular that there is a one-to-one correspondence between the primitive periodic orbits of the geodesic flow on $\T^1\xx/\G$ and 
the primitive periodic orbits of the special flow $(\phi_t)_t$ on $\mathcal{D}^0\times\R/\langle\T_\gol\rangle$. 
This correspondence allows us to characterize periodic orbits of $(g_t)_t$. Let $L>0$ and $(y,x,f(x))\in\mathcal{D}_{f,\mathfrak{L}}^0$ a 
$\phi_L$-periodic triplet: the equality $\phi_L.\left(y,x,f(x)\right)=(y,x,f(x))$ may be written
\begin{align*}
(y,x,f(x)+L)\sim(y,x,f(x)).
\end{align*}
\noindent
Therefore $L\geqslant\mathfrak{L}(x)$ and there exists an integer $k\geqslant1$ satisfying
$$f(x)\leqslant f(x)+L-S_k\mathfrak{l}(x)<f(x)+\mathfrak{L}(x).$$
\noindent
The unique representative of $(y,x,f(x)+L)$ in $\mathcal{D}_{f,\mathfrak{L}}^0$ is given by
$$\left(\T^k.y,\T^k.x,f(x)+L-S_k\mathfrak{l}(x)\right);$$
\noindent
it follows
$$\T^k.(y,x)=(y,x)\ \text{and}\ L=S_k\mathfrak{l}(x).$$
\noindent
These equalities determine $k$ periodic couples for $\T^k$
$$(y,x),\left(\T.y,\T.x\right),\dots,\left(\T^{k-1}.y,\T^{k-1}.x\right)$$
\noindent
in $\mathcal{D}^0$ and the length $L$ of the orbit is given by $L=S_k\mathfrak{l}(x)$. 

Furthermore, the closed geodesics on $\xx/\G$ are in one-to-one correspondence with the periodic orbits of the geodesic flow
$(g_t)_t$ on $\T^1\xx/\G$. Let $\mathfrak{g}$ be a closed geodesic. If it is not the projection of the axis of a hyperbolic isometry of some 
Schottky factor $\G_j$, $j\in[\![p+1,p+q]\!]$, a lift of $\mathfrak{g}$ in
$\mathcal{D}^0\times\R/\langle\T_\gol\rangle$ corresponds to a periodic orbit for the special flow. There thus exist 
$(y,x)\in\mathcal{D}^0$ and $k\geqslant2$ such that 
$$\T^k.(y,x)=(y,x)\ \mathrm{and}\ l(\mathfrak{g})=S_{k}\gol(x),$$
\noindent
where $l(\mathfrak{g})$ is the length of $\mathfrak{g}$. The couple $(y,x)$ is associated to 
a $k$-periodic two-sided admissible sequence $\left(\alpha_n\right)_{n\in\Z}$ of $\Sigma_\mathcal{A}$: the point $x$ is the attractive fixed point 
of the hyperbolic isometry $\alpha_1\alpha_2...\alpha_k$ and the closed geodesic $\mathfrak{g}$ is the projection of the axis of this isometry.

We may notice that the couples $(y,x),\T.(y,x),\dots,\T^{k-1}.(y,x)$ lead to the same orbit of the special flow, and thus define the same 
closed geodesic. We conclude that the closed geodesics which do not correspond to hyperbolic generators of $\G$ are in one-to-one correspondence 
with the orbits of $\T^k$-periodic couples of $\mathcal{D}^0$, $k\geqslant2$. 
\subsubsection{The dynamical system $(\LL^0,\mathrm{T},\nu)$}
Recall that by the identification $\T^1\xx\simeq\partial\xx\overset{\Delta}{\times}\partial\xx\times\R$ given by Hopf coordinates (see Paragraph 2.1.3), the Bowen-Margulis measure $m_\G$ on $\T^1\xx/\G$ is given by the quotient under the action of $\G$ of the measure $\tilde{m}_\G$ defined by 
$$\dd\tilde{m}_\G(y,x,t)=\dfrac{\dd\sigma_{\oo}(y)\dd\sigma_{\oo}(x)}{\dhy_{\oo}(y,x)^{\frac{2\delta_\G}{a}}}\dd t=\dd\mu(y,x)\dd t$$
\noindent
where $\sigma_\oo$ is the Patterson-Sullivan measure seen from $\oo$, $\dhy_\oo$ the Gromov distance on $\partial\xx$ and $a>0$ is such that the curvatures of $\xx$ is less than 
$-a^2$. Notice that under our hypotheses, the measure $m_\G$ has infinite total mass and 
$m_\G\left(\T^1\mm\setminus\Omega^0\right)=0$. Up to multiplying the Patterson density $\left(\sigma_{{\bf x}}\right)_{{\bf x}\in\xx}$ by a constant, we may assume that 
$\mu\left(\mathcal{D}^0\right)=1$. 

The dynamical system $\left(\Omega^0,\left(g_t\right)_{t\in\R},m_\G\right)$ is conjugated to the special flow 
$\left(\mathcal{D}^0\times\R/\langle\T_\gol\rangle,\left(\phi_t\right)_{t\in\R},\overline{m}_\G\right)$ where 
$\overline{m}_\G$ denotes the projection of $\left(\tilde{m}_\G\right)_{|\mathcal{D}^0\times\R}$ to $\mathcal{D}^0\times\R/\langle\T_\gol\rangle$ under the action of $\T_\gol$.

We have $\mathcal{D}^0\subset\LL^0\times\LL^0$ and may consider the measure $\nu$ on $\LL^0$, obtained as the projection of $\mu_{|\mathcal{D}^0}$ on the second coordinates. This measure $\nu$ is absolutely continuous with respect to the Patterson-Sullivan measure $\sigma_\oo$.
\begin{prop}\label{defifonctionh}
The map $h\ :\ \LL^0\longrightarrow\R_+^*$ defined by: for all $j\in\{1,...,p+q\}$ and $x\in\LL_j^0$ 
\begin{equation}
h(x)=\int\limits_{\bigcup\limits_{l\neq j}\LL_l^0}\dfrac{\dd\sigma_{\oo}(y)}{\dhy_{\oo}(x,y)^{\frac{2\delta_\G}{a}}},
\end{equation}
\noindent
is the density of $\nu$ with respect to $\sigma_\oo$; moreover, the measure $\nu$ is $\T$-invariant.
\end{prop}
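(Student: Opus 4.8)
The plan is to prove the two assertions in turn: that $h$ is a genuine $\R_+^*$-valued function equal to $\dfrac{\dd\nu}{\dd\sigma_\oo}$, and that $\nu$ is $\T$-invariant.

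First I would identify the density. By construction $\nu(A)=\mu\bigl(\{(y,x)\in\mathcal{D}^0:x\in A\}\bigr)$ for every Borel set $A\subset\LL^0$. Writing $\mathcal{D}^0=\bigsqcup_{j}\bigl(\bigcup_{k\neq j}\LL_k^0\bigr)\times\LL_j^0$ and applying Tonelli's theorem to $\dd\mu(y,x)=\dhy_\oo(y,x)^{-\frac{2\delta_\G}{a}}\dd\sigma_\oo(y)\dd\sigma_\oo(x)$ yields
$$\nu(A)=\sum_{j=1}^{p+q}\int_{A\cap\LL_j^0}\Bigl(\int_{\bigcup_{l\neq j}\LL_l^0}\frac{\dd\sigma_\oo(y)}{\dhy_\oo(x,y)^{\frac{2\delta_\G}{a}}}\Bigr)\dd\sigma_\oo(x)=\int_A h\,\dd\sigma_\oo,$$
which is the asserted identity. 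That $h$ is $\R_+^*$-valued comes from the geometry of the Schottky sets: $D_1,\dots,D_{p+q}$ being finitely many pairwise disjoint closed subsets of the compact metric space $(\partial\xx,\dhy_\oo)$, there are constants $0<c_0\leqslant c_1<\infty$ with $c_0\leqslant\dhy_\oo(x,y)\leqslant c_1$ whenever $x\in D_j$, $y\in D_l$, $j\neq l$; since $\sigma_\oo$ is finite and $\sigma_\oo(\LL_l^0)=\sigma_\oo(\LL_\G\cap D_l)>0$ (the equality because $\LL_l^0\subset\LL_\G\cap D_l=\overline{\LL_l^0}$ and $\sigma_\oo(\LL_\G\setminus\LL^0)=0$, the positivity being standard for Schottky groups, cf. \cite{DOP}, \cite{BP2}), one gets, for $x\in\LL_j^0$, $0<c_1^{-2\delta_\G/a}\,\sigma_\oo\bigl(\bigcup_{l\neq j}\LL_l^0\bigr)\leqslant h(x)\leqslant c_0^{-2\delta_\G/a}\,\|\sigma_\oo\|<+\infty$.

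For the $\T$-invariance I would transport the problem to $\mathcal{D}^0$. Recall that $\nu=(\mathrm{pr}_2)_*\bigl(\mu_{|\mathcal{D}^0}\bigr)$ for the second-coordinate projection $\mathrm{pr}_2:\mathcal{D}^0\to\LL^0$, and that $\mathrm{pr}_2\circ\T=\T\circ\mathrm{pr}_2$, where on $\mathcal{D}^0$ the map $\T$ acts by $\T.(y,x)=(\alpha_1^{-1}.y,\alpha_1^{-1}.x)$ for $x=\pi(\boldsymbol\alpha)$; hence it suffices to show $\mu_{|\mathcal{D}^0}$ is $\T$-invariant. Using the decomposition $\LL_{j'}^0=\bigcup_{\alpha\in\G_{j'}^*}\bigcup_{k\neq j'}\alpha.\LL_k^0$ recalled above (into pieces with pairwise disjoint closures), write $\mathcal{D}^0$ as the countable disjoint union, up to $\sigma_\oo\otimes\sigma_\oo$-null sets, of the cylinders $C_{m,j',\alpha,k}:=\LL_m^0\times(\alpha.\LL_k^0)$ with $m\neq j'$, $\alpha\in\G_{j'}^*$, $k\neq j'$. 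On $C_{m,j',\alpha,k}$ the transformation $\T$ is the restriction of the isometry $\alpha^{-1}\in\G$ (since $\alpha\in\G_{j'}\setminus\{\mathrm{Id}\}$ is the first letter of the code of any $x\in\alpha.\LL_k^0$), so $\T^{-1}B\cap C_{m,j',\alpha,k}=C_{m,j',\alpha,k}\cap\alpha.B$ and, by the $\G$-invariance of $\mu$ (Paragraph 2.1.3), $\mu\bigl(\T^{-1}B\cap C_{m,j',\alpha,k}\bigr)=\mu\bigl(B\cap\T C_{m,j',\alpha,k}\bigr)$. Finally, the images $\T C_{m,j',\alpha,k}=(\alpha^{-1}.\LL_m^0)\times\LL_k^0$ again form a partition of $\mathcal{D}^0$ up to null sets, because for fixed $j'$ and $k\neq j'$ one has $\bigcup_{\alpha\in\G_{j'}^*,\,m\neq j'}\alpha^{-1}.\LL_m^0=\LL_{j'}^0$ (same decomposition, with $\beta=\alpha^{-1}$) and $\bigcup_{j'}\bigcup_{k\neq j'}\LL_{j'}^0\times\LL_k^0=\mathcal{D}^0$. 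Summing over the cylinders gives $\mu(\T^{-1}B)=\mu\bigl(B\cap\mathcal{D}^0\bigr)=\mu(B)$ for every Borel $B\subset\mathcal{D}^0$, so $\mu_{|\mathcal{D}^0}$ is $\T$-invariant; pushing forward by $\mathrm{pr}_2$ and using $\mathrm{pr}_2\circ\T=\T\circ\mathrm{pr}_2$ then gives $\nu(\T^{-1}A)=\nu(A)$. Alternatively, $\T$-invariance of $\nu$ can be read off directly from the formula for $h$: decomposing $\int_{\LL^0}(\un_A\circ\T)\,h\,\dd\sigma_\oo$ over the pieces $\alpha.\LL_k^0$, substituting $x=\alpha.u$ with $\dd\sigma_\oo(\alpha.u)=e^{-\delta_\G\bcal_u(\alpha^{-1}.\oo,\oo)}\dd\sigma_\oo(u)$ from \eqref{deltaconformité} and rewriting $h(\alpha.u)$ via the conformal change of variables \eqref{mvr}, the factor $e^{+\delta_\G\bcal_u(\alpha^{-1}.\oo,\oo)}$ produced by $h(\alpha.u)$ cancels this Jacobian, and the identity $\bigsqcup_{j\neq k}\bigsqcup_{\alpha\in\G_j^*}\alpha^{-1}.\bigl(\bigcup_{l\neq j}\LL_l^0\bigr)=\bigcup_{j\neq k}\LL_j^0$ reassembles the remaining integral into $h(u)$.

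The only real points of attention are bookkeeping ones. One has to check that every countable union used above is a partition \emph{up to $\sigma_\oo$-null sets}, which is exactly where the ``pairwise disjoint closures'' properties of the sets $\LL_j^0$ and of their decompositions enter, and --- in the computational variant --- that the Jacobian from the $\delta_\G$-conformality \eqref{deltaconformité} and the conformal factor generated by \eqref{mvr} cancel exactly. The finiteness and positivity of $h$ established above are precisely what makes all these rearrangements (Tonelli on the countable partition of $\mathcal{D}^0$, term-by-term change of variables) legitimate, so no integrability issue arises.
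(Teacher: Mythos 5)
Your proof is correct and follows essentially the same route as the paper: Fubini/Tonelli on the decomposition of $\mathcal{D}^0$ to identify $h$ as the density, and the $\G$-invariance of $\mu$ together with the cylinder partition of $\mathcal{D}^0$ (on each atom of which $\T$ acts by an isometry of $\G$) to get $\T$-invariance, then projection on the second coordinate. The extra bookkeeping you supply (finiteness and positivity of $h$, null-set checks, the equivariance $\mathrm{pr}_2\circ\T=\T\circ\mathrm{pr}_2$) is sound detail that the paper leaves implicit.
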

\begin{proof}
Let $\varphi\ :\ \LL^0\longrightarrow\R$ be a Borel function. Denote by $p$ the projection 
$$p\ :\ \left\{\begin{array}{ll}
               \mathcal{D}^0 & \longrightarrow\LL^0\\
               (x,y) & \longmapsto y
               \end{array}\right..
$$
\noindent
We write
\begin{align*}
\nu\left(\varphi\right)=\int_{\LL^0}\varphi(x)\nu(\dd x)= &
\sum\limits_{j=1}^{p+q}\int_{\LL_j^0}\int_{\LL^0\setminus\LL_j^0}\varphi(x)\dd\mu(x,y)\\
= &\sum\limits_{j=1}^{p+q}\int_{\LL_j^0}\int_{\LL^0\setminus\LL_j^0}\varphi(x)\dfrac{\dd\sigma_{\oo}(x)\dd\sigma_{\oo}(y)}{\dhy_{\oo}(x,y)^{\frac{2\delta_\G}{a}}}\\
= &\sum\limits_{j=1}^{p+q}\int_{\LL_j^0}\varphi(x)h(x)\dd\sigma_\oo(x)=\int_{\LL^0}h(x)\varphi(x)\dd\sigma_\oo(x).
\end{align*}

Since the measure $\mu$ is $\G$-invariant, the measure $\mu_{|\mathcal{D}^0}$ is $\T$-invariant on $\mathcal{D}^0$: indeed, the family
$\left(\alpha\LL_i\times\beta\LL_j\right)_{i\neq j,\alpha,\beta\in\G}$ is a partition of $\mathcal{D}^0$ and the action of $\T$ on 
$\mathcal{D}^0$ is given by the action of an isometry $\g\in\G$ on each atom of this partition. 
\end{proof}
\begin{rem}
We may extend the function $h$ on $\LL_\G$ setting for any $j\in[\![1,p+q]\!]$ and $x\in\overline{\LL_j^0}$
$$h(x)=\dfrac{1}{\mu\left(\mathcal{D}^0\right)}\int\limits_{\bigcup\limits_{l\neq j}\LL_l^0}\dfrac{\dd\sigma_{\oo}(y)}{\dhy_{\oo}(x,y)^{\frac{2\delta_\G}{a}}}.$$
\end{rem}

\subsection{On transfer operators}
Let us now introduce a family of transfer operators $(\ot_z)$ associated to the transformation $\T$. The interest of such operators to study hyperbolic flows has already been widely illustrated in the literature, see for instance \cite{Bow}, \cite{PP} or \cite{Zin}. These operators use the non-injectivity of the shift on $\Sigma_{\mathcal{A}}^+$ to describe the dynamic of 
$\T$ on $\LL^0$. To define these transfer operators, we will associate a weight to each point $x\in\LL^0$ to take into account the number of its antecedents under the action of $\T$. The operators $\ot_z$ may thus be seen as transition kernels ruled by the action of inverse branches of $\T$ on $\LL^0$. Hence, for $n$ large enough, the study of $\T^n$ is strongly related to the behaviour of a Markov chain on $\LL^0$ (see \cite{BDP} and \cite{HeHe}).

We first present the family of transfer operators which we will consider and the spaces on which they act. We then study their relationship with the dynamical system $\left(\LL^0,\T,\nu\right)$ presented in the previous paragraph. Eventually, we study the spectral properties and the regularity of the family of operators. 
\subsubsection{Definition and first properties}
Let us introduce the family $(\ot_z)$ of \emph{transfer operators} defined formally for a 
parameter $z\in\C$ and a function $\varphi\ :\ \LL_\G\longrightarrow\C$ by 
$$\ot_z\varphi(x)=\sum\limits_{\T.y=x}e^{-z\gol(y)}\varphi(y).$$
\noindent
These operators are associated with the roof-function $\gol$ defined in \eqref{actionT_gol} and allows us to describe the dynamical system $(\LL^0,\T,\nu)$ from an analytic viewpoint. 
We first explicit them more precisely and check that it acts on the space $\mathcal{C}(\LL_\G)$ of continuous functions from $\LL_\G$ to $\C$ equipped with the norm of uniform convergence $|\cdot|_{\infty}$. To get some critical gap property for their spectrum, we will consider their restriction to some subspace of $\left(\mathcal{C}(\LL_\G),|\cdot|_{\infty}\right)$. 

In order to enlight the text, we will denote by $\LL$ the limit set $\LL_\G$, by 
$\LL_j$ the set $\overline{\LL_j^0}=D_j\cap\LL_\G$ for any $j\in[\![1,p+q]\!]$ and $\delta=\delta_\G$; similarly, the quantity $b(\g,x)$ will stand for $\mathcal{B}_x(\g^{-1}.\oo,\oo)$ for any $x\in\partial\xx$ and $\g\in\G$.

Fix $z\in\C$ and $l\in[\![1,p+q]\!]$. If $x$ belongs to $\LL_l^0$, its pre-images by $\T$ are the points $y=\alpha.x$ with 
$\alpha\in\bigcup_{j\neq l}\G_j^*$ and $\gol(y)=b(\alpha,x)$. Consequently for any bounded Borel function $\varphi\ :\ \LL\longrightarrow\C$
\begin{equation}\label{defoptrans}\ot_z\varphi(x)=\sum\limits_{\underset{j\neq l}{\alpha\in\G_j^*}}e^{-zb(\alpha,x)}\varphi(\alpha.x)=
\sum\limits_{j=1}^{p+q}\un_{\LL_j^c}(x)\sum\limits_{\alpha\in\G_j^*}e^{-zb(\alpha,x)}\varphi(\alpha.x).\end{equation}
\noindent
Combining Hypotheses $(P_2)$ and $(N)$ with Lemma \ref{busedist}, we notice that this 
quantity is finite for $\re{z}\geqslant\delta$ and defines a continuous function 
on $\LL^0$. Since the convergence of the series appearing in \eqref{defoptrans} is normal on $\LL$ when $\re{z}\geqslant\delta$, the 
function $\ot_z\varphi$ may be continuously extended on $\LL$.
The operator $\ot_\delta$ is positive on $\LL$. Furthermore
\begin{lem}\label{dualityTtransfer}
The function $h$ defined in Proposition \ref{defifonctionh} satisfies $\ot_\delta h=h$. Moreover,
for any $\varphi,\psi$ in $\mathcal{C}(\LL)$
$$\int\limits_{\LL}\varphi(x)\psi(\T.x)\sigma_{\oo}(\dd x)=\int\limits_{\LL}\ot_{\delta}\varphi(x)\psi(x)\sigma_{\oo}(\dd x).$$
\noindent
In particular, the measure $\sigma_\oo$ is $\ot_\delta$-invariant. 
\end{lem}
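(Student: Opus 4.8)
The plan is to make everything hinge on the \emph{duality identity}
$$\int_{\LL}\varphi(x)\,\psi(\T.x)\,\sigma_{\oo}(\dd x)=\int_{\LL}\ot_{\delta}\varphi(x)\,\psi(x)\,\sigma_{\oo}(\dd x)\qquad(\varphi,\psi\in\mathcal{C}(\LL)),$$
since both remaining assertions are immediate consequences of it: taking $\psi=\un_{\LL}$ gives $\int_{\LL}\ot_{\delta}\varphi\,\dd\sigma_{\oo}=\int_{\LL}\varphi\,\dd\sigma_{\oo}$, i.e. the $\ot_{\delta}$\nobreakdash-invariance of $\sigma_{\oo}$, and taking $\varphi=h$ together with $\nu=h\,\sigma_{\oo}$ and the $\T$-invariance of $\nu$ from Proposition \ref{defifonctionh} yields $\ot_{\delta}h=h$. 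The only analytic input needed for the identity is the $\delta$-conformality \eqref{deltaconformité}, which, written as a change of variables, reads: for any $\alpha\in\G$ and any bounded Borel $F$,
$$\int_{\alpha.A}F(x)\,\sigma_{\oo}(\dd x)=\int_{A}F(\alpha.y)\,e^{-\delta b(\alpha,y)}\,\sigma_{\oo}(\dd y),$$
recalling that $b(\alpha,y)=\mathcal{B}_{y}(\alpha^{-1}.\oo,\oo)$.

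To prove the identity I would start from the left-hand side and use that $\sigma_{\oo}$ has no atoms and does not charge the countable set $\LL\setminus\LL^{0}$, so one may integrate over $\LL^{0}=\bigsqcup_{j}\LL_{j}^{0}$; moreover each $\LL_{j}^{0}$ splits, according to the first letter, into the disjoint union $\bigsqcup_{\alpha\in\G_{j}^{*}}\alpha.(\LL_{j}^{c}\cap\LL^{0})$ by point (2) of the description of $\LL^{0}$. On the piece $\alpha.(\LL_{j}^{c}\cap\LL^{0})$ one has $\T.x=\alpha^{-1}.x$, so the change of variables $x=\alpha.y$ transforms the corresponding integral into $\int_{\LL_{j}^{c}\cap\LL^{0}}\varphi(\alpha.y)\,\psi(y)\,e^{-\delta b(\alpha,y)}\,\sigma_{\oo}(\dd y)$. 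Summing over $\alpha\in\G_{j}^{*}$ and then over $j$, and interchanging sum and integral (licit because the series defining $\ot_{\delta}|\varphi|$ converges normally on $\LL$ for $\re{z}\geqslant\delta$ and $\sigma_{\oo}$ is finite, so Fubini--Tonelli applies), one recognizes on $\LL_{j}^{c}\cap\LL^{0}$ the sum $\sum_{\alpha\in\G_{j}^{*}}e^{-\delta b(\alpha,y)}\varphi(\alpha.y)$, which by \eqref{defoptrans} reassembles exactly into $\ot_{\delta}\varphi(y)$. This gives the identity.

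For the deductions, the $\ot_{\delta}$-invariance of $\sigma_{\oo}$ is the case $\psi=\un_{\LL}$ as above. For $\ot_{\delta}h=h$: with $\varphi=h$ the identity and the $\T$-invariance of $\nu$ give $\int_{\LL}(\ot_{\delta}h)\,\psi\,\dd\sigma_{\oo}=\int_{\LL}h\,(\psi\circ\T)\,\dd\sigma_{\oo}=\int_{\LL}\psi\,\dd\nu=\int_{\LL}h\,\psi\,\dd\sigma_{\oo}$ for all $\psi\in\mathcal{C}(\LL)$; since $h$ is continuous (on the clopen piece $\LL_{j}=\overline{\LL_{j}^{0}}$ the kernel $\dhy_{\oo}(x,y)^{-2\delta/a}$ is bounded and continuous, as $\overline{\LL_{j}^{0}}$ and $\overline{\LL_{k}^{0}}$, $k\neq j$, have disjoint closures) and so is $\ot_{\delta}h$, while $\sigma_{\oo}$ has full support in $\LL$, this forces $\ot_{\delta}h=h$ everywhere. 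Alternatively, $\ot_{\delta}h=h$ can be obtained by a direct computation in the same spirit: for $x\in\LL_{l}^{0}$ and $\alpha\in\G_{j}^{*}$ ($j\neq l$), expanding $h(\alpha.x)$ via \eqref{mvr} and \eqref{deltaconformité} gives $e^{-\delta b(\alpha,x)}h(\alpha.x)=\int_{\alpha^{-1}.(\bigcup_{m\neq j}\LL_{m}^{0})}\dhy_{\oo}(x,z)^{-2\delta/a}\,\sigma_{\oo}(\dd z)$, and summing over $\alpha\in\G_{j}^{*}$ (whose inverses again exhaust $\G_{j}^{*}$) the domains reassemble to $\LL_{j}^{0}$, so summing over $j\neq l$ reproduces $h(x)$.

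The one point demanding genuine care — the ``hard part'', such as it is — is the bookkeeping of inverse branches of $\T$ inside the free product: one must pin down which translates $\alpha.(\LL_{j}^{c}\cap\LL^{0})$ tile $\LL_{j}^{0}$, use the reindexing $\{\alpha^{-1}:\alpha\in\G_{j}^{*}\}=\G_{j}^{*}$, and keep the reduced-word structure cancellation-free; once that combinatorial picture is fixed, the rest is the conformal change of variables and the routine Fubini justification.
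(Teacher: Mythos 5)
Your proof of the duality identity is correct and follows exactly the paper's route: decompose $\LL$ (up to the $\sigma_\oo$-null set $\LL\setminus\LL^0$) into the tiles $\alpha.(\LL\setminus\LL_j)$, $\alpha\in\G_j^*$, on which $\T$ acts by $\alpha^{-1}$, apply the $\delta$-conformality \eqref{deltaconformité} as a change of variables, and reassemble $\ot_\delta\varphi$; the invariance of $\sigma_\oo$ is likewise obtained by taking $\psi=\un_\LL$, as in the paper. The only real divergence is the first assertion $\ot_\delta h=h$: the paper does not prove it at all but refers to Section 7 of \cite{DP}, whereas you derive it internally — either from the duality identity combined with the $\T$-invariance of $\nu=h\sigma_\oo$ from Proposition \ref{defifonctionh} (together with continuity of $h$ and $\ot_\delta h$ and the full support of $\sigma_\oo$, which turns the weak identity $\int(\ot_\delta h-h)\psi\,\dd\sigma_\oo=0$ into a pointwise one), or by the direct computation $e^{-\delta b(\alpha,x)}h(\alpha.x)=\int_{\alpha^{-1}.(\bigcup_{m\neq j}\LL_m^0)}\dhy_\oo(x,z)^{-2\delta/a}\,\sigma_\oo(\dd z)$ followed by the reindexing $\alpha\mapsto\alpha^{-1}$ and the tiling of $\LL_j^0$. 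Both of your arguments are sound (no circularity, since the $\T$-invariance of $\nu$ in Proposition \ref{defifonctionh} comes from the $\G$-invariance of $\mu$, not from this lemma), and they make the lemma self-contained where the paper relies on an external citation; the cost is only the extra bookkeeping of inverse branches and the Fubini justification, which you handle correctly via the normal convergence of the series defining $\ot_\delta$.
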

\begin{proof}
The property $\ot_\delta h=h$ follows from Section 7 in \cite{DP}. Now, using Property \eqref{deltaconformité} of the family of measures $\left(\sigma_{{\bf x}}\right)_{{\bf x}\in\xx}$, we obtain
\begin{align*}
\int\limits_{\LL}\varphi(x)\psi(\T.x)\sigma_{\oo}(\dd x)= & 
\sum\limits_{j=1}^{p+q}\sum\limits_{\alpha\in\G_j^*}\int_{\alpha.\left(\LL\setminus\LL_j\right)}\varphi(x)\psi(\alpha^{-1}.x)\sigma_{\oo}(\dd x)\\
= &\sum\limits_{j=1}^{p+q}\sum\limits_{\alpha\in\G_j^*}\int_{\LL}\un_{\LL_j^c}(y)
\varphi(\alpha.y)\psi(y)e^{-\delta\mathcal{B}_y(\alpha^{-1}.\oo,\oo)}\sigma_{\oo}(\dd y)\\
= & \int\limits_{\LL}\ot_{\delta}\varphi(y)\psi(y)\sigma_{\oo}(\dd y).
\end{align*}
The equality $\sigma_\oo\ot_\delta=\sigma_\oo$ follows with $\psi=\un_\LL$.
\end{proof}
\noindent
Let us now introduce the normalized operator $P:=\frac{1}{h}\ot_\delta\left(h\cdot\right)$. It is a positive Markov operator,
({\it i.e.} $P\un_\LL=\un_\LL$).
By the previous lemma, we deduce that $P$ satisfies: for any $\varphi,\psi$ in $\mathcal{C}(\LL)$
\begin{equation}
\int\limits_{\LL}\varphi(x)\psi(\T.x)\nu(\dd x)=\int\limits_{\LL}P\varphi(x)\psi(x)\nu(\dd x),
\end{equation}
\noindent
where $\nu=h\sigma_\oo$. A similar property will be useful in the proof of Theorem A
for some suitable extension $\tilde{P}$ of the operator $P$ defined as follows: for all continuous function $\varphi\ :\ \LL\longrightarrow\R$ and $u:\ \R\longrightarrow\R$ with compact support, any $x\in\LL$ and $t\in\R$
\begin{equation}\label{defiP1}\tilde{P}(\varphi\otimes u)(x,t)=\sum\limits_{j=1}^{p+q}\un_{\LL_j^c}(x)\sum\limits_{\alpha\in\G_j^*}
e^{-\delta b(\alpha,x)}\dfrac{h(\alpha.x)\varphi(\alpha.x)}{h(x)}u(t+b(\alpha,x)).\end{equation}
\noindent
By density, the operator $\tilde{P}$ extends continuously to the space of continuous maps with compact support on $\LL\times\R$.
\begin{lem}\label{defiP}
The operator $\tilde{P}$ is the adjoint of the transformation $(x,s)\longmapsto(\T.x,s-\gol(x))$ with respect to the measure $\nu\otimes\dd s$, \emph{i.e.} for all continuous maps $\Phi,\Psi$ on $\LL\times\R$ with compact support
$$\int_{\LL\times\R}\Phi(x,s)\Psi(\T.x,s-\gol(x))\dd\nu(x)\dd s=
\int_{\LL\times\R}\tilde{P}\Phi(x,s)\Psi(x,s)\dd\nu(x)\dd s.$$
\end{lem}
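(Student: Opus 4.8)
The plan is to verify the adjoint relation directly by unfolding the definition of $\tilde{P}$ in \eqref{defiP1} and using the $\G$-invariance of $\mu$, exactly as in the proof of Lemma \ref{dualityTtransfer}, but now carrying the extra real coordinate. First I would write the left-hand side as a sum over the atoms of the partition of $\mathcal{D}^0$: since $\T$ acts on $\LL_l^0$ by $x\mapsto\alpha^{-1}.x$ for $\alpha\in\G_j^*$, $j\neq l$, and $\gol(x)=b(\alpha^{-1}\text{-branch})$, I split
$$\int_{\LL\times\R}\Phi(x,s)\Psi(\T.x,s-\gol(x))\dd\nu(x)\dd s=\sum_{j=1}^{p+q}\sum_{\alpha\in\G_j^*}\int_{\alpha.(\LL\setminus\LL_j)\times\R}\Phi(x,s)\Psi(\alpha^{-1}.x,s-\gol(x))\dd\nu(x)\dd s.$$
Then in each term I perform the change of variables $x=\alpha.y$ with $y\in\LL\setminus\LL_j=\LL_j^c$. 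The key input is that $\nu=h\sigma_\oo$ together with the $\delta$-conformality \eqref{deltaconformité} gives $\dd\sigma_\oo(\alpha.y)=e^{-\delta\mathcal{B}_y(\alpha^{-1}.\oo,\oo)}\dd\sigma_\oo(y)=e^{-\delta b(\alpha,y)}\dd\sigma_\oo(y)$, so that $\dd\nu(\alpha.y)=\tfrac{h(\alpha.y)}{h(y)}e^{-\delta b(\alpha,y)}\dd\nu(y)$. On the $\R$-coordinate the substitution is trivial since $s$ is untouched; one only needs to check that $\gol(\alpha.y)=-b(\alpha,y)$ so that $s-\gol(\alpha.y)=s+b(\alpha,y)$, which matches the shift appearing in \eqref{defiP1}. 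This identity is exactly the content of the definition $\gol(x)=-\mathcal{B}_x(\alpha_1.\oo,\oo)$ once one tracks that the pre-image branch relevant to $y\in\LL_j^c$ is $\alpha.y$.

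After the substitution the left-hand side becomes
$$\sum_{j=1}^{p+q}\sum_{\alpha\in\G_j^*}\int_{\LL_j^c\times\R}\Phi(\alpha.y,s)\,\Psi(y,s+b(\alpha,y))\,\frac{h(\alpha.y)}{h(y)}e^{-\delta b(\alpha,y)}\dd\nu(y)\dd s.$$
Now I make the further change $t=s+b(\alpha,y)$ in the inner $\R$-integral (a translation, Jacobian $1$), which turns $\Psi(y,s+b(\alpha,y))$ into $\Psi(y,t)$ and $\Phi(\alpha.y,s)$ into $\Phi(\alpha.y,t-b(\alpha,y))$. Summing over $j$ and $\alpha$ and comparing with \eqref{defiP1}, the integrand is precisely $\sum_{j}\un_{\LL_j^c}(y)\sum_{\alpha\in\G_j^*}e^{-\delta b(\alpha,y)}\tfrac{h(\alpha.y)}{h(y)}\Phi(\alpha.y,t-b(\alpha,y))\,\Psi(y,t)=\tilde{P}\Phi(y,t)\Psi(y,t)$, giving the right-hand side. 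Fubini's theorem is justified because $\Phi,\Psi$ have compact support and the series defining $\tilde P$ converges normally for $\mathrm{Re}(z)\geqslant\delta$ by Hypotheses $(P_2)$, $(N)$ and Lemma \ref{busedist} (the same estimate used right after \eqref{defoptrans}), so all interchanges of sum and integral are legitimate.

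The only genuinely delicate point is bookkeeping: making sure the indicator functions $\un_{\LL_j^c}$ and the roof-function signs are handled consistently, i.e. that the pre-images of $x\in\LL_l^0$ under $\T$ are exactly $\{\alpha.x : \alpha\in\G_j^*,\ j\neq l\}$ with $\gol(\alpha.x)=b(\alpha,x)$ — this is already recorded in the line preceding \eqref{defoptrans} and in the proof of Lemma \ref{proprietesgol}, so I would simply cite it. Everything else is a routine transfer of the computation in Lemma \ref{dualityTtransfer} across the extra factor $\dd s$, which is invariant under the relevant translations. I do not expect any real obstacle beyond this sign/indicator bookkeeping.
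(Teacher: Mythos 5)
Your overall strategy --- decomposing over the atoms $\alpha.\LL_j^c$, using the $\delta$-conformality \eqref{deltaconformité} to get $\dd\nu(\alpha.y)=\frac{h(\alpha.y)}{h(y)}e^{-\delta b(\alpha,y)}\dd\nu(y)$, and then translating in the $\R$-variable --- is exactly the right (and the intended) verification, a routine extension of Lemma \ref{dualityTtransfer}. However, there is a sign slip in the middle that makes two of your displayed steps false as written. The correct relation is $\gol(\alpha.y)=+b(\alpha,y)$ for $y\in\LL_j^c$ and $\alpha\in\G_j^*$: indeed $\gol(\alpha.y)=-\mathcal{B}_{\alpha.y}(\alpha.\oo,\oo)=\mathcal{B}_{y}(\alpha^{-1}.\oo,\oo)=b(\alpha,y)$, the middle equality coming from antisymmetry of the Busemann cocycle and invariance under the isometry $\alpha^{-1}$; your claim $\gol(\alpha.y)=-b(\alpha,y)$ forgets this change of base point, and it contradicts both the line preceding \eqref{defoptrans} and your own final paragraph, where you quote $\gol(\alpha.x)=b(\alpha,x)$ correctly.

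Consequently, after the substitution $x=\alpha.y$ the left-hand side carries $\Psi(y,s-b(\alpha,y))$, not $\Psi(y,s+b(\alpha,y))$, and the correct translation is $t=s-b(\alpha,y)$, which turns $\Phi(\alpha.y,s)$ into $\Phi(\alpha.y,t+b(\alpha,y))$; it is this expression, with the plus sign, that matches the shift $u(t+b(\alpha,x))$ in \eqref{defiP1}. As written, your penultimate display contains $\Phi(\alpha.y,t-b(\alpha,y))$, which is \emph{not} $\tilde{P}\Phi(y,t)$, so the identification step is wrong; your two sign errors happen to compensate, which is why the stated conclusion is the right one, but the chain of equalities is internally inconsistent. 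Once the sign of $\gol(\alpha.y)$ is corrected the argument goes through verbatim, and your justification of Fubini via the normal convergence of the series defining $\tilde{P}$ (Hypotheses $(P_2)$, $(N)$ and Corollary \ref{busedist}) is fine.
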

\noindent
In order to study the regularity of the family $\left(\ot_z\right)_z$, let us introduce the following 
\emph{weight functions}: for any $z\in\C$ and $\g\in\G$, let 
$$w_z(\g,x)=\left\{\begin{array}{ll}
                        0 & \mathrm{if}\ x\in\LL_{l(\g)},\\
                        e^{-zb(\g,x)} & \mathrm{if}\ x\in\LL_{j},\ j\neq l(\g).
                       \end{array}\right.$$
\noindent
These weight functions satisfy the following cocycle relation: if $\alpha_1,\alpha_2\in\mathcal{A}$ do not belong to the same group $\G_j$, then 
\begin{equation}\label{cocycle}
 w_z(\alpha_1\alpha_2,x)=w_z(\alpha_1,\alpha_2.x)w_z(\alpha_2,x).
\end{equation}
\noindent
This implies that for any $k\geqslant1$, the $k$-th iterate of $\ot_z$ is given by: for any $\varphi\in\left(\mathcal{C}(\LL),|\cdot|_{\infty}\right)$, for any 
$z\in\C$ with $\re{z}\geqslant\delta$ and for any $x\in\LL$,
$$\ot_z^k\varphi(x)=\sum\limits_{\T^k.y=x}e^{-zS_k\gol(y)}\varphi(y)=\sum\limits_{\g\in\G(k)}w_z(\g,x)\varphi(\g.x).$$
\noindent
We will need to control the regularity of the family of functions $(x\mapsto b(\g,x))_{\g\in\G}$. The following proposition is proved in \cite{BP}:
\begin{prop}\label{equilipcocycle}
Let $E\subset\partial\xx$ and $F\subset\xx$ be two sets with disjoint closures in $\xx\cup\partial\xx$. Then the family $(x\longmapsto\mathcal{B}_x({\bf p},\oo))_{{\bf p}\in F}$ is equi-Lipschitz continuous on $\left(E,\mathrm{d}_{\oo}\right)$.
\end{prop}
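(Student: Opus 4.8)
The plan is to reduce the statement to the conformal formula \eqref{mvr} and the ``visibility'' estimate \eqref{visibility}, together with the triangular quasi-equality of Lemma \ref{quasiegalitetriangulaire}. First I would fix two points $x,y\in E$ with $x\neq y$ and a point ${\bf p}\in F$; the goal is to bound $\left|\mathcal{B}_x({\bf p},\oo)-\mathcal{B}_y({\bf p},\oo)\right|$ by a constant (depending only on $E$, $F$ and the curvature bounds) times $\dhy_{\oo}(x,y)^{\omega}$ for a suitable exponent, or in fact just by $\dhy_\oo(x,y)$ up to a multiplicative constant, since on a bounded metric space Lipschitz and Hölder statements are interchangeable after adjusting constants and one may take $\omega=1$ here.

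The key computational input is that the conformal factor controls the difference of Busemann cocycles. Concretely, for an isometry $\g$ one has $b(\g,x)=\mathcal{B}_x(\g^{-1}.\oo,\oo)$ and the identity \eqref{mvr} shows that $\mathcal{B}_x(\g^{-1}.\oo,\oo)-\mathcal{B}_y(\g^{-1}.\oo,\oo)$ is $-\tfrac{2}{a}$ times the logarithm of the ratio of Gromov distances after applying $\g$. The point ${\bf p}\in F\subset\xx$ is not of the form $\g^{-1}.\oo$ in general, but one can still write $\mathcal{B}_x({\bf p},\oo)=\mathcal{B}_x({\bf p},{\bf z})+\mathcal{B}_x({\bf z},\oo)$ using \eqref{rajoutbusemancocycle}, or more directly estimate $\mathcal{B}_x({\bf p},\oo)$ via a geodesic from $\oo$ to ${\bf p}$: since $\overline{E}$ and $\overline{F}$ are disjoint in $\xx\cup\partial\xx$, any geodesic ray towards $x\in E$ and any geodesic ray towards $y\in E$ both eventually separate from the bounded set $F$, so the difference $\left(\dhy({\bf p},{\bf z}_x)-\dhy({\bf z}_x,\oo)\right)-\left(\dhy({\bf p},{\bf z}_y)-\dhy({\bf z}_y,\oo)\right)$ along approximating points ${\bf z}_x\to x$, ${\bf z}_y\to y$ is controlled by the Gromov product $(x|y)_\oo$. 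Using $\dhy_\oo(x,y)=e^{-a(x|y)_\oo}$ and the visibility inequality \eqref{visibility}, this yields the desired bound.

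I would carry out the steps as follows: (1) fix representatives and reduce to bounding $\left|\mathcal{B}_x({\bf p},\oo)-\mathcal{B}_y({\bf p},\oo)\right|$; (2) express each Busemann term as a limit of $\dhy({\bf p},{\bf z})-\dhy({\bf z},\oo)$ along the geodesic rays $[\oo,x)$ and $[\oo,y)$; (3) compare these two rays: thanks to $\overline{E}\cap\overline{F}=\emptyset$ (in the compactified space) and the thin-triangles property in negative curvature, the two rays fellow-travel up to a definite distance related to $(x|y)_\oo$, and beyond the point where they diverge the contributions to $\dhy({\bf p},\cdot)-\dhy(\cdot,\oo)$ stabilize; (4) quantify the fellow-traveling using hyperbolicity (the Gromov product $(x|y)_\oo$ measures how long the rays stay close) and conclude $\left|\mathcal{B}_x({\bf p},\oo)-\mathcal{B}_y({\bf p},\oo)\right|\leqslant C e^{-a(x|y)_\oo}=C\dhy_\oo(x,y)$, with $C$ depending only on $\dhy(\oo,F)$, $\mathrm{diam}(F)$, $\dhy(E,F)$ and the curvature bounds. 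The main obstacle is step (3)–(4): making precise and uniform the claim that the two rays towards $x$ and $y$ decouple from $F$ before they decouple from each other, i.e.\ that $(x|y)_\oo$ can be taken large relative to the geometry of $F$ only when $x$ and $y$ are close — this is exactly where disjointness of the closures $\overline E$ and $\overline F$ in $\xx\cup\partial\xx$ is used, together with a standard $\delta$-hyperbolicity argument; alternatively one invokes \cite{BP} directly as the statement asserts, in which case there is nothing further to prove.
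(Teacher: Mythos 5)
First, note that the paper itself offers no argument for this statement: it is quoted as "proved in \cite{BP}", so your fallback ("invoke \cite{BP} directly") is exactly what the paper does, and only your sketched direct proof can be assessed.

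That sketch has a genuine gap: you treat $F$ as a \emph{bounded} subset of $\xx$ ("the bounded set $F$", constants depending on $\mathrm{diam}(F)$ and $\dhy(\oo,F)$). But if $F$ is bounded, its closure in $\xx\cup\partial\xx$ stays in $\xx$ and the hypothesis of disjoint closures is vacuous; the whole point of the statement, and of every use made of it in the paper (e.g.\ in the proof of Proposition A.2 and in Section 8, where ${\bf p}=\g^{-1}.\oo$ runs over an unbounded piece of the orbit contained in some ${\bf D}_j$), is uniformity over an \emph{unbounded} $F$ whose accumulation points at infinity avoid $\overline{E}$. Your steps (3)--(4) then need, uniformly in ${\bf p}\in F$, that the geodesics $({\bf p}x)$, $x\in E$, pass within a bounded distance of $\oo$ (equivalently a Property~\ref{busedist}-type lower bound $\mathcal{B}_x({\bf p},\oo)\geqslant\dhy(\oo,{\bf p})-C$, obtained from the visibility estimate \eqref{visibility} and the positive $\dhy_\oo$-distance between $\overline{E}\cap\partial\xx$ and $\overline{F}\cap\partial\xx$): this is what makes the exponential stabilization of $\dhy({\bf p},\gamma_x(t))-t$ beyond the fellow-travelling time $(x|y)_\oo$ uniform in ${\bf p}$, and it is precisely the quantitative use of the disjoint-closure hypothesis. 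Your argument never produces it; as written it proves only the easy bounded-$F$ case, with constants that blow up as ${\bf p}$ leaves compacts, which is insufficient for the paper. A minor but real slip in the same direction: on a bounded metric space Lipschitz implies H\"older, not conversely, so one cannot "adjust constants and take $\omega=1$"; here the Lipschitz property with respect to the specific distance $\dhy_\oo=e^{-a(\cdot|\cdot)_\oo}$ is what must be proved and cannot be recovered from a H\"older estimate with smaller exponent.
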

Let us now consider the restriction of the operator $\ot_z$ to the subspace of Lipschitz functions from $\LL$ to 
$\C$, defined by
$$\mathrm{Lip}(\LL)=\{\varphi\in\mathcal{C}(\LL)\ |\ ||\varphi||=|\varphi|_{\infty}+[\varphi]<+\infty\}\subset
\mathcal{C}(\LL)$$
\noindent
where $[\varphi]=\underset{1\leqslant j\leqslant p+q}{\sup}\ \underset{\underset{x\neq y}{x,y\in\LL_j}}{\sup}\dfrac{|\varphi(x)-\varphi(y)|}{\dhy_{\oo}(x,y)}$.
The space $(\mathrm{Lip}(\LL),||\cdot||)$ is a $\C$-Banach space; it follows from Ascoli's theorem that the canonical one-to-one map from 
$(\mathrm{Lip}(\LL),||\cdot||)$ into $(\mathcal{C}(\LL),|\cdot|_{\infty})$ is compact. One readily may 
check that the function $h$ belongs to $\mathrm{Lip}\left(\LL\right)$.
The following proposition may be proved as Lemma 2.1 in \cite{BP}.
\begin{prop}\label{cocycle1lip}
For all $z\in\C$, the weight $w_z(\g,\cdot)$ belongs to $(\mathrm{Lip}(\LL),||\cdot||)$ and there exists a constant $C=C(z)>0$ such that for any $\g$ in $\G^*$,
$$||w_z(\g,\cdot)||\leqslant Ce^{-\re{z}\dhy(\oo,\g.\oo)}.$$
\end{prop}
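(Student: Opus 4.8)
The plan is to estimate the uniform norm $|w_z(\g,\cdot)|_\infty$ and the Lipschitz seminorm $[w_z(\g,\cdot)]$ separately, then add them. Fix $\g\in\G^*$ and set $l:=l(\g)$. On $\LL_l$ the weight $w_z(\g,\cdot)$ is identically $0$; on each $\LL_j$ with $j\neq l$ it equals $e^{-zb(\g,x)}$, so $|w_z(\g,x)|=e^{-\re{z}b(\g,x)}$. By Property~\ref{busedist}, for every $x\in\bigcup_{j\neq l}D_j$ one has $\dhy(\oo,\g.\oo)-C\leqslant b(\g,x)\leqslant\dhy(\oo,\g.\oo)$; using the lower bound when $\re{z}\geqslant0$ and the upper bound when $\re{z}<0$ gives in all cases $e^{-\re{z}b(\g,x)}\leqslant c_1(z)e^{-\re{z}\dhy(\oo,\g.\oo)}$ with $c_1(z):=\max\bigl(1,e^{C\re{z}}\bigr)$, whence $|w_z(\g,\cdot)|_\infty\leqslant c_1(z)e^{-\re{z}\dhy(\oo,\g.\oo)}$.

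For the seminorm, fix $j\neq l$ and $x,y\in\LL_j$. The Schottky position of the sets $({\bf D}_m)_m$ forces the point ${\bf p}:=\g^{-1}.\oo$ to lie in ${\bf D}_l$ (the first letter of $\g^{-1}$ being in $\G_l$, and $\oo\notin\bigcup_m{\bf D}_m$), hence ${\bf p}$ belongs to $F:=\bigl(\bigcup_{m\neq j}{\bf D}_m\bigr)\cap\xx$, a set whose closure in $\xx\cup\partial\xx$ is disjoint from $\overline{\LL_j}\subset\overline{{\bf D}_j}$ since the ${\bf D}_m$ have pairwise disjoint closures. Proposition~\ref{equilipcocycle} applied with $E=\LL_j$ and this $F$ yields a constant $\kappa_j$, independent of $\g$, such that
$$|b(\g,x)-b(\g,y)|=|\mathcal{B}_x({\bf p},\oo)-\mathcal{B}_y({\bf p},\oo)|\leqslant\kappa_j\,\dhy_{\oo}(x,y).$$
Put $\kappa:=\max_{1\leqslant j\leqslant p+q}\kappa_j$ and $M:=\kappa\cdot\mathrm{diam}_{\dhy_{\oo}}(\partial\xx)<+\infty$ (the $\dhy_\oo$-diameter of $\partial\xx$ being finite). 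The elementary inequality $|e^{-zu}-e^{-zv}|\leqslant|z|\,|u-v|\,e^{|\re{z}||u-v|}\,e^{-\re{z}v}$, valid for all $u,v\in\R$, applied with $u=b(\g,x)$, $v=b(\g,y)$, together with the previous line and the uniform bound of the first paragraph for $e^{-\re{z}b(\g,y)}$, gives
$$\frac{|w_z(\g,x)-w_z(\g,y)|}{\dhy_{\oo}(x,y)}\leqslant|z|\kappa\,e^{|\re{z}|M}c_1(z)\,e^{-\re{z}\dhy(\oo,\g.\oo)}=:c_2(z)\,e^{-\re{z}\dhy(\oo,\g.\oo)}.$$

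Finally, $w_z(\g,\cdot)$ is Lipschitz (hence continuous) on each of the pairwise disjoint closed pieces $\LL_j$, and these pieces, having disjoint closures, are open in $\LL=\bigcup_j\LL_j$; so $w_z(\g,\cdot)\in\mathcal{C}(\LL)$ and in fact $w_z(\g,\cdot)\in\mathrm{Lip}(\LL)$, with $||w_z(\g,\cdot)||=|w_z(\g,\cdot)|_\infty+[w_z(\g,\cdot)]\leqslant\bigl(c_1(z)+c_2(z)\bigr)e^{-\re{z}\dhy(\oo,\g.\oo)}$, which is the claim with $C(z)=c_1(z)+c_2(z)$.

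The routine parts are the uniform estimate from Property~\ref{busedist} and the elementary $|e^{-zu}-e^{-zv}|$ bound. The only genuinely delicate point is arranging that Proposition~\ref{equilipcocycle} can be invoked with a set $F$ that does not depend on $\g$: this is precisely what the defining property $\G_m^*.(\overline{\xx}\setminus{\bf D}_m)\subset{\bf D}_m$ of the Schottky sets provides, since it pins $\g^{-1}.\oo$ inside ${\bf D}_{l(\g)}$ uniformly in $\g$, so that for fixed $j$ all the cocycles $x\mapsto b(\g,x)$ with $l(\g)\neq j$ are equi-Lipschitz on $\LL_j$ with one and the same constant.
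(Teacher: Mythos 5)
Your proof is correct and is essentially the argument the paper itself invokes by referring to Lemma 2.1 of \cite{BP}: the sup-norm bound comes from Property \ref{busedist}, and the Lipschitz bound from Proposition \ref{equilipcocycle} applied, for each fixed $j$, to $E=\LL_j$ and the $\g$-independent set $F=\bigl(\bigcup_{m\neq j}{\bf D}_m\bigr)\cap\xx$ containing all the points $\g^{-1}.\oo$ with $l(\g)\neq j$. The only tacit ingredient is the standard normalization that $\oo$ (and the closedness of the sets ${\bf D}_m$) is chosen so that $\oo\notin\bigcup_m{\bf D}_m$, whence $\g^{-1}.\oo\in{\bf D}_{l(\g)}$; this is assumed implicitly throughout the paper as well, so it is not a gap.
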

This yields the following.
\begin{coro}\label{hfonctionpropredeotdelta}
The operator $\ot_z$ is bounded on $\mathrm{Lip}(\LL)$ when $\re{z}\geqslant\delta$.
\end{coro}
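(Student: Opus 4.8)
The plan is to bound the two pieces of the norm $\|\ot_z\varphi\| = |\ot_z\varphi|_\infty + [\ot_z\varphi]$ separately, using the $k=1$ case of the iterate formula $\ot_z\varphi(x) = \sum_{\g\in\G(1)} w_z(\g,x)\varphi(\g.x) = \sum_{\alpha\in\mathcal A} w_z(\alpha,x)\varphi(\alpha.x)$, together with the uniform bound on the weights from Proposition \ref{cocycle1lip} and the summability coming from Assumptions $(P_1)$, $(P_2)$, $(N)$. First I would fix $z\in\C$ with $\re z\geqslant\delta$ and recall that by Proposition \ref{cocycle1lip} there is $C=C(z)>0$ with $\|w_z(\alpha,\cdot)\|\leqslant C e^{-\re z\,\dhy(\oo,\alpha.\oo)}\leqslant C e^{-\delta\,\dhy(\oo,\alpha.\oo)}$ for every $\alpha\in\mathcal A$. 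The series $\sum_{\alpha\in\mathcal A} e^{-\delta\,\dhy(\oo,\alpha.\oo)}$ is finite: on each parabolic factor $\G_j$, $j\in[\![1,p]\!]$, this is exactly the convergence guaranteed by $(P_1)$ and quantified by $(P_2)$; on each factor $\G_j$, $j\in[\![p+1,p+q]\!]$, it follows from $(N)$ (the tail is $o(L(T)/T^\beta)$, hence summable); so call the (finite) total $S(z):=\sum_{\alpha\in\mathcal A}\|w_z(\alpha,\cdot)\|<+\infty$, noting also $\sum_\alpha |w_z(\alpha,\cdot)|_\infty\le S(z)$ and $\sum_\alpha [w_z(\alpha,\cdot)]\le S(z)$.

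For the sup-norm: for $x\in\LL$,
\[
|\ot_z\varphi(x)| \leqslant \sum_{\alpha\in\mathcal A} |w_z(\alpha,x)|\,|\varphi(\alpha.x)| \leqslant |\varphi|_\infty \sum_{\alpha\in\mathcal A} |w_z(\alpha,\cdot)|_\infty \leqslant S(z)\,|\varphi|_\infty,
\]
which gives $|\ot_z\varphi|_\infty\leqslant S(z)|\varphi|_\infty$. For the Lipschitz seminorm, fix $j\in[\![1,p+q]\!]$ and $x,y\in\LL_j$ with $x\neq y$. Since $x,y\in\LL_j=D_j\cap\LL$, each $\alpha\in\mathcal A$ acts on both $x$ and $y$ in a uniform way; write
\[
w_z(\alpha,x)\varphi(\alpha.x) - w_z(\alpha,y)\varphi(\alpha.y) = w_z(\alpha,x)\bigl(\varphi(\alpha.x)-\varphi(\alpha.y)\bigr) + \bigl(w_z(\alpha,x)-w_z(\alpha,y)\bigr)\varphi(\alpha.y).
\]
Summing over $\alpha$, the first group of terms is controlled by $\sum_\alpha |w_z(\alpha,\cdot)|_\infty\,[\varphi]\,\dhy_\oo(\alpha.x,\alpha.y)$, and by Corollary \ref{actioncontractante} one has $\dhy_\oo(\alpha.x,\alpha.y)\leqslant C r\,\dhy_\oo(x,y)$ for a uniform $r\in]0,1[$ (here $\alpha\in\G(1)$ and $x,y\notin D_{l(\alpha)}$ since they lie in $D_j$ with $j\neq l(\alpha)$); the second group is controlled by $\sum_\alpha [w_z(\alpha,\cdot)]\,|\varphi|_\infty\,\dhy_\oo(x,y)$. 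Dividing by $\dhy_\oo(x,y)$ and taking the supremum over $j$ and over $x\neq y$ in $\LL_j$ gives
\[
[\ot_z\varphi] \leqslant Cr\,S(z)\,[\varphi] + S(z)\,|\varphi|_\infty,
\]
so $\|\ot_z\varphi\|\leqslant (1+Cr)S(z)\,[\varphi] + 2S(z)\,|\varphi|_\infty \leqslant C'(z)\,\|\varphi\|$. This shows $\ot_z$ maps $\mathrm{Lip}(\LL)$ into itself and is bounded there.

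The only genuine subtlety — and the step I would be most careful about — is the interchange of the (infinite) sum over $\mathcal A$ with the pointwise Lipschitz estimates: this is legitimate precisely because the series $\sum_\alpha \|w_z(\alpha,\cdot)\|$ converges (normal convergence in $\mathrm{Lip}(\LL)$), which is where Assumptions $(P_1)$, $(P_2)$ and $(N)$ enter in an essential way; without the critical exponents of the influent parabolic factors matching $\delta_\G$ this finiteness could already fail at $\re z=\delta$. Everything else is a routine two-term splitting. Note also that the constant $C'(z)$ depends on $z$ through Proposition \ref{cocycle1lip}; this $z$-dependence is harmless here but will have to be tracked in the later sections where uniformity in $z$ on vertical strips is needed.
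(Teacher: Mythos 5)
Your proof is correct and is essentially the argument the paper intends: the corollary is stated there as an immediate consequence of Proposition \ref{cocycle1lip}, and your write-up simply makes explicit the normal convergence of $\sum_{\alpha\in\mathcal{A}}\|w_z(\alpha,\cdot)\|$ (via $(P_1)$, $(P_2)$, $(N)$ and Corollary \ref{busedist}) together with the two-term splitting using the contraction estimate of Corollary \ref{actioncontractante}. No gaps; the $z$-dependence of the constant is, as you note, harmless here.
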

From Lemma \ref{dualityTtransfer} and from Corollary \ref{hfonctionpropredeotdelta}, we deduce that $1$ is an eigenvalue of 
$\ot_\delta$ on $\mathrm{Lip}\left(\LL\right)$. We need more informations about the spectrum of $\ot_\delta$ on this space. Since the operator 
$\ot_\delta$ is positive, the spectral radius $\rho_{\infty}(\delta)$ of $\ot_\delta$ on $\left(\mc\left(\LL\right),|\cdot|_{\infty}\right)$ is given by
$$\rho_{\infty}(\delta)=\limsup\limits_{n\longrightarrow+\infty}\left|\ot_{\delta}^n\un_\LL\right|_{\infty}^{\frac{1}{n}}.$$
\noindent
The function $h$ being continuous and positive on $\LL$, we have
$$\left|\ot_\delta^n\un_\LL\right|_{\infty}\asymp\left|\ot_\delta^nh\right|_{\infty}=\left|h\right|_{\infty},$$
\noindent
hence $\rho_{\infty}(\delta)=1$. Denote now by $\rho(\delta)$ the spectral radius of $\ot_\delta$ on $\mathrm{Lip}\left(\LL\right)$. The incomming 
proposition gives more details about the spectrum of $\ot_\delta$ on $\mathrm{Lip}\left(\LL\right)$. Its proof relies on the notion of quasi-compacity.  
\begin{defi}
Let $\left(\mathcal{B},||\cdot||\right)$ be a Banach space and $Q$ a bounded operator on $\mathcal{B}$ with spectral radius 
$\rho(Q)$. The operator $Q$ is said to be quasi-compact if $\mathcal{B}$ may be splitted into $Q$-stable 
subspaces $\mathcal{B}=F\oplus H$, where $F$ has finite dimension and $Q_{|F}$ admits only 
eigenvalues of modulus $\rho(Q)$, whereas $\rho\left(Q_{|H}\right)<\rho(Q)$.
\end{defi}
\noindent
This notion is stable under small perturbation (\cite{Hen2}): we will use this fact in the sequel. 
\begin{prop}\label{spectredeotdelta}
The operator $\ot_\delta$ is quasi-compact. The spectral radius $\rho(\delta)$ is a simple and isolated eigenvalue in the spectrum of $\ot_\delta$, 
satisfying $\rho(\delta)=\rho_{\infty}(\delta)=1$; this is the unique eigenvalue with modulus $\rho(\delta)$. Moreover, the rest of the spectrum of $\ot_\delta$ is included in a disc of radius $<1$.
\end{prop}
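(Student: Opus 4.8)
The plan is to run the classical Doeblin--Fortet / Hennion--Hervé scheme on the pair of Banach spaces $\bigl(\mathrm{Lip}(\LL),\|\cdot\|\bigr)$ and $\bigl(\mathcal{C}(\LL),|\cdot|_\infty\bigr)$, whose inclusion is compact (Ascoli, as noted after Proposition \ref{equilipcocycle}). First I would record a uniform $|\cdot|_\infty$-bound: since $\ot_\delta^nh=h$ and $h$ is continuous and bounded away from $0$ and $+\infty$ on $\LL$, one has $\ot_\delta^n\un_\LL\preceq1$ uniformly in $n$, hence $|\ot_\delta^n\varphi|_\infty\leqslant|\ot_\delta^n\un_\LL|_\infty\,|\varphi|_\infty\preceq|\varphi|_\infty$ for all $\varphi\in\mathcal{C}(\LL)$ and $n\geqslant1$. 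The core step is a Lasota--Yorke inequality
\begin{equation*}
\|\ot_\delta^n\varphi\|\ \leqslant\ C\,r^n\,\|\varphi\|\ +\ C'\,|\varphi|_\infty\qquad(n\geqslant1,\ \varphi\in\mathrm{Lip}(\LL)),
\end{equation*}
where $r\in]0,1[$ is the contraction ratio of Corollary \ref{actioncontractante}. Granting it, Hennion--Hervé's theorem (\cite{Hen2}) gives that $\ot_\delta$ is quasi-compact on $\mathrm{Lip}(\LL)$ with essential spectral radius $\leqslant r<1$; in particular its spectral values of modulus $>r$ form a finite set of eigenvalues with finite-dimensional generalized eigenspaces.

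To prove the Lasota--Yorke inequality I would start from $\ot_\delta^n\varphi(x)=\sum_{\g\in\G(n)}w_\delta(\g,x)\varphi(\g.x)$ and, for $x,y$ in a common piece $\LL_j$, split
\begin{equation*}
\ot_\delta^n\varphi(x)-\ot_\delta^n\varphi(y)=\sum_{\g\in\G(n)}\bigl(w_\delta(\g,x)-w_\delta(\g,y)\bigr)\varphi(\g.x)+\sum_{\g\in\G(n)}w_\delta(\g,y)\bigl(\varphi(\g.x)-\varphi(\g.y)\bigr).
\end{equation*}
In the first sum, Proposition \ref{cocycle1lip} gives $|w_\delta(\g,x)-w_\delta(\g,y)|\leqslant\|w_\delta(\g,\cdot)\|\,\dhy_\oo(x,y)\preceq e^{-\delta\dhy(\oo,\g.\oo)}\dhy_\oo(x,y)$, so it is $\preceq|\varphi|_\infty\dhy_\oo(x,y)\sum_{\g\in\G(n)}e^{-\delta\dhy(\oo,\g.\oo)}$, and the last series is bounded uniformly in $n$ (from $b(\g,x)\leqslant\dhy(\oo,\g.\oo)$, Property \ref{busedist}, together with $\ot_\delta^n\un_\LL\preceq1$; cf. also Remark \ref{sommefinie}). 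In the second sum, $\g.x$ and $\g.y$ lie in a common piece $\LL_{i(\g)}$ and Corollary \ref{actioncontractante} gives $\dhy_\oo(\g.x,\g.y)\leqslant Cr^n\dhy_\oo(x,y)$, so it is $\leqslant[\varphi]\,Cr^n\dhy_\oo(x,y)\sum_{\g\in\G(n)}w_\delta(\g,y)=Cr^n[\varphi]\,\dhy_\oo(x,y)\,\ot_\delta^n\un_\LL(y)\preceq r^n[\varphi]\dhy_\oo(x,y)$. Dividing by $\dhy_\oo(x,y)$ and taking the supremum over $x,y$ and $j$ yields $[\ot_\delta^n\varphi]\preceq r^n[\varphi]+|\varphi|_\infty$, which with the $|\cdot|_\infty$-bound is the claimed inequality. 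Normal convergence of all these series for $\re{z}\geqslant\delta$ is ensured by Hypotheses $(P_2)$ and $(N)$ together with Property \ref{busedist}, and the equi-Lipschitz estimate of Proposition \ref{equilipcocycle} underlies Proposition \ref{cocycle1lip}.

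Next I would pin down $\rho(\delta)$. If $\rho(\delta)>1$, then as $\rho(\delta)>r\geqslant r_{\mathrm{ess}}(\ot_\delta)$ it would be an eigenvalue with eigenfunction $\varphi\in\mathrm{Lip}(\LL)\subset\mathcal{C}(\LL)$, and $\rho(\delta)^n|\varphi|_\infty=|\ot_\delta^n\varphi|_\infty\preceq|\varphi|_\infty$ would be absurd; so $\rho(\delta)\leqslant1$, while $\ot_\delta h=h$ with $h>0$ gives $\rho(\delta)\geqslant1$, hence $\rho(\delta)=1=\rho_\infty(\delta)$.

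It remains to see that $1$ is simple and is the only eigenvalue of modulus $1$; this is the delicate point where $p+q\geqslant3$ enters. I would normalise to the Markov operator $P=\frac1h\ot_\delta(h\,\cdot)$ (quasi-compact, with the same peripheral spectrum) and use that the coding map $\T$ is topologically mixing on $\LL^0$: the admissibility graph on $\{1,\dots,p+q\}$, with an edge $j\to k$ iff $k\neq j$, is primitive precisely because $p+q\geqslant3$ --- for $p+q=2$ it has period $2$, which produces the additional eigenvalue $-1$ recalled in the introduction. A $P$-invariant $g\in\mathcal{C}(\LL)$ satisfies $P|g|\geqslant|g|$, and integrating against the $P$-invariant probability measure $\nu$ (Lemma \ref{dualityTtransfer}) forces $P|g|=|g|$; then $\{|g|=|g|_\infty\}$ is non-empty, closed and backward $\T$-invariant, hence equal to $\LL^0$ by topological mixing (equivalently, by ergodicity of $(\LL^0,\T,\nu)$, which follows from the divergence of $\G$), so $g$ is constant; boundedness of the iterates of the Markov operator $P$ also precludes a Jordan block, so $1$ is simple. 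If $P\psi=e^{i\theta}\psi$, the equality $|P\psi|=P|\psi|$ and the strict positivity of the transition weights $e^{-\delta\gol(\cdot)}$ align all inverse branches of $\T$, giving a topological eigenvalue equation for $\T$ on $\LL^0$ that topological mixing excludes unless $\theta\in2\pi\Z$ (cf. \cite{PP}). Thus $1$ is the unique spectral value of modulus $1$; being of modulus strictly greater than the essential spectral radius it is isolated, and the remainder of the spectrum lies in a disc of radius $<1$. The main obstacle throughout is the Lasota--Yorke inequality --- in particular, extracting the contracting factor $r^n$ uniformly over the infinite alphabet $\mathcal{A}$ while keeping the weight masses $\ot_\delta^n\un_\LL$ bounded --- everything else being routine once quasi-compactness and topological mixing are in hand.
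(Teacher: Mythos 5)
Your proposal is correct and follows essentially the same route as the paper's model proof (the paper itself defers to Proposition III.4 of \cite{BP2} and gives the full argument only for the extended operator in Proposition \ref{propriétésopeétendu}): a Doeblin--Fortet/Lasota--Yorke inequality built from Proposition \ref{cocycle1lip}, the uniform bound $\ot_\delta^n\un_\LL\preceq1$ and the contraction of Corollary \ref{actioncontractante}, then Hennion's theorem for quasi-compactness, the eigenfunction $h$ to pin $\rho(\delta)=1$, and positivity/minimality arguments for the peripheral spectrum. Your explicit treatment of the unique modulus-one eigenvalue via primitivity of the admissibility graph when $p+q\geqslant3$ is precisely the ingredient the paper leaves to \cite{BP2} (and which fails for two factors, as the introduction's remark on the eigenvalue $-1$ indicates), so it is a faithful reconstruction rather than a different method.
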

\noindent
The proof of this proposition is identical to the proof of Proposition III.4 in \cite{BP2}. We will give a full proof of an analogous proposition for an extension of these transfer operators in section 8: see Proposition \ref{propriétésopeétendu}. We can thus write 
$$\ot_\delta=\Pi_\delta+R_\delta$$
\noindent
where $\Pi_\delta\ :\ \mathrm{Lip}\left(\LL\right)\longrightarrow\C h$ is the spectral projection on the eigenspace associated to 1 and  $R_\delta=\ot_\delta-\Pi_\delta$ satisfies $\Pi_\delta R_\delta=R_\delta\Pi_\delta=0$ and has a spectral radius $<1$.
There thus exists a linear form $\sigma_\delta\ :\ \mathrm{Lip}\left(\LL\right)\longrightarrow\C$ such that
$\Pi_\delta(\cdot)=\sigma_\delta(\cdot)h$. It follows that
$\Pi_\delta\left(\ot_\delta\varphi\right)=\sigma_\delta\left(\ot_\delta\varphi\right)h$ for 
any $\varphi\in\mathrm{Lip}\left(\LL\right)$ on the one hand and 
$\Pi_\delta\left(\ot_\delta\varphi\right)=\ot_\delta\Pi_\delta\left(\varphi\right)=\sigma_\delta\left(\varphi\right)h$ 
on the other hand, which implies that the measure $\sigma_\delta$ is $\ot_\delta$-invariant. 
\begin{rem}
The measure $\sigma_\delta$ corresponds to the Patterson measure $\sigma_{\oo}$.
Indeed, for any $\varphi\in\mathrm{Lip}\left(\LL\right)$ and $k\geqslant1$,
$$\sigma_{\oo}\left(\varphi\right)=\sigma_{\oo}\left(\ot_\delta^k\varphi\right)=\sigma_\oo\left(\sigma_\delta\left(\varphi\right)h\right)+
\sigma_\oo\left(R_\delta^k\varphi\right).$$
\noindent
By definition of $h$, one gets
$$\sigma_{\oo}\left(\varphi\right)=\sigma_\delta\left(\varphi\right)+\sigma_\oo\left(R_\delta^k\varphi\right)
\longrightarrow\sigma_\delta\left(\varphi\right).$$
\noindent 
The remark thus follows from the density of the space $\mathrm{Lip}\left(\LL\right)$ in $\mathrm{L}^1\left(\LL\right)$.
\end{rem}
\subsubsection{Study of perturbations of $\ot_\delta$}
In this subsection, we extend the previous spectral gap property to small perturbations of $\ot_\delta$ given by $z\longmapsto\ot_z$ for 
$z\in\C$ with $\re{z}\geqslant\delta$. We first prove the following
\begin{prop}\label{continuityoftransfert}
Under assumptions $\left(H_\beta\right)$, for any compact subset $K$ of $\R$, there exists a constant $C=C_K>0$ such that for any $s,t\in K$ and 
$\kappa$ small enough
\begin{itemize}
 \item[1)]if $\beta\in]0,1[$ 
        \begin{align*}
        &a.\ ||\ot_{\delta+it}-\ot_{\delta+is}||\leqslant C|s-t|^{\beta}L\left(\dfrac{1}{|s-t|}\right);\\
        &b.\ ||\ot_{\delta+\kappa+it}-\ot_{\delta+it}||\leqslant C\kappa^{\beta}L\left(\dfrac{1}{\kappa}\right); 
        \end{align*}
 \item[2)]if $\beta=1$ 
       \begin{align*}
       &a.\ ||\ot_{\delta+it}-\ot_{\delta+is}||\leqslant C|s-t|\tilde{L}\left(\dfrac{1}{|s-t|}\right);\\
       &b.\ ||\ot_{\delta+\kappa+it}-\ot_{\delta+it}||\leqslant C\kappa\tilde{L}\left(\dfrac{1}{\kappa}\right),
       \end{align*}
\noindent
where $\tilde{L}(x)=\displaystyle{\int_1^x}\frac{L(y)}{y}\dd y$.
\end{itemize}
\end{prop}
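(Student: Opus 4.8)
The plan is to estimate the operator norm of the difference of two transfer operators $\ot_{z}-\ot_{z'}$ on $\mathrm{Lip}(\LL)$ by reducing everything to the local estimates of Proposition \ref{etapecontroltransfert}. First I would use the explicit form \eqref{defoptrans} to write, for $x\in\LL_l^0$,
$$(\ot_{\delta+it}-\ot_{\delta+is})\varphi(x)=\sum_{j\neq l}\sum_{\alpha\in\G_j^*}e^{-\delta b(\alpha,x)}\left(e^{-itb(\alpha,x)}-e^{-isb(\alpha,x)}\right)\varphi(\alpha.x),$$
and similarly for the $\kappa$-perturbation with $\left(e^{-\kappa b(\alpha,x)}-1\right)$ in place of the oscillating factor. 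The point is that $b(\alpha,x)\asymp\dhy(\oo,\alpha.\oo)$ by Proposition \ref{busedist}, and by Hypotheses $(P_2)$, $(N)$ and the fact that there are finitely many factors, the measure on $\R^+$ obtained by pushing forward the counting measure $\sum_{j}\sum_{\alpha\in\G_j^*}e^{-\delta\dhy(\oo,\alpha.\oo)}\delta_{\dhy(\oo,\alpha.\oo)}$ has tail $\asymp L(T)/T^\beta$ — i.e. it is (a multiple of) a probability measure $\mu$ of the kind treated in Proposition \ref{etapecontroltransfert}. Thus $\sup_x\sum_{j\neq l}\sum_{\alpha\in\G_j^*}e^{-\delta b(\alpha,x)}|e^{-itb(\alpha,x)}-e^{-isb(\alpha,x)}|$ is controlled, after a translation reducing $t-s$ to a single variable, by $\int_0^{+\infty}|e^{i(t-s)y}-1|\,\mu(\dd y)\preceq |t-s|^\beta L(1/|t-s|)$ when $\beta<1$ (resp.\ $|t-s|\tilde L(1/|t-s|)$ when $\beta=1$), which bounds the $|\cdot|_\infty$-part of the norm.

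The second step is to control the Lipschitz seminorm $[\cdot]$ of the difference. Here I would write $(\ot_{\delta+it}-\ot_{\delta+is})\varphi = \sum_j\un_{\LL_j^c}\sum_{\alpha\in\G_j^*}\left(w_{\delta+it}(\alpha,\cdot)-w_{\delta+is}(\alpha,\cdot)\right)\varphi(\alpha\cdot)$ and use the Leibniz-type bound $[fg]\leqslant [f]\,|g|_\infty+|f|_\infty\,[g]$ together with Corollary \ref{actioncontractante} (which gives $[\varphi(\alpha\cdot)]\preceq r^{|\alpha|}[\varphi]$ on $\LL_j$, $j\neq l(\alpha)$) and Proposition \ref{cocycle1lip}. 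The new ingredient needed is a Lipschitz estimate on the \emph{difference} of weights: from $w_{\delta+it}(\alpha,x)-w_{\delta+is}(\alpha,x)=e^{-\delta b(\alpha,x)}\left(e^{-itb(\alpha,x)}-e^{-isb(\alpha,x)}\right)$, the equi-Lipschitz property of $x\mapsto b(\alpha,x)$ (Proposition \ref{equilipcocycle}) and the uniform Lipschitz bound $\|w_\delta(\alpha,\cdot)\|\preceq e^{-\delta\dhy(\oo,\alpha.\oo)}$ give, after using $|e^{-iu}-e^{-iv}|\leqslant\min(2,|u-v|)$ and $|e^{-iu}-e^{-iv}|\leqslant |u-v|$, a bound of the form $\|w_{\delta+it}(\alpha,\cdot)-w_{\delta+is}(\alpha,\cdot)\|\preceq e^{-\delta\dhy(\oo,\alpha.\oo)}\min\!\left(1,|t-s|\,\dhy(\oo,\alpha.\oo)\right)$, with an extra polynomial-in-$\dhy(\oo,\alpha.\oo)$ factor coming from the Lipschitz constant of $b(\alpha,\cdot)$, which is harmless because $L$ and polynomials absorb each other by Potter's bound (Lemma \ref{PB}). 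Summing over $\alpha$ and over $j$ and invoking Proposition \ref{etapecontroltransfert} once more (the function $\min(1,|t-s|y)$ being comparable to $|e^{i(t-s)y}-1|$ up to constants) yields the desired $[\cdot]$-estimate with the same rate.

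The third step is bookkeeping: combine the $|\cdot|_\infty$- and $[\cdot]$-bounds into a bound on $\|\ot_{\delta+it}-\ot_{\delta+is}\|$, handle the $\kappa$-perturbation $\ot_{\delta+\kappa+it}-\ot_{\delta+it}$ in exactly the same way using parts (b) of Proposition \ref{etapecontroltransfert} (now with $|e^{-\kappa y}-1|$), and check uniformity in $t$ (and $s$) over the compact set $K$. Uniformity is automatic because all the estimates above depend on $t,s$ only through $t-s$ (after translating $b(\alpha,x)$ by a fixed amount, which only changes constants) and through the factor $e^{-\delta b(\alpha,x)}$, which is bounded uniformly.

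The main obstacle I anticipate is the Lipschitz-seminorm estimate, specifically producing the correct rate $|t-s|^\beta L(1/|t-s|)$ rather than a weaker rate: the naive bound $[e^{-it b}-e^{-is b}]\preceq |t-s|\,[b]$ loses a power of $|t-s|$, and one must instead interpolate — splitting the sum over $\alpha$ at the scale $\dhy(\oo,\alpha.\oo)\sim 1/|t-s|$, using the cruder bound $|e^{-iu}-e^{-iv}|\leqslant 2$ and $[e^{-iub}]\preceq [b]$ for large $\dhy$ and the linear bound for small $\dhy$ — and then recognize the resulting expression as $\int|e^{i(t-s)y}-1|\mu(\dd y)$ up to the polynomial corrections absorbed by Potter's lemma. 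Once the reduction to Proposition \ref{etapecontroltransfert} is set up cleanly for both the sup-norm and seminorm parts, the rest is routine. This is why the paper's statement mirrors exactly the conclusions of Proposition \ref{etapecontroltransfert}.
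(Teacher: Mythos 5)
Your proposal is correct and follows essentially the same route as the paper's proof: the sup-norm part is reduced, via Corollary \ref{busedist} and Hypotheses $(P_2)$, $(N)$, to the tail-controlled measures of Proposition \ref{etapecontroltransfert}, and the Lipschitz-seminorm part is handled by exactly the factorization the paper uses, $w_{\delta+it}(\alpha,\cdot)-w_{\delta+is}(\alpha,\cdot)=e^{-(\delta+it)b(\alpha,\cdot)}\bigl(1-e^{i(t-s)b(\alpha,\cdot)}\bigr)$, so that the spatial variation only ever hits the bounded difference $b(\alpha,x)-b(\alpha,y)$ (equi-Lipschitz, Proposition \ref{equilipcocycle}) while the surviving oscillating factor $\left|e^{i(t-s)b(\alpha,y)}-1\right|$ is summed with the same tail estimates. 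One caution: no ``polynomial-in-$\dhy(\oo,\alpha.\oo)$'' factor actually arises, because the Lipschitz constants $[b(\alpha,\cdot)]$ are uniformly bounded, and it is essential that none does — Potter's lemma only absorbs slowly varying ratios, not genuine polynomial losses, and under $(H_\beta)$ with $\beta\leqslant1$ one has $\sum_{\alpha}\dhy(\oo,\alpha.\oo)e^{-\delta\dhy(\oo,\alpha.\oo)}=+\infty$, so such a factor would ruin the rate rather than be ``harmless''.
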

\begin{proof}
We only detail the proof of assertion 1.$a.$, the arguments being similar for the others. Let $\varphi\in\mathrm{Lip}\left(\LL\right)$ : it is sufficient to check that
$$||\ot_{\dit}\varphi-\ot_{\delta+is}\varphi||\leqslant C|s-t|^{\beta}L\left(\dfrac{1}{|s-t|}\right)||\varphi||.$$
\noindent
Proposition \ref{etapecontroltransfert} is the key-point to obtain such estimates.
For any  $1\leqslant j\leqslant p+q$ and $x\in\LL\setminus\LL_j$, we define the following measure
$$\mu_j^x=\dfrac{1}{M_j(x)}\sum\limits_{\alpha\in\G_j^*}e^{-\delta b(\alpha,x)}\mathrm{D}_{b(\alpha,x)}$$
\noindent
where $\mathrm{D}_{b(\alpha,x)}$ is the Dirac mass at 
$b(\alpha,x)$ and $M_j(x):=\sum_{\alpha\in\G_j^*}e^{-\delta b(\alpha,x)}$. These measures are supported on $[-C,+\infty[$ where $C>0$ is the constant appearing in Corollary \ref{busedist}. We also deduce from this corollary that
$$e^{-\delta C}\sum\limits_{\alpha\in\G_j^*}e^{-\delta\dhy(\oo,\alpha.\oo)}\leqslant M_j(x)\leqslant
e^{\delta C}\sum\limits_{\alpha\in\G_j^*}e^{-\delta\dhy(\oo,\alpha.\oo)},$$
\noindent
then 
$$\dfrac{e^{-2\delta C}}{\sum\limits_{\alpha\in\G_j^*}e^{-\delta\dhy(\oo,\alpha.\oo)}}\sum\limits_{\underset{\dhy(\oo,\alpha.\oo)>T+C}{\alpha\in\G_j^*}}e^{-\delta\dhy(\oo,\alpha.\oo)}
\leqslant 1-\mu_j^x\left([-C,T]\right)$$
\noindent
and
$$1-\mu_j^x\left([-C,T]\right)\leqslant
\dfrac{e^{2\delta C}}{\sum\limits_{\alpha\in\G_j^*}e^{-\delta\dhy(\oo,\alpha.\oo)}}\sum\limits_{\underset{\dhy(\oo,\alpha.\oo)>T-C}{\alpha\in\G_j}}e^{-\delta\dhy(\oo,\alpha.\oo)}.$$
\noindent
Hence for $T$ large enough, one gets
$$\dfrac{e^{-2\delta C}}{\sum\limits_{\alpha\in\G_j^*}e^{-\delta\dhy(\oo,\alpha.\oo)}}\dfrac{L(T)}{T^{\beta}}\leqslant1-\mu_j^x\left([-C,T]\right)\leqslant
\dfrac{e^{2\delta C}}{\sum\limits_{\alpha\in\G_j^*}e^{-\delta\dhy(\oo,\alpha.\oo)}}\dfrac{L(T)}{T^{\beta}}$$
\noindent
for $j\in[\![1,p]\!]$ and
$$1-\mu_j^x\left([-C,T]\right)=o_j(T)\dfrac{L(T)}{T^{\beta}}$$
\noindent
for $j\in[\![p+1,p+q]\!]$.
From Assertion 1.$a)$ of Proposition \ref{etapecontroltransfert}, we deduce that
$$\int_{0}^{+\infty}\left|e^{ity}-1\right|\mu_j^x(\dd y)\preceq|t|^\beta L\left(\dfrac{1}{|t|}\right)$$
\noindent
for $j\in[\![1,p]\!]$, whereas for $j\in[\![p+1,p+q]\!]$, we have 
$$\int_{0}^{+\infty}\left|e^{ity}-1\right|\mu_j^x(\dd y)= o\left(|t|^\beta L\left(\dfrac{1}{|t|}\right)\right),$$
\noindent
uniformly in $x\in\LL\setminus\LL_j$. These estimates may also be written as
\begin{equation}\label{besoin1}
\sum\limits_{\alpha\in\G_j^*}\left|e^{itb(\alpha,x)}-1\right|e^{-\delta b(\alpha,x)}=|t|^{\beta}L\left(\dfrac{1}{|t|}\right)
\end{equation}
\noindent
for $j\in[\![1,p]\!]$ and
\begin{equation}\label{besoin}
\sum\limits_{\alpha\in\G_j^*}\left|e^{itb(\alpha,x)}-1\right|e^{-\delta b(\alpha,x)}=|t|^{\beta}L\left(\dfrac{1}{|t|}\right)o\left(\dfrac{1}{|t|}\right)
\end{equation}
\noindent
for $j\in[\![p+1,p+q]\!]$. Therefore, for any $j\in[\![1,p+q]\!]$ and $x\in\LL\setminus\LL_j$ 
\begin{align*}
\sum\limits_{\alpha\in\G_j^*}\left|w_{\delta+it}(\alpha,x)\varphi(\alpha.x)-w_{\delta+is}(\alpha,x)\varphi(\alpha.x)\right| & \leqslant
\left(\sum\limits_{\alpha\in\G_j}\left|e^{i(t-s)b(\alpha,x)}-1\right|e^{-\delta b(\alpha,x)}\right)|\varphi|_{\infty}\\
& \preceq|t-s|^{\beta}L\left(\dfrac{1}{|t-s|}\right)||\varphi||
\end{align*}
\noindent
which finally implies 
$$
\left|\ot_{\delta+it}\varphi-\ot_{\delta+is}\varphi\right|_{\infty}\preceq
|t-s|^{\beta}L\left(\dfrac{1}{|t-s|}\right)||\varphi||.
$$
\noindent
In order to control the Lipschitz coefficient of the function $x\longmapsto\ot_{\delta+it}\varphi(x)-\ot_{\delta+is}\varphi(x)$, we first notice that
\begin{equation}\label{majorationintermédiairelip}
\left[\ot_{\delta+it}\varphi-\ot_{\delta+is}\varphi\right]\leqslant
\underset{j\in[\![1,p+q]\!]}{\sup}\underset{\underset{y\neq x}{x,y\in\LL_j}}{\sup}
\sum\limits_{\underset{l\neq j}{\alpha\in\G_l}}\dfrac{A_{j}(l,\alpha,x,y)}{\dhy_{\oo}(x,y)}
\end{equation}
\noindent
where 
\begin{align*}
A_{j}(l,\alpha,x,y):= &\left|w_{\delta+it}(\alpha,x)\varphi(\alpha.x)-w_{\delta+is}(\alpha,x)\varphi(\alpha.x)\right.\\
& -\left.\left(w_{\delta+it}(\alpha,y)\varphi(\alpha.y)-w_{\delta+is}(\alpha,y)\varphi(\alpha.y)\right)\right|
\end{align*}
\noindent
for any $j,l\in[\![1,p+q]\!]$, $l\neq j$, $x,y\in\LL_j$, $y\neq x$ and $\alpha\in\G_l$. We observe that 
\begin{align*}
A_{j}(l,\alpha,x,y)\leqslant &\left|w_{\delta+it}(\alpha,x)-w_{\delta+is}(\alpha,x)
-\left(w_{\delta+it}(\alpha,y)-w_{\delta+is}(\alpha,y)\right)\right||\varphi(\alpha.x)|\\
&+\left|w_{\delta+it}(\alpha,y)-w_{\delta+is}(\alpha,y)\right||\varphi(\alpha.y)-\varphi(\alpha.x)|\\
\leqslant & B_{j}(l,\alpha,x,y)+C_{j}(l,\alpha,x,y)
\end{align*}
\noindent
where
$$
B_{j}(l,\alpha,x,y)=\left|w_{\delta+it}(\alpha,x)-w_{\delta+is}(\alpha,x)
-\left(w_{\delta+it}(\alpha,y)-w_{\delta+is}(\alpha,y)\right)\right||\varphi(\alpha.x)|
$$
\noindent
and
$$
C_{j}(l,\alpha,x,y)=\left|w_{\delta+it}(\alpha,y)-w_{\delta+is}(\alpha,y)\right||\varphi(\alpha.y)-\varphi(\alpha.x)|.
$$
\noindent
On the one hand
\begin{align*}
B_{j}(l,\alpha,x,y) \leqslant  & e^{-\delta b(\alpha,x)}\left|e^{i(t-s)b(\alpha,x)}-e^{i(t-s)b(\alpha,y)}\right|||\varphi||\\
& +\left|e^{-(\delta+it)b(\alpha,x)}-e^{-(\delta+it)b(\alpha,y)}\right|\left|e^{i(t-s)b(\alpha,y)}-1\right|||\varphi||\\
\leqslant & e^{-\delta b(\alpha,x)}\left|e^{i(t-s)(b(\alpha,x)-b(\alpha,y))}-1\right|||\varphi||\\
& +e^{-\delta b(\alpha,y)}\left|e^{(\delta+it)(b(\alpha,y)-b(\alpha,x))}-1\right|\left|e^{i(t-s)b(\alpha,y)}-1\right|||\varphi||\\
\preceq & \left(e^{-\delta b(\alpha,x)}\left[b(\alpha,\cdot)\right]|t-s|\dhy_{\oo}(x,y)\right)||\varphi||\\
& + \left(e^{-\delta b(\alpha,y)}|\delta+it|\left[b(\alpha,\cdot)\right]\left|e^{i(t-s)b(\alpha,y)}-1\right|\dhy_{\oo}(x,y)\right)||\varphi||.
\end{align*}
\noindent
Since the sequence $\left(\left[b(\g,\cdot)\right]\right)_{\g\in\G_j}$ is bounded and $t\in K$, we deduce that
\begin{align*}
\dfrac{B_{j}(l,\alpha,x,y)}{\dhy_{\oo}(x,y)} \preceq 
\left(e^{-\delta b(\alpha,x)}|t-s|\right)||\varphi||
+ \left(e^{-\delta b(\alpha,y)}\left|e^{i(t-s)b(\alpha,y)}-1\right|\right)
||\varphi||.
\end{align*}
\noindent
On the other hand
\begin{align*}
\dfrac{C_{j}(l,\alpha,x,y)}{\dhy_{\oo}(x,y)} \preceq 
e^{-\delta b(\alpha,y)}\left|e^{i(t-s)b(\alpha,y)}-1\right|||\varphi||,
\end{align*}
\noindent
so that for any $j\in[\![1,p+q]\!]$ and $x,y\in\LL_j$, one gets
\begin{align*}
\sum\limits_{\underset{l\neq j}{\alpha\in\G_l}}\dfrac{A_j(l,\alpha,x,y)}{\dhy_{\oo}(x,y)} & \leqslant
\sum\limits_{\underset{l\neq j}{\alpha\in\G_l}}\dfrac{B_j(l,\alpha,x,y)+C_j(l,\alpha,x,y)}{\dhy_{\oo}(x,y)}\\
& \preceq \sum\limits_{\underset{l\neq j}{\alpha\in\G_l}}\left(e^{-\delta b(\alpha,x)}|t-s|+e^{-\delta b(\alpha,y)}\left|e^{i(t-s)b(\alpha,y)}-1\right|\right)||\varphi||.
\end{align*}
\noindent
We deduce from \eqref{besoin1} and \eqref{besoin} that
$$\sum\limits_{\underset{l\neq j}{\alpha\in\G_l}}\dfrac{A_j(l,\alpha,x,y)}{\dhy_{\oo}(x,y)}\preceq|t-s|^{\beta}L\left(\dfrac{1}{|t-s|}\right)||\varphi||$$
\noindent
and this last estimate combined with \eqref{majorationintermédiairelip} yields
%
$$\left[\ot_{\delta+it}\un_{\LL}-\ot_{\delta+is}\un_{\LL}\right]\preceq |t-s|^{\beta}L\left(\dfrac{1}{|t-s|}\right)||\varphi||.$$
\end{proof}
Combining the previous proposition with the proof of Proposition 2.2 in \cite{BP}, we deduce the following corollary.
\begin{coro}\label{continuityoftransferteverywhere}
The application $z\longmapsto\ot_z$ is continuous on $\{z\in\C\ |\ \re{z}\geqslant\delta\}$.
\end{coro}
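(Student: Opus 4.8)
The plan is to split the half-plane $\{\re{z}\geqslant\delta\}$ into the critical line $\re{z}=\delta$ and the open half-plane $\re{z}>\delta$, treat each regime separately, and note that continuity at a given $z_0$ is all that has to be checked.

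First I would handle a point $z_0$ with $\re{z_0}>\delta$, say $\re{z_0}=\delta+2\varepsilon$ with $\varepsilon>0$; this is where "the proof of Proposition 2.2 in \cite{BP}" is used. Restricting to $z$ in the ball $B(z_0,\varepsilon)$, so that $\re{z}\geqslant\delta+\varepsilon$, and writing $\ot_z\varphi-\ot_{z_0}\varphi$ term by term over $j$ and $\alpha\in\G_j^*$ as in \eqref{defoptrans}, I would bound each increment of the weight by the mean-value inequality $|e^{-zb(\alpha,x)}-e^{-z_0b(\alpha,x)}|\leqslant|z-z_0|\,b(\alpha,x)\,e^{-(\delta+\varepsilon)b(\alpha,x)}$. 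Using $\dhy(\oo,\alpha.\oo)-C\leqslant b(\alpha,x)\leqslant\dhy(\oo,\alpha.\oo)$ (Proposition \ref{busedist}) together with the fact that $u\mapsto u\,e^{-\varepsilon u}$ is bounded, the whole series is dominated by $C_\varepsilon|z-z_0|\sum_{\alpha\in\G_j^*}e^{-\delta\dhy(\oo,\alpha.\oo)}$, which converges by Assumptions $(P_1)$ and $(N)$; the uniformity in $x\in\LL$ of the Lipschitz seminorm estimate comes from the uniform bound on $([b(\g,\cdot)])_{\g\in\G_j}$ (Proposition \ref{equilipcocycle}), plus the extra term coming from $\varphi\in\mathrm{Lip}(\LL)$ being Lipschitz. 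This yields the local estimate $\|\ot_z-\ot_{z_0}\|\leqslant C_\varepsilon|z-z_0|$ on $B(z_0,\varepsilon)$, hence continuity (in fact local Lipschitz continuity) on $\{\re{z}>\delta\}$.

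Next I would handle a boundary point $z_0=\delta+it_0$. For $z$ near $z_0$, write $z=\delta+\kappa+it$ with $\kappa\geqslant0$ small and $t$ in a fixed compact neighbourhood $K$ of $t_0$, and split
$$\ot_z-\ot_{z_0}=\left(\ot_{\delta+\kappa+it}-\ot_{\delta+it}\right)+\left(\ot_{\delta+it}-\ot_{\delta+it_0}\right).$$
Proposition \ref{continuityoftransfert} bounds the first bracket by $C_K\kappa^\beta L(1/\kappa)$ and the second by $C_K|t-t_0|^\beta L(1/|t-t_0|)$ when $\beta\in]0,1[$, and by the analogous quantities with $\tilde L$ in place of $L$ when $\beta=1$. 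It then remains to observe that $r^\beta L(1/r)\to0$ and $r\tilde L(1/r)\to0$ as $r\to0^+$: the first follows from the Potter-type bound \eqref{majorationslowlyvarying} (pick $\theta\in]0,\beta[$, so that $r^\beta L(1/r)\preceq r^{\beta-\theta}$), and the second because $\tilde L$ is itself slowly varying by Karamata's Lemma \ref{regvar}. Hence $\ot_z\to\ot_{z_0}$, and combining the two regimes gives continuity on all of $\{\re{z}\geqslant\delta\}$.

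The genuinely delicate regime is the second one, near the critical line, but its difficulty has already been absorbed into Proposition \ref{continuityoftransfert}; once that proposition is available, what is left is bookkeeping plus the elementary remark that $\kappa^\beta L(1/\kappa)\to0$. In the first regime the only point requiring a little care is ensuring that the dominating series is uniform in $x\in\LL$, which is precisely what Propositions \ref{busedist} and \ref{equilipcocycle} guarantee.
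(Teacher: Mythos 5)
Your proof is correct and takes essentially the same route as the paper: Proposition \ref{continuityoftransfert} supplies the delicate estimates at points of the critical line, and your direct mean-value/domination argument on $\{z\ |\ \re{z}>\delta\}$ (with uniformity in $x$ coming from Propositions \ref{busedist} and \ref{equilipcocycle}) is precisely the content the paper delegates to the proof of Proposition 2.2 in \cite{BP}. The only minor point to keep in mind is that $b(\alpha,x)$ can be negative, though only boundedly so, which your remark that $u\longmapsto ue^{-\varepsilon u}$ is bounded on $[-C,+\infty[$ already accommodates.
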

Let us now show the existence of a simple dominant eigenvalue for $\ot_z$, isolated in its spectrum, uniformly in $z$ close enough 
to $\delta$. Denote by $\rho(z)$ the spectral radius of $\ot_z$ on $\mathrm{Lip}\left(\LL\right)$ and by $\left|x+iy\right|_{\infty}=\max\left(|x|,|y|\right)$ for any $x,y\in\R$.
\begin{prop}[\cite{BP} p.$92$]\label{spectreperturbation}
There exist $\varepsilon>0$ and $\rho_\varepsilon\in]0,1[$ such that for all $z\in\C$ satisfying $|z-\delta|_\infty<\varepsilon$ and $\re{z}\geqslant\delta$, one gets:
\begin{enumerate}
 \item[-]$\rho(z)>\rho_\varepsilon$;
 \item[-]$\ot_{z}$ has a unique eigenvalue $\lambda_z$ with modulus $\rho(z)$;
 \item[-]this eigenvalue is simple and $\lim\limits_{z\longrightarrow\delta}\lambda_z=1$;
 \item[-]the rest of the spectrum is included in a disc of radius $\rho_\varepsilon$.
\end{enumerate}
Furthermore for any $A>0$, there exists $\rho(A)<1$ such that $\rho(z)<\rho(A)$ as soon as 
$z\in\C$ satisfies $|z-\delta|_\infty\geqslant\varepsilon$, $\re{z}\geqslant\delta$ and $\left|\im{z}\right|\leqslant A$.
At last, if $z\in\C$ satisfies $\re{z}\geqslant\delta$, then $\rho(z)\leqslant1$ with equality if and only if $z=\delta$.
\end{prop}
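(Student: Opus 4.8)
The plan is to split the statement into a perturbative part near $z=\delta$ and a uniform part for $z$ away from $\delta$, the inequality $\rho(z)\leqslant1$ and its strictness off $\delta$ falling out of the second analysis. For $z$ close to $\delta$ I would argue by classical perturbation theory. By Proposition~\ref{spectredeotdelta}, $\ot_\delta$ is quasi-compact on $\mathrm{Lip}(\LL)$, with $\rho(\delta)=1$ a simple isolated eigenvalue and the rest of its spectrum inside a disc $\bar D(0,r_0)$ with $r_0<1$. Since $z\mapsto\ot_z$ is continuous in operator norm on $\{\re{z}\geqslant\delta\}$ (Corollary~\ref{continuityoftransferteverywhere}), and quasi-compactness together with an isolated simple eigenvalue are stable under small perturbations (\cite{Hen2}, Kato's theory), I would fix $\rho_\varepsilon\in]r_0,1[$, a thin annulus separating $1$ from $\bar D(0,\rho_\varepsilon)$, and consider the Riesz projection $\Pi_z=\frac{1}{2\pi i}\oint(w-\ot_z)^{-1}\,\dd w$ around $1$. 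For $|z-\delta|_\infty<\varepsilon$ with $\varepsilon$ small enough and $\re{z}\geqslant\delta$, $\Pi_z$ has rank one, depends continuously on $z$, and $\ot_z\Pi_z=\lambda_z\Pi_z$ for a simple eigenvalue $\lambda_z\to1$; upper semi-continuity of the spectrum keeps $\mathrm{spec}(\ot_z)\cap\{|w|\geqslant\rho_\varepsilon\}=\{\lambda_z\}$. Shrinking $\varepsilon$ so that $|\lambda_z|>\rho_\varepsilon$ (possible since $\lambda_z\to1$), all four items of the first list follow with $\rho(z)=|\lambda_z|$.

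Next, for the bound $\rho(z)\leqslant1$ I would establish, for every fixed $z$ with $\re{z}\geqslant\delta$, a Doeblin--Fortet (Lasota--Yorke) inequality $[\ot_z^n\varphi]\leqslant C(z)r^n[\varphi]+D(z)|\varphi|_\infty$ on $\mathrm{Lip}(\LL)$, as in \cite{BP} (Lemma~2.1): the contraction factor $r\in]0,1[$ is that of Corollary~\ref{actioncontractante}, and the two terms are controlled using the equi-Lipschitz estimates of Proposition~\ref{equilipcocycle} for the cocycles $b(\g,\cdot)$, Proposition~\ref{busedist}, and the uniform bound $\sup_x\ot_\delta^n\un_\LL(x)\leqslant|h|_\infty/\inf_\LL h$ coming from $\ot_\delta h=h$ (Lemma~\ref{dualityTtransfer}). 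Since $\mathrm{Lip}(\LL)\hookrightarrow\mc(\LL)$ is compact, this forces the essential spectral radius of $\ot_z$ to be $\leqslant r<1$; any spectral value of larger modulus is then an isolated eigenvalue with a Lipschitz eigenfunction $\psi$, and $|\ot_z\psi|\leqslant\ot_{\re{z}}|\psi|$ bounds its modulus by $\rho_\infty(s):=\limsup_n|\ot_s^n\un_\LL|_\infty^{1/n}$ with $s=\re{z}$. Now $\rho_\infty(\delta)=1$ (already observed before Proposition~\ref{spectredeotdelta}), while $b(\g,x)\geqslant\dhy(\oo,\g.\oo)-C\geqslant A|\g|-C$ (Propositions~\ref{busedist} and~\ref{contract}) gives $\ot_s^n\un_\LL(x)\leqslant e^{(s-\delta)C}e^{-(s-\delta)An}\ot_\delta^n\un_\LL(x)$, hence $\rho_\infty(s)\leqslant e^{-(s-\delta)A}<1$ for $s>\delta$. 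Thus $\rho(z)\leqslant\max(r,\rho_\infty(\re{z}))$, which is $\leqslant1$ for $\re{z}=\delta$ and $<1$ for $\re{z}>\delta$; this yields $\rho(z)\leqslant1$ everywhere and the strict inequality whenever $\re{z}>\delta$.

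Finally, on the line $\re{z}=\delta$ with $t\neq0$ I would argue by aperiodicity. If $\rho(\dit)=1$, then $\ot_{\dit}$ has an eigenvalue $\mu$ of modulus $1$ with eigenfunction $\varphi\in\mathrm{Lip}(\LL)$; from $|\varphi|\leqslant\ot_\delta|\varphi|$, integration against the $\ot_\delta$-invariant, fully supported Patterson measure $\sigma_\oo$ (Lemma~\ref{dualityTtransfer}) forces $\ot_\delta|\varphi|=|\varphi|$, so by simplicity of the eigenvalue $1$ of $\ot_\delta$ I may normalize $|\varphi|=h$. Writing $\varphi=hu$ with $u:\LL\to S^1$ continuous, the equality case of the triangle inequality in $\ot_{\dit}(hu)=\mu hu$ gives $e^{-itb(\alpha,x)}u(\alpha.x)=\mu\,u(x)$ for every admissible letter $\alpha$ and every $x\in\LL^0$, i.e.\ $e^{-it\gol(y)}u(y)=\mu\,u(\T.y)$; iterating around a $\T$-periodic point $x$ with $\T^k.x=x$ yields $\mu^k=e^{-itS_k\gol(x)}$, where $S_k\gol(x)$ is the length of the associated closed geodesic of $\xx/\G$. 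This traps the length spectrum into a discrete coset, contradicting its non-arithmeticity (valid here since $\mm$ has cusps, \cite{DP}); hence $\rho(\dit)<1$ for $t\neq0$, and with $\rho(\delta)=1$ this is the last assertion. For the uniform bound over $\{|z-\delta|_\infty\geqslant\varepsilon,\ \re{z}\geqslant\delta,\ |\im{z}|\leqslant A\}$: for $\re{z}\geqslant\delta+1$ the previous paragraph already gives $\rho(z)\leqslant\max(r,e^{-A})<1$, and on the remaining compact set $z\mapsto\rho(z)$ is upper semi-continuous (norm-continuity of $z\mapsto\ot_z$) and $<1$ pointwise, hence bounded away from $1$; $\rho(A)$ is the larger of these two bounds. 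I expect the aperiodicity step to be the main obstacle: passing rigorously from the modulus-one eigenfunction to the cohomological identity for $\gol$ and linking it, via the coding of Section~4.1.3, to the non-arithmeticity of the length spectrum.
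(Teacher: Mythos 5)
The paper itself does not prove this proposition: it is quoted from \cite{BP} (p.~92), the extended analogue (Proposition \ref{spectreperturbationstar}) being likewise deferred to \cite{BP}, and the only related argument written out in the paper is the Doeblin--Fortet/quasi-compactness proof of Proposition \ref{propriétésopeétendu} at $z=\delta$. Your route is exactly the standard one behind the cited result and matches the paper's own toolkit: perturbation of the spectral gap of $\ot_\delta$ (Proposition \ref{spectredeotdelta}) via the norm continuity of $z\longmapsto\ot_z$ and stability of quasi-compactness (\cite{Hen2}); a Lasota--Yorke inequality built from Proposition \ref{cocycle1lip}, Corollary \ref{actioncontractante} and Lemma \ref{A21}, giving essential spectral radius $\leqslant r<1$; the comparison $\rho(z)\leqslant\max\left(r,\rho_\infty(\re{z})\right)$ together with $\dhy(\oo,\g.\oo)\geqslant A|\g|$ (proof of Proposition \ref{contract}) to get $\rho(z)<1$ when $\re{z}>\delta$; upper semicontinuity of the spectral radius plus compactness for the uniform bound $\rho(A)<1$; and an aperiodicity argument on the line $\re{z}=\delta$ using $\sigma_\oo\ot_\delta=\sigma_\oo$ (Lemma \ref{dualityTtransfer}) and the simplicity of the eigenvalue $1$. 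All of these steps are sound.

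The one point that needs tightening is the very end of the aperiodicity argument. From $\ot_{\dit}\varphi=\mu\varphi$ with $|\mu|=1$ you correctly obtain $e^{-it\gol(y)}u(y)=\mu\,u(\T.y)$, hence $e^{-itS_k\gol(x)}=\mu^k$ at $\T^k$-periodic points, i.e. $t\ell+k\theta\in2\pi\Z$ for every closed geodesic of length $\ell$ coded by a word of length $k$, where $\mu=e^{i\theta}$. If $\mu\neq1$ this does not literally ``trap the length spectrum into a discrete coset'': the union over $k$ of the cosets $-\frac{k\theta}{t}+\frac{2\pi}{t}\Z$ may well be dense in $\R$, so the bare statement ``the set of lengths is not contained in a discrete subgroup'' is not directly contradicted. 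You must first eliminate the phase, for instance by comparing two closed geodesics with the same symbolic period $k$ (their length difference then lies in $\frac{2\pi}{t}\Z$), or by comparing an admissible concatenation $w_1w_2$ (period $k_1+k_2$) with $w_1$ and $w_2$, so that $t\left(\ell_{12}-\ell_1-\ell_2\right)\in2\pi\Z$; the non-arithmeticity property coming from the parabolic factors (\cite{DP}) is exactly adapted to such differences and then yields the contradiction. This is a routine but genuinely necessary step, which you yourself flag as the delicate point; with it added, your proof is complete and coincides with the argument of the cited reference.
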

\begin{rem}\label{sertA1etB1}
In the proof of Propositions A.1, A.2, B.1 and C.1, we will use Potter lemma and thus choose 
$\varepsilon>0$ small enough in such a way that
$$\dfrac{L(x)}{L(y)}\leqslant\max\left(\dfrac{y}{x},\dfrac{x}{y}\right)^{\frac{\beta}{2}}$$
\noindent
for any $x,y\geqslant\frac{1}{\varepsilon}$.
\end{rem}
We denote by $h_z$ the unique eigenfunction of $\ot_z$ associated to $\lambda_z$ satisfying $\sigma_{\oo}(h_z)=1$; let 
$\Pi_z\ :\ \mathrm{Lip}(\LL)\longrightarrow\mathrm{Lip}(\LL)$ denote the spectral projection associated to $\lambda_z$. There exists a unique 
linear form $\sigma_z\ :\ \mathrm{Lip}(\LL)\longrightarrow\C$ such that $\Pi_z(\cdot)=\sigma_z(\cdot)h_z$ and $\sigma_z(h_z)=1$. We set 
$R_z:=\ot_z-\Pi_z$. By perturbation theory, the maps $z\longmapsto\lambda_z$, $z\longmapsto h_z$ and $z\longmapsto\Pi_z$
have the same regularity as $z\longmapsto\ot_z$.
\subsubsection{Regularity of the dominant eigenvalue}
In proof of Theorems A and B, we will have to deal with quantities like
\begin{equation}\label{quantitesaetudierjustifiantetuderayonspec}
\sum\limits_{\T^k.y=x}e^{-\delta S_k\gol(y)}\varphi(S_k\gol(y)-R),
\end{equation}
\noindent 
for functions $\varphi$ with a $\mathcal{C}^\infty$ Fourier transform and with compact support. The inverse Fourier transform leads us to write
$$\sum\limits_{\T^k.y=x}e^{-\delta S_k\gol(y)}\varphi(S_k\gol(y)-R)=\dfrac{1}{2\pi}\int_\R e^{itR}\ot_{\delta+it}^k\left(\un_{\LL}(x)\right)\widehat{\varphi}(t)\dd t.$$
\noindent
The study of the behaviour when $R\longrightarrow+\infty$ of the quantity \eqref{quantitesaetudierjustifiantetuderayonspec}
thus involves a precise knowledge of the one of the function $t\longmapsto\lambda_{\delta+it}$ in a neighbourhood of $0$. 

We first establish a result concerning some probability measures depending on the Schottky factors $\G_i$, $1\leqslant i\leqslant p+q$.
\begin{prop}\label{theoreticlocalexp}
Let $j\in[\![1,p+q]\!]$ and $x\in\LL\setminus\LL_j$. Denote by $N_j(x)=\sum\limits_{\alpha\in\G_j^*}h(\alpha.x)e^{-\delta b(\alpha,x)}$ and let us introduce 
the following probability measure $\nu_j^x$ on $[-C,+\infty[$ 
$$\nu_j^x:=\dfrac{1}{N_j(x)}\sum\limits_{\alpha\in\G_j^*}h(\alpha.x)e^{-\delta b(\alpha,x)}D_{b(\alpha,x)}$$
\noindent
where $D_{b(\alpha,x)}$ is the Dirac mass at $b(\alpha,x)\in\R$. This measure satisfies one of the following assertions.
\begin{itemize}
 \item[1)]For $j\in[\![1,p]\!]$ and $t\longrightarrow0$, 
 \begin{itemize}
  \item[-] if $\beta\in]0,1[$
         \begin{align*}
         \widehat{\nu_j^x}(t)=1-\left(\dfrac{C_j}{N_j(x)}h(x_j)e^{2\delta\left(x_j|x\right)_\oo}\right)e^{-i\mathrm{sign}(t)\frac{\beta\pi}{2}}\G(1-\beta)|t|^{\beta}L\left(\dfrac{1}{|t|}\right)(1+o(1));
         \end{align*}
  \item[-]if $\beta=1$
         \begin{align*}
&\ast\widehat{\nu_j^x}(t)=1+\left(\dfrac{C_j}{N_j(x)}h(x_j)e^{2\delta\left(x_j|x\right)_\oo}\right) i|t|
\tilde{L}\left(\dfrac{1}{|t|}\right)(1+o(1));\\
&\ast\re{1-\widehat{\nu_j^x}(t)}=\dfrac{\pi}{2}\left(\dfrac{C_j}{N_j(x)}h(x_j)e^{2\delta\left(x_j|x\right)_\oo}\right)|t|
L\left(\dfrac{1}{|t|}\right)(1+o(1)),
         \end{align*}
\end{itemize}
\noindent where $x_j$ is the fixed point of the parabolic group $\G_j$ and the $(C_j)_{1\leqslant j\leqslant p}$ are the constants appearing in Assumption $(P_2)$;
 \item[2)] For $j\in[\![p+1,p+q]\!]$, there exists a function $f_j\ :\ \LL\longrightarrow\R$ satisfying $\sigma_\oo(\un_{\LL_j^c}f_j)<+\infty$ such that for any $t\longrightarrow0$, one gets
  \begin{itemize}
  \item[-]if $\beta\in]0,1[$
         \begin{align*}
         \widehat{\nu_j^x}(t)=1-f_j(x)o\left(|t|^{\beta}L\left(\dfrac{1}{|t|}\right)\right);
         \end{align*}
  \item[-]if $\beta=1$
         \begin{align*}
         &\ast\widehat{\nu_j^x}(t)=1+f_j(x)o\left(|t|\tilde{L}\left(\dfrac{1}{|t|}\right)\right);\\
         &\ast\re{1-\widehat{\nu_j^x}(t)}=f_j(x)o\left(|t|L\left(\dfrac{1}{|t|}\right)\right).
         \end{align*}
\end{itemize}
\end{itemize}
\end{prop}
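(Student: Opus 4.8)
The proof will compute the local expansion at $t=0$ of the characteristic function $\widehat{\nu_j^x}(t) = \frac{1}{N_j(x)}\sum_{\alpha\in\G_j^*} h(\alpha.x)e^{-\delta b(\alpha,x)}e^{itb(\alpha,x)}$ by reducing it to the setting of Proposition \ref{etapecontroltransfert} via an analysis of the tail of $\nu_j^x$. First I would write
$$1-\widehat{\nu_j^x}(t) = \int_0^{+\infty}\left(1-e^{ity}\right)\nu_j^x(\dd y)$$
(up to the bounded shift by $-C$, which only contributes a $\mathcal{C}^\infty$ factor $e^{-itC}$ that can be absorbed into the $(1+o(1))$), so everything hinges on the asymptotics of $1-F_j^x(T):=\nu_j^x([-C,T])$ as $T\to+\infty$. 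The key point, already used in the proof of Proposition \ref{continuityoftransfert}, is that by Corollary \ref{busedist} one has $b(\alpha,x)\overset{C}{\sim}\dhy(\oo,\alpha.\oo)$; moreover since $h$ is continuous and bounded away from $0$ and $\infty$ on $\LL$ (Proposition \ref{defifonctionh} and the remark following it), the weights $h(\alpha.x)e^{-\delta b(\alpha,x)}$ are comparable to $e^{-\delta\dhy(\oo,\alpha.\oo)}$ uniformly in $x$. Hence Assumption $(P_2)$ for $j\in[\![1,p]\!]$ (resp. $(N)$ for $j\in[\![p+1,p+q]\!]$) gives $1-F_j^x(T)\asymp \frac{L(T)}{T^\beta}$ (resp. $=o_j\bigl(\frac{L(T)}{T^\beta}\bigr)$), so Proposition \ref{etapecontroltransfert} applies and yields the orders of magnitude $|t|^\beta L(1/|t|)$ (resp. $|t|\tilde{L}(1/|t|)$ when $\beta=1$) announced, with the claimed real-part refinement for $\beta=1$.

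**Identifying the constant.** The delicate part is not the order of magnitude but the \emph{exact constant} $\frac{C_j}{N_j(x)}h(x_j)e^{2\delta(x_j|x)_\oo}$. For $j\in[\![1,p]\!]$ the group $\G_j$ is parabolic with fixed point $x_j$, so as $\dhy(\oo,\alpha.\oo)\to+\infty$ we have $\alpha.x\to x_j$ for every $x\in\LL\setminus\LL_j$; by continuity of $h$, $h(\alpha.x)\to h(x_j)$. More precisely I would use the conformal relation \eqref{mvr} together with the definition of the Busemann cocycle to show that, as $\alpha$ ranges over $\G_j$ with $\dhy(\oo,\alpha.\oo)>T$,
$$h(\alpha.x)e^{-\delta b(\alpha,x)} = h(x_j)e^{2\delta(x_j|x)_\oo}e^{-\delta\dhy(\oo,\alpha.\oo)}(1+o(1)),$$
the factor $e^{2\delta(x_j|x)_\oo}$ coming from the comparison $b(\alpha,x) = \dhy(\oo,\alpha.\oo) - 2(x_j|x)_\oo + o(1)$ (a consequence of Lemma \ref{quasiegalitetriangulaire} / Proposition \ref{busedist} and the convergence $\alpha.x\to x_j$). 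Plugging this into the tail and invoking $(P_2)$ in the sharp form $\sum_{\dhy(\oo,\alpha.\oo)>T}e^{-\delta\dhy(\oo,\alpha.\oo)}\sim C_j\frac{L(T)}{T^\beta}$ gives $1-F_j^x(T)\sim \frac{C_j}{N_j(x)}h(x_j)e^{2\delta(x_j|x)_\oo}\frac{L(T)}{T^\beta}$, and then the precise constant in the expansion of $\widehat{\nu_j^x}$ follows from the exact form of the Tauberian/stable-law estimate, i.e. from the proof of Lemma~2 in \cite{Er} underlying Proposition \ref{etapecontroltransfert}, which pins the coefficient $e^{-i\,\mathrm{sign}(t)\beta\pi/2}\G(1-\beta)$ (and $i$, resp. $\frac{\pi}{2}$, for $\beta=1$).

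**The non-influent case and uniformity.** For $j\in[\![p+1,p+q]\!]$ the argument is the same but easier: Assumption $(N)$ gives directly $1-F_j^x(T) = o\bigl(\frac{L(T)}{T^\beta}\bigr)$, and setting $f_j(x):=\frac{1}{N_j(x)}\limsup$-type control of the renormalized tail one gets the stated $f_j(x)\,o(|t|^\beta L(1/|t|))$ form; the integrability $\sigma_\oo(\un_{\LL_j^c}f_j)<+\infty$ reduces to $\sum_{\alpha\in\G_j^*}e^{-\delta\dhy(\oo,\alpha.\oo)}<+\infty$, which holds since $\delta_{\G_j}<\delta$ or by $(N)$. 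Throughout I would keep track of uniformity in $x$: the convergences $\alpha.x\to x_j$ and $h(\alpha.x)\to h(x_j)$ are uniform on $\LL\setminus\LL_j$ because $\LL_j$ and $\LL\setminus\LL_j$ have disjoint closures, so the $o(1)$'s are uniform, which is exactly what is needed later when integrating over $x$. \textbf{The main obstacle} will be making the constant computation rigorous — controlling the error terms in $b(\alpha,x) = \dhy(\oo,\alpha.\oo) - 2(x_j|x)_\oo + o(1)$ uniformly and checking that the $o(1)$ perturbation of the weights does not disturb the sharp stable-law asymptotic (only its leading constant), rather than the purely qualitative order-of-magnitude bounds, which follow routinely from Proposition \ref{etapecontroltransfert}.
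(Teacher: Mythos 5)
Your proposal follows essentially the same route as the paper: you establish the uniform tail asymptotics $1-F_j^x(T)\sim\frac{C_j}{N_j(x)}h(x_j)e^{2\delta(x_j|x)_\oo}\frac{L(T)}{T^\beta}$ from $b(\alpha,x)=\dhy(\oo,\alpha.\oo)-2(x_j|x)_\oo+o(1)$ (the paper cites Lemma 6.7 of \cite{DPPS} for exactly this), the continuity of $h$ at $x_j$, and Assumption $(P_2)$ (resp.\ $(N)$), and then convert the tail into the local expansion of $\widehat{\nu_j^x}$ via the result of \cite{Er}, which is precisely the paper's argument. The only adjustment needed is for the non-influent hyperbolic factors, where $\alpha.x$ accumulates at the two fixed points $x_j^{\pm}$ so $f_j$ must be built from both (as the paper does explicitly); this affects neither the $o$-form nor the integrability of $f_j$.
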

\begin{proof}
We just detail the proof when $\beta\in]0,1[$. We first consider $j\in[\![1,p]\!]$. Fix $x\in\LL\setminus\LL_j$. By Corollary \ref{busedist}, there exist two constants $m,M>0$ such that
\begin{equation}\label{encadrementconstanterenormalisation}
m\sum\limits_{\alpha\in\G_j}e^{-\delta\dhy(\oo,\alpha.\oo)}\leqslant N_j(x)\leqslant M\sum\limits_{\alpha\in\G_j}e^{-\delta\dhy(\oo,\alpha.\oo)}.
\end{equation}
\noindent
On the other hand, Assumption $(P_2)$ gives
$$\sum\limits_{\underset{\dhy(\oo,\alpha.\oo)>T}{\alpha\in\G_j}}e^{-\delta\dhy(\oo,\alpha.\oo)}\sim C_j\dfrac{L(T)}{T^\beta}.$$
\noindent
We want to show that the distribution function $F_j^x$ of $\nu_j^x$ satisfies
\begin{equation}\label{prooftheoreticlocalexp1}
1-F_j^x\left(T\right)\sim\left(\dfrac{C_j}{N_j(x)}h(x_j)e^{2\delta\left(x_j|x\right)_\oo}\right)\dfrac{L(T)}{T^\beta}\ \text{uniformly in}\ x\notin\LL_j.
\end{equation}
\noindent
Fix $\varepsilon>0$. There exists $T_0\gg1$ such that for any $T\geqslant T_0$, $x\in\LL\setminus\LL_j$ and $\alpha\in\G_j$ satisfying $\dhy(\oo,\alpha.\oo)>T$
\begin{align*}
&i)\ -\varepsilon\leqslant b(\alpha,x)-\dhy(\oo,\alpha.\oo)+2\left(x_j|x\right)_\oo\leqslant\varepsilon\ 
\left(\text{Lemma 6.7 in \cite{DPPS}}\right);\\
&ii)\ (1-\varepsilon)h(x_j)\leqslant h(\alpha.x)\leqslant (1+\varepsilon)h(x_j);\\
&iii)\ (1-\varepsilon)C_j\dfrac{L(T)}{T^\beta}
\leqslant\sum\limits_{\underset{\dhy(\oo,\alpha.\oo)>T}{\alpha\in\G_j}}e^{-\delta\dhy(\oo,\alpha.\oo)}
\leqslant(1+\varepsilon)C_j\dfrac{L(T)}{T^\beta};
\end{align*}
\noindent
hence
$$
(1-\varepsilon)^2e^{-\delta\varepsilon}\dfrac{C_j}{N_j(x)}h(x_j)e^{2\delta\left(x_j|x\right)_\oo}
\dfrac{L\left(T+\varepsilon+2\left(x_j|x\right)_\oo\right)}{\left(T+\varepsilon+2\left(x_j|x\right)_\oo\right)^\beta}
\leqslant 1-F_j^x\left(T\right),$$
\noindent
and
$$
1-F_j^x\left(T\right)\leqslant(1+\varepsilon)^2e^{\delta\varepsilon}\dfrac{C_j}{N_j(x)}h(x_j)e^{2\delta\left(x_j|x\right)_\oo}
\dfrac{L\left(T-\varepsilon+2\left(x_j|x\right)_\oo\right)}{\left(T-\varepsilon+2\left(x_j|x\right)_\oo\right)^\beta}.
$$
\noindent
Since $\left(x_j|x\right)_\oo\asymp\dhy(\oo,(x_jx))$, this quantity is bounded uniformly in $x\in\LL\setminus\LL_j$; moreover, the 
functions 
$$t\longmapsto\dfrac{L(T+t)}{L(T)}\ \mathrm{and}\ t\longmapsto\dfrac{T+t}{T}$$
\noindent
tend to $1$ uniformly on each compact subset of $\R$. Hence 
$$(1-\varepsilon)^3e^{-\delta\varepsilon}\left(\dfrac{C_j}{N_j(x)}h(x_j)e^{2\delta\left(x_j|x\right)_\oo}\right)
\dfrac{L(T)}{T^\beta}
\leqslant 1-F_j^x\left(T\right)$$
\noindent
and
$$1-F_j^x\left(T\right)
\leqslant (1+\varepsilon)^3e^{\delta\varepsilon}\left(\dfrac{C_j}{N_j(x)}h(x_j)e^{2\delta\left(x_j|x\right)_\oo}\right)
\dfrac{L(T)}{T^\beta},$$
\noindent
for $T$ large enough. Therefore \eqref{prooftheoreticlocalexp1} is true for $j\in[\![1,p]\!]$.

Recall now the following result exposed in \cite{Er}.
\begin{prop}[\cite{Er}]
Let $\nu$ be a probability measure on $\R^+$ such that there exist $\beta\in]0,1]$ and a slowly varying function $L$ satisfying 
$\mu\left([T,+\infty[\right)\sim C\frac{L(T)}{T^{\beta}}$ when $T\longrightarrow+\infty$. Then the characteristic function
$\widehat{\nu}(t):=\int_0^{+\infty}e^{itx}\dd\nu(x)$ has the following behaviour in a neighbourhood of $0$: 
\begin{itemize}
 \item[-] if $\beta\in]0,1[$
          $$\widehat{\nu}(t)=1-Ce^{-i\mathrm{sign}(t)\frac{\beta\pi}{2}}\G(1-\beta)|t|^{\beta}L\left(\dfrac{1}{|t|}\right)(1+o(1));$$
 \item[-] if $\beta=1$
          \begin{align*}
          &\ast\widehat{\nu}(t)=1+iC|t|\tilde{L}\left(\dfrac{1}{|t|}\right)(1+o(1));\\
          &\ast\re{1-\widehat{\nu}(t)}=\dfrac{\pi}{2}C|t|L\left(\dfrac{1}{|t|}\right)(1+o(1)),
          \end{align*}
 \noindent
where $\tilde{L}(t)=\displaystyle{\int_1^t}\frac{L(x)}{x}\dd x$.
\end{itemize}
\end{prop}
\noindent
The measures $\nu_j^x$, $1\leqslant j\leqslant p$, satisfy \eqref{prooftheoreticlocalexp1}; Assertion 
1) of Proposition \ref{theoreticlocalexp} thus follows from the previous statement.
%

Now fix $j\in[\![p+1,p+q]\!]$. When $\G_j$ is parabolic, the previous arguments still work, but Assumption $(N)$ imposes
$$\sum\limits_{\underset{\dhy(\oo,\alpha.\oo)>T}{\alpha\in\G_j}}e^{-\delta\dhy(\oo,\alpha.\oo)}=o\left(\dfrac{L(T)}{T^\beta}\right),$$
\noindent
which implies
$$1-F_j^x\left(T\right)=\left(\dfrac{1}{N_j(x)}h(x_j)e^{2\delta\left(x_j|x\right)_\oo}\right)\dfrac{L(T)}{T^\beta}o(T)$$
\noindent
with $\lim\limits_{T\longrightarrow+\infty}o(T)=0$ uniformly in $x\notin\LL_j$. The second part of the result follows from \cite{Er}, with  
$f_j(x)$ given in that case by
$$f_j(x)=\dfrac{1}{N_j(x)}h(x_j)e^{2\delta\left(x_j|x\right)_\oo}.$$
\noindent
Inequality \eqref{encadrementconstanterenormalisation} yields 
\begin{align*}
\sigma_{\oo}\left(\un_{\LL_j^c}f_j\right) = & \displaystyle{\int_{\LL\setminus\LL_j}}
\dfrac{1}{N_j(x)}h(x_j)e^{2\delta\left(x_j|x\right)_\oo}\dd\sigma_{\oo}\left(x\right)\\
\preceq & \dfrac{1}{\sum\limits_{\alpha\in\G_j}e^{-\delta\dhy(\oo,\alpha.\oo)}}\int_{\LL\setminus\LL_j}
e^{2\delta\left(x_j|x\right)_\oo}\dd\sigma_{\oo}\left(x\right)\\
\preceq & \int_{\LL\setminus\LL_j}\dfrac{\dd\sigma_{\oo}\left(x\right)}{\dhy_{\oo}(x_j,x)^{\frac{2\delta}{a}}}
< +\infty.
\end{align*}
When $\G_j$ is a hyperbolic group, with attractive fixed point (respectively repulsive) 
$x_j^+$ (resp. $x_j^-$), we write
$$\sum\limits_{\underset{\dhy(\oo,\alpha.\oo)>T}{\alpha\in\G_j}}e^{-\delta\dhy(\oo,\alpha.\oo)}\asymp
\sum\limits_{\underset{|n|l_j\succeq T}{n\in\Z^*}}e^{-\delta l_jn}\asymp e^{-\delta T}$$
\noindent
where $l_j$ is the length of the axis of the generator of $\G_j$. The arguments are the same as for the non-influent parabolics. In that case, the function 
$f_j(x)$ is given by
$$f_j(x)=\dfrac{1}{2N_j(x)}\left(h(x_j^+)e^{2\delta\left(x_j^+|x\right)_\oo}+
h(x_j^-)e^{2\delta\left(x_j^-|x\right)_\oo}\right).$$
\noindent
The quantity $\sigma_{\oo}\left(\un_{\LL_j^c}f_j\right)$ is finite for the same reasons. This ends the proof of 
Proposition \ref{theoreticlocalexp}.
\end{proof}
The following proposition specifies the local behaviour in $0$ of the function $t\longmapsto\lambda_{\delta+it}$.
\begin{prop}\label{localexp}
There exists a constant $\Cg>0$ such that for any  $t$ small enough
\begin{itemize}
 \item[-]if $\beta\in]0,1[$ 
    $$\lambda_{\delta+it}=1-\Cg\G(1-\beta)e^{+i\mathrm{sign}(t)\frac{\beta\pi}{2}}|t|^{\beta}L\left(\dfrac{1}{|t|}\right)(1+o(1));$$
 \item[-]if $\beta=1$
 \begin{align*}
&\ast\lambda_{\delta+it}=1-\Cg \mathrm{sign}(t)i|t|\tilde{L}\left(\dfrac{1}{|t|}\right)(1+o(1));\\
&\ast\re{1-\lambda_{\delta+it}}=\dfrac{\pi}{2}\Cg|t|L\left(\dfrac{1}{|t|}\right)(1+o(1)).
\end{align*}
\end{itemize}
\end{prop}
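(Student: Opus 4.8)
The plan is to read off $\lambda_{\delta+it}$ from the eigenvalue equation and the normalisation, and then to identify the first-order term with an integral of the characteristic functions $\widehat{\nu_j^x}$ of Proposition \ref{theoreticlocalexp}. Applying the functional $\sigma_\oo$ to $\ot_{\delta+it}h_{\delta+it}=\lambda_{\delta+it}h_{\delta+it}$ and using $\sigma_\oo(h_{\delta+it})=1$ gives $\lambda_{\delta+it}=\sigma_\oo\big(\ot_{\delta+it}h_{\delta+it}\big)$. Since $\ot_\delta h=h$, since $\sigma_\oo$ is $\ot_\delta$-invariant (Lemma \ref{dualityTtransfer}), and since $\sigma_\oo(h)=\sigma_\oo(h_{\delta+it})=1$ (normalisation $\mu(\mathcal{D}^0)=1$), I would write
$$\lambda_{\delta+it}=1+\sigma_\oo\big((\ot_{\delta+it}-\ot_\delta)h\big)+\sigma_\oo\big((\ot_{\delta+it}-\ot_\delta)(h_{\delta+it}-h)\big),$$
the last term arising from $\sigma_\oo\big(\ot_{\delta+it}(h_{\delta+it}-h)\big)=\sigma_\oo(h_{\delta+it}-h)+\sigma_\oo\big((\ot_{\delta+it}-\ot_\delta)(h_{\delta+it}-h)\big)$ and $\sigma_\oo(h_{\delta+it}-h)=0$. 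By Proposition \ref{continuityoftransfert} one has $\|\ot_{\delta+it}-\ot_\delta\|\preceq|t|^\beta L(1/|t|)$ (resp. $|t|\tilde L(1/|t|)$ when $\beta=1$), and since $z\mapsto h_z$ inherits the regularity of $z\mapsto\ot_z$ by perturbation theory (Proposition \ref{spectreperturbation}), $\|h_{\delta+it}-h\|$ satisfies the same bound; hence the last term is $O\big((|t|^\beta L(1/|t|))^2\big)=o\big(|t|^\beta L(1/|t|)\big)$ as $t\to0$.

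It then remains to expand $\sigma_\oo\big((\ot_{\delta+it}-\ot_\delta)h\big)$. From the explicit form of $\ot_z$ and of the measures $\nu_j^x$ of Proposition \ref{theoreticlocalexp} one recognises
$$(\ot_{\delta+it}-\ot_\delta)h(x)=\sum_{j=1}^{p+q}\un_{\LL_j^c}(x)\,N_j(x)\big(\widehat{\nu_j^x}(-t)-1\big),$$
so that $\sigma_\oo\big((\ot_{\delta+it}-\ot_\delta)h\big)=\sum_j\int_{\LL\setminus\LL_j}N_j(x)\big(\widehat{\nu_j^x}(-t)-1\big)\dd\sigma_\oo(x)$. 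For an influent factor $j\in[\![1,p]\!]$ the weight $N_j(x)$ cancels against the denominator of the coefficient supplied by Proposition \ref{theoreticlocalexp}, leaving (when $\beta\in]0,1[$, with the $o(1)$ uniform in $x$)
$$\int_{\LL\setminus\LL_j}N_j(x)\big(\widehat{\nu_j^x}(-t)-1\big)\dd\sigma_\oo(x)=-C_j\,h(x_j)\Big(\int_{\LL\setminus\LL_j}e^{2\delta(x_j|x)_\oo}\dd\sigma_\oo(x)\Big)\G(1-\beta)e^{+i\,\mathrm{sign}(t)\frac{\beta\pi}{2}}|t|^\beta L\!\left(\tfrac1{|t|}\right)(1+o(1)),$$
the integral being finite since $x_j\in D_j$ lies at positive $\dhy_\oo$-distance from $\LL\setminus\LL_j\subset\bigcup_{k\neq j}D_k$ and $\sigma_\oo$ is finite. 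For a non-influent factor $j\in[\![p+1,p+q]\!]$ the same computation produces only an $o\big(|t|^\beta L(1/|t|)\big)$, using the integrability of the functions $f_j$ of Proposition \ref{theoreticlocalexp}. Summing over $j$ and feeding the result back into the displayed expansion yields the statement for $\beta\in]0,1[$ with
$$\Cg:=\sum_{j=1}^{p}C_j\,h(x_j)\int_{\LL\setminus\LL_j}e^{2\delta(x_j|x)_\oo}\dd\sigma_\oo(x),$$
a finite sum of strictly positive terms, hence $\Cg>0$. The case $\beta=1$ is treated the same way: the main term becomes purely imaginary of order $|t|\tilde L(1/|t|)$, coming from the first line of part 1 of Proposition \ref{theoreticlocalexp}, and the real part $\re{1-\lambda_{\delta+it}}$ is read off from the second line, of order $|t|L(1/|t|)$.

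The main obstacle is not the first-order identification but checking that every remainder is genuinely of lower order, uniformly in $x$. The cancellation $\sigma_\oo(h_{\delta+it}-h)=0$ together with the quadratic gain on $\|\ot_{\delta+it}-\ot_\delta\|$ is what makes the eigenfunction correction negligible; and in the delicate case $\beta=1$ one must also verify that the $O\big(|t|^2\tilde L(1/|t|)^2\big)$ remainder is in fact $o\big(|t|L(1/|t|)\big)$, so that it does not pollute the real-part refinement. This last point is exactly where Potter's bounds (Lemma \ref{PB}) and Karamata's lemma (Lemma \ref{regvar}) enter, since $\tilde L$ may grow much faster than $L$ while still $|t|\tilde L(1/|t|)^2/L(1/|t|)\to0$. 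Everything else is routine bookkeeping with the expansions of Proposition \ref{theoreticlocalexp} and the estimates of Proposition \ref{continuityoftransfert}.
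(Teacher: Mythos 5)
Your proof is correct and follows essentially the same route as the paper: write $\lambda_{\delta+it}=\sigma_\oo(\ot_{\delta+it}h_{\delta+it})$, isolate the quadratic eigenfunction-correction term via Proposition \ref{continuityoftransfert} and perturbation theory, and identify $\sigma_\oo\bigl((\ot_{\delta+it}-\ot_\delta)h\bigr)=\sum_j\int_{\LL\setminus\LL_j}N_j(x)\bigl(\widehat{\nu_j^x}(-t)-1\bigr)\dd\sigma_\oo(x)$, expanded factor by factor with Proposition \ref{theoreticlocalexp}. The only cosmetic difference is that the paper further rewrites your constant $\Cg=\sum_{j\leqslant p}C_j h(x_j)\int_{\LL\setminus\LL_j}e^{2\delta(x_j|x)_\oo}\dd\sigma_\oo(x)$, via the definition of $h$, as $\frac{1}{\mu(\mathcal{D}^0)}\sum_{j\leqslant p}C_j\bigl(\int_{\LL\setminus\LL_j}\dhy_\oo(x,x_j)^{-2\delta/a}\dd\sigma_\oo(x)\bigr)^2$, which is the same quantity.
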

\begin{proof}
As previously, we only detail the proof for $\beta\in]0,1[$.  We first write
$$\lambda_{\delta+it}=\sigma_{\oo}(\ot_{\delta+it} h_{\delta+it})=\sigma_{\oo}(\ot_{\delta+it} h)+\sigma_{\oo}\left((\ot_{\delta+it}-\otd)(h_{\delta+it}-h)\right).$$
\noindent
By Proposition \ref{continuityoftransfert}, the second term of the right member is bounded from above by
$\sigma_{\oo}(\LL)\left(|t|^{\beta}L\left(\dfrac{1}{|t|}\right)\right)^2$ and it remains to precise the behaviour of the first one near $0$. We write
\begin{align*}
\sigma_{\oo}(\ot_{\dit} h) & =1+\sigma_{\oo}(\ot_{\dit} h)-1=1+\sigma_{\oo}(\ot_{\dit} h)-\sigma_{\oo}(h)\\
& =1+\sigma_{\oo}\left(\left(\ot_{\dit}-\otd\right)h\right)=1+\sum\limits_{j=1}^{p+q}S_j
\end{align*}
\noindent
where
\begin{align*}
S_j:= & \sum\limits_{\alpha\in\G_j^*}\displaystyle{\int_{\LL\setminus\LL_j}}h(\alpha.x)e^{-\delta b(\alpha,x)}(e^{-itb(\alpha,x)}-1)\dd\sigma_{\oo}(x)\\
= & \displaystyle{\int_{\LL\setminus\LL_j}}N_j(x)\left(\widehat{\nu_j^x}(-t)-1\right)\dd\sigma_{\oo}(x),
\end{align*}
\noindent
for $j\in[\![1,p+q]\!]$. If follows from \ref{theoreticlocalexp} that  $S_j=o\left(|t|^{\beta}
L\left(\dfrac{1}{|t|}\right)\right)$ for $j\in[\![p+1,p+q]\!]$, whereas for $j\in[\![1,p]\!]$
\begin{align*}
S_j= & -C_jh(x_j)\left(\int_{\LL\setminus\LL_j}e^{2\delta\left(x_j|x\right)_\oo}\dd\sigma_{\oo}(x)\right)e^{i\mathrm{sign}(t)\frac{\beta\pi}{2}}
\G(1-\beta)|t|^{\beta}L\left(\dfrac{1}{|t|}\right)(1+o(1))\\
= & -C_j\dfrac{1}{\mu\left(\mathcal{D}^0\right)}\left(\int_{\LL\setminus\LL_j}\dfrac{\dd\sigma_\oo(x)}{\dhy_\oo(x,x_j)^{\frac{2\delta}{a}}}\right)^2e^{i\mathrm{sign}(t)\frac{\beta\pi}{2}}
\G(1-\beta)|t|^{\beta}L\left(\dfrac{1}{|t|}\right)(1+o(1)).
\end{align*}
\noindent
The result follows with the constant $\Cg$ given by
\begin{align*}
\Cg  = \dfrac{1}{\mu(\mathcal{D}^0)}\sum\limits_{1\leqslant j\leqslant p}
C_j\left(\displaystyle{\int\limits_{\LL\setminus\LL_j}}
\dfrac{\dd\sigma_{\oo}(x)}{\dhy_{\oo}(x,x_j)^{\frac{2\delta}{a}}}\right)^2.
\end{align*}
\end{proof}
\subsubsection{About the resolvant operator when $\beta=1$}
Let us conclude this section by some complement when $\beta=1$.
In the proof of Theorem A for $\beta=1$, we will use the operator $Q_z=\left(\mathrm{Id}-\ot_z\right)^{-1}$ for $z\in\C$ such that $\re{z}\geqslant\delta$. The following properties come from \cite{MT}. 
\begin{prop}\label{controlQ}
There exist $\varepsilon>0$ and $C>0$ such that $\left|\left|Q_z-(1-\lambda_z)^{-1}\Pi_{z}\right|\right|\leqslant C$ when
$|z-\delta|_{\infty}<\varepsilon$ and 
$\left|\left|Q_z\right|\right|\leqslant C$ for $z$ such that $|z-\delta|_{\infty}\geqslant\varepsilon$. Moreover, for any $t$ close enough to $0$
$$
Q_{\delta+it}=\dfrac{1}{\Cg\mathrm{sign}(t)i|t|\tilde{L}\left(\dfrac{1}{|t|}\right)}(1+o(1))\Pi_0+O(1)
$$
\end{prop}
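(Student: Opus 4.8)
The plan is to exploit the spectral decomposition of $\ot_z$ near $z=\delta$ supplied by Propositions \ref{spectredeotdelta} and \ref{spectreperturbation}, exactly as in \cite{MT}. Fix $\varepsilon>0$ as in Proposition \ref{spectreperturbation}. For $z$ with $\re{z}\geqslant\delta$ and $0<|z-\delta|_\infty<\varepsilon$ I would write $\ot_z=\lambda_z\Pi_z+R_z$, where $\Pi_z$ is the rank-one spectral projection onto $\C h_z$, $\Pi_z R_z=R_z\Pi_z=0$ and $\rho(R_z)\leqslant\rho_\varepsilon<1$. Since $\Pi_z$ commutes with $\ot_z$ and $\mathrm{Lip}(\LL)=\mathrm{range}(\Pi_z)\oplus\ker(\Pi_z)$, the operator $\mathrm{Id}-\ot_z$ acts as multiplication by $1-\lambda_z$ on the first summand and as $\mathrm{Id}-R_z$ on the second; because $\lambda_z\neq1$ (last assertion of Proposition \ref{spectreperturbation}: $\rho(z)<1$ when $z\neq\delta$) and $\rho(R_z)<1$, it is invertible and
$$Q_z=(1-\lambda_z)^{-1}\Pi_z+(\mathrm{Id}-R_z)^{-1}(\mathrm{Id}-\Pi_z).$$
Hence $Q_z-(1-\lambda_z)^{-1}\Pi_z=(\mathrm{Id}-R_z)^{-1}(\mathrm{Id}-\Pi_z)$, and the first bullet follows once I check that $(\mathrm{Id}-R_z)^{-1}$ is bounded uniformly in such $z$; this comes from the continuity of $z\longmapsto R_z$ (Corollary \ref{continuityoftransferteverywhere} and perturbation theory) together with the uniform bound $\rho(R_z)\leqslant\rho_\varepsilon$, via a standard compactness argument producing $N\geqslant1$ and $\theta<1$ with $\|R_z^N\|\leqslant\theta$ on the compact set $\{\re{z}\geqslant\delta,\ |z-\delta|_\infty\leqslant\varepsilon\}$.

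For the range $|z-\delta|_\infty\geqslant\varepsilon$ I would use that $\ot_z$ has spectral radius $\rho(\ot_z)\leqslant\rho(A)<1$ there (Proposition \ref{spectreperturbation}, for $|\im{z}|\leqslant A$), so that $Q_z=\sum_{n\geqslant0}\ot_z^n$ and the very same compactness argument gives $\|Q_z\|\leqslant C$.

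For the asymptotic when $\beta=1$ I would decompose, for $t$ small and nonzero,
$$Q_{\dit}=(1-\lambda_{\dit})^{-1}\Pi_0+(1-\lambda_{\dit})^{-1}\bigl(\Pi_{\dit}-\Pi_0\bigr)+\bigl(Q_{\dit}-(1-\lambda_{\dit})^{-1}\Pi_{\dit}\bigr),$$
where $\Pi_0=\Pi_\delta$. The third term is $O(1)$ by the first part. By Proposition \ref{localexp} for $\beta=1$, $1-\lambda_{\dit}=\Cg\,\mathrm{sign}(t)\,i|t|\tilde{L}(1/|t|)(1+o(1))$; since $L=o(\tilde{L})$ this gives $|1-\lambda_{\dit}|\asymp|t|\tilde{L}(1/|t|)$ and $(1-\lambda_{\dit})^{-1}=\bigl(\Cg\,\mathrm{sign}(t)\,i|t|\tilde{L}(1/|t|)\bigr)^{-1}(1+o(1))$, which is the main term. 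For the middle term I would use that the regularity of $z\longmapsto\Pi_z$ is inherited from that of $z\longmapsto\ot_z$ through perturbation theory, so Proposition \ref{continuityoftransfert}(2.b) yields $\|\Pi_{\dit}-\Pi_0\|\preceq|t|\tilde{L}(1/|t|)$; combined with $|1-\lambda_{\dit}|\asymp|t|\tilde{L}(1/|t|)$ the middle term is $O(1)$. Adding the three pieces gives the claimed expansion.

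I expect the two genuinely technical points to be: (i) turning ``$\rho<1$ uniformly'' into a uniform operator-norm bound on the resolvents $\sum R_z^n$ and $\sum\ot_z^n$ — the compactness/covering step that makes the $O(1)$'s in both bullets uniform; and (ii) transferring the sharp modulus of continuity of $z\longmapsto\ot_z$ from Proposition \ref{continuityoftransfert} to the spectral projection $z\longmapsto\Pi_z$, which is exactly what is needed to absorb $(1-\lambda_{\dit})^{-1}(\Pi_{\dit}-\Pi_0)$ into the $O(1)$. Everything else is bookkeeping, and the argument runs parallel to the corresponding estimate in \cite{MT}.
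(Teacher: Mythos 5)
Your proof is correct and takes essentially the same route as the paper: the same splitting $Q_z=(1-\lambda_z)^{-1}\Pi_z+(\mathrm{Id}-\ot_z)^{-1}(\mathrm{Id}-\Pi_z)$, the uniform resolvent bounds drawn from Proposition~\ref{spectreperturbation} (you merely make explicit the compactness argument the paper leaves implicit), and the absorption of $(1-\lambda_{\delta+it})^{-1}(\Pi_{\delta+it}-\Pi_\delta)$ into the $O(1)$ via the regularity of $z\longmapsto\Pi_z$ combined with the local expansion of Proposition~\ref{localexp}. The only slip is the citation: the estimate $\left|\left|\Pi_{\delta+it}-\Pi_\delta\right|\right|\preceq|t|\tilde{L}\left(\frac{1}{|t|}\right)$ follows from Proposition~\ref{continuityoftransfert}, assertion 2.a with $s=0$ (variation in the imaginary direction), not from 2.b.
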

\begin{proof}
Let $z\in\C$ such that $\re{z}\geqslant\delta$, $z\neq\delta$ and $|z-\delta|_\infty<\varepsilon$, where $\varepsilon$ 
is chosen as in Proposition \ref{spectreperturbation}.
Writing $\ot_z=\lambda_z\Pi_z+R_z=\lambda_z\Pi_z+\ot_z(\mathrm{Id}-\Pi_z$, one gets
$$Q_z=\left(\mathrm{Id}-\ot_z\right)^{-1}=\left(1-\lambda_z\right)^{-1}\Pi_z+\left(\mathrm{Id}-\ot_z\right)^{-1}\left(\mathrm{Id}-\Pi_z\right).$$
\noindent
Proposition \ref{spectreperturbation} implies $\left|\left|\left(\mathrm{Id}-\ot_z\right)^{-1}\left(\mathrm{Id}-\Pi_z\right)\right|\right|\leqslant C$ 
for such a $z$. Proposition \ref{spectreperturbation} also implies $\left|\left|\left(\mathrm{Id}-\ot_z\right)^{-1}\right|\right|\leqslant C$ when $z$ is
far enough to $\delta$; therefore for $t$ close enough to $0$, we get
\begin{align*}
Q_{\delta+it}= & \left(1-\lambda_{\delta+it}\right)^{-1}\Pi_{\delta+it}+O(1)\\
= & \left(1-\lambda_{\delta+it}\right)^{-1}\Pi_{\delta}+\left(1-\lambda_{\delta+it}\right)^{-1}\left(\Pi_{\delta+it}-\Pi_\delta\right)+O(1).
\end{align*}
\noindent
The regularity of the function $t\longmapsto\ot_{\delta+it}$ given in Proposition \ref{continuityoftransfert} and the local expansion of 
$\lambda_{\delta+it}$ given in \eqref{localexp} implies that the second term is a $O(1)$. Finally
$$Q_{\delta+it}=\left(1-\lambda_{\delta+it}\right)^{-1}\Pi_{\delta}+O(1)$$
\noindent
and the result follows from \eqref{localexp}.
\end{proof}
\begin{coro}\label{integrabilityreQ}
The function $t\longmapsto\re{Q_{\delta+it}}$ is integrable in $0$.
\end{coro}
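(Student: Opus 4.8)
The plan is to extract the dominant, purely imaginary part of $Q_{\delta+it}$ near $t=0$ and to show that the \emph{real} part is smaller by a factor $L(1/|t|)/\tilde L(1/|t|)$, which turns out to be exactly enough to gain integrability. First I would start from the identity obtained in the course of the proof of Proposition~\ref{controlQ}, namely
\[
Q_{\delta+it}=(1-\lambda_{\delta+it})^{-1}\,\Pi_\delta+O(1)\qquad(t\to 0),
\]
where the $O(1)$ is bounded in operator norm on a fixed neighbourhood of $0$. Since $\Pi_\delta$ is a real operator (its range is $\C h$ with $h$ real, and it is given by a real linear form), it will be enough to estimate
$\mathrm{Re}\bigl((1-\lambda_{\delta+it})^{-1}\bigr)=\dfrac{\mathrm{Re}(1-\lambda_{\delta+it})}{|1-\lambda_{\delta+it}|^2}$
near $0$ and to check that it is integrable there, the contribution of the bounded $O(1)$ term over a bounded interval being harmless.

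Next I would feed in the local expansion of $\lambda_{\delta+it}$ for $\beta=1$ given by Proposition~\ref{localexp}: on the one hand $\mathrm{Re}(1-\lambda_{\delta+it})=\frac{\pi}{2}\Cg|t|L(1/|t|)(1+o(1))$, and on the other hand, since $1-\lambda_{\delta+it}=\Cg\,\mathrm{sign}(t)\,i\,|t|\,\tilde L(1/|t|)(1+o(1))$, one has $|1-\lambda_{\delta+it}|=\Cg|t|\tilde L(1/|t|)(1+o(1))$ — the modulus being governed by the imaginary part, consistently with $L/\tilde L\to 0$ (Lemma~\ref{regvar}). This yields
\[
\mathrm{Re}(Q_{\delta+it})=\frac{\pi}{2\Cg}\,\frac{L(1/|t|)}{|t|\,\tilde L(1/|t|)^2}\,\Pi_\delta\,(1+o(1))+O(1)\qquad(t\to 0).
\]

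It would then remain to verify that $t\mapsto\dfrac{L(1/|t|)}{|t|\,\tilde L(1/|t|)^2}$ is integrable near $0$. The substitution $u=1/|t|$ turns $\int_0^{a}\frac{L(1/t)}{t\,\tilde L(1/t)^2}\,dt$ into $\int_{1/a}^{+\infty}\frac{L(u)}{u\,\tilde L(u)^2}\,du$, and since $\tilde L'(u)=L(u)/u$ the integrand equals $-\bigl(1/\tilde L(u)\bigr)'$, so the integral equals $\tilde L(1/a)^{-1}<+\infty$, using that $\tilde L(u)\to+\infty$ as $u\to+\infty$ in our infinite-measure setting (cf.\ Remark~\ref{remarquebeta=1}). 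Combined with the boundedness of the $O(1)$ term, this gives the claim. The main point to be careful about is that $Q_{\delta+it}$ itself is of order $1/(|t|\tilde L(1/|t|))$, which is \emph{not} integrable near $0$ in general (e.g.\ $\tilde L=\ln$): it is the cancellation of its leading purely imaginary term, together with the precise rate of $\mathrm{Re}(1-\lambda_{\delta+it})$ supplied by Proposition~\ref{localexp}, that makes the real part integrable, and bookkeeping that cancellation is really the only difficulty.
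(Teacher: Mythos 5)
Your proof is correct and follows essentially the same route as the paper: split off the bounded term $Q_{\delta+it}-(1-\lambda_{\delta+it})^{-1}\Pi_\delta$ via Proposition \ref{controlQ}, use the $\beta=1$ expansions of Proposition \ref{localexp} to get $\re{(1-\lambda_{\delta+it})^{-1}}\asymp\frac{L(1/|t|)}{|t|\tilde L(1/|t|)^2}$, and integrate by the substitution $u=1/t$ with antiderivative $-1/\tilde L(u)$ and $\tilde L\to+\infty$. Your explicit remark that the integrand is exactly $-\bigl(1/\tilde L(u)\bigr)'$ even corrects a harmless typo in the paper's bracketed antiderivative.
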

\begin{proof}
By the previous proposition, we split $\re{Q_{\delta+it}}$ into 
$$\re{Q_{\delta+it}-(1-\lambda_{\delta+it})^{-1}\Pi_\delta}+\re{(1-\lambda_{\delta+it})^{-1}}\Pi_\delta.$$
\noindent
The first part is bounded by a constant $C>0$. Concerning the term $\re{(1-\lambda_{\delta+it})^{-1}}$, we write
$$\re{(1-\lambda_{\delta+it})^{-1}}=\dfrac{\re{1-\lambda_{\delta+it}}}{\left|1-\lambda_{\delta+it}\right|^2}.$$
\noindent
The local expansions \eqref{localexp} yield
$$\re{(1-\lambda_{\delta+it})^{-1}}=\dfrac{\pi}{2\Cg}\dfrac{L\left(\frac{1}{|t|}\right)}{|t|\tilde{L}\left(\frac{1}{|t|}\right)^2}(1+o(1)).$$
\noindent
This function is integrable near $0$: indeed for any $\varepsilon>0$
\begin{align*}
\int_{-\varepsilon}^\varepsilon\dfrac{L\left(\frac{1}{|t|}\right)}{|t|\tilde{L}\left(\frac{1}{|t|}\right)^2}\dd t= & 
2\int_{0}^\varepsilon\dfrac{L\left(\frac{1}{t}\right)}{t\tilde{L}\left(\frac{1}{t}\right)^2}\dd t
= \int_{\frac{1}{\varepsilon}}^{+\infty}\dfrac{L(y)}{y\tilde{L}(y)^2}\dd y\\
= & \left[\dfrac{-1}{\tilde{L}(y)^2}\right]_{\frac{1}{\varepsilon}}^{+\infty}=\dfrac{1}{\tilde{L}\left(\frac{1}{\varepsilon}\right)}<+\infty,
\end{align*}
\noindent
because $\lim\limits_{x\longrightarrow+\infty}\tilde{L}(x)=+\infty$.
\end{proof}                                                                                                      

\section{Theorem A: mixing for $\beta\in]0,1[$}

This section is devoted to the mixing properties of the geodesic flow $\left(g_t\right)_{t\in\R}$ on the unit tangent bundle 
$\T^1\mm$ of the quotient manifold $\mm=\xx/\G$. 
We precise here the speed 
of convergence to $0$ of $m_\G\left(\mathfrak{A}\cap g_{-t}.\mathfrak{B}\right)$ as 
$t\longrightarrow\pm\infty$.

The group $\G$ being of divergent type, the theorem of Hopf, Tsuji and Sullivan (\cite{Rob}) ensures that the geodesic flow is totally conservative: we thus 
do not need to formulate additional assumptions about the sets $\mathfrak{A}$ and $\mathfrak{B}$ to avoid the examples constructed by Hajan and Kakutani 
(\cite{HaKa}). 

We prove in this section the Theorem A.
\begin{thmA}
Let $\G$ be a Schottky group satisfying the hypotheses $(H_\beta)$ for some $\beta\in]0,1[$.
Let $\mathfrak{A},\mathfrak{B}\subset\mathrm{T}^1\xx/\G$ be two $m_\G$-measurable subsets of finite measure. Then, as $t\longrightarrow\pm\infty$
$$m_\G(\mathfrak{A}\cap g_{-t}.\mathfrak{B})\sim\dfrac{\sin(\beta\pi)}{\pi\Cg}\dfrac{m_\G(\mathfrak{A})m_\G(\mathfrak{B})}{|t|^{1-\beta}L(|t|)}.$$ 
\noindent
where
\begin{equation}\label{valeurdelaconstantemelange}
\Cg  = \dfrac{1}{\mu(\mathcal{D}^0)}\sum\limits_{1\leqslant j\leqslant p}
C_j\left(\displaystyle{\int\limits_{\LL\setminus\LL_j}}
\dfrac{\dd\sigma_{\oo}(x)}{\dhy_{\oo}(x,x_j)^{\frac{2\delta}{a}}}\right)^2.
\end{equation}
\end{thmA}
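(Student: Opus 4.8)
The plan is to reduce the mixing statement to an asymptotic for iterates of the transfer operators $\ot_{\delta+it}$ via the symbolic representation of the geodesic flow established in Section~4. First I would recall that, by Lemma~\ref{representationsymboliqueflot}, the dynamical system $(\Omega^0,(g_t),m_\G)$ is conjugated to the special flow over $(\LL^0,\T)$ with roof function $\gol$ (cohomologous to the positive function $\mathfrak{L}$), so that it suffices to prove the asymptotic for test functions of product type on the special flow. Using the adjointness of $\tilde P$ (Lemma~\ref{defiP}) with respect to $\nu\otimes\dd s$, the quantity $\overline{m}_\G(\mathfrak{A}\cap\phi_{-t}\mathfrak{B})$ can be written, for $\mathfrak{A}=\varphi\otimes u$ and $\mathfrak{B}=\psi\otimes v$ with $\varphi,\psi\in\mathrm{Lip}(\LL)$ and $u,v$ continuous with compact support, as a sum over $k$ of integrals of the form
$$\int_{\LL}\int_\R \psi(x)\,v(s)\,\bigl(\ot_\delta^k(h\varphi\,u(\,\cdot+S_k\gol))\bigr)(x,s)\,\frac{\dd\nu(x)}{h(x)}\,\dd s,$$
and the point is to extract the leading behaviour of these as $t\to+\infty$ (the case $t\to-\infty$ being symmetric after exchanging the roles of past and future endpoints).

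The second step is the Fourier-analytic heart: as in the computation preceding Proposition~\ref{localexp}, I would use the inverse Fourier transform to write the relevant kernel as $\frac{1}{2\pi}\int_\R e^{itR}\,\ot_{\delta+it}^k(\un_\LL\cdot)(x)\,\widehat{w}(t)\,\dd t$ for suitable $\widehat w$, sum over $k$, and recognize the geometric series $\sum_k \ot_{\delta+it}^k = (\mathrm{Id}-\ot_{\delta+it})^{-1} = Q_{\delta+it}$ (this converges for $\re z>\delta$ and is handled at $\re z=\delta$ by the spectral decomposition). Then I would use Proposition~\ref{spectreperturbation} to split $Q_{\delta+it}=(1-\lambda_{\delta+it})^{-1}\Pi_{\delta+it}+O(1)$ near $t=0$, the $O(1)$ part and the part of the integral away from $t=0$ contributing lower-order terms (here the fact that $\G$ is divergent, so $\rho(z)<1$ for $z\ne\delta$ with $\re z=\delta$, is essential to kill the $|t|$ bounded-away-from-zero contribution — combined with a Riemann–Lebesgue type argument). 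The dominant contribution is therefore
$$\frac{1}{2\pi}\int_{-\varepsilon}^{\varepsilon} e^{itR}\,\frac{\Pi_\delta(\cdots)}{1-\lambda_{\delta+it}}\,\widehat w(t)\,\dd t,$$
and plugging in the local expansion $1-\lambda_{\delta+it}\sim \Cg\,\G(1-\beta)\,e^{i\,\mathrm{sign}(t)\beta\pi/2}|t|^\beta L(1/|t|)$ from Proposition~\ref{localexp}, this is a classical Tauberian/Fourier integral whose asymptotics as $R\to+\infty$ is governed by the behaviour of the integrand at $t=0$.

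The third step is the extraction of the precise constant. The integral $\int e^{itR} |t|^{-\beta}L(1/|t|)^{-1}e^{-i\,\mathrm{sign}(t)\beta\pi/2}\dd t$ behaves, by the standard asymptotics of Fourier transforms of regularly varying functions (Bingham–Goldie–Teugels, as recalled in Section~3), like a constant multiple of $R^{\beta-1}/L(R)$; computing the constant uses the identity $\int_0^\infty x^{\beta-1}\cos x\,\dd x=\G(\beta)\cos(\beta\pi/2)$ and its sine analogue, together with $\G(\beta)\G(1-\beta)=\pi/\sin(\beta\pi)$, which is exactly where the prefactor $\sin(\beta\pi)/(\pi\Cg)$ comes from. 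The projection $\Pi_\delta$ applied to the product data produces $\sigma_\oo(h\varphi)\,h = \nu(\varphi)\,h$ on one side and pairing against $\psi$ against $\nu$ on the other, and after integrating the $u,v$ factors along $\R$ one recovers exactly $\overline{m}_\G(\mathfrak{A})\,\overline{m}_\G(\mathfrak{B})$ (using $\mu(\mathcal D^0)=1$ in the normalization; the $1/\mu(\mathcal D^0)$ in $\Cg$ is the dimensional bookkeeping). Finally, a density/approximation argument extends the asymptotic from product test functions to arbitrary $m_\G$-measurable sets of finite measure, which is routine given the uniformity of the estimates.

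The main obstacle I expect is controlling the contribution of the integral over $t$ bounded away from $0$ and ensuring the error terms are genuinely $o(R^{\beta-1}/L(R))$ rather than merely $O$ of the main term: this requires the uniform spectral gap $\rho(z)\le\rho(A)<1$ for $|z-\delta|_\infty\ge\varepsilon$, $|\im z|\le A$ (Proposition~\ref{spectreperturbation}), a decay estimate on $\ot_{\delta+it}$ as $|t|\to\infty$ (which in this infinite-measure Schottky setting follows from the non-arithmeticity of the length spectrum — guaranteed here by the presence of cusps — and an argument à la Dolgopyat, or at least a Wiener-type lemma), and careful tracking of how Potter's bounds propagate through all the estimates so that the slowly varying function $L$ can be treated as essentially constant on the relevant scales. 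This is precisely the part where the approach follows \cite{Gou} closely, and adapting those arguments to our operator-valued and geometric context is the technical core of the section.
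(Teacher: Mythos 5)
Your reduction to the symbolic model and to the operators $\ot_{\delta+it}$ matches the paper's Section 5.1, but the analytic core of your argument --- summing the geometric series into the resolvent $Q_{\delta+it}=(\mathrm{Id}-\ot_{\delta+it})^{-1}$ and extracting the asymptotic from the local expansion of $1-\lambda_{\delta+it}$ by a Fourier--Tauberian argument --- does not work on the whole range $\beta\in]0,1[$, and this is precisely the difficulty the paper is built to avoid. The main term you are after decays like $R^{\beta-1}/L(R)$, so every error term must be $o\left(R^{\beta-1}\right)$; a Riemann--Lebesgue bound on the part of the integral away from $t=0$ only gives $o(1)$, which is useless here, and since $t\longmapsto\ot_{\delta+it}$ is only H\"older of order $\beta$ (Proposition~\ref{continuityoftransfert}) no integration by parts is available. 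The standard shift-by-$\pi/R$ trick then produces errors of size roughly $R^{-\beta}\int_{A/R}^{\varepsilon}|t|^{-2\beta}\,\dd t$ (up to slowly varying factors): for $\beta>\frac{1}{2}$ this is comparable to $R^{\beta-1}$ with a constant that can be made small in $A$, but for $\beta\leqslant\frac{1}{2}$ it is of order $R^{-\beta}$, which \emph{dominates} $R^{\beta-1}$. This is the known failure of the naive strong renewal argument for $\beta\leqslant\frac{1}{2}$, and it is exactly why the introduction explains that the route of \cite{DPPS} via Erickson's renewal theorem stops at $\beta>\frac{1}{2}$; no Dolgopyat-type input is needed or would rescue it, since the test functions already have compactly supported Fourier transforms.

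What the paper actually does, and what your proposal is missing, is Gou\"ezel's scheme: instead of summing the series, one proves for each $k$ a local limit theorem at the stable scale $a_k$ defined by $kL(a_k)=a_k^{\beta}$ (Proposition A.1, which produces the stable density $\Psi_{\beta}$), together with the uniform bound $\left|M_k(R;\varphi\otimes u,\psi\otimes v)\right|\preceq k\,L(R)R^{-1-\beta}$ for $R\geqslant Ka_k$ (Proposition A.2). The proof of Proposition A.2 is the technical core: it splits the words $\g=\alpha_1...\alpha_k$ according to how many letters satisfy $\dhy(\oo,\alpha_i.\oo)\geqslant\zeta$ for a truncation level $\zeta$ between $a_k/2$ and $R/2$, and it is precisely there that the additional hypothesis $(S)$ enters --- your proposal never invokes $(S)$, yet without it the case $\beta\leqslant\frac{1}{2}$ is out of reach. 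After summing over $k$, the constant $\frac{\sin(\beta\pi)}{\pi\Cg}$ comes from $\int_0^{+\infty}z^{-\beta}\Psi_{\beta}(z)\,\dd z=\frac{\sin(\beta\pi)}{\beta\pi}$, rather than from the Gamma-reflection identity inside a Fourier integral (the two are of course related). Your resolvent scheme is essentially the one the paper uses for $\beta=1$ (Section 6), where the main term $1/\tilde{L}(R)$ decays sub-polynomially so that such error bounds suffice; for $\beta\in]0,1[$ it would at best recover the range $\beta>\frac{1}{2}$, not the full statement of Theorem A.
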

For any $m_\G$-integrable function $f$, we set $m_\G(f)=\int_{\T^1\xx/\G}f\dd m_\G$. The previous theorem may be reformulated as follows
\begin{thmA}
For any functions $A$,$B$ in
$\mathbb{L}^1\left(\T^1\xx/\G,m_\G\right)\cap\mathbb{L}^2\left(\T^1\xx/\G,m_\G\right)$, 
as $R\longrightarrow\pm\infty$
$$m_\G(A\cdot B\circ g_R)\sim\dfrac{\sin(\beta\pi)}{\pi\Cg}\dfrac{m_\G(A)m_\G(B)}{|R|^{1-\beta}L(|R|)}.$$
\end{thmA}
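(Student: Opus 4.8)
The plan is to reduce the mixing statement on $\T^1\mm$ to a statement about the special flow $(\phi_t)_t$ on $\mathcal{D}^0\times\R/\langle\T_\gol\rangle$ via Lemma \ref{representationsymboliqueflot}, and then to an asymptotic for iterates of the transfer operators $\ot_{\delta+it}$ near $t=0$, using the local expansion of $\lambda_{\delta+it}$ provided by Proposition \ref{localexp}. First I would note that it suffices to prove the second (functional) formulation for $A,B$ a dense and convenient class: say $A=\varphi\otimes u$, $B=\psi\otimes v$ with $\varphi,\psi\in\mathrm{Lip}(\LL)$ and $u,v$ smooth compactly supported on $\R$, since finite linear combinations of such products are dense in $\mathbb{L}^1\cap\mathbb{L}^2(\T^1\mm,m_\G)$ and both sides of the claimed equivalence are continuous (in a suitable sense) in $(A,B)$; the $\mathbb{L}^2$ hypothesis is what lets one control the error terms coming from off-diagonal contributions of $\mathcal{D}^0$ inside $\LL^0\times\LL^0$.

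Next I would unfold the definition of $m_\G$ in Hopf coordinates. Writing $m_\G(A\cdot B\circ g_R)$ as an integral over the fundamental domain $\mathcal{D}_{f,\mathfrak{L}}^0$ against $\dd\nu(x)\,\dd s$ (after the change of measure absorbing $h$), the geodesic flow becomes translation by $R$ in the $s$-variable modulo the action of $\T_\gol$. Using \eqref{actiondeT_golsurlesousdécalage} this turns into a sum over $k\in\Z$ of terms of the form $\int \tilde{P}^k(\cdots)$, i.e. into $\sum_k \int e^{-\delta S_k\gol(x)} (\text{test functions})(S_k\gol(x)-R)\,\dd\nu\,dx$ — precisely the type of quantity \eqref{quantitesaetudierjustifiantetuderayonspec}. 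Applying the Fourier inversion identity there, everything is governed by $\frac{1}{2\pi}\int_\R e^{itR}\sum_k\ot_{\delta+it}^k(\dots)\,\widehat{u}(t)\dots\,\dd t$. Summing the geometric series of transfer operators (justified by Proposition \ref{spectreperturbation}) introduces the resolvent $Q_{\delta+it}=(\mathrm{Id}-\ot_{\delta+it})^{-1}$, whose dominant part near $t=0$ is $(1-\lambda_{\delta+it})^{-1}\Pi_\delta$, with the remaining part uniformly bounded and with $\rho(z)<1$ away from $\delta$ forcing the $|t|$ bounded-away-from-$0$ contribution to decay.

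The heart of the matter is then the asymptotics as $R\to\pm\infty$ of
$$
\frac{1}{2\pi}\int_\R e^{itR}\,\frac{\widehat{u}(t)}{1-\lambda_{\delta+it}}\,\sigma_\delta(\dots)\,h(\dots)\,\dd t .
$$
By Proposition \ref{localexp}, near $0$ one has $1-\lambda_{\delta+it}\sim \Cg\,\G(1-\beta)\,e^{i\,\mathrm{sign}(t)\beta\pi/2}|t|^\beta L(1/|t|)$, so $\frac{1}{1-\lambda_{\delta+it}}$ behaves like $\frac{1}{\Cg\G(1-\beta)}e^{-i\,\mathrm{sign}(t)\beta\pi/2}|t|^{-\beta}L(1/|t|)^{-1}$. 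The Fourier transform of a function behaving like $|t|^{-\beta}$ times a slowly varying factor is, by a standard Tauberian/Fourier computation (the same one underlying the density $\Psi_\beta$ of the stable law and the constants from \cite{G-K} quoted in Section 3), asymptotic to a constant times $|R|^{\beta-1}L(|R|)^{-1}$; tracking the phase $e^{-i\,\mathrm{sign}(t)\beta\pi/2}$ through $\int e^{itR}|t|^{-\beta}\,\dd t$ produces exactly the factor $\frac{\sin(\beta\pi)}{\pi}$ after using $\G(1-\beta)\G(\beta)=\pi/\sin(\beta\pi)$ and $\int_0^\infty r^{-\beta}\cos r\,\dd r = \G(1-\beta)\cos(\beta\pi/2)$, $\int_0^\infty r^{-\beta}\sin r\,\dd r=\G(1-\beta)\sin(\beta\pi/2)$. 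Putting the pieces together, $\sigma_\delta$ being the Patterson measure $\sigma_\oo$ and $\Pi_\delta(\cdot)=\sigma_\oo(\cdot)h$, the factors $m_\G(A)$, $m_\G(B)$ and $\mu(\mathcal{D}^0)$ reassemble, and one lands on $\frac{\sin(\beta\pi)}{\pi\Cg}\frac{m_\G(A)m_\G(B)}{|R|^{1-\beta}L(|R|)}$.

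The main obstacle I expect is controlling the error terms: one has to show that the contribution of the non-dominant part of $Q_{\delta+it}$ (the $O(1)$ piece and the region $|t|\geq\varepsilon$) contributes $o(|R|^{\beta-1}L(|R|)^{-1})$ to the oscillatory integral, and that replacing $h_{\delta+it},\Pi_{\delta+it},\lambda_{\delta+it}$ by their values at $t=0$ up to the stated $(1+o(1))$ is legitimate inside the integral — this is where the Potter-bound estimate of Remark \ref{sertA1etB1} and the regularity estimates of Proposition \ref{continuityoftransfert} are used, much as in the proof of Theorem 1.4 of \cite{Gou} which the paper explicitly follows. A secondary technical point is the passage from the special flow over a fundamental domain back to $m_\G$ on all of $\T^1\mm$ and the reduction to the dense class of test functions, where the $\mathbb{L}^2$ assumption and conservativity of the flow (guaranteed by divergence via Hopf–Tsuji–Sullivan) are needed to handle tails.
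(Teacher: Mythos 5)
Your reduction to test functions $\varphi\otimes u$, $\psi\otimes v$ and the transfer-operator representation of $m_\G(A\cdot B\circ g_R)$ match the paper, but from there you take the resolvent/renewal route: summing the series $\sum_k\ot_{\delta+it}^k$ to get $Q_{\delta+it}=(\mathrm{Id}-\ot_{\delta+it})^{-1}$ and extracting the asymptotics from the oscillatory integral of $\widehat u(t)/(1-\lambda_{\delta+it})$. This is not what the paper does for $\beta\in]0,1[$, and the step you defer ("controlling the error terms") is precisely where the argument breaks for $\beta\leqslant\frac12$. Concretely: to get the stated equivalent you must show that the contribution of the region $\frac{A}{|R|}\leqslant|t|\leqslant\varepsilon$ is $o\bigl(|R|^{\beta-1}/L(|R|)\bigr)$. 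The only regularity available for $t\mapsto\ot_{\delta+it}$ under $(H_\beta)$ is the H\"older-type modulus of Proposition \ref{continuityoftransfert} ($\preceq|t-s|^\beta L(1/|t-s|)$), so one cannot integrate by parts; the standard substitute (the shift $t\mapsto t-\pi/R$ used in Section 6) gains a factor $\|Q\|^2\,\|\ot_{\delta+it}-\ot_{\delta+i(t-\pi/R)}\|\asymp|t|^{-2\beta}R^{-\beta}$ up to slowly varying terms, and integrating this over $[A/R,\varepsilon]$ gives an error of order $R^{-\beta}$ when $2\beta<1$, which is $o(R^{\beta-1})$ only if $\beta>\frac12$. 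This is exactly the limitation of Erickson's renewal theorem that the introduction points to when explaining why \cite{DPPS} could not treat $\beta\leqslant\frac12$ and why hypothesis $(S)$ was added; your proposal never invokes $(S)$, which is a telltale sign it cannot yield the full range $\beta\in]0,1[$. (A secondary, fixable issue: the geometric series for $Q_{\delta+it}$ does not converge at $\re{z}=\delta$ near $t=0$ since $\lambda_\delta=1$; one must work at $\xi>\delta$ and pass to the limit, as the paper does in the $\beta=1$ case.)

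The paper's actual proof of Theorem A for $\beta\in]0,1[$ avoids the resolvent entirely and follows Gou\"ezel: it keeps the decomposition $M(R;\varphi\otimes u,\psi\otimes v)=\sum_{k\geqslant0}M_k(R;\cdot)$ and proves two statements about the individual terms — Proposition A.1, a local limit theorem giving $M_k(R)=\frac{1}{\cg a_k}\bigl(\Psi_\beta(\frac{R}{\cg a_k})m_\G(\varphi\otimes u)m_\G(\psi\otimes v)+o_k(1)\bigr)$ uniformly for $R\leqslant Ka_k$ (via the spectral decomposition of $\ot_{\delta+it}^k$ and the expansion of $\lambda_{\delta+it}^k$ toward the stable characteristic function $g_\beta$), and Proposition A.2, the bound $|M_k(R)|\leqslant Ck\,L(R)R^{-1-\beta}$ for $R\geqslant a_k$, whose proof is where hypothesis $(S)$ and the truncated operators $\ot_{\delta-\frac{1}{\zeta}+it,\zeta}$ are used. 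Summing over $k$ with Karamata and Potter's lemmas and $\int_0^{+\infty}z^{-\beta}\Psi_\beta(z)\,\dd z=\frac{\sin(\beta\pi)}{\beta\pi}$ then yields the constant $\frac{\sin(\beta\pi)}{\pi\Cg}$. To repair your argument you would either have to restrict to $\beta>\frac12$ or supply the missing uniform-in-$k$ estimates (A.1)--(A.2); as written, the proposal has a genuine gap at its central analytic step.
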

From now on, denote $R\in\R$ the parameter of the flow to emphasize the similarity of the proof of Theorem A with Theorem C. For $A,B\in\mathbb{L}^1(\mathrm{T}^1\xx/\G,m_\G)\cap\mathbb{L}^2(\mathrm{T}^1\xx/\G,m_\G)$, we set
\begin{align*}
M\left(R;A,B\right):= & m_\G(A\cdot B\circ g_R)\\
= & \int_{\Omega}A(\left[x_-,x_+,s\right])B\left(g_R(\left[x_-,x_+,s\right])\right)\dd m_\G(\left[x_-,x_+,s\right])
\end{align*}
\noindent
where $\left[x_-,x_+,s\right]$ stands for the $\G$-orbit of $(x_-,x_+,s)$. In the next paragraph, we will express the quantity 
$M\left(R;A,B\right)$ in terms of the iterates of a transfer operator, via the coding described in Section 4.
\subsection{Study of $\boldsymbol{M\left(R;A,B\right)}$}
The spaces $\Omega^0=\LL^0\overset{\Delta}{\times}\LL^0\times\R/\G$ and
$\mathcal{D}^0\times\R/\langle\T_{\gol}\rangle$ are in one-to-one correspondence and the geodesic flow on $\Omega^0$ is conjugated to the special flow
$(\phi_R)_{R\in\R}$ defined on $\mathcal{D}^0\times\R/\langle\T_{\gol}\rangle$. If we denote by $\mathfrak{c}$ the bijection between $\mathcal{D}^0\times\R/\langle\T_{\gol}\rangle$ and $\Omega^0$, we may write
$$M\left(R;A,B\right)=\int_{\mathcal{D}^0\times\R/\langle\T_\gol\rangle}A([[x_-,x_+,s]])B\left(\phi_R.([[x_-,x_+,s]])\right)
\dd \overline{m}_\G ([[x_-,x_+,s]])$$
\noindent
where $[[x_-,x_+,s]]$ is the $\langle\T_\gol\rangle$-orbit of $(x_-,x_+,s)$ and $A$ and $B$ are identified with $A\circ\mathfrak{c}$ and $B\circ\mathfrak{c}$ respectively. The following strategy is inspired of \cite{GuiH}. Let $S\subset\mathcal{D}^0\times\R$ be a fundamental domain for the action of $\langle\T_{\gol}\rangle$. The vector space generated by the functions $\varphi\otimes u$, where $\varphi$ is Lipschitz on $\mathcal{D}^0$ and $u\ :\ \R\longrightarrow\R$ is continuous with compact
support, is dense in $\mathbb{L}^1(S,\bar{\nu}\otimes\dd s)\cap\mathbb{L}^2(S,\bar{\nu}\otimes\dd s)$; we thus assume that
$A=\varphi\otimes u$ and $B=\psi\otimes v$, with $\varphi,\psi,u,v$ as above. From the definition of $\T_{\gol}$ and the fact that $S$ is a fundamental domain, we deduce that for any $(y,x,s)\in S$ and $R\in\R$, there exists a unique integer $k=k\left(R,x_-,x_+,s\right)\in\Z$ such that $T_{\gol}^k.(x_-,x_+,s+R)\in S$. Hence, for any $(y,x,s)\in S$ and $R\in\R$
$$\psi\otimes v\left(\tilde{\phi}_R(x_-,x_+,s)\right)=\sum\limits_{k\in\Z}\psi\otimes v(T_{\gol}^k.(x_-,x_+,s+R)).$$
\noindent
In the sequel, we will denote $\bar{\nu}:=\mu_{|\mathcal{D}^0}$, so that 
$\left(\tilde{m}_\G\right)_{|\mathcal{D}^0\times\R}=\bar{\nu}\otimes\dd s$ (see Paragraph 4.1.3). 
We decompose $M(R;\varphi\otimes u,\psi\otimes v)$ into $M^+(R;\varphi\otimes u,\psi\otimes v)+M^-(R;\varphi\otimes u,\psi\otimes v)$ where 
\begin{align*}
M^+(R;\varphi\otimes & u,\psi\otimes v)\\
& = \sum\limits_{k\geqslant0}\int_{\mathcal{D}^0\times\R}\varphi(x_-,x_+)u(s)\psi\otimes v(T_{\gol}^k.(x_-,x_+,s+R))\dd\bar{\nu}(x_-,x_+)\dd s
\end{align*}
\noindent
and
\begin{align*}
M^-(R;\varphi\otimes & u,\psi\otimes v)\\
& = \sum\limits_{k\geqslant1}\int_{\mathcal{D}^0\times\R}\varphi(x_-,x_+)u(s)\psi\otimes v(T_{\gol}^{-k}.(x_-,x_+,s+R))\dd\bar{\nu}(x_-,x_+)\dd s.
\end{align*}
\noindent
We first prove the following
\begin{lem}\label{reductiondeMa}\
\begin{itemize}
 \item[1)] $R^{1-\beta}L(R)M^-(R;\varphi\otimes u,\psi\otimes v)=0$ for $R$ large enough;
 \item[2)] $R^{1-\beta}L(R)M^+(R;\varphi\otimes u,\psi\otimes v)=0$ for $-R$ large enough.
\end{itemize}
\end{lem}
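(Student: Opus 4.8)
The plan is to deduce both assertions from a single elementary observation: the roof function $\gol$ is uniformly bounded from below, hence so are all its Birkhoff sums $S_k\gol$, and therefore the compact support of $u$ and $v$ forces the integrands defining $M^-(R;\varphi\otimes u,\psi\otimes v)$ and $M^+(R;\varphi\otimes u,\psi\otimes v)$ to vanish \emph{identically} as soon as $|R|$ is large with the appropriate sign; once that is known, both displayed identities are immediate (the prefactor $R^{1-\beta}L(R)$ then multiplies the number $0$).

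First I would record the lower bound. Let $C>0$ be the constant of Property \ref{busedist}. Exactly as in the proof of Lemma \ref{proprietesgol}, telescoping $S_k\gol(x)=\gol(x)+\dots+\gol(\T^{k-1}.x)$ by means of \eqref{rajoutbusemancocycle} gives, for every $k\geq 1$ and every $x=\pi(\boldsymbol{\alpha})\in\LL^0$,
\[
S_k\gol(x)=\mathcal{B}_{\alpha_k^{-1}\cdots\alpha_1^{-1}.x}\!\left(\alpha_k^{-1}\cdots\alpha_1^{-1}.\oo,\oo\right)\;\geq\;\dhy\!\left(\oo,\alpha_k^{-1}\cdots\alpha_1^{-1}.\oo\right)-C\;\geq\;-C,
\]
and trivially $S_0\gol\equiv 0\geq -C$; thus $S_k\gol\geq -C$ on $\LL^0$ for all $k\geq 0$. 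Since $\T_\gol$ is invertible, \eqref{rajoutsamuel} extends to $\T_\gol^{k}.(y,x,r)=(\T^k.(y,x),\,r-S_k\gol(x))$ for every $k\in\Z$, with the convention $S_{-k}\gol(x):=-S_k\gol(\T^{-k}.x)$ for $k\geq 1$; in particular $\T_\gol^{-k}.(x_-,x_+,s+R)=\bigl(\T^{-k}.(x_-,x_+),\,s+R+S_k\gol(\T^{-k}.x_+)\bigr)$, and here $S_k\gol(\T^{-k}.x_+)\geq -C$ as well, being a forward Birkhoff sum of $\gol$ of length $k$ starting from the point $\T^{-k}.x_+\in\LL^0$.

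Next I would run the support argument. Fix $T_0,T_1>0$ with $\mathrm{supp}(u)\subset[-T_1,T_1]$ and $\mathrm{supp}(v)\subset[-T_0,T_0]$. For assertion 1), a non-zero contribution to $M^-(R;\varphi\otimes u,\psi\otimes v)$ requires $|s|\leq T_1$ and, from the factor $\psi\otimes v$ evaluated at $\T_\gol^{-k}.(x_-,x_+,s+R)$, also $|s+R+S_k\gol(\T^{-k}.x_+)|\leq T_0$; combining these with $S_k\gol(\T^{-k}.x_+)\geq -C$ yields $R\leq T_0+T_1+C$, so $M^-(R;\varphi\otimes u,\psi\otimes v)\equiv 0$ whenever $R>T_0+T_1+C$. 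Assertion 2) is the symmetric statement: for $k\geq 0$ the third coordinate of $\T_\gol^{k}.(x_-,x_+,s+R)$ equals $s+R-S_k\gol(x_+)$, a non-zero contribution forces $|s|\leq T_1$ and $|s+R-S_k\gol(x_+)|\leq T_0$, hence $R\geq -T_0-s+S_k\gol(x_+)\geq -T_0-T_1-C$, so $M^+(R;\varphi\otimes u,\psi\otimes v)\equiv 0$ once $-R>T_0+T_1+C$.

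The argument is essentially routine; the only point that requires care is that one needs $S_k\gol$ bounded below \emph{uniformly} over all $k\geq 0$ and over $\LL^0$, rather than the merely eventual positivity stated in Lemma \ref{proprietesgol} — but this uniform bound is already contained in the telescoping estimate used to prove that lemma. A slightly more conceptual alternative would be to replace $\gol$ by the cohomologous positive roof function $\mathfrak{L}$, for which $S_k\mathfrak{L}\geq 0$, absorbing the coboundary into the bounded transfer function $f$; this gives the same conclusion with $C$ replaced by $2|f|_\infty$.
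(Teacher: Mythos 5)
Your argument is correct, and it is more direct than the paper's. The paper does not work on the original integral: it first uses the $\T_\gol$-invariance of $\bar\nu\otimes\dd s$, then a density argument reducing to functions $\varphi,\psi$ depending only on the future coordinates, projects everything onto $\LL\times\R$, and only then observes that $\tilde{P}^k(\psi\otimes v)(x,s+R)=0$ for $R\geqslant R_0:=\max\mathrm{supp}\,v+C-\min\mathrm{supp}\,u+1$, because the cocycle $b(\g,x)$ is bounded below by $-C$ (Property \ref{busedist}). You exploit exactly the same geometric input --- the uniform lower bound $S_k\gol\geqslant-C$, which is indeed contained in the telescoping computation inside the proof of Lemma \ref{proprietesgol}, even though only eventual positivity is stated there --- but you apply it directly to the third coordinate of $\T_\gol^{\pm k}.(x_-,x_+,s+R)$, so you never need the invariance of the measure nor the approximation by cylinder functions. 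What the paper's longer route buys is the representation of $M^{\pm}$ as sums of integrals of iterates of $\tilde{P}$, which is what is actually used in the rest of Section 5; your proof establishes the lemma as stated but does not produce that formula. Two small caveats: for the backward iterates, the lower bound should be stated for $S_k\gol$ evaluated at the second coordinate of $\T^{-k}.(x_-,x_+)$ (the inverse shift on $\mathcal{D}^0$ is read off the past letters, so ``$\T^{-k}.x_+$'' is an abuse of notation, though harmless since that point does lie in $\LL^0$); and your closing alternative does not work as stated, because the transfer function $f=\sum_i a_i\,\gol\circ\T^i$ is only bounded below, not bounded (the roof $\gol$ is unbounded above on the infinite alphabet), so one cannot replace $C$ by $2|f|_\infty$.
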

\begin{proof}\
Since the measure $\bar{\nu}\otimes\dd s$ is $\T_{\gol}$-invariant, we write
\begin{align*}
M^- & (R;\varphi\otimes u,\psi\otimes v)\\
& = \sum\limits_{k\geqslant1}\int_{\mathcal{D}^0\times\R}\varphi\otimes u\left(\T_{\gol}^k.(x_-,x_+,s)\right)\psi\otimes v(x_-,x_+,s+R)\dd\bar{\nu}(x_-,x_+)\dd s.
\end{align*}
\noindent
Recall that the coding of $\mathcal{D}^0$ identifies the couple $(x_-,x_+)$ with a two-sided sequence $\left(\alpha_n\right)_{n\in\Z}$. By a classical 
density argument in Ergodic Theory, it is sufficient to prove that  
$R^{1-\beta}L(R)M^-(R;\varphi\otimes u,\psi\otimes v)=0$ for functions $\varphi,\psi\ :\ \mathcal{D}^0\longrightarrow\R$ only depending on $(\alpha_n)_{n\geqslant-q}$ for some $q\geqslant0$. Using the  $\T_{\gol}$-invariance of
$\bar{\nu}\otimes\dd s$, one will impose $q=0$ in the sequel. Hence
\begin{align*}
&M^-(R;\varphi\otimes u,\psi\otimes v)\\
& = \sum\limits_{k\geqslant1}\int_{\mathcal{D}^0\times\R}\varphi\otimes u\left(\T_{\gol}^k.(p(x_-,x_+),s)\right)\psi\otimes v(p(x_-,x_+),s+R)\dd\bar{\nu}(x_-, x_+)\dd s
\end{align*}
\noindent
where $p : \mathcal{D}^0\longrightarrow\LL$ is the projection on the second coordinate. Finally
$$M^-(R;\varphi\otimes u,\psi\otimes v)=\sum\limits_{k\geqslant1}\int_{\LL\times\R}\varphi\otimes u\left(\T^k.x,s-S_k\gol(x)\right)\psi(x)v(s+R)\dd\nu(x)\dd s.$$
\noindent
Recall the definition of the operator $\tilde{P}$ given in \eqref{defiP1}: for any $x\in\LL$ and $t\in\R$
$$\tilde{P}(\psi\otimes v)(x,t)=\sum\limits_{j=1}^{p+q}\un_{\LL_j^c}(x)\sum\limits_{\alpha\in\G_j^*}e^{-\delta b(\alpha,x)}\dfrac{h\psi(\alpha.x)}{h(x)}v(t+b(\alpha,x)).$$
\noindent
By Lemma \ref{defiP}, this operator is the  adjoint of the transformation $(x,s)\longmapsto(\T.x,s-\gol(x))$ with respect to the measure $\nu\otimes\dd s$. Since the supports of $u$ and $v$ are compact, setting $R_0=\max\mathrm{supp}\ v+C-\min\mathrm{supp}\ u+1$, one gets for all $R\geqslant R_0$ and for any $k\geqslant1$
$$\tilde{P}^k\left(\psi\otimes v\right)(x,s+R)=0.$$
\noindent 
Therefore, when $R\geqslant R_0$
\begin{align*}
M^-(R;\varphi\otimes u,\psi\otimes v)=  \sum\limits_{k\geqslant1}\int_{\LL\times\R}\varphi\otimes u\left(x,s\right)\tilde{P}^k\left(\psi\otimes v\right)(x,s+R)\dd\nu(x)\dd s= 0
\end{align*}
\noindent
which proves 1). The argument is similar for 2).
\end{proof}
\noindent
In other words, we have
\begin{itemize}
 \item[$\bullet$]when $R\longrightarrow+\infty$ 
\begin{align*}
M(R;\varphi\otimes u,\psi\otimes v)= & M^+(R;\varphi\otimes u,\psi\otimes v)\\
= & \sum\limits_{k\geqslant0}\int_{\LL\times\R}\tilde{P}^k\left(\varphi\otimes u\right)(x,s-R)\psi\otimes v\left(x,s\right)\dd\nu(x)\dd s
\end{align*}
\noindent
with the convention $\tilde{P}^0\left(\varphi\otimes u\right)=\varphi\otimes u$;
 \item[$\bullet$]when $R\longrightarrow-\infty$ 
\begin{align*}
M(R;\varphi\otimes u,\psi\otimes v)= & M^-(R;\varphi\otimes u,\psi\otimes v)\\
= & \sum\limits_{k\geqslant1}\int_{\LL\times\R}\varphi\otimes u\left(x,s\right)\tilde{P}^k\left(\psi\otimes v\right)(x,s+R)\dd\nu(x)\dd s.
\end{align*}
\end{itemize}
%
\noindent
The investigation of an asymptotic for $M(R;\varphi\otimes u,\psi\otimes v)$ thus relies 
on similar arguments in case $R\longrightarrow+\infty$ or $R\longrightarrow-\infty$; in the sequel, we just explain how to obtain the asymptotic 
for $R\longrightarrow+\infty$ and assume $R\geqslant R_0$ to ensure $M(R;\varphi\otimes u,\psi\otimes v)=M^+(R;\varphi\otimes u,\psi\otimes v)$. From now on, we omit the symbol $+$. The following subsection is devoted to the proof of the asymptotic of $M(R;\varphi\otimes u,\psi\otimes v)$ when $\beta\in]0,1[$.
\subsection{Theorem A for $\boldsymbol{\beta\in]0,1[}$}
We have
\begin{align*}
M(R;\varphi\otimes u,\psi\otimes v)=\sum\limits_{k\geqslant0}M_k(R;\varphi\otimes u,\psi\otimes v)
\end{align*}
\noindent
where, for any $k\geqslant0$,
$$M_k(R;\varphi\otimes u,\psi\otimes v)=\int_{\LL\times\R}\tilde{P}^k\left(\varphi\otimes u\right)(x,s-R)\psi\otimes v\left(x,s\right)\dd\nu(x)\dd s.$$ 
\noindent
We follow the steps of the proof of Theorem  1.4 in \cite{Gou}. Let $(a_k)_{k\geqslant1}$ satisfying 
$kL(a_k)=a_k^{\beta}$, where $L$ is the slowly varying function given in the family of assumptions $(H_\beta)$. We postpone the proofs of the following propositions in Paragraphs 5.2.2 and 5.2.3.
\begin{propA1}\label{M.1}
Let $\varphi,\psi\ :\ \LL\longrightarrow\R$ be two Lipschitz functions and $u,v\ :\ \R\longrightarrow\R$ be two continuous functions with compact support.
Uniformly in $K\geqslant2$ and $R\in[0,Ka_k]$, one gets when $k\longrightarrow+\infty$,
$$M_k(R;\varphi\otimes u,\psi\otimes v)=\dfrac{1}{\cg a_k}\left(\Psi_{\beta}\left(\dfrac{R}{\cg a_k}\right)m_\G(\varphi\otimes u)m_\G(\psi\otimes v)+o_k(1)\right),$$
\noindent
where $\Psi_{\beta}$ is the density of the fully asymmetric stable law with parameter $\beta$ and 
$\cg=\Cg^{\frac{1}{\beta}}$.
\end{propA1}
\begin{propA2}\label{M.2}
Let $\varphi,\psi\ :\ \LL\longrightarrow\R$ be two Lipschitz functions and $u,v\ :\ \R\longrightarrow\R$ be two continuous functions with compact support. When $R\geqslant a_k$, there exists a constant $C>0$ depending on $u$ such that
$$\left|M_k(R;\varphi\otimes u,\psi\otimes v)\right|\leqslant Ck\dfrac{L(R)}{R^{1+\beta}}\left|\left|\varphi\otimes u\right|\right|_{\infty}\left|\left|\psi\otimes v\right|\right|_{\infty}.$$
\end{propA2}
We now explain how they imply Theorem A.
\subsubsection{Asymptotic of $M(R;\varphi\otimes u,\psi\otimes v)$}
Using Proposition A.1, we decompose $ M(R;\varphi\otimes u,\psi\otimes v)$ as
$$M^1(R;\varphi\otimes u,\psi\otimes v)+M^2(R;\varphi\otimes u,\psi\otimes v)+M^3(R;\varphi\otimes u,\psi\otimes v)$$
\noindent
where
\begin{align*}
M^1(R;\varphi\otimes u,\psi\otimes v) & := m_\G(\varphi\otimes u)m_\G(\psi\otimes v)\sum\limits_{k\ |\ R<Ka_k}\dfrac{1}{\cg a_k}\Psi_{\beta}\left(\dfrac{R}{\cg a_k}\right),\\
M^2(R;\varphi\otimes u,\psi\otimes v) & := \sum\limits_{k\ |\ R<Ka_k}\dfrac{o_k(1)}{\cg a_k},\\ M^3(R;\varphi\otimes u,\psi\otimes v) & := \sum\limits_{k\ |\ R\geqslant Ka_k}M_k(R;\varphi\otimes u,\psi\otimes v).
\end{align*}
\noindent
\begin{itemize}
\item[a)]{\it Contribution of $M^1(R;\varphi\otimes u,\psi\otimes v)$.} Following \cite{Gou}, we introduce the measure
$\mu_R=\sum\limits_{0<\frac{R}{a_k}\leqslant K} D_{\frac{R}{a_k}}$ on $\R$ so that
\begin{equation}\label{mesureequivalent}
\sum\limits_{k\ |\ R<Ka_k}\dfrac{1}{\cg a_k}\Psi_{\beta}\left(\dfrac{R}{\cg a_k}\right)=
\dfrac{1}{R}\displaystyle{\int_0^K}\dfrac{z}{\cg}\Psi_{\beta}\left(\dfrac{z}{\cg}\right)\dd\mu_R(z).
\end{equation}
\noindent
The definition of $\mu_R$ implies $\mu_R([x,y])=\sum\limits_{k\ |\ x\leqslant\frac{R}{a_k}\leqslant y}1=\sum\limits_{k\ |\ \frac{R}{y}\leqslant a_k\leqslant\frac{R}{x}}1$, for any $[x,y]\subset]0,K]$. Recall some properties of functions $A(t)=\frac{t^{\beta}}{L(t)}$ and of its pseudo-inverse $A^*$ given in Section 3. First of all $A(a_n)=n$. Moreover, the function $A^*$ is a regularly varying function with exponent $\frac{1}{\beta}$ which satisfies $a_n=A^*(n)$.
\begin{lem}
For $R$ large enough
\begin{align*}
&\left\{k\ |\ A\left(\dfrac{R}{y}\right)\leqslant k\leqslant A\left(\dfrac{R}{x}-1\right)\right\}\subset
\left\{k\ |\ \dfrac{R}{y}\leqslant a_k\leqslant\dfrac{R}{x}\right\}\\
\text{and}\ &\left\{k\ |\ \dfrac{R}{y}\leqslant a_k\leqslant\dfrac{R}{x}\right\}\subset
\left\{k\ |\ A\left(\dfrac{R}{y}\right)\leqslant k\leqslant A\left(\dfrac{R}{x}\right)\right\}.
\end{align*}
\end{lem}
\begin{proof}
Since the function $A^*$ is increasing and satisfies $A^*(A(t))\geqslant t$ and $A^*(A(t-1))\leqslant t$ for any $t\gg1$ 
(see \cite{Tom}), we obtain
\begin{itemize}
 \item[-]$A\left(\frac{R}{y}\right)\leqslant k$ implies $\frac{R}{y}\leqslant A^*(k)$;
 \item[-]$k\leqslant A\left(\frac{R}{x}-1\right)$ implies $A^*(k)\leqslant\frac{R}{x}$.
\end{itemize}
\end{proof}
\noindent
We deduce from this lemma that
$$\sum\limits_{A\left(\frac{R}{y}\right)\leqslant k\leqslant A\left(\frac{R}{x}-1\right)}1\leqslant
\mu_R([x,y])\leqslant  \sum\limits_{A\left(\frac{R}{y}\right)\leqslant k\leqslant A\left(\frac{R}{x}\right)}1,$$
which yields
$$A\left(\frac{R}{x}-1\right)-A\left(\frac{R}{y}\right)\leqslant
\mu_R([x,y])\leqslant  A\left(\frac{R}{x}\right)-A\left(\frac{R}{y}\right);$$
\noindent
hence
$$
\mu_R([x,y])\sim  A\left(\dfrac{R}{x}\right)-A\left(\dfrac{R}{y}\right)
\sim\dfrac{R^{\beta}}{L(R)}\left(x^{-\beta}-y^{-\beta}\right)
$$
\noindent
and finally
\begin{equation}\label{mesureequivalent2}
R^{-\beta}L(R)\mu_R([x,y])\sim\displaystyle{\int_x^y}\beta z^{-\beta-1}\dd z.
\end{equation}
\noindent
We now want to control quantities of the form
$$\sum\limits_{k\ |\ R<a_k\varepsilon}\dfrac{1}{\cg a_k}\Psi_\beta\left(\dfrac{R}{\cg a_k}\right)$$
\noindent
for $\varepsilon>0$ small enough. For any $R\geqslant1$ and $\varepsilon>0$, 
we write
\begin{equation}\label{ahminceoubliee}
\left|R^{1-\beta}L(R)\sum\limits_{k\ |\ R<a_k\varepsilon}\dfrac{1}{\cg a_k}\Psi_\beta\left(\dfrac{R}{\cg a_k}\right)\right|\preceq
R^{1-\beta}L(R)\sum\limits_{k\ |\ R<\varepsilon a_k}\dfrac{1}{a_k}.
\end{equation}
\noindent
Let $N=N(\varepsilon,R)$ denote the first integer such that $R<\varepsilon a_k$: 
$N$ is increasing in $R$. Karamata's lemma \ref{regvar} then implies
$$\left|\sum\limits_{k\ |\ R<\varepsilon a_k}\dfrac{1}{a_k}\right|=\sum\limits_{k\geqslant N}\dfrac{1}{a_k}\sim 
\dfrac{N}{a_N}.$$
\noindent
From $a_N^\beta=NL(a_N)$, we deduce $\frac{N}{a_N}=\frac{1}{a_N^{1-\beta}L(a_N)}$, so that
\begin{equation}\label{ohouiouicchiantepsilon}
R^{1-\beta}L(R)\dfrac{N}{a_N}\leqslant\varepsilon^{1-\beta}\dfrac{L(R)}{L(a_N)}\leqslant
\varepsilon^{1-\beta}\max\left(\dfrac{R}{a_N},\dfrac{a_N}{R}\right)^{\frac{1-\beta}{2}},
\end{equation}
\noindent
where the last inequality is a consequence of Potter's lemma \ref{PB} with $B=1$, $\rho=\frac{1-\beta}{2}$, $x=R$ and $y=a_N$. It follows from the definition of $N$ that $\frac{R}{a_N}<\varepsilon$ and $\varepsilon a_{N-1}\leqslant R$; hence
$$\dfrac{a_N}{R}=\dfrac{a_{N-1}}{R}\dfrac{a_N}{a_{N-1}}\leqslant
\dfrac{1}{\varepsilon}\dfrac{a_N}{a_{N-1}}\preceq\dfrac{1}{\varepsilon}$$
\noindent
for $N$ large enough. Finally, this last estimate combined with \eqref{ohouiouicchiantepsilon} yields
$$
R^{1-\beta}L(R)\dfrac{N}{a_N}\preceq\varepsilon^{\frac{1-\beta}{2}}.
$$
\noindent
By \eqref{ahminceoubliee}, for any arbitrarily small $\eta>0$, there exists $\varepsilon_\eta>0$ such that, as $\varepsilon<\varepsilon_\eta$  
\begin{equation}\label{indicationGouezel2}
\left|R^{1-\beta}L(R)\sum\limits_{k\ |\ R<a_k\varepsilon}\dfrac{1}{\cg a_k}\Psi_\beta\left(\dfrac{R}{\cg a_k}\right)\right|\preceq\eta.
\end{equation}
\noindent
Properties \eqref{mesureequivalent}, \eqref{mesureequivalent2} and \eqref{indicationGouezel2} imply, for $R$ large enough
\begin{align*}
R^{1-\beta}L(R)\sum\limits_{k\ |\ R<Ka_k} & \dfrac{1}{\cg a_k}\Psi_\beta\left(\dfrac{R}{\cg a_k}\right)\\
= & O(\eta)+R^{1-\beta}L(R)\int_{\varepsilon_\eta}^K\dfrac{z}{\cg}\Psi_\beta\left(\dfrac{z}{\cg}\right)\dd\mu_R(z)\\
= & O(\eta)+\left(\dfrac{\beta}{\cg}\int_{\varepsilon_\eta}^Kz^{-\beta}\Psi_\beta\left(\dfrac{z}{\cg}\right)\dd z\right)(1+o(1)),
\end{align*}
\noindent
with $\lim\limits_{R\longrightarrow+\infty}o(1)=0$. From the integrability of $z\longmapsto z^{-\beta}\Psi_\beta(z)$ on $[0,+\infty[$ (see \cite{Zol}), we deduce that
$$\left|\int_0^{\varepsilon_\eta}z^{-\beta}\Psi\left(\dfrac{z}{\cg}\right)\dd z\right|\leqslant\eta$$
\noindent
up to a reduction of $\varepsilon_\eta$; hence when $R\longrightarrow+\infty$
\begin{equation}\label{indicationGouzelconclusion}
R^{1-\beta}L(R)\sum\limits_{k\ |\ R<Ka_k}\dfrac{1}{\cg a_k}\Psi_\beta\left(\dfrac{R}{\cg a_k}\right)\sim
\dfrac{\beta}{\cg}\int_{0}^Kz^{-\beta}\Psi_\beta\left(\dfrac{z}{\cg}\right)\dd z\pm\eta.
\end{equation}
\noindent
If thus follows from the definition of $M^{1}(R;\varphi\otimes u,\psi\otimes v)$ and from \eqref{indicationGouzelconclusion} that, when $R\longrightarrow+\infty$
\begin{equation}\label{premierepartdominant}
R^{1-\beta}L(R)M^{1}(R;\varphi\otimes u,\psi\otimes v)\sim
\dfrac{\beta m_\G(\varphi\otimes u)m_\G(\psi\otimes v)}{\Cg}\displaystyle{\int_0^{\frac{K}{\cg}}}z^{-\beta}\Psi_{\beta}(z)\dd z
\end{equation}
\noindent
with $\Cg=\cg^{\beta}$.  
\item[b)]{\it Contribution of $M^{2}(R;\varphi\otimes u,\psi\otimes v)$.} Let $N=N(K,R)$ be the smallest integer such that $Ka_N>R$: the function $R\longmapsto N(K,R)$ is increasing in $R$. Let $\varepsilon>0$; for $R$ large enough and any $k\geqslant N$, we get 
$|o_k(1)|\leqslant\varepsilon$. Karamata's lemma thus implies
$$\left|\sum\limits_{k\ |\ R<Ka_k}\dfrac{o_k(1)}{\cg a_k}\right|\leqslant\sum\limits_{k\geqslant N}\dfrac{\varepsilon}{a_k}\sim 
\varepsilon\dfrac{N}{a_N}.$$
%
%
%
%
%
%
\noindent
Following the same steps as in a) for the negligible parts of  
$M^{1}\left(R;\varphi\otimes u,\psi\otimes v\right)$, we deduce from $\frac{N}{a_N}=\frac{1}{a_n^{1-\beta}L(a_N)}$ that 
$$R^{1-\beta}L(R)\dfrac{N}{a_N}\leqslant K^{1-\beta}\dfrac{L(R)}{L(a_N)}$$
\noindent
and Potter's lemma with $B=\rho=1$ and $x=R$ and $y=a_N$ yields 
$$R^{1-\beta}L(R)\frac{N}{a_N}\preceq K^{2-\beta},$$
\noindent 
so that when $R\longrightarrow+\infty$
\begin{equation}\label{premierepartnegligeable}
R^{1-\beta}L(R)M^{2}\left(R;\varphi\otimes u,\psi\otimes v\right)=o_K(1).
\end{equation}
\item[c)]{\it Contribution of $M^3(R;\varphi\otimes u,\psi\otimes v)$.} We write
$$\left|M^3(R;\varphi\otimes u,\psi\otimes v)\right|\leqslant\sum\limits_{k\ |\ R\geqslant Ka_k}\left|M_k(R;\varphi\otimes u,\psi\otimes v)\right|.$$
\noindent
It follows from Proposition A.2 that
$$M^3(R;\varphi\otimes u,\psi\otimes v)\leqslant \left|\left|\varphi\otimes u\right|\right|_{\infty}\left|\left|\psi\otimes v\right|\right|_{\infty}C\dfrac{L(R)}{R^{1+\beta}}\sum\limits_{k\ |\ R\geqslant Ka_k}k,$$
\noindent
and from the definition of $A$, we deduce that
$$\sum\limits_{k\ |\ R\geqslant Ka_k}k=\sum\limits_{k\ |\ k\leqslant A\left(\frac{R}{K}\right)}k\leqslant A\left(\dfrac{R}{K}\right)^2,$$
\noindent
therefore 
$$\left|M^3(R;\varphi\otimes u,\psi\otimes v)\right|\leqslant\left|\left|\varphi\otimes u\right|\right|_{\infty}
\left|\left|\psi\otimes v\right|\right|_{\infty}C\dfrac{L(R)}{R^{1+\beta}}\dfrac{R^{2\beta}}{K^{2\beta}}\dfrac{1}{L\left(\frac{R}{K}\right)^2}.$$
\noindent
Potter's lemma \eqref{PB} implies $\frac{L(R)}{L(R/K)}\leqslant K^{\frac{\beta}{2}}$ for $R$ large enough, so that 
\begin{equation}\label{deuxiemepart}
R^{1-\beta}L(R)M^3(R;\varphi\otimes u,\psi\otimes v)\leqslant C\left|\left|\varphi\otimes u\right|\right|_{\infty}\left|\left|\psi\otimes v\right|\right|_{\infty}K^{-\beta}.
\end{equation}
\end{itemize}
Combining \eqref{premierepartdominant}, \eqref{premierepartnegligeable} and \eqref{deuxiemepart}, it follows that
\begin{align*}
R^{1-\beta}L(R)M(R;\varphi\otimes u, & \psi\otimes v)\\
= & \dfrac{\beta m_\G(\varphi\otimes u)m_\G(\psi\otimes v)}{\Cg}\displaystyle{\int_0^{\frac{K}{\cg}}}z^{-\beta}\Psi_{\beta}(z)\dd z(1+o(1))\\
& + o_K(1) + O(K^{-\beta}),\\
\end{align*}
\noindent
with $\lim\limits_{R\longrightarrow+\infty}o(1)=0$ and $\lim\limits_{R\longrightarrow+\infty}o_K(1)=0$ for any fixed $K$. Then letting $R\longrightarrow+\infty$, we obtain
$$R^{1-\beta}L(R)M(R;\varphi\otimes u,\psi\otimes v)\sim\dfrac{\beta m_\G(\varphi\otimes u)m_\G(\psi\otimes v)}{\Cg}\displaystyle{\int_0^{\frac{K}{\cg}}}z^{-\beta}\Psi_{\beta}(z)\dd z+O(K^{-\beta}).$$
\noindent
Letting $K\longrightarrow+\infty$ and using $\displaystyle{\int_0^{+\infty}}z^{-\beta}\Psi_{\beta}(z)\dd z=\dfrac{\sin(\beta\pi)}{\beta\pi}$ 
(see \cite{Zol}), this achieves the proof of Theorem A in the case $\beta\in]0,1[$. 
\subsubsection{Proof of Proposition A.1}
We want to prove the following local limit theorem
\begin{propA1}
Let $\varphi,\psi\ :\ \LL\longrightarrow\R$ be two Lipschitz functions and $u,v\ :\ \R\longrightarrow\R$ be two continuous functions with compact support.
Uniformly in $K\geqslant2$ and $R\in[0,Ka_k]$, one gets when $k\longrightarrow+\infty$
$$M_k(R;\varphi\otimes u,\psi\otimes v)=\dfrac{1}{\cg a_k}\left(\Psi_{\beta}\left(\dfrac{R}{\cg a_k}\right)m_\G(\varphi\otimes u)m_\G(\psi\otimes v)+o_k(1)\right),$$
\noindent
where $\Psi_{\beta}$ is the density of the fully asymmetric stable law with parameter $\beta$ and 
$\cg=\Cg^{\frac{1}{\beta}}$.
\end{propA1}
Let us fix until the end of this paragraph $K\geqslant$ and $R\gg1$. For all $k\in\N$ such that $Ka_k>R$,

$$M_k(R;\varphi\otimes u,\psi\otimes v)=\int_{\LL\times\R}\tilde{P}^k\left(\varphi\otimes u\right)(x,s-R)\psi\otimes v\left(x,s\right)\nu(\dd x)\dd s.$$
\noindent
We have to prove that the following sequence of measures 

$$\left(a_kM_k(R;\varphi\otimes\bullet,\psi\otimes v)-\dfrac{1}{\cg}\Psi_{\beta}\left(\dfrac{R}{\cg a_k}\right)\left(\nu(\varphi)\int_\R\bullet(x)\dd x\right)m_\G(\psi\otimes v)\right)_{k|Ka_k>R},$$
\noindent
weakly converges to $0$ when $k\longrightarrow+\infty$, uniformly in $K$ and $R$. Using an argument of Stone (\cite{BP} p.106), it is sufficient to show that 
$$a_kM_k(R;\varphi\otimes u,\psi\otimes v)-\dfrac{1}{\cg}\Psi_{\beta}\left(\dfrac{R}{\cg a_k}\right)m_\G(\psi\otimes v)\nu(\varphi)\widehat{u}(0)\longrightarrow0$$
\noindent
for functions $u\ :\ \R\longrightarrow\R$ such that $u$ and $|u|$ have a $\mathcal{C}^{\infty}$ Fourier transform with compact support. More precisely, let us introduce the 
\begin{defi}\label{classedefonctionstest}
Let $\mathscr{U}$ be the set of test functions $u$ of the form $u(x)=e^{itx}u_0(x)$ where $t\in\R$ and $u_0$ belongs to the set of positive integrable function from $\R$ to $\R$, whose Fourier transform is $\mathcal{C}^{\infty}$ and with compact support.
\end{defi}
We first notice that $a_kM_k(R;\varphi\otimes u,\psi\otimes v)$ is finite for $\varphi\in\mathscr{U}$. By the Fourier inverse formula and the definition of 
$\tilde{P}$ given in Lemma \ref{defiP}, we write
\begin{align*}
\tilde{P}^k(\varphi\otimes u)(x,s-R) = & \sum\limits_{\g\in\G(k)}\un_{\LL_{l(\g)}^c}(x)e^{-\delta b(\g,x)}\dfrac{h\varphi(\g.x)}{h(x)}u(s-R+b(\g,x))\\
= & \dfrac{1}{2\pi h(x)}\int_{\R}e^{it(R-s)}\left(\ot_{\delta+it}^kh\varphi\right)(x)\widehat{u}(t)\dd t.
\end{align*}
\noindent
This quantity is bounded from above by $||h\varphi||_{\infty}||\widehat{u}||_1$, up to a multiplicative constant. Since the function $\psi\otimes v$ is 
integrable on $\LL\times\R$, the quantity $M_k(R;\varphi\otimes u,\psi\otimes v)$ is finite for any $u\in\mathscr{U}$.

Proposition A.1 is a consequence of the following lemma, combined with the Lebesgue dominated convergence theorem
\begin{lem}\label{butmelange}For all $u\in\mathscr{U}$, when $k\longrightarrow+\infty$,
$$\dfrac{a_k}{2\pi h(x)}\int_{\R}e^{it(R-s)}\left(\ot_{\delta+it}^kh\varphi\right)(x)\widehat{u}(t)\dd t-
\dfrac{1}{\cg}\Psi_{\beta}\left(\dfrac{R}{\cg a_k}\right)\nu(\varphi)\widehat{u}(0)\longrightarrow0,$$
\noindent
uniformly in $x\in\LL$, $s\in\mathrm{supp}\ v$, $R$ and $K$.
\end{lem}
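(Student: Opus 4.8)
The plan is to follow the proof of Theorem~1.4 in \cite{Gou}, which establishes a local limit theorem for the iterated perturbed operators $\ot_{\delta+it}$ evaluated at frequencies rescaled by $a_k$. Since $\widehat u$ is continuous with compact support, say $\mathrm{supp}\,\widehat u\subset[-A,A]$, the integral over $\R$ reduces to an integral over $[-A,A]$, which I would split according to $|t|\leqslant\eta$ and $\eta\leqslant|t|\leqslant A$, for a small $\eta\in]0,\varepsilon[$ fixed later ($\varepsilon$ being the radius from Proposition~\ref{spectreperturbation}, shrunk so that the Potter estimate of Remark~\ref{sertA1etB1} applies). On the off-centre band $\eta\leqslant|t|\leqslant A$, Proposition~\ref{spectreperturbation} gives $\rho(\delta+it)\leqslant\rho(A)<1$, so $\left|\ot_{\delta+it}^kh\varphi\right|_\infty\leqslant C\rho(A)^k\|h\varphi\|$; since $a_k$ is regularly varying of exponent $1/\beta$ (Section~3), $a_k\rho(A)^k\to0$, and this band contributes $o_k(1)$, uniformly in $x$, $s$, $R$, $K$.

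On the central band $|t|\leqslant\eta$ I would use the spectral decomposition $\ot_{\delta+it}^k=\lambda_{\delta+it}^k\Pi_{\delta+it}+R_{\delta+it}^k$ of Proposition~\ref{spectreperturbation}, with $\|R_{\delta+it}^k\|\leqslant C\rho_\varepsilon^k$; the remainder then contributes at most $C a_k\rho_\varepsilon^k\|h\varphi\|\,\|\widehat u\|_1\to0$ (using that $h$ is bounded below on $\LL$). For the principal term I would substitute $t=\tau/a_k$, so that $\frac{a_k}{2\pi h(x)}\int_{|t|\leqslant\eta}$ becomes $\frac{1}{2\pi h(x)}\int_{|\tau|\leqslant\eta a_k}$ with integrand $e^{i\tau(R-s)/a_k}\lambda_{\delta+i\tau/a_k}^k\,\Pi_{\delta+i\tau/a_k}(h\varphi)(x)\,\widehat u(\tau/a_k)$. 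Setting $\rho:=R/(\cg a_k)\in[0,K/\cg]$, the phase equals $e^{i\cg\rho\tau}\,e^{-i\tau s/a_k}$.

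For the pointwise limit of the integrand, I would combine the local expansion of $\lambda_{\delta+it}$ from Proposition~\ref{localexp} with the relation $ka_k^{-\beta}L(a_k)=1$ defining $(a_k)$ and with the slow variation of $L$ (so that $|\tau|^\beta L(a_k/|\tau|)/L(a_k)\to|\tau|^\beta$); since $\Cg|\tau|^\beta=|\cg\tau|^\beta$ with $\cg=\Cg^{1/\beta}$, this gives $\lambda_{\delta+i\tau/a_k}^k\to g_\beta(\cg\tau)$ for each fixed $\tau$, where $g_\beta$ is the characteristic function of the fully asymmetric stable law of parameter $\beta$. Moreover $\Pi_{\delta+i\tau/a_k}(h\varphi)(x)\to\Pi_\delta(h\varphi)(x)=\sigma_\oo(h\varphi)h(x)=\nu(\varphi)h(x)$ by continuity of $z\mapsto\Pi_z$ (Corollary~\ref{continuityoftransferteverywhere}) and the identification $\sigma_\delta=\sigma_\oo$, while $\widehat u(\tau/a_k)\to\widehat u(0)$ and $e^{-i\tau s/a_k}\to1$ uniformly for $s\in\mathrm{supp}\,v$. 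Granting the dominating bound discussed below, dominated convergence then gives convergence of the central-band integral to $\frac{\nu(\varphi)\widehat u(0)}{2\pi}\int_\R e^{i\cg\rho\tau}g_\beta(\cg\tau)\,\dd\tau=\frac{\nu(\varphi)\widehat u(0)}{2\pi\cg}\int_\R e^{i\rho\sigma}g_\beta(\sigma)\,\dd\sigma$, which by Fourier inversion for the density $\Psi_\beta$ equals $\frac{1}{\cg}\Psi_\beta(\rho)\,\nu(\varphi)\widehat u(0)=\frac{1}{\cg}\Psi_\beta(R/(\cg a_k))\nu(\varphi)\widehat u(0)$. Collecting the three contributions (off-centre band, $R_{\delta+it}^k$ remainder, principal central term) yields the announced convergence, uniformly in $x\in\LL$, $s\in\mathrm{supp}\,v$, $R$ and $K$; Proposition~A.1 then follows by integrating against $\psi\otimes v$ and one more dominated convergence, together with $m_\G(\varphi\otimes u)=\nu(\varphi)\widehat u(0)$ and $m_\G(\psi\otimes v)=\nu(\psi)\int_\R v(s)\,\dd s$.

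The main obstacle is producing the dominating function needed above, which must be integrable in $\tau$ and independent of $k$, $R$ and $K$: concretely, a bound of the form $\left|\lambda_{\delta+i\tau/a_k}^k\right|\leqslant e^{-c|\tau|^{\beta/2}}$ valid for all large $k$ and all $|\tau|\leqslant\eta a_k$. I would obtain it from the real-part estimate $\re{1-\lambda_{\delta+it}}\asymp|t|^\beta L(1/|t|)$ of Proposition~\ref{localexp}, which yields $|\lambda_{\delta+it}|\leqslant\exp(-c|t|^\beta L(1/|t|))$ for $|t|$ small, combined with Potter's Bound (Lemma~\ref{PB}) controlling $L(a_k/|\tau|)/L(a_k)$ from below by a negative power of $|\tau|$; together with $ka_k^{-\beta}=1/L(a_k)$ this turns $k|t|^\beta L(1/|t|)$, at $t=\tau/a_k$, into a quantity bounded below by $c|\tau|^{\beta/2}$. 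Since $|e^{i\cg\rho\tau}|=1$, this bound makes the dominated convergence uniform in $\rho\geqslant0$, hence in $R$ and $K$. The genuinely delicate regime is $R\asymp Ka_k$, where $\rho=R/(\cg a_k)$ is large and both terms of the difference are small: there one relies on the heavy-tail integrability of $g_\beta$ and on the fact that the inverse Fourier integral $\int_\R e^{i\rho\sigma}g_\beta(\sigma)\,\dd\sigma$ converges uniformly in $\rho$, so that the asymptotic remains valid on the whole range $R\in[0,Ka_k]$.
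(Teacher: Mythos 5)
Your proposal is correct and follows essentially the same route as the paper: cut the integral at a small frequency using Proposition \ref{spectreperturbation} to kill the off-centre band, apply the spectral decomposition $\ot_{\delta+it}^k=\lambda_{\delta+it}^k\Pi_{\delta+it}+R_{\delta+it}^k$ on the central band, rescale $t=\tau/a_k$, and pass to the limit $\lambda_{\delta+i\tau/a_k}^k\to g_\beta(\cg\tau)$, $\Pi_{\delta+i\tau/a_k}(h\varphi)\to\nu(\varphi)h$ by dominated convergence, with the dominating function coming from Proposition \ref{localexp} plus Potter's bound, and Fourier inversion of $g_\beta$ producing $\Psi_\beta$. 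The only cosmetic difference is organizational (the paper compares against the full-line inversion integral and splits the error into the pieces $L_1,L_2,M_1,M_2,M_3$, and its dominating exponent is $|\tau|^{3\beta/2}$ rather than $|\tau|^{\beta/2}$ for $|\tau|\leqslant1$, which only changes constants), so the argument is sound as written.
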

\begin{proof}
Fix $\varepsilon>0$ according to Proposition \ref{spectreperturbation}. By the Fourier inverse formula applied to $\Psi_\beta$, we may write
$$\dfrac{a_k}{2\pi h(x)}\int_{\R}e^{it(R-s)}\left(\ot_{\delta+it}^kh\varphi\right)(x)\widehat{u}(t)\dd t-
\dfrac{1}{\cg}\Psi_{\beta}\left(\dfrac{R}{\cg a_k}\right)\nu(\varphi)\widehat{u}(0)=K_1(k)+K_2(k)$$
\noindent
where
$$K_1(k)=\dfrac{a_k}{2\pi h(x)}\int_{[-\varepsilon,\varepsilon]^c}e^{it(R-s)}\left(\ot_{\delta+it}^kh\varphi\right)(x)\widehat{u}(t)\dd t$$ 
\noindent 
and
\begin{align*}
K_2(k)= & \dfrac{a_k}{2\pi h(x)}\int_{[-\varepsilon,\varepsilon]}e^{it(R-s)}\left(\ot_{\delta+it}^kh\varphi\right)(x)\widehat{u}(t)\dd t-
\dfrac{1}{2\pi}\int_{\R}e^{it\frac{R}{a_k}}g_{\beta}(\cg t)\nu(\varphi)\widehat{u}(0)\dd t\\
= &\dfrac{1}{2\pi h(x)}\int_{-\varepsilon a_k}^{\varepsilon a_k}
e^{it\frac{R-s}{a_k}}\left(\ot_{\delta+i\frac{t}{a_k}}^kh\varphi\right)(x)\widehat{u}\left(\dfrac{t}{a_k}\right)\dd t\\
&-\dfrac{1}{2\pi}\int_{\R}e^{it\frac{R}{a_k}}g_{\beta}(\cg t)\nu(\varphi)\widehat{u}(0)\dd t. 
\end{align*}
\noindent
There exists $\rho\in]0,1[$ such that
$\left|\left|\ot_{\delta+it}\right|\right|\leqslant\rho$ for any
$t\in(\mathrm{supp}\ \widehat{u})\cap\left(\R\setminus[-\varepsilon,\varepsilon]\right)$. Hence $\left|K_1(k)\right|\preceq a_k\rho^k$, quantity which goes to $0$ 
uniformly in $x$, $s$, $R$ and $K$ when $k\longrightarrow+\infty$.

Let us deal with $K_2(k)$. Using the spectral decomposition of $\ot_{\delta+i\frac{t}{a_k}}$, we write for any $t\in[-\varepsilon a_k,\varepsilon a_k]$
$$\ot_{\delta+i\frac{t}{a_k}}^k(h\varphi)=\lambda_{\delta+i\frac{t}{a_k}}^k\Pi_{\delta+i\frac{t}{a_k}}(h\varphi)+R_{\delta+i\frac{t}{a_k}}^k(h\varphi)$$
\noindent
where the spectral radius of $R_{\delta+i\frac{t}{a_k}}$ is $\leqslant\rho_\varepsilon<1$. The quantity $K_2(k)$ may be splited into $L_1(k)+L_2(k)+L_3(k)$ where
$$L_1(k)=\dfrac{1}{2\pi h(x)}\int_{-\varepsilon a_k}^{\varepsilon a_k}
e^{it\frac{R-s}{a_k}}R_{\delta+i\frac{t}{a_k}}^k(h\varphi)(x)\widehat{u}\left(\dfrac{t}{a_k}\right)\dd t,$$
$$
L_2(k)=\dfrac{1}{2\pi h(x)}\int_{-\varepsilon a_k}^{\varepsilon a_k}
e^{it\frac{R-s}{a_k}}\lambda_{\delta+i\frac{t}{a_k}}^k\left(\Pi_{\delta+i\frac{t}{a_k}}(h\varphi)(x)-\Pi_{\delta}(h\varphi)(x)\right)
\widehat{u}\left(\dfrac{t}{a_k}\right)\dd t 
$$
\noindent
and
$$
L_3(k)=\dfrac{1}{2\pi h(x)}\int_{-\varepsilon a_k}^{\varepsilon a_k}
e^{it\frac{R-s}{a_k}}\lambda_{\delta+i\frac{t}{a_k}}^k\Pi_{\delta}(h\varphi)(x)\widehat{u}\left(\dfrac{t}{a_k}\right)\dd t
-\dfrac{1}{2\pi}\int_{\R}e^{it\frac{R}{a_k}}g_{\beta}(\cg t)\nu(\varphi)\widehat{u}(0)\dd t.
$$
First $\left|L_1(k)\right|\preceq a_k\rho_\varepsilon^k$, quantity which goes to $0$ uniformly in $x$, $s$, $R$ and $K$ when $k\longrightarrow+\infty$. The characteristic function $g_\beta$ of a stable law with parameter $\beta$ (see Section 3) is given by $g_{\beta}(t)=\exp{\left(-\G(1-\beta)e^{i\mathrm{sign}(t)\frac{\beta\pi}{2}}|t|^{\beta}\right)}$. We may notice that $\left|g_{\beta}(t)\right|\leqslant e^{-(1-\beta)\G(1-\beta)|t|^{\beta}}$ for any 
$t\in\R$, which ensures that $g_{\beta}$ is integrable on $\R$. Moreover, Proposition \ref{localexp} implies that for any $t$ close to $0$, the dominant 
eigenvalue $\lambda_{\delta+it}$ satisfies
$$\lambda_{\delta+it}=e^{-\G(1-\beta)e^{i\mathrm{sign}(t)\frac{\beta\pi}{2}}|\cg t|^{\beta}L\left(\frac{1}{|t|}\right)(1+o(1))}.$$
\noindent 
The regularity of $t\longmapsto\Pi_{\delta+it}$ implies that the integrand of $L_2(k)$ goes to $0$ uniformly in $x$, $s\in\mathrm{supp}\ v$, $R$ and $K$; 
thus, it is sufficient to bound it by an integrable function. By Proposition \ref{continuityoftransfert}, we obtain
$$\left|\Pi_{\delta+i\frac{t}{a_k}}(h\varphi)(x)-\Pi_{\delta}(h\varphi)(x)\right|\leqslant
\left|\left|\Pi_{\delta+i\frac{t}{a_k}}-\Pi_{\delta}\right|\right|\left|\left|h\varphi\right|\right|\preceq\dfrac{|t|^{\beta}}{a_k^{\beta}}L\left(\dfrac{a_k}{|t|}\right),$$
\noindent
with
$$\dfrac{L\left(\frac{a_k}{|t|}\right)}{L(a_k)}\leqslant\max\left(\dfrac{1}{|t|},|t|\right)^{\frac{\beta}{2}}$$
\noindent
for any $k$ large enough and uniformly in $t\in[-\varepsilon a_k,\varepsilon a_k]$, where $\varepsilon>0$ is chosen small enough according to Remark \ref{sertA1etB1}. Therefore
$$\left|\Pi_{\delta+i\frac{t}{a_k}}(h\varphi)(x)-\Pi_{\delta}(h\varphi)(x)\right|\leqslant
\left\{\begin{array}{ll}
              &\dfrac{L(a_k)}{a_k^{\beta}}|t|^{\frac{\beta}{2}}\ \text{if}\ |t|\leqslant1\\
              &\dfrac{L(a_k)}{a_k^{\beta}}|t|^{\frac{3\beta}{2}}\ \text{if}\ |t|>1\\
       \end{array}\right..$$
\noindent
Similarly, the inequalities
$$\left|\lambda_{\delta+i\frac{t}{a_k}}^k\right|\leqslant e^{-(1-\beta)\G(1-\beta)|\cg t|^{\beta}\frac{k}{a_k^{\beta}}L(a_k)\frac{L\left(a_k/|t|\right)}{L(a_k)}(1+o(1))}$$
\noindent
and
$$\min\left(\dfrac{1}{|t|},|t|\right)^{\frac{\beta}{2}}\leqslant\dfrac{L\left(\frac{a_k}{|t|}\right)}{L(a_k)}$$
\noindent
yield
$$\left|\lambda_{\delta+i\frac{t}{a_k}}^k\right|\leqslant
\left\{\begin{array}{ll}
              &e^{-\frac{1}{4}(1-\beta)\G(1-\beta)|\cg t|^{\frac{3\beta}{2}}} \text{if}\ |t|\leqslant1\\
              &e^{-\frac{1}{4}(1-\beta)\G(1-\beta)|\cg t|^{\frac{\beta}{2}}}\ \text{if}\ |t|>1\\
       \end{array}\right..$$
\noindent
Finally, for $k$ large enough, the integrand of $L_2(k)$ may be bounded from above by the function
$$l(t):=\left\{\begin{array}{ll}
        &|t|^{\frac{\beta}{2}}e^{-\frac{1}{4}(1-\beta)\G(1-\beta)|\cg t|^{\frac{3\beta}{2}}} \text{if}\ |t|\leqslant1\\
        &|t|^{\frac{3\beta}{2}}e^{-\frac{1}{4}(1-\beta)\G(1-\beta)|\cg t|^{\frac{\beta}{2}}}\ \text{if}\ |t|>1\\
\end{array}\right.,$$
\noindent
up to a multiplicative constant. 

On the other hand, since $\Pi_\delta(h\varphi)=\nu(\varphi)h$, we decompose $L_3(k)$ into $M_1(k)+M_2(k)+M_3(k)$ where 
$$M_1(k)=\dfrac{\nu(\varphi)\widehat{u}(0)}{2\pi}\int_{[-\varepsilon a_k,\varepsilon a_k]^c}e^{it\frac{R}{a_k}}g_{\beta}(\cg t)\dd t,$$
$$M_2(k)=\dfrac{\nu(\varphi)}{2\pi}\int_{-\varepsilon a_k}^{\varepsilon a_k}
e^{it\frac{R}{a_k}}g_{\beta}(\cg t)\left(\widehat{u}(0)-\widehat{u}\left(\dfrac{t}{a_k}\right)\right)\dd t$$
\noindent
and
$$M_3(k)=\dfrac{\nu(\varphi)}{2\pi}\int_{-\varepsilon a_k}^{\varepsilon a_k}
e^{it\frac{R}{a_k}}\left(e^{-it\frac{s}{a_k}}\lambda_{\delta+i\frac{t}{a_k}}^k-g_{\beta}(\cg t)\right)\widehat{u}\left(\dfrac{t}{a_k}\right)\dd t.$$
\noindent
The term $M_1(k)$ goes to $0$ uniformly in $x$, $s$, $R$ and $K$ and so does $M_2(k)$, thanks to the mean value relation applied to $\widehat{u}$ on $[-\varepsilon,\varepsilon]$ combined with the Lebesgue dominated convergence theorem. Similarly, the integrand of $M_3(k)$ goes to $0$ uniformly in $s\in\mathrm{supp}\ v$, and we bound 
$\left|e^{-it\frac{s}{a_k}}\lambda_{\delta+i\frac{t}{a_k}}^k-g_{\beta}(\cg t)\right|$ from above by 
$e^{-\frac{1}{4}(1-\beta)\G(1-\beta)|\cg t|^{\frac{3\beta}{2}}}+\left|g_{\beta}(\cg t)\right|$ if $|t|\leqslant 1$ and by
$e^{-\frac{1}{4}(1-\beta)\G(1-\beta)|\cg t|^{\frac{\beta}{2}}}+\left|g_{\beta}(\cg t)\right|$ otherwise. This achieves the proof of Lemma 
\ref{butmelange}.
\end{proof}
\subsubsection{Proof of Proposition A.2}
We now give a control of the non-influent terms $M_k$ appearing in the proof of Theorem A. Let us fix 
$k\in\N$ such that $Ka_k\leqslant R$. Once again, we recall that
$$M_k(R;\varphi\otimes u,\psi\otimes v)=\int_{\LL\times\R}\tilde{P}^k\left(\varphi\otimes u\right)(x,s-R)\psi\otimes v\left(x,s\right)\nu(\dd x)\dd s,$$
\noindent
where $\tilde{P}$ is given in \eqref{defiP1}. To show Proposition A.2., it is sufficient to check that
$$\left|\tilde{P}^k\left(\varphi\otimes u\right)(x,s-R)\right|\leqslant Ck\dfrac{L(R)}{R^{1+\beta}}\left|\left|\varphi\otimes u\right|\right|_{\infty}$$
\noindent
uniformly in $x\in\LL$ and $s\in\mathrm{supp}\ v$. We write
\begin{align*}
\left|\tilde{P}^k\left(\varphi\otimes u\right)(x,s-R)\right| \leqslant & 
\dfrac{1}{h(x)}\sum\limits_{\g\in\G(k)}\un_{\LL_{l(\g)}^c}(x)e^{-\delta b(\g,x)}\left|(h\varphi)(\g.x)u(s-R+b(\g,x))\right|\\
\preceq & \left|\left|\varphi\otimes u\right|\right|_{\infty}\sum\limits_{\underset{b(\g,x)\overset{M}{\sim} R-s}{\g\in\G(k)}}\un_{\LL_{l(\g)}^c}(x)e^{-\delta b(\g,x)}
\end{align*}
\noindent
where $a\overset{M}{\sim} b$ means $|a-b|\leqslant M$ and the parameter $M$ satisfies $\mathrm{supp}\ u\subset[-M,M]$.
This notation also emphasizes that the only $\g$ which really appear in the above
sum are the ones such that $b(\g,x)$ has the same order than $R-s$. Therefore we only have to show that
\begin{equation}\label{butA2}
\sum\limits_{\underset{b(\g,x)\overset{M}{\sim} R-s}{\g\in\G(k)}}\un_{\LL_{l(\g)}^c}(x)e^{-\delta b(\g,x)}\leqslant Ck\dfrac{L(R)}{R^{1+\beta}}
\end{equation}
\noindent
where $C$ depends on the support of $u$. The proof is inspired of the one of Theorem 1.6 in \cite{Gou}. We will need the two following lemmas.
\begin{lem}\label{A21}
There exists a constant $C>0$ such that for any $k\geqslant1$ and $x\in\LL$
$$\sum\limits_{\g\in\G(k)}\un_{\LL_{l(\g)}^c}(x)e^{-\delta b(\g,x)}\leqslant C.$$
\end{lem}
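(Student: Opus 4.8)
The plan is to reduce the estimate to the case $k=1$ by exploiting the multiplicativity encoded in the cocycle relation \eqref{cocycle}, and then to close the case $k=1$ directly using Hypotheses $(P_1)$ and $(N)$ together with Corollary \ref{busedist}. The point is that $\sum_{\g\in\G(k)}\un_{\LL_{l(\g)}^c}(x)e^{-\delta b(\g,x)} = \ot_\delta^k\un_\LL(x)$, so the statement is nothing but the assertion that $\left(\ot_\delta^k\un_\LL\right)_{k\geq1}$ is uniformly bounded on $\LL$; and this follows from $\rho_\infty(\delta)=1$ together with the fact, established in Section 4.3.1, that $\left|\ot_\delta^k\un_\LL\right|_\infty \asymp \left|\ot_\delta^k h\right|_\infty = |h|_\infty$ since $\ot_\delta h = h$ (Lemma \ref{dualityTtransfer}) and $h$ is continuous and bounded above and below on the compact set $\LL$.

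More concretely, I would argue as follows. First, $h$ is a strictly positive continuous function on the compact metric space $\LL$, so there are constants $0<m\leq M<+\infty$ with $m\leq h(x)\leq M$ for all $x\in\LL$. Since $\ot_\delta$ is a positive operator, monotonicity gives $m\,\ot_\delta^k\un_\LL(x) \leq \ot_\delta^k h(x) \leq M\,\ot_\delta^k\un_\LL(x)$ for every $k\geq1$ and every $x\in\LL$. By Lemma \ref{dualityTtransfer} we have $\ot_\delta^k h = h$ for all $k$, whence
$$\ot_\delta^k\un_\LL(x) \leq \frac{1}{m}\,\ot_\delta^k h(x) = \frac{h(x)}{m} \leq \frac{M}{m}$$
uniformly in $k\geq1$ and $x\in\LL$. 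Setting $C:=M/m$ and recalling that $\ot_\delta^k\un_\LL(x) = \sum_{\g\in\G(k)}w_\delta(\g,x) = \sum_{\g\in\G(k)}\un_{\LL_{l(\g)}^c}(x)e^{-\delta b(\g,x)}$ (by the iteration formula for $\ot_z$ and the definition of the weights $w_z$), this is exactly the claimed inequality.

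I expect no serious obstacle here; the only thing to be careful about is that everything used is already in place in the excerpt: the identity $\ot_\delta h=h$ (Lemma \ref{dualityTtransfer}), the positivity of $\ot_\delta$, the continuity and positivity of $h$ on $\LL$ (noted just before Proposition \ref{equilipcocycle}, together with compactness of $\LL$ as a closed subset of $\partial\xx$), and the formula $\ot_z^k\varphi(x)=\sum_{\g\in\G(k)}w_z(\g,x)\varphi(\g.x)$. Alternatively, one could avoid invoking $h$ and argue purely with the finiteness established in Remark \ref{sommefinie} plus the cocycle relation: $\sum_{\g\in\G(k)}\un_{\LL_{l(\g)}^c}(x)e^{-\delta b(\g,x)}$ is submultiplicative in $k$ by \eqref{cocycle} and Corollary \ref{busedist}, so it would at worst grow geometrically; but the bound through $h$ is cleaner and gives uniformity in $k$ for free, so that is the route I would take.
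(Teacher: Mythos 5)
Your proof is correct and is essentially the paper's own argument: the paper likewise bounds $\un_{\LL}$ by a multiple of the positive continuous eigenfunction $h$, uses $\ot_\delta^k h=h$, and concludes with $|h|_\infty$ divided by a lower bound for $h$. No gaps to report.
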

\begin{proof}
From properties of the function $h$, we derive
$$\sum\limits_{\g\in\G(k)}\un_{\LL_{l(\g)}^c}(x)e^{-\delta b(\g,x)}\preceq
\sum\limits_{\g\in\G(k)}\un_{\LL_{l(\g)}^c}(x)e^{-\delta b(\g,x)}h(\g.x)\preceq
\ot_{\delta}^kh(x)\leqslant|h|_{\infty}.$$
\end{proof}
\begin{lem}\label{A22}
Let $\Delta>0$. There exists a constant $C_\Delta>0$ such that for any $k\geqslant1$, any $x\in\LL$ and
$\zeta\in[\dfrac{a_k}{2},\infty]$
$$\sum\limits_{\underset{\forall i,\ \dhy(\oo,\alpha_i.\oo)\leqslant\zeta}{\underset{R-\Delta\leqslant b(\g,x)\leqslant R+\Delta}{\g=\alpha_1...\alpha_k}}}
\un_{\LL_{l(\g)}^c}(x)e^{-\delta b(\g,x)}\leqslant C_\Delta\dfrac{e^{-\frac{R}{\zeta}}}{a_k}.$$
\end{lem}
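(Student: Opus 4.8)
The plan is to reduce the estimate to the single-factor assumption $(S)$ (equivalently $(S')$) via the triangular quasi-equality. First I would use Property \ref{busedist} to replace $b(\g,x)$ by $\dhy(\oo,\g.\oo)$ up to an additive constant: since $x\in\bigcup_{j\neq l(\g)}D_j$, we have $\dhy(\oo,\g.\oo)-C\leqslant b(\g,x)\leqslant\dhy(\oo,\g.\oo)$, so the condition $R-\Delta\leqslant b(\g,x)\leqslant R+\Delta$ implies $R-\Delta\leqslant\dhy(\oo,\g.\oo)\leqslant R+\Delta+C$, and $e^{-\delta b(\g,x)}\leqslant e^{\delta C}e^{-\delta\dhy(\oo,\g.\oo)}$. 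So it suffices to bound
$$\sum\limits_{\underset{\forall i,\ \dhy(\oo,\alpha_i.\oo)\leqslant\zeta}{\underset{R-\Delta'\leqslant\dhy(\oo,\g.\oo)\leqslant R+\Delta'}{\g=\alpha_1...\alpha_k}}}e^{-\delta\dhy(\oo,\g.\oo)}$$
for a slightly enlarged $\Delta'$.

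Next I would apply Corollary \ref{distsomtrigeod} (the triangular quasi-equality for the sets ${\bf D}_j$): for $\g=\alpha_1\cdots\alpha_k$ admissible, $\dhy(\oo,\g.\oo)\geqslant\sum_{i=1}^k\dhy(\oo,\alpha_i.\oo)-kC$, hence $e^{-\delta\dhy(\oo,\g.\oo)}\leqslant e^{k\delta C}\prod_i e^{-\delta\dhy(\oo,\alpha_i.\oo)}$. Writing $t_i:=\dhy(\oo,\alpha_i.\oo)$, the constraint becomes (roughly) $t_1+\dots+t_k\in[R-\Delta',R+\Delta'+kC]$ with each $t_i\leqslant\zeta$. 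The key point, exactly as in Gouëzel's proof of his Theorem 1.6, is that since $\zeta\geqslant a_k/2$ and $t_1+\dots+t_k\approx R$, at most one of the $t_i$ can be large (of order $R$) — more precisely, if two indices had $t_i,t_j\geqslant R/\zeta$ then... actually the cleaner way: fix which letter $\alpha_m$ is the "large" one, sum over the $k$ choices of $m$, and for that summand bound the contribution of the other $k-1$ letters using Lemma \ref{A21}-type control (the full sum $\sum_{\alpha\in\mathcal A}e^{-\delta\dhy(\oo,\alpha.\oo)}$ is finite and its $n$-fold products over $\G(n)$ are controlled, by Remark \ref{sommefinie}), while the large letter $\alpha_m$ ranges over a dyadic-type annulus. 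The factor $e^{-R/\zeta}$ will come from the following mechanism: the large letter satisfies $\dhy(\oo,\alpha_m.\oo)\in[R-(\text{small}),R+\Delta']$ essentially, but more importantly one uses the convexity/sub-additivity to extract $e^{-\delta\dhy(\oo,\g.\oo)}\leqslant e^{-\delta(1-\zeta^{-1})\dhy(\oo,\g.\oo)}\cdot e^{-(\delta/\zeta)\dhy(\oo,\g.\oo)}$ and since $\dhy(\oo,\g.\oo)\geqslant R-\Delta'$ the second factor gives $e^{-(\delta/\zeta)(R-\Delta')}\asymp e^{-R/\zeta}$, while the first factor together with the annulus sum and assumption $(S)$ (each letter's weighted annulus sum is $\preceq L(t)/t^{1+\beta}$) produces the $1/a_k$ and one factor $L/t^{1+\beta}$; summing the $k$ choices of the distinguished index gives the linear factor $k$. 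One must be careful that the constants $e^{k\delta C}$ do not blow up: this is handled exactly as in Proposition \ref{contract} by first grouping letters into blocks of fixed length $l_0$ so that $\dhy(\oo,\g.\oo)\geqslant An$ absorbs the $kC$ losses, i.e. working with $\delta'<\delta$ in place of $\delta$ on a portion of the exponent.

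The main obstacle I expect is the bookkeeping that simultaneously (i) keeps the number of "large" letters down to one, so that the remaining $k-1$ letters can be summed freely against assumption $(S)$ without losing polynomial-in-$R$ factors, and (ii) tracks where the $e^{-R/\zeta}$ and the $1/a_k$ come from — in Gouëzel's setting $a_k$ is precisely the scale at which $kL(a_k)=a_k^\beta$, so that $k\cdot L(a_k)/a_k^{1+\beta}\asymp 1/a_k$, and one needs the hypothesis $\zeta\geqslant a_k/2$ exactly to guarantee that the "typical" letter size is $\leqslant\zeta$ so that the cutoff $\dhy(\oo,\alpha_i.\oo)\leqslant\zeta$ is not restrictive for the bulk. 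Concretely I would split the sum according to whether all $t_i\leqslant R/\zeta$ (in which case there is no large letter and one bounds crudely by $e^{-\delta(R-\Delta')}\cdot(\sum_{t_i\leqslant R/\zeta}e^{-\delta t_i})^{k}$, using that $R/\zeta\leqslant 2R/a_k$ is small relative to $R$ — actually this case only survives if $R/\zeta$ times $k$ reaches $R$, i.e. $\zeta\leqslant a_k$, and one gets $e^{-\delta R}\preceq e^{-\delta R/\zeta}\cdot e^{-\delta R(1-1/\zeta)}$ which is negligible), versus the case where exactly one $t_m>R/\zeta$, which is the main term and is treated as above. I would model the argument line-by-line on the proof of Theorem 1.6 in \cite{Gou}, substituting our geometric inputs (Property \ref{busedist}, Corollary \ref{distsomtrigeod}, assumption $(S)$) for Gouëzel's abstract hypotheses.
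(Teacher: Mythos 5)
There is a genuine gap, and it lies exactly where the factor $\frac{1}{a_k}$ has to come from. After the tilting step that you correctly identify (writing $e^{-\delta b(\g,x)}=e^{-b(\g,x)/\zeta}e^{-(\delta-\frac{1}{\zeta})b(\g,x)}$ and using $b(\g,x)\geqslant R-\Delta$, which is also the paper's first move), Lemma \ref{A22} is precisely a uniform anti-concentration (local-limit upper) bound: the tilted, truncated mass of words of length $k$ whose cocycle falls in a window of fixed width $2\Delta$ must be $\preceq\frac{1}{a_k}$ \emph{uniformly in the position $R$ of the window}. Your one-big-jump mechanism (distinguish one large letter, apply $(S)$ to it on an annulus, sum the other $k-1$ letters freely) cannot produce this. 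Under the hypothesis of the lemma all letters are $\leqslant\zeta$, and $\zeta$ may be far smaller than $R$ (e.g. $\zeta=a_k/2$, $R=wa_k$ with $w$ large), so no letter can dominate; your dichotomy at the threshold $R/\zeta$ is not exhaustive (many letters may exceed it), and the annulus center $T=R-\sum_{i\neq m}t_i$ at which you would invoke $(S)$ is not bounded below once several letters are moderately large, so the bound $L(T)/T^{1+\beta}$ degenerates. Configurations with two or more sizeable letters contribute at the same order $\frac{1}{a_k}$ and require either an induction over the number of jumps or a characteristic-function argument. The paper does the latter: it inserts a positive test function $f$ with compactly supported Fourier transform, rewrites the truncated tilted sum as $\frac{e^{-R/\zeta}}{2\pi}\int e^{itR}\bigl(\ot_{\delta-\frac{1}{\zeta}+it,\zeta}^k\un_\LL\bigr)(x)\widehat f(t)\dd t$, proves the perturbation bound $\bigl|\bigl|\ot_{\delta-\frac{1}{\zeta}+it,\zeta}-\ot_{\delta+it}\bigr|\bigr|\preceq L(\zeta)/\zeta^{\beta}\leqslant C/k$ (this is where $\zeta\geqslant a_k/2$ is used), and then extracts $\frac{1}{a_k}$ from spectral estimates: exponential decay of $\|\ot^k\|$ for $|t|\geqslant\varepsilon$, $|\lambda_{t,1/\zeta}|\leqslant 1+C/k$ on $|t|\leqslant C_1/a_k$, and $|\lambda_{t,1/\zeta}|\leqslant 1-c'|t|^{\beta}L(1/|t|)$ on $C_1/a_k\leqslant|t|\leqslant\varepsilon$, whence $\int_{-\varepsilon}^{\varepsilon}\|\ot^k\|\dd t\preceq 1/a_k$ via $u=ta_k$ and $a_k^{\beta}=kL(a_k)$. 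None of this spectral input appears in your plan, and without it the uniform-in-$R$ rate $1/a_k$ is out of reach.

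A second, independent failure is the letter-by-letter factorization. The bound $e^{-\delta\dhy(\oo,\g.\oo)}\leqslant e^{k\delta C}\prod_i e^{-\delta\dhy(\oo,\alpha_i.\oo)}$ carries a factor $e^{k\delta C}$ that cannot be absorbed by "working with $\delta'<\delta$ on part of the exponent": by assumption $(P_1)$ the influent parabolic factors have critical exponent exactly $\delta$, so every per-letter series at an exponent $<\delta$ diverges, and summing the remaining $k-1$ letters "freely" at a reduced exponent is impossible. At exponent $\delta$ the only uniform-in-$k$ control is Lemma \ref{A21}, which is proved through the eigenfunction $h$ (i.e. through the transfer operator and the exact cocycle relation, with prefix and suffix kept as blocks evaluated at reference points), not through products of letter sums — Remark \ref{sommefinie} only gives a constant growing like $e^{kC}$. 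Also, your crude estimate for the "no large letter" case, $e^{-\delta(R-\Delta')}\bigl(\sum e^{-\delta t_i}\bigr)^k$, uses the exponential weight twice and is not a valid inequality as written. In effect your plan reproduces a loose version of the combinatorial reduction carried out in the paper's proof of Proposition A.2 (where one letter really is forced to be large), whereas Lemma \ref{A22} is exactly the complementary "no dominant letter" estimate that this reduction relies upon, and both here and in Gouëzel's corresponding lemma it is established by Fourier/spectral means.
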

\begin{proof}
Let $f\ :\ \R\longrightarrow\R$ be a positive function whose Fourier transform has compact support. We write
\begin{align*}
\sum\limits_{\underset{\forall i,\ \dhy(\oo,\alpha_i.\oo)\leqslant\zeta}{\underset{R-\Delta\leqslant b(\g,x)\leqslant R+\Delta}{\g=\alpha_1...\alpha_k}}} & 
\un_{\LL_{l(\g)}^c}(x)e^{-\delta b(\g,x)}\\
& \leqslant\dfrac{e^{-\frac{R}{\zeta}}}{\underset{[-\Delta,\Delta]}{\min} f}
\sum\limits_{\underset{\forall i,\ \dhy(\oo,\alpha_i.\oo)\leqslant\zeta}{\underset{R-\Delta\leqslant b(\g,x)\leqslant R+\Delta}{\g=\alpha_1...\alpha_k}}}
\un_{\LL_{l(\g)}^c}(x)e^{\frac{R}{\zeta}}e^{-\delta b(\g,x)}f(R-b(\g,x)).
\end{align*}
\noindent
The inequality $R-\Delta\leqslant b(\g,x)\leqslant R+\Delta$ implies that the sum in the right member may be bounded from above by
$$e^{\frac{\Delta}{\zeta}}\sum\limits_{\underset{\forall i,\ \dhy(\oo,\alpha_i.\oo)\leqslant\zeta}{\underset{R-\Delta\leqslant b(\g,x)\leqslant R+\Delta}{\g=\alpha_1...\alpha_k}}}
\un_{\LL_{l(\g)}^c}(x)e^{-\left(\delta-\frac{1}{\zeta}\right)b(\g,x)}f(R-b(\g,x)).$$
\noindent
Since $\zeta$ is large, the quantity $e^{\frac{\Delta}{\zeta}}$ is close to $1$. From the Fourier inverse formula, it follows
\begin{align*}
\sum\limits_{\underset{\forall i,\ \dhy(\oo,\alpha_i.\oo)\leqslant\zeta}{\underset{R-\Delta\leqslant b(\g,x)\leqslant R+\Delta}{\g=\alpha_1...\alpha_k}}} 
\un_{\LL_{l(\g)}^c}(x)e^{-\delta b(\g,x)}\preceq\dfrac{e^{-\frac{R}{\zeta}}}{2\pi}
\displaystyle{\int\limits_{\R}}e^{itR}\left(\ot_{\delta-\frac{1}{\zeta}+it,\zeta}^k\un_{\LL}\right)(x)\widehat{f}(t)\dd t
\end{align*}
\noindent
where $\ot_{\delta-\frac{1}{\zeta}+it,\zeta}$ is defined, for any $\varphi\in\mathrm{Lip}\left(\LL\right)$ and $x\in\LL$, by
$$\ot_{\delta-\frac{1}{\zeta}+it,\zeta}\left(\varphi\right)(x)=\sum\limits_{\underset{\dhy(\oo,\alpha.\oo)\leqslant\zeta}{\alpha\in\mathcal{A}}}\un_{\LL_{l(\alpha)}^c}e^{-\left(\delta-\frac{1}{\zeta}+it\right)b(\alpha,x)}
\varphi(\alpha.x).$$
\noindent
It remains to show that the integral 
$\displaystyle{\int\limits_{\R}}e^{itR}\left(\ot_{\delta-\frac{1}{\zeta}+it,\zeta}^k\un_{\LL}\right)(x)\widehat{f}(t)\dd t$ is 
$\leqslant\frac{C}{a_k}$. Let us split it into $I_1+I_2$ where
$$
I_1:=\displaystyle{\int\limits_{[-\varepsilon,\varepsilon]^c}}
e^{itR}\left(\ot_{\delta-\frac{1}{\zeta}+it,\zeta}^k\un_{\LL}\right)(x)\widehat{f}\left(t\right)\dd t$$
\noindent 
and
$$I_2:=\displaystyle{\int\limits_{-\varepsilon}^{\varepsilon}}
e^{itR}\left(\ot_{\delta-\frac{1}{\zeta}+it,\zeta}^k\un_{\LL}\right)(x)\widehat{f}\left(t\right)\dd t$$
\noindent
for the $\varepsilon$ given in Proposition \ref{spectreperturbation}. We may first notice that 
$\ot_{\delta-\frac{1}{\zeta}+it,\zeta}$ is a continuous perturbation of $\ot_{\delta+it}$ for any $t\in\R$. Indeed
\begin{align*}
\left|\left|\ot_{\delta-\frac{1}{\zeta}+it,\zeta}-\ot_{\delta+it}\right|\right| & \leqslant
\sum\limits_{\underset{\dhy(\oo,\alpha.\oo)\leqslant\zeta}{\alpha\in\mathcal{A}}}
\left|\left|w_{\delta-\frac{1}{\zeta}+it}(\alpha,\cdot)-w_{\delta+it}(\alpha,\cdot)\right|\right|
+\sum\limits_{\underset{\dhy(\oo,\alpha.\oo)>\zeta}{\alpha\in\mathcal{A}}}
\left|\left|w_{\delta+it}(\alpha,\cdot)\right|\right|\\
& \leqslant 
\sum\limits_{\underset{\dhy(\oo,\alpha.\oo)\leqslant\zeta}{\alpha\in\mathcal{A}}}
\left|\left|\un_{\LL_{l(\alpha)}^c}e^{-(\delta+it)b(\alpha,\cdot)}\left(e^{\frac{1}{\zeta}b(\alpha,\cdot)}-1\right)\right|\right|+
C\dfrac{L(\zeta)}{\zeta^{\beta}}.
\end{align*}
\noindent
The two inequalities
$$\left|\left|\un_{\LL_{l(\alpha)}^c}e^{-(\delta+it)b(\alpha,\cdot)}\right|\right|\preceq e^{-\delta\dhy(\oo,\alpha.\oo)}\ 
\text{and}\ \left|\left|e^{\frac{1}{\zeta}b(\alpha,\cdot)}-1\right|\right|\preceq\dfrac{1}{\zeta}\left(1+\dhy(\oo,\alpha.\oo)\right)e^{\frac{1}{\zeta}\dhy(\oo,\alpha.\oo)}$$
\noindent
yield
$$\left|\left|\ot_{\delta-\frac{1}{\zeta}+it,\zeta}-\ot_{\delta+it}\right|\right|\preceq\dfrac{L(\zeta)}{\zeta^{\beta}}.$$
\noindent
Potter's lemma thus implies that for $k$ large enough and $\zeta\geqslant\dfrac{a_k}{2}$
\begin{equation}\label{perturbationxi}
\left|\left|\ot_{\delta-\frac{1}{\zeta}+it,\zeta}-\ot_{\delta+it}\right|\right|\leqslant Ca_k^{-\beta}L(a_k)\leqslant\dfrac{C}{k}.
\end{equation}
\noindent
Combining \eqref{perturbationxi} and the fact that $\rho(\delta+it)<1$ for $|t|\in\mathrm{supp}\ \widehat{f}\setminus[-\varepsilon,\varepsilon]$, it follows that there exists $\rho\in]0,1[$ such that
\begin{align*}
\left|I_1\right|\leqslant C\displaystyle{\int\limits_{|t|\geqslant\varepsilon}}
\left|\left|\ot_{\delta-\frac{1}{\zeta}+it,\zeta}^k\right|\right|\widehat{f}(t)\dd t
\leqslant \dfrac{C}{a_k}a_k\rho^k\displaystyle{\int\limits_{|t|\geqslant\varepsilon}}
\widehat{f}(t)\dd t\preceq\dfrac{1}{a_k},
\end{align*}
\noindent
since the sequence $(a_k\rho^k)$ converges to $0$. 

Moreover, from Proposition \ref{spectreperturbation} and \eqref{perturbationxi}, we deduce that for $k$ large enough and $t$ close to $0$, 
the operator $\ot_{\delta-\frac{1}{\zeta}+it,\zeta}$ admits a unique dominant eigenvalue
$\lambda_{t,\mathrm{\frac{1}{\zeta}}}$ close to $1$, isolated in the spectrum of $\ot_{\delta-\frac{1}{\zeta}+it,\zeta}$ and 
satisfying $|\lambda_{t,\mathrm{\frac{1}{\zeta}}}-\lambda_{\delta+it}|\leqslant\dfrac{C}{k}$. To estimate $I_2$, it is thus sufficient to check that
\begin{equation}
J:=\displaystyle{\int\limits_{-\varepsilon}^{\varepsilon}}\left|\left|\ot_{\delta+\frac{1}{\zeta}+it,s}^k\right|\right|\dd t\leqslant\dfrac{C}{a_k}.
\end{equation}
\noindent
The integral $J$ may be splitted into $J_1+J_2$ where
$$J_1=\displaystyle{\int\limits_{-\frac{C_1}{a_k}}^{\frac{C_1}{a_k}}}\left|\left|\ot_{\delta+\frac{1}{\zeta}+it,\zeta}^k\right|\right|\dd t$$
\noindent
and
$$J_2=\displaystyle{\int\limits_{[-\varepsilon,\varepsilon]\setminus[-\frac{C_1}{a_k},\frac{C_1}{a_k}]}}
\left|\left|\ot_{\delta+\frac{1}{\zeta}+it,\zeta}^k\right|\right|\dd t,$$
\noindent
for a constant $C_1>0$, which will be chosen later. For $J_1$, the inequality
$|\lambda_{\delta+it}|\leqslant 1$ yields $|\lambda_{t,\frac{1}{\zeta}}|\leqslant 1+\dfrac{C}{k}$; hence
$\left|\left|\ot_{\delta+\frac{1}{\zeta}+it,\zeta}^k\right|\right|\leqslant C$ 
and $J_1$ is thus bounded from above by $\dfrac{CC_1}{a_k}$ for some $C_1>0$.

From Proposition \ref{localexp}, we deduce the existence of a constant $c>0$ such that for any 

\noindent
$|t|\in[\dfrac{C_1}{a_k},\varepsilon]$

$$|\lambda_{t,\frac{1}{\zeta}}|\leqslant |\lambda_{t}|+\dfrac{C}{k}\leqslant 1-c|t|^{\beta}L\left(\dfrac{1}{|t|}\right)+\dfrac{C}{k}.$$
\noindent
Moreover for $k$ large enough
$$\dfrac{1}{k}\sim a_k^{-\beta}L\left(a_k\right)\leqslant C_1^{\frac{\beta}{2}}a_k^{-\beta}L\left(\dfrac{a_k}{C_1}\right)\leqslant
\dfrac{|t|^{\beta}}{C_1^{\frac{\beta}{2}}}L\left(\dfrac{1}{|t|}\right).$$
\noindent
Therefore, for $C_1$ large enough
$$|\lambda_{t,\frac{1}{\zeta}}|\leqslant 1-c'|t|^{\beta}L\left(\dfrac{1}{|t|}\right)$$
\noindent
where $c'>0$; hence
$$
J_2\leqslant C\displaystyle{\int\limits_{\frac{C_1}{a_k}}^{\varepsilon}}\left(1-c't^{\beta}L\left(\dfrac{1}{t}\right)\right)^k\dd t
\leqslant C\displaystyle{\int\limits_{\frac{C_1}{a_k}}^{\varepsilon}}e^{-kc't^{\beta}L(\frac{1}{t})}\dd t.
$$
\noindent
Setting $u=ta_k$, it follows from Remark \ref{sertA1etB1} that if $\varepsilon>0$ is small enough, then 
$\frac{L\left(\frac{a_k}{u}\right)}{L(a_k)}\leqslant\max\left(u^{+\frac{\beta}{2}},u^{-\frac{\beta}{2}}\right)$, which yields
\begin{align*}
\displaystyle{\int\limits_{\frac{C_1}{a_k}}^{\varepsilon}}e^{-kc't^{\beta}L(\frac{1}{t})}\dd t= &
\dfrac{1}{a_k}\displaystyle{\int\limits_{C_1}^{\varepsilon a_k}}e^{-kc'|u|^{\beta}|a_k|^{-\beta}L(\frac{a_k}{u})}\dd u\\
\leqslant &\dfrac{1}{a_k}\displaystyle{\int\limits_{C_1}^{\varepsilon a_k}}e^{-kc'|u|^{\beta\pm\frac{\beta}{2}}|a_k|^{-\beta}L(a_k)}\dd u.
\end{align*}
\noindent
One achieves the proof of Lemma \ref{A22} noticing that $a_k^\beta=kL(a_k)$ so that
$$J_2\leqslant \dfrac{C}{a_k}\displaystyle{\int\limits_{C_1}^{\varepsilon a_k}}e^{-c'u^{\beta\pm\frac{\beta}{2}}}\dd u\leqslant
\dfrac{C}{a_k}\displaystyle{\int\limits_{C_1}^{\infty}}e^{-c'u^{\beta\pm\frac{\beta}{2}}}\dd u.$$
\end{proof}
Let us now deal with the proof of Proposition A.2. {\bf For any $\boldsymbol{j\in[\![1,p+q]\!]}$, we fix $\boldsymbol{y_j\in\LL\setminus\LL_j}$ and 
we denote by $\boldsymbol{\G(k,j)}$ the set of isometries $\boldsymbol{\g\in\G}$ with symbolic length $\boldsymbol{|\g|=k}$ 
such that $\boldsymbol{l(\g)}$ equals $\boldsymbol{j}$}. Let us introduce the following notations.
\begin{enumerate}
 \item For any $t>0$ and $\Delta>0$, let
 $$\mathcal{A}(t,\Delta):=\{\g\in\G\ |\ t-\Delta\leqslant\dhy(\oo,\g.\oo)< t+\Delta\};$$
 \item for any $l\in\N^*$, $c,e\in\R^{+\ast}$, let
 $$Q(l,c,e)=
 \sum\limits_{j=1}^{p+q}
 \sum\limits_{\underset{\g\in\mathcal{A}(c,e)}{\g\in\G(l,j)}}e^{-\delta b(\g,y_j)};$$
 \item let $\Omega_r(j_1,j_2,j_3)\subset\G(k)$ be the set defined for any $r\in[\![1,k]\!]$ and any $j_1,j_2,j_3\in[\![1,p+q]\!]$, $j_1\neq j_2$ and $j_2\neq j_3$ by
 \begin{align*}
   &-\Omega_1(j_1,j_2,j_3):=\{\alpha_1...\alpha_k\ |\ \alpha_{1}\in\G_{j_2}^*,\alpha_{k}\in\G_{j_3}^*\};\\
   &-\Omega_r(j_1,j_2,j_3):=\{\alpha_1...\alpha_k\ |\ \alpha_{r-1}\in\G_{j_1}^*,\alpha_{r}\in\G_{j_2}^*,\alpha_{r+1}\in\G_{j_3}^*\},\ \text{if}\ 2\leqslant r\leqslant k-1;\\ 
   &-\Omega_k(j_1,j_2,j_3):=\{\alpha_1...\alpha_k\ |\ \alpha_{k-1}\in\G_{j_1}^*,\alpha_{k}\in\G_{j_2}^*\}.
 \end{align*}
 \item if $\g=\alpha_1...\alpha_k\in\G(k)$, $k\geqslant1$, we denote by $\g_{(0)}=\g^{(k+1)}=\mathrm{Id}$ and for any $j\in[\![1,k]\!]$, we denote 
 by $\g_{(j)}=\alpha_1...\alpha_j$ and by $\g^{(j)}=\alpha_j...\alpha_k$.
\end{enumerate}
\noindent
Since $Ka_k\leqslant R$, we may write $R=wa_k$ for some $w\geqslant1$; we introduce the following truncation level: 
$\zeta=w^{\theta}\dfrac{a_k}{2}\in\left[\frac{a_k}{2},\frac{R}{2}\right]$, for $\theta\in(0,1)$ close to $1$, which will be precised in the proof. We use it to split the set
\begin{equation}\label{defiensembleJ}
\mathfrak{J}=\{(\alpha_1,...,\alpha_k)\ |\ \alpha_1...\alpha_k\ \mathrm{admissible},\ b(\g,x)\overset{M}{\sim} R-s\}
\end{equation}
\noindent
into $\mathfrak{J}=\mathfrak{J}_1\cup \mathfrak{J}_2\cup \mathfrak{J}_3\cup \mathfrak{J}_4$ where
\begin{align*}
\mathfrak{J}_1 & =\{(\alpha_1,...,\alpha_k)\in J |\ \exists r,\ \dhy(\oo,\alpha_r.\oo)\geqslant\dfrac{R}{2}\};\\
\mathfrak{J}_2 & =\{(\alpha_1,...,\alpha_k)\in J\ |\ \forall j,\ \dhy(\oo,\alpha_j.\oo)<\dfrac{R}{2};\ \exists r<t,\ \dhy(\oo,\alpha_r.\oo),\dhy(\oo,\alpha_t.\oo)\geqslant\zeta\};\\
\mathfrak{J}_3 & =\{(\alpha_1,...,\alpha_k)\in J\ |\ \forall j,\ \dhy(\oo,\alpha_j.\oo)<\dfrac{R}{2};\ \exists! r,\ \dhy(\oo,\alpha_r.\oo)\geqslant\zeta\};\\
\mathfrak{J}_4 & =\{(\alpha_1,...,\alpha_k)\in J\ |\ \forall j,\ \dhy(\oo,\alpha_j.\oo)<\zeta\}.
\end{align*}
\noindent
We are going to prove that there exists a constant $C>0$ such that for any $i\in\{1,2,3,4\}$ 
$$\Sigma_i:=\sum\limits_{\underset{(\alpha_1,...,\alpha_k)\in \mathfrak{J}_i}{\g=\alpha_1...\alpha_k}}\un_{\LL_{l(\g)}^c}(x)e^{-\delta b(\g,x)}
\leqslant Ck\dfrac{L(R)}{R^{1+\beta}}.$$
\medskip
{\it Contribution of $\Sigma_1$.} By definition of $\mathfrak{J}_1$, if $\g=\alpha_1...\alpha_k$ with $(\alpha_1,...,\alpha_k)\in\mathfrak{J}_1$, 
there exists $r\in[\![1,k]\!]$ such that $\dhy(\oo,\alpha_r.\oo)\geqslant\frac{R}{2}$. The cocycle property of $b(\g,x)$ furnishes
\begin{align*}
\Sigma_1= & \sum\limits_{r=1}^{k}
\sum\limits_{\underset{\underset{\dhy(\oo,\alpha_r.\oo)\geqslant\frac{R}{2}}{b(\g,x)\overset{M}{\sim} R-s}}{\g=\alpha_1...\alpha_k}}\un_{\LL_{l(\g)}^c}(x)
e^{-\delta b(\g_{(r-1)},\g^{(r)}.x)}e^{-\delta b(\alpha_{r},\g^{(r+1)}.x)}
e^{-\delta b(\g^{(r+1)},x)}\\
= & \sum\limits_{r=1}^{k}\ \sum\limits_{\underset{j_1\neq j_2,j_2\neq j_3}{j_1,j_2,j_3}}\ \sum\limits_{\underset{b(\g,x)\overset{M}{\sim}R-s}{\g\in\Omega_r(j_1,j_2,j_3)}}
\un_{\LL_{l(\g)}^c}(x)
e^{-\delta b(\g_{(r-1)},\g^{(r)}.x)}e^{-\delta b(\alpha_{r},\g^{(r+1)}.x)}
e^{-\delta b(\g^{(r+1)},x)}.
\end{align*}
\noindent
Proposition \ref{equilipcocycle} implies the existence of $D>0$ such that
$$\left\{\begin{array}{lll}
          \left|b(\g_{(r-1)},\g^{(r)}.x)-b(\g_{(r-1)},y_{j_1})\right|\leqslant D\\
          \left|b(\alpha_r,\g^{(r+1)}.x)-b(\alpha_{r},y_{j_2})\right|\leqslant D\\
          \left|b(\g^{(r+1)},x)-b(\g^{(r+1)},y_{j_3})\right|\leqslant D\\
         \end{array}\right.
         $$
\noindent
for any $r\in[\![1,k]\!]$, any $j_1,j_2,j_3\in[\![1,p+q]\!]$, $j_1\neq j_2$, $j_2\neq j_3$ and $\g\in\Omega_r(j_1,j_2,j_3)$. Hence
$$\Sigma_1\preceq\sum\limits_{r=1}^{k}\ \sum\limits_{\underset{j_1\neq j_2,j_2\neq j_3}{j_1,j_2,j_2}}\ 
\sum\limits_{\underset{b(\g,x)\overset{M}{\sim}R-s}{\g\in\Omega_r(j_1,j_2,j_3)}}
\un_{\LL_{l(\g)}^c}(x)
e^{-\delta b(\g_{(r-1)},y_{j_1})}e^{-\delta b(\alpha_{r},y_{j_2})}
e^{-\delta b(\g^{(r+1)},y_{j_3})}.$$
\noindent
Combining $b(\g,x)\overset{M}{\sim} R-s$ with the cocycle property of $b(\g,x)$, we obtain
$$b(\g_{(r-1)},\g^{(r)}.x)+b(\alpha_{r},\g^{(r+1)}.x)+b(\g^{(r+1)},x)\overset{M}{\sim} R-s,$$
\noindent
then
$$b(\g_{(r-1)},y_{j_1})+b(\alpha_{r},y_{j_2})+b(\g^{(r+1)},y_{j_3})\overset{M+3D}{\sim} R-s.$$
\noindent
By Corollary \ref{busedist}, the condition $\dhy(\oo,\alpha_r.\oo)\geqslant\frac{R}{2}$ yields 
$$b(\alpha_{r},y_{j_2})\geqslant \frac{R}{2}-C.$$
\noindent
Therefore
$b(\g_{(r-1)},y_{j_1})+b(\g^{(r+1)},y_{j_3})\leqslant\frac{R}{2}-s+M+3D+C$. Denote by $\Delta=M+3D+C$. Let
$m,n\in\N^*$ such that
$b(\g_{(r-1)},y_{j_1})\in[(m-1)\Delta,(m+1)\Delta]$ and
$b(\g^{(r+1)},y_{j_3})\in[(n-1)\Delta,(n+1)\Delta]$ for $m\leqslant N$ and $n\leqslant N-m$, where 
$$N:=\left[\dfrac{\frac{R}{2}-s+\Delta}{2\Delta}\right]+1.$$
\noindent
Using the previous notations, we may bound $\Sigma_1$ from above by
\begin{align*}
\sum\limits_{r=1}^k\sum\limits_{m+n\leqslant N}\Biggl(Q(r-1, & m\Delta,2\Delta+2C)\\
& Q(1,R-s-(m+n)\Delta,3\Delta+2C)Q(k-r,n\Delta,2\Delta+2C)\Biggr).
\end{align*}
\noindent
For $0\leqslant n,m\leqslant N$ such that $n+m\leqslant N$, Corollary \ref{busedist} implies
\begin{align*}
Q(1,R-s-(m+n)\Delta,3\Delta+2C)= & \sum\limits_{j_2=1}^{p+q}
\sum\limits_{\underset{\alpha\in\mathcal{A}(R-s-(m+n)\Delta,3\Delta+2C)}{\alpha\in\G_{j_2}^*}}
e^{-\delta b(\alpha,y_{j_2})}\\
\preceq &
\sum\limits_{\underset{\alpha\in\mathcal{A}(R-s-(m+n)\Delta,3\Delta+2C)}{|\alpha|=1}}e^{-\delta\dhy(\oo,\alpha.\oo)}.
\end{align*}
\noindent
Combining Assumption $(S)$ of the family $(H_\beta)$ and Potter's lemma, we obtain
\begin{align*}\label{dominantSigma3}
Q(1,R-s-(m+n)\Delta,3\Delta+2C)\leqslant
\underset{t\geqslant\frac{R}{2}-s}{\sup}\sum\limits_{\underset{\alpha\in\mathcal{A}(t,3\Delta+2C)}{|\alpha|=1}}e^{-\delta\dhy(\oo,\alpha.\oo)}
\preceq \underset{t\geqslant\frac{R}{2}-s}{\sup}\dfrac{L(t)}{t^{1+\beta}}
\preceq \dfrac{L(R)}{R^{1+\beta}}.
\end{align*}
\noindent
Thus $\frac{R^{1+\beta}}{L(R)}\Sigma_1$ may be bounded up to a multiplicative constant by
$$\sum\limits_{r=1}^k
\left(\sum\limits_{j_1=1}^{p+q}\sum\limits_{\g_1\in\G(r-1)}\un_{\LL_{l(\g_1)}^c}(y_{j_1})e^{-\delta b(\g_1,y_{j_1})}\right)
\left(\sum\limits_{j_3=1}^{p+q}\sum\limits_{\g_2\in\G(k-r)}\un_{\LL_{l(\g_2)}^c}(y_{j_3})e^{-\delta b(\g_2,y_{j_3})}\right)$$
\noindent
and Lemma \ref{A21} finally gives $\Sigma_1\preceq kR^{-1-\beta}L(R)$.

\medskip
{\it Contribution of $\Sigma_2$}. If $\g=\alpha_1...\alpha_k$ with $(\alpha_1,...,\alpha_k)\in\mathfrak{J}_2$, there exist
$r<t$ in $[\![1,k]\!]$ such that $\dhy(\oo,\alpha_r.\oo),\dhy(\oo,\alpha_t.\oo)>\zeta$. We decompose $\Sigma_2$ as $\Sigma_1$ according to the values of 
$r$ and $t$, which leads us to the following upper bound for $\Sigma_2$
\begin{align*}
\sum\limits_{r<t} & \sum\limits_{m+n+l\leqslant N}\Biggl(Q(r-1,m\Delta,2\Delta+2C)Q(1,R-\zeta-s-(m+n+l)\Delta,3\Delta+2C)\\
& Q(r-t-1,n\Delta,2\Delta+2C)Q(k-t,l\Delta,2\Delta+2C)
\sum\limits_{j=1}^{p+q}\sum\limits_{\underset{\dhy(\oo,\alpha.\oo)\geqslant\zeta}{\alpha\in\G_j^*}}e^{-\delta b(\alpha,y_j)}\Biggl),
\end{align*}
%
\noindent
where $\Delta=M+5D+2C$ and $N=\left[\dfrac{R-2\zeta-s+\Delta}{2\Delta}\right]+1$. We write
\begin{align*}
Q(1,R-\zeta-s-(m+n+l)\Delta,3\Delta+2C)= & \sum\limits_{j_2=1}^{p+q}
\sum\limits_{\underset{\alpha\in\mathcal{A}(R-\zeta-s-(m+n+l)\Delta,3\Delta+2C)}{\alpha\in\G_{j_2}^*}}e^{-\delta b(\alpha,y_{j_2})}\\
\preceq & \dfrac{L(\zeta)}{\zeta^{1+\beta}}.
\end{align*}
\noindent
Assumptions $(P_2)$ and $(N)$ combined with Corollary \ref{busedist} imply
$$\sum\limits_{j=1}^{p+q}\sum\limits_{\underset{\dhy(\oo,\alpha.\oo)\geqslant\zeta}{\alpha\in\G_{j}^*}}e^{-\delta b(\alpha,y_{j})}\preceq 
\dfrac{L(\zeta)}{\zeta^{\beta}}.$$
\noindent
We bound
\begin{align*}
\sum\limits_{m+n+l\leqslant N}Q(r-1,m\Delta,2\Delta+2C)Q(r-t-1,n\Delta,2\Delta+2C)Q(k-t,l\Delta,2\Delta+2C)
\end{align*}
\noindent
from above by $\leqslant C^3$ using Lemma \ref{A21}. Summing over $r<t$, we obtain
$$\Sigma_2\preceq k^2\zeta^{-2\beta-1}L(\zeta)^2.$$
\noindent
Since $k\sim \dfrac{a_k^{\beta}}{L(a_k)}$, $\dfrac{2\zeta}{a_k}=w^{\theta}$ and $\dfrac{R}{\zeta}=2w^{1-\theta}$, the last inequality may be reformulated as follows
$$\Sigma_2\preceq k\dfrac{a_k^{\beta}}{L(a_k)}\dfrac{L(\zeta)}{\zeta^{\beta}}\dfrac{L(\zeta)}{\zeta^{\beta+1}}\dfrac{R^{\beta+1}}{L(R)}R^{-\beta-1}L(R).$$
\noindent
By Potter's lemma, for $\varepsilon>0$, one gets
$$\dfrac{L(\zeta)}{L(a_k)}\leqslant w^{\varepsilon}\ \mathrm{and}\ \dfrac{L(\zeta)}{L(R)}\leqslant w^{\varepsilon};\ \text{therefore}\ \Sigma_2\preceq k\cdot w^{-\beta\theta+\varepsilon}\cdot w^{(1-\theta)(\beta+1)+\varepsilon}R^{-\beta-1}L(R).$$
\noindent
If $\beta\theta>(1-\theta)(\beta+1)$ $\left(i.e.\ \theta>\dfrac{1+\beta}{1+2\beta}\right)$, the power of $w$ may be chosen negative for $\varepsilon$ small enough. 
Finally $\Sigma_2\preceq kR^{-1-\beta}L(R)$.

\medskip
{\it Contribution of $\Sigma_3$.} If $\g=\alpha_1...\alpha_k$ with $(\alpha_1,...,\alpha_k)\in\mathfrak{J}_3$, there exists a unique integer
$r\in[\![1,k]\!]$ such that $\dhy(\oo,\alpha_r.\oo)>\zeta$. We deal separately with the cases $w\leqslant k$ and $w>k$. 

When $w\leqslant k$, either $r\leqslant\frac{k}{w}$ or $r\geqslant k+1-\frac{k}{w}$, or $r\in\left[\frac{k}{w},k+1-\frac{k}{w}\right]$. 
 \begin{itemize} 
  \item[a)]If $r\leqslant\frac{k}{w}$ or $r\geqslant k+1-\frac{k}{w}$, we bound 
\begin{equation}\label{quantiteanonyme}
\sum\limits_{\underset{\underset{\dhy(\oo,\alpha_r.\oo)\geqslant\frac{R}{2}}{b(\g,x)\overset{M}{\sim} R-s}}{\g=\alpha_1...\alpha_k}}\un_{\LL_{l(\g)}^c}(x)
e^{-\delta b(\g_{(r-1)},\g^{(r)}.x)}e^{-\delta b(\alpha_{r},\g^{(r+1)}.x)}
e^{-\delta b(\g^{(r+1)},x)}
\end{equation}
\noindent
from above by
\begin{align*}
\sum\limits_{m+n\leqslant N}\Biggl(Q(r-1,m\Delta,2\Delta+2C)Q(1, & R-s-(m+n)\Delta,3\Delta+2C)\\
& Q(k-r,n\Delta,2\Delta+2C)\Biggr),
\end{align*}
\noindent
where $\Delta=M+3D+C$ and $N=\left[\frac{R-\zeta-s+\Delta}{2\Delta}\right]+1$.
As for $\Sigma_1$, for any $m,n$ such that $m+n\leqslant N$, we bound $Q(1,R-s-(m+n)\Delta,3\Delta+2C)$ from above by
$C\zeta^{-1-\beta}L(\zeta)$. Moreover Lemma \ref{A21} allows us to bound from above the quantity
$$\sum\limits_{m+n\leqslant N}\left(Q(r-1,m\Delta,2\Delta+2C)Q(k-r,n\Delta,2\Delta+2C)\right).$$ 
\noindent 
There are at most $\dfrac{2k}{w}$ such terms; their contribution is thus less than
\begin{align*}
C\dfrac{k}{w}\zeta^{-\beta-1}L(\zeta)|& = Ckw^{-1}\dfrac{L(\zeta)}{\zeta^{\beta+1}}\dfrac{R^{\beta+1}}{L(R)}R^{-\beta-1}L(R)\\
                                 & \preceq kw^{-1}w^{(1-\theta)(\beta+1)+\varepsilon}R^{-\beta-1}L(R),
\end{align*}
\noindent
since
$$\dfrac{R^{\beta+1}}{\zeta^{\beta+1}}=2^{\beta+1}w^{(1-\theta)(\beta+1)}\ \mathrm{and}\ \dfrac{L(\zeta)}{L(R)}\leqslant
\max\left(2w^{1-\theta},\dfrac{1}{2}w^{\theta-1}\right)^{\frac{\varepsilon}{1-\theta}}=2^{\frac{\varepsilon}{1-\theta}}w^{\varepsilon}.$$
\noindent
For $\theta$ close enough to $1$ $\left(i.e.\ \theta>\dfrac{\beta}{1+\beta}\ \text{here}\right)$, the power of $w$ is negative and the 
contribution is finally $\preceq kR^{-\beta-1}L(R)$.
\item[b)]Assume now that $r\in\left[\dfrac{k}{w},k+1-\dfrac{k}{w}\right]$. The condition 
$$R-s-M\leqslant b(\g,x)\leqslant R-s+M$$
\noindent
and the cocycle property of $b(\g,x)$ both imply
$$b(\g_{(r-1)},y_{j_1})+b(\alpha_{r},y_{j_2})+b(\g^{(r+1)},y_{j_3})\overset{M+3D}{\sim} R-s$$
\noindent
for any $r\in[\![1,k]\!]$, any $(j_1,j_2,j_3)\in[\![1,p+q]\!]$, $j_1\neq j_2$, $j_2\neq j_3$ and any $\g\in\Omega_r(j_1,j_2,j_3)$. Fix $r$, 
$j_1$, $j_2$, $j_3$ and $\g\in\Omega_r(j_1,j_2,j_3)$ as above. From $\dhy(\oo,\alpha_r.\oo)<\frac{R}{2}$, we deduce $b(\alpha_r,y_{j_2})<\frac{R}{2}+C$, hence
$$b(\g_{(r-1)},y_{j_1})+b(\g^{(r+1)},y_{j_3})\geqslant\dfrac{R}{2}-s-M-3D-C.$$
\noindent
This last upper bound yields
$$1)\ b(\g_{(r-1)},y_{j_1})\geqslant \dfrac{R}{4}-\dfrac{s}{2}-\dfrac{M+3D+C}{2}$$
\noindent
or 
$$2)\ b(\g^{(r+1)},y_{j_3})\geqslant \dfrac{R}{4}-\dfrac{s}{2}-\dfrac{M+3D+C}{2}.$$
\noindent
We only detail the arguments concerning the control of the sum in the case 1); the other one 
may be treated similarly. Set $\Delta=M+3D$ and let $m,n,l\in\N^*$ such that
$$\left\{\begin{array}{lll}
         b(\g_{(r-1)},y_{j_1})\in[(m-1)\Delta,(m+1)\Delta]\\
         b(\alpha_{r},y_{j_2})\in[(n-1)\Delta,(n+1)\Delta]\\
         b(\g^{(r+1)},y_{j_3})\in[(l-1)\Delta,(l+1)\Delta]\\
         \end{array}\right.$$
\noindent
for $m\leqslant N$, $n\leqslant N-m$ and $l=N-m-n$, where $N=\left[\frac{R-s}{\Delta}\right]+1$. The sum \eqref{quantiteanonyme} for 
$r\in\left[\dfrac{k}{w},k+1-\dfrac{k}{w}\right]$ may thus be bounded from above by
\begin{align*}
\sum\limits_{m+n+l=N}
\biggl(Q(r-1,m\Delta,2\Delta+2C) & \sum\limits_{j_2=1}^{p+q}
\sum\limits_{\underset{\dhy(\oo,\alpha.\oo)>\zeta}{\alpha\in\mathcal{A}(l\Delta,3\Delta+2C)\cap\G_{j_2}^*}}e^{-\delta b(\alpha,y_{j_2})} \\
& Q(k-r,n\Delta,2\Delta+2C)\biggr),
\end{align*}
\noindent
%
\noindent
which is smaller than
\begin{align*}
(\star)\quad\underset{m}{\sup}\left(Q(r-1,m\Delta,2\Delta+2C)\right) & \times
\biggl(\sum\limits_{j_2=1}^{p+q}\sum\limits_{{\underset{\dhy(\oo,\alpha.\oo)>\zeta}{\alpha\in\G_{j_2}^*}}}e^{-\delta b(\alpha,y_{j_2})}\biggr)\\
 \times &\left(\sum\limits_{j_3=1}^{p+q}\sum\limits_{|\g_2|=k-r}\un_{\LL_{l(\g_2)}^c}(y_{j_3})e^{-\delta b(\g_2,y_{j_3})}\right)
\end{align*}
\noindent
where the supremum is taken over $m\in\N^*$ such that $m\Delta\geqslant\frac{R}{4}-\frac{s}{2}-\frac{3}{2}(\Delta+C)$. We combined 
Lemma \ref{A22} and the fact that $s$ lies in a compact subset of $\R$ to control the first 
factor, Assumptions $(P_2)$ and $(N)$ for the second factor and 
Lemma \ref{A21} for the third one, which allows us to bound the quantity $(\star)$ from above by
$$C\dfrac{e^{-\frac{R}{4\zeta}}}{a_{r-1}}\zeta^{-\beta}L(\zeta),$$ 
\noindent 
where $C$ only depends on the support of $\varphi$. Using the regular variation of $(a_r)_r$, we obtain $\frac{a_r}{a_{r-1}}\leqslant C$ for some $C>0$ and the quantity $(\star)$ is bounded from above, up to a multiplicative constant, by
$$e^{-\frac{R}{4\zeta}}\zeta^{-\beta}\dfrac{L(\zeta)}{a_r}.$$
Since $\dfrac{k}{w}\leqslant r\leqslant k+1-\frac{k}{w}$ and $(a_i)_i$ is regularly varying with exponent $\dfrac{1}{\beta}$, Potter's lemma implies
$a_r\geqslant\dfrac{a_k}{w^{\frac{1}{\beta}+\varepsilon}}$; combining with the equalities  $a_k=\dfrac{R}{w}$ and $\dfrac{R}{\zeta}=2w^{1-\theta}$, this yields to an upper bound of the form
$$Ce^{-\frac{w^{1-\theta}}{2}}w^{C'}R^{-\beta-1}L(R)\ \left(\text{with}\ C'=\dfrac{1}{\beta}+(1-\theta)(1+\beta)+\varepsilon\right),$$
\noindent
which is smaller than $CR^{-\beta-1}L(R)$ since $\theta<1$.
The integer $r$ taking at most $k$ values, the result follows for $\Sigma_3$.
\end{itemize}
When $k\leqslant w$, then $r\in\left[\dfrac{k}{w},k+1-\dfrac{k}{w}\right]$ and the proof is the same as for the case b).

\medskip
{\it Contribution of $\Sigma_4$.} By Lemma \ref{A22}  
$$\Sigma_4\leqslant C\dfrac{e^{-\frac{R}{\zeta}}}{a_k},$$
\noindent
with $C\frac{e^{-\frac{R}{\zeta}}}{a_k}=C\frac{e^{-2w^{1-\theta}}}{a_k}$ and
$kR^{-\beta-1}L(R)\sim \dfrac{a_k^{\beta}}{L(a_k)}(wa_k)^{-\beta-1}L(wa_k)=\dfrac{w^{-\beta-1\pm\varepsilon}}{a_k}$.
This achieves the proof for $\Sigma_4$ and the proof of Proposition A.2.

\section{Theorem A: mixing for $\beta=1$}
This section is devoted to the proof of Theorem A when $\beta=1$. Let $\G$ be a Schottky group satisfying the family of hypotheses $(H_\beta)$ for $\beta=1$. 
The arguments here are slightly different from the case $\beta\in]0,1[$. Indeed, recall that 
Karamata's lemma \ref{regvar} asserts that $\tilde{L}$ is a slowly varying function, which additionally satisfies
$$\lim\limits_{x\longrightarrow+\infty}\dfrac{L(x)}{\tilde{L}(x)}=0,$$
\noindent
and the fact that the Bowen-Margulis measure $m_\G$ is infinite implies 
$\lim\limits_{x\longrightarrow+\infty}\tilde{L}(x)=+\infty$. The proofs of this section are inspired from
the one of Theorem 2.1 in the case $\beta=1$ of \cite{MT}. 

The argument of Subsection 5.1 applies verbatim in our setting. Therefore, still writing $M(R;A,B)=m_\G(A.B\circ g_R)$, we have to prove that, as $R\longrightarrow\pm\infty$,
$$M(R;\varphi\otimes u,\psi\otimes v)\sim\dfrac{1}{\Cg}\dfrac{m_\G(\varphi\otimes u)m_\G(\psi\otimes v)}{\tilde{L}(|R|)}$$
\noindent
where $\varphi,\psi\ :\ \mathcal{D}^0\longrightarrow\R$ are Lipschitz functions on $\LL$ and $u,v\ :\ \R\longrightarrow\R$ are continuous functions on $\R$ with compact support. 
The arguments exposed in Subsection 5.1 allow us to treat only the case $R\longrightarrow+\infty$; Lemma \ref{reductiondeMa} implies in particular, for $R$ large enough
\begin{align*}
M(R;\varphi\otimes u,\psi\otimes v)= & M^+(R;\varphi\otimes u,\psi\otimes v)\\
= & \sum\limits_{k\geqslant0}\int_{\LL\times\R}\tilde{P}^k\left(\varphi\otimes u\right)(x,s-R)\psi\otimes v\left(x,s\right)\nu(\dd x)\dd s.
\end{align*}
\noindent
In the sequel, we would need to consider the integral near $0$ of the function $t\longmapsto Q_{\delta+it}$, where 
$Q_z=\left(\mathrm{Id}-\ot_z\right)^{-1}$ for any $z\in\C$ with $\re{z}\geqslant\delta$. Nevertheless, Proposition \ref{controlQ} ensures that $t\longmapsto Q_{\delta+it}$ is not integrable in $0$. To overcome this matter, we proceed as in the proof of Theorem 6.1 in \cite{DPPS} and we introduce the symmetrized quantity $\left(\tilde{P}^{\mathrm{sym}}\right)^k\left(\varphi\otimes u\right)(x,s-R)$ of 
$\tilde{P}^k\left(\varphi\otimes u\right)(x,s-R)$, defined as follows: for any $k\geqslant1$
\begin{align*}
\left(\tilde{P}^{\mathrm{sym}}\right)^k & \left(\varphi\otimes u\right)(x,s-R)\\
& =\sum\limits_{\g\in\G(k)}\un_{\LL_{l(\g)}^c}(x)e^{-\delta b(\g,x)}\dfrac{h\varphi(\g.x)}{h(x)}
\left[u(s-R+b(\g,x))+u(s-R-b(\g,x))\right].
\end{align*}
\noindent
Thus we study the term $M^{\mathrm{sym}}(R;\varphi\otimes u,\psi\otimes v)$ defined by
$$M^{\mathrm{sym}}(R;\varphi\otimes u,\psi\otimes v)=\sum\limits_{k\geqslant0}\int_{\LL\times\R}\left(\tilde{P}^{\mathrm{sym}}\right)^k\left(\varphi\otimes u\right)(x,s-R)\psi\otimes v\left(x,s\right)\nu(\dd x)\dd s$$
\noindent
with the convention $\left(\tilde{P}^{\mathrm{sym}}\right)^0\left(\varphi\otimes u\right)=\varphi\otimes u$. Since $u$ has a compact support, for any $s\in\mathrm{supp}\ v$ and for any $k\in\N$, we have as $R\longrightarrow+\infty$
$$\left(\tilde{P}^{\mathrm{sym}}\right)^k\left(\varphi\otimes u\right)(x,s-R)=
\tilde{P}^k\left(\varphi\otimes u\right)(x,s-R).$$
\noindent
It is thus sufficient to prove that as $R\longrightarrow+\infty$
\begin{equation}\label{thmD}
M^{\mathrm{sym}}(R;\varphi\otimes u,\psi\otimes v)\sim\dfrac{1}{\Cg}\dfrac{m_\G(\varphi\otimes u)m_\G(\psi\otimes v)}{\tilde{L}(R)}.
\end{equation}
\begin{rem}
If we fix $\varphi,\psi$ (resp. the function $v$) in the space of Lipschitz functions (resp. in the space of continuous functions on $\R$ with compact support), statement \eqref{thmD} is equivalent to the weak convergence to $\frac{1}{\Cg}m_\G(\varphi\otimes\bullet)m_\G(\psi\otimes v)$ of the sequence of measures $\left(\tilde{L}(R)M^{\mathrm{sym}}(R;\varphi\otimes\bullet,\psi\otimes v)\right)_R$. By the argument of Stone already mentionned in the proof of 
Proposition A.1, it is thus sufficient to prove that $\tilde{L}(R)M^{\mathrm{sym}}(R;\varphi\otimes u,\psi\otimes v)$ is finite and converges to $\frac{1}{\Cg}m_\G(\varphi\otimes u)m_\G(\psi\otimes v)$ 
for any function $u\ :\ \R\longrightarrow\R$ in the set of test functions $\mathscr{U}$.
\end{rem}
\subsubsection{Proof of $\eqref{thmD}$}
Let $u\in\mathscr{U}$. We first introduce the following quantity: for $\xi>\delta$, 
 $k\geqslant1$, $x\in\LL$ and $s\in\R$
\begin{align*}
\left(\tilde{P}_{\xi}^{\mathrm{sym}}\right)^k & \left(\varphi\otimes u\right)(x,s-R)\\
& :=\sum\limits_{\g\in\G(k)}\un_{\LL_{l(\g)}^c}(x)e^{-\xi b(\g,x)}\dfrac{h\varphi(\g.x)}{h(x)}
\left(u(s-R+b(\g,x))+u(s-R-b(\g,x))\right).
\end{align*}
\noindent
Similarly, we set 
\begin{align}\label{defdeMtildexi}
M_{\xi}^{\mathrm{sym}}(R;\varphi\otimes u,\psi\otimes v)
:=\sum\limits_{k\geqslant0}\int_{\LL\times\R}\left(\tilde{P}_{\xi}^{\mathrm{sym}}\right)^k\left(\varphi\otimes u\right)(x,s-R)\psi\otimes v\left(x,s\right)\nu(\dd x)\dd s
\end{align}
\noindent
with the convention 
$\left(\tilde{P}_{\xi}^{\mathrm{sym}}\right)^0\left(\varphi\otimes u\right)=\varphi\otimes u$. The convergence of the Poincaré series of $\G$ at $\xi>\delta$ yields
$$M_{\xi}^{\mathrm{sym}}(R;\varphi\otimes u,\psi\otimes v)=\int_{\LL\times\R}\left(\sum\limits_{k\geqslant0}\left(\tilde{P}_{\xi}^{\mathrm{sym}}\right)^k\left(\varphi\otimes u\right)(x,s-R)\right)\psi(x)v(s)\nu(\dd x)\dd s;$$
\noindent
moreover
\begin{align*}
\sum\limits_{k\geqslant0} \left(\tilde{P}_{\xi}^{\mathrm{sym}}\right)^k & \left(\varphi\otimes u\right)(x,s-R)\\ 
= & \dfrac{1}{2\pi h(x)}\sum\limits_{k\geqslant0}\int_{\R}e^{it(R-s)}
\left(\ot_{\xi+it}^k+\ot_{\xi-it}^k\right)(h\varphi)(x)\widehat{u}(t)\dd t\\
= & \dfrac{1}{2\pi h(x)}\int_{\R}e^{it(R-s)}\left[\sum\limits_{k\geqslant0}\left(\ot_{\xi+it}^k+\ot_{\xi-it}^k\right)(h\varphi)(x)\right]\widehat{u}(t)\dd t
\end{align*}
\noindent
for any  $x\in\LL$ and $s\in\R$, so that the term $M_{\xi}^{\mathrm{sym}}(R;\varphi\otimes u,\psi\otimes v)$ equals
\begin{align*}
\int_{\LL\times\R}&\left(\dfrac{1}{\pi h(x)}\int_{\R}e^{it(R-s)}\re{Q_{\xi+it}}(h\varphi)(x)\widehat{u}(t)\dd t\right)\psi(x)v(s)\nu(\dd x)\dd s.
\end{align*}
\noindent
To show  \eqref{thmD}, we have to understand how to relate the quantities $M_{\xi}^{\mathrm{sym}}(R;\varphi\otimes u,\psi\otimes v)$ and
$M^{\mathrm{sym}}(R;\varphi\otimes u,\psi\otimes v)$. This is the purpose of the following proposition.
\begin{prop}\label{convergencexiversdelta}
\begin{align*}
\lim\limits_{\xi\searrow\delta}M_{\xi}^{\mathrm{sym}}(R;\varphi\otimes u, & \psi\otimes v)\\
= & \int_{\LL\times\R}\left(\dfrac{1}{\pi h(x)}\int_{\R}e^{it(R-s)}\re{Q_{\delta+it}}(h\varphi)(x)\widehat{u}(t)\dd t\right)\psi(x)v(s)\nu(\dd x)\dd s\\
= & M^{\mathrm{sym}}(R;\varphi\otimes u,\psi\otimes v)
\end{align*}
\end{prop}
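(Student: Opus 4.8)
The plan is to justify passing the limit $\xi\searrow\delta$ through every integral and through the series over $k$ in the two expressions for $M_{\xi}^{\mathrm{sym}}(R;\varphi\otimes u,\psi\otimes v)$ obtained just above the statement. I will establish two convergences and combine them: (a) $\int_{\LL\times\R}\big(\tfrac{1}{\pi h(x)}\int_\R e^{it(R-s)}\re{Q_{\xi+it}}(h\varphi)(x)\widehat{u}(t)\,\dd t\big)\psi(x)v(s)\,\nu(\dd x)\,\dd s$ converges, as $\xi\searrow\delta$, to the same expression with $\delta$ in place of $\xi$; and (b) $M_{\xi}^{\mathrm{sym}}(R;\varphi\otimes u,\psi\otimes v)\to M^{\mathrm{sym}}(R;\varphi\otimes u,\psi\otimes v)$. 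For $\xi>\delta$ the Poincaré series of $\G$ converges, so for such $\xi$ all sums and integrals are absolutely convergent, Fubini applies, $Q_{\xi+it}=\sum_{k\geq0}\ot_{\xi+it}^k$, and both sides in (a) and (b) are literally equal to $M_{\xi}^{\mathrm{sym}}$; hence (a) and (b) together yield the two asserted equalities and, incidentally, the finiteness of $M^{\mathrm{sym}}$ used in the remark preceding the statement.

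For (a), fix $\varepsilon>0$ as in Proposition \ref{spectreperturbation}. For $t\neq0$ one has $\rho(\delta+it)<1$ (last assertion of Proposition \ref{spectreperturbation}), so $\mathrm{Id}-\ot_{\delta+it}$ is invertible; combined with the continuity of $z\mapsto\ot_z$ on $\{\re{z}\geqslant\delta\}$ (Corollary \ref{continuityoftransferteverywhere}), this gives $\re{Q_{\xi+it}}(h\varphi)(x)\to\re{Q_{\delta+it}}(h\varphi)(x)$ as $\xi\searrow\delta$, uniformly in $x\in\LL$. It then remains to produce a $t$-integrable majorant of the integrand, uniform in $x\in\LL$ and in $\xi$ close to $\delta$. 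On $\{|t|>\varepsilon\}\cap\mathrm{supp}\,\widehat{u}$ this is immediate from $\|Q_{\xi+it}\|\leqslant C$ (Proposition \ref{controlQ}) and the compact support of $\widehat{u}$. On $\{|t|\leqslant\varepsilon\}$, Proposition \ref{controlQ} gives $Q_{\xi+it}=(1-\lambda_{\xi+it})^{-1}\Pi_{\xi+it}+O(1)$, and since $\Pi_\delta(h\varphi)=\nu(\varphi)h$ is bounded, the whole question reduces to a bound, near $t=0$ and uniform in $\xi$, on $\re{(1-\lambda_{\xi+it})^{-1}}$ and on the remainder $(1-\lambda_{\xi+it})^{-1}(\Pi_{\xi+it}-\Pi_\delta)(h\varphi)(x)$.

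This last bound is the main obstacle, because $t\mapsto\re{Q_{\delta+it}}$ is only barely integrable at $0$ (Corollary \ref{integrabilityreQ}) and the $\xi$-perturbation must be absorbed without destroying integrability; I will follow the bookkeeping of Theorem $6.1$ in \cite{DPPS} and of \cite{MT}. One writes $\re{(1-\lambda_{\xi+it})^{-1}}=\re{(1-\lambda_{\xi+it})}/|1-\lambda_{\xi+it}|^2\leqslant 1/\re{(1-\lambda_{\xi+it})}$, and since $\tfrac{\dd}{\dd\xi}\lambda_\xi<0$ at $\xi=\delta$ one has $\re{(1-\lambda_{\xi+it})}\geqslant\re{(1-\lambda_{\delta+it})}+c(\xi-\delta)$ for some $c>0$. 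Then one splits $\{|t|\leqslant\varepsilon\}$ at a threshold $\tau=\tau(\xi)\to0$ comparable to the scale where $|1-\lambda_{\delta+it}|\asymp\xi-\delta$, available from Proposition \ref{localexp}: on $\{|t|\leqslant\tau\}$ the crude bounds $\re{(1-\lambda_{\xi+it})^{-1}}\leqslant 1/(c(\xi-\delta))$ and $\|(1-\lambda_{\xi+it})^{-1}(\Pi_{\xi+it}-\Pi_\delta)\|\preceq\tilde{L}(1/(\xi-\delta))$, integrated over an interval of length $\asymp\tau$, contribute $O\big(1/\tilde{L}(1/\tau)\big)$ and $O(\xi-\delta)$ respectively, both tending to $0$ as $\xi\searrow\delta$ since $\tau\,\tilde{L}(1/\tau)=\xi-\delta$ and slowly varying functions are $o$ of any power; on $\{\tau\leqslant|t|\leqslant\varepsilon\}$ one has $\xi-\delta\leqslant|t|\tilde{L}(1/|t|)$, so the local expansions of Proposition \ref{localexp} together with Potter's lemma \ref{PB} make the $\xi$-perturbation negligible against $|1-\lambda_{\delta+it}|$ and give $\re{(1-\lambda_{\xi+it})^{-1}}\preceq L(1/|t|)/\big(|t|\tilde{L}(1/|t|)^2\big)$ and $\|(1-\lambda_{\xi+it})^{-1}(\Pi_{\xi+it}-\Pi_\delta)\|\preceq 1$, the first majorant being integrable on $(0,\varepsilon)$ precisely by the computation in the proof of Corollary \ref{integrabilityreQ}. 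Dominated convergence, together with the vanishing contributions from $\{|t|\leqslant\tau\}$, then proves (a).

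For (b) it suffices, by the remark preceding the statement, to treat $u\in\mathscr{U}$. Termwise one has $\big(\tilde{P}_{\xi}^{\mathrm{sym}}\big)^k(\varphi\otimes u)(x,s-R)\to\big(\tilde{P}^{\mathrm{sym}}\big)^k(\varphi\otimes u)(x,s-R)$ as $\xi\searrow\delta$, the sum over the countably many $\g\in\G(k)$ converging by dominated convergence via $e^{-\xi b(\g,x)}\leqslant e^{C}e^{-\delta b(\g,x)}$ (Property \ref{busedist}) and $\sum_{\g\in\G(k)}e^{-\delta\dhy(\oo,\g.\oo)}<\infty$ (Remark \ref{sommefinie}). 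By linearity reduce to $\varphi,\psi\geqslant0$, and since each $u\in\mathscr{U}$ is of the form $e^{it_0x}u_0$ with $u_0\geqslant0$ and $\big|\big(\tilde{P}_{\xi}^{\mathrm{sym}}\big)^k(\varphi\otimes u)\big|\leqslant e^{C}\big(\tilde{P}^{\mathrm{sym}}\big)^k(\varphi\otimes u_0)$, reduce also to $u_0,v\geqslant0$; after isolating the finitely many $\g$ with $\dhy(\oo,\g.\oo)\leqslant C$ (a finite set by discreteness of $\G$, contributing a quantity continuous in $\xi$), the remaining part of $M_{\xi}^{\mathrm{sym}}$ is monotone as $\xi\searrow\delta$, so Tonelli and monotone convergence give $\lim_{\xi\searrow\delta}M_{\xi}^{\mathrm{sym}}=M^{\mathrm{sym}}\in[0,+\infty]$. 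By (a) this limit is finite; dominating the general complex test functions by this finite nonnegative version and applying dominated convergence then gives (b), hence both equalities of the proposition.
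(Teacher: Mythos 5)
Your overall architecture coincides with the paper's: you reduce everything to the convergence \eqref{propmegarelou} of the inner integral, and you handle the identification of the limit with $M^{\mathrm{sym}}$ exactly as the paper does (monotone convergence for nonnegative $\varphi,\psi,u_0,v$, finiteness via Corollary \ref{integrabilityreQ}, then domination by the $\left(|\varphi|,|u|,|\psi|,|v|\right)$ version for general test functions). That second half is fine. The genuine gap is in your part (a), i.e. precisely the step the paper devotes Propositions \ref{localexpb1}, \ref{propositioncomportement1moinslambda} and \ref{fonctiondedominationpourrelle} to. Your key estimate is $\re{\left(1-\lambda_{\xi+it}\right)^{-1}}\leqslant 1/\re{1-\lambda_{\xi+it}}$, which throws away the imaginary part of $1-\lambda_{\xi+it}$. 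But the integrability at $t=0$ comes exactly from that imaginary part: the paper's usable majorant is $L\!\left(\frac{1}{|t|}\right)/\bigl(|t|\tilde{L}\!\left(\frac{1}{|t|}\right)^2\bigr)$, where the $\tilde{L}^2$ in the denominator is produced by the lower bound $\left|\im{1-\lambda_{\delta+\kappa+it}}\right|\succeq |t|\tilde{L}\!\left(\frac{1}{|t|}\right)$, uniform in $\kappa$ (Proposition \ref{propositioncomportement1moinslambda}, which in turn needs the joint $(\kappa,t)$ expansion of Proposition \ref{localexpb1} via the integrals $I_{S_j},I_{C_j}$ of Proposition \ref{asymptoticISetIC}). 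Your inequality only yields something of size $1/\bigl(|t|L\!\left(\frac{1}{|t|}\right)\bigr)$ near $0$ (take $L$ constant, the generic infinite-measure case), which is not integrable, so the majorant you announce on the region $\tau\leqslant|t|\leqslant\varepsilon$ does not follow from what you set up.

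Two auxiliary claims you lean on are also not available. First, $\xi\longmapsto\lambda_\xi$ is not differentiable at $\delta$ in this setting: by Proposition \ref{localexp} (and its $\kappa$-analogue) one has $1-\lambda_{\delta+\kappa}\asymp\kappa\tilde{L}\!\left(\frac{1}{\kappa}\right)$ with $\tilde{L}\to+\infty$, which is exactly the infinite Bowen--Margulis measure showing up; so the bound $\re{1-\lambda_{\xi+it}}\geqslant\re{1-\lambda_{\delta+it}}+c(\xi-\delta)$ cannot be justified by a derivative and is asserted without proof. Second, treating the shift $\delta\rightsquigarrow\delta+\kappa$ as a generic perturbation of norm $\kappa\tilde{L}\!\left(\frac{1}{\kappa}\right)$ (Proposition \ref{continuityoftransfert}) does not make it negligible against $\left|1-\lambda_{\delta+it}\right|\asymp|t|\tilde{L}\!\left(\frac{1}{|t|}\right)$ on your main regime: with your threshold $\tau\tilde{L}\!\left(\frac{1}{\tau}\right)=\kappa$ one has $\tau<\kappa$, and at $|t|\asymp\tau$ or $|t|\asymp\kappa$ the ratio is of order $\tilde{L}\!\left(\frac{1}{\kappa}\right)\to+\infty$, so Potter's lemma cannot rescue the comparison. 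This is why the paper works with the two-variable expansion directly, and why its domination function $b_\kappa$ in Proposition \ref{fonctiondedominationpourrelle} contains, besides the fixed integrable term, $\kappa$-dependent terms such as $\kappa/\bigl((\kappa^2+t^2)\tilde{L}\!\left(\frac{1}{|t|}\right)\bigr)$ that are not dominated by any fixed integrable function and whose integrals are shown to vanish by a separate argument (the choice $\kappa=A/\sqrt{\tilde{L}\!\left(\frac{1}{A}\right)}$). Your proposal omits these terms and the vanishing-integral step, so as written the passage to the limit in \eqref{propmegarelou} is not established; to repair it you essentially need to reprove Propositions \ref{localexpb1}--\ref{fonctiondedominationpourrelle}.
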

\begin{proof}
This result relies on the two following remarks:
\begin{itemize}
 \item[1)]$\underset{\xi\searrow\delta}{\lim}M_{\xi}^{\mathrm{sym}}(R;\varphi\otimes u,\psi\otimes v)$ exists and is equal to 
\begin{align*}
&\int_{\LL\times\R}\left(\dfrac{1}{\pi h(x)}\int_{\R}e^{it(R-s)}\re{Q_{\delta+it}}(h\varphi)(x)\widehat{u}(t)\dd t\right)v(x)\psi(s)\nu(\dd x)\dd s
\end{align*}
\item[2)]this limit also equals $M^{\mathrm{sym}}(R;\varphi\otimes u,\psi\otimes v)$.
\end{itemize}
\noindent
To prove the first point, it is sufficient to check that
\begin{equation}\label{propmegarelou}
\int_{\R}e^{it(R-s)}\re{Q_{\xi+it}}(h\varphi)(x)\widehat{u}(t)\dd t\underset{\xi\searrow\delta}{\longrightarrow}
\int_{\R}e^{it(R-s)}\re{Q_{\delta+it}}(h\varphi)(x)\widehat{u}(t)\dd t
\end{equation}
\noindent
uniformly in $x\in\LL$ and $s\in\mathrm{supp}\ v$. We postpone the proof in the next paragraph 6.0.5. Let us show how the second assertion follows from the first one. First, assume that the functions $\varphi,\psi,u$ and $v$ are positive. From \eqref{defdeMtildexi}, the monotone convergence theorem implies that
$M_{\xi}^{\mathrm{sym}}(R;\varphi\otimes u,\psi\otimes v)$ converges to $M^{\mathrm{sym}}(R;\varphi\otimes u,\psi\otimes v)$. Combining the uniqueness of the limit for 
$M_{\xi}^{\mathrm{sym}}(R;\varphi\otimes u,\psi\otimes v)$ and Corollary \ref{integrabilityreQ}, we deduce that 
$M^{\mathrm{sym}}(R;\varphi\otimes u,\psi\otimes v)$ is finite for any positive $\varphi,\psi,u$ and $v$. We use the Lebesgue dominated convergence theorem to prove that it is also the case for arbitrary functions $\varphi,\psi,u$ and $v$: indeed, we bound
$$\int_{\LL\times\R}\left(\tilde{P}_{\xi}^{\mathrm{sym}}\right)^k(\varphi\otimes u)(x,s-R)\psi\otimes v(x,s)\nu(\dd x)\dd s$$
\noindent
from above by
\begin{equation}
\label{dominerelou}\int_{\LL\times\R}\left(\tilde{P}^{\mathrm{sym}}\right)^k(|\varphi|\otimes |u|)(x,s-R)|\psi|\otimes |v|(x,s)\nu(\dd x)\dd s,
\end{equation}
\noindent
and notice that $|u|\in\mathscr{U}$, it follows from the finiteness of $M^{\mathrm{sym}}(R;|\varphi|\otimes|u|,|\psi|\otimes|v|)$ that the sum of the family of terms \eqref{dominerelou} does exist.
%
\end{proof}
To achieve the proof of \eqref{thmD}, it is thus sufficient to show 
\begin{prop}\label{butmixingbeta1utensphi}
Uniformly in $x\in\LL$ and $s\in\mathrm{supp}\ v$, as $R\longrightarrow+\infty$
$$I:=\dfrac{1}{\pi h(x)}\int_{\R}e^{it(R-s)}\re{Q_{\delta+it}}(h\varphi)(x)\widehat{u}(t)\dd t\sim
\dfrac{m_\G(\varphi\otimes u)}{\Cg\tilde{L}(R)}.$$
\end{prop}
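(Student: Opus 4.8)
The plan is to use the resolvent expansion of Proposition \ref{controlQ}, which gives, for $t$ close to $0$,
$$\re{Q_{\delta+it}}=\re{\left(1-\lambda_{\delta+it}\right)^{-1}}\Pi_\delta+O(1),$$
together with the local expansion $\lambda_{\delta+it}=1-\Cg\,\mathrm{sign}(t)i|t|\tilde{L}\left(\frac{1}{|t|}\right)(1+o(1))$ and $\re{1-\lambda_{\delta+it}}=\frac{\pi}{2}\Cg|t|L\left(\frac{1}{|t|}\right)(1+o(1))$ from Proposition \ref{localexp}, to compute the leading term of $I$. Recalling $\Pi_\delta(h\varphi)=\nu(\varphi)h=\sigma_\oo(h\varphi)h$ (so that $\frac{1}{h(x)}\Pi_\delta(h\varphi)(x)=\nu(\varphi)$, independent of $x$), the integrand splits into a dominant piece $\frac{\nu(\varphi)}{\pi}e^{it(R-s)}\re{(1-\lambda_{\delta+it})^{-1}}\widehat{u}(t)$ and a remainder coming from the $O(1)$ term, which I expect to be negligible after integration against $\widehat{u}$ because it is a bounded multiple of $e^{it(R-s)}\widehat{u}(t)$ and a Riemann--Lebesgue type argument gives $o(1)$ as $R\to+\infty$ — but of course $1/\tilde{L}(R)\to 0$ as well, so the remainder must be shown to be $o(1/\tilde L(R))$, not merely $o(1)$; this is the point requiring care, and it is handled exactly as in \cite{MT} by splitting the integral over $|t|\le\varepsilon$ and $|t|>\varepsilon$ and using that away from $0$ the operator $\ot_{\delta+it}$ has spectral radius $<1$ (Proposition \ref{spectreperturbation}), so $Q_{\delta+it}$ is bounded there and its contribution decays faster than any power of $R$ after integration against the smooth compactly supported $\widehat{u}$.

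The main computation is then the asymptotic behaviour as $R\to+\infty$ of
$$\int_{-\varepsilon}^{\varepsilon}e^{it(R-s)}\re{(1-\lambda_{\delta+it})^{-1}}\widehat u(t)\,\dd t.$$
Write $\re{(1-\lambda_{\delta+it})^{-1}}=\frac{\re{1-\lambda_{\delta+it}}}{|1-\lambda_{\delta+it}|^2}=\frac{\pi}{2\Cg}\cdot\frac{L(1/|t|)}{|t|\tilde L(1/|t|)^2}(1+o(1))$ as in the proof of Corollary \ref{integrabilityreQ}. Since $\widehat u$ is continuous with $\widehat u(0)=\widehat{u_0}(t_0)$-type normalization (here $\widehat u(0)=\int u$), and since $e^{-its}\to 1$ uniformly on $\mathrm{supp}\,v$ as the relevant scale shrinks, the phase $e^{it(R-s)}$ oscillates and one is reduced to a Fourier-type integral of a slowly varying density. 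The key analytic input is the classical fact (this is precisely Proposition 6.2 / the $\beta=1$ analysis of \cite{MT}, already invoked in Proposition \ref{asymptoticISetIC}) that
$$\int_{\R}e^{itR}\,\frac{L(1/|t|)}{|t|\tilde L(1/|t|)^2}\,\chi(t)\,\dd t\sim\frac{2}{\tilde L(R)}$$
for a cutoff $\chi$ equal to $1$ near $0$; equivalently, one integrates by parts using $\frac{\dd}{\dd y}\left(\tilde L(y)^{-1}\right)=-\frac{L(y)}{y\tilde L(y)^2}$ and the substitution $y=1/|t|$, reducing everything to the elementary estimate $\int_{1/\varepsilon}^{R}\tilde L(y)^{-1}\,\dd\left(\text{oscillatory kernel}\right)$ whose leading order is governed by the value of $\tilde L$ at scale $R$, using $L(R)/\tilde L(R)\to 0$ and $\tilde L(R)\to+\infty$. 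This yields
$$I\sim\frac{\nu(\varphi)}{\pi}\cdot\frac{\pi}{2\Cg}\cdot\widehat u(0)\cdot\frac{2}{\tilde L(R)}=\frac{\nu(\varphi)\widehat u(0)}{\Cg\tilde L(R)}.$$

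Finally I identify the constant: $\nu(\varphi)\widehat u(0)=\left(\int_\LL\varphi\,\dd\nu\right)\left(\int_\R u\,\dd s\right)=\int_{\LL\times\R}(\varphi\otimes u)\,\dd(\nu\otimes\dd s)=m_\G(\varphi\otimes u)$, by the identification of $m_\G$ with $\nu\otimes\dd s$ via the coding of Section 4 (recall $\mu(\mathcal D^0)=1$ and the definition of $h$, $\nu$ in Paragraph 4.1.3). Hence $I\sim\frac{m_\G(\varphi\otimes u)}{\Cg\tilde L(R)}$ uniformly in $x\in\LL$ and $s\in\mathrm{supp}\,v$, which is the claim. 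The hardest part, as indicated, is controlling the error terms to the sharper scale $o(1/\tilde L(R))$ rather than $o(1)$: this requires keeping track of uniformity in $x$ and $s$ throughout (so that afterwards one may integrate against $\psi\otimes v$ and divide by $\tilde L(R)$ to conclude \eqref{thmD}), and it is here that the precise form of Proposition \ref{controlQ} — namely that $Q_{\delta+it}-(1-\lambda_{\delta+it})^{-1}\Pi_\delta$ is bounded uniformly near $\delta$, combined with the smoothness and compact support of $\widehat u$ — does the work, exactly following the template of \cite{MT}.
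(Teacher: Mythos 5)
Your skeleton (resolvent expansion $\re{Q_{\delta+it}}=\re{(1-\lambda_{\delta+it})^{-1}}\Pi_\delta+O(1)$, the local expansion of $\lambda_{\delta+it}$, the identity $\Pi_\delta(h\varphi)=\nu(\varphi)h$, and the identification $\nu(\varphi)\widehat u(0)=m_\G(\varphi\otimes u)$ with final constant $1/\Cg$) matches the paper, but the error control — which you yourself flag as the hard point — has a genuine gap. The difficulty is concentrated in the intermediate range $c/R\lesssim|t|\leqslant\varepsilon$: there the integrable majorant $\frac{L(1/|t|)}{|t|\tilde L(1/|t|)^2}$ has total mass of order $1/\tilde L(1/\varepsilon)$, a constant, which is enormous compared with the main term $1/\tilde L(R)$, so no absolute-value estimate can work and one must extract cancellation from $e^{it(R-s)}$. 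Your two devices do not supply it. First, bounding the $O(1)$ remainder of Proposition \ref{controlQ} over the whole fixed interval $[-\varepsilon,\varepsilon]$ and invoking Riemann--Lebesgue only yields an unquantified $o(1)$, which cannot be compared with $1/\tilde L(R)$ (both tend to $0$); the paper avoids this by using that expansion only on the shrinking interval $|t|\leqslant A/(R-s)$, where the remainder contributes $O(A/R)=o(1/\tilde L(R))$. Second, your "classical fact" $\int e^{itR}\frac{L(1/|t|)}{|t|\tilde L(1/|t|)^2}\chi(t)\,\dd t\sim 2/\tilde L(R)$ is not what Proposition \ref{asymptoticISetIC} provides (that proposition concerns Laplace--Fourier transforms of a regularly varying tail $1-F$), and your integration-by-parts sketch after the substitution $y=1/|t|$ fails as written: the boundary term at $y=1/\varepsilon$ is of constant order $1/\tilde L(1/\varepsilon)$ and differentiating the phase $e^{iR/y}$ produces a factor $R/y^2$ whose integral near the lower limit is not small, so the claimed reduction is not "elementary" — the cancellation between these pieces is exactly what has to be proved. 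Finally, the claim that the region $|t|>\varepsilon$ contributes an error decaying faster than any power of $R$ presupposes smoothness of $t\mapsto Q_{\delta+it}$, which is not available (only the modulus of continuity of Proposition \ref{continuityoftransfert}); this is a lesser issue, but as stated it is unjustified.

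The paper's proof resolves all of this with a single mechanism you are missing: a half-period shift. It splits $I$ at the $R$-dependent scale $|t|=A/(R-s)$; on the inner part the phase is replaced by $1$ up to the harmless factor $e^{it(R-s)}-1$, and the main term is the non-oscillatory integral $\int_0^{A/(R-s)}\frac{L(1/t)}{t\tilde L(1/t)^2}\widehat u(t)\,\dd t$, evaluated by the clean integration by parts ($y=1/t$, $\dd(\tilde L(y)^{-1})=-\frac{L(y)}{y\tilde L(y)^2}\dd y$) to give $\widehat u(0)/\tilde L(R)$ up to negligible terms; on the outer part one writes $2J$ as a sum of three differences obtained from the substitution $t\mapsto t-\pi/(R-s)$, and bounds them using the integrable majorant of $\re{Q_{\delta+it}}$, the regularity of $\widehat u$, and the continuity estimate $\|\ot_{\delta+it}-\ot_{\delta+i(t-\pi/(R-s))}\|\preceq\frac{\tilde L(R)}{R}$ together with $\|Q_{\delta+it}\|\preceq\bigl(|t|\tilde L(1/|t|)\bigr)^{-1}$, yielding a bound of the form $\frac{1}{\tilde L(R)}\bigl(\frac{L(R)}{\tilde L(R)}(A+\pi)^{3/4}+\frac{1}{\sqrt A}\bigr)$, so that letting $R\to+\infty$ and then $A\to+\infty$ kills the outer contribution relative to $1/\tilde L(R)$. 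Without this (or an equivalent quantitative cancellation argument at scale $|t|\in[A/R,\varepsilon]$), your proof does not establish the stated asymptotics, only an upper bound of the wrong order.
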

\noindent
\begin{proof}
Let $A>0$. We split $I$ into $I_1+I_2$ where
$$I_1=\dfrac{1}{\pi h(x)}\int_{|t|>\frac{A}{R-s}}e^{it(R-s)}\re{Q_{\delta+it}}(h\varphi)(x)\widehat{u}(t)\dd t$$
\noindent
and
$$I_2=\dfrac{1}{\pi h(x)}\int_{|t|\leqslant\frac{A}{R-s}}e^{it(R-s)}\re{Q_{\delta+it}}(h\varphi)(x)\widehat{u}(t)\dd t.$$
\noindent
We first deal with $I_1$. We decompose this integral according the sign of $t$; we only detail the arguments for 
$$J=\dfrac{1}{\pi h(x)}\int_{t>\frac{A}{R-s}}e^{it(R-s)}\re{Q_{\delta+it}}(h\varphi)(x)\widehat{u}(t)\dd t.$$
\noindent
Setting $t=y-\frac{\pi}{R-s}$ in $J$, we may write
$$J=-\dfrac{1}{\pi h(x)}\displaystyle{\int_{y>\frac{A+\pi}{R-s}}}e^{iy(R-s)}\re{Q_{\delta+i\left(y-\frac{\pi}{R-s}\right)}}(h\varphi)(x)\widehat{u}
\left(y-\dfrac{\pi}{R-s}\right)\dd y,$$
\noindent
hence
\begin{align*}
2J = & \dfrac{1}{\pi h(x)}\displaystyle{\int_{\frac{A}{R-s}}^{\frac{A+\pi}{R-s}}}e^{it(R-s)}\re{Q_{\delta+it}}(h\varphi)(x)\widehat{u}(t)\dd t\\
& +\dfrac{1}{\pi h(x)}\displaystyle{\int_{t>\frac{A+\pi}{R-s}}}e^{it(R-s)}\re{Q_{\delta+i\left(t-\frac{\pi}{R-s}\right)}}(h\varphi)(x)\left(\widehat{u}(t)-
\widehat{u}\left(t-\dfrac{\pi}{R-s}\right)\right)\dd t\\
& +\dfrac{1}{\pi h(x)}\displaystyle{\int_{t>\frac{A+\pi}{R-s}}}e^{it(R-s)}\left(\re{Q_{\dit}}-\re{Q_{\delta+i\left(t-\frac{\pi}{R-s}\right)}}\right)(h\varphi)(x)
\widehat{u}(t)\dd t\\
=: & K_1+K_2+K_3.
\end{align*}
Let us first deal with $K_1$. By Corollary \ref{integrabilityreQ}, for any $t$ close to $0$
$$\left|\re{Q_{\delta+it}}(h\varphi)(x)\right|\preceq\dfrac{L\left(\frac{1}{|t|}\right)}{|t|\tilde{L}\left(\frac{1}{|t|}\right)^2};$$
\noindent
therefore 
\begin{align*}
|K_1| & \preceq\displaystyle{\int_{\frac{A}{R-s}}^{\frac{A+\pi}{R-s}}}\dfrac{L\left(\frac{1}{t}\right)}{t\tilde{L}\left(\frac{1}{t}\right)^2}\dd t\\
& \preceq \dfrac{1}{\tilde{L}(R)}\dfrac{L(R)}{\tilde{L}(R)}\displaystyle{\int_A^{A+\pi}}
\dfrac{1}{t}\dfrac{L\left(\frac{R-s}{t}\right)}{L(R)}\dfrac{\tilde{L}(R)^2}{\tilde{L}\left(\frac{R-s}{t}\right)^2}\dd t.
\end{align*}
\noindent
Potter's lemma \ref{PB} combined with the fact that $s$ belongs to a compact subset of $\R$ thus implies 
\begin{align}\label{mapremièreétoile}
|K_1|\preceq \dfrac{1}{\tilde{L}(R)}\dfrac{L(R)}{\tilde{L}(R)}\displaystyle{\int_A^{A+\pi}}\dfrac{\dd t}{t^{\frac{1}{4}}}
\preceq \dfrac{1}{\tilde{L}(R)}\dfrac{L(R)}{\tilde{L}(R)}\left(A+\pi\right)^{\frac{3}{4}}.
\end{align}
\noindent
Similarly 
\begin{equation}\label{madeuxièmeétoile}|K_2|\preceq \dfrac{1}{\tilde{L}(R)}\dfrac{L(R)}{\tilde{L}(R)}.\end{equation}
\noindent
Concerning $|K_3|$, we get
\begin{align*}
|K_3|\leqslant & \dfrac{1}{\pi h(x)}\displaystyle{\int_{t>\frac{A+\pi}{R-s}}}
\left|\left|Q_{\delta+it}\right|\right|\left|\left|Q_{\delta+i\left(t-\frac{\pi}{R-s}\right)}\right|\right|\left|\left|\ot_{\dit}-\ot_{\delta+i\left(t-\frac{\pi}{R-s}\right)}\right|\right|
|\widehat{u}(t)|\dd t.
\end{align*}
\noindent
There exists $M>0$ such that the support of $u$ is included in $[-M,M]$; we thus deduce from Propositions \ref{continuityoftransfert} and
\ref{controlQ} that
\begin{align*}|K_3|\preceq & \dfrac{\tilde{L}\left(\dfrac{R-s}{\pi}\right)}{R-s}\displaystyle{\int_{\frac{A+\pi}{R-s}}^M}\dfrac{1}{t\tilde{L}\left(\frac{1}{t}\right)}
\dfrac{1}{\left(t-\frac{\pi}{R-s}\right)\tilde{L}\left(\frac{1}{t-\frac{\pi}{R-s}}\right)}\dd t\\
\preceq &\dfrac{\tilde{L}(R)}{R}\displaystyle{\int_{\frac{A}{R-s}}^{M-\frac{\pi}{R-s}}}\dfrac{1}{\left(t+\frac{\pi}{R-s}\right)\tilde{L}\left(\frac{1}{t+\frac{\pi}{R-s}}\right)}
\dfrac{1}{t\tilde{L}\left(\frac{1}{t}\right)}\dd t.\\
\end{align*}
\noindent
Noticing that $\frac{1}{t}=\frac{R-s}{t(R-s)}$, Potter's lemma implies that when $R\longrightarrow+\infty$
$$\tilde{L}\left(\dfrac{1}{t}\right)\preceq\tilde{L}\left(\dfrac{1}{t+\frac{\pi}{R-s}}\right),$$
\noindent
hence
\begin{align*}|K_3|\preceq & \dfrac{\tilde{L}(R)}{R}\displaystyle{\int_{\frac{A}{R}}^{M}}
\dfrac{1}{t^2\tilde{L}\left(\frac{1}{t}\right)^2} \dd t
\preceq \tilde{L}(R)\displaystyle{\int_{A}^{RM}}\dfrac{1}{t^2\tilde{L}\left(\frac{R}{t}\right)^2} \dd t\\
\preceq & \frac{1}{\tilde{L}(R)}\displaystyle{\int_{A}^{RM}}\dfrac{\tilde{L}(R)^2}{t^2\tilde{L}\left(\frac{R}{t}\right)^2} \dd t
\preceq \frac{1}{\tilde{L}(R)}\displaystyle{\int_{A}^{RM}}\dfrac{1}{t^{\frac{3}{2}}} \dd t,
\end{align*}
\noindent
which yields 
\begin{equation}\label{matroisièmeétoile}|K_3|\preceq\frac{1}{\tilde{L}(R)}\dfrac{1}{\sqrt{A}}.\end{equation}
Combining \eqref{mapremièreétoile}, \eqref{madeuxièmeétoile} and \eqref{matroisièmeétoile}, we deduce
$\lim\limits_{A\longrightarrow+\infty}\lim\limits_{R\longrightarrow+\infty}\tilde{L}(R)|J|=0$ uniformly in $x\in\LL$ and $s\in\mathrm{supp}\ v$. Therefore
\begin{equation}\label{firstpartmixingbeta1}
\lim\limits_{A\longrightarrow+\infty}\lim\limits_{R\longrightarrow+\infty}\tilde{L}(R)|I_1|=0
\end{equation}
\noindent
uniformly in $x\in\LL$ and $s\in\mathrm{supp}\ v$. We now explain why the contribution of $I_2$ is dominant. We write
\begin{align*}
I_2 = & \dfrac{1}{\pi h(x)}\displaystyle{\int_{|t|\leqslant\frac{A}{R-s}}}e^{it(R-s)}\left[\re{Q_{\delta+it}}-\re{\left(1-\lambda_{\delta+it}\right)^{-1}}\Pi_{\delta}\right](h\varphi)(x)\widehat{u}(t)\dd t\\
& + \dfrac{1}{\pi h(x)}\displaystyle{\int_{|t|\leqslant\frac{A}{R-s}}}e^{it(R-s)}\left[\re{\left(1-\lambda_{\delta+it}\right)^{-1}}\Pi_{\delta}\right](h\varphi)(x)\widehat{u}(t)\dd t\\
=: & K_1+K_2.
\end{align*}
\noindent
It follows from Proposition \ref{controlQ} that $\left|K_1\right|\leqslant 2\dfrac{A}{R}$ for $R$ large enough. We split $K_2$ into $L_1+L_2$ where
$$L_1=\dfrac{1}{\pi h(x)}\int_{|t|\leqslant\frac{A}{R-s}}\left(e^{it(R-s)}-1\right)\left[\re{\left(1-\lambda_{\delta+it}\right)^{-1}}\Pi_{\delta}\right](h\varphi)(x)\widehat{u}(t)\dd t$$
\noindent
and
$$L_2=\dfrac{1}{\pi h(x)}\int_{|t|\leqslant\frac{A}{R-s}}\left[\re{\left(1-\lambda_{\delta+it}\right)^{-1}}\Pi_{\delta}\right](h\varphi)(x)\widehat{u}(t)\dd t.$$
\noindent
The equality $\Pi_{\delta}(h\varphi)(x)=\sigma_{\oo}(h\varphi)h(x)=\nu(\varphi)h(x)$ yields
$$L_1=\dfrac{\nu(\varphi)}{\pi}\int_{|t|\leqslant\frac{A}{R-s}}\left(e^{it(R-s)}-1\right)\re{\left(1-\lambda_{\delta+it}\right)^{-1}}\widehat{u}(t)\dd t.$$
\noindent
and the local expansion of $\re{\left(1-\lambda_{\delta+it}\right)^{-1}}$ given in \eqref{localexp} thus implies
\begin{align*}
\tilde{L}(R)L_1 = &\ \tilde{L}(R)\dfrac{\nu(\varphi)}{\Cg}\displaystyle{\int_{|y|\leqslant A}}\dfrac{(e^{iy}-1)}{y}\dfrac{L\left(\frac{R-s}{y}\right)}{\tilde{L}\left(\frac{R-s}{y}\right)^2}
\widehat{u}\left(\dfrac{y}{R-s}\right)\dd y\\
\sim &\ \dfrac{\nu(\varphi)}{\Cg}\dfrac{L(R-s)}{\tilde{L}(R-s)}\displaystyle{\int_{|y|\leqslant A}}\dfrac{(e^{iy}-1)}{y}\dfrac{L\left(\frac{R-s}{y}\right)}{L(R-s)}
\dfrac{\tilde{L}(R-s)^2}{\tilde{L}\left(\frac{R-s}{y}\right)^2}\widehat{u}\left(\dfrac{y}{R-s}\right)\dd y,\\
\preceq &\ A\dfrac{L(R)}{\tilde{L}(R)}.
\end{align*}
\noindent
It remains now to deal with $L_2$; we decompose it according the sign of $t$. We only detail the arguments for $t>0$, \emph{i.e.} we control
$$M=\dfrac{1}{\pi h(x)}\int_0^{\frac{A}{R-s}}\left[\re{\left(1-\lambda_{\delta+it}\right)^{-1}}\Pi_{\delta}\right](h\varphi)(x)\widehat{u}(t)\dd t.$$
\noindent
We follow \cite{MT} and denote by $H$ the function defined as follows:
$$\re{(1-\lambda_{\delta+it})^{-1}}=\dfrac{\pi}{2}\dfrac{1}{\Cg}\dfrac{L\left(\frac{1}{t}\right)}{t\tilde{L}^2\left(\frac{1}{t}\right)}(1+H(t))$$
\noindent
where $H(t)=o(1)$ when $t$ is close to $0$. We obtain
\begin{align*}
M = & \dfrac{\nu(\varphi)}{\pi}\displaystyle{\int_{0}^{\frac{A}{R-s}}}\re{(1-\lambda_{\delta+it})^{-1}}\widehat{u}(t)\dd t\\
= &\dfrac{\nu(\varphi)}{2\Cg}\displaystyle{\int_{0}^{\frac{A}{R-s}}}\dfrac{L\left(\frac{1}{t}\right)}{t\tilde{L}^2\left(\frac{1}{t}\right)}\widehat{u}(t)\dd t +
\dfrac{\nu(\varphi)}{2\Cg}\displaystyle{\int_{0}^{\frac{A}{R-s}}}\dfrac{L\left(\frac{1}{t}\right)}{t\tilde{L}^2\left(\frac{1}{t}\right)}H(t)\widehat{u}(t)\dd t\\
= & \dfrac{\nu(\varphi)}{2\Cg}\displaystyle{\int_{0}^{\frac{A}{R-s}}}\dfrac{L\left(\frac{1}{t}\right)}{t\tilde{L}^2\left(\frac{1}{t}\right)}\widehat{u}(t)\dd t +
O\left(\underset{0\leqslant t\leqslant\frac{A}{R-s}}{\sup}|H(t)|
\displaystyle{\int_{0}^{\frac{A}{R-s}}}\dfrac{L\left(\frac{1}{t}\right)}{t\tilde{L}^2\left(\frac{1}{t}\right)}\dd t\right).
\end{align*}
\noindent
Setting $y=\frac{1}{t}$, it follows from an integration by parts that
$$\displaystyle{\int_{0}^{\frac{A}{R-s}}}\dfrac{L\left(\frac{1}{t}\right)}{t\tilde{L}^2\left(\frac{1}{t}\right)}\widehat{u}(t)\dd t=
\dfrac{\widehat{u}\left(\frac{A}{R-s}\right)}{\tilde{L}\left(\frac{R-s}{A}\right)}+\displaystyle{\int_{\frac{R-s}{A}}^{+\infty}}
\widehat{u}'\left(\frac{1}{y}\right)\tilde{L}(y)^{-1}\dfrac{\dd y}{y^2}.$$
\noindent
Using the properties of slowly varying functions, we deduce that the second term of the right member is negligeable with respect to the first one when 
$R\longrightarrow+\infty$. The regularity of $\widehat{u}$ yields 
$$\lim\limits_{R\longrightarrow+\infty}\widehat{u}\left(\dfrac{A}{R-s}\right)=\widehat{u}(0)$$
\noindent
uniformly in $s\in\mathrm{supp}\ v$. Finally
$$\tilde{L}(R)M\underset{R\longrightarrow+\infty}{\sim}\dfrac{\nu(\varphi)\widehat{u}(0)}{2\Cg};$$
\noindent
hence
$$\tilde{L}(R)L_2\underset{R\longrightarrow+\infty}{\sim}\dfrac{1}{\Cg}m_\G(\varphi\otimes u).$$
\noindent
Combining this result with \eqref{firstpartmixingbeta1}, we finally obtain
$$\lim\limits_{R\longrightarrow+\infty}\dfrac{\tilde{L}(R)}{\pi h(x)}\int_{\R}e^{it(R-s)}\re{Q_{\delta+it}}(h\varphi)(x)\widehat{u}(t)\dd t=
\dfrac{m_\G(\varphi\otimes u)}{\Cg}$$
\noindent
uniformly in $x\in\LL$ and $s\in\mathrm{supp}\ v$. This achieves the proof of Proposition \ref{butmixingbeta1utensphi}.
\end{proof}
\subsubsection{Proof of Proposition \ref{convergencexiversdelta}}
We follow the steps of paragraph 6 in \cite{MT}. To show the convergence, we use the local expansion of $(1-\lambda_{\xi+it})^{-1}$ near $0$ to bound the function
$t\longmapsto\re{Q_{\xi+it}}(h\varphi)(x)$ from above by an integrable function. Let us write $\xi=\delta+\kappa$ where $\kappa>0$.
\begin{prop}\label{localexpb1}
There exist finite measures $\nu_j$ on $\R^+$ with masses $P_j$, $j\in[\![1,p]\!]$ such that
$$1-\lambda_{\delta+\kappa+it}=\sum\limits_{j=1}^p P_j\left[\left(\kappa I_{C_j}+t I_{S_j}\right)-i\left(\kappa I_{S_j}-t I_{C_j}\right)\right]
+o\left(|t|\tilde{L}\left(\dfrac{1}{|t|}\right)+\kappa\tilde{L}\left(\dfrac{1}{\kappa}\right)\right),$$
\noindent
where, for any $j\in[\![1,p]\!]$, $I_{S_j}$ and $I_{C_j}$ are defined as in Proposition \ref{asymptoticISetIC} by
$$I_{S_j}:=\int_0^{+\infty}e^{-\kappa y}\sin(ty)\left(1-\nu_j([0,y])\right)\dd y$$
\noindent
and
$$I_{C_j}:=\int_0^{+\infty}e^{-\kappa u}\cos(ty)\left(1-\nu_j([0,y])\right)\dd y.$$
\end{prop}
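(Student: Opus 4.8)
The plan is to transcribe the proof of Proposition~\ref{localexp} with the purely imaginary perturbation $it$ replaced by $\kappa+it$, using Proposition~\ref{asymptoticISetIC} in the role that \cite{Er} played there. Recall that, with the normalisation $\sigma_{\oo}(h_z)=1$ (and $h_\delta=h$, $\sigma_{\oo}(h)=1$), one has $\lambda_z=\sigma_{\oo}(\ot_z h_z)$. Since $\sigma_{\oo}\otd=\sigma_{\oo}$ by Lemma~\ref{dualityTtransfer}, the identity $\sigma_{\oo}\big(\otd(h_z-h)\big)=\sigma_{\oo}(h_z)-\sigma_{\oo}(h)=0$ gives
\[
\lambda_{\delta+\kappa+it}=\sigma_{\oo}(\ot_{\delta+\kappa+it}\,h)+\sigma_{\oo}\big((\ot_{\delta+\kappa+it}-\otd)(h_{\delta+\kappa+it}-h)\big),
\]
and the second summand is negligible: by Proposition~\ref{continuityoftransfert} (case $\beta=1$) together with $\|\ot_{\delta+\kappa+it}-\otd\|\le\|\ot_{\delta+\kappa+it}-\ot_{\delta+it}\|+\|\ot_{\delta+it}-\otd\|$, and the fact that $z\mapsto h_z$ has the same regularity as $z\mapsto\ot_z$, this term is $O\big((\kappa\tilde{L}(1/\kappa)+|t|\tilde{L}(1/|t|))^2\big)=o\big(\kappa\tilde{L}(1/\kappa)+|t|\tilde{L}(1/|t|)\big)$. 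Writing $\sigma_{\oo}(\ot_{\delta+\kappa+it}h)=1+\sigma_{\oo}\big((\ot_{\delta+\kappa+it}-\otd)h\big)=1+\sum_{j=1}^{p+q}S_j$ with
\[
S_j=\sum_{\alpha\in\G_j^*}\int_{\LL\setminus\LL_j}h(\alpha.x)\,e^{-\delta b(\alpha,x)}\big(e^{-(\kappa+it)b(\alpha,x)}-1\big)\,\dd\sigma_{\oo}(x),
\]
we are reduced to analysing each $S_j$.

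Next I would introduce, for each $j$, the finite measure on $[-C,+\infty[$ (with $C$ the constant of Corollary~\ref{busedist})
\[
\tilde\nu_j:=\int_{\LL\setminus\LL_j}\Big(\sum_{\alpha\in\G_j^*}h(\alpha.x)\,e^{-\delta b(\alpha,x)}\,D_{b(\alpha,x)}\Big)\dd\sigma_{\oo}(x),
\]
of total mass $P_j:=\int_{\LL\setminus\LL_j}N_j(x)\,\dd\sigma_{\oo}(x)<+\infty$ (finiteness because $\G_j$ is convergent for $j\le p$, and from $(N)$ for $j>p$). By Fubini, $S_j=\int(e^{-(\kappa+it)y}-1)\dd\tilde\nu_j(y)$; translating $\tilde\nu_j$ by $C$ to obtain a measure $\mu_j$ on $\R^+$ costs only a factor $e^{(\kappa+it)C}=1+O(\kappa+|t|)$ and an additive $P_j(e^{(\kappa+it)C}-1)=O(\kappa+|t|)$, both absorbed into the remainder. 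An integration by parts (legitimate since $\kappa>0$) then gives
\[
S_j=-(\kappa+it)\,P_j\int_0^{+\infty}e^{-(\kappa+it)y}\big(1-\nu_j([0,y])\big)\dd y,
\]
where $\nu_j:=P_j^{-1}\mu_j$ is a probability measure; expanding $e^{-(\kappa+it)y}=e^{-\kappa y}(\cos ty-i\sin ty)$ and using the notation of Proposition~\ref{asymptoticISetIC} yields exactly $-S_j=P_j\big[(\kappa I_{C_j}+t I_{S_j})-i(\kappa I_{S_j}-t I_{C_j})\big]$.

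It remains to control the tails $1-\nu_j([0,T])$. For $j\in[\![1,p]\!]$, formula~\eqref{prooftheoreticlocalexp1} from the proof of Proposition~\ref{theoreticlocalexp} gives $1-F_j^x(T)\sim\big(\tfrac{C_j}{N_j(x)}h(x_j)e^{2\delta(x_j|x)_{\oo}}\big)\tfrac{L(T)}{T}$ uniformly in $x\notin\LL_j$; since $(x_j|x)_{\oo}\asymp\dhy(\oo,(x_jx))$ is bounded above and below uniformly in $x$ and $N_j(x)$ is comparable to $\sum_{\alpha\in\G_j}e^{-\delta\dhy(\oo,\alpha.\oo)}$, the prefactor is bounded away from $0$ and $\infty$, so averaging over $x$ yields $1-\nu_j([0,T])\sim c_j\,L(T)/T$ for a positive constant $c_j$, and Proposition~\ref{asymptoticISetIC} applies to $\nu_j$ and delivers the stated behaviour of $I_{S_j},I_{C_j}$. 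For $j\in[\![p+1,p+q]\!]$, Assumption~$(N)$ (or the exponential decay when $\G_j$ is hyperbolic) combined with Corollary~\ref{busedist} gives $1-\nu_j([0,T])=o(L(T)/T)$ uniformly in $x$; then, using $|e^{-(\kappa+it)y}-1|\le|e^{-\kappa y}-1|+|e^{-ity}-1|$ together with the ``$o$'' versions of parts~2a) and~2b) of Proposition~\ref{etapecontroltransfert}, one gets $S_j=o\big(\kappa\tilde{L}(1/\kappa)+|t|\tilde{L}(1/|t|)\big)$. Collecting everything, $1-\lambda_{\delta+\kappa+it}=-\sum_{j=1}^{p+q}S_j-\sigma_{\oo}\big((\ot_{\delta+\kappa+it}-\otd)(h_{\delta+\kappa+it}-h)\big)$ equals $\sum_{j=1}^{p}P_j\big[(\kappa I_{C_j}+t I_{S_j})-i(\kappa I_{S_j}-t I_{C_j})\big]$ up to $o\big(|t|\tilde{L}(1/|t|)+\kappa\tilde{L}(1/\kappa)\big)$.

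The main obstacle I anticipate is the bookkeeping of the error terms: one must verify that all the $O$'s generated by the translation by $C$, by the non-influent factors, and by the cross term $\sigma_{\oo}\big((\ot_{\delta+\kappa+it}-\otd)(h_{\delta+\kappa+it}-h)\big)$ are genuinely $o\big(|t|\tilde{L}(1/|t|)+\kappa\tilde{L}(1/\kappa)\big)$, which rests on Karamata's Lemma~\ref{regvar} ($\tilde{L}(1/\kappa)\to+\infty$, $L/\tilde{L}\to 0$) and on the \emph{uniformity in }$x$ of the tail estimates for the $\nu_j^x$ when they are averaged into the single measures $\nu_j$. Once Proposition~\ref{asymptoticISetIC} is granted, everything else is a routine transcription of the proof of Proposition~\ref{localexp}.
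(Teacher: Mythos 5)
Your proof is correct and follows essentially the same route as the paper: the same spectral identity $\lambda_z=\sigma_{\oo}(\ot_z h)+\sigma_{\oo}\big((\ot_z-\otd)(h_z-h)\big)$ with the cross term and the non-influent factors absorbed into the remainder, the same reduction of each influent $S_j$ to the Laplace--Fourier transform of a single finite measure, and the same appeal to Proposition \ref{asymptoticISetIC} after integration by parts. The only (harmless) difference is the construction of $\nu_j$: you average the Dirac masses at $b(\alpha,x)$ over $x$ and translate by the constant of Corollary \ref{busedist} to land in $\R^+$, paying an $O(\kappa+|t|)$ error, whereas the paper splits off the term $S_{j1}$ (also $O(\kappa+|t|)$) and places the Dirac masses at $\dhy(\oo,\alpha.\oo)$; both yield measures with tail asymptotic to a positive constant times $L(T)/T$, so the conclusion is the same.
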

\begin{proof}
The steps are the same as for the local expansion of $\lambda_{\delta+it}$ (in \ref{localexp}). 
For $z=\delta+\kappa+it$, we write
$$\lambda_z=\sigma_{\oo}(\ot_z h_z)=\sigma_{\oo}(\ot_z h)+\sigma_{\oo}\left((\ot_z-\otd)(h_z-h)\right).$$
\noindent
By Proposition \ref{continuityoftransfert}, the contribution of the second term is
$\preceq\left(\kappa \tilde{L}\left(\dfrac{1}{\kappa}\right)+|t|\tilde{L}\left(\dfrac{1}{|t|}\right)\right)^2$. The first term may be decomposed as
\begin{align*}
\sigma_{\oo}(\ot_{\delta+\kappa+it} h) & =1+\sigma_{\oo}(\ot_{\delta+\kappa+it} h)-1=1+\sigma_{\oo}(\ot_{\delta+\kappa+it} h)-\sigma_{\oo}(\otd h)\\
& =1+\sum\limits_{j=1}^{p+q}S_j
\end{align*}
\noindent
where
$$S_j:=\sum\limits_{\alpha\in\G_j^*}\displaystyle{\int_{\LL\setminus\LL_j}}h(\alpha.x)e^{-\delta b(\alpha,x)}(e^{-(it+\kappa)b(\alpha,x)}-1)\dd\sigma_{\oo}(x).$$
\noindent
Using the notations of Proposition \ref{theoreticlocalexp}, we obtain
$$S_j=\int_{\LL\setminus\LL_j}M_j(x)\left(\widehat{\mu_j^x}(-t+i\kappa)-1\right)\dd\sigma_\oo(x)$$
\noindent 
and we deduce from Proposition \ref{etapecontroltransfert} that the contribution of this quantity is
$o\left(\kappa \tilde{L}\left(\dfrac{1}{\kappa}\right)\right)+o\left(|t|\tilde{L}\left(\dfrac{1}{|t|}\right)\right)$
for $j\in[\![p+1,p+q]\!]$.

Let $j\in[\![1,p]\!]$ and $x_j$ be the fixed point of the elementary parabolic group $\G_j$ and set
$\Delta_{\alpha}(x):=b(\alpha,x)-\dhy(\oo,\alpha.\oo)$ for any $\alpha\in\G_j^*$ and $x\in\LL\setminus\LL_j$. The sequence 
$\left(\Delta_\alpha(x)\right)_{\alpha\in\G_j}$ converges to $-2\left(x|x_j\right)_{\oo}$ when 
$\dhy(\oo,\alpha.\oo)\longrightarrow+\infty$, uniformly in $x\notin\LL_j$.
We thus split $S_j$ into $S_{j1}+S_{j2}$ where
%
$$S_{j1}:=\sum\limits_{\alpha\in\G_j^*}e^{-\del\dhy(\oo,\alpha.\oo)}\displaystyle{\int_{\LL\setminus\LL_j}}h(\alpha.x)e^{-\delta\Delta_{\alpha}(x)}\left(e^{-(it+\kappa)b(\alpha,x)}-e^{-(it+\kappa)\dhy(\oo,\alpha.\oo)}\right)\dd\sigma_{\oo}(x)$$
\noindent
and
$$S_{j2}:=\sum\limits_{\alpha\in\G_j^*}e^{-\del\dhy(\oo,\alpha.\oo)}\left(e^{-(it+\kappa)\dhy(\oo,\alpha.\oo)}-1\right)\displaystyle{\int_{\LL\setminus\LL_j}}h(\alpha.x)e^{-\delta\Delta_{\alpha}(x)}\dd\sigma_{\oo}(x).$$
\noindent
Hence
$$
\left|S_{j1}\right|\leqslant Ce^{\delta C}(|t|+\kappa)\sum\limits_{\alpha\in\G_j^*}e^{-\del\dhy(\oo,\alpha.\oo)}=O(|t|+\kappa)=o\left(|t|\tilde{L}\left(\dfrac{1}{|t|}\right)+\kappa \tilde{L}\left(\dfrac{1}{\kappa}\right)\right),
$$
\noindent
since $\lim\limits_{x\longrightarrow+\infty}\tilde{L}(x)=+\infty$. We now decomposed $S_{j2}$ as 
$S_{j2}=P_j\widehat{\nu_j}(-t+i\kappa)-P_j$ where
$$\nu_j=\dfrac{1}{P_j}\sum\limits_{\alpha\in\G_j}e^{-\delta\dhy(\oo,\alpha.\oo)}\left(\displaystyle{\int_{\LL\setminus\LL_j}}h(\alpha.x)
e^{-\delta\Delta_{\alpha}(x)}\dd\sigma_{\oo}(x)\right)D_{\dhy(\oo,\alpha.\oo)}.$$
\noindent
The normalizing coefficient $P_j$ is given by
$$P_j=\sum\limits_{\alpha\in\G_j}e^{-\delta\dhy(\oo,\alpha.\oo)}\left(\displaystyle{\int_{\LL\setminus\LL_j}}h(\alpha.x)
e^{-\delta\Delta_{\alpha}(x)}\dd\sigma_{\oo}(x)\right).$$
\noindent
It is finite by Assumption $(P_1)$ and Corollary \ref{busedist} implies
$\left|\Delta_{\alpha}(x)\right|\leqslant C$ uniformly in $x\notin\LL_j$. In addition, these measures satisfy
\begin{align}\label{equivalencepourmesurenu}
1-\nu_j([0,T])=
\dfrac{1}{P_j}\sum\limits_{\alpha\ |\ \dhy(\oo,\alpha.\oo)>T}e^{-\delta\dhy(\oo,\alpha.\oo)}\left(\displaystyle{\int_{\LL\setminus\LL_j}}
h(\alpha.x)e^{-\delta\Delta_\alpha(x)}\dd\sigma_{\oo}(x)\right)
\sim \dfrac{\mathcal{C}_j}{P_j}\dfrac{L(T)}{T^{\beta}},
\end{align}
\noindent
when $T\longrightarrow+\infty$, with
$$\mathcal{C}_j=\lim\limits_{\alpha\longrightarrow+\infty}\displaystyle{\int_{\LL\setminus\LL_j}}h(\alpha.x)
e^{-\delta\Delta_{\alpha}(x)}\sigma_{\oo}(\dd x)=
h(x_{\mathfrak{a}_j})\displaystyle{\int\limits_{\LL\setminus\LL_j}}\dfrac{\dd\sigma_{\oo}(x)}{\dhy_{\oo}(x,x_{\mathfrak{a}_j})^{\frac{2\delta}{a}}}.$$
\noindent
Therefore
\begin{align*}
\dfrac{S_{j2}}{P_j} = & \displaystyle{\int_0^{+\infty}}\left(e^{i(-t+i\kappa)y}-1\right)\dd\nu_j(y)\\
= & -(\kappa+it)\displaystyle{\int_0^{+\infty}}e^{i(-t+i\kappa)y}(1-\nu_j[0,y])\dd y\\
= & \left(-\kappa I_{Cj}-tI_{Sj}\right)+i\left(\kappa I_{Sj}-tI_{Cj}\right)
\end{align*}
\noindent
where
$$I_{Cj}:=\displaystyle{\int_0^{+\infty}}e^{-\kappa y}\cos(ty)(1-\nu_j[0,y])\dd y$$
\noindent
and
$$I_{Sj}:=\displaystyle{\int_0^{+\infty}}e^{-\kappa y}\sin(ty)(1-\nu_j[0,y])\dd y.$$
\noindent
Finally
$$1-\lambda_{\delta+\kappa+it}=\sum_{j=1}^pP_j\left[\left(\kappa I_{Cj}+tI_{Sj}\right)-i\left(\kappa I_{Sj}-tI_{Cj}\right)\right]+
o\left(|t|\tilde{L}\left(\dfrac{1}{|t|}\right)+\kappa \tilde{L}\left(\dfrac{1}{\kappa}\right)\right).$$
\end{proof}
As a consequence of Propositions \ref{asymptoticISetIC} and \ref{localexpb1}, we may state
\begin{prop}\label{propositioncomportement1moinslambda}
Let $\kappa>0$ and $\varepsilon>0$ be as in Proposition \ref{spectreperturbation}. For any $t\in\R$ such that $\max\left(\kappa,|t|\right)<\varepsilon$, the dominant eigenvalue $\lambda_{\delta+\kappa+it}$ satisfies
\begin{align*}
1) & \left|1-\lambda_{\delta+\kappa+it}\right|^{-1} \preceq \left|\re{1-\lambda_{\delta+\kappa+it}}\right|^{-1}\preceq
\dfrac{1}{\sum\limits_{j=1}^p\mathcal{C}_j}\dfrac{1}{\kappa\tilde{L}\left(\dfrac{1}{\kappa}\right)}\ when\ |t|\leqslant\kappa;\\
2) & \left|1-\lambda_{\delta+\kappa+it}\right|^{-1}\preceq
\dfrac{1}{\sum\limits_{j=1}^p\mathcal{C}_j}\dfrac{1}{(\kappa+|t|)\tilde{L}\left(\dfrac{1}{|t|}\right)}\ when\ |t|>\kappa;\\
3) & \left|\re{\left(1-\lambda_{\delta+\kappa+it}\right)^{-1}}\right|\preceq
\dfrac{1}{\sum\limits_{j=1}^p\mathcal{C}_j}
\left(\dfrac{\kappa}{(\kappa^2+|t|^2)\tilde{L}\left(\dfrac{1}{|t|}\right)}+
\dfrac{|t|L\left(\dfrac{1}{|t|}\right)}{(\kappa^2+|t|^2)\tilde{L}\left(\dfrac{1}{|t|}\right)^2}\right)\\
&\ when\ |t|>\kappa.
\end{align*}
\end{prop}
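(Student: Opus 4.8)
The plan is to deduce Proposition~\ref{propositioncomportement1moinslambda} directly from the local expansion of $1-\lambda_{\delta+\kappa+it}$ obtained in Proposition~\ref{localexpb1} together with the estimates $i)$--$iv)$ of Proposition~\ref{asymptoticISetIC} applied to each of the finitely many measures $\nu_j$, $j\in[\![1,p]\!]$. Recall from \eqref{equivalencepourmesurenu} that each $\nu_j$ satisfies $1-\nu_j([0,T])\sim\frac{\mathcal{C}_j}{P_j}\frac{L(T)}{T}$, so Proposition~\ref{asymptoticISetIC} applies with the slowly varying function $\frac{\mathcal{C}_j}{P_j}L$ in place of $L$; the corresponding $\tilde L$ is $\frac{\mathcal{C}_j}{P_j}\tilde L$. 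Since $P_j I_{C_j}\to\mathcal{C}_j$ and $P_j I_{S_j}$ absorbs the constant $P_j$ similarly, it is convenient to keep track only of the combinations $P_j I_{C_j}$ and $P_j I_{S_j}$, which are governed by $\mathcal{C}_j$.

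First I would prove $1)$. When $|t|\leqslant\kappa$, apply $iii)$ of Proposition~\ref{asymptoticISetIC}: $P_jI_{C_j}=\mathcal{C}_j\tilde L\!\left(\frac1\kappa\right)(1+o(1))+O\!\left(\frac{|t|}{\kappa}L\!\left(\frac1\kappa\right)\right)$, and since $|t|/\kappa\leqslant1$ and $L/\tilde L\to0$, the error term is $o\!\left(\tilde L(1/\kappa)\right)$, so $P_jI_{C_j}\sim\mathcal{C}_j\tilde L(1/\kappa)$; similarly $i)$ gives $P_jI_{S_j}=O\!\left(\frac{|t|}{\kappa}L(1/\kappa)\right)=o\!\left(\tilde L(1/\kappa)\right)$. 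Plugging into the formula of Proposition~\ref{localexpb1} and taking real parts, the real part of $1-\lambda_{\delta+\kappa+it}$ equals $\sum_j(\kappa P_jI_{C_j}+tP_jI_{S_j})+o\!\left(\kappa\tilde L(1/\kappa)\right)=\left(\sum_j\mathcal{C}_j\right)\kappa\tilde L\!\left(\frac1\kappa\right)(1+o(1))$, which is the claimed lower bound (hence an upper bound on the reciprocal), and since $\left|1-\lambda\right|\geqslant\left|\re{1-\lambda}\right|$ the first inequality follows.

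For $2)$ and $3)$, I would work in the regime $|t|>\kappa$ and use instead $ii)$ and $iv)$ of Proposition~\ref{asymptoticISetIC}: $P_jI_{S_j}=O\!\left(L(1/|t|)\right)$ and $P_jI_{C_j}=\mathcal{C}_j\tilde L\!\left(\frac1{|t|}\right)(1+o(1))+O\!\left(\frac{\kappa}{|t|}L(1/|t|)\right)\sim\mathcal{C}_j\tilde L(1/|t|)$ (the error is $o(\tilde L(1/|t|))$ since $\kappa/|t|<1$ and $L/\tilde L\to0$). For $2)$, estimate $\left|1-\lambda_{\delta+\kappa+it}\right|$ from below by the modulus of the term $\sum_j P_j(\kappa I_{C_j}+tI_{S_j}) - i\sum_j P_j(\kappa I_{S_j}-tI_{C_j})$; the dominant contribution comes from the imaginary part $\sum_j tP_jI_{C_j}\sim\left(\sum_j\mathcal{C}_j\right)|t|\tilde L(1/|t|)$ and, in the real part, from $\sum_j\kappa P_jI_{C_j}\sim\left(\sum_j\mathcal{C}_j\right)\kappa\tilde L(1/|t|)$, while $\kappa I_{S_j}$ and $tI_{S_j}$ are both $O(|t|L(1/|t|))=o(|t|\tilde L(1/|t|))$; adding the magnitudes and using $|t|>\kappa$ gives $\left|1-\lambda\right|\gtrsim(\kappa+|t|)\tilde L(1/|t|)/\sum_j\mathcal{C}_j$ up to the common positive factor, which is $2)$ after dividing. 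For $3)$, write $\re{(1-\lambda)^{-1}}=\frac{\re{1-\lambda}}{|1-\lambda|^2}$; bound the numerator $\re{1-\lambda}=\sum_j(\kappa P_jI_{C_j}+tP_jI_{S_j})+o(\cdots)=O\!\left(\kappa\tilde L(1/|t|)+|t|L(1/|t|)\right)$ using $iv)$ and $ii)$, bound the denominator below by $(\kappa^2+|t|^2)\tilde L(1/|t|)^2$ (the square of the lower bound from $2)$, up to the constant), and combine to get the stated two-term estimate. The only genuinely delicate point is bookkeeping the error terms: one must check that in each regime the $o$-terms of Proposition~\ref{localexpb1} and the $O$-terms of Proposition~\ref{asymptoticISetIC} are genuinely negligible compared with the main term $\left(\sum_j\mathcal{C}_j\right)(\kappa+|t|)\tilde L$, which boils down repeatedly to the single fact $L(x)/\tilde L(x)\to0$ as $x\to+\infty$ guaranteed by Karamata's lemma and the infiniteness of $m_\G$. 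The finiteness of the sum over $j$ (only $p$ influent factors) is what lets all these estimates be uniform in $x$, since the dependence on $x$ has already been eliminated in Proposition~\ref{localexpb1}.
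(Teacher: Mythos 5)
Your proposal is correct and follows essentially the same route as the paper: it invokes the decomposition of Proposition \ref{localexpb1}, applies estimates i)--iv) of Proposition \ref{asymptoticISetIC} to the measures $\nu_j$ in the two regimes $|t|\leqslant\kappa$ and $|t|>\kappa$, uses $L/\tilde{L}\to 0$ to absorb the error terms, and obtains 3) from $\re{\left(1-\lambda\right)^{-1}}=\re{1-\lambda}/\left|1-\lambda\right|^2$ together with 2), exactly as in the paper. The only cosmetic slip is the early phrase ``$P_jI_{C_j}\to\mathcal{C}_j$'' (it should be $P_jI_{C_j}\sim\mathcal{C}_j\tilde{L}(\cdot)$), which you correct in the regime-by-regime estimates, so there is no gap.
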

\begin{proof}
From Proposition \ref{localexpb1}, we deduce
$$\re{1-\lambda_{\delta+\kappa+it}}=\sum\limits_{j=1}^pP_j\left[\kappa I_{C_j}+t I_{S_j}\right]+o\left(|t|\tilde{L}\left(\dfrac{1}{|t|}\right)\right)+o\left(\kappa\tilde{L}\left(\dfrac{1}{\kappa}\right)\right)$$
\noindent
and
$$\im{\lambda_{\delta+\kappa+it}}=-\im{1-\lambda_{\delta+\kappa+it}}=\sum\limits_{j=1}^pP_j\left[\kappa I_{S_j}-t I_{C_j}\right]+o\left(|t|\tilde{L}\left(\dfrac{1}{|t|}\right)\right)+o\left(\kappa\tilde{L}\left(\dfrac{1}{\kappa}\right)\right).$$
\begin{itemize}
 \item[1)]Combining \eqref{equivalencepourmesurenu} and assertions i) and iii) of Proposition \ref{asymptoticISetIC}, we obtain
$$\re{1-\lambda_{\delta+\kappa+it}}\sim\sum\limits_{j=1}^{p}\mathcal{C}_j\left[\kappa\tilde{L}\left(\dfrac{1}{\kappa}\right)(1+o(1))+
\kappa O\left(\dfrac{|t|}{\kappa}L\left(\dfrac{1}{\kappa}\right)\right)+t I_{S_j}\right].$$
\noindent
Since $|t|\leqslant\kappa$, it follows that
$$\kappa\dfrac{|t|}{\kappa}L\left(\dfrac{1}{\kappa}\right)=
o\left(\kappa\tilde{L}\left(\dfrac{1}{\kappa}\right)\right),$$
\noindent
and Karamata's lemma yields
$$\left|I_{S_j}\right|\preceq\dfrac{|t|^2}{\kappa}L\left(\dfrac{1}{\kappa}\right)\preceq\kappa L\left(\dfrac{1}{\kappa}\right)=
o\left(\kappa\tilde{L}\left(\dfrac{1}{\kappa}\right)\right),$$
\noindent
hence
$$\re{1-\lambda_{\delta+\kappa+it}}\sim\left(\sum\limits_{j=1}^p\mathcal{C}_j\right)\kappa\tilde{L}\left(\dfrac{1}{\kappa}\right)\ \text{as}\ t\longrightarrow0.$$
\noindent
The first assertion follows.
\item[2)]We use properties ii) and iv) of Proposition \ref{asymptoticISetIC}. In this case
$$\re{1-\lambda_{\delta+\kappa+it}}\sim\sum\limits_{j=1}^{p}\mathcal{C}_j\left[\kappa\tilde{L}\left(\dfrac{1}{|t|}\right)(1+o(1))+
\kappa O\left(\dfrac{\kappa}{|t|}L\left(\dfrac{1}{|t|}\right)\right)+t I_{S_j}\right].$$
\noindent
Inequalities
$$\left|tI_{S_j}\right|\preceq|t|L\left(\dfrac{1}{|t|}\right)=O\left(|t|L\left(\dfrac{1}{|t|}\right)\right)$$
\noindent
and
$$\dfrac{\kappa^2}{|t|}L\left(\dfrac{1}{|t|}\right)\leqslant|t|L\left(\dfrac{1}{|t|}\right)=O\left(|t|L\left(\dfrac{1}{|t|}\right)\right),$$
\noindent
imply that, as $t\longrightarrow0$
\begin{equation}\label{partiereelle}
\re{1-\lambda_{\delta+\kappa+it}}\sim\left(\sum\limits_{j=1}^p\mathcal{C}_j\right)
\left(\kappa\tilde{L}\left(\dfrac{1}{|t|}\right)+O\left(|t|L\left(\dfrac{1}{|t|}\right)\right)\right).
\end{equation}
\noindent
Similarly 
$$\im{\lambda_{\delta+\kappa+it}}=\sum\limits_{j=1}^p\mathcal{C}_j\left[-t\tilde{L}\left(\dfrac{1}{|t|}\right)(1+o(1))-
tO\left(\dfrac{\kappa}{|t|}L\left(\dfrac{1}{|t|}\right)\right)+\kappa I_{S_j}\right]$$
\noindent
and Karamata's lemma implies
\begin{equation}\label{partieimaginaire}
\im{\lambda_{\delta+\kappa+it}}\sim-\left(\sum\limits_{j=1}^p\mathcal{C}_j\right)t\tilde{L}\left(\dfrac{1}{|t|}\right).
\end{equation}
\noindent
Since $\left|1-\lambda_{\delta+\kappa+it}\right|\geqslant
\frac{1}{2}\left(\left|\re{1-\lambda_{\delta+\kappa+it}}\right|+\left|\im{1-\lambda_{\delta+\kappa+it}}\right|\right)$, the estimates 
\eqref{partiereelle} and \eqref{partieimaginaire} combined with  Karamata's lemma furnish
$$\left|1-\lambda_{\delta+\kappa+it}\right|\succeq
\left(\sum\limits_{j=1}^p\mathcal{C}_j\right)\left(\kappa+|t|\right)\tilde{L}\left(\dfrac{1}{|t|}\right).$$
\noindent
The second assertion follows.
\item[3)] It is sufficient to notice that
$$\re{\left(1-\lambda_{\delta+\kappa+it}\right)^{-1}}=\dfrac{\re{1-\lambda_{\delta+\kappa+it}}}{\left|1-\lambda_{\delta+\kappa+it}\right|^2}$$
\noindent
and to use \eqref{partiereelle} combined with $2)$.
\end{itemize}
\end{proof}
The previous proposition leads us to the following property.
\begin{prop}\label{fonctiondedominationpourrelle}
There exists $\kappa_0>0$ such that for any $0<\kappa<\kappa_0$ and any compact subset $K$ of $\R$, the function 
$t\longmapsto\re{Q_{\delta+\kappa+it}}(h\varphi)(x)$ is bounded by $b_\kappa(t)$ defined by
\begin{align*}
b_{\kappa}\ :\ t\longmapsto & \dfrac{\kappa \tilde{L}\left(\frac{1}{\kappa}\right)}{\kappa+|t|}+
\dfrac{1}{\kappa\tilde{L}\left(\frac{1}{\kappa}\right)}\un_{|t|\leqslant\kappa}\\
& + \dfrac{\kappa}{(\kappa^2+t^2)\tilde{L}\left(\frac{1}{|t|}\right)}+
\dfrac{ L\left(\frac{1}{|t|}\right)}{|t|\tilde{L}\left(\frac{1}{|t|}\right)^2},
\end{align*}
uniformly in $x\in\LL$ and $t\in K$.
\end{prop}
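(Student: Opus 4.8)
The plan is to combine the resolvent decomposition of Proposition \ref{controlQ} with the asymptotic control of the dominant eigenvalue furnished by Proposition \ref{propositioncomportement1moinslambda}. First I would fix $\kappa_0$ small enough so that for $0<\kappa<\kappa_0$ and $|t|<\varepsilon$ (with $\varepsilon$ as in Proposition \ref{spectreperturbation}) all the estimates of the previous propositions hold, and for $|t|\geqslant\varepsilon$ we simply use $\left|\left|Q_{\delta+\kappa+it}\right|\right|\leqslant C$ from Proposition \ref{controlQ}: on that range $\re{Q_{\delta+\kappa+it}}(h\varphi)(x)$ is bounded by a constant, which is dominated by $b_\kappa(t)$ on any compact set (note $b_\kappa$ stays bounded below away from $0$). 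So the whole content is the range $|t|<\varepsilon$.

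On $|t|<\varepsilon$, I would write, following Proposition \ref{controlQ},
$$\re{Q_{\delta+\kappa+it}}(h\varphi)(x)=\re{Q_{\delta+\kappa+it}-(1-\lambda_{\delta+\kappa+it})^{-1}\Pi_{\delta+\kappa+it}}(h\varphi)(x)+\re{(1-\lambda_{\delta+\kappa+it})^{-1}}\Pi_{\delta+\kappa+it}(h\varphi)(x).$$
The first term is bounded by $C\left|\left|h\varphi\right|\right|$ uniformly, again absorbed into $b_\kappa(t)$ away from trouble; but near $t=0$ it is still just $O(1)$, and $b_\kappa(t)$ contains the summand $\frac{1}{\kappa\tilde L(1/\kappa)}\un_{|t|\leqslant\kappa}$, which for $|t|\leqslant\kappa$ is of size $\frac{1}{\kappa\tilde L(1/\kappa)}$ — this is $\gg 1$ only if $\kappa\tilde L(1/\kappa)\to 0$, which is false, so I should instead note that for $|t|\le\kappa$ the term $\frac{\kappa\tilde L(1/\kappa)}{\kappa+|t|}$ is $\asymp\tilde L(1/\kappa)\to+\infty$, hence dominates any $O(1)$. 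So the bounded part is harmless on $|t|\leqslant\kappa$ too. For the second term, since $\left|\left|\Pi_{\delta+\kappa+it}\right|\right|\leqslant C$ by perturbation theory, it suffices to bound $\left|\re{(1-\lambda_{\delta+\kappa+it})^{-1}}\right|$. For $|t|\leqslant\kappa$ I would use assertion 1) of Proposition \ref{propositioncomportement1moinslambda}, giving $\left|1-\lambda_{\delta+\kappa+it}\right|^{-1}\preceq\frac{1}{\kappa\tilde L(1/\kappa)}$, which is exactly bounded by $b_\kappa(t)$ via the first summand $\frac{\kappa\tilde L(1/\kappa)}{\kappa+|t|}\asymp\tilde L(1/\kappa)$ — wait, that is the reciprocal; rather I must check $\frac{1}{\kappa\tilde L(1/\kappa)}\leqslant b_\kappa(t)$, and indeed $b_\kappa$ contains the explicit summand $\frac{1}{\kappa\tilde L(1/\kappa)}\un_{|t|\leqslant\kappa}$. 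For $|t|>\kappa$ I would use assertion 3): $\left|\re{(1-\lambda_{\delta+\kappa+it})^{-1}}\right|\preceq\frac{\kappa}{(\kappa^2+t^2)\tilde L(1/|t|)}+\frac{|t|L(1/|t|)}{(\kappa^2+t^2)\tilde L(1/|t|)^2}$, and since $\kappa^2+t^2\asymp t^2$ on $|t|>\kappa$, the second piece is $\preceq\frac{L(1/|t|)}{|t|\tilde L(1/|t|)^2}$, matching the last summand of $b_\kappa$, while the first is $\preceq\frac{\kappa}{(\kappa^2+t^2)\tilde L(1/|t|)}$, matching the third summand of $b_\kappa$.

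The main obstacle, and what needs the most care, is bookkeeping the three regimes $|t|\geqslant\varepsilon$, $\kappa<|t|<\varepsilon$, $|t|\leqslant\kappa$ so that every term — the $O(1)$ error from the resolvent decomposition and each of the four pieces of $b_\kappa$ — is consistently present and bounded below where needed, uniformly in $x\in\LL$ and in $t$ ranging over a fixed compact $K$; in particular one must verify that the function $b_\kappa$ as written is actually $\succeq 1$ on $K\setminus\{0\}$ (so that the bounded part of $\re{Q}$ is controlled) and that all the $\preceq$ constants can be taken independent of $\kappa$ once $\kappa<\kappa_0$, which follows because the estimates in Propositions \ref{controlQ} and \ref{propositioncomportement1moinslambda} are uniform in that range. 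No genuinely new analytic input is needed beyond what those two propositions provide; the proof is a matching-of-bounds argument.
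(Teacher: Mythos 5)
Your overall strategy (resolvent decomposition plus the eigenvalue estimates of Proposition \ref{propositioncomportement1moinslambda}) is the right starting point, but the decomposition you chose breaks at the key step. You keep the perturbed projector and write the main term as $\re{(1-\lambda_{\delta+\kappa+it})^{-1}}\,\Pi_{\delta+\kappa+it}(h\varphi)(x)$, then propose to bound $\left|\re{(1-\lambda_{\delta+\kappa+it})^{-1}}\right|$ and use $\left|\left|\Pi_{\delta+\kappa+it}\right|\right|\leqslant C$. This is not legitimate: for $t\neq0$ the eigenfunction $h_{\delta+\kappa+it}$ and the form $\sigma_{\delta+\kappa+it}$ are genuinely complex, so $\Pi_{\delta+\kappa+it}(h\varphi)(x)$ is a complex number, the real part does not factor, and in general $\left|\re{zw}\right|\not\leqslant\left|\re{z}\right|\,|w|$ (take $z=w=i$). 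From your decomposition the only honest bound on the projection term is $C\left|1-\lambda_{\delta+\kappa+it}\right|^{-1}$, and by assertion 2) of Proposition \ref{propositioncomportement1moinslambda} this is of size $\frac{1}{(\kappa+|t|)\tilde{L}(1/|t|)}$ on $\kappa<|t|<\varepsilon$. That quantity is not dominated by $b_\kappa(t)$ uniformly in $\kappa$: with $L\equiv1$, $t$ small and fixed, $\kappa\searrow0$, every $\kappa$-dependent summand of $b_\kappa$ tends to $0$ and the remaining one, $\frac{L(1/|t|)}{|t|\tilde{L}(1/|t|)^2}$, is smaller than $\frac{1}{|t|\tilde{L}(1/|t|)}$ by the factor $L(1/|t|)/\tilde{L}(1/|t|)$, which is unbounded as $t\to0$. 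So the matching-of-bounds you describe cannot deliver the stated $b_\kappa$.

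The missing idea, which is what the paper's proof does, is to split off the \emph{unperturbed} projector: $Q_{\delta+\kappa+it}=(1-\lambda_{\delta+\kappa+it})^{-1}\Pi_{\delta}+(1-\lambda_{\delta+\kappa+it})^{-1}(\Pi_{\delta+\kappa+it}-\Pi_{\delta})+O(1)$. Since $\Pi_{\delta}(h\varphi)=\nu(\varphi)h$ is real, the real part does factor on the first term and one may use the sharp estimates 1) and 3) on $\re{(1-\lambda_{\delta+\kappa+it})^{-1}}$, which produce the third and fourth summands of $b_\kappa$ together with the $\frac{1}{\kappa\tilde{L}(1/\kappa)}\un_{|t|\leqslant\kappa}$ term. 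The correction is then bounded in modulus by $\left|1-\lambda_{\delta+\kappa+it}\right|^{-1}\left|\left|\Pi_{\delta+\kappa+it}-\Pi_{\delta}\right|\right|\preceq\left(\kappa\tilde{L}(1/\kappa)+|t|\tilde{L}(1/|t|)\right)\left(\frac{1}{\kappa\tilde{L}(1/\kappa)}\un_{|t|\leqslant\kappa}+\frac{1}{(\kappa+|t|)\tilde{L}(1/|t|)}\un_{|t|>\kappa}\right)$, using the regularity of $z\longmapsto\Pi_z$ inherited from Proposition \ref{continuityoftransfert}; this product is precisely the origin of the summand $\frac{\kappa\tilde{L}(1/\kappa)}{\kappa+|t|}$ of $b_\kappa$, which your argument never produces --- a symptom of the gap. (Incidentally, your parenthetical claim that ``$\kappa\tilde{L}(1/\kappa)\to0$ is false'' is itself wrong: $\tilde{L}$ is slowly varying, so $\kappa\tilde{L}(1/\kappa)\to0$ and $\frac{1}{\kappa\tilde{L}(1/\kappa)}\to+\infty$; this side issue does not affect the main objection.)
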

\begin{proof}
Let $\kappa\in[0,1]$ and $t\in\R$ such that $\max\left(\kappa,|t|\right)<\varepsilon$, where
$\varepsilon$ is given in Proposition \ref{spectreperturbation}. Using the arguments exposed in the proof of Proposition 
\ref{controlQ}, we obtain

\begin{equation}\label{decompoQlourde}
Q_{\delta+\kappa+it}=\left(1-\lambda_{\delta+\kappa+it}\right)^{-1}\Pi_{\delta}+
\left(1-\lambda_{\delta+\kappa+it}\right)^{-1}\left(\Pi_{\delta+\kappa+it}-\Pi_\delta\right)+O(1).
\end{equation}
\noindent
By assertions 1) and 3) of the previous proposition, we deduce
\begin{align*}
\re{\left(1-\lambda_{\delta+\kappa+it}\right)^{-1}}\leqslant & \left|\re{\left(1-\lambda_{\delta+\kappa+it}\right)^{-1}}\right|\un_{|t|>\kappa}+
\left|\re{\left(1-\lambda_{\delta+\kappa+it}\right)^{-1}}\right|\un_{|t|\leqslant\kappa}\\
\preceq & \left[\dfrac{\kappa}{(\kappa^2+|t|^2)\tilde{L}\left(\frac{1}{|t|}\right)}+
\dfrac{|t|\tilde{L}\left(\frac{1}{|t|}\right)}{(\kappa^2+|t|^2)\tilde{L}\left(\frac{1}{|t|}\right)^2}\right]\un_{]\kappa,\varepsilon]}(|t|)\\
& +\dfrac{1}{\kappa\tilde{L}\left(\frac{1}{\kappa}\right)}\un_{|t|\leqslant\kappa}\\
\preceq & \dfrac{\kappa}{(\kappa^2+|t|^2)\tilde{L}\left(\frac{1}{|t|}\right)}+
\dfrac{\tilde{L}\left(\frac{1}{|t|}\right)}{|t|\tilde{L}\left(\frac{1}{|t|}\right)^2}+
\dfrac{1}{\kappa\tilde{L}\left(\frac{1}{\kappa}\right)}\un_{|t|\leqslant\kappa}.
\end{align*}
\noindent
The functions $t\longmapsto\Pi_{\delta+it}$ and $\kappa\longmapsto\Pi_{\delta+\kappa+it}$ inherit the regularity of the functions 
$t\longmapsto\ot_{\delta+it}$ and $\kappa\longmapsto\ot_{\delta+\kappa+it}$, therefore Proposition 
\ref{propositioncomportement1moinslambda} combined with Potter's lemma yields to the following 
estimate of the second term in \eqref{decompoQlourde}
\begin{align*}
\bigg| & \left(1-\lambda_{\delta+\kappa+it}\right)^{-1}\left(\Pi_{\delta+\kappa+it}-\Pi_\delta\right)\bigg|\\
& \preceq
\left(\dfrac{1}{\kappa\tilde{L}\left(\frac{1}{\kappa}\right)}\un_{|t|\leqslant\kappa}+
\dfrac{1}{(\kappa+|t|)\tilde{L}\left(\frac{1}{|t|}\right)}\un_{]\kappa,\varepsilon]}(|t|)\right)
\left(\kappa\tilde{L}\left(\dfrac{1}{\kappa}\right)+|t|\tilde{L}\left(\dfrac{1}{|t|}\right)\right)\\
& \preceq 1+\dfrac{1}{\kappa\tilde{L}\left(\frac{1}{\kappa}\right)}\un_{|t|\leqslant\kappa}+
\dfrac{\kappa\tilde{L}\left(\frac{1}{\kappa}\right)}{\kappa+|t|}\\
& \preceq \dfrac{1}{\kappa\tilde{L}\left(\frac{1}{\kappa}\right)}\un_{|t|\leqslant\kappa}+
\dfrac{\kappa\tilde{L}\left(\frac{1}{\kappa}\right)}{\kappa+|t|}
\end{align*}
\noindent
since $1=o\left(\dfrac{1}{\kappa\tilde{L}\left(\frac{1}{\kappa}\right)}\right)$ as $\kappa\searrow0$.
\end{proof}
\noindent
We now achieve the proof of \eqref{propmegarelou}. For any $x\in\LL$ and $t\neq0$, the sequence $\left(\re{Q_{\delta+\kappa+it}}(h\varphi)(x)\right)_{\kappa>0}$ tends to $\re{Q_{\delta+it}}(h\varphi)(x)$ when $\kappa\searrow0$. Moreover, the previous proposition implies that for any $\kappa\searrow0$ and any $t$ in the compact support of $\widehat{\varphi}$
$$\left|\re{Q_{\delta+\kappa+it}}(h\varphi)(x)\right|\leqslant h_\kappa(t)+
\dfrac{ L\left(\frac{1}{|t|}\right)}{|t|\tilde{L}\left(\frac{1}{|t|}\right)^2}$$
\noindent
where
\begin{equation}\label{defihkappa}h_\kappa(t)= \dfrac{\kappa \tilde{L}\left(\frac{1}{\kappa}\right)}{\kappa+|t|}+
\dfrac{1}{\kappa\tilde{L}\left(\frac{1}{\kappa}\right)}\mathrm{1}_{|t|\leqslant\kappa}
+ \dfrac{\kappa}{(\kappa^2+t^2)\tilde{L}\left(\frac{1}{|t|}\right)}.\end{equation}
\noindent
Corollary \ref{integrabilityreQ} implies that the function 
$t\longmapsto\dfrac{ L\left(\frac{1}{|t|}\right)}{|t|\tilde{L}\left(\frac{1}{|t|}\right)^2}$ 
is integrable on any compact. Since the support of $\widehat{\varphi}$ is compact, it is sufficient 
to prove that $\lim\limits_{\kappa\searrow0}\int_{-M}^M h_\kappa(t)\widehat{\varphi}(t)\dd t=0$ for 
any $M>0$. We write on the one hand
\begin{align*}
\int_{-M}^M\dfrac{\kappa\tilde{L}\left(\frac{1}{\kappa}\right)}{\kappa+|t|}\dd t \preceq & 
\kappa\tilde{L}\left(\frac{1}{\kappa}\right)\int_{0}^M\dfrac{\dd t}{\kappa+t}\\
\preceq & \kappa\tilde{L}\left(\frac{1}{\kappa}\right)\left(\ln(\kappa+M)-\ln(\kappa)\right),
\end{align*}
\noindent
quantity which goes to $0$ when $\kappa\searrow0$. On the other hand
\begin{align*}
\int_{-M}^M\dfrac{1}{\kappa\tilde{L}\left(\frac{1}{\kappa}\right)}\un_{[0,\kappa]}(|t|)\dd t \preceq & 
\dfrac{1}{\kappa\tilde{L}\left(\frac{1}{\kappa}\right)}\int_{0}^{\kappa}\dd t\\
\preceq & \dfrac{1}{\tilde{L}\left(\frac{1}{\kappa}\right)}
\end{align*}
\noindent
which goes to $0$ when $\kappa\searrow0$. The control of the integral of the last term of \eqref{defihkappa} relies on the following trick used in  \cite{MT}. Let $A\in[0,M]$; we may write
\begin{align*}
\int_{-M}^M\dfrac{\kappa}{(\kappa^2+|t|^2)\tilde{L}\left(\frac{1}{|t|}\right)}\dd t \preceq & 
\int_{0}^M\dfrac{\kappa}{(\kappa^2+t^2)\tilde{L}\left(\frac{1}{t}\right)}\dd t\\
\preceq & \int_{0}^A\dfrac{\kappa}{(\kappa^2+t^2)\tilde{L}\left(\frac{1}{t}\right)}\dd t+
\int_{A}^M\dfrac{\kappa}{(\kappa^2+t^2)\tilde{L}\left(\frac{1}{t}\right)}\dd t\\
\preceq & \dfrac{A}{\kappa\tilde{L}\left(\frac{1}{A}\right)}+\dfrac{\kappa}{A}.
\end{align*}
\noindent
We may choose $A=A(\kappa)$ 
such that $\kappa=\dfrac{A}{\sqrt{\tilde{L}\left(\frac{1}{A}\right)}}$. From properties of slowly varying functions, we deduce that
$A(\kappa)\longrightarrow0$ when $\kappa\longrightarrow0$. For such $A$, we get
$$\int_{-M}^M\dfrac{\kappa}{(\kappa^2+|t|^2)\tilde{L}\left(\frac{1}{|t|}\right)}\dd t\preceq 
\dfrac{1}{\sqrt{\tilde{L}\left(\frac{1}{A}\right)}}\underset{\kappa\searrow0}{\longrightarrow}0.$$
\section{Theorem B: closed geodesics for $\beta\in]0,1[$}
In this section, we will assume $\beta\in]0,1[$ and we will establish an asymptotic lower bound for the number of closed geodesics of $\xx/\G$ with length $\leqslant R$. 
Let $\mathscr{G}$ denote the set of all closed geodesics of $\xx/\G$ and set $\mathrm{N}_{\mathscr{G}}(R):=\sharp\{\wp\in\mathscr{G}\ |\ l(\wp)\leqslant R\}$. 
%
%
%
A closed geodesic of $\xx/\G$ is the projection on $\xx/\G$ of the axis of an hyperbolic isometry $\g\in\G$. The contribution of geodesics with length $\leqslant R$ associated to powers of generators of some subgroup $\G_j,p+1\leqslant j\leqslant p+q$ is polynomial in $R$ and thus negligeable with respect to $\frac{e^{\delta R}}{R}$.
Therefore, in the sequel, we will only consider the hyperbolic isometries $\g$ of $\G$ with symbolic length $\geqslant2$. Up to a conjugacy, we may assume 
that $i(\g)\neq l(\g)$. Let $\tilde{\mathrm{N}}_{\mathscr{G}}(R)$ the number of closed geodesics $\wp$ with length $\leqslant R$ associated to a 
hyperbolic isometry $\g_{\wp}$ with symbolic length $\geqslant2$. Theorem $B$ may be reformulated as follows:
\begin{thmB}
With the previous notations, for any $\beta\in]0,1[$, as $R\longrightarrow+\infty$,
$$\liminf\limits_{R\longrightarrow+\infty}\dfrac{\delta R}{\beta e^{\delta R}}\tilde{\mathrm{N}}_{\mathscr{G}}(R)\geqslant1.$$
\end{thmB}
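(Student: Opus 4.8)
The plan is to reduce the enumeration of closed geodesics to a weighted count of periodic orbits of the coding transformation $\T$, to prove for the latter a local limit theorem of the same type as Proposition A.1, and to sum over the symbolic length. First I would reduce to periodic orbits. By Lemma \ref{representationsymboliqueflot} and the discussion after it, a closed geodesic of $\xx/\G$ whose primitive representative has symbolic length $k\geqslant 2$ corresponds bijectively to a cyclic equivalence class of $k$-periodic $\mathcal A$-admissible sequences (equivalently, by the normal form in the free product, to a conjugacy class of a cyclically reduced $\g\in\G(k)$ with $i(\g)\neq l(\g)$), and its length equals $S_k\gol(y)$, where $y\in\LL^0$ is the attracting fixed point of $\g=\alpha_1\cdots\alpha_k$, a $\T^k$-fixed point; each primitive class contains exactly $k$ such fixed points. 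Writing $\mathrm{Fix}_k:=\{y\in\LL^0\ |\ \T^k.y=y\}$, this yields
\[
\tilde{\mathrm N}_{\mathscr G}(R)=\sum_{k\geqslant 2}\frac1k\,\sharp\{y\in\mathrm{Fix}_k\ |\ S_k\gol(y)\leqslant R\}-E(R),
\]
where $E(R)$ accounts for the non-primitive fixed points: such a $y$ lies on a $\T$-orbit of size $e<k$ with $e\mid k$ and $S_k\gol(y)=(k/e)S_e\gol(y)$, so $S_k\gol(y)\leqslant R$ forces $S_e\gol(y)\leqslant R/2$, and since (Proposition \ref{contract}, Property \ref{busedist}) periodic points with $S_e\gol\leqslant R/2$ and symbolic length $\leqslant A(R):=R^\beta/L(R)$ are in number $\preceq_\varepsilon e^{(\delta+\varepsilon)R/2}$, one gets $E(R)=o(e^{\delta R}/R)$. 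It thus suffices to bound $\sum_{k\geqslant 2}\tfrac1k\sharp\{y\in\mathrm{Fix}_k\ |\ S_k\gol(y)\leqslant R\}$ from below.

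Then I would prove a local limit theorem for the periodic sums. Set $\mathcal Z_k(t):=\sum_{y\in\mathrm{Fix}_k}e^{-(\delta+it)S_k\gol(y)}$; the claim is that for a positive $\varphi\in\mathscr U$ with $\widehat\varphi(0)=1$ and uniformly in $K\geqslant2$ and $R\in[0,Ka_k]$,
\[
\sum_{y\in\mathrm{Fix}_k}e^{-\delta S_k\gol(y)}\varphi\!\left(S_k\gol(y)-R\right)=\frac1{\cg a_k}\left(\Psi_\beta\!\left(\frac{R}{\cg a_k}\right)+o_k(1)\right),\qquad \cg=\Cg^{1/\beta}.
\]
This would be proved exactly as Lemma \ref{butmelange}: by Fourier inversion the left-hand side is $\tfrac1{2\pi}\int_\R e^{itR}\mathcal Z_k(t)\widehat\varphi(t)\,\dd t$; one identifies $\mathcal Z_k(t)$ with $\operatorname{tr}\ot_{\delta+it}^k$ up to the contribution of the inverse-branch distortions $\tfrac1{1-\jmath_k(y)}-1$, which is $O(\rho_0^k)$ for some $\rho_0<1$ because $|\jmath_k(y)|\preceq r^k$ uniformly (Corollary \ref{actioncontractante}, Proposition \ref{contract}) and $S_k\gol(y)$ is bounded below; Propositions \ref{spectreperturbation} and \ref{localexp} then give $\mathcal Z_k(t)=\lambda_{\delta+it}^k(1+o_k(1))$ near $0$, and the substitution $t=s/a_k$ together with $kL(a_k)=a_k^\beta$ turns $\lambda_{\delta+is/a_k}^k$ into $g_\beta(\cg s)$, whose Fourier transform is $\Psi_\beta$. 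Feeding this through narrow normalized bumps, as in the proof of Proposition A.1, shows that in the range $Ka_k\geqslant R$ the measure $\lambda_k:=\sum_{y\in\mathrm{Fix}_k}e^{-\delta S_k\gol(y)}D_{S_k\gol(y)}$ has on $[R-U,R]$ a density at least $(1-o_k(1))\tfrac1{\cg a_k}\Psi_\beta(\tfrac{R}{\cg a_k})$, so that (integrating $e^{\delta t}\,\dd\lambda_k(t)$, the mass far below $R$ contributing $\preceq e^{\delta R}e^{-\delta U}$)
\[
\sharp\{y\in\mathrm{Fix}_k\ |\ S_k\gol(y)\leqslant R\}\ \geqslant\ (1-o_k(1))\,\frac{e^{\delta R}}{\delta}\cdot\frac1{\cg a_k}\,\Psi_\beta\!\left(\frac{R}{\cg a_k}\right)\qquad(Ka_k\geqslant R).
\]

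Finally I would sum over $k$. Keeping only the nonnegative terms with $Ka_k\geqslant R$, and using that $k=a_k^\beta/L(a_k)$ with $a_\bullet$ regularly varying of index $1/\beta$ (so $\tfrac1k=L(a_k)/a_k^\beta$ and $\dd k\sim\beta k\,\dd a_k/a_k$), the tail being controlled by Karamata's and Potter's lemmas exactly as the term $M^2$ in the proof of Theorem A, one gets
\[
\sum_{k:\ Ka_k\geqslant R}\frac1{k\,\cg a_k}\,\Psi_\beta\!\left(\frac R{\cg a_k}\right)\ \sim\ \frac\beta\cg\int_{R/K}^{+\infty}\frac1{a^2}\,\Psi_\beta\!\left(\frac R{\cg a}\right)\dd a\ =\ \frac\beta R\int_0^{K/\cg}\Psi_\beta(w)\,\dd w
\]
after the change of variable $w=R/(\cg a)$, whence $\liminf_{R\to+\infty}\tfrac{\delta R}{\beta e^{\delta R}}\tilde{\mathrm N}_{\mathscr G}(R)\geqslant\int_0^{K/\cg}\Psi_\beta(w)\,\dd w$ for every $K\geqslant2$; letting $K\to+\infty$ and using $\int_0^{+\infty}\Psi_\beta=1$ finishes the proof.

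The hard part will be the second step: making rigorous the trace identity $\mathcal Z_k(t)=\operatorname{tr}\ot_{\delta+it}^k+O(\rho_0^k)$ on the non-compact symbolic space, since the operators $\ot_z$ are only known to act on $\mathrm{Lip}(\LL)$ — one must pass through the Ruelle dynamical determinant of the family $(\ot_z)$ or smooth the weights, and in either case check uniformity in $k$ and in $t$ near $0$ — and then ensuring in the third step that the error accumulated over the $\asymp A(R)$ relevant indices is genuinely $o(e^{\delta R}/R)$. This same lack of uniformity is why the argument yields only a lower bound: the complementary range $k\ll A(R)/K^\beta$, which is discarded here, is controlled only by the one-sided estimate of Proposition A.2, whose crude form already costs a multiplicative factor of order one.
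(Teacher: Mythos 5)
Your reduction to periodic points and your final summation over $k$ are fine in outline, but the proof breaks at its central analytic step, and you say so yourself: the identification of $\mathcal Z_k(t)=\sum_{\T^k y=y}e^{-(\delta+it)S_k\gol(y)}$ with $\operatorname{tr}\ot_{\delta+it}^k$ up to $O(\rho_0^k)$, followed by $\mathcal Z_k(t)=\lambda_{\delta+it}^k(1+o_k(1))$ uniformly in $|t|\leqslant\varepsilon$ (and after the rescaling $t=s/a_k$). In the present setting the alphabet is countably infinite, the weights are only Lipschitz, and $\ot_{\delta+it}$ is merely quasi-compact on $\mathrm{Lip}(\LL)$: it is not trace class, so $\operatorname{tr}\ot_{\delta+it}^k$ is not even defined, and the flat-trace/periodic-point identity you invoke cannot be related to the spectral data $\lambda_{\delta+it}^k$ without Grothendieck--Fredholm (nuclearity/analyticity) machinery that is not available here. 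The spectral decomposition of Proposition \ref{spectreperturbation} controls $\ot_{\delta+it}^k\varphi(x)$, i.e.\ sums over \emph{preimages} of a point, uniformly in the base point and in $t$ after rescaling; it gives no handle on sums over \emph{fixed} points. So the local limit theorem you need for $\mathrm{Fix}_k$ is precisely what remains unproved, and the route you sketch for it would not go through in this framework.

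The paper's proof supplies exactly the missing idea, and it avoids traces altogether: for each $\g\in\G(n)$ one fixes a non-periodic marked point $x^\g\in\LL_\g^0$, and every $y$ with $\T^k y=x^\g$, $y\in\LL_\g^0$, is paired with the unique $k$-periodic point $x\in\LL_\g^0$ sharing its first $k+n$ letters; Corollary \ref{actioncontractante} gives $\left|S_k\gol(y)-S_k\gol(x)\right|\leqslant\varepsilon_n$ with $\varepsilon_n=r^{n+1}/(1-r)$, whence the lower bound \eqref{çafinitsuruneligne} in terms of preimage sums. These preimage sums are literally iterates $\ot_{\delta+it}^k$ evaluated at $x^\g$, so Propositions B.1 and B.2 (the exact analogues of A.1 and A.2, uniform in the base point) apply, yielding Proposition \ref{propintermediairegeodfermees}; Fatou's lemma in $n$ then produces the constant $\sum_{\g\in\G(n)}\sigma_\oo(\un_{\LL_\g})h(x^\g)e^{-\delta\varepsilon_n}\to\int_{\LL^0}h\,\dd\sigma_\oo=1$. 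Two further remarks: your error term $E(R)$ for non-primitive periodic points is plausible but would still have to be written out; and your closing diagnosis of why only a lower bound is obtained is off the mark --- in the paper the preimage-sum asymptotic of Proposition \ref{propintermediairegeodfermees} is a genuine equivalence (Proposition B.2 is only used to kill the range $p\geqslant Ka_k$), and the obstruction to the matching upper bound is the lack of a dominating integrable function allowing the interchange of $R\to+\infty$ with the sum over the cylinders $\g\in\G(n)$, not any one-sidedness of Proposition A.2.
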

\subsection{Proof of Theorem B}
The coding of the geodesic flow given in Section 4 allows us to write $\tilde{\mathrm{N}}_{\mathscr{G}}(R)$ as
$$\tilde{\mathrm{N}}_{\mathscr{G}}(R)=\sum_{\wp\in\mathscr{G},\ |\g_{\wp}|\geqslant2}\un_{l(\wp)\leqslant R}=\sum\limits_{k\geqslant2}\dfrac{1}{k}\sum\limits_{\T^kx=x}\un_{[0,R]}(S_k\gol(x)).$$
\noindent
Following \cite{La} and \cite{BDP}, the idea is to approach $\tilde{\mathrm{N}}_{\mathscr{G}}(R)$ by quantities of the form 
$$\sum\limits_{k\geqslant2}\dfrac{1}{k}\sum\limits_{\T^ky=x}\un_{[0,R]}(S_k\gol(y))$$
\noindent
for suitable points $x$. Any point $x\in\LL^0$ may be obtained as a limit
$\lim\limits_{p\longrightarrow+\infty}\alpha_1...\alpha_k.x_0$ for a unique sequence $\mathcal{A}$-admissible $\left(\alpha_k\right)_{k\geqslant1}$. We will call $\alpha_1,...,\alpha_k$ the ``first $k$ letters of $x$''. For all $\g\in\G$, denote by $\LL_\g^0=\g\left(\LL^0\setminus\LL_{l(\g)}^0\right)$ and fix a point $x^\g\in\LL_\g^0$.
For technical reasons, we assume that each point $x^\g$ is non-periodic. We also may notice  that for any $n\geqslant1$, the family $\left(\LL_\g^0\right)_{\g\in\G(n)}$ is a partition of $\LL^0$: the larger $n$ is, the finer is this partition. Fix $n\in\N$, $\g\in\G(n)$ and a point $y\in\LL_\g^0$ such that $\T^ky=x^\g$ for some $k\geqslant2$. There exists a unique $k$-periodic point $x\in\LL_\g^0$ such that the points $x$ and $y$ have the same $k$ first letters. From the assumptions on $y$, we deduce that these two points have in fact the same $k+n$ letters. 
Thus there exist a finite $\mathcal{A}$-admissible sequence $(\alpha_1,...,\alpha_{k+n})$ and points $x',y'\in\LL\setminus\LL_{l(\alpha_{k+n})}^0$ such that
$x=\alpha_1...\alpha_{k+n}.x'$ and $y=\alpha_1...\alpha_{k+n}.y'$ and we get
\begin{align*}
\left|S_k\gol(y)-S_k\gol(x)\right| \leqslant & \left|\gol(y)-\gol(x)\right|+...+\left|\gol(\T^{k-1}.y)-\gol(\T^{k-1}.x)\right|\\
\leqslant & \left|b(\alpha_1...\alpha_{k+n},y')-b(\alpha_1...\alpha_{k+n},x')\right|+...\\
& +\left|b(\alpha_k...\alpha_{k+n},y')-b(\alpha_k...\alpha_{k+n},x')\right|\\
\preceq & r^{k+n}+...+r^{n+1} \preceq \dfrac{r^{n+1}}{1-r}:=\varepsilon_n
\end{align*}
\noindent
for $r\in]0,1[$ given in Corollary \ref{actioncontractante}. It follows that
\begin{equation}\label{çafinitsuruneligne}
\sum\limits_{\g\in\G(n)}\sum\limits_{k\geqslant2}\dfrac{1}{k}\sum\limits_{\T^ky=x^\g}\un_{\LL_\g}(y)\un_{[0,R-\varepsilon_n]}(S_k\gol(y))
\leqslant\tilde{\mathrm{N}}_{\mathscr{G}}(R)
\end{equation}
\noindent
Theorem B will thus be a consequence of the following proposition.
\begin{prop}\label{propintermediairegeodfermees} For any function $\varphi\in\mathrm{Lip}\left(\LL\right)$ and uniformly in $x\in\LL^0$, we have as $R\longrightarrow+\infty$
$$\sum\limits_{k\geqslant2}\dfrac{1}{k}\sum\limits_{\T^ky=x}\varphi(y)\un_{[0,R]}(S_k\gol(y))\sim
\beta\sigma_\oo\left(\varphi\right)h(x)\dfrac{e^{\delta R}}{\delta R}.$$
\end{prop}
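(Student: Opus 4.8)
The plan is to package the sum into a Dirichlet-type generating function, read off its singularity at $s=\delta$ from the spectral data of the transfer operators, and invert by a Tauberian theorem. By linearity of both sides in $\varphi$ (and of $\sigma_\oo$), writing $\varphi=(\varphi+|\varphi|_\infty\un_{\LL})-|\varphi|_\infty\un_{\LL}$ I may assume $\varphi\geqslant 0$. For $\re{s}>\delta$ set
$$Z_\varphi(s,x):=\sum_{k\geqslant 2}\frac1k\ot_s^k\varphi(x)=\sum_{k\geqslant 2}\frac1k\sum_{\T^k y=x}\varphi(y)e^{-sS_k\gol(y)},$$
which converges because $\rho(\ot_s)<1$ there (Proposition \ref{spectreperturbation}), and write $\mathcal N(R,x)$ for the left–hand side of the Proposition. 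Since $\gol\geqslant -C$ and $S_k\gol>0$ for $k\geqslant k_0$ (Lemma \ref{proprietesgol}), while $e^{-\delta S_k\gol}\asymp e^{-\delta\dhy(\oo,\g.\oo)}$ on the relevant terms, $\mathcal N(\cdot,x)$ is non-decreasing and locally finite, and $\int_t^{+\infty}se^{-sR}\dd R=e^{-st}$ gives the Laplace–Stieltjes identity $Z_\varphi(s,x)=\int_0^{+\infty}e^{-sR}\,\dd_R\mathcal N(R,x)$ for $\re{s}>\delta$.

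The next step is to locate the singularity of $Z_\varphi$. For $|s-\delta|_\infty<\varepsilon$, $\re{s}\geqslant\delta$, the spectral decomposition $\ot_s=\lambda_s\Pi_s+R_s$ with $\rho(R_s)\leqslant\rho_\varepsilon<1$ (Propositions \ref{spectredeotdelta}, \ref{spectreperturbation}) yields
$$Z_\varphi(s,x)=-\log(1-\lambda_s)\,\Pi_s\varphi(x)+E(s,x),$$
where $E(\cdot,x)$ is continuous and bounded uniformly in $x\in\LL$ and $\Pi_s\varphi\to\Pi_\delta\varphi=\sigma_\oo(\varphi)h$ uniformly on $\LL$ as $s\to\delta$ (recall $\sigma_\delta=\sigma_\oo$). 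On the rest of the critical line $\rho(\ot_{\delta+it})<1$ for $t\neq 0$ (Proposition \ref{spectreperturbation}), so $Z_\varphi(\delta+it,x)=\sum_{k\geqslant 2}\frac1k\ot_{\delta+it}^k\varphi(x)$ converges and, by Corollary \ref{continuityoftransferteverywhere}, is continuous in $t\in\R\setminus\{0\}$ uniformly in $x$; hence $Z_\varphi(\cdot,x)$ extends continuously to $\{\re{s}\geqslant\delta\}\setminus\{\delta\}$. The behaviour at $\delta$ itself requires a half-plane version of Proposition \ref{localexp}: extending the argument of Propositions \ref{theoreticlocalexp}–\ref{localexp} (the extreme rays $z=it$ and $z=\kappa>0$ being covered by parts a) and b) of Proposition \ref{etapecontroltransfert}, and the Abelian input being $\int_0^{+\infty}(1-e^{-zy})\,\dd\nu(y)\sim C\,\G(1-\beta)\,z^{\beta}L(1/|z|)$ for $\re{z}\geqslant 0$, $z\to 0$, valid whenever $1-F_\nu(T)\sim C L(T)/T^{\beta}$), one gets
$$1-\lambda_{\delta+z}=\Cg\,\G(1-\beta)\,z^{\beta}\,L\!\left(1/|z|\right)(1+o(1))\qquad(\re{z}\geqslant 0,\ z\to 0),$$
with $\Cg$ as in \eqref{valeurdelaconstantemelange} and $z^{\beta}$ the principal branch (for $z=it$ this is Proposition \ref{localexp}). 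Since $\log(1/z)=\log(1/|z|)+O(1)$ and $\log L(1/|z|)=o(\log(1/|z|))$, this gives $-\log(1-\lambda_{\delta+z})=\beta\log(1/z)(1+o(1))$, hence
$$Z_\varphi(\delta+z,x)=\beta\,\sigma_\oo(\varphi)\,h(x)\,\log(1/z)\,(1+o(1))\qquad(\re{z}\geqslant 0,\ z\to 0),$$
uniformly in $x\in\LL^0$.

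The last step is the Tauberian inversion. I would apply the Tauberian theorem for logarithmic singularities: if $U$ is non-decreasing with $\int_0^{+\infty}e^{-sR}\,\dd U(R)=g(s)$ for $\re{s}>\delta$, $g$ extends continuously to $\{\re{s}\geqslant\delta\}\setminus\{\delta\}$, and $g(\delta+z)\sim c\log(1/z)$ as $z\to0$ in the closed right half-plane, then $U(R)\sim c\,e^{\delta R}/(\delta R)$ — as one checks on $U(R)=e^{\delta R}/(\delta R)$, whose transform is $E_1(s-\delta)+O(1)\sim\log(1/(s-\delta))$. Applied to $\mathcal N(\cdot,x)$ this yields directly $\mathcal N(R,x)\sim\beta\,\sigma_\oo(\varphi)\,h(x)\,e^{\delta R}/(\delta R)$. (Equivalently, and more classically, one differentiates: $-\partial_sZ_\varphi(s,x)=\int_0^{+\infty}e^{-sR}\dd_R\mathcal M(R,x)$ where $\mathcal M(R,x):=\sum_{k\geqslant 2}\frac1k\sum_{\T^k y=x}\varphi(y)S_k\gol(y)\un_{[0,R]}(S_k\gol(y))$ is non-decreasing, has transform with a simple-pole-type singularity $\beta\sigma_\oo(\varphi)h(x)/(s-\delta)\cdot(1+o(1))$ at $\delta$, so a Wiener–Ikehara theorem gives $\mathcal M(R,x)\sim\tfrac{\beta}{\delta}\sigma_\oo(\varphi)h(x)e^{\delta R}$, and since $\dd_R\mathcal N(R,x)=R^{-1}\dd_R\mathcal M(R,x)$, partial summation $\mathcal N(R,x)=\mathcal M(R,x)/R+\int_0^R t^{-2}\mathcal M(t,x)\,\dd t+O(1)$ together with $\int^R t^{-2}e^{\delta t}\dd t=-e^{\delta R}/R+\delta\int^R t^{-1}e^{\delta t}\dd t=o(e^{\delta R}/R)$ gives the same asymptotic.) The convergence is uniform in $x\in\LL^0$ because the transform depends on $x$ only through $\Pi_s\varphi(x)$ and a uniformly bounded remainder, so the uniformity passes through the smoothing-kernel proof of the Tauberian theorem. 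Undoing the reduction $\varphi\geqslant 0$ finishes the argument.

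The main obstacle, and where the real work lies, is twofold: establishing the half-plane local expansion of $\lambda_{\delta+z}$ near $\delta$ (the cited propositions supply only the imaginary axis, Proposition \ref{localexp}, and, for $\beta=1$, the mixed variable, Proposition \ref{localexpb1}), which forces one to prove the sectorial Abelian estimate generalising the Erickson-type lemma; and carrying out the Tauberian inversion in the non-analytic, merely logarithmically singular regime, together with the verification that it is uniform in $x$ — the hypotheses $(H_\beta)$, in particular the boundary bound $\rho(\ot_{\delta+it})<1$ of Proposition \ref{spectreperturbation}, enter precisely here.
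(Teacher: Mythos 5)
Your reduction to the Laplace--Stieltjes transform $Z_\varphi(s,x)=\int_0^{+\infty}e^{-sR}\,\dd_R\mathcal N(R,x)$ with a logarithmic singularity at $s=\delta$ is natural, but the Tauberian theorem you invoke to invert it is false as stated, and this is the genuine gap. You require only that $g(s)=\int_0^{+\infty}e^{-sR}\dd U(R)$ extend continuously to $\{\re{s}\geqslant\delta\}\setminus\{\delta\}$ and that $g(\delta+z)\sim c\log(1/z)$ as $z\to0$ in the closed half-plane. Consider $U(R)=\frac{e^{\delta R}}{\delta R}\left(1+a\sin(\log R)\right)$ for small $a>0$: it is non-decreasing for $R$ large, and the perturbation contributes $a\int_{R_0}^{+\infty}e^{-zR}\frac{\sin(\log R)}{R}\dd R=O(1)$ uniformly as $z\to 0$ with $\re{z}\geqslant0$ (split at $R=1/|z|$ and integrate by parts beyond), so both of your hypotheses hold; yet $U(R)\not\sim c\,e^{\delta R}/(\delta R)$. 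Any correct Wiener--Ikehara/Delange statement needs the remainder $g(s)-c\log\frac{1}{s-\delta}$ to extend continuously (or at least as a local pseudofunction) across the \emph{whole} line $\re{s}=\delta$, including $s=\delta$ itself --- and that stronger hypothesis is exactly what the available spectral input cannot supply: Proposition \ref{localexp} gives $1-\lambda_{\delta+it}=\Cg\Gamma(1-\beta)e^{i\mathrm{sign}(t)\beta\pi/2}|t|^{\beta}L(1/|t|)(1+o(1))$, whence $-\log(1-\lambda_{\delta+it})=\beta\log\frac{1}{|t|}-\log L\left(\frac{1}{|t|}\right)+O(1)$ with an $O(1)$ that is not known to converge as $t\to0$ and a term $-\log L(1/|t|)$ that is $o(\log(1/|t|))$ but unbounded (take $L=\ln$). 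So even after correcting the statement of the Tauberian theorem, its hypotheses are not verifiable from what is proved. The same objection hits your ``differentiate and apply Wiener--Ikehara'' variant, which moreover differentiates the asymptotic expansion of $\lambda_{\delta+it}$ term by term; this is not justified, since for $\beta<1$ the measures $\nu_j^x$ have infinite first moment and $\partial_t\widehat{\nu_j^x}$ requires a separate Erickson-type analysis. The sectorial extension of Proposition \ref{localexp} to $\re{z}\geqslant0$, which you also need, is likewise asserted rather than proved.

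The paper avoids Tauberian inversion entirely. It sandwiches $u_R(t)=e^{\delta t}\un_{[0,R]}(t)$ between Riemann sums $\sum_{p}e^{\delta\eta p}\un_{[\eta p,\eta(p+1)[}(t)$ (up to a factor $e^{\delta\eta}\to1$), proves a \emph{local renewal theorem} uniform in $x$ (Proposition \ref{butdessectionsgeodfermees}: $p\sum_{k\geqslant2}\frac1kZ_k(\varphi,u,x,p)\to\beta\sigma_\oo(\varphi)h(x)\widehat{u}(0)/\eta$) by summing the local limit theorem of Proposition B.1 --- where the stable density $\Psi_\beta$ and the normalisation $a_k^\beta=kL(a_k)$ make the slowly varying factor cancel explicitly --- against the weight $1/k$, controlling the tail $p\geqslant Ka_k$ with Proposition B.2 (this is where hypothesis $(S)$ enters), and concludes with the uniform Ces\`aro Lemma \ref{dedalbopeigne}. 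All estimates stay on the imaginary axis and never require boundary regularity of $\log(1-\lambda_{\delta+it})$ at $t=0$ beyond the multiplicative $(1+o(1))$ of Proposition \ref{localexp}. If you want to salvage a transform-side proof, you would have to upgrade Proposition \ref{localexp} to an additive expansion with a remainder admitting continuous (or pseudofunction) boundary values at $t=0$, which is a substantially harder statement than anything established in the paper.
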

\begin{proof}
The quantity $N\left(\varphi,x,R\right):=\sum\limits_{k\geqslant2}\frac{1}{k}\sum\limits_{\T^ky=x}\varphi(y)\un_{[0,R]}\left(S_k\gol(y)\right)$
may be written as 
$$N\left(\varphi,x,R\right)=\sum\limits_{k\geqslant2}\dfrac{1}{k}\sum\limits_{\T^ky=x}\varphi(y)e^{-\delta S_k\gol(y)}u_R\left(S_k\gol(y)\right)$$
\noindent
where $u_R(t)=e^{\delta t}\un_{[0,R]}(t)$. Let us fix $\eta>0$ and set $P=\left[\frac{R}{\eta}\right]$. From
$$\sum\limits_{p=0}^{P-1}e^{\delta\eta p}\un_{[\eta p,\eta(p+1)[}(t)\leqslant
u_R(t),$$
\noindent
we deduce
\begin{align*}
\sum\limits_{p=0}^{P-1}e^{\delta\eta p}\sum\limits_{k\geqslant2}\dfrac{1}{k}\sum\limits_{\T^ky=x}\varphi(y)e^{-\delta S_k\gol(y)}\un_{[0,\eta[}\left(S_k\gol(y)-\eta p\right)\leqslant
N\left(\varphi,x,R\right).
\end{align*}
\noindent
Let us first admit the following proposition, whose proof is postponed to subsection 7.2.
\begin{prop}\label{butdessectionsgeodfermees}
For any functions $\varphi\in\mathrm{Lip}\left(\LL\right)$ and $u\ :\ \R\longrightarrow\R$ with compact support, one gets
$$ 
\lim\limits_{p\longrightarrow+\infty}\underset{x\in\LL^0}{\sup}
\left|p\sum\limits_{k\geqslant2}\dfrac{1}{k}\sum\limits_{\T^ky=x}\varphi(y)e^{-\delta S_k\gol(y)}u\left(S_k\gol(y)-\eta p\right)-
\beta\sigma_\oo\left(\varphi\right)h(x)\dfrac{\widehat{u}(0)}{\eta}\right|=0.$$
\end{prop}
\noindent
We also need the following Lemma (see \cite{BDP}). 
\begin{lem}\label{dedalbopeigne}
Let $(V_p(x))_{p\geqslant0}$ a sequence of positive functions defined on a compact set $K$ and $\left(v_p\right)_{p\geqslant0}$ a divergent positive series. If
$\displaystyle{\lim_{p\longrightarrow+\infty}\sup_{x\in K}\left|\frac{V_p(x)}{v_p}-1\right|=0}$, then
$$\lim\limits_{N\longrightarrow+\infty}\sup\limits_{x\in K}\left|\dfrac{\sum\limits_{p=0}^NV_p(x)}{\sum\limits_{p=0}^Nv_p}-1\right|=0.$$
\end{lem}
\noindent
Proposition \ref{butdessectionsgeodfermees} and Lemma \ref{dedalbopeigne} imply that, as $P\longrightarrow+\infty$
$$\sum\limits_{p=1}^{P}e^{\delta\eta p}\sum\limits_{k\geqslant2}\dfrac{1}{k}\sum\limits_{\T^ky=x}\varphi(y)e^{-\delta S_k\gol(y)}\un_{[0,\eta[}\left(S_k\gol(y)-\eta p\right)
\sim\left(\sum\limits_{p=1}^{P}\dfrac{e^{\delta\eta p}}{p}\right)\beta\sigma_\oo\left(\varphi\right)h(x)$$
\noindent
uniformly in $x\in\LL^0$. Since $\displaystyle{\sum_{p=1}^{P}\frac{e^{\delta\eta p}}{p}\sim \frac{e^{\delta\eta I}}{\delta\eta I}}$ and $\eta$ is arbitrary chosen, 
we obtain when $R\longrightarrow+\infty$
$$N\left(\varphi,x,R\right)\sim \beta\sigma_\oo\left(\varphi\right)h(x)\dfrac{e^{\delta R}}{\delta R}$$
\noindent
uniformly in $x\in\LL^0$.
\end{proof}

Theorem B is a direct consequence of Proposition \ref{propintermediairegeodfermees}; combining 
\eqref{çafinitsuruneligne} and Fatou's lemma yields
$$e^{-\delta\varepsilon_n}\sum\limits_{\g\in\G(n)}\sigma_\oo\left(\un_{\LL_\g}\right)h(x^\g)\leqslant
\liminf\limits_{R\longrightarrow+\infty}\dfrac{\delta R}{\beta e^{\delta R}}\tilde{\mathrm{N}}_{\mathscr{G}}(R).$$
\noindent
Theorem B follows noticing that
$$\lim\limits_{n\longrightarrow+\infty}e^{-\delta\varepsilon_n}\sum\limits_{\g\in\G(n)}\sigma_\oo\left(\un_{\LL_\g}\right)h(x^\g)=\int_{\LL^0}h(x)\dd\sigma_\oo(x)=1.$$
\noindent
\begin{rem}
We also may have an upper bound of $\tilde{\mathrm{N}}_{\mathscr{G}}(R)$ of the form 
$$\sum\limits_{\g\in\G(n)}\sum\limits_{k\geqslant2}\dfrac{1}{k}\sum\limits_{\T^ky=x^\g}\un_{\LL_\g}(y)\un_{[0,R+\varepsilon_n]}(S_k\gol(y))$$
\noindent
as the minor bound in \eqref{çafinitsuruneligne}. Unfortunately, we do not find a domination by an 
integrable function in order to use Proposition \ref{propintermediairegeodfermees}.
\end{rem}
\subsection{Proposition \ref{butdessectionsgeodfermees}}
The proof is inspired from the one of Theorem 1.4 in \cite{Gou}. Let us consider a sequence $(a_k)_{k\geqslant1}$ of non negative real numbers such that $a_k^\beta=kL(a_k)$, where $L$ is the slowly varying function given in assumptions $(H_\beta)$. Let $\varphi\in\mathrm{Lip}\left(\LL\right)$ and $u\ :\ \R\longrightarrow\R$ a function with compact support. For any $k\geqslant1$, denote by 
$$Z_k\left(\varphi,u,x,p\right)=\sum\limits_{\T^ky=x}\varphi(y)e^{-\delta S_k\gol(y)}u\left(S_k\gol(y)-\eta p\right).$$
\noindent
We first admit the two following propositions.
\begin{propB1}
Let $\varphi\in\mathrm{Lip}\left(\LL\right)$ and $u\ :\ \R\longrightarrow\R$ with compact support. For all $\eta>0$, uniformly in $K\geqslant2$, $p\in[0,Ka_k]$
and $x\in\LL^0$, as $k\longrightarrow+\infty$
$$Z_k\left(\varphi,u,x,p\right)=\dfrac{1}{\cg a_k}\left(\Psi_{\beta}\left(\dfrac{\eta p}{\cg a_k}\right)\sigma_\oo\left(\varphi\right)h(x)\widehat{u}(0)+o_k(1)\right),$$
\noindent
where $\Psi_{\beta}$ stands for the density of the fully asymmetric stable law with parameter $\beta$ and $\cg$ equals to $\Cg^{\frac{1}{\beta}}$.
\end{propB1}
\begin{propB2}
Let $\varphi\in\mathrm{Lip}\left(\LL\right)$ and $u\ :\ \R\longrightarrow\R$ with compact support. There exists a constant $C>0$, which depends only on $\eta$ and on the support of $\varphi$ such that, 
when $p\geqslant Ka_k$
$$\left|Z_k\left(\varphi,u,x,p\right)\right|\leqslant Ck\dfrac{L(p)}{p^{1+\beta}}|\varphi|_{\infty}|u|_{\infty}.$$
\end{propB2}
We postpone the proof of these propositions to Subsections 7.2.1 and 7.2.2 and first explain how Proposition \ref{butdessectionsgeodfermees} follows. Let us set for any $x\in\LL$ and $p\geqslant1$
$$D(x;p)=\left|p\sum\limits_{k\geqslant2}\dfrac{1}{k}Z_k\left(\varphi,u,x,p\right)-
\beta\sigma_\oo\left(\varphi\right)h(x)\dfrac{\widehat{u}(0)}{\eta}\right|,$$
\noindent
where $\eta>0$ is fixed as in the proof of Proposition \ref{propintermediairegeodfermees}.
By Proposition B.1, the quantity $D(x;p)$ is bounded from above by $D(x;p)^1+D(x;p)^2+D(x;p)^3+D(x;p)^4$ where
$$D^1(x;p)=\left|p\sum\limits_{k\ |\ \frac{a_k}{K}\leqslant p<Ka_k}\dfrac{1}{\cg ka_k}\Psi_{\beta}\left(\dfrac{\eta p}{\cg a_k}\right)\sigma_\oo\left(\varphi\right)h(x)\widehat{u}(0)-\beta\sigma_\oo\left(\varphi\right)h(x)\dfrac{\widehat{u}(0)}{\eta}\right|,$$
\begin{align*}
& D^2(x;p)=\left|p\sum\limits_{k\ |\ p<\frac{a_k}{K}}\dfrac{1}{\cg ka_k}\Psi_{\beta}\left(\dfrac{\eta p}{\cg a_k}\right)\sigma_\oo\left(\varphi\right)h(x)\widehat{u}(0)\right|,\\
& D^3(x;p)=\left|p\sum\limits_{k\ |\ p<Ka_k}\dfrac{o_k(1)}{\cg ka_k}\right|\\
\text{and}\ & D^4(x;p)=\left|p\sum\limits_{k\ |\ p\geqslant Ka_k}\dfrac{1}{k}Z_k\left(\varphi,u,x,p\right)\right|.
\end{align*}
\begin{itemize}
 \item[a)]{\it Study of $D^1(x;p)$.} There exists $C>0$ which depends only on $\varphi$ and $u$ such that
 $$D^1(x;p)\leqslant C\left|\dfrac{p}{\cg}\sum_{k\ |\ \frac{a_k}{K}\leqslant p<Ka_k}\dfrac{1}{ka_k}\Psi_{\beta}\left(\dfrac{\eta p}{\cg a_k}\right)-\frac{\beta}{\eta}\right|.$$
 \noindent
 Since $k=\frac{a_k^\beta}{L(a_k)}$, it follows 
 \begin{align}\label{etap1geodfermees}
 \sum_{k\ |\ \frac{a_k}{K}\leqslant p<Ka_k}\dfrac{1}{ka_k}\Psi_{\beta}\left(\dfrac{\eta p}{\cg a_k}\right) & =\dfrac{L(p)}{p^{1+\beta}}
 \sum_{k\ |\ \frac{a_k}{K}\leqslant p<Ka_k}\dfrac{L(a_k)}{L(p)}\dfrac{p^{1+\beta}}{a_k^{1+\beta}}\Psi_{\beta}\left(\dfrac{\eta p}{\cg a_k}\right)\nonumber\\
 & \sim \dfrac{L(p)}{p^{1+\beta}}
 \sum_{k\ |\ \frac{a_k}{K}\leqslant p<Ka_k}\dfrac{p^{1+\beta}}{a_k^{1+\beta}}\Psi_{\beta}\left(\dfrac{\eta p}{\cg a_k}\right)
 \end{align}
 \noindent
 uniformly in $p$ such that $\frac{a_k}{K}\leqslant p<Ka_k$. Using the measure
 $\mu_p=\sum_{k}D_{\frac{p}{a_k}}$ defined on the interval $\left[\frac{1}{K},K\right]$, we may rewrite the right member of \eqref{etap1geodfermees} as
 \begin{equation}\label{etap2geodfermees}
 \dfrac{L(p)}{p^{1+\beta}}\sum_{k\ |\ \frac{a_k}{K}\leqslant p<Ka_k}\dfrac{p^{1+\beta}}{a_k^{1+\beta}}\Psi_{\beta}\left(\dfrac{\eta p}{\cg a_k}\right)=
 \dfrac{L(p)}{p^{1+\beta}}\int_{\frac{1}{K}}^Kz^{\beta+1}\Psi_\beta\left(\dfrac{\eta}{\cg}z\right)\dd\mu_p(z)
 \end{equation}
 \noindent
 and from the arguments given in the proof of Theorem A in a) Section 5.2.1, we deduce that 
 the sequence of measures $\left(\frac{L(p)}{p^{1+\beta}}\mu_p\right)_p$ weakly converges on $\left[\frac{1}{K},K\right]$ to the measure $\beta x^{-1-\beta}\dd x$. Combining 
 \eqref{etap1geodfermees} and \eqref{etap2geodfermees}, we get 
 $$D^1(x;p)\leqslant C\beta
 \left|\dfrac{1}{\cg}\left(\int_{\frac{1}{K}}^K\Psi_\beta\left(\dfrac{\eta}{\cg}z\right)\dd z\right)(1+o(1))-\frac{1}{\eta}\right|$$
 \noindent
 where $\lim\limits_{K\longrightarrow+\infty}\lim\limits_{p\longrightarrow+\infty}o(1)=0$. Finally  
 $$D^1(x;p)\leqslant C\dfrac{\beta}{\eta}
 \left|\left(\int_{\frac{\eta}{K\cg}}^{\frac{\eta K}{\cg}}\Psi_\beta\left(z\right)\dd z\right)(1+o(1))-1\right|$$
 \noindent
 which implies when $K\longrightarrow+\infty$, since $\Psi_\beta$ is a probability density, 
 $$\lim\limits_{K\longrightarrow+\infty}\lim\limits_{p\longrightarrow+\infty}\underset{x\in\LL^0}{\sup}\ D^1(x;p)=0.$$
\item[b)]{\it Study of $D^2(x;p)$.} Recall that $k=A(a_k)$ and $A$ is increasing. The inequality $pK<a_k$ thus gives $A(pK)\leqslant k$ and yields
\begin{align*}
D^2(x;p) \leqslant \dfrac{p}{\cg A(pK)}\left|\sum\limits_{k\ |\ p<\frac{a_k}{K}}\dfrac{1}{a_k}\Psi_\beta\left(\dfrac{\eta p}{\cg a_k}\right)\right|
\leqslant C\dfrac{p^{1-\beta}}{K^\beta}L(pK)\left|\sum\limits_{k\ |\ p<\frac{a_k}{K}}\dfrac{1}{a_k}\Psi_\beta\left(\dfrac{\eta p}{\cg a_k}\right)\right|.
\end{align*}
\noindent
From the weak convergence on $\left]0,K\right]$ of the sequence of measures $\left(\frac{L(p)}{p^{1+\beta}}\mu_p\right)_p$ to the measure $\beta x^{-1-\beta}\dd x$, we deduce
\begin{align*}
\dfrac{p^{1-\beta}}{K^\beta}L(pK) \sum\limits_{k\ |\ p<\frac{a_k}{K}}\dfrac{1}{a_k}\Psi_\beta\left(\dfrac{\eta p}{\cg a_k}\right)
& =\dfrac{p^{-\beta}}{K^\beta}L(pK)\int_{0}^{\frac{1}{K}}z\Psi_\beta\left(\frac{\eta}{\cg}z\right)\dd\mu_p(z)\\
& =\dfrac{\beta}{K^\beta}\dfrac{L(pK)}{L(p)}\left(\int_{0}^{\frac{1}{K}}z\Psi_\beta\left(\frac{\eta}{\cg}z\right)\dd z\right)(1+o(1))
\end{align*}
\noindent
where $\lim\limits_{p\longrightarrow+\infty}o(1)=0$;  hence
$$\lim\limits_{K\longrightarrow+\infty}\lim\limits_{p\longrightarrow+\infty}\underset{x\in\LL^0}{\sup}\ D^2(x;p)=0.$$
\item[c)] {\it Study of $D^3(x;p)$.} Fix $\varepsilon>0$ and let $N=N(p)$ be the smallest integer such that $Ka_N>p$. The map $p\longmapsto N(p)$ is increasing. For $p$ large enough and any $k\geqslant N(p)$, one gets $|o_k(1)|\leqslant\varepsilon$. Karamata's lemma \ref{regvar} implies
\begin{align*}
D^3(x;p) \leqslant\dfrac{\varepsilon}{\cg} p\sum\limits_{k\geqslant N}\dfrac{1}{ka_k}\leqslant \dfrac{\varepsilon}{\cg}\dfrac{p}{a_N}.
\end{align*}
\noindent
Noticing that $\frac{1}{a_N}=\frac{a_N^{\beta-1}}{a_N^\beta}$ with 
$a_{N-1}\leqslant\frac{p}{K}$ and $p<Ka_N$ and using asymptotic properties of regularly varying functions, we deduce 
$$\frac{1}{a_N}=\dfrac{a_N^{\beta-1}}{a_{N-1}^{\beta-1}}\dfrac{a_{N-1}^{\beta-1}}{a_N^{\beta}}\preceq \dfrac{p^{\beta-1}}{K^{\beta-1}}\dfrac{K^\beta}{p^\beta}=\dfrac{K}{p}.$$
\noindent
Finally $\underset{x\in\LL^0}{\sup}\ D^3(x;p)\preceq\varepsilon K$ and thus $\underset{x\in\LL^0}{\sup}\ D^3(x;p)=o_K(1)$ where 
$\lim\limits_{p\longrightarrow+\infty}o_K(1)=0$ for any fixed $K$; hence
$\lim\limits_{K\longrightarrow+\infty}\lim\limits_{p\longrightarrow+\infty}\underset{x\in\LL^0}{\sup}\ D^3(x;p)=0.$
\item[d)] {\it Study of $D^4(x;p)$.} By Proposition B.2, we bound $D^4(x;p)$ from above by $Cp^{-\beta}L(p)\sum_{k\ |\ Ka_k\leqslant p}1$ where  
the constant $C$ only depends on $\varphi$ and $u$. Using the same arguments as in b), the inequality $a_k\leqslant\frac{p}{K}$ implies $k\leqslant\left(\frac{p}{K}\right)^\beta L\left(\frac{p}{K}\right)^{-1}$; therefore
$$D^4(x;p)\leqslant CK^{-\beta}\dfrac{L(p)}{L\left(\frac{p}{K}\right)}.$$
\noindent
Potter's lemma with $B=1$, $\rho=\frac{\beta}{2}$, $x=p$ and $y=\frac{p}{K}$ finally implies $D^4(x;p)\leqslant CK^{-\frac{\beta}{2}}$, so that 
$\lim\limits_{K\longrightarrow+\infty}\lim\limits_{p\longrightarrow+\infty}\underset{x\in\LL^0}{\sup}\ D^4(x;p)=0.$
\end{itemize}
\subsubsection{Proof of Proposition B.1}
We follow the proof of Proposition A.1 in paragraph 5.2.2.

Let us fix $p\gg1$ and consider the integers $k$ such that $Ka_k>p$, where $K>2$ is fixed. For any such $k\in\N$, any $\varphi\in\mathrm{Lip}\left(\LL\right)$ and $u\ :\ \R\longrightarrow\R$ continuous with compact support, we write 
$$Z_k\left(\varphi,u,x,p\right)=\sum\limits_{\T^ky=x}\varphi(y)e^{-\delta S_k\gol(y)}u\left(S_k\gol(y)-\eta p\right).$$
\noindent
We want to show that, as $k\longrightarrow+\infty$
\begin{equation}
a_kZ_k\left(\varphi,u,x,p\right)-\dfrac{1}{\cg}\Psi_{\beta}\left(\dfrac{\eta p}{\cg a_k}\right)\sigma_\oo\left(\varphi\right)h(x)\widehat{u}(0)
\longrightarrow0
\end{equation}
\noindent
uniformly in $K$, $p$ and $x\in\LL^0$.
\begin{pro}\label{nouvellepropriété}
The sequence of measures 
$$\left(\varphi\longmapsto a_kZ_k\left(\varphi,\bullet,x,p\right)-\dfrac{1}{\cg}\Psi_{\beta}\left(\dfrac{\eta p}{\cg a_k}\right)\sigma_\oo\left(\varphi\right)h(x)\int_\R\bullet(y)\dd y\right)_{k|Ka_k>p}$$
\noindent
converges weakly to $0$ as $k\longrightarrow+\infty$.
\end{pro}
\noindent
By Stone's argument (see the proof of Proposition A.1), it is sufficient to check that $a_kZ_k\left(\varphi,\bullet,x,p\right)$ is finite and that the convergence of Property \ref{nouvellepropriété} holds.
The Fourier inverse formula furnishes 
$$
Z_k\left(\varphi,u,x,p\right)=\dfrac{1}{2\pi}\int_\R e^{it\eta p}\ot_{\delta+it}^k\varphi(x)\widehat{u}(t)\dd t;
$$
\noindent
we thus deduce 
$\left|Z_k\left(\varphi,u,x,p\right)\right|\preceq||\varphi||_\infty||\widehat{u}||_1<+\infty$. To prove Property \ref{nouvellepropriété}, let us fix $\varepsilon>0$ satisfying the conclusion of 
Proposition \ref{spectreperturbation} and let us decompose
$$
a_kZ_k\left(\varphi,u,x,p\right)-\dfrac{1}{\cg}\Psi_{\beta}\left(\dfrac{\eta p}{\cg a_k}\right)\sigma_\oo\left(\varphi\right)h(x)\widehat{u}(0)
$$
\noindent
as $K_1(k)+K_2(k)$ where 
$$K_1(k):=\dfrac{a_k}{2\pi}\displaystyle{\int\limits_{[-\varepsilon,\varepsilon]^c}}e^{it\eta p}\ot_{\delta+it}^k\varphi(x)\widehat{u}(t)\dd t$$
\noindent
and
\begin{align*}
K_2(k):= & \dfrac{a_k}{2\pi}\displaystyle{\int\limits_{-\varepsilon}^{\varepsilon}}e^{it\eta p}\ot_{\dit}^k\varphi(x)\widehat{u}(t)\dd t-
\dfrac{1}{2\pi}\displaystyle{\int\limits_{\R}}e^{it\frac{\eta p}{a_k}}g_{\beta}(\cg t)\sigma_\oo\left(\varphi\right)h(x)\widehat{u}(0)\dd t\\
= & \dfrac{1}{2\pi}\displaystyle{\int\limits_{-\varepsilon a_k}^{\varepsilon a_k}}e^{is\frac{\eta p}{a_k}}\ot_{\delta+i\frac{s}{a_k}}^k\varphi(x)\widehat{u}\left(\dfrac{s}{a_k}\right)\dd s
-\dfrac{1}{2\pi}\displaystyle{\int\limits_{\R}}e^{it\frac{\eta p}{a_k}}g_{\beta}(\cg t)\sigma_\oo\left(\varphi\right)h(x)\widehat{u}(0)\dd t.
\end{align*}
Proposition \ref{spectreperturbation} combined with the fact that $\widehat{u}$ has a compact support implies the existence of $\rho\in]0,1[$ such that $||\ot_{\delta+it}^k||\preceq\rho^k$ for any $t\in[-\varepsilon,\varepsilon]^c\cap\mathrm{supp}\ \widehat{u}$; hence $\left|K_1(k)\right|\preceq\left|\left|\widehat{u}\right|\right|_{\infty}
\left|\left|\varphi\right|\right|_{\infty}\rho^ka_k\longrightarrow0$ as $k\longrightarrow+\infty$, uniformly in $K$, $p$ and $x\in\LL^0$. 

We now deal with  $K_2(k)$. The spectral decomposition of $\ot_{\delta+i\frac{u}{a_k}}$ furnishes
$$\ot_{\delta+i\frac{s}{a_k}}^k\varphi=\lambda_{\delta+i\frac{s}{a_k}}^k\Pi_{\delta+i\frac{s}{a_k}}\varphi+R_{\delta+i\frac{s}{a_k}}^k\varphi$$ 
\noindent
where the spectral radius of $R_{\delta+i\frac{s}{a_k}}$ is $\leqslant\rho_\varepsilon<1$.
We split $K_2(k)$ into $L_1(k)+L_2(k)+L_3(k)$ where 
$$L_1(k)=\dfrac{1}{2\pi}\int\limits_{-\varepsilon a_k}^{\varepsilon a_k}e^{is\frac{\eta p}{a_k}}R_{\delta+i\frac{s}{a_k}}^k\varphi(x)\widehat{u}\left(\dfrac{s}{a_k}\right)\dd s,$$
\noindent
$$L_2(k)=\dfrac{1}{2\pi}\int\limits_{-\varepsilon a_k}^{\varepsilon a_k}e^{is\frac{\eta p}{a_k}}\lambda_{\delta+i\frac{s}{a_k}}^k\left(\Pi_{\delta+i\frac{s}{a_k}}\varphi(x)-\Pi_{\delta}\varphi(x)\right)\widehat{u}\left(\dfrac{s}{a_k}\right)\dd s$$
\noindent
and
$$L_3(k)=\dfrac{\sigma_\oo\left(\varphi\right)h(x)}{2\pi}\int\limits_{-\varepsilon a_k}^{\varepsilon a_k}e^{is\frac{\eta p}{a_k}}\lambda_{\delta+i\frac{s}{a_k}}^k\widehat{u}\left(\dfrac{s}{a_k}\right)\dd s-\dfrac{\sigma_\oo\left(\varphi\right)h(x)}{2\pi}\int\limits_{\R}e^{it\frac{\eta p}{a_k}}g_{\beta}(\cg t)\widehat{u}(0)\dd t.$$
\noindent
First $\left|L_1(k)\right|\preceq a_k\rho_\varepsilon^k\left|\left|\widehat{u}\right|\right|_{\infty}\left|\left|\varphi\right|\right|$, hence $L_1(k)$ tends to
$0$ uniformly in $K$, $p$  and $x\in\LL^0$ when $k\longrightarrow+\infty$. We need the Lebesgue dominated convergence theorem for $L_2(k)$; as for the quantity $L_2(k)$ appearing in the proof of Proposition A.1, the local expansion of $\lambda_{\delta+it}$ given in Proposition \ref{localexp} implies that the integrand of $L_2(k)$ is
bounded from above up to a multiplicative constant by 
$$l(t)=\left\{\begin{array}{ll}
        &|t|^{\frac{\beta}{2}}e^{-\frac{1}{4}(1-\beta)\G(1-\beta)|\cg t|^{\frac{3\beta}{2}}} \text{if}\ |t|\leqslant1\\
        &|t|^{\frac{3\beta}{2}}e^{-\frac{1}{4}(1-\beta)\G(1-\beta)|\cg t|^{\frac{\beta}{2}}}\ \text{if}\ |t|>1\\
\end{array}\right..$$
\noindent
The term $L_3(k)$ may be treated similarly as the quantity $L_3(k)$ in the proof of Proposition A.1.
\subsubsection{Proof of Proposition B.2}
The quantity $Z_k\left(\varphi,u,x,p\right)$ may be written as 
$$Z_k\left(\varphi,u,x,p\right)=\sum\limits_{\g\in\G(k)}\un_{\LL_{l(\g)}}(x)\varphi(\g.x)e^{-\delta b(\g,x)}u\left(b(\g,x)-\eta p\right).$$
\noindent
Thus there exists a constant $M>0$ such that
$$\left|Z_k\left(\varphi,u,x,p\right)\right|\leqslant\left|\varphi\right|_{\infty}\left|u\right|_{\infty}
\sum\limits_{\underset{b(\g,x)\overset{M}{\sim} \eta p}{\g\in\G(k)}}\un_{\LL_{l(\g)}}(x)e^{-\delta b(\g,x)}.$$
\noindent
It is thus sufficient to prove
$$
\sum\limits_{\underset{b(\g,x)\overset{M}{\sim} \eta p}{\g\in\G(k)}}\un_{\LL_{l(\g)}}(x)e^{-\delta b(\g,x)}\leqslant Ck\dfrac{L(p)}{p^{1+\beta}}
$$
\noindent
where $C$ only depends on $\eta$ and on the support of $\varphi$; this inequality is a consequence of \eqref{butA2}.

\section{Extended coding}
In the sequel, we aim to find an asymptotic for the orbital function $\mathrm{N}_\G(\oo,R)$ of the group $\G$ defined by 
$$\mathrm{N}_\G(\oo,R)=\sharp\{\g\in\G\ |\ \dhy(\oo,\g.\oo)\leqslant R\}.$$
\noindent
The idea is to formalize the fact that $\mathrm{N}(\oo,\G)$ ``behaves'' like a sum of iterates of some extension the transfer operators. Unfortunately, the coding exposed in Section 4 does not take into account the finite words $\g=\alpha_1...\alpha_k$. Adaptating Lalley's approach (in \cite{La}), we first extend this coding to finite sequences $(\alpha_1,...,\alpha_k)$ and then study the corresponding transfer operators: in subsection 8.3, we extend Propositions \ref{spectredeotdelta}, \ref{continuityoftransfert}, \ref{spectreperturbation} and \ref{localexp} to this new coding. As for the mixing, 
these results are essential. Then, the proof of Theorem C follows quite line by line the one of Theorem A using this new coding.
\subsection{Extension of the coding to finite sequences}
In this section, we fix $x_0\in\partial\xx\setminus D$ as in section 4. We denote
$\tilde{\LL}^0$ the set $\tilde{\LL}^0:=\LL^0\cup\G.x_0$ and introduce the following 
symbolic space (called ``extended symbolic space'')
$$\tilde{\Sigma}^+:=\Sigma^+\cup\{\emptyset\}\cup\G^*$$
\noindent
where we denote respectively by $\emptyset$ and $\G^*$ the empty sequence and the set $\G\setminus\{\mathrm{Id}\}$. The set
$\tilde{\Sigma}^+$ is in one-to-one correspondance with the subset $\tilde{\LL}^0$ of $\partial\xx$:
\begin{enumerate}
 \item[-]the point $x_0$ corresponds to the empty sequence;
 \item[-]the point $\alpha_1...\alpha_k.x_0$ corresponds to the admissible finite sequence $(\alpha_1,...,\alpha_k)$;
 \item[-]the infinite sequence $(\alpha_k)_k$ corresponds to the limit point $x:=\lim\limits_{k\longrightarrow+\infty}\alpha_1...\alpha_k.x_0$.
\end{enumerate}
\noindent
Analogously to paragraph 4.1.1, we will use the following description of $\tilde{\LL}^0\setminus\{x_0\}$:
\begin{itemize}
\item[(1)]the set $\tilde{\LL}^0\setminus\{x_0\}$ is the disjoint union of sets $(\tilde{\LL}_j^0)_j$, where
$$
\tilde{\LL}_j^0:=\LL_j^0\cup\left\{g.x_0\ |\ g\ \text{has first letter in}\ \G_j\right\}=\tilde{\LL}^0\cap D_j;
$$
\item[(2)]each subset $\tilde{\LL}_j^0$ is partitioned into a countable number of subsets with disjoint closures: indeed, for any $j\in[\![1,p+q]\!]$
$$\tilde{\LL}_j^0=\bigcup_{\alpha\in\G_j^*}\alpha.\left(\{x_0\}\cup\bigcup_{l\neq j}\tilde{\Lambda}_l^0\right).$$
\end{itemize}
\noindent
Now we extend the cocycle $b(\g,x)=\mathcal{B}_x(\g^{-1}.\oo,\oo)$ on $\tilde{\LL}^0$ in such a way to decompose the distance $\dhy(\oo,\g.\oo)$ for any $\g\in\G$ as a sum of terms expressed with the cocycle. 
For any $\g,g\in\G$, let us set 
$$b^*(\g,g.x_0):=\dhy(\g^{-1}.\oo,g.\oo)-\dhy(\oo,g.\oo).$$
\noindent 
The function $x=g.x_0\in\G.x_0\longmapsto b^*(\g,x)$ satisfies the three following properties: 
\begin{itemize}
\item[i)] when the sequence of points $\left(g.\oo\right)_{g}$ tends to a point $x\in\LL$, so does the sequence $\left(g.x_0\right)_{g}$ and the quantity $\dhy(\g^{-1}.\oo,g.\oo)-\dhy(\oo,g.\oo)$ 
converges to $\mathcal{B}_{x}(\g^{-1}.\oo,\oo)=b(\g,x)$;
\item[ii)]  there exists a constant $C>0$ depending only on $\xx$ and $\G$ such that 
for any $g\in\G$ with $i(g)\neq l(\g)$, one gets 
$$\left|\left(\dhy(\g^{-1}.\oo,g.\oo)-\dhy(\oo,g.\oo)\right)-\dhy(\oo,\g.\oo)\right|\leqslant C,$$ 
\noindent
hence $\left|b^*(\g,g.x_0)-\dhy(\oo,\g.\oo)\right|\leqslant C$;
\item[iii)]$b^*(\g g,x_0)=b^*(\g,g.x_0)+b^*(g,x_0)$ for any $\g,g\in\G$.
\end{itemize}
The function $b^*(\g,\cdot)$ is a cocycle, which extends continuously $b$ to $\tilde{\LL}^0$; in particular, if $\g$ decomposes into $\g=\alpha_1...\alpha_k$, then
\begin{equation}\label{decompo1}\dhy(\oo,\g.\oo)=b^*(\alpha_1,\alpha_2...\alpha_k.x_0)+b^*(\alpha_2,\alpha_3...\alpha_k.x_0)+...+b^*(\alpha_k,x_0).\end{equation}
\noindent
In the sequel, we thus consider the following ``extended cocycle'', also denoted by $b^*$ and defined by: for any $\g\in\G$ and $x\in\tilde{\LL}^0$
\begin{equation}\label{definouveaucocycle}
b^*(\g,x) = \left\{\begin{array}{ll}
                   \mathcal{B}_x(\g^{-1}.\oo,\oo)\ \text{if}\ x\in\LL^0\\
                   \dhy(\g^{-1}.\oo,g.\oo)-\dhy(\oo,g.\oo)\ \text{if}\ x=g.x_0
                   \end{array}\right..
\end{equation}
\noindent
The map $\T$ previously defined on $\LL^0$, may be naturally extended to $\G.x_0$ as follows: 
\begin{itemize}
 \item[-]we use the convention $\T.x_0=x_0$;
 \item[-]we set $\T.(\g.x_0):=\alpha_1^{-1}.(\g.x_0)$ for any $\g\in\G$ with first letter $\alpha_1\in\mathcal{A}$.
\end{itemize}
\noindent
We can similarly extend the roof function $\gol$ to the discrete orbit $\G.x_0$ setting 
\begin{itemize}
 \item[-]$\gol(x_0)=0$;
 \item[-]$\gol(\g.x_0)=b^*(\alpha_1,\alpha_2...\alpha_k.x_0)$ for any $\g=\alpha_1...\alpha_k$ in 
$\G^*$.
\end{itemize}
\noindent
Then for $y=\g.x_0$ the relation \eqref{decompo1} may be rewritten as
\begin{equation}\label{decompo2}\dhy(\oo,\g.\oo)=\gol(y)+\gol(\T.y)+...+\gol(\T^{k-1}.y)=S_{k}\gol(y).\end{equation}
%
%
\subsection{Regularity of the extended cocycle}
To simplify the notations, from now on we denote by
$\tilde{\LL}=\overline{\tilde{\LL}^0}=\LL\cup\G.x_0$ and $\tilde{\LL}_j:=\overline{\tilde{\LL}_j^0}=
\tilde{\LL}\cap D_j$ for any $j\in[\![1,p+q]\!]$. In this subsection, we aim to show the following
\begin{prop}\label{contfinal}
There exists a constant $C=C(x_0)>0$ such that for any $j\in[\![1,p+q]\!]$, $\g\in\G$ with $l(\g)=j$ and $x,y\in\tilde{\LL}_l$, $l\neq j$,
$$\left|b^*(\g,x)-b^*(\g,y)\right|\leqslant C\dhy_\oo(x,y).$$
\end{prop}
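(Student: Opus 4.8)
The statement is the ``extended'' analogue of Proposition \ref{equilipcocycle}: it asserts equi-Lipschitz control of the family $(x\mapsto b^*(\g,x))_{\g\in\G}$ on the sets $\tilde{\LL}_l$, where now $x,y$ range over $\tilde{\LL}_l = \overline{\LL_l^0}\cup\{g.x_0 : i(g)=l\}\cup\{x_0\}$, i.e. over both limit points and points of the discrete orbit $\G.x_0$. The plan is to split into cases according to the nature of $x$ and $y$, reducing each case to Proposition \ref{equilipcocycle} by an approximation argument.

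\emph{Step 1: both $x,y\in\LL_l$.} Here $b^*(\g,\cdot)=b(\g,\cdot)=\mathcal{B}_\cdot(\g^{-1}.\oo,\oo)$, and since $\g$ has last letter in $\G_j$ with $l\neq j$, we have $\g^{-1}.\oo\in{\bf D}_j$ (up to the usual translate), while $x,y\in D_l\subset\partial\xx$ and $\overline{{\bf D}_j}\cap\overline{D_l}\cap\partial\xx=\emptyset$. So Proposition \ref{equilipcocycle} applies with $E=D_l$, $F=\{\g^{-1}.\oo\}$; the Lipschitz constant it produces depends a priori on $F$, hence on $\g$, so the real content is that it can be taken uniform in $\g$ — but this uniformity is exactly what Proposition \ref{cocycle1lip} (and Corollary \ref{actioncontractante}) encodes via the geometry of the sets ${\bf D}_j$: one uses the cocycle relation $b^*(\g,x)=b^*(\alpha_1,\alpha_2\cdots\alpha_k.x)+\cdots$ together with the uniform contraction $|\g'(x)|_\oo\leqslant Cr^n$ to telescope the difference $|b^*(\g,x)-b^*(\g,y)|$ into a geometric series $\sum_i [b(\alpha_i,\cdot)]\,\dhy_\oo(\alpha_{i+1}\cdots\alpha_k.x,\alpha_{i+1}\cdots\alpha_k.y)\preceq \dhy_\oo(x,y)\sum_i Cr^{i}$, using that the Lipschitz constants $[b(\alpha,\cdot)]$ of single letters are uniformly bounded (Proposition \ref{equilipcocycle} applied to each $D_l$ and each $\alpha$, with the bound depending only on the fixed sets $D_l$).

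\emph{Step 2: $x=g_1.x_0$ and $y=g_2.x_0$ in $\tilde{\LL}_l$ (the discrete orbit case), and the mixed case $x\in\LL_l$, $y=g.x_0$.} Here I would use property i) of the extended cocycle from \eqref{definouveaucocycle}: if $(g_n.\oo)_n\to \xi\in\LL$ then $g_n.x_0\to\xi$ and $b^*(\g,g_n.x_0)\to b(\g,\xi)$. Thus $b^*(\g,\cdot)$ is the \emph{continuous extension} of $b(\g,\cdot)$ from $\LL_l^0$ to $\tilde{\LL}_l$. Since $\tilde{\LL}_l^0$ is dense in $\tilde{\LL}_l$ and $\LL_l^0$ is dense in $\tilde{\LL}_l$ as well (every $g.x_0$ is a limit of radial points coded by extending the finite word), the Lipschitz inequality with a fixed constant $C$, once established on $\LL_l^0\times\LL_l^0$ in Step 1, passes to the closure $\tilde{\LL}_l\times\tilde{\LL}_l$ by continuity of both $b^*(\g,\cdot)$ and $\dhy_\oo$, provided the constant $C$ obtained in Step 1 is genuinely independent of $\g$. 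So the whole statement reduces to: (a) the uniform Lipschitz bound on $\LL_l^0$, handled in Step 1, and (b) the claim that $b^*(\g,\cdot)$ restricted to $\tilde{\LL}_l$ is continuous and agrees with the limit of $b(\g,\cdot)$, which is property i) together with the triangular quasi-equality Lemma \ref{quasiegalitetriangulaire} to control convergence when the orbit point $g.x_0$ does not escape to infinity (in which case $g$ lies in a finite set and there is nothing to prove).

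\emph{Main obstacle.} The delicate point is the uniformity in $\g$ of the Lipschitz constant, i.e. making Step 1 genuinely independent of the length $|\g|=n$ and of the specific isometry. This is where the Schottky geometry (disjointness of the ${\bf D}_j$, hence a uniform gap between $\g^{-1}.\oo$ and $D_l$) combined with the contraction estimate of Corollary \ref{actioncontractante} must be used carefully: the single-letter Lipschitz constants must be bounded uniformly over the (infinite) alphabet $\mathcal{A}$, which again follows from Proposition \ref{equilipcocycle} with $E=D_l$ and $F$ ranging over the orbit points $\alpha^{-1}.\oo$ for $\alpha\in\G_j^*$ — these all lie in a single set ${\bf D}_j$ having positive distance to $D_l$, so one application of Proposition \ref{equilipcocycle} gives one constant valid for all of them. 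Once that is in hand, the telescoping series converges geometrically and yields the uniform constant $C=C(x_0)$, completing the proof. A secondary technical nuisance is treating the base point $x_0$ itself and orbit points $g.x_0$ where $g$ is a short word (so that $g.\oo$ stays bounded); these form a locally finite set and are dispatched directly from the definition \eqref{definouveaucocycle} and Proposition \ref{equilipcocycle} applied with $F$ a finite set.
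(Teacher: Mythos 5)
Your reduction to isometries of symbolic length one (telescoping via the cocycle relation and the uniform contraction of Corollary \ref{actioncontractante}) is exactly the paper's first step, and your Step 1 (both points in $\LL_l$) is the paper's case (a), correctly dispatched by Proposition \ref{equilipcocycle} with $F$ the single set ${\bf D}_j$ at positive distance from $D_l$. The gap is in Step 2, which is where the actual content of the proposition lies. You treat the orbit points $g.x_0$ by a density/continuity argument, but a fixed orbit point $g.x_0$ is \emph{not} in the closure of $\LL_l^0$: since $x_0\notin\LL_\G$ and $\LL_\G$ is closed and $\G$-invariant, $g.x_0$ lies at positive $\dhy_\oo$-distance (comparable to $e^{-a\dhy(\oo,g.\oo)}$) from the limit set, so the Lipschitz bound established on $\LL_l^0$ extends by continuity only to $\LL_l$, never to the points $g.x_0$. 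Moreover $b^*(\alpha,g.x_0)=\dhy(\alpha^{-1}.\oo,g.\oo)-\dhy(\oo,g.\oo)$ is \emph{not} the Busemann cocycle evaluated at the boundary point $g.x_0$; property i) of \eqref{definouveaucocycle} only says that these distance differences converge to $\mathcal{B}_\xi(\alpha^{-1}.\oo,\oo)$ along sequences $g_n.\oo\to\xi$, which is a qualitative statement about ends of the orbit and gives no control at a fixed $g$.

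What is actually needed, and what the paper proves, is the quantitative estimate $\left|\mathcal{B}_x(\alpha^{-1}.\oo,\oo)-b^*(\alpha,g.x_0)\right|\leqslant C\,\dhy_\oo(x,g.x_0)$ at the scale $\dhy_\oo(x,g.x_0)\asymp e^{-a\dhy(\oo,g.\oo)}$. This requires identifying a boundary direction $\xi$ with $b^*(\alpha,g.x_0)=\mathcal{B}_\xi(\alpha^{-1}.\oo,\oo)$ (squeezing between the endpoints $\xi',\xi''$ of rays through $g.\oo$) and then showing $\dhy_\oo(x,\xi)\preceq e^{-a\dhy(\oo,g.\oo)}$ via Gromov-hyperbolicity shadow inclusions (Proposition \ref{inclusion}), with separate treatments of $i(g)\neq l(g)$ (using the axis of the hyperbolic isometry $g$) and $i(g)=l(g)$. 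On top of that, the reduction from an arbitrary $x\in\LL_{i(g)}$ to a point of $g.\left(\LL\setminus\LL_{l(g)}\right)$, and the orbit--orbit case $x=g_1.x_0$, $y=g_2.x_0$, each require a comparison point furnished by Facts \ref{faitintermediaire} and \ref{fact7}, whose proofs use the conformality relation \eqref{mvr} and Corollary \ref{busedist} in an essential way. None of this is recoverable from the continuity property you invoke, so cases (b) and (c) remain unproved in your proposal.
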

The proof is long and technical. It is sufficient to prove this result for $\g\in\G$ with symbolic length $1$. Indeed, assume that $\g=\alpha_1...\alpha_k$ for 
$k\geqslant2$ and denote by $\g^{(j)}=\alpha_j...\alpha_k$ for any $j\in[\![2,k-1]\!]$. Using the cocycle property of $b^*$, we obtain for any $x,y\in\tilde{\LL}\setminus\tilde{\LL}_{j}$
\begin{align*}
\left|b^*(\g,x)-b^*(\g,y)\right| \leqslant &\left|b^*(\alpha_1,\g^{(2)}.x)-b^*(\alpha_1,\g^{(2)}.y)\right|+\left|b^*(\g^{(2)},x)-b^*(\g^{(2)},y)\right|\\
\leqslant & \left|b^*(\alpha_1,\g^{(2)}.x)-b^*(\alpha_1,\g^{(2)}.y)\right|+\left|b^*(\alpha_2,\g^{(3)}.x)-b^*(\alpha_2,\g^{(3)}.y)\right|\\
&+... +\left|b^*(\alpha_k,x)-b^*(\alpha_k,y)\right|.
\end{align*}
\noindent
If Proposition \ref{contfinal} holds for element of $\G$ with symbolic length $1$, we get 
$$\left|b^*(\g,x)-b^*(\g,y)\right|\leqslant C(x_0)\left(\dhy_\oo(\g^{(2)}.x,\g^{(2)}.y)+...+\dhy_\oo(x,y)\right),$$
\noindent
where $C(x_0)$ depends only on $x_0$. By Corollary \ref{actioncontractante}, there exist $r\in]0,1[$ and $C>0$ such that 
$\dhy_\oo(\g^{(2)}.x,\g^{(2)}.y)\leqslant C.r^{k-1}\dhy_{\oo}(x,y),...,\dhy_\oo(\alpha_k.x,\alpha_k.y)\leqslant C.r\dhy_{\oo}(x,y)
$, so that
$$\left|b^*(\g,x)-b^*(\g,y)\right|\leqslant CC(x_0)\dfrac{1}{1-r}\dhy_\oo(x,y),$$
\noindent
which proves that the inequality is still valid when $|\g|\geqslant2$.

Let us fix for this subsection $\alpha\in\G_j^*$, $j\in[\![1,p+q]\!]$, and $x,y\in\tilde{\LL}_l$ for some 
$l\neq j$. There are three cases to consider:
\begin{itemize}
 \item[(a)]the points $x$ and $y$ both belong to $\LL$;
 \item[(b)]$x\in\LL$ and $y\in\G.x_0$;
 \item[(c)]the points $x$ and $y$ belong to $\G.x_0$.
\end{itemize}
The different cases are treated in the next three subsubsections.
\subsubsection{{\bf Case (a): the points ${\bf x}$ and ${\bf y}$ both belong to $\boldsymbol{\LL}$.}} The statement follows from Proposition \ref{equilipcocycle}.
\subsubsection{{\bf Case (b): ${\bf x}\boldsymbol{\in\LL}$ and ${\bf y}\boldsymbol{\in\G.x_0}$.}} Set 
$y=g.x_0$ for some $g\in\G$ with $i(g)=l$. The statement may be thus reformulated as follows\\

\noindent {\bf Proposition \ref{contfinal} in case (b).}$\ ${\it There exists a constant $C=C(x_0)>0$ such that for any $j\in[\![1,p+q]\!]$, $\alpha\in\G_j^*$, $g\in\G$ with $i(g)\neq j$ and $x\in\LL_{i(g)}$, the following inequality holds 
$$\left|\mathcal{B}_x(\alpha^{-1}.\oo,\oo)-b^*(\alpha,g.x_0)\right|\leqslant C\dhy_\oo(x,g.x_0).$$}
\begin{proof}[Proof of the case (b).]
The proof is splitted into two steps. In the first step {\bf (1)}, we assume that 
$x\in g.\left(\LL\setminus\LL_{l(g)}\right)$ and then we prove the case {\bf (b)} without this additionnal assumption in step {\bf (2)}. 
\begin{itemize}
 \item[{\bf (1)}]Denote by $V(x,\oo,t)$ the subset of points $y\in\partial\xx$ whose projection ${\bf \tilde{y}}$ on the geodesic ray  $[\oo x)$ satisfies $\dhy({\bf \tilde{y}},\oo)\geqslant t$; the $\dhy_\oo$-diameter 
of such a set is $\preceq e^{- at}$. The set $\tilde{V}(x,\oo,t)$ stands for a connected and geodesically convex subset of $\overline{\xx}$ whose
intersection with $\partial\xx$ equals to $V(x,\oo,t)$. Recall that the space $\xx$ is a 
Gromov-$\kappa$-hyperbolic space for some $\kappa=\kappa(a)>0$ (see \cite{G-H}). The following properties are proved in \cite{Sch}.
\begin{prop}
Let $\kappa>0$ such that  $\xx$ is a Gromov-$\kappa$-hyperbolic space. 
\begin{enumerate}\label{inclusion}
 \item\label{inclusion1} Let ${\bf p}\in\xx$, $x\in\partial{\xx}$ and $t\geqslant1$. For any $y\in V(x,{\bf p},t+7\kappa)$
$$V(x,{\bf p},t+6\kappa)\subset V(y,{\bf p},t)\subset V(x,{\bf p},t-6\kappa).$$
\item\label{inclusion2} For any $D>0$, denote by $K_2=2D+4\kappa$. Let ${\bf p}$ and ${\bf q}$ be in $\xx$ such that $\dhy({\bf p},{\bf q})\leqslant D$, $x\in\partial\xx$ and $t\geqslant K_2$. Hence
$$V(x,{\bf p},t+K_2)\subset V(x,{\bf q},t)\subset V(x,{\bf p},t-K_2).$$
\end{enumerate}
\end{prop}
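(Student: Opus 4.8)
The plan is to deduce both inclusions from the coarse geometry of the Gromov product on the boundary, treating the sets $V(x,{\bf p},t)$ as shadows. The one preliminary point I would establish first is that, in the $\kappa$-hyperbolic space $\xx$, membership in $V(x,{\bf p},t)$ is controlled by a Gromov product up to an additive error of order $\kappa$: for $y\in\partial\xx$, the two geodesic rays $[{\bf p}x)$ and $[{\bf p}y)$ $\kappa$-fellow-travel until approximately time $(x|y)_{\bf p}$ and diverge afterwards, so the nearest-point projection $\tilde y$ of $y$ onto $[{\bf p}x)$ satisfies $\bigl|\dhy({\bf p},\tilde y)-(x|y)_{\bf p}\bigr|\leqslant c_0\kappa$ for a universal $c_0$. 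Consequently there is an explicit $c_1$ (a small multiple of $\kappa$) with $\{y\ |\ (x|y)_{\bf p}\geqslant t+c_1\}\subset V(x,{\bf p},t)\subset\{y\ |\ (x|y)_{\bf p}\geqslant t-c_1\}$. Together with the standard $\kappa$-inequality $(\xi|\eta)_{\bf p}\geqslant\min\bigl((\xi|\zeta)_{\bf p},(\zeta|\eta)_{\bf p}\bigr)-\kappa$ valid in any Gromov-$\kappa$-hyperbolic space (see \cite{G-H}), this is all that is needed.

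For part~\ref{inclusion1} I would argue as follows. Fix $y\in V(x,{\bf p},t+7\kappa)$. If $z\in V(x,{\bf p},t+6\kappa)$, the preliminary identification gives $(x|y)_{\bf p}$ and $(x|z)_{\bf p}$ both $\geqslant t+6\kappa-c_1$, whence $(y|z)_{\bf p}\geqslant\min\bigl((x|y)_{\bf p},(x|z)_{\bf p}\bigr)-\kappa\geqslant t+c_1$ once $7\kappa$ is large enough to absorb $c_1+\kappa$; hence $z\in V(y,{\bf p},t)$, which is the left inclusion. Conversely, if $z\in V(y,{\bf p},t)$, then $(y|z)_{\bf p}\geqslant t-c_1$ and $(x|y)_{\bf p}\geqslant t+7\kappa-c_1\geqslant t-c_1$, so $(x|z)_{\bf p}\geqslant\min\bigl((x|y)_{\bf p},(y|z)_{\bf p}\bigr)-\kappa\geqslant t-6\kappa+c_1$, giving $z\in V(x,{\bf p},t-6\kappa)$. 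The numerical gap between $6\kappa$ and $7\kappa$ is exactly what lets the $\kappa$-errors of the preliminary identification cancel.

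For part~\ref{inclusion2} the only extra ingredient is the change-of-basepoint estimate $\bigl|(x|y)_{\bf p}-(x|y)_{\bf q}\bigr|\leqslant\dhy({\bf p},{\bf q})\leqslant D$, which follows at once from the cocycle relation \eqref{rajoutbusemancocycle} and $|\mathcal{B}_\xi({\bf p},{\bf q})|\leqslant\dhy({\bf p},{\bf q})$. Applying the preliminary identification once at ${\bf p}$ and once at ${\bf q}$ and combining with this estimate yields $V(x,{\bf q},t+K_2)\subset V(x,{\bf p},t)\subset V(x,{\bf q},t-K_2)$ with $K_2=2D+4\kappa$, a deliberately generous constant (the $2D$ absorbing the basepoint shift with room to spare and the $4\kappa$ the two projection errors); by symmetry in ${\bf p},{\bf q}$ this is the claim. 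I do not expect a genuine obstacle here: these are classical facts of $\delta$-hyperbolic geometry, already worked out in section~2.3 of \cite{Sch}; the only care required is the bookkeeping of the $\kappa$-constants so that the projection-based definition of $V$ lines up with the Gromov-product sublevel sets, and checking that the stated constants $6\kappa$, $7\kappa$, $K_2$ leave enough slack.
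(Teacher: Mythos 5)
The paper itself gives no proof of this proposition: it is quoted verbatim from section 2.3 of \cite{Sch}, so there is no internal argument to compare yours against. Your route through the Gromov product is the natural one, and it certainly yields inclusions of this shape with constants that are \emph{some} universal multiples of $\kappa$ (together with the correct basepoint-change bound $\left|(x|y)_{\bf p}-(x|y)_{\bf q}\right|\leqslant\dhy({\bf p},{\bf q})$, which is fine); that qualitative version is in fact all the rest of the paper uses.

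The genuine gap is that the content of the statement as written is the explicit constants, and your proof defers exactly the step where they would be checked. You never fix $c_0$, hence $c_1$. Running your own inequalities: in part (1) the left inclusion needs $t+6\kappa-c_1-\kappa\geqslant t+c_1$ and the right one needs $t-c_1-\kappa\geqslant t-6\kappa+c_1$, i.e.\ both force $c_1\leqslant\tfrac{5\kappa}{2}$; in part (2), with $K_2=2D+4\kappa$, your chain requires $K_2\geqslant D+2c_1$, i.e.\ $c_1\leqslant\tfrac{D}{2}+2\kappa$, and since the statement is claimed for every $D>0$ this effectively forces $c_1\leqslant 2\kappa$. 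So the whole argument reduces to the sharp quantitative claim that the projection $\tilde y$ of $y$ onto $[{\bf p}x)$ lies within at most $2\kappa$ of the point at parameter $(x|y)_{\bf p}$ along the ray. That does not follow from a generic ``universal constant'' citation: the standard comparisons between $\dhy({\bf p},\tilde y)$, $\dhy({\bf p},(xy))$ and $(x|y)_{\bf p}$ come with errors of $4\kappa$ or more, and the exact value depends on which definition of $\kappa$-hyperbolicity (thin triangles versus the four-point inequality) and which extension of the Gromov product to $\partial\xx$ one adopts. As written, your proof establishes the proposition with $6\kappa$, $7\kappa$, $2D+4\kappa$ replaced by unspecified multiples of $\kappa$; to get the stated version you must either prove the projection/Gromov-product comparison with error at most $2\kappa$ in the chosen conventions, or argue directly with projections and thin triangles as is done in \cite{Sch}.
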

Let $y\in\LL\setminus\LL_{l(g)}$ such that $x=g.y$. The conformality 
equation \eqref{mvr} implies
$$\mathrm{d}_{\oo}(x,g.x_0)=\mathrm{d}_{\oo}(g.y,g.x_0)=\sqrt{|g'(y)|_{\oo}|g'(x_0)|_{\oo}}\mathrm{d}_{\oo}(y,x_0).$$
\noindent
Since $y\in\LL\setminus D_{l(g)}$, Corollary \ref{busedist} implies
$$\mathrm{d}_{\oo}(x,g.x_0)\asymp e^{-a\dhy(\oo,g.\oo)}\mathrm{d}_{\oo}(y,x_0)\asymp e^{-a\dhy(\oo,g.\oo)}.$$
\noindent
We now estimate $|\mathcal{B}_x(\alpha^{-1}.\oo,\oo)-b^*(\alpha,g.x_0)|$. Denote by $\xi'$ (respectively $\xi''$) 
the endpoint of the geodesic ray starting from $\oo$ (resp. from $\alpha^{-1}.\oo$) and passing through $g.\oo$. From the definition of 
Busemann functions (see figure \ref{image1cocycle}), we derive
\begin{equation}\label{etoile}
\mathcal{B}_{\xi'}(\alpha^{-1}.\oo,\oo)\leqslant\dhy(\alpha^{-1}.\oo,g.\oo)-\dhy(\oo,g.\oo)\leqslant\mathcal{B}_{\xi''}(\alpha^{-1}.\oo,\oo).
\end{equation}
\begin{figure}[htpb]
\begin{center}
\includegraphics[height=3.2cm]{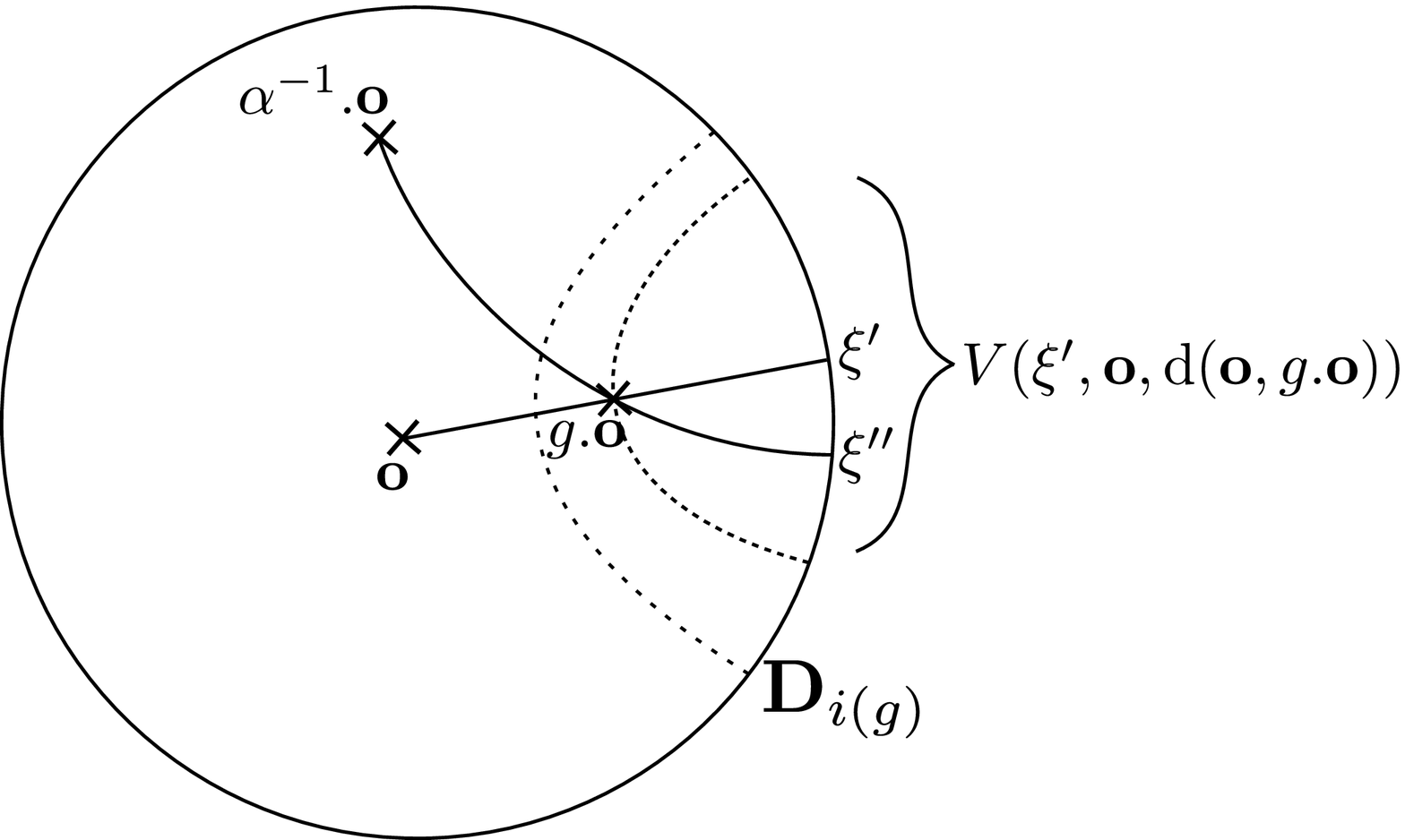}
\end{center}
\caption{\label{image1cocycle}Estimate of $\dhy(\alpha^{-1}.\oo,g.\oo)-\dhy(\oo,g.\oo)$}
\end{figure}
%
The points $\xi'$ and $\xi''$ belong to the path-connected set $V(\xi',\oo,\dhy(\oo,g.\oo))$ which is path-connected. From Proposition \ref{equilipcocycle} and \eqref{etoile}, we deduce 
the existence of $\xi\in D_{i(g)}$ such that $b^*(\alpha,g.x_0)=\mathcal{B}_{\xi}(\alpha^{-1}.\oo,\oo)$. Proposition \ref{equilipcocycle} also implies 
$$|\mathcal{B}_x(\alpha^{-1}.\oo,\oo)-b^*(\alpha,g.x_0)|=
|\mathcal{B}_x(\alpha^{-1}.\oo,\oo)-\mathcal{B}_{\xi}(\alpha^{-1}.\oo,\oo)|\leqslant \mathrm{C}\mathrm{d}_{\oo}(x,\xi)$$
\noindent
for some constant $C>0$. Since $\xi\in V(\xi',\oo,\dhy(\oo,g.\oo))$, we derive $\dhy_\oo(\xi',\xi)\preceq e^{-a\dhy(\oo,g.\oo)}$. In order to obtain a similar conclusion for $\dhy_\oo(x,\xi')$, we have two different cases to consider.
\begin{itemize}
\item[(i)]{\it Assume first that $i(g)\neq l(g)$.} In this case, the isometry $g$ is hyperbolic with attractive (resp. repulsive) 
fixed point $x_g^+$ (resp. $x_g^-$); notice that $x_g^+,y\in\LL\setminus\LL_{l(g)}$, while 
$x_g^-\in\LL_{l(g)}$. There thus exist $E>0$ and a point
$\tilde{\oo}\in(x_{g}^-x_{g}^+)$ such that $\dhy(\oo,\tilde{\oo})\leqslant E$ and the projection of $y$ on the axis $(x_{g}^-x_{g}^+)$ belongs to 
the geodesic ray $[\tilde{\oo}x_{g}^+)$ (see figure \ref{cocyclelip2figure}). 
\begin{figure}[htpb]
\begin{center}
\includegraphics[height=5cm]{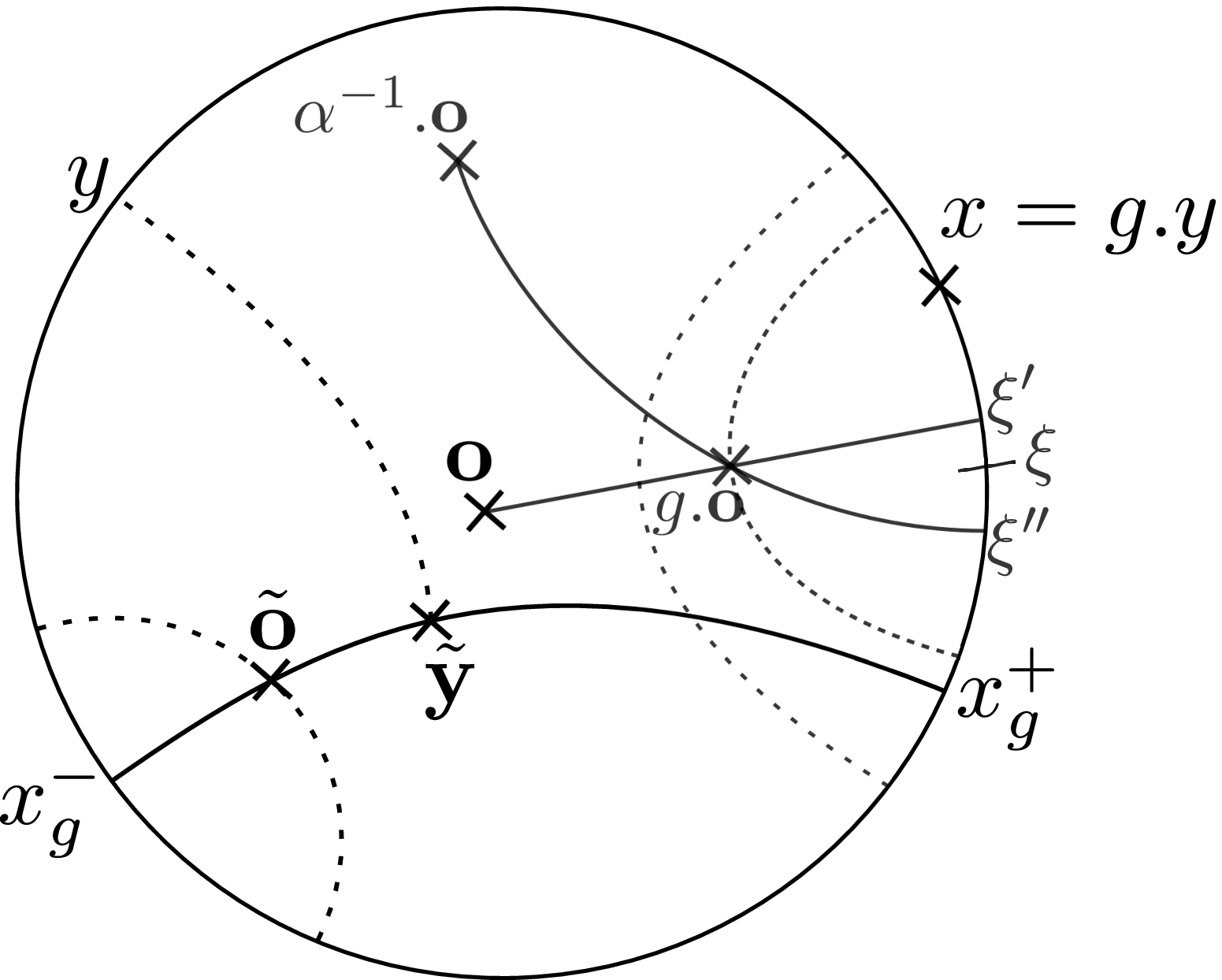}
\end{center}
\caption{\label{cocyclelip2figure}Points $\tilde{\oo}$ and $\tilde{y}$}
\end{figure}
Denote by $L=\dhy(\tilde{\oo},g.\tilde{\oo})$ the length of 
the axis of $g$. Proposition \ref{inclusion}.\ref{inclusion2} with ${\bf q}=\tilde{\oo}$, ${\bf p}=\oo$, $D=E$ and $t=L-E-3\kappa$ yields
\begin{equation}\label{premiereinclusion}
\quad\quad\quad V(x_{g}^+,{\bf \tilde{y}},L-E-3\kappa)\subset V(x_{g}^+,\tilde{\oo},L-E-3\kappa)\subset V(x_{g}^+,\oo,L-3E-7\kappa).
\end{equation}
\noindent
It follows from $g.\tilde{\oo}\in B(g.\oo,E)$ and $B(g.\oo,E)\subset\tilde{V}(\xi',\oo,\dhy(\oo,g.\oo)-E)$, that 
$[g.\tilde{\oo},x_{g}^+)\subset\tilde{V}(\xi',\oo,\dhy(\oo,g.\oo)-E)$, hence $x_g^+\in V(\xi',\oo,\dhy(\oo,g.\oo)-E)$. The triangular inequality implies $L-2E\leqslant \dhy(\oo,g.\oo)$, so that $x_g^+\in V(\xi',\oo,L-3E)$. By Proposition \ref{inclusion}.\ref{inclusion1} with $t=L-3E-7\kappa$, we get
\begin{equation}\label{deuxièmeinclusion}
V(x_{g}^+,\oo,L-3E-7\kappa)\subset V(\xi',\oo,L-3E-13\kappa),
\end{equation}
\noindent
so that combining \eqref{premiereinclusion} and \eqref{deuxièmeinclusion}
$$V(x_{g}^+,{\bf \tilde{y}},L-E)\subset V(\xi',\oo,L-3E-13\kappa).$$
\noindent
Since $x\in V(x_{g}^+,{\bf \tilde{y}},L-E)$, we have $x\in V(\xi',\oo,L-3E-13\kappa)$; moreover, 
the triangular inequality implies $\dhy(\oo,g.\oo)-2E\leqslant L$, hence $x\in V(\xi',\oo,\dhy(\oo,g.\oo)-5E-13\kappa)$. Finally $\dhy_{\oo}(x,\xi')\leqslant K_{\G}'e^{-a\dhy(\oo,g.\oo)}$, 
so that $\dhy_{\oo}(x,\xi)\preceq e^{-a\dhy(\oo,g.\oo)}$: this achieves the proof of part {\bf (1)} when $i(g)\neq l(g)$.
\item[(ii)]{\it When $i(g)=l(g)$:} one applies the previous arguments with $\alpha g$ instead of $g$. We thus obtain $\dhy_\oo(\alpha.x,\alpha.\xi')\preceq e^{-a\dhy(\oo,\alpha.g.\oo)}$. Using \eqref{mvr} and Lemma 
\ref{quasiegalitetriangulaire}, we finally obtained 
$\dhy_\oo(x,\xi')\preceq e^{-a\dhy(\oo,g.\oo)}$. 
\end{itemize}
%
%
\item[{\bf (2)}]We now prove the case {\bf (b)} without additionnal assumption on the position of $x$ in $\LL_{i(g)}$. 
\begin{fait}\label{faitintermediaire}
There exists a constant $C=C(x_0)>0$ such that, for any $g\in\G$ and $x\in\LL_{i(g)}$, there exists $z\in g.\left(\LL\setminus\LL_{l(g)}\right)$ such that $\dhy_\oo(z,x)\leqslant C\dhy_\oo(x,g.x_0)$ and 
$\dhy_\oo(z,g.x_0)\leqslant C\dhy_\oo(x,g.x_0)$.
\end{fait}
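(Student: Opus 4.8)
\textbf{Plan of proof of Fact \ref{faitintermediaire}.}
The statement says that for a point $x\in\LL_{i(g)}$ we can find a ``comparison point'' $z$ lying in the deeper set $g.(\LL\setminus\LL_{l(g)})$ which is $\dhy_\oo$-comparable both to $x$ and to $g.x_0$, the implied constant depending only on $x_0$. The plan is to argue geometrically, separating according to how far $x$ lies from the orbit point $g.x_0$. First I would fix $g\in\G$ and $x\in\LL_{i(g)}$ and set $\rho:=\dhy_\oo(x,g.x_0)$; by the conformal formula \eqref{mvr} and Corollary \ref{busedist} applied to $g^{-1}$, the quantity $\dhy_\oo(x,g.x_0)$ is comparable to $e^{-a\dhy(\oo,g.\oo)}$ up to multiplicative constants that depend only on $x_0$ and the curvature bounds, so $\rho\asymp e^{-a\dhy(\oo,g.\oo)}$. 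Writing $x=g.w$ with $w\in\LL_{i(g)}$ (the point $x$ lies in $D_{i(g)}$, which is not the set $D_{l(g)}$ over which $g$ contracts unless $i(g)=l(g)$), I would need to distinguish whether $w$ already lies outside $D_{l(g)}$ — in which case $z:=x$ works trivially — or not.

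The substantive case is $w\in\LL_{l(g)}$ (this can only occur when $i(g)=l(g)$, since otherwise $x=g.w$ forces $w\in D_{i(g)}\neq D_{l(g)}$, but I would phrase the argument uniformly). Here the idea is to pick any fixed point $x_\star\in\LL\setminus(\LL_{l(g)}\cup\{x_0\})$ — for instance a point in $\LL_k^0$ for some $k\neq l(g)$, chosen once and for all — and set $z:=g.x_\star$. Then $z\in g.(\LL\setminus\LL_{l(g)})$ by construction. The two comparisons to be established are $\dhy_\oo(z,g.x_0)\preceq\rho$ and $\dhy_\oo(z,x)\preceq\rho$. For the first, apply \eqref{mvr} to the pair $(x_\star,x_0)$: $\dhy_\oo(g.x_\star,g.x_0)=\sqrt{|g'(x_\star)|_\oo\,|g'(x_0)|_\oo}\,\dhy_\oo(x_\star,x_0)$, and since both $x_\star$ and $x_0$ lie outside $D_{l(g)}$, Proposition \ref{busedist} (or Corollary \ref{actioncontractante}) gives $|g'(x_\star)|_\oo\asymp|g'(x_0)|_\oo\asymp e^{-a\dhy(\oo,g.\oo)}$, hence $\dhy_\oo(z,g.x_0)\asymp e^{-a\dhy(\oo,g.\oo)}\asymp\rho$ with constants depending only on $x_0$ and $x_\star$. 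For the second comparison, I would use the triangle inequality $\dhy_\oo(z,x)\leqslant\dhy_\oo(z,g.x_0)+\dhy_\oo(g.x_0,x)=\dhy_\oo(z,g.x_0)+\rho\preceq\rho$, which closes the argument once the first comparison is in hand.

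The main obstacle I anticipate is the uniformity of the constants — specifically, ensuring that the comparability $|g'(x_\star)|_\oo\asymp e^{-a\dhy(\oo,g.\oo)}$ holds with a constant independent of $g$. This is exactly the content of Corollary \ref{busedist} / Proposition \ref{busedist}, which gives $\dhy(\oo,g.\oo)-C\leqslant\mathcal B_x(g^{-1}.\oo,\oo)\leqslant\dhy(\oo,g.\oo)$ uniformly for $x$ outside $D_{l(g)}$, so the estimate does go through provided the fixed point $x_\star$ is chosen with $x_\star\notin\overline{D_{l(g)}}$ for \emph{every} possible last-letter index $l(g)$; since there are only finitely many such indices and the $D_j$ have pairwise disjoint closures, one can pick $x_\star$ from a fixed nonempty open subset of $\LL$ meeting $\LL^0$ and disjoint from $\bigcup_j D_j$-neighbourhoods appropriately, or more simply choose $x_\star=x_0$-type auxiliary points handled case by case on the (finitely many) values of $l(g)$, taking the worst constant. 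A secondary point to be careful about is the degenerate possibility that $i(g)\neq l(g)$, where one must check $x=g.w$ indeed has $w\notin D_{l(g)}$ so that the trivial choice $z=x$ is legitimate; this follows from $x\in D_{i(g)}$, $g^{-1}.D_{i(g)}\subset\partial\xx\setminus D_{l(g)}$ being a consequence of the Schottky ping-pong relations $g.(\partial\xx\setminus D_{l(g)})\subset D_{i(g)}$ read for $g^{-1}$. After assembling these pieces the Fact follows, and with it the case (b) of Proposition \ref{contfinal} via part {\bf (1)} applied to the comparison point $z$.
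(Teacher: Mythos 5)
Your construction is a genuinely different route from the paper's: instead of locating the first letter at which the words of $x$ and of $g$ diverge and comparing $\dhy_{\oo}(x,g.x_0)$ with the distance from $g.x_0$ to an image of the boundary set $\partial_{g_1}^{x_0}$, you take $z=x$ in the trivial case and otherwise $z=g.x_\star$ for a reference point $x_\star\in\LL\setminus\LL_{l(g)}$ fixed once per index. If completed, this is shorter than the paper's argument. However, as written the pivotal inequality $\dhy_{\oo}(g.x_\star,g.x_0)\leqslant C\,\dhy_{\oo}(x,g.x_0)$ rests on the claim $\dhy_{\oo}(x,g.x_0)\asymp e^{-a\dhy(\oo,g.\oo)}$, and this two-sided comparability is false: take $g=p^{n}$ a large power of a parabolic generator and $x$ the fixed point of $p$ (or any limit point whose word begins with $p^{2n}$); then $x\in\LL_{i(g)}$ but $\dhy_{\oo}(x,g.x_0)$ is of order $e^{-a\dhy(\oo,g.\oo)/2}$, much larger than $e^{-a\dhy(\oo,g.\oo)}$. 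The justification you give also cannot work: Property \ref{busedist} applied to $g^{-1}$ requires the base point to lie outside $D_{l(g^{-1})}=D_{i(g)}$, and both $x$ and $g.x_0$ lie precisely in $D_{i(g)}$; likewise $x_0\notin\bigcup_j D_j$, so the estimate $\left|g'(x_0)\right|_{\oo}\asymp e^{-a\dhy(\oo,g.\oo)}$ is not literally covered by \ref{busedist} either. A further side remark is wrong (though harmless): when $i(g)\neq l(g)$ it is not true that $w=g^{-1}.x\notin D_{l(g)}$; ping-pong gives $g^{-1}.\left(\partial\xx\setminus D_{i(g)}\right)\subset D_{l(g)}$, hence $g^{-1}.D_{i(g)}\supset\partial\xx\setminus D_{l(g)}$, not the inclusion you state, so the ``substantive case'' occurs for arbitrary $g$.

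The good news is that only one direction of your comparability is needed, namely $e^{-a\dhy(\oo,g.\oo)}\preceq\dhy_{\oo}(x,g.x_0)$, and it is true for every $x\in\LL$, by a different (easy) argument: write $x=g.w$ with $w=g^{-1}.x\in\LL$; by \eqref{mvr}, $\dhy_{\oo}(x,g.x_0)=\sqrt{\left|g'(w)\right|_{\oo}\left|g'(x_0)\right|_{\oo}}\,\dhy_{\oo}(w,x_0)\geqslant e^{-a\dhy(\oo,g.\oo)}\dhy_{\oo}(x_0,\LL)$, since $\left|\g'(y)\right|_{\oo}=e^{-a\mathcal{B}_y(\g^{-1}.\oo,\oo)}\geqslant e^{-a\dhy(\oo,\g.\oo)}$ at every boundary point (the Busemann cocycle never exceeds the distance) and $\dhy_{\oo}(x_0,\LL)>0$ because $x_0\notin D\supset\LL$. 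For the upper bound $\dhy_{\oo}(g.x_\star,g.x_0)\leqslant Ce^{-a\dhy(\oo,g.\oo)}$ you may use \ref{busedist} at $x_\star$, but at $x_0$ you must prove separately that $\mathcal{B}_{x_0}(g^{-1}.\oo,\oo)\geqslant\dhy(\oo,g.\oo)-C(x_0)$, e.g.\ by Lemma \ref{quasiegalitetriangulaire} applied to the ray $[\oo x_0)$ and the orbit of $\oo$, whose accumulation set is $\LL$, disjoint from $\{x_0\}$. With these two corrected estimates your choice of $z$ satisfies both required inequalities and the triangle inequality concludes, yielding a proof of Fact \ref{faitintermediaire} that is simpler than the paper's combinatorial one; but as submitted, the key step is justified by a false statement and a misapplied lemma, so the argument is not complete.
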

\begin{proof}[Proof of Fact \ref{faitintermediaire}.]
Since $x\in\LL_{i(g)}$, there exists $g'\in\G$ with $i(g')=i(g)$ and $x'\in\LL$ such that $x=g'.x'$. Let $k$ be the first index 
$\leqslant\min\left(|g'|,|g|\right)$ for which the $k$-th letters of $g'$ and $g$ are different. There are two cases to consider:
\begin{itemize}
\item[(i)]{\it both $k$-th letters of $g$ and $g'$ do not belong to the same Schottky factor $\G_j,1\leqslant j\leqslant p+q$}; in this case, we may write $g=\alpha_1...\alpha_{k-1}\alpha_k.g_1$ and $g'=\alpha_1...\alpha_{k-1}\alpha_k'.g_1'$ where $\alpha_k$ and $\alpha_k'$ do not belong to the same factor. Fix $w\in\LL\setminus\LL_{l(g)}$ and a point $u$ in the boundary $\partial_{g_1}^{x_0}$ of the connected component of $D_{i(g_1)}$  containing $g_1.x_0$ (see figure \ref{faitchiantpicture}). It follows 
$$\quad\quad\quad\quad\dhy_{\oo}(g.x_0,\alpha_1...\alpha_k.u)=e^{-\frac{a}{2}\mathcal{B}_{g_1.x_0}\left(\alpha_k^{-1}...\alpha_1^{-1}.\oo,\oo\right)}
e^{-\frac{a}{2}\mathcal{B}_{u}\left(\alpha_k^{-1}...\alpha_1^{-1}.\oo,\oo\right)}\dhy_\oo(g_1.x_0,u).$$
\noindent
Setting
$m=\underset{j\in[\![1,p+q]\!]}{\min}\underset{\underset{i(\g)=j}{\g\in\G}}{\min}\underset{u\in\partial_{\g}^{x_0}}{\min}\dhy_\oo(\g.x_0,u)>0$, 
we obtain
$$me^{-\frac{a}{2}\mathcal{B}_{g_1.x_0}\left(\alpha_k^{-1}...\alpha_1^{-1}.\oo,\oo\right)}
e^{-\frac{a}{2}\mathcal{B}_{u}\left(\alpha_k^{-1}...\alpha_1^{-1}.\oo,\oo\right)}\leqslant\dhy_{\oo}(g.x_0,\alpha_1...\alpha_k.u).$$
\noindent
There also exists a constant $M>0$ (which actually is the $\dhy_\oo$-diameter of $\partial\xx$) such that
$$\dhy_{\oo}(g.x_0,g.w)\leqslant M
e^{-\frac{a}{2}\mathcal{B}_{g_1.x_0}\left(\alpha_k^{-1}...\alpha_1^{-1}.\oo,\oo\right)}
e^{-\frac{a}{2}\mathcal{B}_{g_1.w}\left(\alpha_k^{-1}...\alpha_1^{-1}.\oo,\oo\right)}.$$
\noindent
This yields 
$$\quad\dfrac{\dhy_{\oo}(g.x_0,g.w)}{\dhy_{\oo}(g.x_0,\alpha_1...\alpha_k.u)}\leqslant\dfrac{M}{m}e^{-\frac{a}{2}\left(\mathcal{B}_{g_1.w}\left(\alpha_k^{-1}...\alpha_1^{-1}.\oo,\oo\right)-\mathcal{B}_{u}\left(\alpha_k^{-1}...\alpha_1^{-1}.\oo,\oo\right)\right)}\preceq1,$$
\noindent
where the last estimate is uniform in $u\in\partial_{g_1}^{x_0}$ and follows from 
Corollary \ref{busedist}. This upper bound being true for any $u\in\partial_{g_1}^{x_0}$, we deduce from the compactness of $\partial_{g_1}^{x_0}$
that $\dhy_{\oo}(g.x_0,g.w)\leqslant C'(x_0)\dhy_{\oo}(g.x_0,\alpha_1...\alpha_k.\partial_{g_1}^{x_0})$. Since  
$x\notin\alpha_1...\alpha_{k}.D_{i(g_1)}$, we obtain $\dhy_{\oo}(g.x_0,g.w)\leqslant C'(x_0)\dhy_{\oo}(x,g.x_0)$. The triangular 
inequality yields $\dhy_\oo(g.w,x)\leqslant\dhy_\oo(x,g.x_0)+\dhy(g.x_0,g.w)$ and the result follows with $z=g.w$ and $C=1+C'(x_0)$.
\begin{figure}[htpb]
\begin{center}
\includegraphics[height=4.5cm]{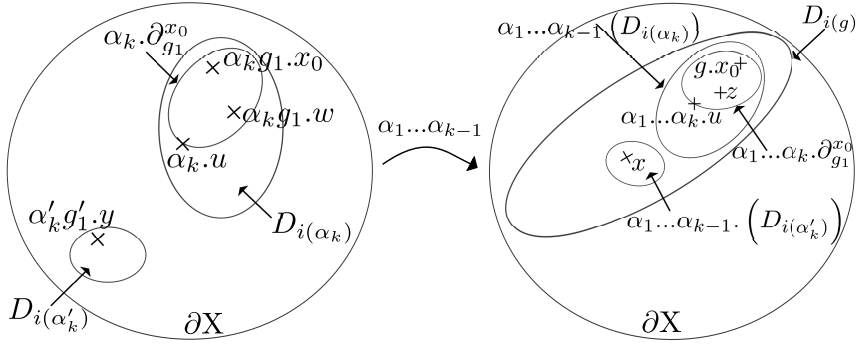}
\end{center}
\caption{\label{faitchiantpicture}Action of $\alpha_1...\alpha_{k-1}$}
\end{figure}

\item[(ii)]{\it both $k$-th letters belong to the same Schottky factor $\G_j,1\leqslant j\leqslant p+q$}: there thus exist $\beta\in\mathcal{A}$ and $n>n'\in\N^*$ such that 
$g=\alpha_1...\alpha_{k-1}.\beta^n.g_1$ and $g'=\alpha_1...\alpha_{k-1}.\beta^{n'}.g_1'$. Assume that $\beta$ generates $\G_l$, $l\in[\![1,p+q]\!]$. 
Fix $u\in\partial_{g_1}^{x_0}$ and $w\in\LL\setminus\LL_{l(g)}$. Similarly as in a), we get
$$\quad\quad\quad\quad\quad me^{-\frac{a}{2}\mathcal{B}_{g_1.x_0}\left(\beta^{-n}\alpha_{k-1}^{-1}...\alpha_1^{-1}.\oo,\oo\right)}
e^{-\frac{a}{2}\mathcal{B}_{u}\left(\beta^{-n}\alpha_{k-1}^{-1}...\alpha_1^{-1}.\oo,\oo\right)}\leqslant\dhy_{\oo}(g.x_0,\alpha_1...\alpha_{k-1}\beta^n.u)$$
\noindent
and
$$\quad\quad\dhy_{\oo}(g.x_0,g.w)\leqslant M
e^{-\frac{a}{2}\mathcal{B}_{g_1.x_0}\left(\beta^{-n}\alpha_{k-1}^{-1}...\alpha_1^{-1}.\oo,\oo\right)}
e^{-\frac{a}{2}\mathcal{B}_{g_1.w}\left(\beta^{-n}\alpha_{k-1}^{-1}...\alpha_1^{-1}.\oo,\oo\right)}.$$
\noindent
Using Corollary \ref{busedist} and noticing that the previous estimates are satisfied for any $u\in\partial_{g_1}^{x_0}$, we deduce the existence of 
a constant $C'(x_0)>0$ such that 
$\dhy_{\oo}(g.x_0,g.w)\leqslant C'(x_0)\dhy_{\oo}(g.x_0,\alpha_1...\alpha_{k-1}\beta^n.\partial_{g_1}^{x_0})$. Since
$x\in\alpha_1...\alpha_{k-1}\beta^{n'}.D_{i(g_1')}$ and 
$\alpha_1...\alpha_{k-1}\beta^{n'}.D_{i(g_1')}\cap\alpha_1...\alpha_{k-1}\beta^n.\partial_{g_1}^{x_0}=\emptyset$, we deduce 
$\dhy_\oo(g.w,g.x_0)\leqslant C'(x_0)\dhy_{\oo}(x,g.x_0)$. We set $z=g.w$. This achieves the proof of 
Fact \ref{faitintermediaire}.
\end{itemize}
\end{proof}
\noindent
Let us now come back to the proof of the part {\bf (2)} of case {\bf (b)}. Fix $x\in\LL_{i(g)}$ and a point $z\in g.\left(\LL\setminus\LL_{l(g)}\right)$ satisfying the conclusions of Fact \ref{faitintermediaire}. We bound $\left|\mathcal{B}_x(\alpha^{-1}.\oo,\oo)-b^*(\alpha,g.x_0)\right|$ from above by 
\begin{equation}\label{bonpointprésentation}\left|\mathcal{B}_x(\alpha^{-1}.\oo,\oo)-\mathcal{B}_z(\alpha^{-1}.\oo,\oo)\right|
+\left|\mathcal{B}_z(\alpha^{-1}.\oo,\oo)-b^*(\alpha,g.x_0)\right|.\end{equation}
\noindent
Proposition \ref{equilipcocycle} and part {\bf (1)} of case {\bf (b)} applied at $z$ thus imply that there exists $C=C(x_0)>0$ such that the bound \eqref{bonpointprésentation} is smaller than 
$C\dhy_\oo(x,g.x_0)$, which concludes the proof of case {\bf (b)}.
\end{itemize}
\end{proof}
\subsubsection{{\bf Case (c): the points ${\bf x}$ and ${\bf y}$ belong to $\boldsymbol{\G.x_0}$.}} Fix $g_1,g_2\in\G$ such that $i(g_1)=i(g_2)=l$, $x=g_1.x_0$ and $y=g_2.x_0$. Set $g_1=g.\beta_1$ and $g_2=g.\beta_2$. 
\begin{itemize}
\item[{\bf (1)}] Assume first that $i(\beta_1)\neq i(\beta_2)$. Let
$z_1\in\LL\setminus\LL_{l(g_1)}$ and $z_2\in\LL\setminus\LL_{l(g_2)}$. From Proposition \ref{equilipcocycle} and from case {\bf (b)} above, we deduce 
that there exists a constant $C'=C'(x_0)>0$ such that
\begin{align*}
\left|b^*(\alpha,g_1.x_0)-b^*(\alpha,g_2.x_0)\right| \leqslant & \left|b^*(\alpha,g_1.x_0)-\bcal_{g_1.z_1}(\alpha^{-1}.\oo,\oo)\right|\\
 & +\left|\bcal_{g_1.z_1}(\alpha^{-1}.\oo,\oo)-\bcal_{g_2.z_2}(\alpha^{-1}.\oo,\oo)\right|\\
& + \left|\bcal_{g_2.z_2}(\alpha^{-1}.\oo,\oo)-b^*(\alpha,g_2.x_0)\right|\\
\leqslant & C'\left(\dhy_{\oo}(g_1.x_0,g_1.z)+\dhy_{\oo}(g_1.z_1,g_2.z_2)\right.\\
& \left.+\dhy_{\oo}(g_2.z_2,g_2.x_0)\right).
\end{align*}
\noindent
Combining Property \eqref{mvr} and Corollary \ref{busedist}, it follows that
\begin{align*}
\left|b^*(\alpha,g_1.x_0)-b^*(\alpha,g_2.x_0)\right| \preceq & e^{-a\dhy(\oo,g_1.\oo)}\dhy_{\oo}(x_0,z_1)+e^{-a\dhy(\oo,\gamma.\oo)}\dhy_{\oo}(\beta_1.z_1,\beta_2.z_2)\\
& + e^{-a\dhy(\oo,g_2.\oo)}\dhy_{\oo}(z_2,x_0) \\
\preceq & \left(e^{-a\dhy(\oo,g_1.\oo)}+e^{-a\dhy(\oo,\g.\oo)}+e^{-a\dhy(\oo,g_2.\oo)}\right).
\end{align*}
\noindent
Lemma \ref{quasiegalitetriangulaire} implies that, for $i\in\{1,2\}$,
$$\dhy(\oo,g.\oo)+\dhy(\oo,\beta_i.\oo)-C\leqslant\dhy(\oo,g_i.\oo)\leqslant\dhy(\oo,g.\oo)+\dhy(\oo,\beta_i.\oo)+C,$$ 
\noindent 
hence
$$\left|b^*(\alpha,g_1.x_0)-b^*(\alpha,g_2.x_0)\right|\preceq e^{-a\dhy(\oo,g.\oo)}.$$ 
\noindent
Since $\min_{(x,y)\in\bigcup\limits_{l\neq j}\tilde{\LL}_l\times\tilde{\LL}_j}\dhy_{\oo}(x,y)>0$, this 
yields
$$\left|b^*(\alpha,g_1.x_0)-b^*(\alpha,g_2.x_0)\right|\preceq e^{-a\dhy(\oo,g.\oo)}\dhy_{\oo}(\beta_1.x_0,\beta_2.x_0).$$
\noindent
We conclude the part {\bf (1)} of case {\bf (c)} using \eqref{mvr} again. 
\item[{\bf (2)}]If $i(\beta_1)=i(\beta_2))=l\in[\![1,p+q]\!]$: there exist $\beta\in\G_l^*$, $n_1,n_2\in\Z^*,\ n_1\neq n_2$ and $\beta_{1,1},\beta_{2,1}\in\G$ such that $\beta_1=\beta^{n_1}\beta_{1,1}$ and $\beta_2=\beta^{n_2}\beta_{2,1}$. 
We need the following fact.
\begin{fait}\label{fact7}
There exists $z\in\LL_l$ such that $\dhy_\oo(\beta_1.x_0,z)\leqslant C\dhy(\beta_1.x_0,\beta_2.x_0)$ and $\dhy_\oo(\beta_2.x_0,z)\leqslant C\dhy(\beta_1.x_0,\beta_2.x_0)$ for a constant $C=C(x_0)>0$.
\end{fait}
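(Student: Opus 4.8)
\textbf{Plan for the proof of Fact \ref{fact7}.} The situation is the same as in the proof of Fact \ref{faitintermediaire}, case (ii), but applied one level down: both points $\beta_1.x_0$ and $\beta_2.x_0$ lie in $D_l$ (since $i(\beta_1)=i(\beta_2)=l$), and their first letters are the powers $\beta^{n_1}$ and $\beta^{n_2}$ of the same generator $\beta$ of $\G_l$, with $n_1\neq n_2$. The plan is to pick any auxiliary point $w\in\LL\setminus\LL_{l(\beta_1)}$ and show that $z:=\beta_1.w\in\LL_l$ satisfies both required inequalities. Since $\beta_1.x_0$ and $z=\beta_1.w$ share the same first letter $\beta^{n_1}$ (indeed the same first $|\beta^{n_1}|$ letters), both $\dhy_\oo(\beta_1.x_0,z)$ and $\dhy_\oo(\beta_2.x_0,\beta_1.x_0)$ can be compared to the $\dhy_\oo$-diameter of $\beta^{n_1}.D_{i(\beta_{1,1})}$.

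More precisely, I would run the conformality formula \eqref{mvr} twice, exactly as in Fact \ref{faitintermediaire}: for a point $u$ in the boundary $\partial_{\beta_{1,1}}^{x_0}$ of the connected component of $D_{i(\beta_{1,1})}$ containing $\beta_{1,1}.x_0$, write
\begin{align*}
\dhy_\oo(\beta_1.x_0,\beta^{n_1}.u)&=e^{-\frac{a}{2}\mathcal{B}_{\beta_{1,1}.x_0}(\beta^{-n_1}.\oo,\oo)}e^{-\frac{a}{2}\mathcal{B}_u(\beta^{-n_1}.\oo,\oo)}\dhy_\oo(\beta_{1,1}.x_0,u),\\
\dhy_\oo(\beta_1.x_0,\beta_1.w)&\leqslant M\,e^{-\frac{a}{2}\mathcal{B}_{\beta_{1,1}.x_0}(\beta^{-n_1}.\oo,\oo)}e^{-\frac{a}{2}\mathcal{B}_{\beta_{1,1}.w}(\beta^{-n_1}.\oo,\oo)},
\end{align*}
where $M$ is the $\dhy_\oo$-diameter of $\partial\xx$. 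Dividing and using Corollary \ref{busedist} to control the difference of Busemann terms $\mathcal{B}_{\beta_{1,1}.w}(\beta^{-n_1}.\oo,\oo)-\mathcal{B}_u(\beta^{-n_1}.\oo,\oo)$ uniformly in $u$, together with the positive lower bound $m:=\min_{j}\min_{\g:\,i(\g)=j}\min_{u\in\partial_\g^{x_0}}\dhy_\oo(\g.x_0,u)>0$ already introduced in the proof of Fact \ref{faitintermediaire}, gives $\dhy_\oo(\beta_1.x_0,\beta_1.w)\leqslant C'(x_0)\,\dhy_\oo(\beta_1.x_0,\beta^{n_1}.\partial_{\beta_{1,1}}^{x_0})$ after taking the supremum over the compact set $\partial_{\beta_{1,1}}^{x_0}$. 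Finally, since $n_1\neq n_2$ the point $\beta_2.x_0=\beta^{n_2}\beta_{2,1}.x_0$ lies in $\beta^{n_2}.D_{i(\beta_{2,1})}$, which is disjoint from $\beta^{n_1}.\partial_{\beta_{1,1}}^{x_0}\subset\beta^{n_1}.D_{i(\beta_{1,1})}$ (disjointness of the $D_j$'s and of distinct powers pushing into $D_l$); hence $\dhy_\oo(\beta_1.x_0,\beta^{n_1}.\partial_{\beta_{1,1}}^{x_0})\leqslant\dhy_\oo(\beta_1.x_0,\beta_2.x_0)$, giving the first inequality with $z=\beta_1.w$. The second inequality then follows from the triangle inequality $\dhy_\oo(\beta_2.x_0,z)\leqslant\dhy_\oo(\beta_2.x_0,\beta_1.x_0)+\dhy_\oo(\beta_1.x_0,z)$.

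The main obstacle, as in Fact \ref{faitintermediaire}, is making the passage from the pointwise estimate (for a fixed $u$) to the estimate against the whole boundary piece $\partial_{\beta_{1,1}}^{x_0}$ uniform; this is where the compactness of $\partial_{\beta_{1,1}}^{x_0}$ and the uniform bound coming from Corollary \ref{busedist} are used. One also has to be slightly careful that $z=\beta_1.w$ genuinely lands in $\LL_l$ and not merely in $D_l$, which holds because $w\in\LL\setminus\LL_{l(\beta_1)}$ and $\beta_1$ has first letter in $\G_l$, so $\beta_1.w\in\LL_l^0\subset\LL_l$. Once Fact \ref{fact7} is established, case \textbf{(c)} part \textbf{(2)} is finished as in part \textbf{(1)}: bound $|b^*(\alpha,\beta_1.x_0)-b^*(\alpha,\beta_2.x_0)|$ by inserting $\mathcal B_z(\alpha^{-1}.\oo,\oo)$ and applying Proposition \ref{equilipcocycle} together with case \textbf{(b)}, which completes the proof of Proposition \ref{contfinal}.
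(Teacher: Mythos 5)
Your proposal is correct and is essentially the paper's own proof with the roles of $\beta_1$ and $\beta_2$ exchanged: your $z=\beta_1.w=\beta^{n_1}.(\beta_{1,1}.w)$ is exactly the paper's choice $z=\beta^{n_2}.w'$ with $w'\in\LL_{i(\beta_{2,1})}$ after relabelling, and the estimates coincide (conformality \eqref{mvr} applied to the shared power of $\beta$, the constant $m$ and Corollary \ref{busedist} to get the uniform bound against the boundary piece $\partial_{\beta_{1,1}}^{x_0}$, disjointness of $\beta^{n_1}.(\partial\xx\setminus D_l)$ and $\beta^{n_2}.(\partial\xx\setminus D_l)$ for the crossing step, then the triangle inequality yielding $C=1+C'$). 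The only cosmetic difference is that the paper measures the distance from $\beta_2.x_0$ to $z$ first and deduces the other bound by the triangle inequality, while you do it from $\beta_1.x_0$; both are at the same level of rigor as the paper, including the implicit ``inside/outside'' step and the glossed degenerate case $\beta_{i,1}=\mathrm{Id}$.
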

\begin{proof}[Proof of Fact \ref{fact7}.]
Fix $w\in\LL_{i(\beta_{2,1})}$ and $u\in\partial_{\beta_{2,1}}^{x_0}$. As in the proof of Fact \ref{faitintermediaire}, we may write 
$$me^{-\frac{a}{2}\mathcal{B}_{\beta_{2,1}.x_0}\left(\beta^{-n_2}.\oo,\oo\right)}
e^{-\frac{a}{2}\mathcal{B}_{u}\left(\beta^{-n_2}.\oo,\oo\right)}\leqslant\dhy_{\oo}(\beta^{n_2}\beta_{2,1}.x_0,\beta^{n_2}.u)$$
\noindent
and
$$\dhy_{\oo}(\beta^{n_2}\beta_{2,1}.x_0,\beta^{n_2}.w)\leqslant M
e^{-\frac{a}{2}\mathcal{B}_{\beta_{2,1}.x_0}\left(\beta^{-n_2}.\oo,\oo\right)}
e^{-\frac{a}{2}\mathcal{B}_{w}\left(\beta^{-n_2}.\oo,\oo\right)}.$$
\noindent
Setting $z=\beta^{n_2}.w$, Corollary \ref{busedist} and the previous estimates yield
$$\dhy_\oo(\beta^{n_2}.\beta_{2,1}.x_0,z)\leqslant C'\dhy_{\oo}(\beta^{n_2}\beta_{2,1}.x_0,\beta^{n_2}.\partial_{\beta_{2,1}}^{x_0})$$
\noindent
for $C'=C'(x_0)>0$. But $\beta^{n_1}\beta_{1,1}.x_0\in\beta^{n_1}.\left(\partial\xx\setminus D_l\right)$ and 
$\beta^{n_1}.\left(\partial\xx\setminus D_l\right)\cap\beta^{n_2}.\partial_{\beta_{2,1}}^{x_0}=\emptyset$, therefore
$$\dhy_\oo(\beta^{n_2}\beta_{2,1}.x_0,z)\leqslant C'\dhy_{\oo}(\beta^{n_2}\beta_{2,1}.x_0,\beta^{n_1}\beta_{1,1}.x_0).$$
\noindent
Fact \ref{fact7} follows with $C=1+C'$.
\end{proof}
\noindent
We now achieve the proof of part {\bf (2)} of case {\bf (c)}. Let $z$ be a point of $\LL_l$ satisfying the conclusion of Fact \ref{fact7} above. Let us split $\left|b^*(\alpha,g_1.x_0)-b^*(\alpha,g_2.x_0)\right|$ into
$$\left|b^*(\alpha,g_1.x_0)-b^*(\alpha,g.z)\right|+\left|b^*(\alpha,g.z)-b^*(\alpha,g_2.x_0)\right|;$$
\noindent
The case {\bf (c)} with $x=g_1.x_0$ and $y=g_2.x_0$ follows using case {\bf (b)}, the conformality equality \eqref{mvr} and the definition of $z$.
\end{itemize}
\subsection{On the extended transfer operator and its spectral properties}
We now introduce the transfer operator associated to the roof function $b^*$. Recall that
$\tilde{\LL}=\LL\cup\G.x_0$ and $\tilde{\LL}_j:=\overline{\tilde{\LL}_j^0}$ for any $j\in[\![1,p+q]\!]$.

For any $z\in\C$ and any function $\varphi\in\mathcal{C}(\tilde{\LL})$, we formally define the operator $\ot_z^*$ as follows: for any $x\in\tLL^0$
$$\ot_z^*\varphi(x)=\sum\limits_{j=1}^{p+q}\sum\limits_{\alpha\in\G_j^*}\un_{\tilde{\LL}_j^c}(x)e^{-z b^*(\alpha,x)}\varphi(\alpha.x)=\sum_{j=1}^{p+q}\sum_{\alpha\in\G_j^*}e^{-\del\dhy(\oo,\alpha.\oo)}\varphi(\alpha.x_0).$$ 
\noindent
Assumptions $(P_1)$ and $(N)$ combined with Corollary \ref{busedist} imply that the previous sums are finite for $\re{z}\geqslant\delta$. The normal convergence of these series for $\re{z}\geqslant\delta$ and the fact that $\varphi\in\mathcal{C}(\tilde{\LL})$ also imply that $\ot_z^*\varphi$ may be continuously extended on $\tilde{\LL}$; in particular, the operator $\ot_{\delta}^*$ acts on $\mathcal{C}(\tLL)$. Denote by 
$\rho_ {\infty}^*$ its spectral radius on this set. The operator $\ot_{\delta}^*$ is positive, hence 
$$\rho_{\infty}^*=\limsup\limits_{k\longrightarrow+\infty}\left|(\otds)^k\un_{\tLL}\right|_{\infty}^{\frac{1}{k}}.$$
\noindent
To obtain a spectral gap, we will study the action of $\ot_{\delta}^*$ on the space $\mathrm{Lip}(\tilde{\LL})$, defined by
$$\mathrm{Lip}(\tilde{\LL}):=\left\{\varphi\in\mathcal{C}(\tLL)\ |\ ||\varphi||:=|\varphi|_{\infty}+[\varphi]<+\infty\right\}$$
\noindent
where
$$[\varphi]=\underset{1\leqslant j\leqslant p+q}{\sup}\ \underset{\underset{x\neq y}{x,y\in\tLL_j}}{\sup}\dfrac{|\varphi(x)-\varphi(y)|}{\dhy_{\oo}(x,y)}.$$
\noindent
The following proposition implies that $\ot_z^*$ is bounded on $\mathrm{Lip}(\tilde{\LL})$ for $z\in\C$ such that $\re{z}\geqslant\delta$. 
\begin{prop}
Each weight $w_z^*(\g,\cdot):=e^{-zb^*(\g,\cdot)}\un_{\tLL_{l(\g)}^c}$ belongs to $\mathrm{Lip}(\tilde{\LL})$ 
and there exists a constant $C=C(z)>0$ such that for any $\g$ in $\G^*$, we have
$$||w_z^*(\g,\cdot)||\leqslant Ce^{-\re{z}\dhy(\oo,\g.\oo)}.$$
\end{prop}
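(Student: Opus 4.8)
The plan is to mimic the proof of Proposition \ref{cocycle1lip}, replacing the estimate \eqref{busedist} and Proposition \ref{equilipcocycle} by their extended counterparts established above in this section. First I would split $\|w_z^*(\g,\cdot)\|$ into the sup-norm part $|w_z^*(\g,\cdot)|_\infty$ and the Lipschitz part $[w_z^*(\g,\cdot)]$. For the sup-norm, observe that for $x\in\tilde\LL_{l(\g)}^c$ we have $w_z^*(\g,x)=e^{-zb^*(\g,x)}$, and property (ii) of the extended cocycle (together with Corollary \ref{busedist} in the case $x\in\LL$) gives $|b^*(\g,x)-\dhy(\oo,\g.\oo)|\leqslant C$ uniformly; hence $|w_z^*(\g,x)|=e^{-\re{z}b^*(\g,x)}\leqslant e^{|\re{z}|C}e^{-\re{z}\dhy(\oo,\g.\oo)}$, which is the required bound for the first term.

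Next I would treat the Lipschitz part. Fix $j\in[\![1,p+q]\!]$ with $j\neq l(\g)$ and $x,y\in\tilde\LL_j$; then $w_z^*(\g,x)-w_z^*(\g,y)=e^{-zb^*(\g,x)}-e^{-zb^*(\g,y)}$. Factoring out $e^{-zb^*(\g,y)}$ and using $|e^{-zu}-1|\leqslant |z|e^{|\re{z}||u|}|u|$ together with the fact that $|b^*(\g,x)-b^*(\g,y)|$ is bounded (again by property (ii) / Corollary \ref{busedist}, both $b^*(\g,x)$ and $b^*(\g,y)$ are within $C$ of $\dhy(\oo,\g.\oo)$, so their difference is at most $2C$), we obtain
\begin{align*}
|w_z^*(\g,x)-w_z^*(\g,y)| &\leqslant |z|\,e^{2|\re{z}|C}\,e^{-\re{z}b^*(\g,y)}\,|b^*(\g,x)-b^*(\g,y)|\\
&\leqslant |z|\,e^{3|\re{z}|C}\,e^{-\re{z}\dhy(\oo,\g.\oo)}\,|b^*(\g,x)-b^*(\g,y)|.
\end{align*}
Now Proposition \ref{contfinal} — which is exactly the statement that $x\mapsto b^*(\g,x)$ is equi-Lipschitz on each $\tilde\LL_l$, $l\neq l(\g)$, with a constant $C(x_0)$ independent of $\g$ — gives $|b^*(\g,x)-b^*(\g,y)|\leqslant C(x_0)\dhy_\oo(x,y)$. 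Dividing by $\dhy_\oo(x,y)$ and taking the supremum over $j$ and over $x\neq y$ in $\tilde\LL_j$ yields $[w_z^*(\g,\cdot)]\leqslant C'(z)e^{-\re{z}\dhy(\oo,\g.\oo)}$. Combining the two parts gives the proposition with $C(z)=e^{|\re{z}|C}+C'(z)$.

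The only genuine input here is Proposition \ref{contfinal}, whose proof is the long technical argument occupying the three preceding subsubsections; once that is granted, the present statement is a routine elementary estimate. So there is no real obstacle left — the ``hard part'' has already been dispatched by Proposition \ref{contfinal}, and what remains is just bookkeeping with the cocycle bound from property (ii) of $b^*$ and the inequality $|e^{-zu}-1|\leqslant|z|e^{|\re z||u|}|u|$. One minor point to be careful about is that $b^*(\g,x)$ may be slightly negative (it is only bounded below by $-C$), so the exponential factors $e^{|\re{z}||u|}$ in the mean value estimate must genuinely absorb the additive constant $C$; I would keep all these constants explicit (depending on $\re{z}$ and on the bounds of the curvature and on $x_0$) to make the uniformity in $\g$ transparent.
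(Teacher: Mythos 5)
Your proof is correct and follows essentially the same route as the paper, which simply invokes the argument of Proposition \ref{cocycle1lip} with Corollary \ref{busedist} / property (ii) of $b^*$ for the sup-norm and Proposition \ref{contfinal} for the Lipschitz seminorm. The details you supply (the elementary inequality $|e^{-zu}-1|\leqslant|z||u|e^{|\re{z}||u|}$ and the uniform bound $|b^*(\g,x)-\dhy(\oo,\g.\oo)|\leqslant C$ on $\tLL_{l(\g)}^c$) are exactly the intended bookkeeping.
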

\noindent
The proof is similar to the one of Proposition \ref{cocycle1lip} using Proposition \ref{contfinal}. Let $\rho^*$ denote the spectral radius of $\ot_z^*$ on $\mathrm{Lip}\left(\tilde{\LL}\right)$. 
\begin{prop}\label{propriétésopeétendu}
The operator $\otds$ is quasi-compact on $\mathrm{Lip}(\tilde{\LL})$ and $\rho^*$ is a simple and isolated eigenvalue in its spectrum, associated to a positive eigenfunction; this is the unique eigenvalue with modulus $\rho^*$. Furthermore $\rho^*=\rho_{\infty}^*=1$. 
\end{prop}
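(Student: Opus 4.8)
The plan is to follow closely the strategy used in \cite{BP2} for Proposition \ref{spectredeotdelta}, transported to the extended setting $\tilde{\LL}$, $b^*$, $\ot_z^*$. The two new inputs compared with the non-extended case are the regularity estimate of Proposition \ref{contfinal} (which plays the role of Proposition \ref{equilipcocycle} and the analogue of Proposition \ref{cocycle1lip}) and the identification of the spectral radius $\rho^*$. First I would establish the \emph{Doeblin--Fortet (Lasota--Yorke) inequality}: there exist $k_0\geqslant1$, $r\in]0,1[$ and $C>0$ such that for all $\varphi\in\mathrm{Lip}(\tilde{\LL})$,
$$[(\otds)^{k_0}\varphi]\leqslant Cr^{k_0}[\varphi]+C|\varphi|_\infty.$$
The contraction of the Lipschitz seminorm comes from Corollary \ref{actioncontractante} (the map $\T$ is expanding, equivalently $\g$ contracts distances at a uniform geometric rate $r^{|\g|}$ on $\bigcup_{j\neq l(\g)}D_j$, and this extends to $\tilde{\LL}$ since $\G.x_0$ lies in the same sets $D_j$) together with Proposition \ref{contfinal} controlling the Lipschitz regularity of the weights $w^*_\delta(\g,\cdot)$; the lower-order term comes from $\sum_{\g\in\G(k_0)}|w^*_\delta(\g,\cdot)|_\infty\leqslant C$, which follows from Remark \ref{sommefinie}-type estimates and Corollary \ref{busedist}. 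Combined with the compact inclusion $\mathrm{Lip}(\tilde{\LL})\hookrightarrow(\mathcal{C}(\tilde{\LL}),|\cdot|_\infty)$ (Ascoli), the Ionescu-Tulcea--Marinescu theorem then gives that $\otds$ is quasi-compact on $\mathrm{Lip}(\tilde{\LL})$.

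Next I would pin down $\rho^*=\rho_\infty^*=1$. One inequality is soft: since $\otds$ is a positive operator on $\mathcal{C}(\tilde{\LL})$, its spectral radius there equals $\limsup_k|(\otds)^k\un_{\tilde{\LL}}|_\infty^{1/k}$, and by \eqref{decompo2} one has $(\otds)^k\un_{\tilde{\LL}}(x_0)=\sum_{\g\in\G(k)}e^{-\delta\dhy(\oo,\g.\oo)}$, a quantity which is finite for each $k$ (Remark \ref{sommefinie}) and whose $k$-th root tends to $1$ because $\delta=\delta_\G$ is exactly the critical exponent of $\mathcal{P}_\G$ and $\G$ is divergent (so the full Poincaré series $\sum_k(\otds)^k\un_{\tilde{\LL}}(x_0)$ diverges at $\delta$, forcing $\limsup\geqslant1$; conversely the same series at any $s>\delta$ converges, forcing $\leqslant1$, then let $s\searrow\delta$). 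Hence $\rho_\infty^*=1$. Then $\rho^*\leqslant\rho_\infty^*$ trivially, while $\rho^*\geqslant\rho_\infty^*$ by a standard argument (quasi-compactness on $\mathrm{Lip}$ plus density of $\mathrm{Lip}(\tilde{\LL})$ in $\mathcal{C}(\tilde{\LL})$, so the $\mathcal{C}^0$-spectral radius cannot exceed the $\mathrm{Lip}$-spectral radius); thus $\rho^*=1$.

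Finally I would show $\rho^*=1$ is a simple isolated eigenvalue, the only one of modulus $\rho^*$, with positive eigenfunction. By quasi-compactness the peripheral spectrum consists of finitely many eigenvalues of modulus $1$ with finite-dimensional generalized eigenspaces. A positive-operator argument of Birkhoff/Perron--Frobenius type applies: $\otds$ restricted to $\tilde{\LL}^0$ is \emph{topologically mixing} (through the coding, because the directed graph on $\{1,\dots,p+q\}$ with an edge $i\to j$ whenever $i\neq j$ is primitive once $p+q\geqslant3$, and the orbit $\G.x_0$ connects into $\LL^0$ under $\T$), which rules out eigenvalues of modulus $1$ other than $1$ and forces $1$ to be simple; positivity of $\otds$ then yields a strictly positive eigenfunction. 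Concretely, I would imitate verbatim the proof of Proposition III.4 in \cite{BP2}: construct the eigenfunction as a limit of $(\otds)^{nk_0}\un_{\tilde{\LL}}/|(\otds)^{nk_0}\un_{\tilde{\LL}}|_\infty$ along the mixing structure, and use a Hopf-type argument to get uniqueness. \textbf{The main obstacle} is bookkeeping around the ``boundary'' point $x_0$ and its orbit: the sets $\tilde{\LL}_j$ are not symmetric under $\T$ in quite the same way as the $\LL_j$, and one must check that Proposition \ref{contfinal} (cases (a), (b), (c)) really does give a \emph{uniform} Lipschitz bound on $w^*_\delta(\alpha,\cdot)$ over all of $\tilde{\LL}_j^c$, so that the Lasota--Yorke constants do not depend on whether arguments lie in $\LL$ or in $\G.x_0$; once that uniformity is in hand, every step of the non-extended proof carries over.
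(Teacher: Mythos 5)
Your first step (Lasota--Yorke inequality from Corollary \ref{actioncontractante} and Proposition \ref{contfinal}, Ascoli, then Hennion/Ionescu-Tulcea--Marinescu) is exactly the paper's argument for quasi-compactness. The gaps are in the two places where you depart from the paper. First, the identification $\rho_\infty^*=1$: divergence of $\mathcal{P}_\G$ at $\delta$ does give $\limsup_k\bigl((\otds)^k\un_{\tLL}(x_0)\bigr)^{1/k}\geqslant1$, since $(\otds)^k\un_{\tLL}(x_0)=\sum_{\g\in\G(k)}e^{-\delta\dhy(\oo,\g.\oo)}$; but the upper bound does \emph{not} follow from convergence of the Poincar\'e series at $s>\delta$ by ``letting $s\searrow\delta$''. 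Convergence at $s>\delta$ only yields $\limsup_k\bigl(\sum_{\g\in\G(k)}e^{-s\dhy(\oo,\g.\oo)}\bigr)^{1/k}\leqslant1$ for each fixed $s>\delta$, and because elements of $\G(k)$ have unbounded displacement (the parabolic factors), this bound does not pass to $s=\delta$: the function $s\mapsto\limsup_k(\cdot)^{1/k}$ may jump upward at the left endpoint of the convergence interval. The uniform bound $|(\otds)^k\un_{\tLL}|_\infty\preceq1$ really uses the eigenfunction $h$ of $\ot_\delta$ on $\LL$ (equivalently Lemma \ref{A21} together with Corollary \ref{busedist}); accordingly the paper gets $\rho^*\leqslant1$ by restricting peripheral eigenfunctions of $\otds$ to $\LL$, where the known spectral picture of $\ot_\delta$ (Proposition \ref{spectredeotdelta}) applies, and $\rho^*\geqslant1$ by comparing $(\otds)^{n}\un_{\tLL}$ with $h=\ot_\delta^{n}h$ on $\LL$.

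Second, the Perron--Frobenius step. The extended system $(\tLL,\T)$ is \emph{not} topologically mixing, and the assertion that ``the orbit $\G.x_0$ connects into $\LL^0$ under $\T$'' is false: for $g=\alpha_1\dots\alpha_k$ one has $\T.(g.x_0)=\alpha_2\dots\alpha_k.x_0$, so the forward $\T$-orbit of any point of $\G.x_0$ stays in $\G.x_0$ and terminates at the fixed point $x_0$, never entering $\LL^0$; it is only the long inverse branches (the sums over $\g\in\G(k)$ defining $(\otds)^k$ at points of $\G.x_0$) that accumulate on $\LL$. So a mixing-based argument for simplicity and for uniqueness of the peripheral eigenvalue cannot be run directly on $\tLL$; the transient part $\G.x_0$ is exactly the ``bookkeeping around $x_0$'' you flag, and topological mixing is not the tool that handles it. The paper handles it by (i) restriction to $\LL$ plus minimality of the $\G$-action there, and (ii) propagation to $\G.x_0$ via the contraction estimate $|\varphi(\g.x_0)-\varphi(\g.y)|\preceq r^{|\g|}$ for $y\in\LL\setminus\LL_{l(\g)}$ together with $\sum_{\g\in\G(k)}e^{-\delta\dhy(\oo,\g.\oo)}\preceq1$, which forces the values of any peripheral eigenfunction (and of the modulus of eigenfunctions of the normalized operator $P^*=\frac{1}{\phi}\otds(\cdot\,\phi)$) on $\G.x_0$ from their restriction to $\LL$; simplicity is then obtained by a maximum-principle argument for $P^*$. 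If you replace your mixing claim by this restriction-plus-contraction mechanism, the remainder of your outline does go through.
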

\begin{proof}
As in the proof of Proposition \ref{spectredeotdelta}, the first step is to prove that $\otds$ is quasi-compact on
$\mathrm{Lip}(\tilde{\LL})$. By \cite{Hen2}, it is sufficient to prove that 
$\otds$ satisfies the following property.
\begin{defi}[Property DF(s)]
The operator $\otds$ satisfies the DF(s)-property on $(\mathrm{Lip}(\tilde{\LL}),||\cdot||)$ if
\begin{enumerate}
 \item[i)]$\otds$ is compact from $(\mathrm{Lip}(\tilde{\LL}),||\cdot||)$ to $(\mathrm{Lip}(\tilde{\LL}),|\cdot|_{\infty})$;
 \item[ii)]for any $k\in\N\setminus\{0\}$, there exist positive reals $S_k,s_k$ such that 
 $$\left|\left|\left(\otds\right)^k\varphi\right|\right|\leqslant S_k\left|\varphi\right|_\infty+s_k\left|\left|\varphi\right|\right|\ \text{for any}\ \varphi\in\mathrm{Lip}(\tilde{\LL})\ 
 \text{and}\ \liminf_k(s_k)^{\frac{1}{k}}=s<\rho^*.$$
\end{enumerate}
\end{defi}
We follow once again the steps of proofs given in \cite{BP} and \cite{Pe1}. The set
$\tLL=\LL\cup\G.x_0$ being compact, Ascoli's result implies that the inclusion $\left(\mathrm{Lip}(\tilde{\LL}),||\cdot||\right)\hookrightarrow
\left(\mathrm{Lip}(\tilde{\LL}),|\cdot|_{\infty}\right)$ is compact; hence $\otds$ is compact from  
$\left(\mathrm{Lip}(\tilde{\LL}),||\cdot||\right)$ to $\left(\mathrm{Lip}(\tilde{\LL}),|\cdot|_{\infty}\right)$. Let us show the existence of two 
sequences $(s_k)_{k\geqslant1}$ and $(S_k)_{k\geqslant1}$ satisfying properties ii) above. Let $\varphi\in\mathrm{Lip}(\tilde{\LL})$, $k\geqslant1$ and $j\in[\![1,p+q]\!]$. For any $x\in\tLL_j$, 
\begin{equation}\label{boundsup}\left|(\otds)^k\varphi(x)\right|\leqslant\left(\sum\limits_{|\g|=k}||w_\delta^*(\g,\cdot)||\right)\cdot|\varphi|_{\infty}.\end{equation}
For any $x,y\in\tLL_j$, we bound $\left|(\otds)^k\varphi(x)-(\otds)^k\varphi(y)\right|$ from above by $K_1+K_2$ where
$$K_1:=\sum\limits_{\underset{l(\g)\neq j}{\g\in\G(k)}}e^{-\delta b^*(\g,x)}\left|\varphi(\g.x)-\varphi(\g.y)\right|$$
\noindent
and
$$K_2:=\sum\limits_{\underset{l(\g)\neq j}{\g\in\G(k)}}\left|e^{-\delta b^*(\g,y)}-e^{-\delta b^*(\g,x)}\right|\left|\varphi(\g.y)\right|.$$
\noindent
The points $x$ and $y$ belong to $\tilde{\LL}_j$ and $\g$ 
satisfies $l(\g)\neq j$; hence, by Corollary \ref{actioncontractante}, there exists $r<1$ such that
\begin{align*}
K_1\leqslant & Cr^k\left(\sum\limits_{\underset{l(\g)\neq j}{\g\in\G(k)}}e^{-\delta b^*(\g,x)}\right)\left[\varphi\right]\dhy_{\oo}(x,y),
\end{align*}
\noindent
hence
\begin{equation}\label{firstboundlip}
K_1\leqslant Cr^k\left|(\otds)^k\un_{\tilde{\LL}}\right|_{\infty}\left|\left|\varphi\right|\right|\cdot\dhy_{\oo}(x,y).
\end{equation}
\noindent
The second term $K_2$ is bounded from above by
\begin{equation}\label{secondboundlip}
\left(\sum\limits_{\g\in\G(k)}\left|\left|w_\delta^*(\g,\cdot)\right|\right|\right)\left|\varphi\right|_\infty\cdot\dhy_{\oo}(x,y).
\end{equation}
\noindent
Consequently
$$\left|\left|(\otds)^k\varphi\right|\right|\leqslant\left(Cr^k\left|(\otds)^k\un_{\tilde{\LL}}\right|_{\infty}\right)\left|\left|\varphi\right|\right|+
2\left(\sum\limits_{\g\in\G(k)}\left|\left|w_\delta^*(\g,\cdot)\right|\right|\right)\left|\varphi\right|_\infty.$$
\noindent
Set $s_k:=Cr^k\left|(\otds)^k\un_{\tilde{\LL}_{\G}}\right|_{\infty}$ and 
$S_k=2\sum_{\g\in\G(k)}\left|\left|w_\delta^*(\g,\cdot)\right|\right|$. Finally $\otds$ 
satisfies the DF(s)-property and is thus quasi-compact on
$\mathrm{Lip}(\tilde{\LL})$.

In the second step, we prove that $\ros=\rois=1$. The equality $\ros=\rois$ follows from the arguments 
exposed in the proof of Proposition III.4 in \cite{BP2}.
Let us now show $\rho^*=1$. We know by Proposition \ref{spectredeotdelta} that the spectral radius of $\ot_{\delta}$ equals $1$ and is a simple and isolated eigenvalue associated to 
the positive eigenfunction $h$ defined in Section 4. Let $\varphi\in\mathrm{Lip}(\tilde{\LL})$ be an eigenfunction associated to an eigenvalue $\lambda$ with 
modulus $\rho^*$. The function $\varphi_{|_{\LL}}$ belongs to $\mathrm{Lip}(\LL)$ and satisfies for any $x\in\LL$ 
\begin{align*}
\otd\varphi_{|_{\LL}}(x)= & \sum\limits_{\alpha\in\mathcal{A}}\un_{\LL_{l(\alpha)}^c}(x)e^{-\delta b(\alpha,x)}\varphi_{|_{\LL}}(\alpha.x)\\
=& \left(\otds\varphi\right)_{|_{\LL}}(x)\\
=&\lambda\varphi_{|_{\LL}}(x). 
\end{align*}
\noindent
Therefore $\rho^*=\left|\lambda\right|\leqslant1$. On the other hand $\rho^*=\limsup_n\left|\left(\otds\right)^n\un_{\tLL}\right|_{\infty}^{\frac{1}{n}}$. Fix $\varepsilon>0$; there exists a subsequence $(n_k)_k$ such that $\left(\otds\right)^{n_k}\un_{\tLL}(x)\preceq(\rho^*)^{n_k}(1+\varepsilon)^{n_k}$ for any $k\geqslant1$ and $x\in\LL$. By definition of $\otds$, 
we obtain for any $k\geqslant1$ and $x\in\LL$
\begin{equation}\label{rhovautun1}
h(x)=\otd^{n_k}h(x)\asymp\otd^{n_k}\un_{\LL}(x)\preceq(\rho^*)^{n_k}(1+\varepsilon)^{n_k}.
\end{equation}
\noindent
Letting $k\longrightarrow+\infty$, we obtain  
$\rho^*(1+\varepsilon)\geqslant1$ for any $\varepsilon>0$. Finally $\rho^*=1$.

Let $e^{i\theta}$ be an eigenvalue with modulus $1$ for $\otds$: there exists $\varphi\in\mathrm{Lip}\left(\tilde{\LL}\right)$ such that 
$\otds\varphi=e^{i\theta}\varphi$. As before, for any $x\in\LL$, we get
$$\ot_\delta\varphi_{|\LL}(x)=\left(\otds\varphi\right)_{|\LL}(x)=e^{i\theta}\varphi_{|\LL}(x).$$
\noindent
Since $1$ is the unique eigenvalue with modulus $1$ of $\ot_\delta$, it follows $\theta\in2\pi\Z$; hence $1$ is also the unique eigenvalue with modulus $1$ of $\otds$.

The next step is devoted to the proof of the existence of a positive eigenfunction for $1$. Since the operator
$\otds$ is positive, there exists a non-negative function $\phi$ such that $\otds\phi=\phi$. 
Let us show that $\phi$ is positive. Assume that $\phi$ vanishes at $x\in\tLL$. There are two cases.
\begin{itemize}
 \item[-]If $x\in\LL$, there exists $j\in[\![1,p+q]\!]$ such that $x\in\LL_j$. For any $k\geqslant1$, we have 
\begin{equation}\label{renewal}\left(\otds\right)^k\phi(x)=\sum\limits_{\underset{l(\g)\neq j}{\g\in\G(k)}}e^{-\delta b(\g,x)}\phi(\g.x)=\phi(x)=0.\end{equation}
\noindent
Therefore $\phi(\g.x)=0$ for any $k\geqslant1$ and any $\g\in\G(k)$ with $l(\g)\neq j$. By minimality of the action of $\G$ on $\LL$ 
and continuity of $\phi$ on $\LL$, we derive that $\phi$ vanishes at any point of $\LL$. It remains to show that we may draw the same conclusion on $\G.x_0$. Equation
\eqref{renewal} ensures that it is sufficient to prove that $\phi(x_0)=0$, or equivalently, that the sequence $\left(\sum_{\g\in\G(k)}e^{-\delta\dhy(\oo,\g.\oo)}\phi(\g.x_0)\right)_k$ tends to $0$. Let $k\geqslant1$. Corollary \ref{actioncontractante} implies that for any  $|\g|=k$ and any limit point $y\in\LL_{l(\g)}^c$
$$\phi(\g.x_0)=|\phi(\g.x_0)|=|\phi(\g.x_0)-\phi(\g.y)|\leqslant\left[\phi\right]\dhy_{\oo}(\g.x_0,\g.y)\leqslant\left[\phi\right]r^k\dhy_{\oo}(x_0,y).$$
\noindent
Lemma \ref{A21} combined with Corollary \ref{busedist} implies that there exists a constant 
$C>0$ such that $\sum_{\g\in\G(k)}e^{-\delta\dhy(\oo,\g.\oo)}\preceq C$, so that
%
%
$\sum_{\g\in\G(k)}e^{-\delta\dhy(\oo,\g.\oo)}\phi(\g.x_0)\preceq r^k$, which tends to $0$ as 
$k\longrightarrow+\infty$.
\item[-] Otherwize $x\in\G.x_0$. It follows $\phi(\g.x)=0$ for any $x\in\LL$ and any $\g\in\G$ such that $x\notin\tLL_{l(\g)}$. By continuity of $\phi$ and by definition of the limit set, the function $\phi$ vanishes on $\LL$ and the above arguments ensure that 
this is true everywhere.
\end{itemize}

The next step deals with the dimension of the eigenspace associated to $1$. To prove that it is equal to $1$, we introduce the normalized operator 
$P^*$ defined for any $\varphi\in\mathrm{Lip}(\tLL)$ by
$$P^*\varphi=\dfrac{1}{\phi}\otds\left(\varphi\phi\right)$$
\noindent
where $\phi$ is a positive eigenfunction associated to $1$. This operator is 
positive, bounded on the space $\mathrm{Lip}(\tLL)$ and quasi-compact; it also satisfies $P^*(\un_{\tilde{\LL}})=\un_{\tilde{\LL}}$ and its spectral radius is $1$. Let us prove that the eigenspace associated to $1$ for $P^*$ has dimension $1$. Let
$f$ be an eigenfunction associated to $1$ for $P^*$. 
\begin{itemize}
\item[(i)] Assume first that the maximum $|f|_{\infty}$ of $|f|$ is obtained  at a point 
$x\in\LL_j$, $j\in[\![1,p+q]\!]$. Hence
\begin{equation}\label{equalitymarkovian}|f(x)|=|P^*f(x)|\leqslant P^*|f|(x)\leqslant|f(x)|.\end{equation}
\noindent
Using the iterates of $P^*$ and a convexity argument, we obtain $|f(\g.x)|=|f(x)|$ for any $\g\in\G$ with $l(\g)\neq j$. 
By minimality of the action of $\G$ on $\LL$, the function $|f|$ is constant on $\LL$. To check that $\left|f\right|$ is also constant on $\G.x_0$, we rewrite \eqref{renewal} as 
$$1-\dfrac{|f(x_0)|}{M}=\dfrac{1}{\phi(x_0)}\sum\limits_{\g\in\G(k)}e^{-\delta\dhy(\oo,\g.\oo)}\left(1-\dfrac{|f(\g.x_0)|}{M}\right)\phi(\g.x_0),$$
\noindent
hence
\begin{equation}\label{jelerajouteetcmieux}\left|1-\dfrac{|f(x_0)|}{M}\right|\leqslant\dfrac{1}{\phi(x_0)}\sum\limits_{\g\in\G(k)}e^{-\delta\dhy(\oo,\g.\oo)}\dfrac{|M-|f(\g.x_0)||}{|M|}\phi(\g.x_0).\end{equation}
\noindent
For any $\g\in\G(k)$ and any $z\in\LL_{l(\g)}^c$ one gets $M=|f(\g.z)|$. As previously, we bound from above the right term of \eqref{jelerajouteetcmieux} by $Cr^k$, quantity which goes to $0$ as $k\longrightarrow+\infty$. Finally $|f(x_0)|=M$. Applying \eqref{equalitymarkovian} to the iterates of $P^*$ and using a convexity argument once again, we deduce that
 $|f|$ is constant on $\G.x_0$, and finally $|f|$ is constant on $\tLL$. 
\item[(ii)] If $|f|$ reaches its maximum in $\G.x_0$, it follows from the minimality of the action of $\G$ on the boundary that 
$|f|$ is constant on $\LL$, hence on $\tLL$ as above.
\end{itemize}
Repeating the same argument, we show that $f$ is constant on $\tLL$, which ends the proof of the proposition.
\end{proof}
Let us now choose an eigenfunction $h^*$ of $\otds$ associated to the eigenvalue $1$ satisfying $\sigma_\oo\left(h^*\right)=1$.
There exists $\Pi_\delta^*\ :\ \mathrm{Lip}\left(\tLL\right)\longrightarrow\C h^*$ such that for any
$\varphi\in\mathrm{Lip}\left(\tLL\right)$,  
$$\otds\varphi=\Pi_\delta^*\left(\varphi\right)+R_\delta^*\left(\varphi\right)$$
\noindent
where $R^*$ satisfies $R_{\delta}^*\Pi_{\delta}^*=\Pi_{\delta}^*R_{\delta}^*=0$ and has a spectral radius $<1$. There exists a linear form
$\sigma^*\ :\ \mathrm{Lip}\left(\tLL\right)\longrightarrow\C$ satisfying $\sigma^*\left(h^*\right)=1$ such that for any $\varphi\in\mathrm{Lip}(\tLL)$, 
\begin{equation}\label{decspec}\otds\varphi=\sigma^*(\varphi)h^*+R^*\varphi.\end{equation}
\noindent
It follows for any $k\geqslant1$
$$
\left(\otds\right)^k\varphi=\sigma^*\left(\varphi\right)h^*+\left(R^*\right)^k\varphi=\sigma^*\left(\otds\varphi\right)h^*+\left(R^*\right)^{k-1}\otds\varphi.
$$
\noindent
Letting $k\longrightarrow+\infty$, we deduce that $\sigma^*$ is $\otds$-invariant. To identify $\sigma^*$, let us write, for any $\varphi\in\mathrm{Lip}\left(\tLL\right)$ and $k\geqslant1$
$$\ot_\delta^k\varphi_{|\LL}=\left(\left(\otds\right)^k\varphi\right)_{|\LL}=\sigma^*\left(\varphi\right)h_{|\LL}^*+\left(R^*\right)^k\varphi_{|\LL}.$$
\noindent
The equality $\sigma_\oo\left(\varphi\right)=\sigma_\oo\left(\varphi_{|\LL}\right)=\sigma_\oo\left(\ot_\delta^k\varphi_{|\LL}\right),$ readly implies
$$\sigma_\oo\left(\varphi\right)=
\sigma^*\left(\varphi\right)+\sigma_{\oo}\left(\left(R^*\right)^k\varphi_{|\LL}\right)$$
\noindent
and letting $k\longrightarrow+\infty$, we obtain $\sigma_\oo\left(\varphi\right)=\sigma^*\left(\varphi\right)$. Finally
$\sigma^*=\sigma_\oo$ and $h^*$ extends $h$ on $\tLL$.
\begin{rem}\label{valeurhstarenxzero}
One gets
$h^*(x_0)=\lim\limits_{k\longrightarrow+\infty}\sum\limits_{\g\in\G(k)}e^{-\delta\dhy(\oo,\g.\oo)}$;
this quantity does not depend on $x_0\in\partial\xx$.
\end{rem}
Let us now check that the function $z\longmapsto\ot_z^*$ is a continuous perturbation of $\ot_\delta^*$ for $\re{z}\geqslant\delta$ using the following proposition, whose proof is the same 
as the one of Proposition \ref{continuityoftransfert}.
\begin{prop}\label{continuityoftransfertstar}
Under the assumptions $\left(H_\beta\right)$, for any compact $K$ of $\R$, there exists a constant $C=C_K>0$ such that for any $s,t\in K$ 
and $\kappa>0$
\begin{itemize}
 \item[1)]if $\beta\in]0,1[$ 
        \begin{align*}
        &a.\ ||\ot_{\delta+it}^*-\ot_{\delta+is}^*||\leqslant C|s-t|^{\beta}L\left(\dfrac{1}{|s-t|}\right),\\
        &b.\ ||\ot_{\delta+\kappa+it}^*-\ot_{\delta+it}^*||\leqslant C\kappa^{\beta}L\left(\dfrac{1}{\kappa}\right); 
        \end{align*}
\item[2)]if $\beta=1$ 
       \begin{align*}
       &a.\ ||\ot_{\delta+it}^*-\ot_{\delta+is}^*||\leqslant C|s-t|\tilde{L}\left(\dfrac{1}{|s-t|}\right),\\
       &b.\ ||\ot_{\delta+\kappa+it}^*-\ot_{\delta+it}^*||\leqslant C\kappa\tilde{L}\left(\dfrac{1}{\kappa}\right),
       \end{align*}
\noindent
where $\tilde{L}(x)=\displaystyle{\int_1^x}\frac{L(y)}{y}\dd y$.
\end{itemize}
\end{prop}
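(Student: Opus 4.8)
The plan is to mimic the proof of Proposition~\ref{continuityoftransfert} verbatim, replacing the cocycle $b$ by the extended cocycle $b^*$ and the space $\mathrm{Lip}(\LL)$ by $\mathrm{Lip}(\tilde{\LL})$, the only new ingredient being the equi-Lipschitz control of $x\mapsto b^*(\g,x)$ furnished by Proposition~\ref{contfinal} (which plays here the role of Proposition~\ref{equilipcocycle}). I would detail only assertion $1.a.$ and indicate that the other three follow by the same scheme, exactly as in the original proof.

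First I would fix $\varphi\in\mathrm{Lip}(\tilde{\LL})$ and $s,t\in K$, and show $\|\ot^*_{\delta+it}\varphi-\ot^*_{\delta+is}\varphi\|\leqslant C|s-t|^{\beta}L(1/|s-t|)\,\|\varphi\|$. The sup-norm estimate is reduced, for each $j\in[\![1,p+q]\!]$ and $x\in\tilde{\LL}\setminus\tilde{\LL}_j$, to controlling $\sum_{\alpha\in\G_j^*}|e^{i(t-s)b^*(\alpha,x)}-1|e^{-\delta b^*(\alpha,x)}$. Here I would introduce the probability measures $\mu_j^x=\tfrac{1}{M_j(x)}\sum_{\alpha\in\G_j^*}e^{-\delta b^*(\alpha,x)}\mathrm{D}_{b^*(\alpha,x)}$, and use property~ii) of $b^*$ in \eqref{definouveaucocycle} (i.e. $|b^*(\alpha,x)-\dhy(\oo,\alpha.\oo)|\leqslant C$, valid since $i(\alpha)\ne j$ when $x\notin\tilde\LL_j$) to sandwich $M_j(x)$ and the tail $1-\mu_j^x([-C,T])$ between multiples of $\sum_{\alpha\in\G_j}e^{-\delta\dhy(\oo,\alpha.\oo)}$ and $\tfrac{L(T)}{T^\beta}$, exactly as in Proposition~\ref{continuityoftransfert}. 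Then Assumption $(P_2)$ for $j\leqslant p$, $(N)$ for $j>p$, together with Proposition~\ref{etapecontroltransfert}, give $\int|e^{ity}-1|\mu_j^x(\dd y)\preceq |t|^\beta L(1/|t|)$ (resp. $o(\cdot)$), uniformly in $x$, which yields the $|\cdot|_\infty$-bound after summing over $j$.

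For the Lipschitz seminorm I would follow the same decomposition $A_j(l,\alpha,x,y)\leqslant B_j+C_j$ as in the original proof, with $x,y\in\tilde{\LL}_j$ and $\alpha\in\G_l$, $l\ne j$. The key substitution is that wherever the original argument invokes the boundedness of the seminorms $([b(\g,\cdot)])_{\g\in\G_j}$ via Proposition~\ref{equilipcocycle}, I now invoke Proposition~\ref{contfinal}, which gives exactly $|b^*(\alpha,x)-b^*(\alpha,y)|\leqslant C\dhy_\oo(x,y)$ for $x,y\in\tilde\LL_l$, $l\ne l(\alpha)$; combined with $|e^{w}-1|\leqslant |w|e^{|w|}$ and $|e^{iu}-1|\leqslant|u|$ this bounds $B_j/\dhy_\oo(x,y)$ and $C_j/\dhy_\oo(x,y)$ by $e^{-\delta b^*(\alpha,x)}|t-s|$ plus $e^{-\delta b^*(\alpha,y)}|e^{i(t-s)b^*(\alpha,y)}-1|$, up to a multiplicative constant depending on $K$. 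Summing over $\alpha\in\G_l$, $l\ne j$, and applying again the measures $\mu_j^x$ and Proposition~\ref{etapecontroltransfert} (assertion $1.b.$ for the $|t-s|$-term, after noting $|t-s|=o(|t-s|^\beta L(1/|t-s|))$, and assertion $1.a.$ for the other), I obtain $[\ot^*_{\delta+it}\varphi-\ot^*_{\delta+is}\varphi]\preceq |t-s|^\beta L(1/|t-s|)\|\varphi\|$. For $\beta=1$ the same computation with the $\tilde{L}$-estimates of Proposition~\ref{etapecontroltransfert} gives $2.a.$ and $2.b.$, and the $\kappa$-estimates $1.b.$ and $2.b.$ are obtained by replacing $e^{i(t-s)b^*}$ with $e^{-\kappa b^*}$ throughout, using parts b) of Proposition~\ref{etapecontroltransfert}.

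The main obstacle is purely bookkeeping rather than conceptual: one must check that property~ii) of the extended cocycle applies in each sum (which it does, since the letter $\alpha$ summed over lies in $\G_j^*$ with $j\ne i(\g)$ of the points involved, so $i(\alpha)\ne$ the relevant index), and that the Lipschitz estimates of Proposition~\ref{contfinal} are available on $\tilde\LL_l$ precisely for the arguments $\alpha$ appearing — this is exactly the hypothesis $l\ne l(\alpha)$ under which Proposition~\ref{contfinal} is stated. Since all of these match the structure of Proposition~\ref{continuityoftransfert} line for line, once those compatibility checks are made the proof is a direct transcription, which is why the excerpt only says ``the proof is the same as the one of Proposition~\ref{continuityoftransfert}''.
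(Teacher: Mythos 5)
Your proposal is correct and is exactly the argument the paper intends: the paper's own "proof" is just the statement that it is identical to that of Proposition \ref{continuityoftransfert}, with $b$ replaced by $b^*$, $\mathrm{Lip}(\LL)$ by $\mathrm{Lip}(\tilde\LL)$, Corollary \ref{busedist} by property ii) of the extended cocycle, and Proposition \ref{equilipcocycle} by Proposition \ref{contfinal}, which is precisely the transcription you carry out (including the compatibility checks on the indices $i(g)\neq l(\alpha)$).
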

Now let $\rho^*(z)$ denote the spectral radius of 
$\ot_z^*$ on $\left(\mathrm{Lip}\left(\tLL\right),||\cdot||\right)$. As in \cite{BP}, we may state the
\begin{prop}\label{spectreperturbationstar}
There exist $\varepsilon>0$ and $\rho_\varepsilon\in]0,1[$ such that for any $z\in\C$ satisfying $|z-\delta|_\infty<\varepsilon$ and $\re{z}\geqslant\delta$
\begin{enumerate}
 \item[-]$\rho^*(z)>\rho_\varepsilon$;
 \item[-]$\ot_{z}$ has a unique eigenvalue $\lambda_z^*$ with modulus $\rho^*(z)$;
 \item[-]this eigenvalue is simple and close to $1$;
 \item[-]the remainder of the spectrum is included in a disc of radius $\rho_\varepsilon$.
\end{enumerate}
Moreover, for any $A>0$, there exists $\rho(A)<1$ such that $\rho^*(z)<\rho(A)$ as soon as
$z\in\C$ satisfies $|z-\delta|_\infty\geqslant\varepsilon$, 
$\re{z}\geqslant\delta$ and $\left|\im{z}\right|\leqslant A$. Finally $\rho^*(z)=1$ if and only if $z=\delta$.
\end{prop}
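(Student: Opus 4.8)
The plan is to mirror, almost line by line, the proof of Proposition \ref{spectreperturbation} for the non-extended operators $\ot_z$, exploiting the fact that everything needed -- quasi-compactness of $\otds$ (Proposition \ref{propriétésopeétendu}), the continuity estimates of Proposition \ref{continuityoftransfertstar}, and the equi-Lipschitz control of the weights $w_z^*(\g,\cdot)$ -- is already in place in the extended setting. The only genuinely new ingredient is the final clause ``$\rho^*(z)=1$ if and only if $z=\delta$'', and here the key observation, already used repeatedly in the proof of Proposition \ref{propriétésopeétendu}, is that restriction to $\LL$ intertwines $\ot_z^*$ with $\ot_z$: for any $\varphi\in\mathrm{Lip}(\tilde{\LL})$ and any $x\in\LL$,
$$
\left(\ot_z^*\varphi\right)_{|\LL}(x)=\ot_z\left(\varphi_{|\LL}\right)(x).
$$

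First I would establish the spectral gap near $\delta$. Since $\otds$ is quasi-compact on $\mathrm{Lip}(\tilde{\LL})$ with $\rho^*=\rho_\infty^*=1$ a simple isolated eigenvalue and no other eigenvalue of modulus $1$ (Proposition \ref{propriétésopeétendu}), the spectral picture is: $1$ isolated, the rest of the spectrum inside a disc of radius $\rho_0<1$. Quasi-compactness and the associated spectral decomposition are stable under small perturbations in operator norm (\cite{Hen2}, \cite{Kato}); Proposition \ref{continuityoftransfertstar} shows that $z\longmapsto\ot_z^*$ is norm-continuous on $\{\re{z}\geqslant\delta\}$, with the explicit moduli of continuity in $|z-\delta|_\infty$. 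Hence there exist $\varepsilon>0$ and $\rho_\varepsilon\in]\rho_0,1[$ such that for $|z-\delta|_\infty<\varepsilon$ and $\re{z}\geqslant\delta$, the operator $\ot_z^*$ has a unique eigenvalue $\lambda_z^*$ of modulus $\rho^*(z)>\rho_\varepsilon$, this eigenvalue is simple, $\lim_{z\to\delta}\lambda_z^*=1$, and the rest of the spectrum lies in a disc of radius $\rho_\varepsilon$. The associated spectral projection $\Pi_z^*$ and eigenfunction $h_z^*$ then depend on $z$ with the same regularity as $z\longmapsto\ot_z^*$, by analytic (here: Hölder) perturbation theory.

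Next, for the large-$|t|$, bounded-imaginary-part regime: fix $A>0$. For $z$ with $|z-\delta|_\infty\geqslant\varepsilon$, $\re{z}\geqslant\delta$ and $|\im{z}|\leqslant A$, I would argue exactly as for Proposition \ref{spectreperturbation} that $\rho^*(z)<\rho(A)$ for some $\rho(A)<1$. The point is that $\rho^*(z)\leqslant\rho^*(\re{z})\leqslant\rho^*(\delta)=1$ (positivity gives $|\ot_z^*\varphi|\leqslant\ot_{\re{z}}^*|\varphi|$ pointwise), and strict inequality $\rho^*(z)<1$ holds for $z\neq\delta$ in the relevant range because, via the intertwining with $\ot_z$, any eigenfunction $\varphi$ of $\ot_z^*$ with $|\lambda_z^*|=1$ would restrict to an eigenfunction $\varphi_{|\LL}$ of $\ot_z$ for an eigenvalue of modulus $\rho_\infty(z)=1$; but Proposition \ref{spectreperturbation} forbids this unless $z=\delta$. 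One must also check that $\varphi_{|\LL}$ is not identically zero, which follows from minimality of the $\G$-action on $\LL$ and a contraction estimate on $\G.x_0$ (the $r^k$-argument of the proof of Proposition \ref{propriétésopeétendu}): if $\varphi$ vanishes on $\LL$ it vanishes on all of $\tilde{\LL}$. A standard compactness argument in $z$ over the compact annulus $\{\varepsilon\leqslant|z-\delta|_\infty,\ \re{z}\geqslant\delta,\ |\im{z}|\leqslant A\}$ then upgrades the pointwise bounds $\rho^*(z)<1$ to a uniform bound $\rho^*(z)\leqslant\rho(A)<1$, using upper semi-continuity of the spectral radius (\cite{Kato}).

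Finally, the equivalence $\rho^*(z)=1\iff z=\delta$. The implication $\Leftarrow$ is Proposition \ref{propriétésopeétendu}. For $\Rightarrow$: suppose $\re{z}\geqslant\delta$ and $\rho^*(z)=1$. Then $\rho_\infty^*(z)=1$ as well (the equality $\rho^*=\rho_\infty^*$ extends to perturbations just as in the proof of Proposition \ref{propriétésopeétendu}, or one uses that $\rho^*(z)\leqslant\rho^*(\re{z})$ forces $\re{z}=\delta$ first, via Karamata/Potter applied to the convergent tails $(P_2)$, $(N)$ exactly as for $\ot_z$), so in fact $\re{z}=\delta$, say $z=\delta+it$. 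Now I would invoke the two cases above: if $|t|<\varepsilon$ then $\rho^*(\delta+it)=|\lambda_{\delta+it}^*|$, and by the intertwining $\lambda_{\delta+it}^*=\lambda_{\delta+it}$ (the restriction of an $\ot_{\delta+it}^*$-eigenfunction for $\lambda_{\delta+it}^*$ is an $\ot_{\delta+it}$-eigenfunction of the same modulus-$\rho$ eigenvalue, hence equals $\lambda_{\delta+it}$ by simplicity and uniqueness in Proposition \ref{spectreperturbation}), so $\rho^*(\delta+it)=|\lambda_{\delta+it}|<1$ unless $t=0$; if $|t|\geqslant\varepsilon$ then $t$ lies in the bounded-imaginary-part regime (take $A=|t|$) and $\rho^*(\delta+it)<1$. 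Either way $t=0$, i.e. $z=\delta$.

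The main obstacle I anticipate is not conceptual but bookkeeping: one must be careful that the restriction map $\varphi\longmapsto\varphi_{|\LL}$ is used correctly -- it is norm-decreasing from $\mathrm{Lip}(\tilde{\LL})$ to $\mathrm{Lip}(\LL)$ and intertwines the operators, but to conclude $\lambda_z^*=\lambda_z$ one needs that the restricted eigenfunction is nonzero, which is precisely where the contraction estimate on the discrete orbit $\G.x_0$ (Corollary \ref{actioncontractante} applied with $r<1$, together with $\sum_{\g\in\G(k)}e^{-\delta b^*(\g,x_0)}\leqslant C$ from Lemma \ref{A21} and Corollary \ref{busedist}) does the work, exactly as in the proof of Proposition \ref{propriétésopeétendu}. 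The rest is a faithful transcription of the proof of Proposition \ref{spectreperturbation} with $\ot_z$ replaced by $\ot_z^*$ and Propositions \ref{spectredeotdelta}, \ref{continuityoftransfert} replaced by \ref{propriétésopeétendu}, \ref{continuityoftransfertstar}.
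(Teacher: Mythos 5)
Your proposal is correct and follows essentially the route the paper intends: the paper gives no separate proof of Proposition \ref{spectreperturbationstar} beyond ``As in \cite{BP}'', relying precisely on the ingredients you assemble --- quasi-compactness and the spectral picture at $z=\delta$ (Proposition \ref{propriétésopeétendu}), norm continuity of $z\longmapsto\ot_z^*$ (Proposition \ref{continuityoftransfertstar}), standard perturbation theory, and the restriction-to-$\LL$ intertwining $\left(\ot_z^*\varphi\right)_{|\LL}=\ot_z\left(\varphi_{|\LL}\right)$ together with the $r^k$-contraction argument on $\G.x_0$ that the paper itself uses in the proof of Proposition \ref{propriétésopeétendu} (and your explicit check that $\varphi_{|\LL}\not\equiv0$ is if anything more careful than the paper). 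The only quibbles are cosmetic: the region $\{|z-\delta|_\infty\geqslant\varepsilon,\ \re{z}\geqslant\delta,\ |\im{z}|\leqslant A\}$ is not compact in the real direction (handled by noting that $\left|\left|\ot_z^*\right|\right|\preceq\sum_{\alpha\in\mathcal{A}}e^{-\re{z}\dhy(\oo,\alpha.\oo)}\longrightarrow0$ as $\re{z}\longrightarrow+\infty$), and the ``Karamata/Potter'' aside is unnecessary since your main route (peripheral eigenvalue, restriction, and Proposition \ref{spectreperturbation}) already yields $\rho^*(z)=1\Rightarrow z=\delta$.
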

The following proposition specifies the local behaviour of the dominant eigenvalue $\lambda_{\delta+it}^*$; its proof is verbatim the one of Proposition \ref{localexp}.
\begin{prop}\label{localexp2}
For the constant $\Cg>0$ given in Proposition \ref{localexp} and $t\longrightarrow0$
\begin{itemize}
 \item[-]if $\beta\in]0,1[$ 
    $$\lambda_{\delta+it}^*=1-\Cg\G(1-\beta)e^{+i\mathrm{sign}(t)\frac{\beta\pi}{2}}|t|^{\beta}L\left(\dfrac{1}{|t|}\right)(1+o(1));$$
 \item[-]if $\beta=1$
 \begin{align*}
&\bullet\ \lambda_{\delta+it}^*=1-\Cg \mathrm{sign}(t)i|t|\tilde{L}\left(\dfrac{1}{|t|}\right)(1+o(1));\\
&\bullet\ \re{1-\lambda_{\delta+it}^*}=\dfrac{\pi}{2}\Cg|t|L\left(\dfrac{1}{|t|}\right)(1+o(1)).
\end{align*}
\end{itemize}
\end{prop}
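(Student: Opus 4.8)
The plan is to mimic verbatim the proof of Proposition \ref{localexp}, now working with the extended operator $\ot_z^*$ on $\mathrm{Lip}(\tilde{\LL})$ in place of $\ot_z$ on $\mathrm{Lip}(\LL)$, and with the extended cocycle $b^*$ in place of $b$. As in the non-extended case, I would only detail the argument for $\beta\in]0,1[$, the case $\beta=1$ being identical up to the usual replacement of $|t|^\beta L(1/|t|)$ by its counterparts involving $\tilde L$.

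First I would write, using the normalisation $\sigma_\oo(h_z^*)=1$ (the extended eigenfunction and its spectral data being provided by Propositions \ref{propriétésopeétendu} and \ref{spectreperturbationstar}, whose regularity in $z$ follows from Proposition \ref{continuityoftransfertstar} exactly as before),
$$\lambda_{\delta+it}^*=\sigma_{\oo}\left(\ot_{\delta+it}^* h_{\delta+it}^*\right)=\sigma_{\oo}\left(\ot_{\delta+it}^* h^*\right)+\sigma_{\oo}\left((\ot_{\delta+it}^*-\ot_\delta^*)(h_{\delta+it}^*-h^*)\right).$$
By Proposition \ref{continuityoftransfertstar} the second term is $O\bigl((|t|^\beta L(1/|t|))^2\bigr)$, hence negligible. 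For the first term I would use that $\sigma_\oo(h^*)=1=\sigma_\oo(\ot_\delta^* h^*)$ (since $\sigma_\oo$ is $\ot_\delta$-invariant on $\LL$ and $h^*$ restricts to $h$ on $\LL$) to write $\sigma_{\oo}(\ot_{\delta+it}^* h^*)=1+\sigma_\oo\bigl((\ot_{\delta+it}^*-\ot_\delta^*)h^*\bigr)=1+\sum_{j=1}^{p+q}S_j$ with
$$S_j=\sum_{\alpha\in\G_j^*}\int_{\tilde{\LL}\setminus\tilde{\LL}_j}h^*(\alpha.x)e^{-\delta b^*(\alpha,x)}\bigl(e^{-itb^*(\alpha,x)}-1\bigr)\,\dd\sigma_\oo(x).$$
Here the integration is against $\sigma_\oo$, which is supported on $\LL$ and gives no mass to $\G.x_0$; on $\LL$ one has $b^*=b$ and $h^*=h$, so $S_j$ is literally the same quantity that appears in the proof of Proposition \ref{localexp}. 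Therefore I can invoke Proposition \ref{theoreticlocalexp} verbatim: $S_j=o(|t|^\beta L(1/|t|))$ for $j\in[\![p+1,p+q]\!]$, and for $j\in[\![1,p]\!]$,
$$S_j=-C_j\frac{1}{\mu(\mathcal{D}^0)}\left(\int_{\LL\setminus\LL_j}\frac{\dd\sigma_\oo(x)}{\dhy_\oo(x,x_j)^{2\delta/a}}\right)^2 e^{i\,\mathrm{sign}(t)\frac{\beta\pi}{2}}\G(1-\beta)|t|^\beta L\left(\frac{1}{|t|}\right)(1+o(1)).$$
Summing over $j$ yields the claimed expansion with exactly the same constant $\Cg$ as in \eqref{valeurdelaconstantemelange}, which is the content of the statement.

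I do not expect a genuine obstacle here: the whole point of Sections 8.1–8.3 is that once the extended cocycle $b^*$ has been shown Lipschitz (Proposition \ref{contfinal}) and the extended operator shown to have the same spectral-gap and perturbation properties (Propositions \ref{propriétésopeétendu}, \ref{spectreperturbationstar}, \ref{continuityoftransfertstar}), every computation involving $\lambda_z^*$ reduces on the support of $\sigma_\oo$ to the already-treated computation for $\lambda_z$. The only point requiring a word of care is the identity $\sigma_\oo(\ot_\delta^* h^*)=1$ and the fact that $h^*_{|\LL}=h$; both are established at the end of Subsection 8.3 (the discussion following \eqref{decspec}). Consequently the proof is a one-line reduction: \emph{the argument is word for word the one of Proposition \ref{localexp}, the integrals defining $S_j$ being unchanged since $\sigma_\oo$ charges only $\LL$, on which $b^*$ and $h^*$ coincide with $b$ and $h$}.
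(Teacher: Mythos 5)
Your proposal is correct and is essentially the paper's own argument: the paper simply declares that the proof is verbatim that of Proposition \ref{localexp}, and the accompanying remark justifies the identical constant $\Cg$ exactly as you do, via $\lambda_{\delta+it}^*\sim\sigma_{\oo}\left(\left(\ot_{\delta+it}^*-\ot_{\delta}^*\right)h^*\right)$ together with $\sigma_{\oo}\left(\G.x_0\right)=0$, so that the integrals reduce to those for $\ot_{\delta+it}$ and $h$ on $\LL$. Your additional care about $\sigma_\oo(h^*)=1$, $h^*_{|\LL}=h$ and the regularity input from Proposition \ref{continuityoftransfertstar} matches what Section 8.3 provides.
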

\begin{rem}
The fact that the constant $\Cg$ is the one of Proposition \ref{localexp} relies on the equivalent
$\lambda_{\delta+it}^*\sim\sigma_{\oo}\left(\left(\ot_{\delta+it}^*-\ot_{\delta}^*\right)h^*\right)$ and the equality $\sigma_{\oo}\left(\G.x_0\right)=0$. Therefore
$$\sigma_{\oo}\left(\left(\ot_{\delta+it}^*-\ot_{\delta}^*\right)h^*\right)=
\sigma_{\oo}\left(\left(\ot_{\delta+it}-\ot_{\delta}\right)h\right).$$
\end{rem}
In the proof of Theorem C for $\beta=1$, we will use $Q_z^*=\left(\mathrm{Id}-\ot_z^*\right)^{-1}$ for $z\in\C$ such that $\re{z}\geqslant\delta$. 
The following properties are the same as Properties \ref{controlQ} and \ref{integrabilityreQ} proved in Section 4 concerning
$Q_z=\left(\mathrm{Id}-\ot_z\right)^{-1}$. 
\begin{prop}\label{controlQstar}
There exist $\varepsilon>0$ and $C>0$ such that $\left|\left|Q_z^*-(1-\lambda_z^*)^{-1}\Pi_{z}^*\right|\right|\leqslant C$ for $z$ such that
$|z-\delta|_\infty<\varepsilon$ and $\left|\left|Q_z^*\right|\right|\leqslant C$ for $z$ such that $|z-\delta|_\infty\geqslant\varepsilon$. Moreover, as $t\longrightarrow0$,
$$
Q_{\delta+it}^*=\dfrac{1}{\Cg\mathrm{sign}(t)i|t|\tilde{L}\left(\dfrac{1}{|t|}\right)}(1+o(1))\Pi_0+O(1).
$$
\end{prop}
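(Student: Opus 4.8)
The plan is to transcribe, essentially verbatim, the proof of Proposition \ref{controlQ}, replacing the transfer operators $\ot_z$, the dominant eigenvalues $\lambda_z$, the spectral projections $\Pi_z$ and the remainders $R_z$ by their extended counterparts $\ot_z^*$, $\lambda_z^*$, $\Pi_z^*$, $R_z^*$, and invoking the extended spectral statements already at our disposal: Proposition \ref{spectreperturbationstar} for the uniform spectral gap near $\delta$ and away from it, Proposition \ref{continuityoftransfertstar} for the regularity (here of order $|t|\tilde{L}(1/|t|)$, since $\beta=1$) of $z\longmapsto\ot_z^*$, and Proposition \ref{localexp2} for the local expansion of $\lambda_{\delta+it}^*$ at $0$. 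I will also use that, by perturbation theory, the maps $z\longmapsto\lambda_z^*$, $z\longmapsto h_z^*$ and $z\longmapsto\Pi_z^*$ inherit the modulus of continuity of $z\longmapsto\ot_z^*$, exactly as recorded after Proposition \ref{spectreperturbation} in the unextended setting.

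First I would establish the two uniform bounds. For $z$ with $\re{z}\geqslant\delta$, $z\neq\delta$ and $|z-\delta|_\infty<\varepsilon$, I write $\ot_z^*=\lambda_z^*\Pi_z^*+R_z^*$ with $R_z^*=\ot_z^*(\mathrm{Id}-\Pi_z^*)$, and splitting along $\C h_z^*\oplus\ker\Pi_z^*$ gives
$$Q_z^*=(1-\lambda_z^*)^{-1}\Pi_z^*+(\mathrm{Id}-\ot_z^*)^{-1}(\mathrm{Id}-\Pi_z^*).$$
Proposition \ref{spectreperturbationstar} guarantees that the spectral radius of $\ot_z^*$ on $\ker\Pi_z^*$ is $\leqslant\rho_\varepsilon<1$ uniformly in such $z$, so the Neumann series $\sum_{k\geqslant0}(R_z^*)^k(\mathrm{Id}-\Pi_z^*)$ converges in operator norm with a bound independent of $z$; this is the first inequality. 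When $z$ stays at distance $\geqslant\varepsilon$ from $\delta$ (with $\im{z}$ in a fixed bounded range), the same proposition gives $\rho^*(z)<\rho(A)<1$, hence $Q_z^*=\sum_{k\geqslant0}(\ot_z^*)^k$ converges with $\|Q_z^*\|\leqslant C$.

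Next I would derive the expansion at $t\to0$. From the first part, for $t$ small enough, $Q_{\delta+it}^*=(1-\lambda_{\delta+it}^*)^{-1}\Pi_{\delta+it}^*+O(1)$; I then write $\Pi_{\delta+it}^*=\Pi_\delta^*+(\Pi_{\delta+it}^*-\Pi_\delta^*)$. Combining $\|\Pi_{\delta+it}^*-\Pi_\delta^*\|\preceq|t|\tilde{L}(1/|t|)$ (from Proposition \ref{continuityoftransfertstar} and the perturbation theory above) with $|1-\lambda_{\delta+it}^*|\asymp|t|\tilde{L}(1/|t|)$ (from Proposition \ref{localexp2}), the term $(1-\lambda_{\delta+it}^*)^{-1}(\Pi_{\delta+it}^*-\Pi_\delta^*)$ is $O(1)$, so $Q_{\delta+it}^*=(1-\lambda_{\delta+it}^*)^{-1}\Pi_\delta^*+O(1)$; inserting the expansion $1-\lambda_{\delta+it}^*=\Cg\,\mathrm{sign}(t)\,i|t|\tilde{L}(1/|t|)(1+o(1))$ of Proposition \ref{localexp2} yields the announced formula, with $\Pi_\delta^*$ playing the role of $\Pi_0$.

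I do not expect a genuinely new difficulty, since all the spectral-perturbation input has been set up beforehand; the only points requiring a little care are checking that the $O(1)$ and $o(1)$ remainders are uniform in the operator norm, and that Proposition \ref{spectreperturbationstar} (whose proof is only indicated as "the same as in \cite{BP}") really supplies the uniform separation of the dominant eigenvalue needed to control $(\mathrm{Id}-\ot_z^*)^{-1}$ on $\ker\Pi_z^*$ — which is precisely the analogue of what was used in Proposition \ref{controlQ}. So the main obstacle, such as it is, was already disposed of in establishing the quasi-compactness and spectral gap of the extended operator (Propositions \ref{propriétésopeétendu} and \ref{spectreperturbationstar}) together with the local expansion \ref{localexp2}.
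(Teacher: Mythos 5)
Your proposal is correct and follows exactly the route the paper intends: the text only remarks that Proposition \ref{controlQstar} is proved ``the same'' way as Proposition \ref{controlQ}, namely via the decomposition $Q_z^*=(1-\lambda_z^*)^{-1}\Pi_z^*+(\mathrm{Id}-\ot_z^*)^{-1}(\mathrm{Id}-\Pi_z^*)$, the uniform spectral gap of Proposition \ref{spectreperturbationstar}, the regularity of $z\longmapsto\ot_z^*$ (Proposition \ref{continuityoftransfertstar}) to absorb $(1-\lambda_{\delta+it}^*)^{-1}(\Pi_{\delta+it}^*-\Pi_\delta^*)$ into the $O(1)$ term, and the local expansion of Proposition \ref{localexp2}. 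Your identification of $\Pi_0$ with $\Pi_\delta^*$ and your attention to the uniformity of the remainders match the paper's (implicit) argument, so nothing further is needed.
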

As a direct consequence:
\begin{coro}\label{integrabilityreQstar}
The function $t\longmapsto\re{Q_{\delta+it}^*}$ is integrable at $0$.
\end{coro}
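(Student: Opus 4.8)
\textbf{Corollary \ref{integrabilityreQstar} (plan).} The statement to prove is that $t\longmapsto\re{Q_{\delta+it}^*}$ is integrable in a neighbourhood of $0$, where $Q_z^*=(\mathrm{Id}-\ot_z^*)^{-1}$. The plan is to copy verbatim the proof of Corollary \ref{integrabilityreQ}, using the extended objects in place of the unextended ones: Proposition \ref{controlQstar} plays the role of Proposition \ref{controlQ}, and Proposition \ref{localexp2} (for $\beta=1$) plays the role of Proposition \ref{localexp}. All the analytic input is already in place, so no new idea is needed; the work is purely a matter of substituting $Q^*$, $\lambda^*$, $\Pi^*$ for $Q$, $\lambda$, $\Pi$.

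Concretely, I would first invoke Proposition \ref{controlQstar} to split, for $t$ close to $0$,
$$\re{Q_{\delta+it}^*}=\re{Q_{\delta+it}^*-(1-\lambda_{\delta+it}^*)^{-1}\Pi_\delta^*}+\re{(1-\lambda_{\delta+it}^*)^{-1}}\Pi_\delta^*,$$
noting that the first summand is bounded by a constant $C>0$ uniformly near $0$, hence integrable there. For the second summand, write
$$\re{(1-\lambda_{\delta+it}^*)^{-1}}=\dfrac{\re{1-\lambda_{\delta+it}^*}}{\left|1-\lambda_{\delta+it}^*\right|^2},$$
and plug in the local expansions of Proposition \ref{localexp2} for $\beta=1$: from $\re{1-\lambda_{\delta+it}^*}=\frac{\pi}{2}\Cg|t|L(1/|t|)(1+o(1))$ and $|1-\lambda_{\delta+it}^*|\sim\Cg|t|\tilde L(1/|t|)$ one gets
$$\re{(1-\lambda_{\delta+it}^*)^{-1}}=\dfrac{\pi}{2\Cg}\dfrac{L\left(\frac{1}{|t|}\right)}{|t|\tilde L\left(\frac{1}{|t|}\right)^2}(1+o(1)).$$

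Finally I would check integrability near $0$ of this last function exactly as in the proof of Corollary \ref{integrabilityreQ}: for any $\varepsilon>0$, the change of variable $y=1/t$ gives
$$\int_{-\varepsilon}^{\varepsilon}\dfrac{L\left(\frac{1}{|t|}\right)}{|t|\tilde L\left(\frac{1}{|t|}\right)^2}\,\dd t=2\int_{0}^{\varepsilon}\dfrac{L\left(\frac{1}{t}\right)}{t\tilde L\left(\frac{1}{t}\right)^2}\,\dd t=\int_{\frac{1}{\varepsilon}}^{+\infty}\dfrac{L(y)}{y\tilde L(y)^2}\,\dd y=\left[\dfrac{-1}{\tilde L(y)}\right]_{\frac{1}{\varepsilon}}^{+\infty}=\dfrac{1}{\tilde L\left(\frac{1}{\varepsilon}\right)}<+\infty,$$
using $(\tilde L'(y)=L(y)/y)$ together with $\lim_{x\to+\infty}\tilde L(x)=+\infty$ (which holds in our setting of infinite Bowen-Margulis measure, by Remark \ref{remarquebeta=1}). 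There is essentially no obstacle here: the only point requiring a line of care is that the $o(1)$ in the expansion of $\re{(1-\lambda_{\delta+it}^*)^{-1}}$ is uniform near $0$, so that the bound $\re{(1-\lambda_{\delta+it}^*)^{-1}}\preceq L(1/|t|)/(|t|\tilde L(1/|t|)^2)$ is valid on a fixed punctured neighbourhood of $0$, which is exactly what Propositions \ref{controlQstar} and \ref{localexp2} deliver.
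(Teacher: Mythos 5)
Your proposal is correct and follows exactly the route the paper intends: Corollary \ref{integrabilityreQstar} is stated there as a direct consequence of Proposition \ref{controlQstar}, proved verbatim as Corollary \ref{integrabilityreQ} with the starred objects $Q^*$, $\lambda^*$, $\Pi^*$ and the local expansion of Proposition \ref{localexp2}. Your computation even fixes the small slip in the paper's display for Corollary \ref{integrabilityreQ}, where the antiderivative of $\frac{L(y)}{y\tilde L(y)^2}$ should indeed be $-\frac{1}{\tilde L(y)}$ (not $-\frac{1}{\tilde L(y)^2}$), consistently with the stated value $\frac{1}{\tilde L(1/\varepsilon)}$.
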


\section{Theorem C: asymptotic of the orbital counting function}

Let us restate Theorem C. 
\begin{thmC}
Let $\G$ be a Schottky group satisfying the assumptions $(H_\beta)$ for some $\beta\in]0,1]$. Then, as 
$R\longrightarrow+\infty$,
\begin{align*}
&\bullet\ \mathrm{N}_{\G}(\oo,R)\sim C\dfrac{e^{\delta R}}{R^{1-\beta}L(R)}\ \text{with}\ 
C=\dfrac{\sin(\beta\pi)}{\pi}\dfrac{h^*(x_0)}{\delta\Cg}\ \text{if}\ \beta\in]0,1[;\\
&\bullet\ \mathrm{N}_{\G}(\oo,R)\sim C'\dfrac{e^{\delta R}}{\tilde{L}(R)}\ \text{with}\ 
C'=\dfrac{h^*(x_0)}{\delta\Cg}\ \text{if}\ \beta=1.
\end{align*}
\end{thmC}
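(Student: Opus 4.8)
The plan is to follow the blueprint of the proof of Theorem~A, replacing the special-flow set-up by the extended coding of Section~8. First I would express $\mathrm{N}_\G(\oo,R)$ in terms of the extended transfer operators. By the decomposition \eqref{decompo2}, for any $\g=\alpha_1\dots\alpha_k\in\G$ with $i(\g)\ne l(\g)$ one has $\dhy(\oo,\g.\oo)=S_k\gol(\g.x_0)$, and the set of such words is parametrised by the $\T^k$-preimages of $x_0$ inside $\G.x_0$. Hence, up to a polynomial error coming from the finitely many ``bad'' families (powers of a single generator, and words with $i(\g)=l(\g)$, handled by conjugation as in Section~7),
\[
\mathrm{N}_\G(\oo,R)\ \overset{\cdot}{=}\ \sum_{k\ge 1}\ \sum_{(\T^*)^k y=x_0}\un_{[0,R]}\bigl(S_k\gol(y)\bigr)
\ =\ \sum_{k\ge 1}\ \bigl((\ot_\delta^*)^k u_R\bigr)(x_0),
\]
where $u_R(t)=e^{\delta t}\un_{[0,R]}(t)$ and the weight $e^{-\delta S_k\gol(y)}$ is absorbed into $\ot_\delta^*$. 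This is the exact analogue of the quantity $M(R;\varphi\otimes u,\psi\otimes v)$ of Section~5, with $\psi\otimes v$ replaced by evaluation at $x_0$ and $\varphi\otimes u$ by $\un_{\tLL}\otimes u_R$; note that $u_R$ is \emph{not} compactly supported, so a preliminary truncation $u_R\approx\sum_p e^{\delta\eta p}\un_{[\eta p,\eta(p+1))}$ (as in Section~7) converts the problem into summing over compactly supported building blocks, on which Fourier inversion applies. The key analytic inputs are now available: Proposition~\ref{propriétésopeétendu} gives the spectral gap for $\ot_\delta^*$ with dominant eigenfunction $h^*$ (and $\sigma^*=\sigma_\oo$, $h^*(x_0)$ as in Remark~\ref{valeurhstarenxzero}), Proposition~\ref{spectreperturbationstar} gives the perturbed spectral picture, Proposition~\ref{continuityoftransfertstar} the modulus of continuity of $z\mapsto\ot_z^*$, and Proposition~\ref{localexp2} the local expansion of $\lambda_{\delta+it}^*$ with the \emph{same} constant $\Cg$ as in Theorem~A.

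For $\beta\in]0,1[$ I would then run the argument of Sections~5.2.1--5.2.3 verbatim. Writing $N_k(R):=((\ot_\delta^*)^k u_R)(x_0)=\tfrac{1}{2\pi}\int_\R e^{itR}(\ot_{\delta+it}^*)^k\un_{\tLL}(x_0)\,\widehat{u_R}(t)\,\dd t$ after the usual truncation, one proves the two estimates analogous to Propositions~A.1 and A.2: a local limit theorem $a_kN_k(R)=\tfrac{1}{\cg}\Psi_\beta\!\bigl(\tfrac{R}{\cg a_k}\bigr)h^*(x_0)+o_k(1)$ uniformly for $R\le Ka_k$ (using the spectral decomposition $\ot_{\delta+it}^*=\lambda_{\delta+it}^*\Pi_{\delta+it}^*+R_{\delta+it}^*$, Proposition~\ref{localexp2}, and dominated convergence with the explicit integrable majorant $l(t)$ already built in the proof of Lemma~\ref{butmelange}), and a tail bound $|N_k(R)|\preceq k\,L(R)R^{-1-\beta}$ for $R\ge Ka_k$ (this reduces, exactly as in Proposition~A.2, to the inequality \eqref{butA2}, which is a statement purely about $\G(k)$ and the cocycle $b^*$, controlled via Lemmas~\ref{A21} and \ref{A22} and Assumption~$(S)$; Proposition~\ref{contfinal} supplies the needed equi-Lipschitz regularity of $b^*$ on $\tLL$). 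Summing over $k$ and decomposing $\sum_k = M^1+M^2+M^3$ as in Section~5.2.1, the Karamata/Potter bookkeeping with the measures $\mu_R=\sum_{0<R/a_k\le K}D_{R/a_k}$ gives, after letting $R\to+\infty$ then $K\to+\infty$ and using $\int_0^\infty z^{-\beta}\Psi_\beta(z)\,\dd z=\tfrac{\sin(\beta\pi)}{\beta\pi}$, the asymptotic
\[
R^{1-\beta}L(R)\,\mathrm{N}_\G(\oo,R)\ \longrightarrow\ \frac{\beta h^*(x_0)}{\Cg}\cdot\frac{1}{\delta}\cdot\frac{\sin(\beta\pi)}{\beta\pi}\ =\ \frac{\sin(\beta\pi)}{\pi}\,\frac{h^*(x_0)}{\delta\Cg},
\]
where the extra factor $\tfrac1\delta$ arises from the truncation sum $\sum_{p\le P}\tfrac{e^{\delta\eta p}}{p}\sim\tfrac{e^{\delta\eta P}}{\delta\eta P}$ exactly as in Proposition~\ref{propintermediairegeodfermees}.

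For $\beta=1$ the proof follows Section~6: one symmetrises $u_R$, introduces $M_\xi^{\mathrm{sym}}(R)$ for $\xi>\delta$ where the Poincaré series converges, writes it via $\re{Q_{\xi+it}^*}$, and passes to the limit $\xi\searrow\delta$ using Proposition~\ref{controlQstar}, Corollary~\ref{integrabilityreQstar}, and the domination argument built on $b_\kappa$ and $h_\kappa$ (Propositions~\ref{localexpb1}, \ref{propositioncomportement1moinslambda}, \ref{fonctiondedominationpourrelle}), all of which carry over since $\lambda_{\delta+it}^*$ has the same local behaviour as $\lambda_{\delta+it}$. The dominant contribution, as in Proposition~\ref{butmixingbeta1utensphi}, comes from the term $\re{(1-\lambda_{\delta+it}^*)^{-1}}\Pi_\delta^*$ near $t=0$ and yields $\tilde L(R)\,\mathrm{N}_\G(\oo,R)\to \tfrac{1}{\delta\Cg}h^*(x_0)$. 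I expect the main obstacle to be purely bookkeeping rather than conceptual: keeping track, through the truncation $u_R\rightsquigarrow\sum_p e^{\delta\eta p}\un_{[\eta p,\eta(p+1))}$, of the precise constant (in particular the factor $\tfrac1\delta$ and the mass $h^*(x_0)$, which must be seen not to depend on the auxiliary point $x_0$ by Remark~\ref{valeurhstarenxzero}), and verifying that the non-influent and $i(\g)=l(\g)$ contributions are genuinely $o\!\bigl(e^{\delta R}R^{-1+\beta}L(R)^{-1}\bigr)$ — for the hyperbolic-generator powers this is immediate since their count grows only polynomially times $e^{\frac{\delta R}{2}l_j/\,\cdot}$, but it should be stated carefully.
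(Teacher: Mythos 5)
Your overall route is the paper's: encode $\mathrm{N}_\G(\oo,R)$ through the extended operator $\ot_z^*$ evaluated at $x_0$, prove the exact analogues of Propositions A.1 and A.2 (these are precisely Propositions C.1 and C.2 of the paper), sum over $k$ with the Karamata--Potter bookkeeping, and for $\beta=1$ pass through the symmetrized resolvent $Q_z^*$. Your treatment of the non-compactly supported $u_R$ by discretization into blocks is equivalent to what the paper actually does: it proves the asymptotic of $W(R,u)$ for compactly supported $u$ and then counts in annuli of width $\varepsilon$, which is where the factor $\tfrac1\delta$ appears, via $\int_1^R e^{\delta x}x^{\beta-1}L(x)^{-1}\dd x\sim e^{\delta R}/(\delta R^{1-\beta}L(R))$.

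One step of your reduction, however, is wrong as written, although it is also unnecessary. You restrict \eqref{decompo2} to words with $i(\g)\neq l(\g)$ and claim that the excluded elements (together with single-factor elements) are ``handled by conjugation as in Section 7'' and contribute a polynomial error. Orbital counting is not a class function: $\dhy(\oo,h\g h^{-1}.\oo)$ differs from $\dhy(\oo,\g.\oo)$ by an unbounded amount, so the conjugation trick of the closed-geodesics section is irrelevant here; and the words with $i(\g)=l(\g)$ are exponentially many --- their count up to distance $R$ is of the same order, up to slowly varying factors, as the main term $e^{\delta R}/(R^{1-\beta}L(R))$, and even a single influent parabolic factor already contributes $\asymp e^{\delta R}L(R)/R^{1+\beta}$ elements by $(S'')$ --- so discarding them would destroy the asymptotic. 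The correct observation, which the paper uses, is that no restriction is needed: $b^*(\g,x_0)=\dhy(\oo,\g.\oo)$ and the cocycle property make \eqref{decompo1}--\eqref{decompo2} valid for every $\g\in\G^*$, and since $x_0\notin D$ one has $\bigl(\ot_{\delta+it}^*\bigr)^k\un_{\tLL}(x_0)=\sum_{\g\in\G(k)}e^{-(\delta+it)\dhy(\oo,\g.\oo)}$, i.e. the operator iterate at $x_0$ already runs over all of $\G(k)$, which is exactly the paper's $W_k(R,u)$ after Fourier inversion. With that correction (and noting the minor slip that for the orbital problem the block masses decay like $(\eta p)^{\beta-1}/L(\eta p)$ rather than $1/p$ --- the factor $\tfrac1\delta$ still arises from summing $e^{\delta\eta p}$ against a regularly varying sequence), your sketch coincides with the paper's proof, including the identification of the constants via $h^*(x_0)$ and $\Cg$.
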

\noindent
We write
\begin{align*}\mathrm{N}_{\G}(\oo,R)= &
e^{\delta R}\sum\limits_{\g\in\G^*}e^{-\delta\dhy(\oo,\g.\oo)}u(\dhy(\oo,\g.\oo)-R)+1\\
= & e^{\delta R}\sum\limits_{k\geqslant1}\sum\limits_{\g\in\G(k)}e^{-\delta\dhy(\oo,\g.\oo)}u(\dhy(\oo,\g.\oo)-R)+1\\
\sim & e^{\delta R}W(R,u),
\end{align*}
\noindent
where $u(x)=e^{\delta x}\un_{\R^-}(x)$ and 
$W(R,u)=\sum\limits_{k\geqslant1}W_k(R,u)=\sum_{\g\in\G(k)}e^{-\delta\dhy(\oo,\g.\oo)}u(\dhy(\oo,\g.\oo)-R)$. To prove Theorem C, it 
is thus sufficient to prove the following 
\begin{prop}\label{butorbital}
For any function $u\ :\ \R\longrightarrow\R$ piecewise continuous and satisfying $\int_\R u(x)\dd x>0$, as $R\longrightarrow+\infty$,
\begin{align*}
&\bullet\ W(R,u)\sim\dfrac{C}{R^{1-\beta}L(R)}\int_{\R}u(x)\dd x\ \text{if}\ \beta\in]0,1[;\\
&\bullet\ W(R,u)\sim\dfrac{C'}{\tilde{L}(R)}\int_{\R}u(x)\dd x\ \text{if}\ \beta=1,
\end{align*}
\noindent
where the constants $C,C'$ are precised in the above Theorem C.
\end{prop}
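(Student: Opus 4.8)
The plan is to re-run the argument of Section~5 (for $\beta\in]0,1[$) and of Section~6 (for $\beta=1$), replacing the special flow and the operator $\tilde P$ by the extended transfer operators $\ot_z^*$ of Section~8 evaluated at the base point $x_0\in\partial\xx\setminus D$. The entry point is \eqref{decompo2}: if $\g=\alpha_1\cdots\alpha_k\in\G(k)$ then $\dhy(\oo,\g.\oo)=b^*(\g,x_0)=S_k\gol(\g.x_0)$, and since $x_0\notin D$ one has $\un_{\tLL_{l(\g)}^c}(x_0)=1$ for every $\g$, so that $(\ot_z^*)^k\un_{\tLL}(x_0)=\sum_{\g\in\G(k)}e^{-z\dhy(\oo,\g.\oo)}$ for $\re{z}\geqslant\delta$. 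Hence for a test function $u\in\mathscr U$ (Definition~\ref{classedefonctionstest}), Fourier inversion gives
$$W_k(R,u)=\sum_{\g\in\G(k)}e^{-\delta\dhy(\oo,\g.\oo)}u\big(\dhy(\oo,\g.\oo)-R\big)=\frac1{2\pi}\int_\R e^{-itR}\,(\ot_{\delta-it}^*)^k\un_{\tLL}(x_0)\,\widehat u(t)\,\dd t.$$
By the same Stone-type argument used in the proof of Proposition~A.1, it suffices to prove Proposition~\ref{butorbital} for $u\in\mathscr U$, and then recover a general piecewise continuous $u$ with $\int_\R u>0$ — in particular the counting kernel $u(x)=e^{\delta x}\un_{\R^-}(x)$, which is bounded but not compactly supported — by sandwiching $u$ between test functions and truncating the fast-decaying tail.

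For $\beta\in]0,1[$, write $W(R,u)=\sum_{k\geqslant1}W_k(R,u)$ and split according to $R<Ka_k$ or $R\geqslant Ka_k$, where $a_k^\beta=kL(a_k)$, exactly as in §5.2. The first ingredient is the analogue of Proposition~A.1 (Proposition~C.1): uniformly in $K\geqslant2$ and $R\in[0,Ka_k]$, as $k\to+\infty$,
$$W_k(R,u)=\frac1{\cg a_k}\left(\Psi_\beta\!\left(\frac{R}{\cg a_k}\right)h^*(x_0)\,\widehat u(0)+o_k(1)\right),$$
where $h^*(x_0)=\lim_k(\otds)^k\un_{\tLL}(x_0)$ is the value of the spectral projection at $x_0$ (Remark~\ref{valeurhstarenxzero}). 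Its proof copies that of Lemma~\ref{butmelange}: cut the Fourier integral at $[-\varepsilon,\varepsilon]$, discard $[-\varepsilon,\varepsilon]^c$ and the subdominant term $(R^*_{\delta-it})^k$ using the spectral gap of Proposition~\ref{spectreperturbationstar}, bound $\Pi_{\delta-it}^*-\Pi_\delta^*$ via Proposition~\ref{continuityoftransfertstar}, and in the leading term substitute $t\mapsto t/a_k$ and insert the local expansion of $\lambda_{\delta+it}^*$ from Proposition~\ref{localexp2} — which carries the \emph{same} constant $\Cg$ as $\lambda_{\delta+it}$ — to Fourier-invert the stable density $\Psi_\beta$. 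The second ingredient is the analogue of Proposition~A.2 (Proposition~C.2): for $R\geqslant a_k$ one has $|W_k(R,u)|\leqslant Ck\,L(R)R^{-1-\beta}\,|u|_\infty$. Since $b^*(\g,x_0)=\dhy(\oo,\g.\oo)$ and the equi-Lipschitz and distance-comparison estimates of §5.2.3 extend to $\tLL$ by Proposition~\ref{contfinal} and property~(ii) of $b^*$, this reduces to \eqref{butA2}, whose proof via the decomposition $\mathfrak J=\mathfrak J_1\cup\mathfrak J_2\cup\mathfrak J_3\cup\mathfrak J_4$ applies verbatim. Assembling as in §5.2.1 — weak convergence of $\mu_R=\sum_{0<R/a_k\leqslant K}D_{R/a_k}$ towards $\beta z^{-\beta-1}\dd z$, Karamata's and Potter's lemmas for the $o_k$- and $R\geqslant Ka_k$-errors, and $\int_0^{+\infty}z^{-\beta}\Psi_\beta(z)\dd z=\frac{\sin(\beta\pi)}{\beta\pi}$ after $K\to+\infty$ — yields $W(R,u)\sim\frac{\sin(\beta\pi)}{\pi\Cg}\,h^*(x_0)\,\widehat u(0)\,R^{\beta-1}L(R)^{-1}$ for $u\in\mathscr U$.

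For $\beta=1$ one follows §6: since the Poincaré series converges at $\re{z}>\delta$, $\sum_{k\geqslant0}(\ot_z^*)^k\un_{\tLL}(x_0)=Q_z^*\un_{\tLL}(x_0)$ with $Q_z^*=(\mathrm{Id}-\ot_z^*)^{-1}$; one passes to $z=\delta$ through the symmetrised operators at $\xi=\delta+\kappa$ and lets $\kappa\searrow0$ using Proposition~\ref{controlQstar} and Corollary~\ref{integrabilityreQstar}. The dominant contribution comes from a neighbourhood of $t=0$, where $\re{(1-\lambda_{\delta+it}^*)^{-1}}=\frac{\pi}{2\Cg}\,\frac{L(1/|t|)}{|t|\,\tilde L(1/|t|)^2}(1+o(1))$ by Proposition~\ref{localexp2}; integrating this as in the proof of Proposition~\ref{butmixingbeta1utensphi} gives $W(R,u)\sim\frac1{\Cg}h^*(x_0)\widehat u(0)\,\tilde L(R)^{-1}$. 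Specialising both cases to $u(x)=e^{\delta x}\un_{\R^-}(x)$, for which $\widehat u(0)=\int_\R u=1/\delta$, and using $\mathrm N_\G(\oo,R)\sim e^{\delta R}W(R,u)$, produces the constants $C$ and $C'$ of Theorem~C. The steps I expect to need the most care are, first, checking that \eqref{butA2} (proven for $b(\g,x)$, $x\in\LL$) transfers to $b^*(\g,x_0)$ — which is in fact transparent since $b^*(\g,x_0)=\dhy(\oo,\g.\oo)$ exactly — and, second, the approximation passing from compactly supported test functions to the non-compactly-supported kernel $e^{\delta x}\un_{\R^-}$: a two-sided approximation handles the bulk, and the tail $\{x\leqslant-T\}$ is absorbed by Proposition~C.2 summed over $k$, with error $O\big(e^{-\delta T}R^{\beta-1}L(R)^{-1}\big)$ (resp.\ $O\big(e^{-\delta T}\tilde L(R)^{-1}\big)$).
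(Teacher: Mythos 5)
Your proposal reproduces the paper's own proof of this proposition essentially step by step: the identity $(\ot^*_{z})^k\un_{\tLL}(x_0)=\sum_{\g\in\G(k)}e^{-z\dhy(\oo,\g.\oo)}$ coming from $b^*(\g,x_0)=\dhy(\oo,\g.\oo)$, the local limit theorem (Proposition C.1, proved exactly like A.1 from Propositions \ref{spectreperturbationstar}, \ref{continuityoftransfertstar} and \ref{localexp2} plus Stone's argument on $\mathscr{U}$), the tail estimate C.2 reduced to \eqref{butA2}, the same Karamata/Potter assembly with the measures $\mu_R$, and for $\beta=1$ the symmetrization and resolvent argument with $Q_z^*$, Proposition \ref{controlQstar} and Corollary \ref{integrabilityreQstar} — this is precisely Section 9. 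The only divergence is peripheral and lies outside the statement itself: the paper never applies the proposition to the non-compactly-supported kernel $e^{\delta x}\un_{\R^-}(x)$, but deduces Theorem C by applying it to the compactly supported annulus kernels $t\longmapsto e^{\delta t}\un_{[0,\varepsilon]}(t)$ and letting $\varepsilon\longrightarrow0$, so the tail-truncation step you flag as delicate (and which, as sketched, would need more care since C.2 assumes compact support and only covers $R\geqslant Ka_k$) is not needed.
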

\begin{proof}[Proof of Theorem C]
Let us explain how Theorem C follows from Proposition \ref{butorbital} for $\beta\in]0,1[$. Up to a finite number of terms, we have 
$$\mathrm{N}_{\G}(\oo,R)=\sharp\{\g\in\G\ |\ 1\leqslant\dhy(\oo,\g.\oo)\leqslant R\}.$$
\noindent
Fix $\varepsilon>0$. Then, for all $R>0$, one gets $m(R)\leqslant \mathrm{N}_{\G}(\oo,R)\leqslant M(R)$ where 
$$m(R)=\sum\limits_{q=0}^{\left[\frac{R}{\varepsilon}\right]-2}
\sharp\{\g\in\G\ |\ 1+q\varepsilon\leqslant\dhy(\oo,\g.\oo)< 1+(q+1)\varepsilon\}$$
\noindent
and
$$M(R)=\sum\limits_{q=0}^{\left[\frac{R}{\varepsilon}\right]-1}
\sharp\{\g\in\G\ |\ 1+q\varepsilon\leqslant\dhy(\oo,\g.\oo)< 1+(q+1)\varepsilon\}.$$
\noindent
Applying Proposition \ref{butorbital} with $u\ :\ t\longmapsto e^{\delta t}\un_{[0,\varepsilon]}(t)$ yields
$$
m(R)\sim C\dfrac{e^{\delta\varepsilon}-1}{\delta\varepsilon}\sum\limits_{q=0}^{\left[\frac{R}{\varepsilon}\right]-2}
\varepsilon\dfrac{e^{\delta(1+q\varepsilon)}}{(1+q\varepsilon)^{1-\beta}L(1+q\varepsilon)}
\underset{\varepsilon\longrightarrow0}{\longrightarrow}C\int_1^R\dfrac{e^{\delta x}}{x^{1-\beta}L(x)}\dd x
$$
\noindent
and
$$
M(R)\sim C\dfrac{e^{\delta\varepsilon}-1}{\delta\varepsilon}\sum\limits_{q=0}^{\left[\frac{R}{\varepsilon}\right]-1}
\varepsilon\dfrac{e^{\delta(1+q\varepsilon)}}{(1+q\varepsilon)^{1-\beta}L(1+q\varepsilon)}
\underset{\varepsilon\longrightarrow0}{\longrightarrow}C\int_1^R\dfrac{e^{\delta x}}{x^{1-\beta}L(x)}\dd x
$$
\noindent
Finally $\mathrm{N}_{\G}(\oo,R)\sim C\frac{e^{\delta R}}{\delta R^{1-\beta}L(R)}$. The same proof holds for $\beta=1$ using the second estimate of Proposition \ref{butorbital}.
\end{proof}
\subsection{Proposition \ref{butorbital} for $\boldsymbol{\beta\in]0,1[}$}
%
%
%
%
We follow step by step the proof of Theorem A. Let $K\geqslant2$ and $(a_k)_{k\geqslant1}$ be a sequence satisfying $kL(a_k)=a_k^{\beta}$. We first admit the following propositions.
\begin{propC1}\label{C1}
Let $u\ :\ \R\longrightarrow\R$ a function with compact support. Uniformly in $K\geqslant2$ and $R\in[0,Ka_k]$, one gets when $k\longrightarrow+\infty$
$$W_k(R,u)=\dfrac{1}{\cg a_k}\left(C_0\Psi_\beta\left(\dfrac{R}{\cg a_k}\right)\widehat{u}(0)+o_k(1)\right),$$
\noindent
where $\Psi_\beta$ is the density of the fully asymmetric stable law with parameter $\beta$, 
$\cg=\Cg^{\frac{1}{\beta}}$ and 
$C_0=h^*(x_0)=\lim\limits_{k\rightarrow+\infty}\sum\limits_{\g\in\G(k)}e^{-\delta\dhy(\oo,\g.\oo)}$.
\end{propC1}
\begin{propC2}\label{C2}
Let $u\ :\ \R\longrightarrow\R$ be a function with compact support. If $R\geqslant a_k$, there exists $C>0$ which only depends on $u$ such that
$$\left|W_k(R,u)\right|\leqslant Ck\dfrac{L(R)}{R^{1+\beta}}|u|_{\infty}.$$
\end{propC2}
Let us now explain how Proposition \ref{butorbital} follows from Propositions C.1 and C.2. Let $u\ :\ \R\longrightarrow\R$ be a function 
with compact support. We want to estimate $W(R,u)=\sum_{k\geqslant1}W_k(R,u)$. By Proposition C.1, we may decompose this quantity into $W^1(R,u)+W^2(R,u)+W^3(R,u)$ where
\begin{align*}
& W^1(R,u):= \dfrac{C_0\widehat{u}(0)}{\cg}\sum\limits_{k\ |\ R<Ka_k}\dfrac{1}{a_k}\Psi_\beta\left(\dfrac{R}{\cg a_k}\right),\\
& W^2(R,u):= \dfrac{1}{\cg}\sum\limits_{k\ |\ R<Ka_k}\dfrac{o_k(1)}{a_k}\\
\text{and}\ & W^3(R,u):=\sum\limits_{k\ |\ R\geqslant Ka_k}W_k(R,u).
\end{align*}
\begin{itemize}
 \item[a)] {\it Contribution of $W^1(R,u)$.}  Following \cite{Gou}, we introduce the measure
$\mu_R=\sum\limits_{0<\frac{R}{a_k}\leqslant K} D_{\frac{R}{a_k}}$ defined on the interval $]0,K]$. One gets
$$\sum\limits_{R<Ka_k}\dfrac{1}{\cg a_k}\Psi_\beta\left(\dfrac{R}{\cg a_k}\right)=
\dfrac{1}{R}\displaystyle{\int_0^K}\dfrac{z}{\cg}\Psi_\beta\left(\dfrac{z}{\cg}\right)\dd\mu_R(z).$$
\noindent
When $R\longrightarrow+\infty$, this measure satisfies
$$R^{-\beta}L(R)\mu_R([x,y])\sim\displaystyle{\int_x^y}\beta z^{-\beta-1}\dd z,$$
\noindent
so that
\begin{align}\label{Dominantorbital1}
R^{1-\beta}L(R)W^1(R,u)\sim & 
\ \dfrac{C_0\beta\widehat{u}(0)}{\cg}\displaystyle{\int_0^K}z^{-\beta}\Psi_\beta\left(\dfrac{z}{\cg}\right)\dd z.\nonumber\\
\sim & \dfrac{\beta\widehat{u}(0)}{\Cg}\displaystyle{\int_0^{\frac{K}{\cg}}}z^{-\beta}\Psi_\beta(z)\dd z\ \text{with}\ \Cg=\cg^{\beta}.
\end{align}
\item[b)] {\it Contribution of $W^2(R,u)$.} This quantity is similar to $M^2(R;\varphi\otimes u,\psi\otimes v)$ 
appearing in the proof of Theorem A; it satisfies  
\begin{equation}\label{Dominantorbital2}
R^{1-\beta}L(R)W^2(R,u)=o_K(1)\ \text{where}\ \lim\limits_{R\longrightarrow+\infty}o_K(1)=0\ \text{for any fixed}\ K.
\end{equation}
\item[c)] {\it Contribution of $W^3(R,u)$.} As for $M^3(R;\varphi\otimes u,\psi\otimes v)$ appearing in the proof of Theorem A, we get
\begin{equation}\label{negligeableorbital}
R^{1-\beta}L(R)W^3(R,u)\leqslant CK^{-\beta}.
\end{equation}
\end{itemize}
Combining \eqref{Dominantorbital1}, \eqref{Dominantorbital2} and \eqref{negligeableorbital}, we obtain
\begin{align*}
R^{1-\beta}L(R)W(R,u)=\dfrac{\beta\widehat{u}(0)C_0}{\Cg}\displaystyle{\int_0^{\frac{K}{\cg}}}z^{-\beta}\psi(z)\dd z(1+o(1)) + 
o_K(1) + O(K^{-\beta}),
\end{align*}
\noindent
with $\lim\limits_{R\longrightarrow+\infty}o(1)=0$. Letting $R\longrightarrow+\infty$ and 
choosing $K$ large enough yield
$$R^{1-\beta}L(R)W(R,u)\sim\dfrac{\beta\widehat{u}(0)C_0}{\Cg}\displaystyle{\int_0^{+\infty}}z^{-\beta}\Psi_\beta(z)\dd z$$
\noindent
and Proposition \ref{butorbital} follows with $C=\frac{C_0}{\Cg}\frac{\sin(\beta\pi)}{\pi}.$
\subsubsection{Proof of Proposition C.1}
Let $K\geqslant2$ and $R>0$ be fixed and choose $k\in\N$ such that $Ka_k\geqslant R$. It is sufficient to prove that, as $k\longrightarrow+\infty$,
\begin{equation}\label{convergenceorbitaleC1}
a_kW_k(R,u)-\dfrac{h^*(x_0)}{\cg}\Psi_{\beta}\left(\dfrac{R}{\cg a_k}\right)\widehat{u}(0)\longrightarrow0,
\end{equation}
\noindent 
for all $u\in\mathscr{U}$, where the set of test functions $\mathscr{U}$ was defined in \ref{classedefonctionstest}. Let us 
first show that the quantity $W_k(R,u)$ is finite for any $u\in\mathscr{U}$. The Fourier inverse formula implies
\begin{align*}
\left|W_k(R,u)\right|\leqslant & \dfrac{1}{2\pi}\int_{\R}\left|e^{itR}\left(\ot_{\delta+it}^*\right)^k\un_{\tLL}(x_0)\widehat{u}(t)\dd t\right|\\
\leqslant & \dfrac{1}{2\pi}\left(\ot_{\delta}^*\right)^k\un_{\tLL}(x_0)\int_{\R}\widehat{u}(t)\dd t\\
\preceq & \left|\left|\widehat{u}\right|\right|_1h^*(x_0)<+\infty.
\end{align*}
\noindent
Let us now prove \eqref{convergenceorbitaleC1}. To simplify notations, we will omit the symbol $^*$. The Fourier inverse formula allows us to split 
$$
a_kW_k(R,u)-\dfrac{h(x_0)}{\cg}\Psi_{\beta}\left(\dfrac{R}{\cg a_k}\right)\widehat{u}(0)
$$
\noindent
into $K_1(k)+K_2(k)$ where 
$$K_1(k):=\dfrac{a_k}{2\pi}\displaystyle{\int\limits_{[-\varepsilon,\varepsilon]^c}}e^{itR}\ot_{\delta+it}^k\un_{\tLL}(x_0)\widehat{u}(t)\dd t$$
\noindent
and
\begin{align*}
K_2(k):= & \dfrac{a_k}{2\pi}\displaystyle{\int\limits_{-\varepsilon}^{\varepsilon}}e^{itR}\ot_{\dit}^k\un_{\tLL}(x_0)\widehat{u}(t)\dd t-
\dfrac{1}{2\pi}\displaystyle{\int\limits_{\R}}e^{it\frac{R}{a_k}}g_{\beta}(\cg t)\widehat{u}(0)h(x_0)\dd t\\
= & \dfrac{1}{2\pi}\displaystyle{\int\limits_{-\varepsilon a_k}^{\varepsilon a_k}}e^{it\frac{R}{a_k}}\ot_{\delta+i\frac{t}{a_k}}^k\un_{\tLL}(x_0)\widehat{u}\left(\dfrac{t}{a_k}\right)\dd t-
\dfrac{1}{2\pi}\displaystyle{\int\limits_{\R}}e^{it\frac{R}{a_k}}g_{\beta}(\cg t)\widehat{u}(0)h(x_0)\dd t,
\end{align*}
where $\varepsilon>0$ satisfies the conclusions of Proposition \ref{spectreperturbationstar}. The spectral properties of $\ot_z$ given in the latter
proposition and the fact that $\widehat{u}$ has compact support imply that $||\ot_{\delta+it}^k||\preceq\rho^k$, for $0<\rho<1$ which
only depends on the support of $u$. Therefore $\left|K_1(k)\right|\preceq\left|\left|\widehat{u}\right|\right|_{\infty}\rho^ka_k\longrightarrow0$ as $k\longrightarrow+\infty$, 
uniformly in $K$ and $R$. 

We now deal with $K_2(k)$. The spectral decomposition of 
$\ot_{\delta+i\frac{t}{a_k}}$ yields
$$\ot_{\delta+i\frac{t}{a_k}}^k\un_{\tLL}=\lambda_{\delta+i\frac{t}{a_k}}^k\Pi_{\delta+i\frac{t}{a_k}}\un_{\tLL}+R_{\delta+i\frac{t}{a_k}}^k\un_{\tLL}x_0$$ 
\noindent
with $\mathrm{spec}(R_{\delta+i\frac{t}{a_k}})\subset B(0,\rho_\varepsilon)$ and $0<\rho_\varepsilon<1$. We decompose $K_2(k)$ as $L_1(k)+L_2(k)+L_3(k)$ where
$$L_1(k)=\dfrac{1}{2\pi}\int\limits_{-\varepsilon a_k}^{\varepsilon a_k}e^{it\frac{R}{a_k}}R_{\delta+i\frac{t}{a_k}}^k\un_{\tLL}(x_0)\widehat{u}\left(\dfrac{t}{a_k}\right)\dd t,$$
\noindent
$$L_2(k)=\dfrac{1}{2\pi}\int\limits_{-\varepsilon a_k}^{\varepsilon a_k}e^{it\frac{R}{a_k}}\lambda_{\delta+i\frac{t}{a_k}}^k\left(\Pi_{\delta+i\frac{t}{a_k}}(\un_{\tLL})(x_0)-\Pi_{\delta}(\un_{\tLL})(x_0)\right)\widehat{u}\left(\dfrac{t}{a_k}\right)\dd t$$
\noindent
and
$$L_3(k)=\dfrac{h(x_0)}{2\pi}\int\limits_{-\varepsilon a_k}^{\varepsilon a_k}e^{it\frac{R}{a_k}}\lambda_{\delta+i\frac{t}{a_k}}^k\widehat{u}\left(\dfrac{t}{a_k}\right)\dd t-\dfrac{h(x_0)}{2\pi}\int\limits_{\R}e^{it\frac{R}{a_k}}g_{\beta}(\cg t)\widehat{u}(0)\dd t.$$
\noindent
First $\left|L_1(k)\right|\preceq a_k\rho_\varepsilon^k\left|\left|\widehat{u}\right|\right|_{\infty}\longrightarrow0$ as $k\longrightarrow+\infty$, uniformly in $K$ and $R$. We use the Lebesgue dominated convergence theorem for $L_2(k)$: as for the integral $L_2(k)$ in the proof of Proposition A.1, choosing $\varepsilon>0$ small enough so that it satisfies Remark \ref{sertA1etB1} and using the local expansion of $\lambda_{\delta+it}$ given in Proposition \ref{localexp2}, we may bound from above the integrand of $L_2(k)$ up to a multiplicative constant by
$$l(t)=\left\{\begin{array}{ll}
        &|t|^{\frac{\beta}{2}}e^{-\frac{1}{4}(1-\beta)\G(1-\beta)|\cg t|^{\frac{3\beta}{2}}} \text{if}\ |t|\leqslant1\\
        &|t|^{\frac{3\beta}{2}}e^{-\frac{1}{4}(1-\beta)\G(1-\beta)|\cg t|^{\frac{\beta}{2}}}\ \text{if}\ |t|>1\\
\end{array}\right.,$$
\noindent
The term $L_3(k)$ may be treated similarly as $L_3(k)$ appearing in the proof of Proposition A.1.
\subsubsection{Proof of Proposition C.2}
It suffices to check that there exist constants $C,M>0$ such that
\begin{equation}
\sum\limits_{\underset{\dhy(\oo,\g.\oo)\overset{M}{\sim}R}{\g\in\G(k)}}e^{-\delta\dhy(\oo,\g.\oo)}\leqslant Ck\dfrac{L(R)}{R^{1+\beta}}.
\end{equation}
\noindent
Corollary \ref{busedist} implies that this estimate is a consequence of Proposition A.2.
\subsection{Proposition \ref{butorbital} for $\boldsymbol{\beta=1}$}
As in the proof of Theorem A for $\beta=1$, we will need to symmetrize the quantity $W(R,u)$. We thus define
$$W^{\mathrm{sym}}(R,u):=\sum\limits_{k\geqslant1}\sum\limits_{\g\in\G(k)}e^{-\delta\dhy(\oo,\g.\oo)}\left(u(\dhy(\oo,\g.\oo)-R)+u(-\dhy(\oo,\g.\oo)-R)\right).$$
\noindent
We may notice that $W^{\mathrm{sym}}(R,u)=W(R,u)$ for $R$ large enough, because the function $u$ has compact support. We first study
$$W_{\xi}^{\mathrm{sym}}(R,u):=\sum\limits_{k\geqslant1}\sum\limits_{\g\in\G(k)}e^{-\xi\dhy(\oo,\g.\oo)}\left(u(\dhy(\oo,\g.\oo)-R)+u(-\dhy(\oo,\g.\oo)-R)\right)$$
\noindent
for $\xi>\delta$. From now on, we assume again that $u$ belongs to the set introduced in Definition \ref{classedefonctionstest}. The fact that $W_{\xi}^{\mathrm{sym}}(R,u)$ is finite for any $u\in\mathscr{U}$ will be a consequence of the forthcomming study. For now, let us notice that 
the Fourier inverse formula combined with the convergence of the Poincaré series of $\G$ at $\xi>\delta$ implies
$$W_{\xi}^{\mathrm{sym}}(R,u)=\dfrac{1}{2\pi}\int_{\R}e^{itR}\sum\limits_{k\geqslant1}\left(\left(\ot_{\xi+it}^*\right)^k+\left(\ot_{\xi-it}^*\right)^k\right)(\un_{\tLL})(x_0)\widehat{u}(t)\dd t$$
\noindent
which is finite for any $u\in\mathscr{U}$. Therefore
$$W_{\xi}^{\mathrm{sym}}(R,u)=\dfrac{1}{\pi}\int_{\R}e^{itR}\re{{Q_{\xi+it}^*}}(\un_{\tLL})(x_0)\widehat{u}(t)\dd t-\dfrac{1}{\pi}\int_{\R}e^{itR}\widehat{u}(t)\dd t.$$
\noindent
We want
\begin{equation}\label{expressionpotentielorbitalbeta1}
W^{\mathrm{sym}}(R,u)=\dfrac{1}{\pi}\int_{\R}e^{itR}\re{Q_{\delta+it}^*}(\un_{\tLL})(x_0)\widehat{u}(t)\dd t-\dfrac{1}{\pi}\int_{\R}e^{itR}\widehat{u}(t)\dd t.
\end{equation}
\noindent
Notice that, as in Proposition \ref{convergencexiversdelta},
$$\dfrac{1}{\pi}\int_{\R}e^{itR}\re{Q_{\xi+it}^*}(\un_{\tLL})(x_0)\widehat{u}(t)\dd t\underset{\xi\searrow\delta}{\longrightarrow}
\dfrac{1}{\pi}\int_{\R}e^{itR}\re{Q_{\delta+it}^*}(\un_{\tLL})(x_0)\widehat{u}(t)\dd t.$$
\noindent
Moreover, for a positive function $u$, the monotone convergence theorem implies that $W_{\xi}^{\mathrm{sym}}(R,u)$ tends to 
$W^{\mathrm{sym}}(R,u)$. Since $t\longmapsto\re{Q_{\delta+it}^*}\left(\un_{\tilde{\LL}}\right)(x_0)$ is integrable in $0$, we deduce that $W^{\mathrm{sym}}(R,u)$ is finite for any positive $u$ (which also implies that it is finite for any $u\in\mathscr{U}$). 
Finally
$$W^{\mathrm{sym}}(R,u)=\dfrac{1}{\pi}\int_{\R}e^{itR}\re{Q_{\delta+it}^*}(\un_{\tLL})(x_0)\widehat{u}(t)\dd t-\dfrac{1}{\pi}\int_{\R}e^{itR}\widehat{u}(t)\dd t.$$
\noindent
By Riemann-Lebesgue's Lemma, the term $\int_{\R}e^{itR}\widehat{u}(t)\dd t$ is negligeable with respect to $\frac{1}{\tilde{L}(R)}$. As previously, let us fix $A>0$. We split
$$\dfrac{1}{\pi}\int_{\R}e^{itR}\re{Q_{\delta+it}^*}(\un_{\tLL})(x_0)\widehat{u}(t)\dd t$$
\noindent
into $I_1+I_2$ where  
$$I_1=\dfrac{1}{\pi}\int_{|t|>\frac{A}{R}}e^{itR}\re{Q_{\delta+it}^*}(\un_{\tLL})(x_0)\widehat{u}(t)\dd t$$
\noindent
and
$$I_2=\dfrac{1}{\pi}\int_{|t|\leqslant\frac{A}{R}}e^{itR}\re{Q_{\delta+it}^*}(\un_{\tLL})(x_0)\widehat{u}(t)\dd t.$$
\noindent
We may decompose $I_1$ according to the sign of $t$. Let $J$ be
$$\dfrac{1}{\pi}\int_{t>\frac{A}{R}}e^{itR}\re{Q_{\delta+it}^*}(\un_{\tLL})(x_0)\widehat{u}(t)\dd t.$$
\noindent
Setting $t=y-\frac{\pi}{R}$ in $J$, it follows
$$J=-\dfrac{1}{\pi}\displaystyle{\int_{y>\frac{A+\pi}{R}}}e^{iyR}\re{Q_{\delta+i\left(y-\frac{\pi}{R}\right)}^*}(\un_{\tLL})(x_0)\widehat{u}
\left(y-\dfrac{\pi}{R}\right)\dd y.$$
\noindent
Hence
\begin{align*}
2J = & \dfrac{1}{\pi}\displaystyle{\int_{\frac{A}{R}}^{\frac{A+\pi}{R}}}e^{itR}\re{Q_{\delta+it}^*}(\un_{\tLL})(x_0)\widehat{u}(t)\dd t\\
& +\dfrac{1}{\pi}\displaystyle{\int_{t>\frac{A+\pi}{R}}}e^{itR}\re{Q_{\delta+i\left(t-\frac{\pi}{R}\right)}^*}(\un_{\tLL})(x_0)\left(\widehat{u}(t)-
\widehat{u}\left(t-\dfrac{\pi}{R}\right)\right)\dd t\\
& +\dfrac{1}{\pi}\displaystyle{\int_{t>\frac{A+\pi}{R}}}e^{itR}\left(\re{Q_{\dit}^*}-\re{Q_{\delta+i\left(t-\frac{\pi}{R}\right)}^*}\right)(\un_{\tLL})(x_0)
\widehat{u}(t)\dd t\\
=: & K_1+K_2+K_3.
\end{align*}
\noindent
As in Proposition \ref{butmixingbeta1utensphi}, one has 
$|K_1|\preceq\frac{1}{\tilde{L}(R)}\frac{L(R)}{\tilde{L}(R)}(A+\pi)^{\frac{3}{4}}$ and
$|K_2|\preceq\frac{1}{\tilde{L}(R)}\frac{L(R)}{\tilde{L}(R)}$. The arguments exposed to control $K_3$ in the proof of Proposition \ref{butmixingbeta1utensphi} give
$$|K_3|\preceq\dfrac{1}{\tilde{L}(R)}\dfrac{1}{\sqrt{A}}.$$
\noindent
Therefore
$\lim\limits_{A\longrightarrow+\infty}\lim\limits_{R\longrightarrow+\infty}\tilde{L}(R)|J|=0$. 
\noindent
Hence
\begin{equation}\label{firstpartorbitalbeta1}
\lim\limits_{A\longrightarrow+\infty}\lim\limits_{R\longrightarrow+\infty}\tilde{L}(R)|I_1|=0.
\end{equation}
\noindent
We rewrite the integral $I_2$ as
\begin{align*}
I_2 = & \dfrac{1}{\pi}\displaystyle{\int_{|t|\leqslant\frac{A}{R}}}e^{itR}\left[\re{Q_{\delta+it}^*}-\re{\left(1-\lambda_{\delta+it}^*\right)^{-1}}\Pi_{\delta}^*\right](\un_{\tLL})(x_0)\widehat{u}(t)\dd t\\
& + \displaystyle{\int_{|t|\leqslant\frac{A}{R}}}e^{itR}\left[\re{\left(1-\lambda_{\delta+it}^*\right)^{-1}}\Pi_{\delta}^*\right](\un_{\tLL})(x_0)\widehat{u}(t)\dd t\\
=: & L_1+L_2.
\end{align*}
\noindent
By Proposition \ref{controlQstar}, for  $R$ large enough, we have $\left|L_1\right|\leqslant 2\dfrac{A}{R}$. The integral $L_2$ may be splitted into $M_1+M_2$ as follows
\begin{align*}
L_2 = & \displaystyle{\int_{|t|\leqslant\frac{A}{R}}}(e^{itR}-1)\left[\re{\left(1-\lambda_{\delta+it}^*\right)^{-1}}\Pi_{\delta}^*\right](\un_{\tLL})(x_0)\widehat{u}(t)\dd t \\
& + \displaystyle{\int_{|t|\leqslant\frac{A}{R}}}\left[\re{\left(1-\lambda_{\delta+it}^*\right)^{-1}}\Pi_{\delta}^*\right](\un_{\tLL})(x_0)\widehat{u}(t)\dd t\\
= & M_1+M_2,
\end{align*}
\noindent
with, as in Proposition \ref{butmixingbeta1utensphi},
$$\tilde{L}(R)M_1\preceq A\dfrac{L(R)}{\tilde{L}(R)}.$$
\noindent
To study the integral $M_2$, it suffices to deal with the case $0\leqslant t\leqslant \frac{A}{R}$.
Denote by
$$N=\displaystyle{\int_{0}^{\frac{A}{R}}}\left[\re{\left(1-\lambda_{\delta+it}^*\right)^{-1}}\Pi_{\delta}^*\right](\un_{\tLL})(x_0)\widehat{u}(t)\dd t.$$
\noindent
We obtain
$$\tilde{L}(R)N\sim\dfrac{h^*(x_0)}{2\Cg}\widehat{u}(0),$$
\noindent
hence
$$\lim\limits_{R\longrightarrow+\infty}\tilde{L}(R)M_2=C_{\G}\widehat{u}(0)$$
\noindent
with $C_{\G}=\dfrac{\lim\limits_{k\longrightarrow+\infty}\sum\limits_{\g\in\G(k)}e^{-\delta\dhy(\oo,\g.\oo)}}{\Cg}$. This concludes the proof of Theorem C.

%

\bibliographystyle{acm}
\bibliography{mabiblio.bib}

\begin{thebibliography}{10}

\bibitem{Bab2}
{\sc Babillot, M.}
\newblock On the mixing property for hyperbolic systems.
\newblock {\em Israel journal of mathematics 129}, 1 (2002), 61--76.

\bibitem{BP}
{\sc Babillot, M., and Peign{\'e}, M.}
\newblock Homologie des g\'eod\'esiques ferm\'ees sur des vari\'et\'es
  hyperboliques avec bouts cuspidaux.
\newblock {\em Ann. Sci. \'Ecole Norm. Sup. (4) 33}, 1 (2000), 81--120.

\bibitem{BP2}
{\sc Babillot, M., and Peign{\'e}, M.}
\newblock Asymptotic laws for geodesic homology on hyperbolic manifolds with
  cusps.
\newblock {\em Bull. Soc. Math. France 134}, 1 (2006), 119--163.

\bibitem{Bal}
{\sc Ballmann, W.}
\newblock {\em Lectures on spaces of nonpositive curvature}, vol.~25 of {\em
  DMV Seminar}.
\newblock Birkh\"auser Verlag, Basel, 1995.
\newblock With an appendix by Misha Brin.

\bibitem{Bie}
{\sc Bieberbach, L.}
\newblock {\"U}ber die bewegungsgruppen der euklidischen r{\"a}ume.
\newblock {\em Mathematische Annalen 70}, 3 (1911), 297--336.

\bibitem{Bie2}
{\sc Bieberbach, L.}
\newblock {\"U}ber die bewegungsgruppen der euklidischen r{\"a}ume (zweite
  abhandlung.) die gruppen mit einem endlichen fundamentalbereich.
\newblock {\em Mathematische Annalen 72}, 3 (1912), 400--412.

\bibitem{BGT}
{\sc Bingham, N.~H., Goldie, C.~M., and Teugels, J.~L.}
\newblock {\em Regular variation}, vol.~27 of {\em Encyclopedia of Mathematics
  and its Applications}.
\newblock Cambridge University Press, Cambridge, 1987.

\bibitem{Bou}
{\sc Bourdon, M.}
\newblock {\em Actions quasi-convexes d'un groupe hyperbolique, flot
  geodesique}.
\newblock PhD thesis, 1993.

\bibitem{Bow}
{\sc Bowen, R.}
\newblock {\em Equilibrium states and the ergodic theory of {A}nosov
  diffeomorphisms}.
\newblock Lecture Notes in Mathematics, Vol. 470. Springer-Verlag, Berlin-New
  York, 1975.

\bibitem{DOP}
{\sc Dal'bo, F., Otal, J.-P., and Peign{\'e}, M.}
\newblock S\'eries de {P}oincar\'e des groupes g\'eom\'etriquement finis.
\newblock {\em Israel J. Math. 118\/} (2000), 109--124.

\bibitem{BDP}
{\sc Dal'bo, F., and Peign{\'e}, M.}
\newblock Comportement asymptotique du nombre de g\'eod\'esiques ferm\'ees sur
  la surface modulaire en courbure non constante.
\newblock {\em Ast\'erisque}, 238 (1996), 111--177.
\newblock {\'E}tudes spectrales d'op{\'e}rateurs de transfert et applications.

\bibitem{DP}
{\sc Dal'bo, F., and Peign{\'e}, M.}
\newblock Groupes du ping-pong et g\'eod\'esiques ferm\'ees en courbure {$-1$}.
\newblock {\em Ann. Inst. Fourier (Grenoble) 46}, 3 (1996), 755--799.

\bibitem{DP2}
{\sc Dal'bo, F., and Peign{\'e}, M.}
\newblock Some negatively curved manifolds with cusps, mixing and counting.
\newblock {\em J. Reine Angew. Math. 497\/} (1998), 141--169.

\bibitem{DPPS2}
{\sc Dal'Bo, F., Peign{\'e}, M., Picaud, J., and Sambusetti, A.}
\newblock Asymptotic geometry of negatively curved manifolds of finite volume.
\newblock {\em ArXiv 1503.03971\/}.

\bibitem{DPPS}
{\sc Dal'Bo, F., Peign{\'e}, M., Picaud, J.-C., and Sambusetti, A.}
\newblock Convergence and counting in infinite measure.
\newblock {\em To appear in Annales Institut Fourier\/}.

\bibitem{DC}
{\sc do~CARMO, M.~P.}
\newblock In {\em Differential Geometry of Curves and Surfaces}. Prentice Hall,
  1976.

\bibitem{Eb}
{\sc Eberlein, P.}
\newblock Geodesic flows on negatively curved manifolds. {I}.
\newblock {\em Ann. of Math. (2) 95\/} (1972), 492--510.

\bibitem{Er}
{\sc Erickson, K.~B.}
\newblock Strong renewal theorems with infinite mean.
\newblock {\em Trans. Amer. Math. Soc. 151\/} (1970), 263--291.

\bibitem{G-H}
{\sc Ghys, {\'E}., and de~la Harpe, P.}
\newblock Panorama.
\newblock In {\em Sur les groupes hyperboliques d'apr\`es {M}ikhael {G}romov
  ({B}ern, 1988)}, vol.~83 of {\em Progr. Math.} Birkh\"auser Boston, Boston,
  MA, 1990, pp.~1--25.

\bibitem{G-K}
{\sc Gnedenko, B.~V., and Kolmogorov, A.~N.}
\newblock {\em Limit distributions for sums of independent random variables}.
\newblock Translated from the Russian, annotated, and revised by K. L. Chung.
  With appendices by J. L. Doob and P. L. Hsu. Revised edition. Addison-Wesley
  Publishing Co., Reading, Mass.-London-Don Mills., Ont., 1968.

\bibitem{Gou}
{\sc Gou{\"e}zel, S.}
\newblock Correlation asymptotics from large deviations in dynamical systems
  with infinite measure.
\newblock {\em Colloq. Math. 125}, 2 (2011), 193--212.

\bibitem{GuiH}
{\sc Guivarc'h, Y., and Hardy, J.}
\newblock Th{\'e}or{\`e}mes limites pour une classe de cha{\^\i}nes de markov
  et applications aux diff{\'e}omorphismes d'anosov.
\newblock In {\em Annales de l'IHP Probabilit{\'e}s et statistiques\/} (1988),
  vol.~24, pp.~73--98.

\bibitem{HaKa}
{\sc Hajian, A.~B., and Kakutani, S.}
\newblock Example of an ergodic measure preserving transformation on an
  infinite measure space.
\newblock In {\em Contributions to Ergodic Theory and Probability}. Springer,
  1970, pp.~45--52.

\bibitem{Hed}
{\sc Hedlund, G.~A.}
\newblock Fuchsian groups and mixtures.
\newblock {\em Annals of Mathematics\/} (1939), 370--383.

\bibitem{HeiIm}
{\sc Heintze, E., and Im~Hof, H.-C.}
\newblock Geometry of horospheres.
\newblock {\em J. Differential Geom. 12}, 4 (1977), 481--491.

\bibitem{Hen2}
{\sc Hennion, H.}
\newblock Quasi-compacit{\'e} cas des noyaux lipschitziens et des noyaux
  markoviens.
\newblock {\em Publications math{\'e}matiques et informatiques de Rennes}, 2
  (1995), 1--50.

\bibitem{HeHe}
{\sc Hennion, H., and Herv{\'e}, L.}
\newblock {\em Limit theorems for {M}arkov chains and stochastic properties of
  dynamical systems by quasi-compactness}, vol.~1766 of {\em Lecture Notes in
  Mathematics}.
\newblock Springer-Verlag, Berlin, 2001.

\bibitem{Kato}
{\sc Kato, T.}
\newblock {\em Perturbation theory for linear operators}, vol.~132.
\newblock Springer Science \& Business Media, 2013.

\bibitem{La}
{\sc Lalley, S.~P.}
\newblock Renewal theorems in symbolic dynamics, with applications to geodesic
  flows, non-{E}uclidean tessellations and their fractal limits.
\newblock {\em Acta Math. 163}, 1-2 (1989), 1--55.

\bibitem{LP}
{\sc Lax, P.~D., and Phillips, R.~S.}
\newblock The asymptotic distribution of lattice points in euclidean and
  non-euclidean spaces.
\newblock In {\em Toeplitz centennial}. Springer, 1982, pp.~365--375.

\bibitem{Mar}
{\sc Margulis, G.~A.}
\newblock Applications of ergodic theory to the investigation of manifolds of
  negative curvature.
\newblock {\em Functional analysis and its applications 3}, 4 (1969), 335--336.

\bibitem{MT}
{\sc Melbourne, I., and Terhesiu, D.}
\newblock Operator renewal theory and mixing rates for dynamical systems with
  infinite measure.
\newblock {\em Invent. Math. 189}, 1 (2012), 61--110.

\bibitem{OP}
{\sc Otal, J.-P., and Peign{\'e}, M.}
\newblock Principe variationnel et groupes kleiniens.
\newblock {\em Duke Math. J. 125}, 1 (2004), 15--44.

\bibitem{PP}
{\sc Parry, W., and Pollicott, M.}
\newblock An analogue of the prime number theorem for closed orbits of {A}xiom
  {A} flows.
\newblock {\em Ann. of Math. (2) 118}, 3 (1983), 573--591.

\bibitem{Pat2}
{\sc Patterson, S.~J.}
\newblock The limit set of a {F}uchsian group.
\newblock {\em Acta Math. 136}, 3-4 (1976), 241--273.

\bibitem{Pe1}
{\sc Peign{\'e}, M.}
\newblock On some exotic {S}chottky groups.
\newblock {\em Discrete Contin. Dyn. Syst. 31}, 2 (2011), 559--579.

\bibitem{Rob}
{\sc Roblin, T.}
\newblock Ergodicit\'e et \'equidistribution en courbure n\'egative.
\newblock {\em M\'em. Soc. Math. Fr. (N.S.)}, 95 (2003), vi+96.

\bibitem{Sch}
{\sc Schapira, B.}
\newblock Lemme de l'ombre et non divergence des horosph\`eres d'une
  vari\'et\'e g\'eom\'etriquement finie.
\newblock {\em Ann. Inst. Fourier (Grenoble) 54}, 4 (2004), 939--987.

\bibitem{Sel}
{\sc Selberg, A.}
\newblock Harmonic analysis and discontinuous groups in weakly symmetric
  {R}iemannian spaces with applications to {D}irichlet series.
\newblock {\em J. Indian Math. Soc. (N.S.) 20\/} (1956), 47--87.

\bibitem{Sul}
{\sc Sullivan, D.}
\newblock Entropy, {H}ausdorff measures old and new, and limit sets of
  geometrically finite {K}leinian groups.
\newblock {\em Acta Math. 153}, 3-4 (1984), 259--277.

\bibitem{Tom}
{\sc Thomine, D.}
\newblock {\em Th{\'e}or{\`e}mes limites pour les sommes de Birkhoff de
  fonctions d'int{\'e}grale nulle en th{\'e}orie ergodique en mesure infinie}.
\newblock PhD thesis, Rennes 1, 2013.

\bibitem{Zin}
{\sc Zinsmeister, M.}
\newblock {\em Formalisme thermodynamique et syst\`emes dynamiques
  holomorphes}, vol.~4 of {\em Panoramas et Synth\`eses [Panoramas and
  Syntheses]}.
\newblock Soci\'et\'e Math\'ematique de France, Paris, 1996.

\bibitem{Zol}
{\sc Zolotarev, V.~M.}
\newblock {\em One-dimensional stable distributions}, vol.~65 of {\em
  Translations of Mathematical Monographs}.
\newblock American Mathematical Society, Providence, RI, 1986.
\newblock Translated from the Russian by H. H. McFaden, Translation edited by
  Ben Silver.

\end{thebibliography}

\end{document}